\def\YEAR{\year}\newcount\VOL\VOL=\YEAR\advance\VOL by-1995
\def\firstpage{73}\def\lastpage{176}
\def\received{May 13, 2009}\def\revised{}
\def\communicated{Peter Schneider}
\def\magnification{\afterassignment\m@g\count@}
\def\m@g{\mag=\count@\hsize6.5truein\vsize8.9truein\dimen\footins8truein}
\font\eightrm=cmr8
\font\caps=cmcsc10                    
\font\Caps=cmcsc10 scaled \magstep1   
\font\scaps=cmcsc8
\def\DocMath{}
\def\DocMath{{\def\th{\thinspace}\scaps Documenta Math.}}
\renewcommand{\@oddfoot}{\hfill\scaps Documenta Mathematica 
    \number\VOL\  (\number\YEAR) \number\firstpage--\lastpage\hfill}
\renewcommand{\@evenfoot}{\ifnum\thepage>\lastpage\hfill\scaps
    Documenta Mathematica \number\VOL\  (\number\YEAR)\hfill\else\@oddfoot\fi}%
\renewcommand{\@evenhead}{%
    \ifnum\thepage>\lastpage\rlap{\thepage}\hfill%
    \else\rlap{\thepage}\slshape\leftmark\hfill{\caps\SAuthor}\hfill\fi}%
\renewcommand{\@oddhead}{%
    \ifnum\thepage=\firstpage{\DocMath\hfill\llap{\thepage}}%
    \else{\slshape\rightmark}\hfill{\caps\STitle}\hfill\llap{\thepage}\fi}%
\def\TSkip{\bigskip}
\newbox\TheTitle{\obeylines\gdef\GetTitle #1
\ShortTitle  #2
\SubTitle    #3
\Author      #4
\ShortAuthor #5
\EndTitle
{\setbox\TheTitle=\vbox{\baselineskip=20pt\let\par=\cr\obeylines%
\halign{\centerline{\Caps##}\cr\noalign{\medskip}\cr#1\cr}}%
	\copy\TheTitle\TSkip\TSkip%
\def\next{#2}\ifx\next\empty\gdef\STitle{#1}\else\gdef\STitle{#2}\fi%
\def\next{#3}\ifx\next\empty%
    \else\setbox\TheTitle=\vbox{\baselineskip=20pt\let\par=\cr\obeylines%
    \halign{\centerline{\caps##} #3\cr}}\copy\TheTitle\TSkip\TSkip\fi%
\centerline{\caps #4}\TSkip\TSkip%
\def\next{#5}\ifx\next\empty\gdef\SAuthor{#4}\else\gdef\SAuthor{#5}\fi%
\ifx\received\empty\relax
    \else\centerline{\eightrm Received: \received}\fi%
\ifx\revised\empty\TSkip%
    \else\centerline{\eightrm Revised: \revised}\TSkip\fi%
\ifx\communicated\empty\relax
    \else\centerline{\eightrm Communicated by \communicated}\fi\TSkip\TSkip%
\catcode'015=5}}\def\Title{\obeylines\GetTitle}
\def\Abstract{\begingroup\narrower
    \parskip=\medskipamount\parindent=0pt{\caps Abstract. }}
\def\EndAbstract{\par\endgroup\TSkip}
\long\def\MSC#1\EndMSC{\def\arg{#1}\ifx\arg\empty\relax\else
     {\par\narrower\noindent%
     2010 Mathematics Subject Classification: #1\par}\fi}
\long\def\KEY#1\EndKEY{\def\arg{#1}\ifx\arg\empty\relax\else
	{\par\narrower\noindent Keywords and Phrases: #1\par}\fi\TSkip}
\newbox\TheAdd\def\Addresses{\vfill\copy\TheAdd\vfill
    \ifodd\number\lastpage\vfill\eject\phantom{.}\vfill\eject\fi}
{\obeylines\gdef\GetAddress #1
\Address #2 
\Address #3
\Address #4
\EndAddress
{\def\xs{5.3truecm}\parindent=0pt
\setbox0=\vtop{{\obeylines\hsize=\xs#1\par}}\def\next{#2}
\ifx\next\empty 
     \setbox\TheAdd=\hbox to\hsize{\hfill\copy0\hfill}
\else\setbox1=\vtop{{\obeylines\hsize=\xs#2\par}}\def\next{#3}
\ifx\next\empty 
     \setbox\TheAdd=\hbox to\hsize{\hfill\copy0\hfill\copy1\hfill}
\else\setbox2=\vtop{{\obeylines\hsize=\xs#3\par}}\def\next{#4}
\ifx\next\empty\ 
     \setbox\TheAdd=\vtop{\hbox to\hsize{\hfill\copy0\hfill\copy1\hfill}
                \vskip20pt\hbox to\hsize{\hfill\copy2\hfill}}
\else\setbox3=\vtop{{\obeylines\hsize=\xs#4\par}}
     \setbox\TheAdd=\vtop{\hbox to\hsize{\hfill\copy0\hfill\copy1\hfill}
	        \vskip20pt\hbox to\hsize{\hfill\copy2\hfill\copy3\hfill}}
\fi\fi\fi\catcode'015=5}}\gdef\Address{\obeylines\GetAddress}
\numberwithin{equation}{section}
\theoremstyle{plain}
\newtheorem{theorem}[equation]{Theorem}
\newtheorem{corollary}[equation]{Corollary}
\newtheorem{proposition}[equation]{Proposition}
\newtheorem{lemma}[equation]{Lemma}
\theoremstyle{definition}
\newtheorem{definition}[equation]{Definition}
\newtheorem{notation}[equation]{Notation}
\newtheorem{remark}[equation]{Remark}
\newtheorem{claim}{Claim}
\DeclareMathOperator{\Det}{Det}
\DeclareMathOperator{\Tr}{Tr}
\DeclareMathOperator{\WF}{WF}
\DeclareMathOperator{\tr}{tr}
\DeclareMathOperator{\dv}{div}
\DeclareMathOperator{\cha}{\widehat{CH}}
\DeclareMathOperator{\CH}{CH}
\DeclareMathOperator{\Td}{Td}
\DeclareMathOperator{\cht}{\widetilde{ch}}
\DeclareMathOperator{\chh}{\widehat{ch}}
\DeclareMathOperator{\Tot}{Tot}
\DeclareMathOperator{\dd}{d}
\DeclareMathOperator{\Img}{Im}
\DeclareMathOperator{\ch}{ch}
\DeclareMathOperator{\cl}{cl}
\DeclareMathOperator{\rk}{rk}
\DeclareMathOperator{\ad}{ad}
\DeclareMathOperator{\Ker}{Ker}
\DeclareMathOperator{\Id}{id}
\DeclareMathOperator{\Pic}{Pic}
\DeclareMathOperator{\Coker}{Coker}
\DeclareMathOperator{\Spec}{Spec}
\DeclareMathOperator{\supp}{supp}
\DeclareMathOperator{\amap}{a}
\newcommand{\an}{\text{{\rm an}}}
\newcommand{\CC}{{\mathbb C}}
\newcommand{\QQ}{{\mathbb Q}}
\newcommand{\cc}{{\mathcal{C}}}
\newcommand{\las}{{\text{\rm l,a}}}
\newcommand{\D}{\text{{\rm cur}}}
\def\?{\ ???\ \immediate\write16{}%
\immediate\write16{Warning: There was still a question mark . . . }%
\immediate\write16{}}
\begin{document}
\Title
Singular Bott-Chern Classes 
and the Arithmetic Grothendieck
Riemann Roch Theorem for Closed Immersions
\ShortTitle 
Singular Bott-Chern Classes
\SubTitle   
\Author Jos\'e I.~Burgos Gil\footnote{Partially supported by Grant %
DGI MTM2006-14234-C02-01.}
and R\u azvan Li\c tcanu\footnote{Partially supported by CNCSIS Grant %
  1338/2007 and PN II Grant ID\_2228 (502/2009)}
\ShortAuthor
Jos\'e I.~Burgos Gil and  R\u azvan Li\c tcanu
\EndTitle
\Abstract 
We study the singular Bott-Chern classes introduced by
Bismut, Gillet and Soul\'e. Singular Bott-Chern classes are the main
ingredient to define direct images for closed immersions in arithmetic
$K$-theory. In this paper we give an axiomatic definition of a theory
of singular Bott-Chern classes, study their properties, and
classify all possible
theories of this kind. We identify the theory defined by Bismut,
Gillet and Soul\'e 
as the only one that satisfies the additional condition of being
homogeneous. We include a proof of the arithmetic
Grothendieck-Riemann-Roch theorem for closed immersions that
generalizes a result of Bismut, Gillet and Soul\'e and was already
proved by Zha. This result can be combined with the arithmetic
Grothendieck-Riemann-Roch theorem for submersions to extend this
theorem to arbitrary projective morphisms. As a byproduct of this study we
obtain two results of independent interest. First, we prove a
Poincar\'e lemma for the complex of currents with fixed wave front set,
and second we prove that certain direct images of Bott-Chern classes
are closed.
\EndAbstract
\MSC 
14G40 32U40 
\EndMSC
\KEY 
Arakelov Geometry, Closed immersions, Bott-Chern classes, Arithmetic
Riemann-Roch theorem, currents, wave front sets. 
\EndKEY
\Address 
Jos\'e I.~Burgos Gil
Instituto de Ciencias Matem\'aticas 
(CSIC-UAM-UC3M-UCM)
burgos@icmat.es, 
\ \ jiburgosgil@gmail.com
Temporary Address:
Centre de Recerca Matem\'atica CRM
UAB Science Faculty
08193 Bellaterra
Barcelona, Spain
\Address
R\u azvan Li\c tcanu
University Al. I. Cuza
Faculty of Mathematics
Bd. Carol I, 11
700506 Ia\c si
Romania
litcanu@uaic.ro
\Address
\Address
\EndAddress
\setcounter{tocdepth}{2}
\setcounter{section}{-1}
\date{}
\vspace*{2cm} 



\newpage

\tableofcontents

\section{Introduction}
\label{sec:introduction}

Chern-Weil theory associates to each hermitian vector
bundle a family of closed characteristic forms that represent the
characteristic classes of the vector bundle. The characteristic
classes are compatible with exact sequences. But this is not true for
the characteristic forms. The Bott-Chern classes measure the lack of
compatibility of the characteristic forms with exact sequences.

The Grothendieck-Riemann-Roch theorem gives a formula that relates
direct images and characteristic classes. In general this formula is
not valid for the characteristic forms. The singular Bott-Chern
classes measure, in a functorial way, the failure of an exact
Grothendieck-Riemann-Roch theorem for
closed immersions at the level of characteristic forms. In the same
spirit, the analytic torsion forms 
measure the failure of an exact
Grothendieck-Riemann-Roch theorem for
submersions at the level of characteristic forms. Hence singular
Bott-Chern classes and analytic torsion 
forms are analogous objects, the first for closed immersions and the
second for submersions. 

Let us give a more precise description of Bott-Chern
classes and singular Bott-Chern
classes.
Let $X$ be a complex manifold and let $\varphi$ be a symmetric power
series in $r$ variables with real coefficients. Let $\overline
E=(E,h)$ be a rank $r$ holomorphic vector bundle provided with a
hermitian 
metric. Using Chern-Weil theory, we can associate to $\overline E$ a
differential form $\varphi(\overline E)=\varphi(-K)$, where $K$ is the
curvature tensor of $E$ viewed as a matrix of 2-forms. The
differential form $\varphi(\overline E)$ is closed and is a sum of
components of bidegree $(p,p)$ for $p\ge 0$.

If
$$\overline {\xi}\colon 0\longrightarrow \overline{E}'\longrightarrow
\overline{E}\longrightarrow 
\overline{E}''\longrightarrow 0$$ 
is a short exact sequence of holomorphic vector bundles provided with
hermitian metrics, then the differential forms $\varphi(\overline E)$
and $\varphi(\overline {E}'\oplus \overline {E}')$ may be different,
but they represent the same cohomology class. 

The Bott-Chern form associated to $\overline \xi$ is a solution of the
differential equation         
\begin{equation}\label{eq:1}
  -2\partial\bar \partial \varphi(\overline \xi)=
\varphi(\overline {E}'\oplus \overline {E}')
- \varphi(\overline E)
\end{equation}
obtained in a functorial way. The class of a Bott-Chern form modulo
the image of $\partial$ and $\overline{\partial}$ is called a
Bott-Chern class and is denoted by $\widetilde
{\varphi}(\overline {\xi})$.

There are three ways of defining the Bott-Chern classes. The
first one is the original definition of Bott and Chern
\cite{BottChern:hvb}. It is based on a deformation
between the connection associated to $\overline E$ and the connection
associated to $\overline E'\oplus \overline E''$. This deformation is
parameterized by a real variable. 

In \cite{GilletSoule:MR854556} Gillet and
Soul\'e introduced a second definition of Bott-Chern classes that
is 
based on a deformation between $\overline E$ and $\overline E'\oplus
\overline E''$ parameterized by a projective line. This second
definition is used in \cite{BismutGilletSoule:at} to prove that the
Bott-Chern classes are characterized by three properties
\begin{enumerate}
\item \label{item:1} The differential equation \eqref{eq:1}.
\item \label{item:2} Functoriality (i.e. compatibility with pull-backs
  via holomorphic maps). 
\item \label{item:3} The vanishing of the Bott-Chern class of a
  orthogonally split
  exact sequence. 
\end{enumerate}

In \cite{BismutGilletSoule:at} Bismut, Gillet and
Soul\'e have a third definition of Bott-Chern classes based
on the theory of superconnections. This definition is useful to link
Bott-Chern classes with analytic torsion forms.

The definition of Bott-Chern classes can be generalized to any
bounded exact sequence of hermitian vector bundles (see section
\ref{sec:bott-chern-forms} for details). Let
\begin{displaymath}
\overline \xi\colon0\longrightarrow(E_{n},h_{n})\longrightarrow
\dots \longrightarrow (E_{1},h_{1}) 
\longrightarrow(E_{0},h_{0})\longrightarrow 0
\end{displaymath}
be a bounded acyclic complex of hermitian vector bundles; by this we
mean 
a bounded acyclic complex of vector bundles, where each vector bundle
is 
equipped with an arbitrarily chosen hermitian metric. Let
\begin{displaymath}
  r=\sum_{i\text{ even}} \rk(E_{i})=\sum_{i\text{ odd}} \rk(E_{i}). 
\end{displaymath}
As before, let $\varphi$ be a
symmetric power series in $r$ variables. A Bott-Chern class
associated to $\overline \xi$ satisfies the differential equation 
\begin{displaymath}
    -2\partial\bar \partial \widetilde{\varphi}(\overline \xi)=
\varphi(\bigoplus_{k}\overline E_{2k})
- \varphi(\bigoplus_{k}\overline E_{2k+1}).
\end{displaymath}
In particular, let ``$\ch$'' denote the power series associated to the
Chern character class. The Chern character class
has the advantage of being additive for direct sums. Then, the
Bott-Chern class associated to the long exact sequence $\overline
{\xi}$ 
and to the Chern character class satisfies the differential equation 
\begin{displaymath}
  -2\partial\bar \partial \widetilde{\ch}(\overline \xi)=
  -\sum_{k=0}^{n}(-1)^{i}\ch(\overline E_{k}). 
\end{displaymath}

Let now $i\colon Y\longrightarrow X$ be a closed immersion of complex
manifolds. Let $\overline {F}$ be a holomorphic vector bundle on $Y$
provided with a hermitian metric. Let $\overline N$ be the normal
bundle to $Y$ in $X$ provided also with a hermitian metric. Let
\begin{displaymath}
 0\longrightarrow \overline E_{n}\longrightarrow
 \overline E_{n-1}
  \longrightarrow \dots \longrightarrow \overline E_{0}
  \longrightarrow i_{\ast} F\longrightarrow 0
\end{displaymath}
be a resolution of the coherent sheaf $i_{\ast} F$ by locally free
sheaves, provided with hermitian metrics (following Zha
\cite{zha99:_rieman_roch} we shall call such a
sequence a metric on the coherent sheaf $i_{\ast} F$). Let $\Td$
denote the Todd characteristic class. Then the
Grothendieck-Riemann-Roch theorem for the closed immersion $i$
implies that the current $i_{\ast}(\Td(\overline
N)^{-1}\ch(\overline F))$ and the differential form
$\sum_{k}(-1)^{k}\ch(\overline E_{k})$ represent the same class in
cohomology. We denote $\overline {\xi}$ the data consisting in the
closed embedding $i$, the hermitian bundle $\overline N$, the
hermitian bundle $\overline {F}$ and the resolution $\overline
E_{\ast} \longrightarrow i_{\ast} F$.

In the paper \cite{BismutGilletSoule:MR1047123}, Bismut, Gillet and
Soul\'e introduced a current associated to the above
situation. These currents are called singular Bott-Chern currents
and denoted in \cite{BismutGilletSoule:MR1047123} by
$T(\overline \xi)$. When the hermitian metrics satisfy a
certain technical condition (condition A of Bismut) then the
singular Bott-Chern current $T(\overline \xi)$ satisfies the
differential equation
\begin{displaymath}
  -2\partial\bar \partial T(\overline \xi)=
  i_{\ast}(\Td(\overline N)^{-1}\ch(\overline F))
  -\sum_{i=0}^{n}(-1)^{i}\ch(\overline E_{i}).
\end{displaymath}

These singular Bott-Chern currents are among the main ingredients
of the proof of Gillet and
Soul\'e's arithmetic Riemann-Roch theorem. In fact it is the main
ingredient of the arithmetic 
Riemann-Roch theorem for closed immersions
\cite{BismutGilletSoule:MR1086887}. This definition of singular
Bott-Chern classes is based on the formalism of superconnections,
like the third definition of ordinary Bott-Chern classes.

In his thesis \cite{zha99:_rieman_roch}, Zha gave another
definition of singular Bott-Chern currents and used it to give a
proof of a different version of the arithmetic Riemann-Roch theorem.
This second definition is analogous to Bott and Chern's original
definition. Nevertheless there is no explicit comparison between 
the two definitions of singular Bott-Chern currents.

One of the purposes of this note is to give a third construction of singular
Bott-Chern currents, in fact of their classes modulo the image of
$\partial$ and $\overline \partial$, which could be seen as analogous
to the second 
definition of Bott-Chern classes. Moreover we will use this third
construction to give an axiomatic definition of a theory of singular
Bott-Chern classes. A theory of singular Bott-Chern classes 
is an assignment that, to each  data $\overline{\xi}$
as above, associates a class of currents $T(\overline{\xi})$, that
satisfies the analogue of conditions \ref{item:1}, \ref{item:2} and
\ref{item:3}.
The main technical point of this axiomatic
definition is that the conditions analogous to \ref{item:1},
\ref{item:2} and \ref{item:3} above are not enough to characterize
the singular Bott-Chern classes. Thus we are led to the problem of
classifying the possible theories of Bott-Chern classes, which is the
other purpose of this paper.

We fix a theory $T$ of singular Bott-Chern
classes. 
Let $Y$ be a complex manifold and let $\overline N$ and $\overline
F$ be two hermitian holomorphic vector bundles on $Y$. We write
$P=\mathbb{P}(N\oplus 1)$ for the projective completion of $N$. Let
$s\colon Y\longrightarrow P$ be the inclusion as the zero section and let
$\pi_P\colon P\longrightarrow Y$ be the projection. Let
$\overline{K}_{\ast}$ be the Koszul resolution of
$s_{\ast}\mathcal{O}_Y$ endowed with the metric induced by
$\overline{N}$. Then we have a resolution by hermitian vector
bundles
\begin{displaymath}
 K(\overline F,\overline N)\colon \overline{K}_{\ast}\otimes
 \pi_P^{\ast}\overline{F}\longrightarrow s_{\ast}F. 
\end{displaymath}
To these data we associate a singular Bott-Chern class $T(K(\overline
F,\overline N))$. It turns out that the current
\begin{displaymath}
\frac{1}{(2\pi i)^{\rk N}}  
\int_{\pi_P}  T(K(\overline F,\overline
  N))=(\pi_P)_{\ast}T(K(\overline F,\overline N)) 
\end{displaymath}
is closed (see section \ref{sec:direct-images-bott} for general
properties of the Bott-Chern classes that imply this property) and
determines a characteristic class $C_T(F,N)$ on $Y$ for the vector
bundles $N$ and $F$. Conversely, any arbitrary characteristic class
for pairs of vector bundles can be obtained in this way. This allows
us to classify the possible
theories of singular Bott-Chern classes:

\begin{claim} [theorem \ref{thm:6}] The assignment that sends a
singular Bott-Chern class $T$ to the characteristic class $C_{T}$ is a
bijection between the set of theories of singular Bott-Chern classes and the set
of characteristic classes.   
\end{claim}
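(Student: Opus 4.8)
The plan is to deduce the bijectivity of $T\mapsto C_{T}$ from the fact that the theories of singular Bott--Chern classes form a torsor under the abelian group $\mathcal{S}$ of \emph{closed assignments}: those rules $S$ which are functorial and satisfy the normalisation axiom (the analogue of condition~\ref{item:3}) exactly as a theory does, but for which the right hand side of the differential equation is replaced by $0$, so that $S(\overline{\xi})$ is a $\partial\bar\partial$-closed current for every datum $\overline{\xi}$. If $T,T'$ are theories then $S=T-T'\in\mathcal{S}$, and conversely $T+S$ is again a theory for $S\in\mathcal{S}$. The fibre-integral construction extends verbatim to $\mathcal{S}$: for $S\in\mathcal{S}$ the current $(\pi_{P})_{\ast}S(K(\overline{F},\overline{N}))$ is again closed, being a difference of two currents of the form $C_{T}(F,N)$, and $S\mapsto C_{S}:=(\pi_{P})_{\ast}S(K(\overline{F},\overline{N}))$ is a group homomorphism $\mathcal{S}\to\{\text{characteristic classes for pairs}\}$ with $C_{T+S}=C_{T}+C_{S}$. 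Since at least one theory $T_{0}$ exists — this being the content of the construction of a theory of singular Bott--Chern classes carried out in the preceding sections — the theorem becomes equivalent to the assertion that $S\mapsto C_{S}$ is a bijection: surjectivity of $T\mapsto C_{T}$ then follows by taking $T_{\phi}=T_{0}+S_{\phi-C_{T_{0}}}$, and injectivity follows because $C_{T}=C_{T'}$ gives $C_{T-T'}=0$.

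For the injectivity of $S\mapsto C_{S}$, let $S\in\mathcal{S}$ with $C_{S}=0$; one must show $S(\overline{\xi})=0$ for every datum $\overline{\xi}=(i\colon Y\hookrightarrow X,\ \overline{N},\ \overline{F},\ \overline{E}_{\ast}\to i_{\ast}F)$. The first step is deformation to the normal cone: form $W=\mathrm{Bl}_{Y\times\{\infty\}}(X\times\PP^{1})$, extend the immersion to $\widetilde{Y}=Y\times\PP^{1}\hookrightarrow W$, and choose hermitian metrics on a resolution of $\widetilde{i}_{\ast}\pr^{\ast}F$ so that its restriction over $t\neq\infty$ recovers $\overline{\xi}$ while its restriction over $t=\infty$ splits off, on the component $P=\PP(N\oplus 1)$ of the special fibre, the Koszul datum $K(\overline{F},\overline{N})$. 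Applying functoriality to the inclusions of the fibres of $W\to\PP^{1}$ and the homotopy formula for the closed current $S(\widetilde{\xi})$ on $W$ — fibre integration along a path from $0$ to $\infty$ — one finds that, modulo $\Img\partial+\Img\bar\partial$, $S(\overline{\xi})$ is congruent to an expression built functorially out of $S(K(\overline{F},\overline{N}))$ alone; in particular the dependence on the resolution $\overline{E}_{\ast}$ disappears. Here is where the Poincar\'e lemma for currents with fixed wave front set is needed, in order to perform the homotopy inside the admissible class of currents and to control the resulting ambiguity.

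The second and decisive step is a rigidity statement on $P$: a closed, functorial assignment satisfying the normalisation axiom is determined, on the Koszul datum $K(\overline{F},\overline{N})$, by its fibre integral $(\pi_{P})_{\ast}S(K(\overline{F},\overline{N}))=C_{S}(F,N)$. The tool is the $\GG_{m}$-action scaling $N$, hence acting on $P$ and fixing both the zero section $s(Y)$ and the section at infinity $\PP(N)$: functoriality with respect to this action and to $\pi_{P}$, combined with the projective-bundle decomposition of the cohomology of $P$ over $Y$ — and of the admissible currents on $P$, via the same wave front set Poincar\'e lemma — into pull-backs from $Y$ wedged with powers of the tautological class, forces $S(K(\overline{F},\overline{N}))$ to be, modulo $\Img\partial+\Img\bar\partial$, such a sum; the normalisation axiom then annihilates every summand except the one recorded by $(\pi_{P})_{\ast}$. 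Hence $C_{S}=0$ gives $S(K(\overline{F},\overline{N}))\equiv 0$, and by the first step $S(\overline{\xi})\equiv 0$ for all $\overline{\xi}$. This rigidity on $P$ — upgrading an equality of fibre integrals to an equality of current classes — is the step I expect to be the main obstacle, precisely because it forces the homogeneity coming from the $\GG_{m}$-action to interact with the normalisation axiom and with the wave front set Poincar\'e lemma in a controlled way.

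For the surjectivity of $S\mapsto C_{S}$, let $\phi$ be a characteristic class for pairs and let $\psi_{\phi}(\overline{F},\overline{N})$ be the corresponding Chern--Weil form on $Y$; set $S_{\phi}(\overline{\xi}):=i_{\ast}\bigl(\psi_{\phi}(\overline{F},\overline{N})\bigr)$ (adjusting $\psi_{\phi}$ by a universal closed form if necessary to meet the normalisation axiom). Since $i_{\ast}$ of a closed form is $\partial\bar\partial$-closed, $S_{\phi}$ satisfies the homogeneous differential equation; it is functorial because for the Tor-independent base changes allowed in the axioms one has $f^{\ast}i_{\ast}=i'_{\ast}(f')^{\ast}$; and it satisfies the normalisation axiom by construction. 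Evaluating on the Koszul datum gives $S_{\phi}(K(\overline{F},\overline{N}))=s_{\ast}\psi_{\phi}(\overline{F},\overline{N})$, and since $\pi_{P}\circ s=\Id_{Y}$ we get $C_{S_{\phi}}=(\pi_{P})_{\ast}s_{\ast}\psi_{\phi}(\overline{F},\overline{N})=\psi_{\phi}(\overline{F},\overline{N})$, that is $C_{S_{\phi}}=\phi$. Together with the torsor reduction of the first paragraph this shows that $T\mapsto C_{T}$ is a bijection.
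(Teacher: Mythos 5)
Your torsor reduction is internally consistent (differences of theories do lie in your group $\mathcal{S}$, and $T_{0}+S$ is again a theory), but it only converts the statement into: \emph{some} theory $T_{0}$ exists, and $S\mapsto C_{S}$ is bijective. The base point is exactly what you do not supply. You take $T_{0}$ from ``the construction of a theory of singular Bott--Chern classes carried out in the preceding sections'', but no such construction precedes theorem \ref{thm:6}: section \ref{sec:singular-bott-chern} explicitly only \emph{assumes} existence in order to derive consequences, and the existence of even one theory is precisely (part of) the surjectivity content of theorem \ref{thm:6}. The paper establishes it there by the explicit formula \eqref{eq:68} of definition \ref{def:5} (deformation to the normal cone, the current $W_{1}$, the Koszul resolution on $P$, plus the prescribed class $C$), and the bulk of the proof consists in checking that this formula is independent of the choices of metrics on $\tr_{1}(\overline E_{\ast})_{\ast}$ and $\overline A_{\ast}$ and satisfies the differential equation, normalization and functoriality, and finally that $C_{T_{C}}=C$. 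None of this is replaced by your argument (an appeal to the Bismut--Gillet--Soul\'e or Zha constructions would also not be available at this point of the paper, since they are only verified to be theories in section \ref{sec:bismut-gillet-soule}). So, as written, the proposal proves at most: \emph{if} a theory exists, then $T\mapsto C_{T}$ is injective with image a coset of the image of $S\mapsto C_{S}$; the existence half is missing. (A smaller point on the same half: your representative $\psi_{\phi}$ cannot literally be a Chern--Weil form, since $C_{T}(F,N)$ lives in odd Deligne degrees $\bigoplus_{p}H^{2p-1}_{\mathcal{D}^{\an}}(Y,\mathbb{R}(p))$; one must instead take a closed current representative of the class, e.g.\ of the shape ${\bf 1}_{1}\bullet(\text{Chern--Weil form})$, or simply use the cohomological $i_{\ast}$ of theorem \ref{thm:13}, noting that a closed element of $\widetilde{\mathcal{D}}^{2p-1}_{D}(X,p)$ is the same thing as a cohomology class. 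This is repairable, unlike the missing base point.)

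On injectivity, your route is workable but both vaguer and heavier than needed, and the step you single out as the main obstacle is misconceived as stated. If you carry out Step 1 precisely for an abstract $S\in\mathcal{S}$ (which requires first re-proving, for $S$, the auxiliary facts the paper proves for theories: vanishing on acyclic complexes with $Y=\emptyset$, metric independence, and the analogue of proposition \ref{prop:5} allowing you to replace $\tr_{1}(\overline E_{\ast})_{\ast}|_{P}$ by $K(\overline F,\overline N)\oplus\overline A_{\ast}$ with $\overline A_{\ast}$ orthogonally split), the deformation identity gives exactly $S(\overline{\xi})=(p_{P})_{\ast}S(K(\overline F,\overline N))=i_{\ast}C_{S}(F,N)$; so $C_{S}=0$ kills $S(\overline{\xi})$ outright and no ``rigidity on $P$'' is needed. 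This is just the paper's uniqueness argument, equation \eqref{eq:10}, applied with zero right-hand side. Moreover, the mechanism you propose for the rigidity step would not work as stated: the $\GG_{m}$-scaling of $N$ does not act by isometries on the Koszul data, so functoriality under it yields no constraint without a separate metric-independence argument; what actually kills the lower terms of the projective-bundle decomposition is restriction to the divisor at infinity $D_{\infty}=\mathbb{P}(N)$, where the Koszul complex is orthogonally split and hence $S$ vanishes, combined with the structure of $H^{\ast}_{\mathcal{D}^{\an}}(P)$ as a module over $H^{\ast}_{\mathcal{D}^{\an}}(Y)$ --- this is precisely how the paper argues in lemma \ref{lemm:4} and theorem \ref{thm:9}, where such rigidity is needed for the homogeneity statement, not for theorem \ref{thm:6}.
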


The next objective of this note is to study the properties of the
different theories of singular 
Bott-Chern classes and of the corresponding characteristic classes.
We mention, in the first place, that for the functoriality condition
to make sense, we have to study the wave front sets of the currents
representing the singular Bott-Chern classes. In particular we use a
Poincar\'e Lemma for currents with fixed wave front set. This result
implies that, in each singular Bott-Chern class, we can find a
representative with controlled wave front set that can be pulled back
with respect certain morphisms.

We also  investigate how different properties of the singular Bott-Chern
classes $T$ are reflected in properties of the characteristic
classes $C_T$. We thus characterize the compatibility of the
singular Bott-Chern classes with the projection formula, by the
property of $C_T$ of being compatible with the
  projection formula. We
also relate the compatibility of the singular Bott-Chern classes
with the composition of successive closed immersions to an
additivity property of the associated characteristic class.

Furthermore, we show that we can add a 
natural fourth axiom to the conditions analogue to \ref{item:1},
\ref{item:2} and \ref{item:3}, namely the condition of being
homogeneous (see section \ref{sec:bismut-gillet-soule} for the precise
definition). 

\begin{claim} [theorem \ref{thm:12}] There exists a unique homogeneous
  theory of singular Bott-Chern classes.
\end{claim}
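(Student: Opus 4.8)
The plan is to use the classification theorem (Theorem~\ref{thm:6}): a theory of singular Bott--Chern classes is the same datum as a characteristic class $C$ for pairs of vector bundles $(F,N)$. So the existence-and-uniqueness statement for a \emph{homogeneous} theory will follow once we identify which characteristic classes $C$ correspond to homogeneous theories, and show that there is exactly one such $C$. First I would unwind the homogeneity condition through the bijection $T\mapsto C_T$. Homogeneity of $T$ is a statement about how $T(\overline\xi)$ behaves under scaling the metrics (equivalently, a grading-compatibility: the component of $T$ in bidegree $(p,p)$ should be the singular Bott--Chern class of the degree-$p$ part of the relevant characteristic power series). Tracing this through the construction $C_T(F,N)=(\pi_P)_\ast T(K(\overline F,\overline N))$, I expect homogeneity of $T$ to force $C_T$ to be a specific, canonically determined characteristic class — the one coming from the "arithmetic" normalization, i.e. essentially $C(F,N)$ must be the class that makes the defining differential equation homogeneous in each bidegree. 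Conversely, starting from that distinguished $C$, the theory $T$ it defines via Theorem~\ref{thm:6} will automatically be homogeneous because the construction is bidegree-by-bidegree compatible.

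Concretely, the key steps in order: (1) State the homogeneity axiom precisely (this is cited as being in section~\ref{sec:bismut-gillet-soule}) and reduce it, via $T\leftrightarrow C_T$, to a property $(\ast)$ of the characteristic class $C_T$; the natural candidate for $(\ast)$ is that $C_T(F,N)$, as a function of the Chern forms, be a sum of terms each homogeneous of the "correct" weight (so that the differential equation $-2\partial\bar\partial T=i_\ast(\Td(\overline N)^{-1}\ch(\overline F))-\sum(-1)^i\ch(\overline E_i)$ splits compatibly by bidegree). (2) Show that there is exactly one characteristic class satisfying $(\ast)$: existence by writing it down explicitly from $\Td^{-1}$ and $\ch$ with the appropriate homogeneous normalization; uniqueness because the weight constraint pins down every graded piece. (3) Invoke Theorem~\ref{thm:6} to transport this unique $C$ back to a unique homogeneous theory $T$, checking that the $T$ attached to this $C$ really does satisfy the homogeneity axiom (this direction uses that the construction of $T$ from $C$ in the proof of Theorem~\ref{thm:6} respects the bidegree decomposition).

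The main obstacle I expect is step~(1): making the passage between "homogeneous theory $T$" and "the characteristic class $C_T$ has the graded form $(\ast)$" genuinely reversible. In one direction, homogeneity of $T$ readily restricts $C_T$, since $C_T$ is extracted from $T(K(\overline F,\overline N))$ by a pushforward that is compatible with bidegrees. The subtler direction is showing that the distinguished $C$ yields a theory $T$ that is homogeneous \emph{as a whole} — not just that its associated characteristic class is graded correctly, but that for \emph{every} datum $\overline\xi$ (arbitrary resolutions, not just Koszul ones on projective completions) the resulting class $T(\overline\xi)$ scales correctly. This requires knowing that the formula expressing a general $T(\overline\xi)$ in terms of $C$, Bott--Chern classes of auxiliary complexes, and the transgression construction from section~\ref{sec:direct-images-bott} is built out of pieces that are each homogeneous; i.e. one leans on the functoriality and deformation-to-the-normal-cone machinery already set up, plus the homogeneity of ordinary Bott--Chern classes. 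Once those ingredients are in place, homogeneity of the general $T$ is a formal consequence, and uniqueness is immediate from Theorem~\ref{thm:6}.
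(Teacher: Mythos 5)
There is a genuine gap, and it lies exactly where you flag the difficulty: you never pin down what the homogeneity axiom actually is, and the surrogate you propose for it does not work. Homogeneity (definition \ref{def:16}) is not a statement about rescaling metrics or about the bidegree components of $T(\overline\xi)$ for general data; it says that for the Koszul datum $K(\overline F,\overline N)$ on $P=\mathbb{P}(N\oplus\mathbb{C})$ the twisted class $\widetilde e_{T}(\overline F,\overline N)=T(K(\overline F,\overline N))\bullet \Td(\overline Q)\bullet\ch^{-1}(\pi_{P}^{\ast}\overline F)$ is concentrated in the single group $\widetilde{\mathcal{D}}^{2r_{N}-1}_{D}(P,r_{N})$. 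Consequently your condition $(\ast)$ on $C_{T}$ — ``each graded piece has the correct weight so the differential equation splits by bidegree'' — cannot pin anything down: for \emph{every} theory $T$, the class $C_{T}(F,N)$ already lies in $\bigoplus_{p}H^{2p-1}_{\mathcal{D}^{\an}}(Y,\mathbb{R}(p))$ by degree reasons (see the remark after theorem \ref{thm:8}), and by theorem \ref{thm:14} it is ${\bf 1}_{1}$ times an arbitrary power series in the Chern classes; the differential equation \eqref{eq:42} imposes no constraint on $C_{T}$ whatsoever, since it holds for all theories. So the step ``uniqueness because the weight constraint pins down every graded piece'' has no content, and without it theorem \ref{thm:6} cannot be invoked to conclude.

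The missing idea is the rigidity of the Euler--Green class on $P$ (lemma \ref{lemm:4} in the paper). One first observes that, by functoriality and the normalization axiom, $T(K(\overline F,\overline N))$ restricts to zero on the divisor at infinity $D_{\infty}$, because the Koszul complex is orthogonally split there; hence $\widetilde e_{T}$ satisfies $\dd_{\mathcal{D}}\widetilde e_{T}=[c_{r_{N}}(\overline Q)]-\delta_{Y}$ and $\widetilde e_{T}|_{D_{\infty}}=0$. The point where homogeneity is indispensable is that the restriction map $H^{2r_{N}-1}_{\mathcal{D}^{\an}}(P,r_{N})\to H^{2r_{N}-1}_{\mathcal{D}^{\an}}(D_{\infty},r_{N})$ is an isomorphism in \emph{that particular} bidegree (projective bundle formula), so a class concentrated there and killed by restriction to $D_{\infty}$ is unique; in other bidegrees the restriction has kernel, which is precisely why non-homogeneous theories are not determined by these axioms. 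This forces $\widetilde e_{T}=\widetilde e(P,\overline Q,s)$, hence $T(K(\overline F,\overline N))=\widetilde e(P,\overline Q,s)\bullet\Td^{-1}(\overline Q)\bullet\ch(\pi_{P}^{\ast}\overline F)$, hence $C_{T}=(\pi_{P})_{\ast}T(K(\overline F,\overline N))$ is independent of $T$, and uniqueness follows from theorem \ref{thm:6}; existence goes the other way, defining $C(F,N)=(\pi_{P})_{\ast}(\widetilde e(P,\overline Q,s)\bullet\Td^{-1}(\overline Q))\bullet\ch(\overline F)$, checking it is a characteristic class, and verifying that the theory $T_{C}$ of theorem \ref{thm:6} is homogeneous — a check that only concerns the Koszul data, not arbitrary $\overline\xi$ as you feared. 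Your overall frame (classify via $C_{T}$) agrees with the paper, but without the Euler--Green lemma and the $D_{\infty}$-restriction argument the proof does not go through.
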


Thanks to this axiomatic characterization, we prove that this theory
agrees with the theories 
of singular Bott-Chern classes introduced by  Bismut, Gillet and
Soul\'e \cite{BismutGilletSoule:MR1086887}, and by Zha
\cite{zha99:_rieman_roch}. In particular
this provides us a comparison
between the two definitions. We will also characterize 
the characteristic class $C_{T^h}$ for the theory
of homogeneous singular Bott-Chern classes.

The last objective of this paper is to give a proof of the arithmetic
Riemann-Roch theorem for closed immersions. A version of this theorem
was proved by Bismut, Gillet and Soul\'e and by Zha.

Next we will discuss the contents of the different sections of this paper. 
In section \S 1 we recall the properties of characteristic classes in
analytic Deligne cohomology. A characteristic class is just a
functorial assignment that associates a
cohomology class to each vector bundle. The main result of this
section is that any 
characteristic class is given by a power series on the Chern classes,
with appropriate 
coefficients.

In section \S 2 we recall the theory of Bott-Chern forms and its
main properties. The contents of this section are standard although
the presentation is slightly different to the ones published in the
literature. 

In section \S 3 we study certain direct images of Bott-Chern forms. The
main result of this section is that, even if the Bott-Chern classes are not
closed, certain direct images of Bott-Chern classes are closed. This
result generalizes previous 
results of Bismut, Gillet and Soul\'e and of Mourougane. This result
is used to prove that the class $C_{T}$ mentioned previously is indeed a
cohomology class, but it can be of independent interest because it
implies that several identities in characteristic classes are valid at
the level of differential forms.

In section \S 4 we study the cohomology of the complex of currents with
a fixed wave front set. The main result of this section is a
Poincar\'e lemma for currents of this kind. This implies in particular
a $\partial\bar \partial$-lemma. The results of this section are
necessary to state the functorial properties of singular Bott-Chern
classes.     

In section \S 5 we recall the deformation of resolutions, that is a
generalization of the deformation to the normal cone, and we also recall
the construction of the Koszul resolution. These are the main
geometric tools used to study singular Bott-Chern classes. 

Sections \S 6 to \S 9 are devoted to the definition and study of the
theories of singular Bott-Chern classes. Section \S 6 contains the
definition and first properties. Section \S 7 is devoted to the
classification theorem of such theories. In section \S 8 we study how
properties of the theory of singular Bott-Chern classes and of the
associated characteristic class are related. And in section \S 9 we
define the theory of homogeneous singular Bott-Chern classes and we
prove that it agrees with the theories defined by Bismut, Gillet and 
Soul\'e and by Zha.   

Finally in section \S 10 we define arithmetic $K$-groups associated to 
a 
$\mathcal{D}_{\log}$-arithmetic variety $(\mathcal{X}, \mathcal{C})$
(in the sense of \cite{BurgosKramerKuehn:cacg}) and push-forward
maps for closed immersions of metrized arithmetic varieties, at the
level of the arithmetic $K$-groups. After studying the compatibility
of these maps with the projection formula and with the push-forward
map at the level of currents, we prove a general Riemann-Roch
theorem for closed immersions (theorem \ref{thm:15}) that compares
the direct images in the arithmetic $K$-groups with the direct
images in the arithmetic Chow groups. This theorem is compatible, if
we 
choose the theory of homogeneous singular Bott-Chern classes, with
the arithmetic Riemann-Roch theorem for closed immersions proved by
Bismut, Gillet and Soul\'e \cite{BismutGilletSoule:MR1086887} and it
agrees with the theorem proved by
Zha  \cite{zha99:_rieman_roch}. Theorem \ref{thm:15}, together with the
arithmetic Grothendieck-Riemann-Roch theorem for submersions 
proved in
\cite{GilletRoesslerSoule:_arith_rieman_roch_theor_in_higher_degrees},
can be used 
to obtain an arithmetic Grothendieck-Riemann-Roch theorem
for projective morphisms of regular arithmetic varieties.

\emph{Acknowledgements}: This project was started during the Special
Year on Arakelov Theory and Shimura Varieties held at the CRM
(Bellaterra, Spain). We would like to thank the CRM for his
hospitality during that year. We would also like to thank the
University of Barcelona and the University Alexandru Ioan Cuza of Ia\c si for
their hospitality during several visits that allowed us to finish the
project. We would also like to thank K. K\"ohler,
J. Kramer, U. K\"uhn, V. Maillot, D. Rossler, and J. Wildeshaus  with
whom we have had many discussions on the subject of this paper. Our
special thanks to G. Freixas and Shun Tang for their careful reading of
the paper and for suggesting some simplifications of the
proofs. Finally we would like to thank the referee for his excellent
work.      

\section{Characteristic classes in analytic Deligne cohomology}
\label{sec:char-class}

A characteristic class for complex vector bundles is a functorial
assignment which, to each 
complex continuous vector bundle on a paracompact topological space $X$,
assigns a class in a suitable cohomology theory of $X$. For example,
if the cohomology theory is
singular cohomology, it is 
well known that each characteristic class can be expressed as a power
series in the Chern classes. This can be seen for instance, showing
that continuous complex vector bundles on a paracompact space $X$ can
be classified by homotopy 
classes of maps from $X$ to the classifying space
$BGL_{\infty}(\mathbb{C})$ and that the cohomology of
$BGL_{\infty}(\mathbb{C})$ is generated by the Chern classes (see for
instance \cite{MilnorStasheff:cc}).

The aim of this section is to show that a similar result is true if we
restrict the class of spaces to the class of quasi-projective smooth
complex manifolds, the class of maps to the class of algebraic maps
and the class of vector bundles to the class of algebraic vector
bundles and we choose analytic Deligne
cohomology as our cohomology theory.  

This result and the techniques used to prove it are standard. We will
use the splitting principle to reduce to 
the case of line bundles and will then use the projective spaces as
a model of the 
classifying space $BGL_{1}(\mathbb{C})$. In this section we also
recall the definition of Chern classes in analytic Deligne cohomology
and 
we fix some notations that will be used through the
paper.

\begin{definition}
  Let $X$ be a complex manifold. For each integer $p$, \emph{the
    analytic real Deligne  complex} of $X$ is
  \begin{multline*}
    \mathbb{R}_{X,\mathcal{D}}(p)= (\underline
    {\mathbb{R}}(p)\longrightarrow 
    \mathcal{O}_{X}\longrightarrow \Omega ^{1}_{X}\longrightarrow \dots 
    \longrightarrow \Omega ^{p-1}_{X})\\
    \cong s(\underline
    {\mathbb{R}}(p)\oplus F^{p}\Omega _{X}^{\ast}\longrightarrow \Omega
    ^{\ast}_{X}), 
  \end{multline*}
  where $\underline
    {\mathbb{R}}(p)$ is the constant sheaf $(2\pi i)^{p}\underline
    {\mathbb{R}}\subseteq \underline {\mathbb{C}}$.
  The \emph{analytic real Deligne cohomology  of $X$}, denoted
  $H^{\ast}_{\mathcal{D}^{\an}}(X,\mathbb{R}(p))$, is the 
  hyper-cohomology of the above complex. 
\end{definition}

Analytic Deligne cohomology satisfies the following result.

\begin{theorem} \label{thm:13}
  The assignment $X\longmapsto
  H^{\ast}_{\mathcal{D}^{\an}}(X,\mathbb{R}(\ast))= 
  \bigoplus_{p}H^{\ast}_{\mathcal{D}^{\an}}(X,\mathbb{R}(p))$ is a
  contravariant functor between the category of complex manifolds and
  holomorphic maps and the category of unitary bigraded rings that are graded
  commutative (with respect to the first degree) and
  associative. Moreover
  there exists a functorial map
    \begin{displaymath}
      c\colon\Pic(X)=H^{1}(X,\mathcal{O}^{\ast}_{X})\longrightarrow 
      H^{2}_{\mathcal{D}^{\an}}(X,\mathbb{R}(1))
    \end{displaymath}
    and, for each closed immersion of complex manifolds
    $i\colon Y\longrightarrow X$ of codimension $p$, there exists a morphism
    \begin{displaymath}
      i_{\ast}\colon H^{*}_{\mathcal{D}^{\an}}(Y,\mathbb{R}(*))\longrightarrow 
      H^{*+2p}_{\mathcal{D}^{\an}}(X,\mathbb{R}(*+p))
    \end{displaymath}
    satisfying the properties
    \begin{enumerate}
    \item [A1] \label{item:20} Let $X$ be a complex manifold and let
      $E$ be a holomorphic 
      vector bundle of rank $r$. Let $\mathbb{P}(E)$ be the associated
      projective 
      bundle and let $\mathcal{O}(-1)$ the tautological line
      bundle. The map
      \begin{displaymath}
        \pi ^{\ast}\colon H^{\ast}_{\mathcal{D}^{\an}}(X,\mathbb{R}(\ast))
        \longrightarrow
        H^{\ast}_{\mathcal{D}^{\an}}(\mathbb{P}(E),\mathbb{R}(\ast)) 
      \end{displaymath}
      induced by the projection $\pi \colon\mathbb{P}(E)\longrightarrow X$
      gives to the second ring a structure of left
      module  over the first. Then
      the elements $c(\cl(\mathcal{O}(-1)))^{i}$, $i=0,\dots ,r-1$
      form a basis of this module.  
  \item [A2] \label{item:21} If $X$ is a complex manifold, $L$  a line
    bundle, $s$ a 
    holomorphic section of $L$ that is transverse to the zero section,
    $Y$ is the zero locus of $s$ and $i\colon Y\longrightarrow X$ the
    inclusion, then 
    \begin{displaymath}
      c(\cl(L))=i _{\ast}(1_{Y}).
    \end{displaymath}
  \item [A3] \label{item:22} If $j\colon Z\longrightarrow Y$ and
    $i\colon Y\longrightarrow X$ are 
    closed immersions of complex manifolds then
    $(ij)_{\ast}=i_{\ast}j_{\ast}$.
  \item [A4] \label{item:23} If $i\colon Y\longrightarrow X$ is a closed
    immersion of 
    complex manifolds then, for every $a\in
    H^{\ast}_{\mathcal{D}^{\an}}(X,\mathbb{R}(\ast))$ and $b\in
    H^{\ast}_{\mathcal{D}^{\an}}(Y,\mathbb{R}(\ast))$ 
    \begin{displaymath}
      i_{\ast}(b i^{\ast}a)=(i_{\ast} b)a.
    \end{displaymath}
  \end{enumerate}
\end{theorem}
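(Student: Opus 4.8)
The plan is to equip an explicit complex of sheaves computing $\mathbb{R}_{X,\mathcal{D}}(p)$ with all the required structure and then deduce A1--A4 from standard facts: the projective bundle formula for Deligne cohomology, functoriality and the projection formula for push-forward of currents, and the Poincar\'e--Lelong equation. For the ring structure I would use the Dolbeault presentation $\mathbb{R}_{X,\mathcal{D}}(p)\cong s\big(\underline{\mathbb{R}}(p)\oplus F^{p}\Omega^{\ast}_{X}\to\Omega^{\ast}_{X}\big)$ already written down in the definition, and put on the associated simple complexes Beilinson's product: on cochains it is given by an explicit bilinear formula which is associative and graded commutative (in the cohomological degree) up to an explicit homotopy. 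Hence it descends to an associative, graded commutative, bigraded ring structure on $\bigoplus_{p}H^{\ast}_{\mathcal{D}^{\an}}(X,\mathbb{R}(p))$, with unit the locally constant function $1\in H^{0}(X,\mathbb{R})=H^{0}_{\mathcal{D}^{\an}}(X,\mathbb{R}(0))$, and it is contravariantly functorial because pullback of forms is a morphism of these complexes.

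For the first Chern class I would pass to the Deligne complex with integral coefficients $\mathbb{Z}_{X,\mathcal{D}}(1)=\big(2\pi i\,\underline{\mathbb{Z}}\to\mathcal{O}_{X}\big)$, which the exponential sequence identifies with $\mathcal{O}_{X}^{\ast}[-1]$; this yields a canonical isomorphism $H^{2}_{\mathcal{D}}(X,\mathbb{Z}(1))\cong H^{1}(X,\mathcal{O}_{X}^{\ast})=\Pic(X)$, and $c$ is the composition of its inverse with the change-of-coefficients map $H^{2}_{\mathcal{D}}(X,\mathbb{Z}(1))\to H^{2}_{\mathcal{D}^{\an}}(X,\mathbb{R}(1))$ induced by $2\pi i\,\underline{\mathbb{Z}}\hookrightarrow 2\pi i\,\underline{\mathbb{R}}$. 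Functoriality of $c$ is then immediate.

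For the Gysin maps I would pass to currents: on a complex manifold the inclusion of the form-theoretic Deligne complex into the analogous complex built from currents is a quasi-isomorphism, so currents also compute $H^{\ast}_{\mathcal{D}^{\an}}(X,\mathbb{R}(p))$ (a $\partial\bar\partial$-type comparison, revisited in a refined "fixed wave front set" form in \S 4). Given a closed immersion $i\colon Y\to X$ of codimension $p$, push-forward of currents, normalised by the factor $(2\pi i)^{p}$, commutes with $d$ and hence is compatible with the Deligne differential, and it raises the Hodge filtration level by $p$; it therefore induces the asserted map $i_{\ast}\colon H^{\ast}_{\mathcal{D}^{\an}}(Y,\mathbb{R}(\ast))\to H^{\ast+2p}_{\mathcal{D}^{\an}}(X,\mathbb{R}(\ast+p))$. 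Equivalently one may use the purity isomorphism $H^{k}_{\mathcal{D},Y}(X,\mathbb{R}(q))\cong H^{k-2p}_{\mathcal{D}^{\an}}(Y,\mathbb{R}(q-p))$, which comes from purity for each of the three complexes $\underline{\mathbb{R}}(q)$, $F^{q}\Omega^{\ast}_{X}$ and $\Omega^{\ast}_{X}$ forming the cone. In the current model A3 is just the functoriality $(ij)_{\ast}=i_{\ast}j_{\ast}$ of push-forward of currents, and A4 is the projection formula $i_{\ast}(T\cdot i^{\ast}\eta)=(i_{\ast}T)\cdot\eta$ for currents paired against the ring product of A1 (choosing smooth representatives for the classes on $X$), so both are essentially formal. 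For A2, if $s$ is a holomorphic section of $L$ transverse to the zero section with zero locus $Y$, then for any smooth hermitian metric on $L$ the Poincar\'e--Lelong equation exhibits $i_{\ast}(1_{Y})$ and the first Chern form of the metric as cohomologous in the current Deligne complex for $\mathbb{R}(1)$, which is precisely the identity $c(\cl(L))=i_{\ast}(1_{Y})$.

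Finally, A1 is the projective bundle formula for analytic Deligne cohomology. I would construct the $H^{\ast}_{\mathcal{D}^{\an}}(X,\mathbb{R}(\ast))$-module map $\bigoplus_{i=0}^{r-1}H^{\ast-2i}_{\mathcal{D}^{\an}}(X,\mathbb{R}(\ast-i))\to H^{\ast}_{\mathcal{D}^{\an}}(\mathbb{P}(E),\mathbb{R}(\ast))$ sending $(a_{i})_{i}$ to $\sum_{i}\pi^{\ast}(a_{i})\cdot c(\cl(\mathcal{O}(-1)))^{i}$, and show it is an isomorphism. Since $H^{\ast}_{\mathcal{D}^{\an}}$ is the hypercohomology of a complex of sheaves it satisfies Mayer--Vietoris, so the claim may be checked locally on $X$, where $E$ becomes trivial and $\mathbb{P}(E)$ a product; there it follows from the projective bundle formula for each of the three complexes building the cone (topological cohomology, holomorphic de Rham cohomology, and its Hodge-filtered pieces) together with the long exact sequence of the cone and the five lemma. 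I expect A1 to be the main obstacle, since it is the one step requiring an actual computation of Deligne cohomology rather than a formal manipulation of complexes and currents; the other delicate point is the comparison between the form-theoretic and current-theoretic Deligne complexes that underlies the construction of $i_{\ast}$ and the proof of A2.
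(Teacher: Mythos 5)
Your proposal is correct and follows essentially the same route as the paper: Beilinson's product for the ring structure, the exponential-sequence description of $c$, a current-theoretic resolution of the Deligne complexes to define $i_{\ast}$ (whence A3 and A4 are formal and A2 is Poincar\'e--Lelong), and the projective bundle formula deduced from those of the three constituents of the cone for A1. The paper phrases this last step as sheaf-level isomorphisms $\bigoplus_{i=0}^{r-1}\mathbb{R}_{X,\mathcal{D}}(p-i)[-2i]\to R\pi_{\ast}\mathbb{R}_{\mathbb{P}(E),\mathcal{D}}(p)$ given by powers of $c_{1}(\mathcal{O}(-1))$, which is just the derived-category form of your local verification.
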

\begin{proof}
  The functoriality is clear. The product structure is described, for
  instance, in \cite{EsnaultViehweg:DBc}. The morphism $c$ is defined
  by the morphism in the derived category
  \begin{displaymath}
    \mathcal{O}^{\ast}_{X}[1]\overset{\cong}{\longleftarrow}
    s(\underline {\mathbb{Z}}(1)\rightarrow \mathcal{O}_{X})
    \longrightarrow s(\underline {\mathbb{R}}(1)\rightarrow
    \mathcal{O}_{X}) = \mathbb{R}_{\mathcal{D}}(1).
  \end{displaymath}
  The morphism $i_{\ast}$ can be constructed by resolving the sheaves
  $\mathbb{R}_{\mathcal{D}}(p)$ by means of currents (see
  \cite{Jannsen:DcHD} for a related construction). Properties 
  A3 and A4 follow easily from this
  construction. 

  By abuse of
  notation, we will denote by $c_{1}(\mathcal{O}(-1))$ the first Chern
  class of $\mathcal{O}(-1)$ with the algebro-geometric twist, in any
  of the groups 
  $H^{2}(\mathbb{P}(E),\underline {\mathbb{R}}(1))$,
  $H^{2}(\mathbb{P}(E),\underline {\mathbb{C}})$,
  $H^{1}(\mathbb{P}(E),\Omega ^{1}_{\mathbb{P}(E)})$. Then, we have
  sheaf isomorphisms (see for instance \cite{Gros:MR844488} for a related
  result),
  \begin{align*}
    \bigoplus_{i=0}^{r-1}\underline {\mathbb{R}}_{X}(p-i)[-2i]
    &
    \longrightarrow R\pi _{\ast}\underline {\mathbb{R}}_{\mathbb{P}(E)}(p)\\
    \bigoplus_{i=0}^{r-1}\Omega ^{\ast}_{X}[-2i]
    &
    \longrightarrow R\pi _{\ast}\Omega ^{\ast}_{\mathbb{P}(E)}\\
    \bigoplus_{i=0}^{r-1}F^{p-i}\Omega ^{\ast}_{X}[-2i]
    &
    \longrightarrow R\pi _{\ast}F^{p}\Omega ^{\ast}_{\mathbb{P}(E)}
  \end{align*}
  given, all of them, by $(a_{0},\dots ,a_{r-1})\longmapsto \sum
  a_{i}c_{1}(\mathcal{O}(-1))^{i}$.  
  Hence we obtain a sheaf isomorphism
  \begin{displaymath}
        \bigoplus_{i=0}^{r-1}\mathbb{R}_{X,\mathcal{D}}(p-i)[-2i]
    \longrightarrow R\pi _{\ast}\mathbb{R}_{\mathbb{P}(E),\mathcal{D}}(p)
  \end{displaymath}
  from which property A1 follows. Finally property
  A2 in this context is given by the Poincare-Lelong
  formula (see \cite{BurgosKramerKuehn:cacg} proposition 5.64). 
\end{proof}

\begin{notation}\label{def:19}
  For the convenience of the reader, we gather here together several
  notations and conventions regarding the differential forms, currents
  and Deligne cohomology that will be used through the paper.

  Throughout this paper we will use consistently the algebro-geometric
  twist. In particular the Chern classes $c_{i}$, $i=0,\dots$ in Betti
  cohomology will live in $c_{i}\in H^{2i}(X,\mathbb{R}(i))$; hence
  our normalizations differ from the ones in \cite{GilletSoule:ait}
  where real forms and currents are used.

  Moreover we will use the following notations.  We will denote by
  $\mathscr{E}^{\ast}_{X}$ the sheaf of Dolbeault algebras of
  differential forms on $X$ and by $\mathscr{D}^{\ast}_{X}$ the sheaf
  of Dolbeault complexes of currents on $X$ (see
  \cite{BurgosKramerKuehn:cacg} \S 5.4 for the structure of Dolbeault
  complex of $\mathscr{D}^{\ast}_{X}$).  We will denote by
  $E^{\ast}(X)$ and by $D^{\ast}(X)$ the complexes of global sections
  of $\mathscr{E}^{\ast}_{X}$ and $\mathscr{D}^{\ast}_{X}$
  respectively.  Following \cite{Burgos:CDB} and
  \cite{BurgosKramerKuehn:cacg} definition 5.10, we denote by
  $(\mathcal{D}^{\ast}(\underline{\phantom{A}},\ast),\dd_{\mathcal{D}})$
  the functor that associates to a Dolbeault complex its corresponding
  Deligne complex. For shorthand, we will denote
  \begin{align*}
    \mathcal{D}^{\ast}(X,p)&=\mathcal{D}^{\ast}(E^{\ast }(X),p),\\
    \mathcal{D}^{\ast}_{D}(X,p)&=\mathcal{D}^{\ast}(D^{\ast }(X),p).
  \end{align*}
  To keep track of the algebro-geometric twist we will use the
  conventions of 
  \cite{BurgosKramerKuehn:cacg} \S 5.4 regarding the current
  associated to a locally integrable differential form
  \begin{displaymath}
    [\omega ](\eta)=\frac{1}{(2\pi i)^{\dim X}}\int_{X}\eta\land \omega
  \end{displaymath}
  and the current associated with a subvariety $Y$
  \begin{displaymath}
    \delta _{Y}(\eta)=\frac{1}{(2\pi i)^{\dim Y}}\int_{Y}\eta.
  \end{displaymath}
  With these conventions, we have a bigraded morphism 
  $\mathcal{D}^{\ast}(X,\ast)\to \mathcal{D}^{\ast}_{D}(X,\ast)$ and,
  if $Y$ has codimension $p$, the current $\delta _{Y}$ belongs to 
  $\mathcal{D}^{2p}_{D}(X,p)$.
  Then $\mathcal{D}^{\ast}(X,p)$ and $\mathcal{D}_{D}^{\ast}(X,p)$ are
  the complex of global sections of 
  an acyclic resolution of $\mathbb{R}_{X,\mathcal{D}}(p)$. Therefore
  \begin{displaymath}
    H^{\ast}_{\mathcal{D}^{\an}}(X,\mathbb{R}(p))=
    H^{\ast}(\mathcal{D}(X,p))=H^{\ast}(\mathcal{D}_{D}(X,p)).
  \end{displaymath}
  If $f:X\to Y$ is a proper smooth morphism of complex manifolds of
  relative dimension $e$, then the integral along the fibre morphism
  \begin{displaymath}
    f_{\ast}:\mathcal{D}^{k}(X,p)\longrightarrow
    \mathcal{D}^{k-2e}(X,p-e)
  \end{displaymath}
  is given by
  \begin{equation}\label{eq:36}
    f_{\ast} \omega =\frac{1}{(2\pi i)^{e}}\int_{f}\omega .
  \end{equation}

  If $(\mathcal{D}^{\ast}(\ast),\dd_{\mathcal{D}})$ is a
  Deligne complex associated to a Dolbeault complex, we will write 
  \begin{displaymath}
    \widetilde{\mathcal{D}}^{k}(X,p):=
    \mathcal{D}^{k}(X,p)/\dd_{\mathcal{D}}\mathcal{D}^{k-1}(X,p).
  \end{displaymath}

  Finally, following \cite{BurgosKramerKuehn:cacg} 5.14 we denote by
  $\bullet$ the product in the Deligne complex that induces the usual
  product in Deligne cohomology. Note that, if $\omega \in \bigoplus
  _{p}\mathcal{D}^{2p}(X,p)$, then for any $\eta\in
  \mathcal{D}^{\ast}(X,\ast)$ we have $\omega \bullet \eta=\eta\bullet
  \omega =\eta\land \omega $. Sometimes, in this case we will just
  write $\eta\omega :=\eta\bullet\omega $.
\end{notation}

We denote by $\ast$ the complex  manifold
consisting on one single point. Then
\begin{displaymath}
  H^{n}_{\mathcal{D^{\an}}}(\ast,p)=
  \begin{cases}
    \mathbb{R}(p):=(2\pi i)^{p}\mathbb{R}, & \text{ if }n=0,\ p\le 0,\\
    \mathbb{R}(p-1):=(2\pi i)^{p-1}\mathbb{R}, & \text{ if }n=1,\ p>
    0.\\
    \{0\}, & \text{otherwise.}
  \end{cases}
\end{displaymath}
The product structure in this case is the bigraded
product that is given by complex number multiplication when the
degrees allow the product to be non zero. We will denote by
$\mathbb{D}$ this ring. This is the base ring for analytic Deligne
cohomology. Note that, in particular, $
H^{1}_{\mathcal{D^{\an}}}(\ast,1)=\mathbb{R}=\mathbb{C}/\mathbb{R}(1)$. We
will denote by  
${\bf 1}_{1}$ the image of $1$ in $H^{1}_{\mathcal{D^{\an}}}(\ast,1)$.

Following \cite{Grothendieck:tcc}, theorem \ref{thm:13} implies the
existence of a theory of Chern classes for holomorphic vector bundles
in analytic Deligne cohomology. That is, to every vector bundle $E$,
we can associate a 
collection of Chern classes $c_{i}(E)\in
H^{2i}_{\mathcal{D}^{\an}}(X,\mathbb{R}(i))$, $i\ge 1$ in a functorial
way.
 
We want to see that all possible characteristic classes in analytic
Deligne cohomology can be derived from the Chern classes. 

\begin{definition} \label{def:6}
  Let $n\ge 1$ be an integer and let $r_1\ge 1,\dots ,r_n\ge 1$ be a
  collection of integers. A \emph{theory of characteristic classes
    for $n$-tuples of vector bundles of rank $r_{1},\dots ,r_{n}$} is
  an assignment that, to each 
  $n$-tuple of isomorphism classes of vector bundles $(E_{1},\dots
  ,E_{n})$ over a complex 
  manifold $X$, with
  $\rk(E_{i})=r_{i}$, assigns a class  
  $$\cl(E_{1},\dots ,E_{n})\in \bigoplus _{k,p}
  H^{k}_{\mathcal{D}^{\an}}(X,\mathbb{R}(p))$$ 
  in a functorial way. That is, for every morphism
    $f\colon X\longrightarrow Y$ of 
    complex manifolds, the equality
    \begin{displaymath}
      f^{\ast}(\cl(E_{1},\dots ,E_{n}))=
      \cl(f^{\ast} E_{1},\dots ,f^{\ast} E_{n})
    \end{displaymath}
    holds
\end{definition}

The first consequence of the functoriality and certain homotopy property
of analytic Deligne cohomology classes is the following.

\begin{proposition} \label{prop:12}
  Let $\cl$ be a
  theory of characteristic classes
  for $n$-tuples of vector bundles of rank $r_{1},\dots ,r_{n}$. Let
  $X$ be a complex manifold and let 
  $(E_{1},\dots ,E_{n})$ be a $n$-tuple of vector bundles over $X$ with
  $\rk(E_{i})=r_{i}$ for all $i$. Let $1\le j\le n$ and let
  \begin{displaymath}
    0\longrightarrow E'_{j}\longrightarrow  E_{j}
    \longrightarrow E''_{j}\longrightarrow 0,
  \end{displaymath}
  be a short exact sequence. Then 
  the equality
    \begin{displaymath}
      \cl(E_{1},\dots, E_{j},\dots ,E_{n})=
      \cl(E_{1},\dots, E'_{j}\oplus E''_{j},\dots  ,E_{n}) 
    \end{displaymath}
    holds.
\end{proposition}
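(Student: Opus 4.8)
\medskip

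The plan is to reduce the statement to the classical fact that a characteristic class is a polynomial in Chern classes (which holds, by the discussion preceding the proposition, on quasi-projective smooth complex manifolds, hence — via the splitting principle and functoriality — in the generality we need). First I would observe that the question is local on $X$ with respect to the other entries and only concerns the $j$-th slot, so without loss of generality I may fix $E_{1},\dots,\widehat{E_{j}},\dots,E_{n}$ and regard $\cl$ as a characteristic class in the single variable $E_{j}$, with values twisted by the (fixed) classes of the remaining bundles; concretely, it suffices to treat the case $n=1$, $j=1$, and then tensor the resulting identity against $\cl(E_{1},\dots,E_{n})$-type data by functoriality of the product in $H^{\ast}_{\mathcal{D}^{\an}}(X,\mathbb{R}(\ast))$. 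So the core assertion is: if $\cl$ is a characteristic class for vector bundles of rank $r$, and $0\to E'\to E\to E''\to 0$ is exact, then $\cl(E)=\cl(E'\oplus E'')$.

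\medskip

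Next I would invoke the main result of Section~\S1: any such $\cl$ is given by a universal power series in the Chern classes $c_{i}$. Since the Chern classes in analytic Deligne cohomology are themselves functorial and satisfy the Whitney sum formula (this is part of the Grothendieck-style axiomatics recorded after Theorem~\ref{thm:13}), and since the total Chern class depends only on the class of a bundle in $K$-theory — in particular $c(E)=c(E')c(E'')=c(E'\oplus E'')$ for a short exact sequence — every individual $c_{i}(E)$ equals $c_{i}(E'\oplus E'')$. Feeding this into the universal power series gives $\cl(E)=\cl(E'\oplus E'')$ directly. The homotopy property alluded to in the statement is exactly what is used, in Section~\S1, to prove that $\cl$ is polynomial in the $c_i$; so once that reduction is in hand, the present proposition is a formal consequence.

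\medskip

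Alternatively — and this is probably the cleaner write-up — I would argue geometrically without first invoking the polynomiality: consider the deformation to the normal cone / the standard ``Atiyah'' deformation. On $X\times\mathbb{P}^1$ there is a bundle $\mathcal{E}$ restricting to $E$ over $X\times\{0\}$ and to $E'\oplus E''$ over $X\times\{\infty\}$ (extension-to-split deformation). By functoriality, $\cl(\mathcal{E})$ restricts to $\cl(E)$ and to $\cl(E'\oplus E'')$ on the two fibres. Since the two inclusions $X\hookrightarrow X\times\mathbb{P}^1$ as $X\times\{0\}$ and $X\times\{\infty\}$ are homotopic and analytic Deligne cohomology is homotopy invariant in the appropriate sense (the homotopy property invoked for Proposition~\ref{prop:12}), the two pullbacks of $\cl(\mathcal{E})$ agree, whence $\cl(E)=\cl(E'\oplus E'')$.

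\medskip

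The only real subtlety — the step I expect to be the main obstacle — is the precise form of the ``homotopy property'' for analytic Deligne cohomology that is being invoked: unlike Betti cohomology, $H^{\ast}_{\mathcal{D}^{\an}}$ is \emph{not} $\mathbb{A}^1$-homotopy invariant in the naive sense (it sees Hodge-theoretic data). What is true, and what must be used, is that pullback along the two sections $X\to X\times \mathbb{P}^1$ of a class \emph{that extends over all of $\mathbb{P}^1$} agree — equivalently, that $H^{\ast}_{\mathcal{D}^{\an}}(X\times\mathbb{P}^1,\mathbb{R}(\ast))\cong H^{\ast}_{\mathcal{D}^{\an}}(X,\mathbb{R}(\ast))\otimes H^{\ast}_{\mathcal{D}^{\an}}(\mathbb{P}^1,\mathbb{R}(\ast))$ (projective bundle formula, which is property~A1), and that the two sections induce the same map on the $H^0(\mathbb{P}^1)$-component. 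So the honest proof routes through A1; I would make that dependence explicit rather than appealing to a vague homotopy invariance. With that in place the rest is routine.
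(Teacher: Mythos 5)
Your second, geometric argument is in substance the paper's own proof: the paper also takes the extension-to-split deformation, namely a bundle $\widetilde E_{j}$ on $X\times \mathbb{P}^{1}$ (cf.\ definition \ref{def:12}) with $\iota_{0}^{\ast}\widetilde E_{j}\cong E_{j}$ and $\iota_{\infty}^{\ast}\widetilde E_{j}\cong E'_{j}\oplus E''_{j}$, and applies $\cl$ to the tuple $(p_{1}^{\ast}E_{1},\dots ,\widetilde E_{j},\dots ,p_{1}^{\ast}E_{n})$. The only difference is how one concludes that the two restrictions agree. The paper works at the level of forms: if $\omega$ is a $\dd_{\mathcal{D}}$-closed form representing the class on $X\times\mathbb{P}^{1}$, then $\beta=\frac{1}{2\pi i}\int_{\mathbb{P}^{1}}\frac{-1}{2}\log t\bar t\bullet \omega$ satisfies $\dd_{\mathcal{D}}\beta=\iota_{\infty}^{\ast}\omega-\iota_{0}^{\ast}\omega$, so the two restrictions are cohomologous. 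Your route through property A1 of theorem \ref{thm:13} is a correct alternative at the level of classes: writing any class on $X\times\mathbb{P}^{1}=\mathbb{P}(\mathcal{O}_{X}^{\oplus 2})$ as $\pi^{\ast}a+\pi^{\ast}b\cdot c(\cl(\mathcal{O}(-1)))$, both $\iota_{0}^{\ast}$ and $\iota_{\infty}^{\ast}$ kill the second summand because $\mathcal{O}(-1)$ restricts trivially to $X\times\{0\}$ and $X\times\{\infty\}$; you should state this last point explicitly, but then the argument closes. Your diagnosis that naive homotopy invariance of $H^{\ast}_{\mathcal{D}^{\an}}$ cannot be invoked is exactly right, and the paper's explicit transgression is its way of supplying the missing homotopy.

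Two parts of your written plan are genuinely defective and should be dropped. First, the route through polynomiality in Chern classes is circular in this paper: theorem \ref{thm:14} is proved \emph{after} and \emph{using} proposition \ref{prop:12} (the reduction from arbitrary rank to line bundles via the splitting principle quotes it), and moreover it only covers algebraic bundles on quasi-projective manifolds, whereas the proposition is asserted for arbitrary complex manifolds and holomorphic bundles — remark \ref{rem:3} explicitly records that polynomiality is not known in that generality. Second, the preliminary reduction to $n=1$ is neither needed nor justified: fixing $E_{1},\dots ,\widehat{E_{j}},\dots ,E_{n}$ on a particular $X$ does not yield a characteristic class in one variable (functoriality requires pulling back every entry along arbitrary morphisms), and ``tensoring the identity against $\cl$-type data'' presumes a product decomposition of $\cl$ that a general theory of classes for $n$-tuples does not have. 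The correct fix, which is what the paper does and what your geometric argument silently requires, is simply to keep all entries and replace the unperturbed ones by $p_{1}^{\ast}E_{i}$ on $X\times\mathbb{P}^{1}$.
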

\begin{proof}
  Let $\iota_{0},\iota_{\infty}\colon X\longrightarrow X\times
  \mathbb{P}^{1}$ be the inclusion as the fiber over $0$ and the fiber
  over $\infty$ respectively. Then there exists a vector bundle
  $\widetilde E_{j}$ on $X\times \mathbb{P}^{1}$ (see for instance
  \cite{GilletSoule:vbhm}  (1.2.3.1) or definition \ref{def:12} below)
  such that 
  $\iota^{\ast}_{0}\widetilde E_{j}\cong E_{j}$ and
  $\iota^{\ast}_{\infty}\widetilde E_{j}\cong E'_{j}\oplus E''_{j}$. Let
  $p_{1}\colon X\times 
  \mathbb{P}^{1}\longrightarrow X$ be the first projection. Let
  $\omega\in \bigoplus _{k,p}\mathcal{D}^{k}(X,p)$ be any
  $\dd_{\mathcal{D}}$-closed form
  that represents  $\cl(p_{1}^{\ast}E_{1},\dots,
  \widetilde E_{j},\dots ,p_{1}^{\ast} E_{n})$. Then, by functoriality
  we know that $\iota_{0}^{\ast}\omega $ represents $ \cl(E_{1},\dots,
  E_{j},\dots ,E_{n}) $ and  $\iota_{\infty}^{\ast}\omega $ represents
  $\cl(E_{1},\dots, E'_{j}\oplus E''_{j},\dots  ,E_{n})$. We write
  \begin{displaymath}
    \beta =\frac{1}{2\pi i}\int_{\mathbb{P}^{1}}
    \frac{-1}{2}\log t\bar t\bullet \omega,
  \end{displaymath}
  where $t$ is the absolute coordinate of $\mathbb{P}^{1}$. Then
  \begin{displaymath}
    \dd_{\mathcal{D}}\beta 
    = \iota_{\infty}^{\ast}\omega -\iota^{\ast}_{0}\omega 
  \end{displaymath}
  which implies the result.
\end{proof}

A standard method to produce characteristic classes for vector bundles
is to choose hermitian metrics on the vector bundles and to construct
closed differential forms out of them. The following result shows that
functoriality implies that the cohomology classes represented by these
forms are independent from the hermitian metrics and therefore are
characteristic classes. When working with hermitian vector bundles we
will use the convention that, if $E$ denotes the vector bundle, then
$\overline E=(E,h)$ will denote the vector bundle together with the
hermitian metric.

\begin{proposition} \label{prop:22}
  Let $n\ge 1$ be an integer and let $r_1\ge 1,\dots ,r_n\ge 1$ be a
  collection of integers.
  Let $\cl$ be an assignment that, to each $n$-tuple
  $(\overline E_{1},\dots,\overline
  E_{n})=((E_{1},h_{1}),\dots,(E_{n},h_{n}))$ of isometry classes of
  hermitian   
  vector bundles of rank $r_{1},\dots,r_{n}$ over a complex manifold
  $X$, associates a cohomology class 
  \begin{displaymath}
    \cl (\overline
  E_{1},\dots,\overline E_{n})\in \bigoplus_{k,p}
  H_{\mathcal{D}}^{k}(X,\mathbb{R}(p))
  \end{displaymath}
  such that, for each morphism $f:Y\to X$,
  \begin{displaymath}
    \cl (f^{\ast}\overline
  E_{1},\dots,f^{\ast} \overline E_{n}) = f^{\ast}\cl (\overline
  E_{1},\dots,\overline E_{n}).
  \end{displaymath}
Then the cohomology class $\cl (\overline
  E_{1},\dots,\overline E_{n})$
  is independent from the hermitian metrics. Therefore it is a well
  defined characteristic class.
\end{proposition}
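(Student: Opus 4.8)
The plan is to reduce the statement to Proposition~\ref{prop:12} by exhibiting, for any two choices of hermitian metrics on the same underlying tuple of vector bundles, an auxiliary family over $X\times\PP^{1}$ that interpolates between them and fits the hypothesis of that proposition. Concretely, fix an $n$-tuple $(E_{1},\dots,E_{n})$ of vector bundles on $X$ and two collections of metrics, giving hermitian bundles $\overline E_{i}=(E_{i},h_{i})$ and $\overline E_{i}'=(E_{i},h_{i}')$. It suffices to treat the case where the metrics differ in a single slot, say $j$, since one can pass from $(\overline E_{1},\dots,\overline E_{n})$ to $(\overline E_{1}',\dots,\overline E_{n}')$ by changing one metric at a time and concatenating the resulting equalities.

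First I would construct the interpolating bundle. Consider $p_{1}\colon X\times\PP^{1}\longrightarrow X$ and set $\widetilde E_{j}=p_{1}^{\ast}E_{j}$ as a \emph{bundle}, but equip it with a hermitian metric $\widetilde h_{j}$ that restricts to $h_{j}$ on the fiber $X\times\{0\}$ and to $h_{j}'$ on the fiber $X\times\{\infty\}$; such a metric exists by a partition-of-unity argument (interpolate $h_{j}$ and $h_{j}'$ using a smooth function on $\PP^{1}$ that is $\equiv 1$ near $0$ and $\equiv 0$ near $\infty$, noting the space of hermitian metrics on a fixed bundle is convex). For $i\ne j$ put $\widetilde E_{i}=p_{1}^{\ast}\overline E_{i}$ with the pulled-back metric. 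Applying the given assignment $\cl$ on $X\times\PP^{1}$ to $(\widetilde E_{1},\dots,\widetilde E_{n})$ yields a class whose pullback along the inclusion $\iota_{0}\colon X\hookrightarrow X\times\PP^{1}$ of the fiber over $0$ is $\cl(\overline E_{1},\dots,\overline E_{j},\dots,\overline E_{n})$ and whose pullback along $\iota_{\infty}$ is $\cl(\overline E_{1},\dots,\overline E_{j}',\dots,\overline E_{n})$, by the assumed functoriality (the isometry classes of the restrictions are as claimed).

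Next I would run the same homotopy argument as in the proof of Proposition~\ref{prop:12}: choose a $\dd_{\mathcal D}$-closed form $\omega\in\bigoplus_{k,p}\mathcal D^{k}(X\times\PP^{1},p)$ representing $\cl(\widetilde E_{1},\dots,\widetilde E_{n})$, and set
\begin{displaymath}
  \beta=\frac{1}{2\pi i}\int_{\PP^{1}}\frac{-1}{2}\log t\bar t\bullet\omega,
\end{displaymath}
so that $\dd_{\mathcal D}\beta=\iota_{\infty}^{\ast}\omega-\iota_{0}^{\ast}\omega$. This shows $\iota_{0}^{\ast}\omega$ and $\iota_{\infty}^{\ast}\omega$ represent the same class, i.e.
\begin{displaymath}
  \cl(\overline E_{1},\dots,\overline E_{j},\dots,\overline E_{n})=
  \cl(\overline E_{1},\dots,\overline E_{j}',\dots,\overline E_{n}).
\end{displaymath}
Iterating over $j=1,\dots,n$ gives independence from all the metrics; hence $\cl$ factors through isomorphism classes of the underlying bundles and, being functorial, defines a characteristic class in the sense of Definition~\ref{def:6}.

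The one point requiring a little care — the ``main obstacle,'' though it is minor — is the construction of the interpolating metric $\widetilde h_{j}$ on $p_1^\ast E_j$ with the prescribed restrictions over $0$ and $\infty$, together with checking that the restriction to the two fibers is genuinely isometric to $\overline E_j$ and $\overline E_j'$ (not merely isomorphic), so that functoriality of $\cl$ applies verbatim. This is handled by the convexity of the metric cone: pick $\rho\in\mathscr E^{0}(\PP^{1})$ with $\rho\equiv 1$ on a neighbourhood of $0$ and $\rho\equiv 0$ on a neighbourhood of $\infty$, and put $\widetilde h_j=\rho\cdot p_1^\ast h_j+(1-\rho)\cdot p_1^\ast h_j'$; positivity is immediate and the restrictions are exact on the two neighbourhoods. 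Everything else is a direct transcription of the argument already used for Proposition~\ref{prop:12}.
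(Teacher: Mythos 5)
Your proposal is correct and follows essentially the same route as the paper: deform the metric in one slot over $X\times\mathbb{P}^{1}$, pull back along $\iota_{0}$ and $\iota_{\infty}$, and use the $\frac{-1}{2}\log t\bar t$ homotopy from the proof of Proposition~\ref{prop:12}. Your explicit convex-combination construction of the interpolating metric just makes precise the step the paper states as "we can choose a hermitian metric $h$ on $p_{1}^{\ast}E_{j}$", and the one-slot-at-a-time reduction is likewise implicit there.
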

\begin{proof}
  Let $1\le j \le n$ be an integer and let $\overline
  E'_{j}=(E_{j},h'_{j})$ be the 
  vector bundle underlying $\overline E_{j}$ with a different choice of
  metric. Let $\iota_{0}$, $\iota_{\infty}$ and $p_{1}$ be as in the
  proof of proposition 
  \ref{prop:12}. Then we can choose a hermitian metric $h$ on
  $p_{1}^{\ast}E_{j}$,  such that $\iota
  _{0}^{\ast}(p_{1}^{\ast}E_{j},h)=\overline E_{j}$ and $\iota
  _{\infty}^{\ast}(p_{1}^{\ast}E_{j},h)=\overline E_{j}'$. Let $\omega
  $ be any smooth closed differential form on $X\times \mathbb{P}^{1}$
  that represents  
  $
    \cl (p_{1}^{\ast}\overline
    E_{1},\dots,(p_{1}^{\ast}E_{1},h),\dots,p_{1}^{\ast}\overline
    E_{n}).
  $
  Then,
  \begin{displaymath}
    \beta =\frac{1}{2\pi i}\int_{\mathbb{P}^{1}}
    \frac{-1}{2}\log t\bar t\bullet \omega
  \end{displaymath}
  satisfies
  \begin{displaymath}
    \dd_{\mathcal{D}}\beta 
    = \iota_{\infty}^{\ast}\omega -\iota^{\ast}_{0}\omega 
  \end{displaymath}
  which implies the result.
\end{proof}

We are interested in vector bundles that can be extended to a
projective variety. Therefore we will restrict ourselves to the
algebraic category. So,
by a complex algebraic manifold we will mean the complex manifold
associated to a smooth quasi-projective variety over
$\mathbb{C}$. When working with an algebraic manifold,
by a vector bundle  we will mean
the holomorphic vector bundle associated to an algebraic
vector bundle.

We will denote
by $\mathbb{D}[[x_{1},\dots ,x_{r}]]$ the ring of commutative formal
power series. That is, the unknowns $x_{1},\dots ,x_{r}$ commute with
each other and with $\mathbb{D}$. We turn it into a commutative
bigraded ring by declaring that the unknowns $x_{i}$ have bidegree
$(2,1)$.  The symmetric group in $r$ elements, 
$\mathfrak{S}_{r}$ 
acts on $\mathbb{D}[[x_{1},\dots ,x_{r}]]$. The subalgebra of
invariant elements is generated over $\mathbb{D}$ by the elementary
symmetric functions.   
The main result of this section is the following
\begin{theorem} \label{thm:14}
  Let $\cl$ be a theory of characteristic classes for $n$-tuples of
  vector bundles of rank $r_{1},\dots ,r_{n}$. Then, there is a power
  series $\varphi \in \mathbb{D}[[x_{1},\dots ,x_{r}]]$ in $r=r_{1}+\dots
  +r_{n}$ variables with 
  coefficients in the ring $\mathbb{D}$,
  such that, for each complex algebraic manifold $X$ and each
  $n$-tuple of
  algebraic vector bundles $(E_{1},\dots 
  ,E_{n})$ over $X$ with $\rk(E_{i})=r_{i}$ this equality holds:  
  \begin{equation}\label{eq:2}
    \cl(E_{1},\dots ,E_{n})=
    \varphi(c_{1}(E_{1}),\dots ,c_{r_{1}}(E_{1}),\dots ,
    c_{1}(E_{n}),\dots ,c_{r_{n}}(E_{n})).
  \end{equation}
  Conversely, any power series $\varphi$ as before
  determines a theory of characteristic classes for 
  $n$-tuples  of vector bundles of rank $r_{1},\dots ,r_{n}$, by
  equation \eqref{eq:2}.
\end{theorem}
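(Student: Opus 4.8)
The plan is to reduce the general statement to the classical computation of the analytic Deligne cohomology of a product of projective bundles, using the splitting principle to pass from arbitrary vector bundles to line bundles, and then Proposition~\ref{prop:12} and Proposition~\ref{prop:22} to control the behavior under exact sequences and metrics. The converse direction is immediate: given $\varphi$, the functoriality of the Chern classes $c_i(E)$ (which follows from Grothendieck's construction applied to Theorem~\ref{thm:13}) shows that \eqref{eq:2} defines a functorial assignment, hence a theory of characteristic classes. So the real content is the direct implication, namely that every theory $\cl$ arises in this way.

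For the direct implication, I would first treat the universal case. Consider the product of projective spaces $Y_N = (\mathbb{P}^N)^{r_1}\times\dots\times(\mathbb{P}^N)^{r_n}$ (or a product of sufficiently high Grassmannians) equipped with the external sum $E_i^{\mathrm{univ}}$ of the tautological line bundles on the $r_i$ factors $(\mathbb{P}^N)^{r_i}$, so that $E_i^{\mathrm{univ}}$ splits as a direct sum of $r_i$ line bundles. By iterating property A1 of Theorem~\ref{thm:13} over the successive projective-bundle projections, one computes $H^*_{\mathcal{D}^{\an}}(Y_N,\mathbb{R}(*))$ as a free module over $\mathbb{D}$ with basis the monomials in the classes $\xi_{i,k}:=c(\cl(\mathcal{O}(-1)))$ of the tautological bundles, subject only to the relations $\xi_{i,k}^{N+1}=0$. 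Hence $\cl(E_1^{\mathrm{univ}},\dots,E_n^{\mathrm{univ}})$ is a polynomial (of bounded degree) in the $\xi_{i,k}$ with coefficients in $\mathbb{D}$; its invariance under permuting the factors within each block (again via functoriality, pulling back along the coordinate permutations of $Y_N$) forces this polynomial to be symmetric in each block $\{\xi_{i,1},\dots,\xi_{i,r_i}\}$, so it is a polynomial in the elementary symmetric functions, i.e. in the Chern classes $c_j(E_i^{\mathrm{univ}})$. Taking $N\to\infty$ and using compatibility of these expressions one obtains a power series $\varphi\in\mathbb{D}[[x_1,\dots,x_r]]$ with $\cl(E_1^{\mathrm{univ}},\dots,E_n^{\mathrm{univ}})=\varphi(c_\bullet(E_i^{\mathrm{univ}}))$.

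Next I would bootstrap from the universal case to an arbitrary $n$-tuple $(E_1,\dots,E_n)$ on a complex algebraic manifold $X$. By Proposition~\ref{prop:12}, $\cl$ is unchanged if we replace any $E_i$ by $E_i'\oplus E_i''$ for a short exact sequence; iterating, combined with the splitting principle for algebraic vector bundles on a quasi-projective $X$ (the successive projective-bundle construction, on which $\cl$, the $c_j$, and the power-series expression $\varphi(c_\bullet)$ all pull back injectively by A1), we may assume each $E_i = \bigoplus_{k=1}^{r_i} L_{i,k}$ is a direct sum of line bundles. Then $(L_{i,k})$ is classified, up to the relevant homotopy, by a map $f\colon X\to Y_N$ (for $N$ large) with $f^*\mathcal{O}(-1)_{i,k}\cong L_{i,k}$; here I would invoke the homotopy invariance of analytic Deligne cohomology classes already used in Proposition~\ref{prop:12} (the $\mathbb{P}^1$-deformation argument) to see that the value of any functorial assignment on $(L_{i,k})$ depends only on $f^*$ of the universal classes. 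Applying functoriality of $\cl$ and of the $c_j$ to $f$ then gives $\cl(E_1,\dots,E_n)=f^*\varphi(c_\bullet(E^{\mathrm{univ}}))=\varphi(c_\bullet(f^*E^{\mathrm{univ}}))=\varphi(c_\bullet(E))$, which is \eqref{eq:2}; and since the Chern classes are unchanged by $E_i\mapsto E_i'\oplus E_i''$ (additivity of the total Chern class), this descends back to the original non-split bundles.

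The main obstacle I anticipate is the ``classifying space'' step: since analytic Deligne cohomology is not a homotopy-invariant (topological) cohomology theory and we are restricted to the algebraic category, one cannot literally invoke $BGL_\infty(\mathbb{C})$ as in \cite{MilnorStasheff:cc}. The fix is to work with the finite-dimensional approximations $Y_N$ and to establish, for each fixed cohomological degree, that a line bundle $L$ on $X$ whose class in $\Pic(X)$ is pulled back from $\mathbb{P}^N$ along $f$ has $c(\cl(L))=f^*c(\cl(\mathcal{O}(-1)))$, together with a stabilization argument in $N$ to absorb arbitrarily large degrees into a single power series. This is exactly the role of property A1 (to identify the cohomology rings of the $Y_N$) and of property A2 together with the Poincaré--Lelong formula (to pin down $c(\cl(\mathcal{O}(-1)))$ as the first Chern class with the algebro-geometric twist), so the verification is technical but uses precisely the structure made available by Theorem~\ref{thm:13}; the remaining bookkeeping — symmetry, passing to the limit, and the descent through the splitting principle — is routine.
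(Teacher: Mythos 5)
Your overall strategy (universal computation on products of projective spaces, symmetry to pass to elementary symmetric functions, stabilization in $N$, and the splitting principle plus Proposition~\ref{prop:12} to reduce to split bundles) is the same as the paper's, but there is a genuine gap at the step where you treat arbitrary line bundles. You assert that after splitting, the tuple $(L_{i,k})$ ``is classified, up to the relevant homotopy, by a map $f\colon X\to Y_N$ with $f^{\ast}\mathcal{O}(-1)_{i,k}\cong L_{i,k}$''. In the algebraic (or holomorphic) category this is simply false for general line bundles: $L$ is the pullback of $\mathcal{O}(1)$ (resp.\ $\mathcal{O}(-1)$) along a morphism to projective space only when $L$ (resp.\ $L^{\vee}$) is generated by global sections, and a line bundle with, say, no nonzero sections and non-globally-generated dual admits no such classifying morphism at all. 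Moreover, the ``homotopy invariance'' you invoke is not available: analytic Deligne cohomology is not a homotopy-invariant theory, functoriality of $\cl$ is only required for holomorphic maps, and indeed $c_{1}$ in Deligne cohomology separates line bundles that are topologically isomorphic, so no argument can factor through a topological classifying-space statement. The $\mathbb{P}^{1}$-deformation used in Proposition~\ref{prop:12} only handles degenerations of extensions (and of metrics, in Proposition~\ref{prop:22}); it does not say that $\cl(L)$ depends only on a classifying map ``up to homotopy''. Your proposed fix at the end only restates functoriality for bundles that \emph{are} pullbacks, so it does not close the gap.

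What is missing is precisely the paper's tensoring trick: after compactifying to reduce to $X$ projective, one first proves \eqref{eq:2} for line bundles generated by global sections (where classifying morphisms to $\mathbb{P}^{m_{1},\dots,m_{n}}$ do exist), then introduces the auxiliary class $\cl'(L_{1},\dots,L_{n},M)=\cl(L_{1}\otimes M^{\vee},\dots,L_{n}\otimes M^{\vee})$, computes it on the universal spaces $\mathbb{P}^{m_{1},\dots,m_{n},m_{n+1}}$ with $L_{i}=p_{i}^{\ast}\mathcal{O}(1)\otimes p_{n+1}^{\ast}\mathcal{O}(1)$, $M=p_{n+1}^{\ast}\mathcal{O}(1)$, and deduces the substitution identity $\psi(x_{1},\dots,x_{n},y)=\varphi(x_{1}-y,\dots,x_{n}-y)$. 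An arbitrary line bundle is then written as $L_{i}=L_{i}'\otimes M^{\vee}$ with $M$ and $L_{i}'=L_{i}\otimes M$ globally generated, and the identity transfers the globally generated case to the general one. Without this (or an equivalent device for representing arbitrary line bundles as differences of very ample ones and proving the corresponding identity of power series), your reduction from the universal case to a general $X$ does not go through; the rest of your argument (converse direction, symmetry, stabilization, splitting principle, descent through Proposition~\ref{prop:12}) is fine and matches the paper.
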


\begin{proof}
  The second statement is obvious from the properties of Chern
  classes. 

  Since we are assuming $X$ quasi-projective, given $n$ algebraic
  vector bundles $E_{1}, \dots ,E_{n}$ on $X$, there is a 
  smooth projective compactification $\widetilde X$ and vector bundles
  $\widetilde E_{1},\dots ,\widetilde E_{n}$ on $\widetilde X$, such
  that $E_{i}=\widetilde E_{i}|_{X}$ (see for instance
  \cite{BurgosWang:hBC} proposition 2.2), we are reduced to the case when
  $X$ is projective. In this case, analytic
  Deligne cohomology agrees with ordinary Deligne cohomology.

  Let us assume first that $r_{1}=\dots =r_{n}=1$ and that we have a
  characteristic class $\cl$ for $n$ line bundles. Then, for each
  $n$-tuple of positive integers $m_{1},\dots ,m_{n}$ we consider the
  space $\mathbb{P}^{m_{1},\dots
    ,m_{n}}=\mathbb{P}^{m_{1}}_{\mathbb{C}}\times \dots \times 
  \mathbb{P}^{m_{n}}_{\mathbb{C}}$ and we denote by $p_{i}$ the projection over
  the $i$-th factor. Then 
  $$\left.
  \bigoplus
  _{k,p}H^{k}_{\mathcal{D}}(\mathbb{P}^{m_{1},\dots
    ,m_{n}},\mathbb{R}(p))=\mathbb{D}[x_{1},\dots ,x_{n}]
  \right/
  (x_{1}^{m_{1}},\dots ,x_{n}^{m_{n}})
  $$ 
  is a quotient of the 
  polynomial ring generated by the classes
  $x_{i}=c_{1}(p_{i}^{\ast}\mathcal{O}(1))$ with coefficients in the
  ring $\mathbb{D}$.
  Therefore, there is a
  polynomial $\varphi_{m_{1},\dots ,m_{n}}$ in $n$ variables such that 
  \begin{displaymath}
    \cl(p_{1}^{\ast}\mathcal{O}(1),\dots ,p_{1}^{\ast}\mathcal{O}(1))=
    \varphi_{m_{1},\dots ,m_{n}}(x_{1},\dots ,x_{n}).
  \end{displaymath}
  If $m_{1}\le m'_{1}$, \dots , $m_{n}\le m_{n}'$ then, by
  functoriality, the polynomial $\varphi_{m_{1},\dots ,m_{n}}$
  is the truncation of the polynomial $\varphi_{m'_{1},\dots
    ,m'_{n}}$. Therefore there is a power series in $n$ variables,
  $\varphi $ such that $\varphi_{m_{1},\dots ,m_{n}}$ is the truncation
  of $\varphi $ in the appropriate quotient of the polynomial ring. 

  Let
  $L_{1}, \dots ,L_{n}$ be line bundles on a 
  projective algebraic manifold that are generated by global
  sections. Then they determine a morphism $f\colon X\longrightarrow
  \mathbb{P}^{m_{1},\dots ,m_{n}}$ such that
  $L_{i}=f^{\ast}p^{\ast}_{i}\mathcal{O}(1)$. Therefore, again by
  functoriality, we obtain
  \begin{displaymath}
    \cl(L_{1},\dots ,L_{n})=
    \varphi(c_{1}(L_{1}),\dots ,c_{1}(L_{n})).
  \end{displaymath}
 
  From the class $\cl$ we can define a new characteristic class for $n+1$ line
  bundles by the formula
  \begin{displaymath}
    \cl'(L_{1},\dots ,L_{n},M)=\cl(L_{1}\otimes M^{\vee},\dots
    ,L_{n}\otimes M^{\vee}). 
  \end{displaymath}
  When $L_{1},\dots ,L_{n}$ and $M$ are generated by global sections
  we have that there is a power series $\psi $ such that
  \begin{displaymath}
    \cl'(L_{1},\dots ,L_{n},M)=
    \psi (c_{1}(L_{1}),\dots ,c_{1}(L_{n}),c_{1}(M)).
  \end{displaymath}
  Moreover, when the line bundles $L_{i}\otimes M^{\vee}$ are also
  generated by global 
  sections the following holds
  \begin{align*}
    \psi (c_{1}(L_{1}),\dots
    ,c_{1}(L_{n}),c_{1}(M))&=\varphi(c_{1}(L_{1}\otimes M^{\vee}),\dots
    ,c_{1}(L_{n}\otimes M^{\vee}))\\
    &=\varphi(c_{1}(L_{1})-c_1(M),\dots
    ,c_{1}(L_{n})-c_{1}(M)). 
  \end{align*}
  Considering the system of spaces $\mathbb{P}^{m_{1},\dots
    ,m_{n},m_{n+1}}$ with line bundles
  \begin{displaymath}
    L_{i}=p_{i}^{\ast}\mathcal{O}(1)\otimes
    p_{n+1}^{\ast}\mathcal{O}(1),\ i=1,\dots ,n,\quad
    M=p_{n+1}^{\ast}\mathcal{O}(1),
  \end{displaymath}
  we see that there is an identity of power series
  \begin{displaymath}
    \varphi(x_{1}-y,\dots ,x_{n}-y)=\psi (x_{1},\dots, x_{n},y).
  \end{displaymath}
  Now let $X$ be a projective complex manifold and let $L_{1}, \dots ,
  L_{n}$ be arbitrary line bundles. Then there is a line bundle $M$
  such that $M$ and $L_{i}'=L_{i}\otimes M$, $i=1,\dots ,n$ are
  generated by global sections. Then we have
  \begin{align*}
    \cl(L_{1},\dots ,L_{n})&=\cl(L'_{1}\otimes M^{\vee},\dots
    ,L'_{n}\otimes M^{\vee})\\
    &=\cl'(L'_{1},\dots ,L'_{n},M)\\
    &=\psi (c_{1}(L'_{1}),\dots ,c_{1}(L'_{n}),c_{1}(M))\\
    &=\varphi((c_{1}(L'_{1})-c_{1}(M),\dots
    ,c_{1}(L'_{n})-c_{1}(M)))\\
    &=\varphi(c_{1}(L_{1}),\dots ,c_{1}(L_{n})).
  \end{align*}

  The case  of arbitrary rank vector bundles follows from the case of rank
  one vector bundles by proposition \ref{prop:12} and the splitting
  principle. We next recall the argument. Given a projective complex
  manifold $X$ and vector bundles $E_{1},\dots ,E_{n}$ of rank
  $r_{1},\dots ,r_{n}$, we can find a proper morphism
  $\pi \colon\widetilde X\longrightarrow X$, with $\widetilde X$ a complex
  projective manifold, and such that the induced morphism
  \begin{displaymath}
    \pi
    ^{\ast}\colon H^{\ast}_{\mathcal{D}}(X,\mathbb{R}(\ast))\longrightarrow 
    H^{\ast}_{\mathcal{D}}(\widetilde X,\mathbb{R}(\ast))
  \end{displaymath}
  is injective and every bundle $\pi ^{\ast}(E_{i})$ admits a
  holomorphic filtration
  \begin{displaymath}
    0= K_{i,0}\subset K_{i,1}\subset \dots \subset
    K_{i,r_{i}-1}\subset K_{i,r_{i}}=\pi ^{\ast}(E_{i}),
  \end{displaymath}
  with $L_{i,j}=K_{i,j}/K_{i,j-1}$ a line bundle. If $\cl$ is a
  characteristic class for $n$-tuples of vector bundles of rank
  $r_{1},\dots ,r_{n}$, we define a characteristic class for
  $r_{1}+\dots +r_{n}$-tuples of line bundles by the formula
  \begin{multline*}
    \cl'(L_{1,1},\dots ,L_{1,r_{1}},\dots ,L_{n,1},\dots
    ,L_{n,r_{n}})=\\
    \cl(L_{1,1}\oplus \dots \oplus L_{1,r_{1}},\dots ,L_{n,1}\oplus
    \dots \oplus, L_{n,r_{n}}).
  \end{multline*}
  By the case of line bundles we know that there is a power series in
  $r_{1}+\dots +r_{n}$ variables $\psi$ such that
  \begin{displaymath}
    \cl'(L_{1,1},\dots ,L_{1,r_{1}},\dots ,L_{n,1},\dots
    ,L_{n,r_{n}})=\psi(c_{1}(L_{1,1}),\dots ,c_{1}(L_{n,r_{n}})).
  \end{displaymath}
  Since the class $\cl'$ is symmetric under the group
  $\mathfrak{S}_{r_{1}}\times \dots \times \mathfrak{S}_{r_{n}}$, the
  same is true for the power series $\psi$. Therefore $\psi$ can
  be written in terms of symmetric elementary functions. That is,
  there is another power series in $r_{1}+\dots +r_{n}$ variables
  $\varphi $, such that
  \begin{multline*}
    \psi(x_{1,1},\dots ,x_{n,r_{n}})=
    \varphi(s_{1}(x_{1,1},\dots ,x_{1,r_{1}}),\dots
    ,s_{r_{1}}(x_{1,1},\dots ,x_{1,r_{1}}),\dots\\
    \dots ,s_{1}(x_{n,1},\dots ,x_{n,r_{n}}),\dots
    ,s_{r_{n}}(x_{n,1},\dots ,x_{n,r_{n}})),
  \end{multline*}
  where $s_{i}$ is the $i$-th elementary symmetric function of the
  appropriate number of variables.
  Then
  \begin{align*}
    \pi ^{\ast}(\cl(E_{1},\dots , E_{n}))&=
    \cl(\pi ^{\ast}E_{1},\dots , \pi ^{\ast} E_{n}))\\
    &=\cl'(L_{1,1},\dots ,L_{n,r_{n}})\\
    &= \psi (c_{1}(L_{1,1}),\dots ,c_{1}(L_{n,r_{n}}))\\
    &= \varphi(c_{1}(\pi ^{\ast}E_{1}),\dots ,c_{r_{1}}(\pi
    ^{\ast}E_{1}),\dots ,c_{1}(\pi ^{\ast}E_{n}),\dots ,c_{r_{n}}(\pi
    ^{\ast}E_{n}))\\
    &=\pi ^{\ast} \varphi(c_{1}(E_{1}),\dots ,c_{r_{1}}(E_{1}),\dots
    ,c_{1}(E_{n}),\dots ,c_{r_{n}}(E_{n})).
  \end{align*}
  Therefore, the result follows from the injectivity of $\pi ^{\ast}$.
\end{proof}

\begin{remark}\label{rem:3}
  It would be interesting to know if the functoriality of a
  characteristic class in enough to imply that it is a power series in
  the Chern classes for arbitrary complex manifolds and holomorphic
  vector bundles. 
\end{remark}

\section{Bott-Chern classes}
\label{sec:bott-chern-forms}

The aim of this section is to recall the theory of Bott-Chern
classes. For more details we 
refer the reader to \cite{BottChern:hvb}, \cite{BismutGilletSoule:at},
\cite{GilletSoule:vbhm}, \cite{Soule:lag}, \cite{BurgosWang:hBC},
\cite{Burgos:hvbcc} 
and \cite{BurgosKramerKuehn:accavb}. Note however that the theory we
present here is equivalent, although not identical, to the different
versions that appear in the literature.

Let $X$ be a complex manifold and let $\overline {E}=(E,h)$ be a rank
$r$ holomorphic vector bundle provided with a hermitian metric. Let
$\phi \in \mathbb{D}[[x_{1},\dots ,x_{r}]]$ be a formal power series
in $r$ variables that is symmetric under the action of
$\mathfrak{S}_{r}$. Let $s_{i}$, 
$i=1,\dots ,r$ be the elementary symmetric functions in $r$
variables. Then $\phi (x_{1},\dots ,x_{r})=\varphi(s_{1},\dots
,s_{r})$ for certain power series $\varphi$. By Chern-Weil theory we
can obtain a representative of the 
class 
\begin{displaymath}
  \phi (E):=\varphi(c_{1}(E),\dots ,c_{r}(E))\in \bigoplus
  _{k,p}H^{k}_{\mathcal{D}^{\an}}(X,\mathbb{R}(p))   
\end{displaymath}
as follows.

We denote also by $\phi $ the invariant power series in $r\times r$
matrices defined by $\phi $. Let $K$ be the curvature matrix of the
hermitian holomorphic connection of $(E,h)$. The entries of $K$ in a
particular trivialization of $E$ are local sections 
of $\mathcal{D}^{2}(X,1)$. Then we write
\begin{displaymath}
  \phi (E,h)=\phi (-K)\in  \bigoplus _{k,p}\mathcal{D}^{k}(X,p).
\end{displaymath}
The form $\phi (E,h)$ is well defined, closed, and it represents the class
$\phi(E)$. 
  
Now let 
\begin{displaymath}
\overline E_{\ast}=(\dots  \overset {f_{n+1}}{\longrightarrow }\overline
    E_{n}\overset {f_{n}}{\longrightarrow }\overline E_{n-1}
\overset {f_{n-1}}{\longrightarrow }\dots )
\end{displaymath}
be a bounded acyclic complex of hermitian vector bundles; by this we
mean 
a bounded acyclic complex of vector bundles, where each vector bundle is
equipped with an arbitrarily chosen hermitian metric. 

Write
\begin{displaymath}
  r=\sum_{i\text{ even}} \rk(E_{i})=\sum_{i\text{ odd}} \rk(E_{i}). 
\end{displaymath}
and let $\phi$ be a
symmetric power series in $r$ variables.

As before, we can define the Chern forms $$\phi
(\bigoplus_{i\text{ even}}(E_{i},h_{i})) \text{ and } 
\phi
(\bigoplus_{i\text{ odd}}(E_{i},h_{i})),$$ that represent the Chern
classes $\phi 
(\bigoplus_{i\text{ even}}E_{i})$ and
$\phi
(\bigoplus_{i\text{ odd}}E_{i})$. 
The Chern classes are compatible with respect to exact sequences, that is,
\begin{displaymath}
\phi
(\bigoplus_{i\text{ even}}E_{i})=\phi 
(\bigoplus_{i\text{ odd}}E_{i}).
\end{displaymath}
But, in general, this is not true for the Chern forms. This lack of
compatibility with exact sequences on the level of Chern forms is
measured by the Bott-Chern 
classes. 

\begin{definition}
  Let 
$$\overline E_{\ast}=(\dots  \overset {f_{n+1}}{\longrightarrow }\overline
    E_{n}\overset {f_{n}}{\longrightarrow }\overline E_{n-1}
\overset {f_{n-1}}{\longrightarrow }\dots )
$$ 
be an acyclic complex of hermitian vector 
bundles, we
will say that $\overline E_{\ast}$ is an \emph{orthogonally 
  split complex} of vector bundles if, for any integer $n$, the exact sequence
\begin{displaymath}
  0 \longrightarrow \Ker f_{n} \
  \longrightarrow \overline E_{n}
  \longrightarrow \Ker f_{n-1} \longrightarrow 0
\end{displaymath}
is split, there is a splitting section $s_{n}\colon\Ker
f_{n-1} \to  E_{n}$ 
such that $\overline E_{n}$ is the orthogonal direct sum of $\Ker
f_{n}$ and $\Img s_{n}$ and the metrics induced in the subbundle
$\Ker f_{n-1} $ by the inclusion $\Ker f_{n-1}\subset \overline E_{n-1}$ and
by the section $s_{n}$ agree. 
\end{definition}

\begin{notation}
  \label{def:3}
  Let $(x:y)$ be homogeneous coordinates of $\mathbb{P}^{1}$ and let
  $t=x/y$ be the absolute coordinate. In order to make certain choices
  of metrics in a functorial way, we fix once and for all a partition of
  unity $\{\sigma _{0},\sigma _{\infty}\}$, over $\mathbb{P}^{1}$
  subordinated to the open cover of $\mathbb{P}^{1}$ given by the open
  subsets $\left\{\{|y|>1/2|x|\},
    \{|x|>1/2|y|\}\right\}$. As usual we will write $\infty =(1:0)$, $0=(0:1)$.
\end{notation}

The fundamental result of the theory of Bott-Chern classes is the
following theorem (see \cite{BottChern:hvb}, \cite{BismutGilletSoule:at},
\cite{GilletSoule:vbhm}).

\begin{theorem}
\label{thm:3}
There is a unique way to attach to each bounded exact complex
$\overline{E}_{\ast}$ as above, a class $\widetilde{\phi}(\overline{E}_{\ast})$ in
\begin{displaymath}
\bigoplus_{k}\widetilde{\mathcal{D}}^{2k-1}(X,k)=\bigoplus_{k}
\mathcal{D}^{2k-1}(X,k)/\Img(\dd_{\mathcal{D}})
\end{displaymath}
satisfying the following properties
\begin{enumerate}
\item \label{item:12} (Differential equation)
  \begin{equation}
    \label{eq:13}
    \dd_{\mathcal{D}}\widetilde{\phi}(\overline{E}_{\ast})=
    \phi
    (\bigoplus_{i\text{ even}}(E_{i},h_{i}))- 
    \phi
    (\bigoplus_{i\text{ odd}}(E_{i},h_{i})).   
  \end{equation}
\item \label{item:13} (Functoriality)
$f^{\ast}\widetilde{\phi}(\overline{E}_{\ast})=\widetilde{\phi}
(f^{\ast}\overline{E}_{\ast})$, for every holomorphic map $f\colon X'
\longrightarrow X$. 
\item \label{item:14} (Normalization)
If $\overline {E}_{\ast}$ is orthogonally split, then $\widetilde 
{\phi}(\overline{E}_{\ast})=0$.
\end{enumerate}
\end{theorem}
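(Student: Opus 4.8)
The plan is to establish the theorem in two stages: first uniqueness, then existence. For \emph{uniqueness}, suppose $\widetilde{\phi}$ and $\widetilde{\phi}'$ are two assignments satisfying (i)--(iii). Their difference $\Delta(\overline{E}_\ast) = \widetilde{\phi}(\overline{E}_\ast) - \widetilde{\phi}'(\overline{E}_\ast)$ lands in $\bigoplus_k \widetilde{\mathcal{D}}^{2k-1}(X,k)$, is $\dd_\mathcal{D}$-closed by (i), hence defines a class in $\bigoplus_k H^{2k-1}_{\mathcal{D}^{\an}}(X,\mathbb{R}(k))$; it is functorial by (ii) and vanishes on orthogonally split complexes by (iii). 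The key geometric input is the deformation over $\mathbb{P}^1$: given a bounded acyclic complex $\overline{E}_\ast$ on $X$, one constructs (as in \cite{GilletSoule:vbhm}) a bounded acyclic complex $\widetilde{E}_\ast$ of hermitian vector bundles on $X\times\mathbb{P}^1$ whose restriction to the fiber over $1$ (or any finite point) is $\overline{E}_\ast$ and whose restriction to the fiber over $\infty$ is an orthogonally split complex, with the metrics on $\widetilde{E}_\ast$ chosen in a functorial way using the fixed partition of unity $\{\sigma_0,\sigma_\infty\}$ of Notation \ref{def:3}. Applying $\Delta$ to $\widetilde{E}_\ast$ gives a functorial closed class on $X\times\mathbb{P}^1$; by the homotopy argument already used in Proposition \ref{prop:12} (integrating $-\tfrac12\log t\bar t$ against a representative along the $\mathbb{P}^1$-fiber), the restrictions to the two fibers represent the same cohomology class, so $\Delta(\overline{E}_\ast) = \Delta(\text{split}) = 0$ in cohomology. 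One then needs to upgrade this from "zero in cohomology" to "zero in $\bigoplus_k\widetilde{\mathcal{D}}^{2k-1}(X,k)$": this is where a $\partial\bar\partial$-type argument is needed, showing that a $\dd_\mathcal{D}$-closed element of $\widetilde{\mathcal{D}}^{2k-1}(X,k)$ that is a coboundary is in fact zero in the quotient — this follows from the structure of the Deligne complex associated to a Dolbeault complex (cf. \cite{Burgos:CDB}), since $\widetilde{\mathcal{D}}$ already quotients by $\dd_\mathcal{D}$ of degree $2k-2$ forms and the relevant cohomology computation pins down the class exactly.

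For \emph{existence}, I would give an explicit construction, again via the $\mathbb{P}^1$-deformation. Set
\begin{displaymath}
  \widetilde{\phi}(\overline{E}_\ast) := -\frac{1}{2\pi i}\int_{\mathbb{P}^1} \frac{-1}{2}\log t\bar t \cdot \left(\phi(\bigoplus_{i\text{ even}}\widetilde{E}_i) - \phi(\bigoplus_{i\text{ odd}}\widetilde{E}_i)\right),
\end{displaymath}
where $\widetilde{E}_\ast$ is the functorial deformation on $X\times\mathbb{P}^1$ described above and $\phi(-)$ denotes the Chern--Weil form. One checks the differential equation (i) by applying $\dd_\mathcal{D}$ under the integral sign and using the standard identity $\dd_\mathcal{D}\left(\tfrac{-1}{2}\log t\bar t\right) = \delta_0 - \delta_\infty$ on $\mathbb{P}^1$ together with the fact that the Chern--Weil forms are $\dd_\mathcal{D}$-closed; the fiber integral then picks out the difference of the Chern forms on the two fibers, which by construction equals the right-hand side of \eqref{eq:13} (on the $\infty$-fiber the complex is orthogonally split so its even and odd Chern forms coincide). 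Functoriality (ii) is immediate from the functoriality of the metric choices on $\widetilde{E}_\ast$ (this is the whole point of fixing the partition of unity once and for all) and the base-change compatibility of fiber integration. The normalization (iii) requires that when $\overline{E}_\ast$ is already orthogonally split one may take $\widetilde{E}_\ast = p_1^\ast\overline{E}_\ast$ with the pulled-back (split) metric, whose Chern--Weil forms are then constant along the fiber, so the integrand's relevant part vanishes and $\widetilde{\phi}(\overline{E}_\ast)=0$; a small compatibility check is needed to see the functorial deformation agrees with this choice up to something that integrates to zero.

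The main obstacle I anticipate is not any single step but the bookkeeping ensuring the construction descends correctly: namely (a) pinning down the deformation $\widetilde{E}_\ast$ of a \emph{long} acyclic complex (not just a short exact sequence) with functorially chosen metrics — this is essentially Gillet--Soul\'e's construction but must be stated carefully for complexes of arbitrary length, and the bidegree bookkeeping in $\mathcal{D}^{2k-1}(X,k)$ must be tracked; and (b) the passage from cohomological vanishing to vanishing in the quotient complex $\widetilde{\mathcal{D}}$, which relies on knowing that the natural map from $\dd_\mathcal{D}$-closed elements of $\widetilde{\mathcal{D}}^{2k-1}(X,k)$ to $H^{2k-1}_{\mathcal{D}^{\an}}(X,\mathbb{R}(k))$ is injective in the relevant range — a fact about the Deligne complex of a Dolbeault complex that should be cited from \cite{Burgos:CDB} or \cite{BurgosKramerKuehn:cacg} rather than reproved. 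Everything else is a routine consequence of Chern--Weil theory, the homotopy formula of Proposition \ref{prop:12}, and the properties of fiber integration recorded in Notation \ref{def:19}.
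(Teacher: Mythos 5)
Your proposal follows essentially the same route as the paper: uniqueness via the transgression $\tr_{1}(\overline E_{\ast})_{\ast}$ over $X\times\mathbb{P}^{1}$ combined with the $\frac{1}{2\pi i}\int_{\mathbb{P}^{1}}\frac{-1}{2}\log t\bar t$ homotopy identity, and existence by taking the resulting fiber-integral expression (the paper's formula \eqref{eq:7}, which coincides with your definition) as the definition, with the verification of (i)--(iii) and the independence of the metric on the deformation deferred to the Bismut--Gillet--Soul\'e argument, exactly as the paper does. The only slips are cosmetic: in the paper's conventions $\dd_{\mathcal{D}}[\tfrac{-1}{2}\log t\bar t]=\delta_{\infty}-\delta_{0}$ (see \eqref{eq:62}), not $\delta_{0}-\delta_{\infty}$; the deformation with the metrics of definition \ref{def:13} is isometric to $\overline E_{\ast}$ over $t=0$ (not over an arbitrary finite point), and for a split complex it is not $p_{1}^{\ast}\overline E_{\ast}$ but a twist by the $\mathcal{O}(i)$, so the normalization uses the antisymmetry of $\log t\bar t$ under $t\mapsto 1/t$ as in propositions \ref{prop:18} and the proof of theorem \ref{thm:6}; and your worry (b) is vacuous, since a $\dd_{\mathcal{D}}$-closed element of $\widetilde{\mathcal{D}}^{2k-1}(X,k)$ is by definition precisely a class in $H^{2k-1}_{\mathcal{D}^{\an}}(X,\mathbb{R}(k))$, so cohomological vanishing already gives vanishing in the quotient.
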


\begin{proof}
We first recall how to prove the uniqueness. 

Let $\overline K_{i}=(K_{i},g_{i})$, where $K_{i}=\Ker f_{i}$
and $g_{i}$ is the metric induced by the inclusion $K_{i}\subset
E_{i}$. Consider the complex manifold $X\times
\mathbb{P}^{1}$ with projections $p_{1}$ and $p_{2}$. For every vector
bundle $F$ on $X$ we will denote $F(i)=p_{1}^{\ast}F\otimes
p_{2}^{\ast}\mathcal{O}_{\mathbb{P}^{1}}(i)$. Let $\widetilde 
C_{\ast}=\widetilde C(E_{\ast})_{\ast}$ be the complex
of vector bundles on $X\times \mathbb{P}^{1}$ given by
$\widetilde C_{i}=E_{i}(i)\oplus E_{i-1}(i-1)$ with differential
$d(s,t)=(t,0)$. Let $\widetilde D_{\ast}=\widetilde D(E_{\ast})_{\ast}$ be the complex
of vector bundles with 
$\widetilde D_{i}=E_{i-1}(i)\oplus E_{i-2}(i-1)$ and differential
$d(s,t)=(t,0)$. Using notation
\ref{def:3} we define the map $\psi 
\colon\widetilde C(E_{\ast})_{i}\longrightarrow \widetilde
D(E_{\ast})_{i}$ given by $\psi
(s,t)=(f_{i}(s)-t\otimes y,f_{i-1}(t))$. It is a morphism of complexes.
\begin{definition}\label{def:12}
  The \emph{first transgression exact sequence} of $E_{\ast}$ is given by
  \begin{displaymath}
    \tr_{1}(E_{\ast})_{\ast}=\Ker \psi.
  \end{displaymath}
\end{definition}

On $X\times \mathbb{A}^{1}$, the
map $p_{1}^{\ast}E_{i}\longrightarrow \widetilde C(E_{\ast})_{i}$ given by
$s\longmapsto
(s\otimes y^{i},f_{i}(s)\otimes y^{i-1})$ induces an isomorphism of
complexes
\begin{equation}
  \label{eq:59}
  p_{1}^{\ast}E_{\ast} \longrightarrow \tr_{1}(E_{\ast})_{\ast}|_{X\times
  \mathbb{A}^{1}}, 
\end{equation}
 and in particular isomorphisms
\begin{equation}
  \label{eq:57}
  \tr_{1}(E_{\ast})_{i}|_{X\times
  \{0\}}\cong E_{i}.
\end{equation}
Moreover, we have isomorphisms
\begin{equation}
  \label{eq:58}
  \tr_{1}(E_{\ast})_{i}|_{X\times
  \{\infty \}}\cong K_{i}\oplus K_{i-1}.
\end{equation}

\begin{definition}\label{def:13} We will denote by $\tr_{1}(\overline
  E_{\ast})_{\ast}$ the complex $\tr_{1}(E_{\ast})_{\ast}$ provided with any
  hermitian metric such that the isomorphisms \eqref{eq:57} and
  \eqref{eq:58} are isometries. If we need a functorial choice of
  metric, we proceed as follows. On $X\times
  (\mathbb{P}^{1}\setminus \{0\})$ we consider the metric induced by
  $\widetilde C$  on
  $\tr_{1}(E_{\ast})_{\ast}$. On $X\times
  (\mathbb{P}^{1}\setminus \{\infty\})$ we consider the metric induced
  by the isomorphism \eqref{eq:59}. We glue both metrics by means of
  the partition of unity of notation \ref{def:3}.
\end{definition}
In particular, we have that $\tr_{1}(\overline E_{\ast})|_{X\times
  \{\infty \}}$ is orthogonally split.  
We assume that there
exists a theory of Bott-Chern classes satisfying the above
properties. 
 Thus, there exists a class of
differential forms
$\widetilde{\phi}(\tr_{1}(\overline{E}_{\ast})_{\ast})$ with the
following properties. By
\ref{item:12} this class satisfies
\begin{displaymath}
  \dd_{\mathcal{D}}\widetilde{\phi}(\tr_{1}(\overline{E}_{\ast})_{\ast})= 
  \phi(\bigoplus_{i\text{ even}}\tr_{1}(\overline{E}_{\ast})_{i}))- 
  \phi
  (\bigoplus_{i\text{ odd}}\tr_{1}(\overline{E}_{\ast})_{i}).  
\end{displaymath}
By \ref{item:13}, it satisfies
\begin{displaymath}
  \widetilde{\phi}(\tr_{1}(\overline{E}_{\ast})_{\ast})\mid_{X\times \{0\}}=
  \widetilde{\phi}(\tr_{1}(\overline{E}_{\ast})_{\ast}\mid_{X\times
    \{0\}})=\widetilde{\phi}(\overline{E}_{\ast}).
\end{displaymath}
Finally, by \ref{item:13} and~\ref{item:14} it satisfies
\begin{displaymath}
  \widetilde{\phi}(\tr_{1}(\overline{E}_{\ast})_{\ast})\mid_{X\times 
    \{\infty\}  } = 
  \widetilde{\phi}(\tr_{1}(\overline{E}_{\ast})_{\ast}\mid_{X\times
    \{\infty\}})=0.
\end{displaymath}

Let $\phi(\tr_{1}(\overline{E}_{\ast})_{\ast})$ be any
representative of the class
$\widetilde{\phi}(\tr_{1}(\overline{E}_{\ast})_{\ast})$. 

Then, in the group
$\bigoplus_{k}\widetilde{\mathcal{D}}^{2k-1}(X,k)$, we have
\begin{align*}
  0
  &=\dd_{\mathcal{D}} \frac{1}
  {2\pi
    i}\int_{\mathbb{P}^{1}}\frac{-1}{2}\log(t\bar{t}) \bullet
  \phi(\tr_{1}(\overline{E}_{\ast})_{\ast}) 
  \\
  &=\frac{1}
  {2\pi
    i}\int_{\mathbb{P}^{1}}\left(\dd_{\mathcal{D}}\frac{-1}{2}\log(t\bar{t})
  \bullet \phi(\tr_{1}(\overline{E}_{\ast})_{\ast}) -
  \frac{-1}{2}\log(t\bar{t})\bullet
  \dd_{\mathcal{D}}\phi(\tr_{1}(\overline{E}_{\ast})_{\ast})\right)\\ 
  &= \widetilde{\phi}(\tr_{1}(\overline{E}_{\ast})_{\ast})|_{X\times
    \{\infty\}}
  -\widetilde{\phi}(\tr_{1}(\overline{E}_{\ast})_{\ast})|_{X\times
    \{0\}}\\
  &\phantom{\ }-\frac{1}
  {2\pi
    i}\int_{\mathbb{P}^{1}}\frac{-1}{2}\log(t\bar{t}) \bullet 
  (\phi(\bigoplus_{i\text{ even}}\tr_{1}(\overline{E}_{\ast})_{i})- 
  \phi
  (\bigoplus_{i\text{ odd}}\tr_{1}(\overline{E}_{\ast})_{i}))\\
  &=  -\widetilde{\phi}(\overline{E}_{\ast}) -\frac{1}
  {2\pi
    i}\int_{\mathbb{P}^{1}}\frac{-1}{2}\log(t\bar{t}) \bullet 
  (\phi(\bigoplus_{i\text{ even}}\tr_{1}(\overline{E}_{\ast})_{i})- 
  \phi
  (\bigoplus_{i\text{ odd}}\tr_{1}(\overline{E}_{\ast})_{i})
  ).
\end{align*}

Hence, if such a theory exists, it should satisfy the formula
\begin{equation}
  \label{eq:7}
  \widetilde{\phi}(\overline{E}_{\ast})=
 \frac{1}
  {2\pi
    i}\int_{\mathbb{P}^{1}}\frac{-1}{2}\log(t\bar{t}) \bullet 
  (\phi(\bigoplus_{i\text{ odd}}\tr_{1}(\overline{E}_{\ast})_{i})- 
  \phi
  (\bigoplus_{i\text{ even}}\tr_{1}(\overline{E}_{\ast})_{i})
  ).
\end{equation}
Therefore $ \widetilde{\phi}(\overline{E}_{\ast})$ is determined by properties (i), (ii) and
(iii). 

In order to prove the existence of a theory of functorial Bott-Chern
forms, we have to see that the right hand 
side of equation
\eqref{eq:7} is independent from the choice of the metric on
$\tr_{1}(\overline{E}_{\ast})_{\ast}$  and that it satisfies
the properties (i), (ii) and
(iii). For this the reader can follow the proof of
\cite{BismutGilletSoule:at} theorem 
1.29. 

\end{proof}

In view of the proof of theorem \ref{thm:3}, we can define the
Bott-Chern classes as follows.

\begin{definition}
  \label{def:1}
  Let 
  \begin{displaymath}
    \overline{E}_{\ast}\colon 0\longrightarrow(E_{n},h_{n})\longrightarrow
    \dots \longrightarrow (E_{1},h_{1}) 
    \longrightarrow(E_{0},h_{0})\longrightarrow 0
  \end{displaymath}
  be a bounded acyclic complex of hermitian vector bundles. Let
  \begin{displaymath}
    r=\sum_{i\text{ even}} \rk(E_{i})=\sum_{i\text{ odd}} \rk(E_{i}). 
  \end{displaymath}
  Let $\phi\in \mathbb{D}[[x_{1},\dots ,x_{r}]]^{\mathfrak{S}_{r}}$ be
  a symmetric power series in $r$ variables. Then the 
  \emph{Bott-Chern class} associated to $\phi $ and
  $\overline{E}_{\ast}$ is the element of
  $\bigoplus_{k,p}\widetilde{\mathcal{D}}^{k}(E_{X},p)$ given by
  \begin{displaymath}
    \widetilde{\phi}(\overline{E_{\ast}})=
    \frac{1}
    {2\pi
      i}\int_{\mathbb{P}^{1}}\frac{-1}{2}\log(t\bar{t}) \bullet 
    (\phi(\bigoplus_{i\text{ odd}}\tr_{1}(\overline{E}_{\ast})_{i})- 
    \phi
    (\bigoplus_{i\text{ even}}\tr_{1}(\overline{E}_{\ast})_{i})
    ).
  \end{displaymath}
\end{definition}

The following property is obvious from the definition.
\begin{lemma}
  Let $\overline E_{\ast}$ be an acyclic complex of hermitian
  vector bundles. Then, for any integer $k$,
  \begin{displaymath}
    \widetilde{\phi}(\overline{E}_{\ast
    }[k])=(-1)^{k}\widetilde{\phi}(\overline{E}_{\ast }). 
  \end{displaymath}
\end{lemma}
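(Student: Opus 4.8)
The plan is to deduce the identity from the uniqueness assertion of Theorem \ref{thm:3} rather than from a direct manipulation of the integral formula in Definition \ref{def:1}. Fix the shift $k$, and consider the assignment $T$ that sends a bounded acyclic complex $\overline E_\ast$ of hermitian vector bundles to $T(\overline E_\ast):=(-1)^{k}\widetilde\phi(\overline E_\ast[k])$. First I would check this is well defined: $\overline E_\ast[k]$ is again a bounded acyclic complex of hermitian vector bundles, and since the even- and odd-indexed terms of $\overline E_\ast[k]$ are, after reindexing, the even/odd terms of $\overline E_\ast$ when $k$ is even and the odd/even terms when $k$ is odd, the common rank $r=\sum_{i\text{ even}}\rk(E_i)=\sum_{i\text{ odd}}\rk(E_i)$ is unchanged; hence $\widetilde\phi(\overline E_\ast[k])$ is defined for the same symmetric power series $\phi$, and $T(\overline E_\ast)$ lies in $\bigoplus_k\widetilde{\mathcal D}^{2k-1}(X,k)$.

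Next I would verify that $T$ satisfies the three characterizing properties (i)--(iii) of Theorem \ref{thm:3}. Functoriality (ii) is immediate, since pullback commutes with the shift and $\widetilde\phi$ is functorial. Normalization (iii) holds because if $\overline E_\ast$ is orthogonally split then so is $\overline E_\ast[k]$ (the splitting condition in each degree is untouched by a shift), whence $\widetilde\phi(\overline E_\ast[k])=0$. For the differential equation (i), apply $\dd_{\mathcal D}$ to $T(\overline E_\ast)$ and use \eqref{eq:13} for $\overline E_\ast[k]$: when $k$ is even the even/odd direct sums occurring on the right-hand side are literally those of $\overline E_\ast$, while when $k$ is odd they are interchanged, and the resulting sign cancels the factor $(-1)^k=-1$; in both cases one recovers exactly the right-hand side of \eqref{eq:13} for $\overline E_\ast$. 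By the uniqueness in Theorem \ref{thm:3} we conclude $T=\widetilde\phi$, i.e.\ $(-1)^k\widetilde\phi(\overline E_\ast[k])=\widetilde\phi(\overline E_\ast)$, which is the asserted formula.

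The only point that deserves care --- and the reason I would route the argument through uniqueness rather than through Definition \ref{def:1} --- is the behavior of the first transgression complex under a shift: $\tr_1(\overline E_\ast[k])$ is not simply $\tr_1(\overline E_\ast)[k]$ but differs from it by a twist by $p_2^\ast\mathcal O_{\mathbb P^1}(k)$, so the characteristic forms entering the integral formula are not merely reindexed, and tracking this through the fibre integral is more delicate than one would like. With the uniqueness characterization in hand the whole argument is purely formal, reducing to the bookkeeping of index parities under a shift; this is presumably why the statement is called obvious.
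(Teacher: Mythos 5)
Your argument is correct, but it is not the route the paper takes: the paper simply declares the identity ``obvious from the definition'', i.e.\ from the explicit integral formula of definition \ref{def:1}, and offers no further argument, whereas you derive it formally from the uniqueness statement in theorem \ref{thm:3}. Your verification of the three axioms for $T(\overline E_{\ast}):=(-1)^{k}\widetilde{\phi}(\overline E_{\ast}[k])$ is sound: the rank $r$ is unchanged under the shift, orthogonal splitness and functoriality are preserved (the possible sign change of the differentials under a shift affects none of kernels, splittings, or the even/odd sums), and the parity bookkeeping in the differential equation \eqref{eq:13} produces exactly the sign $(-1)^{k}$ you need. What your route buys is that it is purely formal and sidesteps the point you correctly flag, namely that $\tr_{1}(\overline E_{\ast}[k])_{i}$ is not $\tr_{1}(\overline E_{\ast})_{i+k}$ but its twist by $p_{2}^{\ast}\mathcal{O}_{\mathbb{P}^{1}}(-k)$, so a literal reading of definition \ref{def:1} requires either a choice of metrics trivializing the twist at $0$ and $\infty$ or an extra exactness argument for the twisted integrand; the cost is that you invoke the full force of theorem \ref{thm:3}, whose existence half the paper itself only cites from \cite{BismutGilletSoule:at}. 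The paper's intended shortcut is presumably just the observation that the shift swaps the even and odd parts (together with treating the twist as harmless), which your uniqueness argument makes rigorous without any such bookkeeping; either way the statement stands, and your proof is complete.
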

\hfill$\square$

Particular cases of Bott-Chern classes are obtained when we consider a
single vector bundle with two different hermitian metrics or a short
exact sequence of vector bundles. Note however that, in order to fix
the sign of the Bott-Chern classes on these cases, one has to
choose the degree of the vector bundles involved, for instance as in
the next definition.  
\begin{definition} \label{def:11} 
Let $E$ be
a holomorphic vector bundle of rank $r$, let $h_{0}$ and  $h_{1}$
be two hermitian metrics and let $\phi $ be an invariant power series of
$r$ variables. We will denote by $\widetilde \phi (E,h_{0},h_{1})$ the
Bott-Chern class associated to the complex
\begin{displaymath}
  \overline {\xi}\colon 0\longrightarrow (E,h_{1})\longrightarrow
  (E,h_{0})\longrightarrow 0, 
\end{displaymath}
where $(E,h_{0})$ sits in degree zero. 
\end{definition}
Therefore, this class satisfies
\begin{displaymath}
  \dd_{\mathcal{D}}\widetilde \phi (E,h_{0},h_{1})=\phi (E,h_{0})-\phi
  (E,h_{1}). 
\end{displaymath}

In fact we can characterize $\widetilde \phi (E,h_{0},h_{1})$
axiomatically as follows.

\begin{proposition} \label{prop:18}
  Given $\phi$, a symmetric power series in $r$
  variables, 
  there is a unique way to attach, to each rank $r$ vector bundle $E$
  on a complex manifold $X$ and
  metrics $h_{0}$ and $h_{1}$, a class $\widetilde
  {\phi}(E,h_{0},h_{1})$ satisfying
  \begin{enumerate}
  \item $\dd_{\mathcal{D}}\widetilde \phi (E,h_{0},h_{1})=\phi (E,h_{0})-\phi
    (E,h_{1})$.
  \item $f^{\ast}\widetilde \phi (E,h_{0},h_{1})=\widetilde \phi
    (f^{\ast}(E,h_{0},h_{1}))$ for every holomorphic map
    $f\colon Y\longrightarrow X$.
  \item $\widetilde \phi (E,h,h)=0$.
  \end{enumerate}
  Moreover, if we denote $\widetilde E:=\tr_{1}(\overline 
  \xi)_{1}$, then it satisfies
  \begin{displaymath}
    \widetilde E|_{X\times\{\infty\}}\cong (E,h_{0}),\quad
    \widetilde E|_{X\times\{0\}}\cong (E,h_{1})
  \end{displaymath}
  and
  \begin{equation}\label{eq:106}
    \widetilde \phi (E,h_{0},h_{1})=
 \frac{1}
  {2\pi
    i}\int_{\mathbb{P}^{1}}\frac{-1}{2}\log(t\bar{t}) \bullet 
  \phi(\widetilde E).
  \end{equation}
\end{proposition}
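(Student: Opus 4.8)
The plan is to deduce this from the general theory of Bott-Chern classes established in Theorem~\ref{thm:3}, applied to the particular complex
\begin{displaymath}
  \overline{\xi}\colon 0\longrightarrow (E,h_{1})\longrightarrow (E,h_{0})\longrightarrow 0,
\end{displaymath}
with $(E,h_{0})$ placed in degree zero. First I would observe that for this $\overline{\xi}$, we have $\bigoplus_{i\text{ even}}(E_{i},h_{i})=(E,h_{0})$ and $\bigoplus_{i\text{ odd}}(E_{i},h_{i})=(E,h_{1})$, so the differential equation~\eqref{eq:13} of Theorem~\ref{thm:3}(i) becomes exactly condition (i) of the proposition; functoriality~\eqref{item:13} gives condition (ii); and the normalization~\eqref{item:14}, noting that $\overline{\xi}$ is orthogonally split precisely when $h_{0}=h_{1}$, gives condition (iii). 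Thus $\widetilde{\phi}(\overline{\xi})=\widetilde{\phi}(E,h_{0},h_{1})$ (in the sense of Definition~\ref{def:11}) certainly satisfies (i)--(iii).

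The heart of the matter is \emph{uniqueness}: that any class satisfying (i)--(iii) must coincide with $\widetilde{\phi}(E,h_{0},h_{1})$. For this I would run the transgression argument from the proof of Theorem~\ref{thm:3} specialized to $\overline{\xi}$. Concretely, set $\widetilde{E}:=\tr_{1}(\overline{\xi})_{1}$, the degree-$1$ term of the first transgression complex; since $K_{1}=\Ker f_{1}=0$ in this two-term situation (or rather the kernels $\Ker f_{i}$ are all zero because $\overline{\xi}$ itself is already split at the level of vector bundles, the maps being identities), the transgression complex reduces essentially to the single bundle $\widetilde{E}$ on $X\times\mathbb{P}^{1}$ sitting in one degree, with $\widetilde{E}|_{X\times\{0\}}\cong(E,h_{1})$ via~\eqref{eq:57} and $\widetilde{E}|_{X\times\{\infty\}}\cong(E,h_{0})$ via~\eqref{eq:58} (here one must track the degree shift carefully to get $h_{0}$ at $\infty$ and $h_{1}$ at $0$, matching the sign convention of Definition~\ref{def:11}). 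Applying a hypothetical theory satisfying (i)--(iii) to $\widetilde{E}$ and integrating $-\tfrac{1}{2}\log(t\bar t)$ against the resulting Chern form over $\mathbb{P}^{1}$, exactly as in the displayed computation preceding~\eqref{eq:7}, forces
\begin{displaymath}
  \widetilde{\phi}(E,h_{0},h_{1})=\frac{1}{2\pi i}\int_{\mathbb{P}^{1}}\frac{-1}{2}\log(t\bar t)\bullet \phi(\widetilde{E}),
\end{displaymath}
which is~\eqref{eq:106}; and since the right-hand side is a fixed expression, uniqueness follows. Alternatively, uniqueness can be obtained more cheaply by invoking Theorem~\ref{thm:3} directly: the complex $\overline{\xi}$ is a bounded acyclic complex, so Theorem~\ref{thm:3} already asserts that there is a \emph{unique} class attached to it satisfying (i)--(iii)$'$ (the three properties there), and one need only check that properties (i)--(iii) of the proposition are equivalent to (the restriction to $\overline{\xi}$ of) properties (i)--(iii) of Theorem~\ref{thm:3} — which is the observation made in the first paragraph.

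The main obstacle I anticipate is purely bookkeeping: pinning down the identifications $\widetilde{E}|_{X\times\{\infty\}}\cong(E,h_{0})$ and $\widetilde{E}|_{X\times\{0\}}\cong(E,h_{1})$ with the correct metrics and the correct $\mathbb{P}^{1}$-fiber, given the degree shifts built into the definition of $\tr_{1}$ (where $\widetilde{C}_{i}=E_{i}(i)\oplus E_{i-1}(i-1)$ and the convention in Definition~\ref{def:11} that $(E,h_{0})$ sits in degree zero). Once the $0$-versus-$\infty$ and $h_{0}$-versus-$h_{1}$ conventions are matched, formula~\eqref{eq:106} drops out of~\eqref{eq:7} by restricting that formula to the two-term complex, using that $\tr_{1}(\overline{\xi})_{i}=0$ for $i\neq 1$ so the only surviving term is $-\phi(\tr_{1}(\overline{\xi})_{1})=-\phi(\widetilde{E})$ in the even sum (or the odd sum, depending on the degree placement — again a sign to be tracked), reproducing the stated sign.
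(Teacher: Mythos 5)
Your main route is the same as the paper's: the paper also proves the axiomatic characterization ``as in theorem \ref{thm:3}'' and obtains \eqref{eq:106} by specializing the transgression construction and formula \eqref{eq:7} to the two-term complex $\overline{\xi}$. However, two points in your write-up need repair. First, the bookkeeping you flag as the main obstacle is in fact done incorrectly: it is not true that all the kernels vanish, nor that $\tr_{1}(\overline{\xi})_{i}=0$ for $i\neq 1$. One has $\Ker f_{1}=0$ but $\Ker f_{0}=(E,h_{0})$, and accordingly $\tr_{1}(\overline{\xi})_{0}=p_{1}^{\ast}(E,h_{0})$ (this is exactly what the paper records). So the specialization of \eqref{eq:7} yields the integrand $\phi(\widetilde E)-\phi(p_{1}^{\ast}(E,h_{0}))$, not $\phi(\widetilde E)$ alone; the second term drops out not because the bundle is zero but because $\phi(p_{1}^{\ast}(E,h_{0}))=p_{1}^{\ast}\phi(E,h_{0})$ is a pullback from $X$, so its product with the fiberwise function $-\tfrac{1}{2}\log(t\bar t)$ has no $(1,1)$-component along the $\mathbb{P}^{1}$-fibre and integrates to zero (the paper instead invokes the antisymmetry of $\log t\bar t$ under $t\mapsto 1/t$). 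Without some such justification your derivation of \eqref{eq:106} is incomplete as stated.

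Second, your ``cheaper'' uniqueness argument does not work. The uniqueness in theorem \ref{thm:3} quantifies over assignments defined on \emph{all} bounded acyclic complexes and satisfying the axioms for all of them; a hypothetical assignment $(E,h_{0},h_{1})\mapsto\widetilde\phi(E,h_{0},h_{1})$ obeying only (i)--(iii) of the proposition is defined merely on the special two-term complexes and need not extend to such a global assignment, so theorem \ref{thm:3} says nothing about it. Your first route is the correct one and is the one the paper intends: the transgression of $\overline{\xi}$ is again of the same shape (the differential $\widetilde E\to p_{1}^{\ast}(E,h_{0})$ is an isomorphism of underlying bundles, so it is again ``one bundle, two metrics''), its restriction to $X\times\{\infty\}$ is $(E,h_{0},h_{0})$ so axiom (iii) applies there, its restriction to $X\times\{0\}$ is $(E,h_{0},h_{1})$, and the computation preceding \eqref{eq:7}, run entirely inside this restricted family, then forces \eqref{eq:106} and hence uniqueness.
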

\begin{proof}
  The axiomatic characterization is proved as in theorem
  \ref{thm:3}. In order to prove equation \eqref{eq:106},
  if we follow the notations of the proof of theorem \ref{thm:3} we
  have $K_{0}=(E,h_{0})$ and $K_{1}=0$. Therefore $\tr_{1}(\overline
  \xi)_{0}=p_{1}^{\ast}(E,h_{0})$, while $\widetilde
  E:=\tr_{1}(\overline 
  \xi)_{1}$ satisfies $\widetilde E|_{X\times \{0\}}=(E,h_{1})$ and
  $\widetilde E|_{X\times \{\infty\}}=(E,h_{0})$. Using the antisymmetry
  of $\log t\bar t$ under the involution $t\mapsto 1/t$ we obtain
  \begin{displaymath}
    \widetilde \phi (E,h_{0},h_{1})=\widetilde {\phi }(\overline{\xi})=
 \frac{1}
  {2\pi
    i}\int_{\mathbb{P}^{1}}\frac{-1}{2}\log(t\bar{t}) \bullet 
  \phi(\widetilde E).
  \end{displaymath}  
\end{proof}

We can also treat the case of short exact
sequences. If
 $$\overline{\varepsilon}\colon 0 \longrightarrow
\overline{E}_{2} \longrightarrow \overline{E}_{1}
\longrightarrow \overline{E}_{0} \longrightarrow 0$$
is a short exact sequence of hermitian vector bundles, by convention, we
will assume that $\overline{E}_{0}$ sits in degree zero.
This fixs the sign of $\widetilde \phi(\overline{\varepsilon})$. 

\begin{proposition} \label{prop:20}
  Given $\phi$, a symmetric power series in $r$
  variables, 
  there is a unique way to attach, to each short exact sequence of
  hermitian vector bundles 
  on a complex manifold $X$
  $$\overline{\varepsilon}\colon 0 \longrightarrow
  \overline{E}_{2} \longrightarrow \overline{E}_{1}
  \longrightarrow \overline{E}_{0} \longrightarrow 0,$$
  where $\overline E_{1}$ has rank $r$, a class
  $\widetilde
  {\phi}(\overline{\varepsilon })$ satisfying
  \begin{enumerate}
  \item $\dd_{\mathcal{D}}\widetilde \phi (\overline{\varepsilon })
    =\phi (\overline E_{0}\oplus \overline E_{2})-\phi
    (\overline E_{1})$.
  \item $f^{\ast}\widetilde \phi (\overline{\varepsilon })=\widetilde \phi
    (f^{\ast}(\overline{\epsilon }))$ for every holomorphic map
    $f\colon Y\longrightarrow X$.
  \item $\widetilde \phi (\overline {\varepsilon })=0$ whenever
    $\overline{\varepsilon }$ is orthogonally split. 
  \end{enumerate}
\ \hfill $\square$
\end{proposition}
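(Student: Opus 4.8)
The plan is to deduce this from Theorem \ref{thm:3} by viewing a short exact sequence as a bounded acyclic complex concentrated in degrees $0,1,2$, the only subtlety being that conditions (ii) and (iii) are imposed here only on short exact sequences, so one must check that the uniqueness argument never leaves this subclass. For existence I would simply regard $\overline{\varepsilon}\colon 0\to\overline E_{2}\to\overline E_{1}\to\overline E_{0}\to 0$, with $\overline E_{0}$ placed in degree $0$, as a bounded acyclic complex of hermitian vector bundles; exactness gives $\rk E_{0}+\rk E_{2}=\rk E_{1}=r$, so $\phi$ has the right number of variables and Definition \ref{def:1} produces a class $\widetilde\phi(\overline\varepsilon)$. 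Since $\bigoplus_{i\text{ even}}E_{i}=E_{0}\oplus E_{2}$ and $\bigoplus_{i\text{ odd}}E_{i}=E_{1}$, and since an orthogonally split short exact sequence is orthogonally split as a complex, the three properties of Theorem \ref{thm:3} specialize exactly to (i), (ii), (iii).

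For uniqueness, I would first observe that the first transgression of a short exact sequence is again a short exact sequence. Indeed, $\tr_{1}(\overline\varepsilon)_{\ast}$ is a bounded acyclic complex of vector bundles on $X\times\mathbb{P}^{1}$ whose restriction to $X\times\mathbb{A}^{1}$ is isomorphic to $p_{1}^{\ast}\overline\varepsilon$ by \eqref{eq:59}, and each term $\tr_{1}(\overline\varepsilon)_{i}$, being locally free on the connected manifold $X\times\mathbb{P}^{1}$ and vanishing on the dense open set $X\times\mathbb{A}^{1}$ for $i\notin\{0,1,2\}$, vanishes identically; hence $\tr_{1}(\overline\varepsilon)_{\ast}$ is concentrated in degrees $0,1,2$ and, being acyclic, is a short exact sequence. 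With the functorial metric of Definition \ref{def:13} its restriction to $X\times\{0\}$ is isometrically $\overline\varepsilon$, and by \eqref{eq:58} together with $K_{0}=E_{0}$, $K_{1}\cong E_{2}$, $K_{2}=0$, its restriction to $X\times\{\infty\}$ is the orthogonally split sequence $0\to\overline E_{2}\to\overline E_{2}\oplus\overline E_{0}\to\overline E_{0}\to 0$.

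Granting this, I would run the computation in the proof of Theorem \ref{thm:3} essentially verbatim: given any assignment $\widetilde\phi$ satisfying (i)--(iii) and a representative $\phi(\tr_{1}(\overline\varepsilon)_{\ast})$ of $\widetilde\phi(\tr_{1}(\overline\varepsilon)_{\ast})$, expand $0=\dd_{\mathcal D}\bigl(\frac{1}{2\pi i}\int_{\mathbb{P}^{1}}\frac{-1}{2}\log(t\bar t)\bullet\phi(\tr_{1}(\overline\varepsilon)_{\ast})\bigr)$ in $\bigoplus_{k}\widetilde{\mathcal D}^{2k-1}(X,k)$ by the Leibniz rule, substitute (i) for $\dd_{\mathcal D}\phi(\tr_{1}(\overline\varepsilon)_{\ast})$ and the Poincar\'e--Lelong identity for $\dd_{\mathcal D}(-\tfrac12\log t\bar t)$ along the fibres over $0$ and $\infty$, and use (ii) to identify the boundary terms as $\widetilde\phi(\overline\varepsilon)$ over $0$ and $\widetilde\phi$ of an orthogonally split sequence over $\infty$, the latter vanishing by (iii). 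This forces formula \eqref{eq:7} with $\overline E_{\ast}$ replaced by $\overline\varepsilon$, hence agreement with Definition \ref{def:1} and uniqueness of the assignment.

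The hard part is really just the bookkeeping of the middle paragraph: confirming that $\tr_{1}$ of a short exact sequence, endowed with the functorial metric, is a short exact sequence with the stated restrictions over $0$ and $\infty$. Everything downstream of that is a direct transcription of the proof of Theorem \ref{thm:3}, so I do not expect any genuine difficulty there.
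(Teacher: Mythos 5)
Your argument is correct and is exactly the proof the paper intends but omits (the statement is left with a $\square$, to be proved "as in Theorem \ref{thm:3}"): existence by specializing Definition \ref{def:1} to a complex concentrated in degrees $0,1,2$, and uniqueness by checking that $\tr_{1}$ of a short exact sequence is again a short exact sequence whose restrictions to $X\times\{0\}$ and $X\times\{\infty\}$ are $\overline{\varepsilon}$ and an orthogonally split sequence, so the deformation argument never leaves the subclass on which the axioms are imposed. The only nitpick is that the fibre at $\infty$ is $0\to\overline K_{1}\to\overline K_{1}\oplus\overline K_{0}\to\overline K_{0}\to 0$ with $\overline K_{1}=\Img f_{2}$ carrying the metric induced from $\overline E_{1}$ rather than that of $\overline E_{2}$, but this does not matter since orthogonal splitness is all that axiom (iii) requires.
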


The following additivity result of Bott-Chern classes will be useful
later. 

\begin{lemma} \label{lemm:1}
  Let $\overline A_{\ast,\ast}$ be a bounded exact sequence of bounded
  exact 
  sequences of 
  hermitian vector bundles. Let 
  $$r=\sum_{i, j \text{ even} }\rk(A_{i,j})
  =\sum_{i, j \text{ odd} }\rk(A_{i,j})
  =\sum_{\substack{i \text{ odd}\\ j \text{ even}} }\rk(A_{i,j})
  =\sum_{\substack{i\text{ even}\\j  \text{ odd}} }\rk(A_{i,j}).
  $$
  Let $\phi $ be a symmetric power series in $r$ variables. Then 
  \begin{displaymath}
    \widetilde {\phi }(\bigoplus_{k \text{ even} }
    \overline A_{k,\ast})-
    \widetilde {\phi }(\bigoplus_{k \text{ odd} }\overline 
    A_{k,\ast})=
    \widetilde {\phi }(\bigoplus_{k \text{ even} }\overline 
    A_{\ast,k})-
    \widetilde {\phi }(\bigoplus_{k \text{ odd} }\overline 
    A_{\ast,k}).
  \end{displaymath}
\end{lemma}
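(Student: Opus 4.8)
The plan is to reduce the statement to the uniqueness part of Theorem \ref{thm:3}, by exhibiting both sides of the claimed identity as components of a single Bott-Chern class attached to a suitable \emph{total complex}. Concretely, given the double complex $\overline A_{\ast,\ast}$, form the total complex $\overline T_{\ast}=\Tot(\overline A_{\ast,\ast})$, where $T_{n}=\bigoplus_{i+j=n}A_{i,j}$ is equipped with the orthogonal direct sum metric, and the differential is the usual sign-twisted sum of the horizontal and vertical differentials. Since $\overline A_{\ast,\ast}$ is a bounded exact sequence of bounded exact sequences, $\overline T_{\ast}$ is a bounded acyclic complex of hermitian vector bundles, so $\widetilde\phi(\overline T_{\ast})$ is defined. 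The rank condition in the statement guarantees that $\phi$ is a symmetric power series in the correct number of variables for $\overline T_{\ast}$ as well, since $\sum_{n\text{ even}}\rk(T_{n})=\sum_{i+j\text{ even}}\rk(A_{i,j})=r$ by hypothesis (the two diagonal sums $\sum_{i,j\text{ even}}$ and $\sum_{i\text{ odd},j\text{ odd}}$ add up to it).

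First I would establish a \emph{filtration principle}: if a bounded acyclic complex $\overline F_{\ast}$ of hermitian bundles carries a finite filtration by acyclic subcomplexes with orthogonally split successive quotients (split as complexes, with the metrics matching as in the definition preceding Notation \ref{def:3}), then $\widetilde\phi(\overline F_{\ast})$ equals the alternating sum of the $\widetilde\phi$ of those graded pieces — up to a controlled correction by Bott-Chern classes of the metric-comparison sequences. The cleanest way to get this is again via the first transgression construction $\tr_{1}$ used in the proof of Theorem \ref{thm:3}: one deforms over $\mathbb{P}^{1}$ so that at $t=0$ one sees $\overline F_{\ast}$ and at $t=\infty$ one sees the associated graded, and then integrates $-\tfrac12\log(t\bar t)$ against $\phi$ of the deformation, exactly as in equation \eqref{eq:7}. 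Applying this to the column filtration of $\Tot(\overline A_{\ast,\ast})$ (whose graded pieces are the rows $\overline A_{\ast,k}$ shifted by $k$, up to metric identifications that are isometries by our choice of the direct-sum metric) and separately to the row filtration (graded pieces the columns $\overline A_{k,\ast}$) yields two expressions for a common object. Using the shift formula $\widetilde\phi(\overline E_{\ast}[k])=(-1)^{k}\widetilde\phi(\overline E_{\ast})$ from the Lemma just before Definition \ref{def:11}, the alternating signs organize precisely into $\widetilde\phi(\bigoplus_{k\text{ even}}\overline A_{\ast,k})-\widetilde\phi(\bigoplus_{k\text{ odd}}\overline A_{\ast,k})$ in one case and $\widetilde\phi(\bigoplus_{k\text{ even}}\overline A_{k,\ast})-\widetilde\phi(\bigoplus_{k\text{ odd}}\overline A_{k,\ast})$ in the other.

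Alternatively — and this may in fact be the shorter route — one can argue purely by the uniqueness clause of Theorem \ref{thm:3}. Both sides of the asserted identity are functorial in $\overline A_{\ast,\ast}$ (condition \ref{item:13}); both vanish when $\overline A_{\ast,\ast}$ is orthogonally split as a double complex, since then every row and every column is orthogonally split (condition \ref{item:14}); and applying $\dd_{\mathcal D}$ to either side gives $\phi$ of the same two hermitian bundles $\bigoplus_{i,j\text{ even or }i,j\text{ odd}}\overline A_{i,j}$ against $\phi$ of $\bigoplus_{\text{mixed parity}}\overline A_{i,j}$, by condition \ref{item:12} applied twice. Thus the difference of the two sides is a functorial assignment that is $\dd_{\mathcal D}$-closed and vanishes on orthogonally split data; but a mild strengthening of the uniqueness argument (deforming an arbitrary double complex to an orthogonally split one via a two-parameter version of $\tr_{1}$, or iterating $\tr_{1}$ in each index) forces such an assignment to be zero. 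The main obstacle in either approach is bookkeeping: one must check that the metrics produced by the $\tr_1$ deformation on the graded pieces genuinely agree with the orthogonal-direct-sum metrics on the $\overline A_{\ast,k}$ and $\overline A_{k,\ast}$ (so that the relevant metric-comparison Bott-Chern classes vanish rather than merely being controlled), and that all the shift signs $(-1)^{k}$ and the Koszul signs of $\Tot$ combine correctly. I expect this sign-and-metric verification, not any conceptual point, to be where the real work lies; it is routine but must be done carefully.
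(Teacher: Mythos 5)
Neither of your two routes, as written, closes the argument, and neither is the route the paper takes: the paper proves this lemma by the same one-parameter deformation as Proposition \ref{prop:5}. One applies $\tr_{1}$ to the exact complexes in \emph{one} index only (the construction is functorial, so the maps in the other index carry over), integrates $W_{1}=[\frac{-1}{2}\log(t\bar t)]$ against the difference of representatives of the two Bott--Chern classes, uses that at $t=\infty$ the even and odd parts are \emph{isometric} (because $\tr_{1}$ of an acyclic complex restricted to $\infty$ is $K_{i}\oplus K_{i-1}$, so both parts become $\bigoplus_{i}K_{i}$) --- not that anything is orthogonally split in both directions --- and then identifies the leftover $W_{1}$-integral, via the differential equation and the explicit formula of Definition \ref{def:1} together with the compatibility of $\tr_{1}$ with direct sums, with the difference of Bott--Chern classes taken in the other index. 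No bi-splitting of the double complex and no two-parameter deformation is needed.

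Your first approach has a concrete error and a false intermediate statement. The rank count is wrong: $\sum_{n\text{ even}}\rk\,\Tot(\overline A_{\ast,\ast})_{n}=\sum_{i+j\text{ even}}\rk(A_{i,j})=\sum_{i,j\text{ even}}\rk(A_{i,j})+\sum_{i,j\text{ odd}}\rk(A_{i,j})=2r$, not $r$, so $\widetilde{\phi}(\Tot(\overline A_{\ast,\ast}))$ is not defined for a symmetric series in $r$ variables under the conventions of Definition \ref{def:1}. Moreover the ``filtration principle'' you invoke --- that the Bott--Chern class of a filtered acyclic complex is the alternating sum of the Bott--Chern classes of the graded pieces --- is false for non-additive $\phi$; the correct statement is that of Corollary \ref{cor:12} (a \emph{single} class $\widetilde{\phi}$ of the direct sum of the shifted graded pieces), and passing from that to the difference $\widetilde{\phi}(\bigoplus_{k\text{ even}})-\widetilde{\phi}(\bigoplus_{k\text{ odd}})$ for general $\phi$ is exactly the content of the lemma, not sign bookkeeping. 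In your second approach the verification that $\dd_{\mathcal{D}}$ of both sides agree and that both sides vanish on bi-orthogonally split double complexes is correct, but the step you dismiss as a ``mild strengthening of the uniqueness argument'' is the missing proof: you must actually construct, inside the class of double complexes with exact rows and columns, a chain of $\mathbb{P}^{1}$-deformations from an arbitrary $\overline A_{\ast,\ast}$ to one in which every row \emph{and} every column is orthogonally split. A row-wise $\tr_{1}$ splits only the rows at $\infty$; to see that a subsequent column-wise $\tr_{1}$ does not destroy that splitting one has to analyse the $\infty$-fibre (using that the induced maps on $\tr_{1}$ at $\infty$ are diagonal and the metrics of Definition \ref{def:13}, it decomposes orthogonally into squares formed by the kernel complexes joined by identity maps), and only then does the closed--functorial--normalized argument force the difference to vanish. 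That analysis is the substance of the proof and is absent from your proposal.
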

\begin{proof}
  The proof is analogous to the proof of proposition \ref{prop:5} and
  is left to the reader.
\end{proof}

\begin{corollary}\label{cor:12}
  Let $\overline A_{\ast,\ast}$ be a bounded double complex of
  hermitian vector bundles with exact rows, let
  $$r=\sum_{i+j \text{ even} }\rk(A_{i,j})=\sum_{i+j \text{ odd}
  }\rk(A_{i,j})
  $$
  and
  let $\phi $ be a symmetric power series in $r$ variables. Then
  \begin{displaymath}
        \widetilde {\phi }(\Tot \overline A_{\ast,\ast})
        =\widetilde
        {\phi}(\bigoplus_{k}\overline A_{\ast,k}[-k]).
  \end{displaymath}
\end{corollary}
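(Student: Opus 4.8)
The plan is to deduce this from Lemma \ref{lemm:1} by taking $\overline A_{\ast,\ast}$ to be the given double complex viewed appropriately as a "bounded exact sequence of bounded exact sequences''. The subtlety is that in Lemma \ref{lemm:1} both the rows and the columns are required to be exact, whereas here only the rows are exact. So first I would reduce to the exact situation by a standard device: replace the double complex $\overline A_{\ast,\ast}$ (with exact rows) by a triple-indexed object, or more simply by observing that the total complex $\Tot\overline A_{\ast,\ast}$ of a double complex with exact rows is itself acyclic, and that $\bigoplus_{k}\overline A_{\ast,k}[-k]$ is, up to a shift in each summand, the complex obtained by forgetting the vertical differential. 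The point is to build a genuine bounded exact sequence of bounded exact complexes whose "even minus odd'' alternating sums along the two directions recover exactly the two sides of the claimed identity.

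Concretely, I would proceed as follows. For each fixed $k$, the row $\overline A_{\ast,k}$ is a bounded exact complex; place it in the appropriate vertical degree and form the bounded exact sequence of complexes
\begin{displaymath}
  0\longrightarrow \overline A_{\ast,n}\longrightarrow\dots\longrightarrow
  \overline A_{\ast,1}\longrightarrow \overline A_{\ast,0}\longrightarrow 0,
\end{displaymath}
which is exact because the vertical columns $\overline A_{i,\ast}$ are complexes — wait, this is not exact in general, so instead I would use the mapping-cone/total-complex trick: the associated object $\overline A_{\ast,\ast}$, regarded with the two differentials, fits Lemma \ref{lemm:1} only after we note that the vertical alternating sum $\bigoplus_{j\text{ even}}\overline A_{\ast,j}-\bigoplus_{j\text{ odd}}\overline A_{\ast,j}$, computed inside the group of Bott-Chern classes, equals $\widetilde\phi(\Tot\overline A_{\ast,\ast})$ up to sign by the behaviour of $\widetilde\phi$ under totalization of a short filtration. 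Thus I would first prove the auxiliary fact that for a bounded complex $(\overline C_{\ast},d)$ of bounded exact complexes, $\widetilde\phi(\Tot)=\sum_k(-1)^k\widetilde\phi(\overline C_k)$ in $\bigoplus\widetilde{\mathcal D}$, which itself follows from Lemma \ref{lemm:1} applied iteratively together with the shift formula $\widetilde\phi(\overline E_\ast[k])=(-1)^k\widetilde\phi(\overline E_\ast)$ already recorded above.

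Granting that auxiliary additivity-under-totalization statement, the corollary is immediate: both $\Tot\overline A_{\ast,\ast}$ and $\bigoplus_k\overline A_{\ast,k}[-k]$ have the same associated graded for the column filtration, namely $\overline A_{\ast,k}$ in vertical degree $k$; applying the auxiliary fact to each presentation gives $\widetilde\phi(\Tot\overline A_{\ast,\ast})=\sum_k(-1)^k\widetilde\phi(\overline A_{\ast,k})=\widetilde\phi(\bigoplus_k\overline A_{\ast,k}[-k])$, where the last equality uses additivity of $\widetilde\phi$ over orthogonal direct sums together with the shift formula. The main obstacle is bookkeeping the signs and degree placements correctly so that the "even'' and "odd'' groupings in Lemma \ref{lemm:1} line up with the column filtration of $\Tot$; once the indexing conventions (which degree $\overline E_0$ sits in, and the sign $(-1)^k$ from the shift) are pinned down, the argument is formal. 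As in Lemma \ref{lemm:1} itself, the verification that the relevant auxiliary sequences are orthogonally split where needed, so that the normalization axiom \ref{item:14} applies, is routine and can be left to the reader by analogy with the proof of Proposition \ref{prop:5}.
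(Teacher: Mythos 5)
There is a genuine gap, and it sits exactly at the two places where you invoke additivity of $\widetilde{\phi}$. Your auxiliary claim ``$\widetilde\phi(\Tot)=\sum_k(-1)^k\widetilde\phi(\overline C_k)$'' and your final step ``the last equality uses additivity of $\widetilde\phi$ over orthogonal direct sums'' are both false for a general symmetric power series $\phi$: the Bott-Chern class $\widetilde\phi(\overline E_\ast\oplus\overline F_\ast)$ satisfies a differential equation whose right-hand side is $\phi(\bigoplus_{\text{even}})-\phi(\bigoplus_{\text{odd}})$ for the \emph{direct sum}, and since $\phi$ itself is not additive (think of the total Chern class or the Todd class), this does not decompose as the sum of the equations for $\widetilde\phi(\overline E_\ast)$ and $\widetilde\phi(\overline F_\ast)$. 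The identity you want as an intermediate step is precisely corollary \ref{cor:9}, which holds only for the Chern character (or other additive classes); corollary \ref{cor:12} is deliberately stated with $\widetilde\phi\bigl(\bigoplus_k\overline A_{\ast,k}[-k]\bigr)$, i.e.\ $\widetilde\phi$ of one direct-sum complex, rather than $\sum_k(-1)^k\widetilde\phi(\overline A_{\ast,k})$, because these differ in general. In addition, your first paragraph's attempted construction is abandoned midway (you note yourself the sequence of rows is not exact), and ``Lemma \ref{lemm:1} applied iteratively'' is never made precise, so even the reduction to the auxiliary statement is not carried out.

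The paper's proof avoids splitting $\widetilde\phi$ across direct sums altogether. Writing $\Tot_n$ for the total complex of the rows of index $\ge n$, for each $k$ one has the short exact sequence of acyclic complexes
\begin{displaymath}
  0\longrightarrow \Tot_{k+1}\longrightarrow \Tot_{k}\oplus \bigoplus_{l<k}
  \overline {A}_{l,\ast}[-l]\longrightarrow
  \bigoplus_{l\le k}\overline {A}_{l,\ast}[-l]\longrightarrow 0,
\end{displaymath}
which is orthogonally split in each degree. Applying lemma \ref{lemm:1} to this (the degreewise split short exact sequences in one direction, the three acyclic complexes in the other) makes one side vanish by the normalization axiom and yields
\begin{displaymath}
  \widetilde {\phi }\Bigl(\Tot_{k+1} \oplus \bigoplus_{l\le k}
  \overline {A}_{l,\ast}[-l]\Bigr)
  =\widetilde {\phi }\Bigl(\Tot_{k} \oplus \bigoplus_{l< k}
  \overline {A}_{l,\ast}[-l]\Bigr),
\end{displaymath}
where the quantity being tracked is always $\widetilde\phi$ of a single direct-sum complex. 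Induction from $k=k_0$ (where the second summand is empty and one gets $\widetilde\phi(\Tot\overline A_{\ast,\ast})$) to $k$ large (where $\Tot_{k+1}=0$ and one gets $\widetilde\phi(\bigoplus_l\overline A_{l,\ast}[-l])$) finishes the proof. If you repair your argument along these lines — keeping the direct sums intact inside one $\widetilde\phi$ and peeling off one row at a time — you recover the paper's proof; as written, the proposal only proves the corollary for additive classes such as $\ch$.
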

\begin{proof}
  Let $k_{0}$ be an integer such that $\overline A_{k,l}=0$ for
  $k<k_{0}$. For 
  any integer $n$ we denote by $\Tot_{n}=\Tot((\overline
  A_{k,l})_{k\ge n})$ the 
  total complex of the exact complex formed by the rows with index
  greater or equal than $n$. Then
  $\Tot_{k_{0}}=\Tot(\overline A_{\ast,\ast})$. For each $k$ there is
  an exact 
  sequence of complexes 
  \begin{displaymath}
    0\longrightarrow 
    \Tot_{k+1}\longrightarrow \Tot_{k}\oplus \bigoplus_{l<k}
    \overline {A}_{l,\ast}[-l]\longrightarrow 
    \bigoplus_{l\le k}
    \overline {A}_{l,\ast}[-l]\longrightarrow 0,
  \end{displaymath}
  which is orthogonally split in each degree. Therefore by lemma
  \ref{lemm:1} we obtain
  \begin{displaymath}
        \widetilde {\phi }(\Tot_{k} \oplus \bigoplus_{l<k}
    \overline {A}_{l,\ast}[-l])
        =\widetilde {\phi }(\Tot_{k-1} \oplus \bigoplus_{l\le k}
    \overline {A}_{l,\ast}[-l]).    
  \end{displaymath}
  Hence the result follows by induction.
\end{proof}

A particularly important characteristic class is the Chern
character. This class is additive for exact sequences. Specializing
lemma 
\ref{lemm:1} and corollary \ref{cor:12} to the Chern character we
obtain 

\begin{corollary} \label{cor:9} With the hypothesis of lemma
  \ref{lemm:1}, the following equality holds: 
  \begin{displaymath}
    \sum_{k}(-1)^{k}\widetilde{\ch}(\overline A_{k,\ast})=        
    \sum_{k}(-1)^{k}\widetilde{\ch}(\overline A_{\ast,k})=
    \widetilde{\ch}(\Tot \overline A_{\ast,\ast}).
  \end{displaymath}
\end{corollary}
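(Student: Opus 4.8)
The plan is to reduce the statement to Lemma~\ref{lemm:1} and Corollary~\ref{cor:12} by specializing the symmetric power series to the one defining the Chern character, and then to exploit the two features that make $\ch$ special: it is additive under direct sums, and its Bott--Chern classes obey the shift formula $\widetilde{\ch}(\overline E_{\ast}[k])=(-1)^{k}\widetilde{\ch}(\overline E_{\ast})$ from the lemma right after Definition~\ref{def:1}.

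First I would record that $\widetilde{\ch}$ is additive for direct sums of bounded acyclic complexes, i.e. $\widetilde{\ch}(\overline E_{\ast}\oplus \overline F_{\ast})=\widetilde{\ch}(\overline E_{\ast})+\widetilde{\ch}(\overline F_{\ast})$. This follows from the explicit formula of Definition~\ref{def:1}: the construction $\tr_{1}$ is compatible with degreewise direct sums of complexes, so choosing on $\tr_{1}(\overline E_{\ast}\oplus\overline F_{\ast})_{\ast}$ the orthogonal direct sum of admissible metrics on $\tr_{1}(\overline E_{\ast})_{\ast}$ and $\tr_{1}(\overline F_{\ast})_{\ast}$ (legitimate, since by Theorem~\ref{thm:3} the class does not depend on this choice) makes the integrand split because $\ch$ is additive. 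Consequently $\widetilde{\ch}(\bigoplus_{k\ \text{even}}\overline A_{k,\ast})=\sum_{k\ \text{even}}\widetilde{\ch}(\overline A_{k,\ast})$, and likewise for odd indices and for the complexes $\overline A_{\ast,k}$.

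For the first equality I would apply Lemma~\ref{lemm:1} with $\phi=\ch$ (its hypotheses are exactly the ones assumed here): using the additivity just established, the left-hand side of the lemma becomes $\sum_{k}(-1)^{k}\widetilde{\ch}(\overline A_{k,\ast})$ and the right-hand side becomes $\sum_{k}(-1)^{k}\widetilde{\ch}(\overline A_{\ast,k})$, which is the claimed identity. For the second equality I would apply Corollary~\ref{cor:12} with $\phi=\ch$ to the double complex $\overline A_{\ast,\ast}$ (which has exact rows, being an exact sequence of exact sequences, so $\Tot\overline A_{\ast,\ast}$ is acyclic), obtaining $\widetilde{\ch}(\Tot\overline A_{\ast,\ast})=\widetilde{\ch}(\bigoplus_{k}\overline A_{\ast,k}[-k])$; additivity turns the right-hand side into $\sum_{k}\widetilde{\ch}(\overline A_{\ast,k}[-k])$, and the shift formula rewrites each term as $(-1)^{k}\widetilde{\ch}(\overline A_{\ast,k})$, so that $\widetilde{\ch}(\Tot\overline A_{\ast,\ast})=\sum_{k}(-1)^{k}\widetilde{\ch}(\overline A_{\ast,k})$, matching the second member already computed.

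The proof is essentially bookkeeping on top of results already in hand, so I do not expect a genuine obstacle; the only point needing a word of justification is the additivity of $\widetilde{\ch}$ under direct sums, and even that is forced by the explicit formula together with the metric-independence built into Theorem~\ref{thm:3}.
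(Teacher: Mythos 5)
Your proposal is correct and follows the paper's intended argument: the paper obtains this corollary precisely by specializing lemma \ref{lemm:1} and corollary \ref{cor:12} to the Chern character, using its additivity (hence the additivity of $\widetilde{\ch}$ under direct sums) together with the shift formula $\widetilde{\ch}(\overline E_{\ast}[k])=(-1)^{k}\widetilde{\ch}(\overline E_{\ast})$. Your extra justification of the additivity of $\widetilde{\ch}$ via the compatibility of $\tr_{1}$ with direct sums and the metric independence in theorem \ref{thm:3} is exactly the point the paper leaves implicit.
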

\hfill $\square$

Our next aim is to extend the Bott-Chern classes associated to the
Chern character to metrized coherent sheaves. This extension is due to
Zha \cite{zha99:_rieman_roch}, although it is still unpublished.

\begin{definition}\label{metrcohsh}
A \textit{metrized coherent sheaf} $\overline{\mathcal{F}}$ on
$X$ is a pair $(\mathcal{F}, \overline{E}_{\ast}\to
\mathcal{F})$ where $\mathcal{F}$ is a coherent sheaf on
$X$ and 
\begin{displaymath}
 0\to \overline{E}_{n}\to \overline{E}_{n-1}\to
\dots \to \overline{E}_{0}\to \mathcal{F} \to 0
\end{displaymath}
is
a finite resolution by hermitian vector bundles of the
coherent sheaf $\mathcal{F}$. This resolution is also called the metric of
$\overline{\mathcal{F}}$.

If $\overline E$ is a hermitian vector bundle, we will also denote by
$\overline E$ the metrized coherent sheaf $(E,\overline
E\overset{\Id}{\longrightarrow} E)$.
\end{definition}

Note that the coherent sheaf $0$ may have non trivial metrics. In fact,
any exact sequence of hermitian vector bundles
\begin{displaymath}
  0\to \overline {A}_{n}\to \dots \to \overline{A}_{0}\to 0\to 0
\end{displaymath}
can be seen as a metric on $0$. It will be denoted
$\overline {0}_{A_{\ast}}$. A metric on $0$ is said to be \emph{orthogonally split}
if the exact sequence is orthogonally split. 

A morphism of metrized coherent sheaves $\overline{\mathcal{F}}_1\to
\overline{\mathcal{F}}_2$ is just a morphism of sheaves
$\mathcal{F}_1\to \mathcal{F}_2$. A sequence of metrized coherent
sheaves
$$\overline{\varepsilon}\colon\qquad \ldots \longrightarrow
\overline{\mathcal{F}}_{n+1} \longrightarrow
\overline{\mathcal{F}}_n \longrightarrow
\overline{\mathcal{F}}_{n-1} \longrightarrow \ldots$$ 
is said to be
exact if it is exact as a sequence of coherent sheaves.

\begin{definition}
  Let  $\overline{\mathcal{F}}=(\mathcal{F}, \overline{E}_{\ast}\to
\mathcal{F})$ be a metrized coherent sheaf. Then the \emph{Chern character
form} associated to $\overline{\mathcal{F}}$ is given by
\begin{displaymath}
  \ch(\overline{\mathcal{F}}) =\sum_{i}(-1)^{i}\ch(\overline E_{i}).
\end{displaymath}
\end{definition}

\begin{definition}
  \emph{An exact sequence of metrized coherent sheaves with compatible
    metrics}  
  is a commutative diagram 
  \begin{equation}
    \label{eq:100}
    \begin{array}{ccccccccc}
      && \vdots && \vdots && \vdots && \\
      && \downarrow && \downarrow && \downarrow && \\
      0 & \rightarrow & \overline{E}_{n,1} & \rightarrow &
      \ldots & \rightarrow &
      \overline{E}_{0,1} & \rightarrow & 0\\
      && \downarrow && \downarrow && \downarrow && \\
      0 & \rightarrow & \overline{E}_{n,0} & \rightarrow &
      \ldots & \rightarrow &
      \overline{E}_{0,0} & \rightarrow & 0 \\
      && \downarrow && \downarrow && \downarrow && \\
      0 & \rightarrow & \mathcal{F}_{n} & \rightarrow &
      \ldots & \rightarrow &
      \mathcal{F}_{0} & \rightarrow & 0 \\
      && \downarrow && \downarrow && \downarrow && \\
      && 0 && 0 && 0 &&
    \end{array}  
  \end{equation}
  where all the rows and columns are exact. The columns of this
  diagram are the individual metrics of each coherent sheaf. We will
  say that an exact sequence with compatible metrics is
  \emph{orthogonally split} if each row of vector bundles is an
  orthogonally split exact 
  sequence of hermitian vector bundles.
\end{definition}

As in the case of exact sequences of hermitian vector bundles, the
Chern character form is not compatible with exact sequences of
metrized coherent sheaves and we can define a secondary Bott-Chern
character which measures the lack of compatibility between the
metrics.
\begin{theorem}\label{zhabc}
  \begin{enumerate}
  \item[1)] There is a unique way to attach to every finite exact sequence
    of metrized coherent sheaves with compatible metrics
    $$\overline{\varepsilon}\colon\qquad 0\to 
    \overline{\mathcal{F}}_{n} \to \dots
    \to \overline{\mathcal{F}}_0 \to 0
    $$ on a complex manifold $X$
    a Bott-Chern secondary character
    $$\widetilde{\ch}(\overline{\varepsilon})\in
    \bigoplus_{p}\widetilde{\mathcal{D}}^{2p-1}(X,p)$$  
    such that the
    following axioms are 
    satisfied:
    \begin{enumerate}
    \item[(i)]\label{item:24} (Differential equation) 
      \begin{displaymath}
        \dd_{\mathcal{D}}\widetilde{\ch}(\overline{\varepsilon })=
        \sum_{k}(-1)^{k}\ch(\overline{\mathcal{F}_{k}}).  
      \end{displaymath}
    \item[(ii)]\label{item:25} (Functoriality) If $f\colon X'\longrightarrow
      X$ is a morphism of complex 
      manifolds, that is tor-independent from the coherent sheaves
      $\mathcal{F}_{k}$, then
      \begin{displaymath}
        f^{\ast}(\widetilde{\ch})(\overline{\varepsilon })=\widetilde
        \ch(f^{\ast}\overline{\varepsilon }), 
      \end{displaymath}
      where the exact sequence $f^{\ast}\overline{\varepsilon }$
      exists thanks to the tor-independence.
    \item [(iii)]\label{item:26} (Horizontal normalization) If
      $\overline{\varepsilon }$ is orthogonally split then
      \begin{displaymath}
        \widetilde{\ch}(\overline{\varepsilon })=0.
      \end{displaymath}
    \end{enumerate}
  \item[2)] There is a unique way to attach to every finite exact sequence
    of metrized coherent sheaves
    $$\overline{\varepsilon}\colon\qquad 0\to 
    \overline{\mathcal{F}}_{n} \to \dots
    \to \overline{\mathcal{F}}_0 \to 0
    $$ on a complex manifold $X$
    a Bott-Chern secondary character
    $$\widetilde{\ch}(\overline{\varepsilon})\in
    \bigoplus_{p}\widetilde{\mathcal{D}}^{2p-1}(X,p)$$   
    such that the axioms (i), (ii) and
    (iii) above and the 
    axiom (iv) below are 
    satisfied:
    \begin{enumerate}
    \item [(iv)] (Vertical normalization) For every bounded complex of
      hermitian vector bundles 
      \begin{displaymath}
        \dots \rightarrow \overline A_{k}\rightarrow
        \dots \rightarrow \overline A_{0}\rightarrow 0
      \end{displaymath}
      that is orthogonally split,
      and every bounded complex of metrized coherent sheaves
    $$\overline{\varepsilon}\colon\qquad 0\to 
    \overline{\mathcal{F}}_{n} \to \dots
    \to \overline{\mathcal{F}}_0 \to 0
    $$
    where the metrics are given by $\overline E_{i,\ast}\rightarrow
    \mathcal{F}_{i}$, if, for some $i_{0}$ we denote
    \begin{displaymath}
      \overline {\mathcal{F}}_{i_{0}}'=(\mathcal{F}_{i_{0}},
      \overline E_{i_{0},\ast}\oplus \overline A_{\ast}\rightarrow
      \mathcal{F}_{i_{0}})
    \end{displaymath}
    and
    $$\overline{\varepsilon}'\colon\qquad 0\to 
    \overline{\mathcal{F}}_{n} \to \dots \to \overline
    {\mathcal{F}}_{i_{0}}' \to \dots
    \to \overline{\mathcal{F}}_0 \to 0,
    $$
    then $\widetilde{\ch}(\overline{\varepsilon }')=
    \widetilde{\ch}(\overline{\varepsilon })$.
    \end{enumerate}
  \end{enumerate}
\end{theorem}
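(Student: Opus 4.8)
The plan is to reduce everything to the theory of ordinary Bott--Chern classes of Theorem~\ref{thm:3} and to Corollary~\ref{cor:12}.

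For part~1), present the defining data $\overline\varepsilon$ as a bicomplex $\overline E_{\ast,\ast}$ of hermitian vector bundles, namely the array~\eqref{eq:100} with the first index matching the index of $\overline{\mathcal F}_\ast$ and the second index the resolution degree. Its rows are exact, so $\Tot\overline E_{\ast,\ast}$ is a bounded acyclic complex of hermitian vector bundles, and I would \emph{define} $\widetilde\ch(\overline\varepsilon):=\widetilde\ch(\Tot\overline E_{\ast,\ast})$. Axiom (i) is then just bookkeeping of the differential equation of Theorem~\ref{thm:3}, since $\dd_{\mathcal D}\widetilde\ch(\Tot\overline E_{\ast,\ast})=\sum_{i,j}(-1)^{i+j}\ch(\overline E_{i,j})=\sum_i(-1)^i\ch(\overline{\mathcal F}_i)$. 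Axiom (ii) is functoriality of ordinary Bott--Chern classes, the key point being that tor-independence of $f$ from the $\mathcal F_i$ is exactly what guarantees that $f^\ast$ of~\eqref{eq:100} is again a diagram with exact columns, i.e.\ the one attached to $f^\ast\overline\varepsilon$. Axiom (iii) follows from Corollary~\ref{cor:12}, which rewrites $\widetilde\ch(\Tot\overline E_{\ast,\ast})=\sum_j(-1)^j\widetilde\ch(\overline E_{\ast,j})$, together with the vanishing in Theorem~\ref{thm:3} applied to each orthogonally split row. For uniqueness I would mimic the proof of Theorem~\ref{thm:3}: applying the deformation of resolutions of \S 5 simultaneously to all columns of~\eqref{eq:100} produces an exact sequence of metrized coherent sheaves with compatible metrics over $X\times\mathbb{P}^1$ that restricts to $\overline\varepsilon$ over $0$ and to an orthogonally split sequence over $\infty$; then axioms (i)--(iii) force $\widetilde\ch(\overline\varepsilon)$ to equal $\frac{1}{2\pi i}\int_{\mathbb{P}^1}\frac{-1}{2}\log(t\bar t)\bullet(\cdot)$ of the relevant Chern forms, a formula that does not refer to the chosen theory.

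For part~2) the new difficulty is that an arbitrary exact sequence of metrized coherent sheaves does not come with a diagram~\eqref{eq:100}, because the given resolutions of the $\mathcal F_i$ need not fit together. I would first treat the \emph{change of metric} case: for $\mathcal F$ with two metrics $\mathfrak m=(\overline E_\ast\to\mathcal F)$ and $\mathfrak m'=(\overline E'_\ast\to\mathcal F)$, choose a common refinement $\overline E''_\ast\to\mathcal F$ dominating both and set $\widetilde\ch(\mathcal F;\mathfrak m,\mathfrak m'):=\widetilde\ch(\cone(\overline E''_\ast\to\overline E_\ast))-\widetilde\ch(\cone(\overline E''_\ast\to\overline E'_\ast))$ (both mapping cones are bounded acyclic); one checks this is independent of $\overline E''$, satisfies $\dd_{\mathcal D}\widetilde\ch(\mathcal F;\mathfrak m,\mathfrak m')=\ch(\mathcal F,\mathfrak m)-\ch(\mathcal F,\mathfrak m')$, and is unchanged if $\mathfrak m$ or $\mathfrak m'$ is enlarged by a split acyclic complex. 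Then, for a general exact sequence $\overline\varepsilon\colon 0\to\overline{\mathcal F}_n\to\dots\to\overline{\mathcal F}_0\to 0$, an iterated horseshoe-lemma construction over the short exact subquotients of the complex $\mathcal F_\ast$ produces a diagram~\eqref{eq:100}, i.e.\ an exact sequence $\overline\varepsilon^{\mathrm c}$ with compatible metrics and with new metrics $\mathfrak m^{\mathrm c}_i$ on the \emph{same} sheaves $\mathcal F_i$, and I would set
\begin{displaymath}
\widetilde\ch(\overline\varepsilon):=\widetilde\ch(\overline\varepsilon^{\mathrm c})+\sum_i(-1)^i\widetilde\ch(\mathcal F_i;\mathfrak m_i,\mathfrak m^{\mathrm c}_i),
\end{displaymath}
the first term being defined by part~1). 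Adding up the differential equations gives (i); functoriality (ii) and horizontal normalization (iii) are inherited from part~1) and from the change-of-metric classes; and (iv) holds because enlarging one $\mathfrak m_{i_0}$ by a split acyclic complex changes neither $\mathfrak m^{\mathrm c}_{i_0}$ nor $\widetilde\ch(\overline\varepsilon^{\mathrm c})$ and leaves $\widetilde\ch(\mathcal F_{i_0};\mathfrak m_{i_0},\mathfrak m^{\mathrm c}_{i_0})$ unchanged. For uniqueness, axiom (iv) together with the common-refinement reduction shows that any theory satisfying (i)--(iv) is determined by its values on sequences with compatible metrics, which are pinned down by part~1).

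The step I expect to be the main obstacle is the well-definedness in part~2): one must show that $\widetilde\ch(\overline\varepsilon)$ does not depend on the auxiliary choices in the horseshoe construction (the order of the short exact subquotients, the splittings, the auxiliary metrics, and the chosen common refinements). This is precisely where axiom (iv) does its work — it is what lets one connect any two such constructions by inserting split acyclic complexes together with harmless changes of metric — and it is the reason the compatible-metrics version needs only three axioms while the general version needs the fourth. A lesser technical point is carrying out the deformation of resolutions of \S 5 for the whole bicomplex~\eqref{eq:100} at once, rather than for a single resolution, in the uniqueness part of~1).
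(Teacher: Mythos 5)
Your construction is, in substance, the paper's own: for part~1) the paper defines the class by $\sum_{j}(-1)^{j}\widetilde{\ch}(\overline E_{\ast,j})$ (equation \eqref{eq:102}), which by corollary \ref{cor:12} and additivity of $\ch$ is the same as your $\widetilde{\ch}(\Tot\overline E_{\ast,\ast})$; and for part~2) the paper also reduces to the compatible-metrics case and corrects by change-of-metric terms computed from a common refinement of resolutions (the terms $\cht(\overline B_{i,\ast})-\cht(\overline A_{i,\ast})$ and $\cht(\overline{\eta}_{i,j})-\cht(\overline{\mu}_{i,j})$ in the defining formula \eqref{eq:111} are exactly your mapping-cone classes $\widetilde{\ch}(\mathcal F_i;\mathfrak m_i,\mathfrak m_i^{c})$), with independence of the auxiliary choices coming from corollary \ref{cor:9}. (One small attribution slip: that independence is \emph{not} supplied by axiom (iv), which is something to be verified, but by corollary \ref{cor:9}; axiom (iv), together with (i) and (ii), is what is used in the \emph{uniqueness} direction, where one first shows that enlarging a metric by an arbitrary acyclic complex $\overline A_\ast$ changes the class by $(-1)^{i_0}\cht(\overline A_\ast)$ and then, by a further deformation argument, derives the formula \eqref{eq:107} for a general change of resolution.)

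The one step that fails as written is the uniqueness argument in part~1). You propose to apply the deformation of resolutions of \S 5 ``simultaneously to all columns'' of \eqref{eq:100}. The columns are the resolutions $\overline E_{i,\ast}\to\mathcal F_i$ of arbitrary coherent sheaves: theorem \ref{thm:5} does not apply to them (it is stated for resolutions of $i_\ast F$ for a closed immersion $i$), the resulting spaces $W$ would in general not be $X\times\mathbb P^1$ and would differ for different $i$, and even formally the fibre at $\infty$ would not be orthogonally split in the sense of axiom (iii), so the homotopy between $\overline\varepsilon$ and a split sequence would not be produced. What must be deformed are the \emph{rows}: apply the first transgression $\tr_1$ of definitions \ref{def:12} and \ref{def:13} to each exact sequence of hermitian vector bundles $\overline E_{\ast,j}$, and define $\widetilde{\mathcal F}_i$ as the cokernel of the induced vertical map $\widetilde E_{i,1}\to\widetilde E_{i,0}$; a diagram chase then gives a diagram of type \eqref{eq:100} over $X\times\mathbb P^1$ whose restrictions at $0$ and $\infty$ are tor-independent, isometric to $\overline\varepsilon$ at $0$ and orthogonally split at $\infty$, after which the standard $\tfrac{1}{2\pi i}\int_{\mathbb P^1}\tfrac{-1}{2}\log(t\bar t)\bullet(\cdot)$ argument forces formula \eqref{eq:102}. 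With this correction (and with the well-definedness points settled via corollary \ref{cor:9}, as you anticipate), your proof coincides with the paper's.
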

\begin{proof}
  \emph{1)} The uniqueness is proved using the standard
  deformation argument. By definition, the metrics of the coherent
  sheaves form a diagram like (\ref{eq:100}). On $X\times
  \mathbb{P}^{1}$, for each $j\ge 0$ we consider the exact sequences
  $\widetilde E_{\ast,j}=\tr_{1}(E_{\ast,j})$ associated to the rows
  of the diagram with the hermitian metrics
  of definition \ref{def:13}. Then, for each $i,j$ there are maps
  $\dd\colon\widetilde E_{i,j}\to \widetilde E_{i-1,j}$, and $\delta
  \colon\widetilde E_{i,j}\to \widetilde E_{i,j-1}$. We denote 
  $$\widetilde {\mathcal{F}}_{i}=
  \Coker(\delta \colon\widetilde E_{i,1}\to \widetilde E_{i,0}).$$
  Using the definition of $\tr_{1}$ and diagram chasing one can prove
  that there is a commutative diagram  
  \begin{equation} \label{eq:101}
    \begin{array}{ccccccccc}
      && \vdots && \vdots && \vdots && \\
      && \downarrow && \downarrow && \downarrow && \\
      0 & \rightarrow & \widetilde{E}_{n,1} & \rightarrow &
      \ldots & \rightarrow &
      \widetilde{E}_{0,1} & \rightarrow & 0\\
      && \downarrow && \downarrow && \downarrow && \\
      0 & \rightarrow & \widetilde{E}_{n,0} & \rightarrow &
      \ldots & \rightarrow &
      \widetilde{E}_{0,0} & \rightarrow & 0 \\
      && \downarrow && \downarrow && \downarrow && \\
      0 & \rightarrow & \widetilde{\mathcal{F}}_{n} & \rightarrow &
      \ldots & \rightarrow &
      \widetilde{\mathcal{F}}_{0} & \rightarrow & 0 \\
      && \downarrow && \downarrow && \downarrow && \\
      && 0 && 0 && 0 &&
    \end{array}  
  \end{equation}
  where all the rows and columns are exact. In particular this implies
  that the inclusions $i_{0}\colon X\to X\times \{0\}\to X\times
  \mathbb{P}^{1}$ and $i_{\infty}\colon X\to X\times \{\infty\}\to X\times
  \mathbb{P}^{1}$ are tor-independent from the sheaves $\widetilde
  {\mathcal{F}}_{i}$. But $i_{0}^{\ast}\widetilde
  {\mathcal{F}}_{\ast}$ is isometric with $\overline
  {\mathcal{F}}_{\ast}$ and $i_{\infty}^{\ast}\widetilde
  {\mathcal{F}}_{\ast}$ is orthogonally split. Hence, by the standard
  argument, axioms (i), (ii) and (iii) imply that
  \begin{equation}
    \label{eq:102}
    \widetilde{\ch}(\overline{\varepsilon})=
    \sum_{j}(-1)^{j}\widetilde{\ch}(\overline E_{\ast,j}).
  \end{equation}
  To prove the existence we use equation (\ref{eq:102}) as
  definition. Then the properties of the Bott-Chern classes of exact
  sequences of hermitian vector bundles imply that axioms (i), (ii)
  and (iii) are satisfied.

  \emph{Proof of 2)}. We first assume that such theory exists. Let
      \begin{displaymath}
        \dots \rightarrow \overline A_{k}\rightarrow
        \dots \rightarrow \overline A_{0}\rightarrow 0
      \end{displaymath}
      be a bounded complex of hermitian vector bundles, non
      necessarily orthogonally split,  
      and
    $$\overline{\varepsilon}\colon\qquad 0\to 
    \overline{\mathcal{F}}_{n} \to \dots
    \to \overline{\mathcal{F}}_0 \to 0
    $$
     a bounded complex of metrized coherent sheaves
    where the metrics are given by $\overline E_{i,\ast}\rightarrow
    \mathcal{F}_{i}$. As in axiom (iv), for some $i_{0}$ we denote
    \begin{displaymath}
      \overline {\mathcal{F}}_{i_{0}}'=(\mathcal{F}_{i_{0}},
      \overline E_{i,\ast}\oplus \overline A_{\ast}\rightarrow
      \mathcal{F}_{i_{0}})
    \end{displaymath}
    and
    $$\overline{\varepsilon}'\colon\qquad 0\to 
    \overline{\mathcal{F}}_{n} \to \dots \to \overline
    {\mathcal{F}}_{i_{0}}' \to \dots
    \to \overline{\mathcal{F}}_0 \to 0.
    $$
    
    By axioms (i), (ii) and (iv), the class
    $(-1)^{i_{0}}(\widetilde{\ch}(\overline{\varepsilon }')- 
    \widetilde{\ch}(\overline{\varepsilon
    }))$ satisfies the properties that characterize
    $\widetilde{\ch}(A_{\ast})$. Therefore
    $\widetilde{\ch}(\overline{\varepsilon }')=
    \widetilde{\ch}(\overline{\varepsilon
    })+(-1)^{i_{0}}\widetilde{\ch}(A_{\ast})$.  
    
    Fix again a number $i_{0}$ and assume that there is an exact
    sequence of resolutions
     \begin{equation}\label{eq:110}
   \xymatrix{
     0\ar[r]&
     \overline A_{\bullet}\ar[d]\ar[r]&
     \overline E_{i_{0},\ast}'\ar[d]\ar[r]&
     \overline E_{i_{0},\ast}\ar[d]\ar[r]&
     0\\
     & 0\ar[r]&
      \mathcal{F}_{i_{0}} \ar@{=}[r] &
       \mathcal{F}_{i_{0}} &
}
 \end{equation}

Let now $\overline{\varepsilon} '$ denote the exact sequence
$\overline{\varepsilon }$ but with the metric $\overline
E_{i_{0},\ast}'$ in the position $i_{0}$. Let $\overline{\eta}_{j}$
denote the $j$-th row of the diagram \eqref{eq:110}. Again using a
deformation 
argument one sees that
\begin{equation}
  \label{eq:107}
  \cht(\overline{\varepsilon }')-\cht(\overline{\varepsilon })=
  (-1)^{i_{0}}\left(\cht(\overline A_{\ast})-
    \sum_{j}(-1)^{j}\cht(\eta_{j})
  \right).
\end{equation}

Choose now a compatible system of metrics
  \begin{equation}
    \label{eq:103}
    \begin{array}{ccccccccc}
      && \vdots && \vdots && \vdots && \\
      && \downarrow && \downarrow && \downarrow && \\
     0 & \rightarrow& \overline{D}_{n,1} & \rightarrow &
      \ldots & \rightarrow &
      \overline{D}_{0,1} & \rightarrow & 0\\
      && \downarrow && \downarrow && \downarrow && \\
      0&\rightarrow & \overline{D}_{n,0} & \rightarrow &
      \ldots & \rightarrow &
      \overline{D}_{0,0} & \rightarrow & 0 \\
      && \downarrow && \downarrow && \downarrow && \\
      0 & \rightarrow & \mathcal{F}_{n} & \rightarrow &
      \ldots & \rightarrow &
      \mathcal{F}_{0} & \rightarrow & 0 \\
      && \downarrow && \downarrow && \downarrow && \\
      && 0 && 0 && 0 &&
    \end{array}    
  \end{equation}
we denote by $\overline{\lambda}
_{j}$ each row of the above diagram.
For each $i$, choose a resolution $\overline E'_{i,\ast}\longrightarrow
\mathcal{F}_{i}$ such that there exist exact sequences of resolutions 
     \begin{equation} \label{eq:108}
   \xymatrix{
     0\ar[r]&
     \overline A_{i,\ast}\ar[d]\ar[r]&
     \overline E_{i,\ast}'\ar[d]\ar[r]&
     \overline E_{i,\ast}\ar[d]\ar[r]&
     0\\
     & 0\ar[r]&
      \mathcal{F}_{i} \ar@{=}[r] &
       \mathcal{F}_{i} &
}
 \end{equation}
and 
     \begin{equation} \label{eq:109}
   \xymatrix{
     0\ar[r]&
     \overline B_{i,\ast}\ar[d]\ar[r]&
     \overline E_{i,\ast}'\ar[d]\ar[r]&
     \overline D_{i,\ast}\ar[d]\ar[r]&
     0\\
     & 0\ar[r]&
      \mathcal{F}_{i} \ar@{=}[r] &
       \mathcal{F}_{i} &
}
 \end{equation}
We denote by $\overline{\eta}_{i,j}$ each row of the diagram
\eqref{eq:108} and by $\overline{\mu}_{i,j}$ each row of the diagram
\eqref{eq:109}.  
Then, by \eqref{eq:107} and \eqref{eq:102}, we have
\begin{multline}
  \label{eq:111}
  \cht(\overline{\varepsilon })=\sum_{j}(-1)^{j}\cht(\overline{\lambda
  }_{j})
  + \sum _{i}(-1)^{i}(\cht(\overline B_{i,\ast})-\cht(\overline A_{i,\ast}))\\
  +\sum_{i,j}(-1)^{i+j}(\cht(\overline{\eta}_{i,j})-\cht(\overline{\mu}_{i,j})) 
\end{multline}
Thus, $\cht(\overline{\varepsilon })$ is uniquely determined by axioms
(i) to (iv).
  To prove the existence we use equation (\ref{eq:111}) as
  definition.   
  We have to show that this definition is independent of the choices of
  the new resolutions. This
  independence follows from corollary \ref{cor:9}. Once we know that
  the Bott-Chern classes are well defined, it is clear that they
  satisfy axioms (i), (ii), (iii) and (iv).
\end{proof}

\begin{proposition} \label{prop:21}
    (Compatibility with exact squares) If
      $$\begin{array}{ccccccccc}
        && \vdots && \vdots && \vdots && \\
        && \downarrow && \downarrow && \downarrow && \\
        \dots & \rightarrow & \overline{\mathcal{F}}_{n+1,m+1} & \rightarrow &
        \overline{\mathcal{F}}_{n+1,m} & \rightarrow &
        \overline{\mathcal{F}}_{n+1,m-1} & \rightarrow & \dots \\
        && \downarrow && \downarrow && \downarrow && \\
        \dots & \rightarrow & \overline{\mathcal{F}}_{n,m+1} & \rightarrow &
        \overline{\mathcal{F}}_{n,m} & \rightarrow &
        \overline{\mathcal{F}}_{n,m-1} & \rightarrow & \dots \\
        && \downarrow && \downarrow && \downarrow && \\
        \dots & \rightarrow & \overline{\mathcal{F}}_{n-1,m+1} & \rightarrow &
        \overline{\mathcal{F}}_{n-1,m} & \rightarrow &
        \overline{\mathcal{F}}_{n-1,m-1} & \rightarrow & \dots \\
        && \downarrow && \downarrow && \downarrow && \\
        && \vdots && \vdots && \vdots &&
      \end{array}$$
      is a bounded commutative diagram of metrized coherent sheaves,
      where all the 
      rows \dots $(\overline{\varepsilon}_{n-1})$,
      $(\overline{\varepsilon}_{n})$, 
      $(\overline{\varepsilon}_{n+1})$, \dots and all the columns
      $(\overline{\eta}_{m-1})$, $(\overline{\eta}_{m})$,
      $(\overline{\eta}_{m+1})$ 
      are exact, then
      $$\sum_{n}(-1)^{n}\widetilde{\ch}(\overline{\varepsilon}_{n}) =
      \sum_{m}(-1)^{m}\widetilde{\ch}(\overline{\eta}_m)
      .$$  
\end{proposition}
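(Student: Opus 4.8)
The plan is to reduce the identity to its analogue for exact sequences of hermitian vector bundles, Corollary \ref{cor:9}, by passing to a compatible choice of resolutions for the whole bidiagram $\{\mathcal{F}_{n,m}\}$. As a first step I would show that the difference $\sum_{n}(-1)^{n}\widetilde{\ch}(\overline{\varepsilon}_{n})-\sum_{m}(-1)^{m}\widetilde{\ch}(\overline{\eta}_{m})$ depends only on the underlying diagram of coherent sheaves and not on the chosen resolutions. Indeed, by the change-of-resolution analysis in the proof of Theorem \ref{zhabc}, part 2) — formula \eqref{eq:107} applied to the situation \eqref{eq:110} — replacing the resolution of a single sheaf $\mathcal{F}_{n_{0},m_{0}}$ modifies $\widetilde{\ch}(\overline{\varepsilon}_{n_{0}})$ by $(-1)^{m_{0}}\Delta$ and $\widetilde{\ch}(\overline{\eta}_{m_{0}})$ by $(-1)^{n_{0}}\Delta$, where $\Delta$ is one and the same class depending only on the two resolutions of $\mathcal{F}_{n_{0},m_{0}}$ (the sign being dictated by the position of that sheaf in the respective exact sequence, namely $m_{0}$ in its row and $n_{0}$ in its column). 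Hence both alternating sums change by the same amount $(-1)^{n_{0}+m_{0}}\Delta$, so their difference is unaffected; since any two systems of resolutions are linked by a chain of such single-sheaf moves (any two resolutions of a fixed sheaf admitting a common refinement, exactly as in the well-definedness argument for \eqref{eq:111}), the claim follows. We may therefore assume that the metrics fit into a bounded triple complex of hermitian vector bundles $\{\overline{E}_{n,m,k}\}$, with $k$ the resolution index, such that $\overline{E}_{n,m,\ast}\to\mathcal{F}_{n,m}$ is the chosen metric for each $(n,m)$ and all the slices $\overline{E}_{n,\ast,k}$ and $\overline{E}_{\ast,m,k}$ (for fixed $n,k$, resp.\ fixed $m,k$) are exact sequences of hermitian vector bundles; the existence of such a compatible triple complex follows by applying the resolution construction of \S5 (deformation of resolutions) iteratively in the two coherent-sheaf directions, in the same way that diagram \eqref{eq:101} is produced from \eqref{eq:100} in the proof of Theorem \ref{zhabc}.

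For a compatible system, formula \eqref{eq:102} from the proof of Theorem \ref{zhabc}, part 1), applied to each row and to each column gives $\widetilde{\ch}(\overline{\varepsilon}_{n})=\sum_{k}(-1)^{k}\widetilde{\ch}(\overline{E}_{n,\ast,k})$ and $\widetilde{\ch}(\overline{\eta}_{m})=\sum_{k}(-1)^{k}\widetilde{\ch}(\overline{E}_{\ast,m,k})$, where the arguments are now genuine exact sequences of hermitian vector bundles. Substituting and interchanging the finite sums, it suffices to prove, for each fixed $k$, that $\sum_{n}(-1)^{n}\widetilde{\ch}(\overline{E}_{n,\ast,k})=\sum_{m}(-1)^{m}\widetilde{\ch}(\overline{E}_{\ast,m,k})$. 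But $\{\overline{E}_{n,m,k}\}_{n,m}$ is precisely a bounded exact sequence of bounded exact sequences of hermitian vector bundles, so this is exactly Corollary \ref{cor:9} (with $\overline{A}_{n,\ast}=\overline{E}_{n,\ast,k}$ and $\overline{A}_{\ast,m}=\overline{E}_{\ast,m,k}$). Summing over $k$ yields the proposition; note also that, as a consistency check, taking $\dd_{\mathcal{D}}$ of both sides gives $\sum_{n,m}(-1)^{n+m}\ch(\overline{\mathcal{F}}_{n,m})$ either way, by axiom (i) of Theorem \ref{zhabc}.

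The main obstacle is the first step: constructing a compatible triple complex of hermitian resolutions exact in all three directions, and keeping the sign bookkeeping straight so that passing to it leaves the difference of the two sides untouched. Everything else is formal once this is in place — the translation formula \eqref{eq:102} and the vector-bundle input Corollary \ref{cor:9} do the rest — and the only genuinely new point beyond the proof of Theorem \ref{zhabc} is the observation that a change of resolution at a single sheaf perturbs the row-sum and the column-sum by the same class. I would therefore set up the whole argument so as to run in parallel with the proof of Theorem \ref{zhabc}, part 2), invoking its intermediate formulas \eqref{eq:107} and \eqref{eq:111} rather than reproving them.
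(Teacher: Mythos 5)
Your proposal is correct and follows essentially the same route as the paper, whose proof is exactly the combination you use: the change-of-resolution bookkeeping encoded in \eqref{eq:107}/\eqref{eq:111} (your sign argument that a single-sheaf replacement perturbs both alternating sums by $(-1)^{n_{0}+m_{0}}\Delta$ is right), followed by \eqref{eq:102} and corollary \ref{cor:9} applied slice-by-slice to a compatible system of resolutions. The only small blemish is the justification of the compatible triple complex: it is obtained by the same (standard, unproved) choice the paper makes in \eqref{eq:103}, not by the deformation construction of \S 5 or the passage from \eqref{eq:100} to \eqref{eq:101}, which deforms an already compatible system rather than producing one.
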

\begin{proof}
  This follows from equation \eqref{eq:111} and corollary \ref{cor:9}.
\end{proof}

We will use the notation of definition \ref{def:11} also in the case
of metrized coherent sheaves.

It is easy to verify the following result.

\begin{proposition}\label{prop:16} Let 
$$(\overline{\varepsilon})\qquad \ldots \longrightarrow
\overline{E}_{n+1} \longrightarrow
\overline{E}_n \longrightarrow
\overline{E}_{n-1} \longrightarrow \ldots$$ 
be a finite exact sequence of hermitian vector bundles. Then the
Bott-Chern classes obtained by theorem \ref{zhabc} and by theorem
\ref{thm:3} agree.
\hfill$\square$
\end{proposition}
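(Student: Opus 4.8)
The plan is to deduce the equality from the uniqueness statement in Theorem~\ref{thm:3}, applied to the symmetric power series $\phi=\ch$. Regard the bounded exact sequence
$$(\overline{\varepsilon})\colon\ \cdots\longrightarrow\overline E_{n+1}\longrightarrow\overline E_n\longrightarrow\overline E_{n-1}\longrightarrow\cdots$$
of hermitian vector bundles as an exact sequence of metrized coherent sheaves in which each sheaf $E_i$ is equipped with the trivial resolution $\overline E_i\overset{\Id}{\longrightarrow}E_i$. These trivial resolutions form a compatible system of metrics, so $(\overline{\varepsilon})$ lies in the range of Theorem~\ref{zhabc}; write $T(\overline{\varepsilon})\in\bigoplus_p\widetilde{\mathcal D}^{2p-1}(X,p)$ for the Bott--Chern secondary character it produces. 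It then suffices to check that the assignment $(\overline{\varepsilon})\mapsto T(\overline{\varepsilon})$, restricted to bounded exact sequences of hermitian vector bundles, satisfies the three defining properties \ref{item:12}, \ref{item:13}, \ref{item:14} of $\widetilde{\ch}(\overline{\varepsilon})$ from Theorem~\ref{thm:3}; the uniqueness there then forces the two classes to coincide.

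Each of the three checks is immediate from the corresponding axiom of Theorem~\ref{zhabc}. For the differential equation \ref{item:12}: the resolutions being trivial gives $\ch(\overline{\mathcal F}_k)=\ch(\overline E_k)$, so axiom~(i) yields $\dd_{\mathcal D}T(\overline{\varepsilon})=\sum_k(-1)^k\ch(\overline E_k)$, which by additivity of the Chern character equals $\ch(\bigoplus_{i\text{ even}}\overline E_i)-\ch(\bigoplus_{i\text{ odd}}\overline E_i)$. For functoriality \ref{item:13}: every holomorphic map is tor-independent from locally free sheaves, and pullback sends the trivial resolution of $E_i$ to that of $f^{\ast}E_i$, so axiom~(ii) gives $f^{\ast}T(\overline{\varepsilon})=T(f^{\ast}\overline{\varepsilon})$. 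For normalization \ref{item:14}: if $(\overline{\varepsilon})$ is orthogonally split, then in the compatible-metric diagram \eqref{eq:100} attached to the trivial resolutions the only nonzero row of vector bundles is $(\overline{\varepsilon})$ itself, so this diagram is orthogonally split as an exact sequence with compatible metrics and axiom~(iii) gives $T(\overline{\varepsilon})=0$.

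Alternatively, the equality can be read off directly from the proof of Theorem~\ref{zhabc}: for the trivial resolutions the diagram \eqref{eq:100} reduces to $(\overline{\varepsilon})$ in resolution degree $0$ with zero rows otherwise, so formula \eqref{eq:102} becomes $T(\overline{\varepsilon})=\sum_j(-1)^j\widetilde{\ch}(\overline E_{\ast,j})=\widetilde{\ch}(\overline{\varepsilon})$, the right-hand $\widetilde{\ch}$ being the class of Theorem~\ref{thm:3}; and passing to part~2) of Theorem~\ref{zhabc} changes nothing, since in \eqref{eq:111} one may take the auxiliary resolutions $\overline E'_{i,\ast}=\overline E_{i,\ast}$ with $\overline A_{i,\ast}=\overline B_{i,\ast}=0$, making all correction terms involve only orthogonally split rows $0\to 0\to\overline E_{i,j}\overset{\Id}{\to}\overline E_{i,j}\to 0$, hence zero. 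There is no real obstacle here; the only delicate points are bookkeeping --- keeping the degree and sign conventions of the two theorems aligned, and checking that ``orthogonally split'' for the coherent-sheaf package reduces to the vector-bundle notion.
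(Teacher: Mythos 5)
Your argument is correct, and since the paper leaves this proposition as an easy verification, what you write is exactly the intended check: viewing each $\overline E_i$ as the metrized coherent sheaf $(E_i,\overline E_i\overset{\Id}{\to}E_i)$, formula \eqref{eq:102} collapses to the single row $j=0$ and gives the class of theorem \ref{thm:3} directly, while your alternative route via the uniqueness axioms of theorem \ref{thm:3} (differential equation, functoriality using tor-independence from locally free sheaves, and normalization) is equally valid. No gaps; the only care needed is the degree/shift bookkeeping you already flag.
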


\begin{proposition}\label{prop:19} Let $\overline
  {\mathcal{F}}=(\mathcal{F},\overline E_{\ast}\to\mathcal{F})$ be a
  metrized coherent sheaf. We consider the exact sequence of metrized
  coherent sheaves
  \begin{displaymath}
    \overline{\varepsilon }\colon\qquad 0\longrightarrow 
    \overline E_{n}\to\dots\to\overline E_{0}\to 
    \overline{\mathcal{F}}\to 0,
  \end{displaymath}
  where, by abuse of notation, $\overline E_{i}=(E_{i},\overline
  E_{i}\overset {=}{\to} E_{i})$.
  Then $\widetilde {\ch}(\overline{\varepsilon })=0$.
\end{proposition}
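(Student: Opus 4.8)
The plan is to deduce the vanishing from Theorem~\ref{zhabc}(2) and the explicit formulas in its proof, ultimately reducing everything to the vanishing of ordinary Bott--Chern classes of orthogonally split exact sequences of hermitian vector bundles (the normalization property of Theorem~\ref{thm:3}, transported to metrized coherent sheaves by Proposition~\ref{prop:16}). As a first consistency check, $\widetilde{\ch}(\overline{\varepsilon})$ is automatically $\dd_{\mathcal{D}}$-closed: by axiom (i) its differential is $\ch(\overline{\mathcal{F}})-\sum_j(-1)^j\ch(\overline{E}_j)$ (up to the sign convention), which vanishes \emph{as a form} since by definition $\ch(\overline{\mathcal{F}})=\sum_j(-1)^j\ch(\overline{E}_j)$ for the metric $\overline{E}_\ast\to\mathcal{F}$ carried by $\overline{\mathcal{F}}$. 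So $\widetilde{\ch}(\overline{\varepsilon})$ represents a genuine cohomology class, and the content of the proposition is that this class is trivial.

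The key point is that, although the tautological metrics of $\overline{\varepsilon}$ (the identity resolution on each $\overline{E}_j$, the resolution $\overline{E}_\ast$ on $\overline{\mathcal{F}}$) do \emph{not} assemble into a diagram of compatible metrics as in \eqref{eq:100} --- in each resolution-degree $\ge 1$ the corresponding row would fail to be exact, precisely because the differentials of $\overline{E}_\ast$ are not isomorphisms --- the underlying exact sequence of coherent sheaves $0\to E_n\to\cdots\to E_0\to\mathcal{F}\to 0$ does admit such a diagram $\overline{D}_{\ast,\ast}$. One obtains it by iterating the horseshoe lemma along this exact sequence: this produces a double complex of locally free sheaves whose columns resolve the successive terms and whose rows are exact (exactness of the rows is where exactness of the original sequence is used). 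Each row of $\overline{D}_{\ast,\ast}$ being a bounded exact sequence of vector bundles, one can choose the hermitian metrics on the entries --- one row at a time, so the choices do not interfere --- so that \emph{every} row is orthogonally split; by \eqref{eq:102} the secondary character attached to this compatible metric structure is then $0$.

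It remains to pass from the metrics of $\overline{D}_{\ast,\ast}$ back to those of $\overline{\varepsilon}$, which differ only by a change of resolution at each position. Here the analysis in the proof of Theorem~\ref{zhabc}(2) --- equations \eqref{eq:107}--\eqref{eq:111}, built on the comparison diagrams \eqref{eq:108} and \eqref{eq:109} --- expresses $\widetilde{\ch}(\overline{\varepsilon})$ as the (vanishing) alternating sum of the secondary characters of the rows of $\overline{D}_{\ast,\ast}$ together with an explicit $\mathbb{Z}$-linear combination of ordinary Bott--Chern classes $\widetilde{\ch}$ of bounded exact sequences of hermitian vector bundles, namely the rows of \eqref{eq:108} and \eqref{eq:109} and the acyclic complexes $\overline{A}_{i,\ast}$ and $\overline{B}_{i,\ast}$. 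I expect this last step to be the main obstacle and the only real content: one must choose the intermediate resolutions $\overline{E}'_{i,\ast}$ (enlarging by orthogonally split acyclic complexes resolving $0$) and all the accompanying metrics and sections so that \emph{every} hermitian complex occurring in the correction terms is orthogonally split, hence contributes $0$ by the normalization property of Theorem~\ref{thm:3}; that the answer is independent of these choices is guaranteed, exactly as in the proof of Theorem~\ref{zhabc}(2), by Corollary~\ref{cor:9}. Combining the two steps gives $\widetilde{\ch}(\overline{\varepsilon})=0$.

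Alternatively, one can proceed by induction on the length $n$: splitting off $0\to\overline{\mathcal{K}}\to\overline{E}_0\to\overline{\mathcal{F}}\to 0$ with $\overline{\mathcal{K}}=\Ker(\overline{E}_0\to\mathcal{F})$ metrized by the truncation $\overline{E}_n\to\cdots\to\overline{E}_1$, and invoking additivity of $\widetilde{\ch}$ under splicing (a consequence of Corollary~\ref{cor:9} and Proposition~\ref{prop:21}) reduces the statement to this $3$-term case, which one then handles by the same construction of a compatible metric structure with orthogonally split rows after enlarging the resolution of $E_0$.
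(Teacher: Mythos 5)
Your reduction in the ``alternative'' route (splitting off $0\to\overline{\mathcal K}\to\overline E_{0}\to\overline{\mathcal F}\to 0$ with $\overline{\mathcal K}$ metrized by the truncated resolution, and using proposition \ref{prop:21} to induct) is exactly the paper's first step. The gap is in the step you yourself flag as ``the main obstacle'': the claim that the intermediate resolutions and metrics in \eqref{eq:107}--\eqref{eq:111} can be chosen so that \emph{every} correction term is orthogonally split. This is not just unproved, it fails for the natural choices. If you take the horseshoe system with rows $0\to\overline E_{k+1}\to\overline E_{k+1}\perp\overline E_{k}\to\overline E_{k}\to 0$, the comparison between the tautological identity resolution of $E_{i}$ and the horseshoe resolution forces the surjection in degree $0$ to be the augmentation $(s,t)\mapsto \dd s+t$, not the orthogonal projection; the resulting short exact sequences are of the graph type $0\to E_{k+1}\xrightarrow{\,s\mapsto(s,\dd s)\,}\overline E_{k+1}\perp\overline E_{k}\xrightarrow{\,(s,t)\mapsto t-\dd s\,}E_{k}\to 0$, which are \emph{isomorphic} to split sequences but not orthogonally split (the splitting section is not an isometry onto the orthogonal complement), so the normalization axiom of theorem \ref{thm:3} does not kill them. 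Moreover ``orthogonally split'' requires a holomorphic splitting, which one cannot simply impose by a choice of metric on an arbitrary exact kernel complex such as the $\overline A_{i,\ast}$, $\overline B_{i,\ast}$; and a single metric on the middle terms $\overline E'_{i,\ast}$ would have to split the sequences toward $\overline E_{i,\ast}$ and toward $\overline D_{i,\ast}$ simultaneously. So wherever you put the orthogonal splitness (in the rows of the compatible system or in the comparison terms), sequences of the above graph type survive, and their vanishing is precisely the real content.

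The paper deals with this head on: after the inductive reduction to the three-term case, it chooses the cone-type resolution $\overline D_{1,k}=\overline E_{k+i+1}\oplus\overline E_{k+i}$ with maps $\Delta(s)=(s,\dd s)$ and $\nabla(s,t)=t-\dd s$, observes that all correction terms except the rows $\overline\lambda_{k}$ are orthogonally split, and then proves $\cht(\overline\lambda_{k})=0$ by a separate argument: comparing $\overline\lambda_{k}$ with the genuinely orthogonally split sequence $(i_{1},p_{2})$ through the automorphism $(s,t)\mapsto(s,t+\dd s)$ of $\overline E_{k+i+1}\oplus\overline E_{k+i}$, and applying corollary \ref{cor:9} (lemma \ref{lemm:1}) to the two displayed diagrams to cancel the metric-change contributions. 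Some argument of this kind --- not a further choice of metrics --- is what your proposal is missing; without it, neither of your two routes closes.
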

\begin{proof} Define $\mathcal{K}_{i}=\Ker(E_{i}\to E_{i-1})$,
  $i=1,\dots,n$ and $\mathcal{K}_{0}=\Ker(E_{0}\to
  \mathcal{F})$. Write
  \begin{displaymath}
    \overline {\mathcal{K}_{i}}=(\mathcal{K}_{i}
    ,0\to\overline E_{n}\to\dots\to \overline E_{i+1}\to
    \mathcal{K}_{i}),\  i=0,\dots,n,
  \end{displaymath}
  and $\overline{\mathcal{K}}_{-1}=\overline{\mathcal{F}}$. If we
  prove that
  \begin{equation}\label{eq:105}
    \widetilde{\ch}(0\to \overline {\mathcal{K}_{i}}\to
    \overline E_{i}\to \overline {\mathcal{K}}_{i-1}\to 0)=0,
  \end{equation}
  then we obtain the result by induction using proposition
  \ref{prop:21}.  
  In order to prove equation (\ref{eq:105}) we apply equation
  \eqref{eq:111}. To this end consider resolutions 
  \begin{alignat*}{2}
    \overline D_{0,\ast}&\longrightarrow \mathcal{K}_{i-1},&\qquad
    \overline D_{0,k}&=\overline E_{k+i}\\
    \overline D_{1,\ast}&\longrightarrow E_{i},&\qquad
    \overline D_{1,k}&=\overline E_{k+i+1}\oplus\overline E_{k+i}\\
    \overline D_{2,\ast}&\longrightarrow \mathcal{K}_{i},&\qquad
    \overline D_{2,k}&=\overline E_{k+i+1}
  \end{alignat*}
  with the map $D_{2,k}\overset{\Delta }{\to} D_{1,k}$ given by
  $s\mapsto (s,\dd s)$ and 
  the map $D_{1,k}\overset{\nabla}{\to} D_{0,k}$ given by
  $(s,t)\mapsto t-\dd s$. The 
  differential of the complex $D_{1,k}$ is given by $(s,t)\mapsto
  (t,0)$. Using equations (\ref{eq:111}) and (\ref{eq:102}) we write
  the left hand side of equation (\ref{eq:105}) in terms of Bott-Chern
  classes of vector bundles. All the exact sequences involved are
  orthogonally split except maybe the sequences
  \begin{displaymath}
    \overline{\lambda} _{k}\colon\qquad 0\to \overline D_{2,k}\to  \overline
    D_{1,k}\to  
    \overline D_{0,k}\to 0.
  \end{displaymath}
  But now we consider the diagrams
  \begin{displaymath}
    \xymatrix{ \overline E_{k+i+1} \ar[r]^-{i_{1}} \ar[d]^{\Id}&
      \overline E_{k+i+1}\oplus \overline E_{k+i} \ar[r]^-{p_{2}}
      \ar[d]^{f}&
      \overline E_{k+i} \ar[d]^{\Id}\\
      \overline E_{k+i+1} \ar[r]^-{\Delta }&
      \overline E_{k+i+1}\oplus \overline E_{k+i} \ar[r]^-{\nabla}
      & \overline E_{k+i}
    }
  \end{displaymath}
  and 
  \begin{displaymath}
    \xymatrix{ \overline E_{k+i} \ar[r]^-{i_{2}} \ar[d]^{\Id}&
      \overline E_{k+i+1}\oplus \overline E_{k+i} \ar[r]^-{p_{1}}
      \ar[d]^{f}&
      \overline E_{k+i+1} \ar[d]^{\Id}\\
      \overline E_{k+i} \ar[r]^-{i_{2} } &
      \overline E_{k+i+1}\oplus \overline E_{k+i} \ar[r]^-{p_{1}}
      & \overline E_{k+i+1}
    },
  \end{displaymath}
  where $i_{i}$, $i_{2}$ are the natural inclusions, $p_{1}$ and
  $p_{2}$ are the projections and $f(s,t)=(s,t+f(s))$. These diagrams
  and corollary \ref{cor:9} imply that $\widetilde{\ch}(\overline
  {\lambda}_{k} )=0$. 
\end{proof}

\begin{remark} In \cite{zha99:_rieman_roch}, Zha shows that the
  Bott-Chern classes associated to exact sequences of metrized
  coherent sheaves are characterized by proposition \ref{prop:16},
  proposition \ref{prop:19} and proposition \ref{prop:21}. We prefer the
  characterization in terms of the differential equation, the
  functoriality and the normalization, because it relies on natural
  extensions of the corresponding axioms that define the Bott-Chern
  classes for exact sequences of hermitian vector bundles. Moreover,
  this approach will be used in a subsequent paper where we will study
  singular Bott-Chern classes 
  associated to arbitrary proper morphisms.
\end{remark}

The following generalization of proposition \ref{prop:19} will be
useful later. 
Let
\begin{displaymath}
 \varepsilon \colon 0\rightarrow \mathcal{G}_{n}\rightarrow
 \mathcal{G}_{n-1}\rightarrow 
\dots \rightarrow
  \mathcal{G}_{0} \rightarrow \mathcal{F}\rightarrow 0
\end{displaymath}
be a finite resolution of a coherent sheaf by coherent sheaves. Assume
that we have a commutative diagram
  $$\begin{array}{ccccccccccc}
    && \vdots && \vdots && \vdots && &&\\
    && \downarrow && \downarrow && \downarrow && &&\\
    && \overline{E}_{1,n} & \rightarrow &
    \ldots & \rightarrow &
    \overline{E}_{1,0} && &&\\
    && \downarrow && \downarrow && \downarrow && &&\\
    && \overline{E}_{0,n} & \rightarrow &
    \ldots & \rightarrow &
    \overline{E}_{0,0} && &&\\
    && \downarrow && \downarrow && \downarrow && &&\\
    0 & \rightarrow & \overline{\mathcal{G}}_{n} & \rightarrow &
    \ldots & \rightarrow &
    \overline{\mathcal{G}}_{0} & \rightarrow & \mathcal{F}
    &\rightarrow & 0 \\
    && \downarrow && \downarrow && \downarrow &&&& \\
    && 0 && 0 && 0 &&
  \end{array}$$
where the columns are exact, the rows are complexes and the $\overline
E_{i,j}$ are hermitian 
vector bundles. The columns of this diagram define
metrized coherent sheaves $\overline{\mathcal{G}}_{i}$. Let $\overline
{\mathcal{F}}$ be the metrized coherent sheaf defined by the
resolution $\Tot(\overline E_{\ast,\ast})\longrightarrow
\mathcal{F}$. 

\begin{proposition}\label{prop:13} With the notations above, let
  $\overline {\varepsilon }$ be the exact sequence of metrized
  coherent sheaves
\begin{displaymath}
 \overline{\varepsilon} \colon 0\rightarrow \overline{\mathcal{G}}_{n}\rightarrow
 \overline{\mathcal{G}}_{n-1}\rightarrow 
\dots \rightarrow
  \overline{\mathcal{G}}_{0} \rightarrow \overline{\mathcal{F}}\rightarrow 0
\end{displaymath}
Then $\widetilde {\ch}(\overline{\varepsilon })=0$.
\end{proposition}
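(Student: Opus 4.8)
The plan is to argue by induction on the length $n$ of $\varepsilon$, reducing the inductive step to a single short exact sequence of ``mapping cone type'' and then evaluating it by means of the mapping cylinder. For $n=0$ one has $\mathcal G_0\xrightarrow{\ \sim\ }\mathcal F$ and $\Tot(\overline E_{\ast,\ast})=\overline E_{\ast,0}$, so $\overline{\varepsilon}$ is the identity sequence between two metrized coherent sheaves carrying the same metric; it is orthogonally split and $\widetilde{\ch}(\overline{\varepsilon})=0$. For $n\ge 1$ set $\mathcal Z=\ker(\mathcal G_0\to\mathcal F)=\Img(\mathcal G_1\to\mathcal G_0)$. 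Truncating $\varepsilon$ shows that $0\to\mathcal G_n\to\cdots\to\mathcal G_1\to\mathcal Z\to 0$ is exact, so the total complex $\overline Z_{\bullet}:=\Tot(\overline E_{\ast,\ge 1})$ of the columns $j=1,\dots,n$ of the given diagram is a resolution of $\mathcal Z$; write $\overline{\mathcal Z}=(\mathcal Z,\overline Z_{\bullet})$. Deleting the $j=0$ column exhibits
\[
  \overline{\alpha}\colon\ 0\to\overline{\mathcal G}_n\to\cdots\to\overline{\mathcal G}_1\to\overline{\mathcal Z}\to 0
\]
as an instance of the assertion to be proved, with $\mathcal Z$ in the role of $\mathcal F$ and of length $n-1$; hence $\widetilde{\ch}(\overline{\alpha})=0$ by the inductive hypothesis. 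First I would split off the last short exact sequence: a standard application of Proposition \ref{prop:21}, performed exactly as the reduction to short exact sequences in the proof of Proposition \ref{prop:19}, gives
\[
  \widetilde{\ch}(\overline{\varepsilon})=\widetilde{\ch}(\overline{\beta})\pm\widetilde{\ch}(\overline{\alpha}),\qquad
  \overline{\beta}\colon\ 0\to\overline{\mathcal Z}\to\overline{\mathcal G}_0\to\overline{\mathcal F}\to 0 ,
\]
so everything reduces to $\widetilde{\ch}(\overline{\beta})=0$.

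The crucial observation is that the metric $\Tot(\overline E_{\ast,\ast})$ on $\overline{\mathcal F}$ is precisely the mapping cone $\operatorname{Cone}(g)$ of the morphism of complexes $g\colon\overline Z_{\bullet}\to\overline E_{\ast,0}$ assembled from the horizontal differentials of the bicomplex, a morphism lifting the inclusion $\mathcal Z\hookrightarrow\mathcal G_0$; thus $\overline{\beta}$ is a short exact sequence of metrized coherent sheaves of mapping cone type. To evaluate it I would replace the metric $\overline E_{\ast,0}$ on $\mathcal G_0$ by the mapping cylinder $\operatorname{Cyl}(g)$: the degreewise sequences $0\to\overline Z_m\to\operatorname{Cyl}(g)_m\to\operatorname{Cone}(g)_m\to 0$ are orthogonally split, so $\overline{\beta}'\colon 0\to\overline{\mathcal Z}\to(\mathcal G_0,\operatorname{Cyl}(g))\to\overline{\mathcal F}\to 0$ is an exact sequence with compatible metrics in the sense of \eqref{eq:100}, and \eqref{eq:102} of Theorem \ref{zhabc} expresses $\widetilde{\ch}(\overline{\beta}')$ as the alternating sum of the $\widetilde{\ch}$ of those orthogonally split rows, whence $\widetilde{\ch}(\overline{\beta}')=0$. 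To return from $\overline{\beta}'$ to $\overline{\beta}$ I would use the change of resolution formula \eqref{eq:111} (equivalently \eqref{eq:107}): $\operatorname{Cyl}(g)$ retracts onto $\overline E_{\ast,0}$ with acyclic kernel isomorphic, as a complex, to the mapping cone of the identity $\operatorname{Cone}(\operatorname{Id}_{\overline Z_{\bullet}})$, so that $\widetilde{\ch}(\overline{\beta})-\widetilde{\ch}(\overline{\beta}')$ equals, up to sign, $\widetilde{\ch}(\operatorname{Cone}(\operatorname{Id}_{\overline Z_{\bullet}}))$ modulo Bott--Chern classes of orthogonally split sequences of hermitian vector bundles, which vanish by normalization.

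It then remains to check the auxiliary fact that $\widetilde{\ch}(\operatorname{Cone}(\operatorname{Id}_{\overline A_{\bullet}}))=0$ for every bounded complex $\overline A_{\bullet}$ of hermitian vector bundles. This I would prove by induction on the number of terms: the brutal truncation gives a degreewise orthogonally split exact sequence of acyclic complexes
\[
  0\to\operatorname{Cone}(\operatorname{Id}_{\sigma_{\ge 1}\overline A_{\bullet}})\to\operatorname{Cone}(\operatorname{Id}_{\overline A_{\bullet}})\to\operatorname{Cone}(\operatorname{Id}_{\overline A_0})\to 0 ,
\]
to which Corollary \ref{cor:9} applies and yields $\widetilde{\ch}(\operatorname{Cone}(\operatorname{Id}_{\overline A_{\bullet}}))=\widetilde{\ch}(\operatorname{Cone}(\operatorname{Id}_{\sigma_{\ge 1}\overline A_{\bullet}}))+\widetilde{\ch}(\operatorname{Cone}(\operatorname{Id}_{\overline A_0}))$. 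The first summand vanishes by the inductive hypothesis, using $\operatorname{Cone}(\operatorname{Id}_{\overline A_{\bullet}[1]})=\operatorname{Cone}(\operatorname{Id}_{\overline A_{\bullet}})[1]$ together with the sign lemma following Definition \ref{def:1}; the second vanishes because $\operatorname{Cone}(\operatorname{Id}_{\overline A_0})$ is the orthogonally split complex $0\to\overline A_0\xrightarrow{\operatorname{Id}}\overline A_0\to 0$. Combined with the previous paragraph this gives $\widetilde{\ch}(\overline{\beta})=0$, closing the induction.

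The hard part, I expect, will be the change of resolution step: tracking the signs in \eqref{eq:111} and verifying that the non-isometric identification of $\ker(\operatorname{Cyl}(g)\to\overline E_{\ast,0})$ with $\operatorname{Cone}(\operatorname{Id}_{\overline Z_{\bullet}})$ introduces only Bott--Chern classes of orthogonally split sequences, so that they drop out of the final identity; everything else is a routine manipulation of the constructions recalled above and of Theorem \ref{zhabc}.
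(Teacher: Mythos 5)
Your reductions are fine up to the last step: the induction via Proposition \ref{prop:21}, the identification $\Tot(\overline E_{\ast,\ast})=\operatorname{Cone}(g)$, the vanishing of $\widetilde{\ch}(\overline{\beta}')$ because its rows $0\to\overline Z_m\to\operatorname{Cyl}(g)_m\to\operatorname{Cone}(g)_m\to 0$ are orthogonally split, and also the auxiliary lemma $\widetilde{\ch}(\operatorname{Cone}(\operatorname{Id}_{\overline A_\bullet}))=0$ (your truncation argument via Corollary \ref{cor:9} is correct, up to swapping sub and quotient). The genuine gap is exactly the step you call the hard part, and the mechanism you propose for it is not the right one. By Proposition \ref{prop:21} the difference $\widetilde{\ch}(\overline{\beta})-\widetilde{\ch}(\overline{\beta}')$ is $\pm\widetilde{\ch}(\mathcal G_0,h_E,h_{\operatorname{Cyl}})$, and when you evaluate this via \eqref{eq:107}/\eqref{eq:111} using the exact sequence of resolutions $0\to\overline A_\bullet\to\operatorname{Cyl}(g)\to\overline E_{\ast,0}\to 0$, the correction terms are not Bott--Chern classes of orthogonally split sequences: the kernel $\overline A_\bullet$ carries the metric induced from the cylinder, i.e.\ the graph-type metric $\|z\|^2+\|z'\|^2+\|g(z)\|^2$, which is \emph{not} isometric to $\operatorname{Cone}(\operatorname{Id}_{\overline Z_\bullet})$ with its orthogonal-sum metrics, and the degreewise sequences $\eta_m\colon 0\to\overline A_m\to\operatorname{Cyl}(g)_m\to\overline E_{m,0}\to 0$ are $g$-twisted graph sequences that are not orthogonally split either (their classes are of the type $\widetilde{\ch}(M,h,\psi^\ast h)$ for a unipotent shear $\psi$, which does not vanish in general). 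So neither family of corrections drops out; what you actually need is the cancellation $\widetilde{\ch}(\overline A_\bullet)=\sum_m(-1)^m\widetilde{\ch}(\eta_m)$, equivalently $\widetilde{\ch}(\mathcal G_0,h_E,h_{\operatorname{Cyl}})=0$, and this is essentially the whole content of the short-exact-sequence case of the proposition — your induction has not reduced the difficulty, it has relocated it into an identity you neither state nor prove.

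Compare with the paper's proof, which is designed precisely to avoid this twist. There one changes the metric on \emph{every} $\mathcal G_j$ simultaneously, replacing $\overline E_{\ast,j}$ by $\overline D_{\ast,j}=s(\Tot_{j+1}\to\Tot_j)$; then the comparison sequences \eqref{eq:77}, $0\to\overline E_{\ast,j}\to\overline D_{\ast,j}\to s(\Tot_{j+1}\to\Tot_{j+1})\to 0$, are degreewise inclusions and projections of orthogonal direct summands with \emph{no} $g$-twist (all the twisting is pushed into the differential of $\overline D_{\ast,j}$, where it is harmless), so the metric change on each $\mathcal G_j$ contributes zero term by term, and the resulting sequence with the $D$-metrics is an exact sequence with compatible, orthogonally split metrics, killed by \eqref{eq:102} and Proposition \ref{prop:21}. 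If you want to salvage your inductive scheme, replace the mapping cylinder of $g\colon\overline Z_\bullet\to\overline E_{\ast,0}$ by the cone $s(\Tot_{1}\to\Tot_0)$ as the intermediate metric on $\mathcal G_0$; but at that point you have reconstructed the paper's argument rather than given an alternative to it.
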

\begin{proof}
  For each $k$, let $\Tot_{k}=\Tot((E_{\ast,j})_{j\ge k})$. There are 
  inclusions $\Tot_{k}\longrightarrow \Tot_{k-1}$. Let 
  $\overline D_{\ast,j}=s(\Tot_{j+1}\to \Tot_{j})$ with the hermitian
  metric induced by $\overline E_{\ast,\ast}$. There are exact sequences of
  complexes
  \begin{equation}\label{eq:77}
    0\longrightarrow \overline E_{\ast,j}\longrightarrow
    \overline D_{\ast,j}\longrightarrow 
    s(\Tot_{j+1}\to \Tot_{j+1})\longrightarrow 0
  \end{equation}
  that are orthogonally split at each degree. The third complex is
  orthogonally split. Therefore, if we denote by $h_{E}$ and $h_{D}$ the metric
  structures of $\mathcal{G}_{j}$ induced respectively by the first
  and second column of diagram \eqref{eq:77}, then
  \begin{equation}
    \label{eq:79}
    \widetilde {\ch}(\mathcal{G}_{j},h_{E},h_{D})=0.
  \end{equation}
  There is a commutative diagram of resolutions
  $$\begin{array}{ccccccccccc}
    && \vdots && \vdots && \vdots && \vdots &&\\
    && \downarrow && \downarrow && \downarrow && \downarrow &&\\
    0 &\rightarrow & \overline{D}_{1,n} & \rightarrow &
    \ldots & \rightarrow &
    \overline{D}_{1,0} &\rightarrow &(\Tot_{0})_{1} & \rightarrow & 0\\
    && \downarrow && \downarrow && \downarrow && \downarrow &&\\
    0 &\rightarrow & \overline{D}_{0,n} & \rightarrow &
    \ldots & \rightarrow &
    \overline{D}_{0,0} &\rightarrow & (\Tot_{0})_{0} &\rightarrow &0\\
    && \downarrow && \downarrow && \downarrow && \downarrow &&\\
    0 & \rightarrow & \mathcal{G}_{n} & \rightarrow &
    \ldots & \rightarrow &
    \mathcal{G}_{0} & \rightarrow & \mathcal{F}
    &\rightarrow & 0 \\
    && \downarrow && \downarrow && \downarrow &&\downarrow && \\
    && 0 && 0 && 0 && 0 &&
  \end{array}$$
  where the rows of degree greater or equal than zero are orthogonally
  split. Hence the result follows from equation (\ref{eq:102}), 
  equation \eqref{eq:79} and proposition \ref{prop:21}.
\end{proof}

\begin{remark} We have only defined the Bott-Chern
   classes associated to the Chern 
  character. Everything applies without change to any additive
  characteristic class. The reader will find no difficulty to adapt
  the previous results to any multiplicative characteristic class like
  the Todd genus or the total Chern class.
\end{remark}

\section{Direct images of Bott-Chern classes}
\label{sec:direct-images-bott}

The aim of this section is to show that certain direct images of
Bott-Chern classes are closed. This result is a generalization of
results of Bismut, Gillet and Soul\'e
\cite{BismutGilletSoule:MR1086887} page 325 and of Mourougane
\cite{Mourougane04:cbcc} proposition 6. The fact that these direct
images of Bott-Chern classes are closed implies that certain relations
between characteristic classes are true at the level of differential
forms (see corollary \ref{cor:3} and corollary \ref{cor:4}). 

In the first part of this section we deal with differential
geometry. Thus all the varieties will be differentiable manifolds.
  
Let $G_{1}$ be a Lie group and let $\pi
\colon N_{2}\longrightarrow M_{2}$ be a 
principal bundle with structure group $G_{2}$ and connection
$\omega_{2} $. Assume 
that there is a left action of $G_{1}$ over $N_{2}$ that commutes with the
right action of $G_{2}$ and such that the connection $\omega_{2} $ is
$G_{1}$-invariant. 

Let $\mathfrak{g}_{1}$ and $\mathfrak{g}_{2}$ be the Lie algebras of
$G_{1}$ and $G_{2}$. Every element $\gamma \in \mathfrak{g}_{1}$
defines a tangent vector field $\gamma ^{\ast}$ over $N_{2}$ given by
\begin{displaymath}
  \gamma ^{\ast}_{p}=\left. \frac{d}{dt}\right |_{t=0}\exp(t\gamma )p. 
\end{displaymath}
Let $(\gamma ^{\ast})^{V}$ be the vertical component of $\gamma
^{\ast}$ with respect to the connection $\omega_{2} $. For every point
$p\in N_{2}$, we denote by $\varphi(\gamma,p )\in \mathfrak{g}_{2}$
the element characterized by $(\gamma ^{\ast})_{p}^{V}=\varphi(\gamma
,p)_{p}^{\ast}$, where $\varphi(\gamma
,p)^{\ast}$ is the fundamental vector field associated to  
$\varphi(\gamma
,p)$. 

The commutativity of the actions of $G_{1}$ and $G_{2}$ and the
invariance of the connection $\omega_{2} $ implies that, for $g\in
G_{1}$ and $\gamma \in \mathfrak{g}_{1}$, the
following equalities hold 
\begin{align}
  L_{g\ast} (\gamma ^{\ast})&= (\ad (g)\gamma ^{\ast}),\\
  L_{g\ast} (\gamma ^{\ast})^{V}&= (\ad (g)\gamma ^{\ast})^{V},\\
  \varphi(\ad(g)\gamma ,p)&=\varphi(\gamma ,g^{-1}p).\label{eq:4}
\end{align}
Let $\mathcal{G}_{2}$ be the vector bundle over $M_{2}$ associated to $N_{2}$
and the adjoint representation of $G_{2}$. That is,
$$\mathcal{G}_{2}=N_{2}\times \mathfrak{g}_{2}\left/\big\langle (pg,v)\sim
(p,\ad(g)v)\big\rangle\right. . $$ Thus, we
can identify smooth sections of $\mathcal{G}_{2}$ with  $
\mathfrak{g}_{2}$-valued functions on $N_{2}$ that are invariant
under the action of $G_{2}$. In this way, 
$\varphi(\gamma ,p)$ determines a section  
\begin{displaymath}
 \varphi(\gamma )\in C^{\infty}(N_{2},\mathfrak{g}_{2})^{G_{2}}=
 C^{\infty}(M_{2},\mathcal{G}_{2}).
\end{displaymath}
Equation \eqref{eq:4} implies that, for $g\in
G_{1}$ and $\gamma \in \mathfrak{g}_{1}$,
\begin{displaymath}
  \varphi(\ad(g)\gamma )=L_{g^{-1}}^{\ast}\varphi(\gamma ).
\end{displaymath}

We denote by $\Omega ^{\omega_{2} }$ the curvature of the connection
$\omega_{2} $. Let $P$ be an invariant function on $\mathfrak{g}_{2}$,
then $P(\Omega ^{\omega_{2}
}+\varphi(\gamma))$ is a well defined differential form on $M_{2}$.

\begin{proposition} \label{prop:2}
  Let $P$ be an invariant function on $\mathfrak{g}_{2}$ and let $\mu
  $ be a current on $M_{2}$ invariant under the action of $G_{1}$. 
  Then $\mu (P(\Omega ^{\omega_{2}
}+\varphi(\gamma)))$ is an invariant function on
$\mathfrak{g}_{1}$. 
\end{proposition}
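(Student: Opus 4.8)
\emph{Proof proposal.} The plan is to show directly that $f(\gamma):=\mu\bigl(P(\Omega^{\omega_{2}}+\varphi(\gamma))\bigr)$ is invariant under the adjoint action of $G_{1}$ on $\mathfrak g_{1}$; that $f$ is a well-defined smooth function on $\mathfrak g_{1}$ is automatic, since $\gamma\mapsto\varphi(\gamma)$ is $\RR$-linear and the form $P(\Omega^{\omega_{2}}+\varphi(\gamma))$ was already seen to be well defined (and, for degree reasons, depends polynomially on $\varphi(\gamma)$, hence smoothly on $\gamma$).

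Fix $g\in G_{1}$. First I would record the two $G_{1}$-equivariance inputs, both naturally living on $N_{2}$. Since $\omega_{2}$ is $G_{1}$-invariant, so is its curvature, i.e.\ $L_{g}^{\ast}\Omega^{\omega_{2}}=\Omega^{\omega_{2}}$; and the consequence of \eqref{eq:4} displayed just above gives $\varphi(\ad(g)\gamma)=L_{g^{-1}}^{\ast}\varphi(\gamma)$. Because $P$ is $\ad(G_{2})$-invariant, forming $P$ of a $\mathfrak g_{2}$-valued inhomogeneous form commutes with pullback along the diffeomorphism $L_{g^{-1}}$ of $N_{2}$, so
\[
P\bigl(\Omega^{\omega_{2}}+\varphi(\ad(g)\gamma)\bigr)
=P\bigl(L_{g^{-1}}^{\ast}\Omega^{\omega_{2}}+L_{g^{-1}}^{\ast}\varphi(\gamma)\bigr)
=L_{g^{-1}}^{\ast}\,P\bigl(\Omega^{\omega_{2}}+\varphi(\gamma)\bigr).
\]
Both sides are basic forms on $N_{2}$ (horizontal and $G_{2}$-invariant, the latter again by invariance of $P$), hence descend to forms on $M_{2}$; since the left action of $g$ on $N_{2}$ descends to the induced action on $M_{2}$, the same identity holds on $M_{2}$, with $L_{g^{-1}}^{\ast}$ now denoting pullback along the induced diffeomorphism of $M_{2}$.

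Finally I would pair with $\mu$: using its $G_{1}$-invariance,
\[
f(\ad(g)\gamma)=\mu\bigl(L_{g^{-1}}^{\ast}P(\Omega^{\omega_{2}}+\varphi(\gamma))\bigr)
=\mu\bigl(P(\Omega^{\omega_{2}}+\varphi(\gamma))\bigr)=f(\gamma),
\]
which is exactly the asserted invariance. I expect the only point needing genuine care to be the bookkeeping between $N_{2}$ and $M_{2}$: the curvature, the section $\varphi(\gamma)$, and the relation \eqref{eq:4} are most transparently stated upstairs on $N_{2}$, whereas $\mu$ acts downstairs, so one must check that ``apply the invariant function $P$'', ``descend a basic form to $M_{2}$'', and ``pull back along $L_{g^{-1}}$'' are mutually compatible --- once this is set up, the argument is a short unwinding of definitions.
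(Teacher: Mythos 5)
Your argument is correct and is essentially the paper's own proof: both use the identity $\varphi(\ad(g)\gamma)=L_{g^{-1}}^{\ast}\varphi(\gamma)$ together with the $G_{1}$-invariance of $\omega_{2}$ (hence of $\Omega^{\omega_{2}}$) and of the current $\mu$, and the invariance of $P$, to move the pullback through $P$ and onto $\mu$. Your extra remark on the bookkeeping between basic forms on $N_{2}$ and forms on $M_{2}$ is a correct clarification of a point the paper leaves implicit.
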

\begin{proof}
  Let $g\in G_{1}$. Then,  
  \begin{align*}
    \mu (P(\Omega ^{\omega_{2}
    }+\varphi(\ad(g) \gamma)))&=
    \mu (P(\Omega ^{\omega_{2}
    }+L_{g^{-1}}^{\ast} \varphi(\gamma)))\\
    &=\mu (P(L_{g^{-1}}^{\ast}\Omega ^{\omega_{2}
    }+L_{g^{-1}}^{\ast} \varphi(\gamma)))\\
    &=L_{g^{-1}\ast}(\mu)(P(\Omega ^{\omega_{2}
    }+\varphi( \gamma)))\\ 
    &= \mu (P(\Omega ^{\omega_{2}
    }+\varphi( \gamma)))
  \end{align*}
\end{proof}

Let now $N_{1}\longrightarrow M_{1}$ be a principal bundle with
structure group
$G_{1}$ and provided with a connection $\omega _{1}$. Then we can form
the diagram
\begin{displaymath}
  \begin{CD}
  N_{1}\times N_{2} @>\pi_{1}>> N_{1}\underset{G_{1}}
    {\times}N_{2}\\
    @VV\pi' V @VV\pi V\\
    N_{1}\times M_{2} @>\pi_{2}>> N_{1}\underset{G_{1}}
    {\times}M_{2}\\
    @. @VV q V\\
    @. M_{1}
\end{CD}
\end{displaymath}
Then $\pi$ is a principal bundle with structure group $G_{2}$. The
connections $\omega _{1}$ and $\omega _{2}$ induce a connection on the
principal bundle $\pi$. The subbundle of horizontal vectors with
respect to this connection is given by $\pi_{1\ast}(T^{H}N_{1}\oplus
T^{H}N_{2})$. We will denote this connection by $\omega _{1,2}$. We
are interested in computing the curvature $\omega _{1,2}$.

In fact, all the maps in the above diagram are fiber bundles provided
with a connection. When applicable, given a vector field $U$ in any of
these spaces, we will denote by $U^{H,1}$ the horizontal lifting to 
$N_{1}\times N_{2}$, by $U^{H,2}$ the horizontal lifting
to $N_{1}\underset {G_{1}}{\times}N_{2}$ and by $U^{H,3}$ the
horizontal lifting to $N_{1}\underset {G_{1}}{\times}M_{2}$.

The tangent space $T(N_{1}\times N_{2})$ can be decomposed as direct
sum in the following ways
\begin{align}
  T(N_{1}\times N_{2})&=
  T^{H}N_{1}\oplus T^{V}N_{1}\oplus T^{H}N_{2}\oplus T^{V}N_{2}\notag \\
  &=T^{H}N_{1}\oplus T^{V}N_{1}\oplus T^{H}N_{2}\oplus \Ker
  \pi_{1\ast},\label{eq:3}   
\end{align}
For every point $(x,y)\in N_{1}\times N_{2}$ we have that
$(\Ker \pi_{1\ast})_{(x,y)}\subset T^{V}_{x}N_{1}\oplus
T_{y}N_{2}$. Moreover, there is an isomorphism
$\mathfrak{g}_{1}\longrightarrow (\Ker \pi_{1\ast})_{(x,y)}$ that
sends an element $\gamma \in \mathfrak{g}_{1}$ to 
the element $(\gamma ^{\ast}_{x},-\gamma ^{\ast}_{y})\in T^{V}_{x}N_{1}\oplus
T_{y}N_{2}$.

The tangent space to $N_{1}\underset {G_{1}}{\times}M_{2}$ can be
decomposed as the sum of the subbundle of vertical vectors with
respect to $q$ and the subbundle of horizontal vectors defined by the
connection $\omega _{1}$. The horizontal lifting to $N_{1}\times
N_{2}$ of a vertical vector lies in $T^{H}N_{2}$ and the
horizontal lifting of  a horizontal vector lies in $T^{H}N_{1}$.

Let $U$, $V$ be two vector fields on $M_{1}$ and let $U^{H,3}$,
$V^{H,3}$ be the horizontal liftings to $N_{1}\underset{G_{1}}{\times
}M_{2}$. Then 
\begin{align*}
  \Omega ^{\omega _{1,2}}(U^{H,3},&V^{H,3})=[U^{H,3},V^{H,3}]^{H,2}-
  [U^{H,2},V^{H,2}]\\
  &=\pi_{1\ast}([U^{H,3},V^{H,3}]^{H,1}-
  [U^{H,1},V^{H,1}])\\
  &= \pi_{1\ast}([U^{H,3},V^{H,3}]^{H,1}-
  [U,V]^{H,1}+ [U,V]^{H,1}-
  [U^{H,1},V^{H,1}])\\
  &=\pi_{1,\ast}([U^{H,3},V^{H,3}]^{H,1}-
  [U,V]^{H,1}+\Omega ^{\omega _{1}}(U,V)).
\end{align*}
But, we have
\begin{align*}
  \Omega ^{\omega _{1,2}}(U^{H,3},V^{H,3})&\in T^{V}N_{2},\\
  \Omega ^{\omega _{1}}(U,V)&\in T^{V}N_{1},\\
  [U^{H,3},V^{H,3}]^{H,1}- [U,V]^{H,1}&\in T^{H}N_{2}. 
\end{align*}
Therefore, by the direct sum decomposition \eqref{eq:3} we obtain that 
\begin{displaymath}
  \Omega ^{\omega _{1,2}}(U^{H,3},V^{H,3})=((\pi_{1\ast} \Omega ^{\omega
    _{1}}(U,V)))^{V},
\end{displaymath}
where the vertical part is taken with respect to the fib re bundle
$\pi$.

If $U$ is a horizontal vector field over
$N_{1}\underset{G_{1}}{\times}M_{2}$ and $V$ is a vertical vector field, a
similar argument shows that $\Omega ^{\omega _{1,2}}(U,V)=0$. Finally,
if $U$ and $V$ are vector fields on $M_{2}$, they determine vertical
vector fields on $N_{1}\underset{G_{1}}{\times}M_{2}$. Then the
horizontal liftings $U^{H,1}$ and $V^{H,1}$ are induced by
horizontal liftings of $U$ and $V$ to $N_{2}$. Therefore, reasoning as
before we see that
\begin{displaymath}
  \Omega^{\omega _{1,2}}(U,V)=\Omega ^{\omega _{2}}(U,V).
\end{displaymath}

\begin{proposition} \label{prop:3}
  Let $G_{1}$ and $G_{2}$ be Lie groups, with Lie algebras
  $\mathfrak{g}_{1}$ and $\mathfrak{g}_{2}$. For $i=1,2$, let
  $N_{i}\longrightarrow 
  M_{i}$ be a principal bundle with structure group $G_{i}$, provided
  with a
  connection $\omega _{i}$. Assume that there is a left action of
  $G_{1}$ over $N_{2}$ that commutes with the right action of $G_{2}$
  and that the connection $\omega _{2}$ is invariant under the
  $G_{1}$-action. We form the $G_{2}$-principal bundle
  $\pi\colon N_{1}\underset{G_{1}}{\times} N_{2}\longrightarrow 
  N_{1}\underset{G_{1}}{\times} M_{2}$ with the induced connection
  $\omega _{1,2}$ and curvature $\Omega ^{\omega _{1,2}}$.  Let $P$ be
  any invariant function on $\mathfrak{g}_{2}$. Thus $P(\Omega
  ^{\omega _{1,2}})$ is a well defined closed differential form on
  $N_{1}\underset{G_{1}}{\times} M_{2}$. Let $\mu $ be a current on
  $M_{2}$ invariant under the $G_{1}$-action. Being $G_{1}$ invariant,
  the
  current $\mu $ 
  induces a current on $N_{1}\underset{G_{1}}{\times} M_{2}$, that we
  denote also by $\mu$. Let $q\colon N_{1}\underset{G_{1}}{\times} M_{2}
  \longrightarrow M_{1}$ be the projection. Then $q_{\ast}(P(\Omega
  ^{\omega _{1,2}})\land \mu)$ is a closed differential form on
  $M_{1}$.
\end{proposition}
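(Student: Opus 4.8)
First I would set, for $\gamma \in \mathfrak{g}_{1}$,
\[
  Q(\gamma )=\mu \bigl(P(\Omega ^{\omega _{2}}+\varphi (\gamma ))\bigr),
\]
where $\varphi $ is the fibrewise map of this section. Since $P$ is an invariant polynomial (if $P$ is only a formal power series one argues degree by degree, the $2$-form part $\Omega ^{\omega _{2}}$ being nilpotent) and $\Omega ^{\omega _{2}}$ is a matrix of $2$-forms, $P(\Omega ^{\omega _{2}}+\varphi (\gamma ))$ is a polynomial in $\gamma $ with differential-form coefficients, so $Q$ is a polynomial function on $\mathfrak{g}_{1}$; by Proposition~\ref{prop:2} it is $\Ad (G_{1})$-invariant. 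Decomposing $Q=\sum _{k}Q_{k}$ into its homogeneous invariant components and applying Chern--Weil theory to the curvature $\Omega ^{\omega _{1}}$ of $N_{1}\longrightarrow M_{1}$ then produces a closed differential form $Q(\Omega ^{\omega _{1}}):=\sum _{k}Q_{k}(\Omega ^{\omega _{1}})$ on $M_{1}$.

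The plan is to prove the identity
\[
  q_{\ast}\bigl(P(\Omega ^{\omega _{1,2}})\land \mu \bigr)=Q(\Omega ^{\omega _{1}})
\]
(up to a normalization constant irrelevant for closedness); this gives at once that the left-hand side is a smooth form and, being equal to a Chern--Weil form, is closed. The identity is local on $M_{1}$. Since $p\colon N_{1}\longrightarrow M_{1}$ is a surjective submersion, $p^{\ast}$ is injective on currents, and base change holds: pulling back $N_{1}\underset{G_{1}}{\times}M_{2}$ along $p$ gives the trivial bundle $\pr_{1}\colon N_{1}\times M_{2}\longrightarrow N_{1}$, with induced map $\tilde p\colon N_{1}\times M_{2}\longrightarrow N_{1}\underset{G_{1}}{\times}M_{2}$, and $p^{\ast}q_{\ast}=(\pr_{1})_{\ast}\tilde p^{\ast}$ on currents. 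So it suffices to check the identity after applying $p^{\ast}$. Now the three curvature computations carried out just before the statement — the value of $\Omega ^{\omega _{1,2}}$ on a pair of horizontal lifts of vector fields on $M_{1}$, on a mixed horizontal--vertical pair, and on a pair of vectors tangent to $M_{2}$ — say precisely that, relative to the splitting $T(N_{1}\times M_{2})=(\text{horizontal for }\omega _{1})\oplus TM_{2}$, the pulled-back curvature is, as an adjoint-bundle-valued $2$-form,
\[
  \tilde p^{\ast}\Omega ^{\omega _{1,2}}=\Omega ^{\omega _{2}}+\varphi (\pr_{1}^{\ast}\Omega ^{\omega _{1}}),
\]
the second term being the $\mathfrak{g}_{1}$-valued curvature of $N_{1}$, pulled back to $N_{1}\times M_{2}$ and composed with $\varphi $.

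Consequently $\tilde p^{\ast}P(\Omega ^{\omega _{1,2}})=P(\Omega ^{\omega _{2}}+\varphi (\pr_{1}^{\ast}\Omega ^{\omega _{1}}))$ and $\tilde p^{\ast}\mu =\pr_{2}^{\ast}\mu $. Expanding $P$ and using that $\pr_{1}^{\ast}\Omega ^{\omega _{1}}$ is pulled back from the base of $\pr_{1}$, so it can be extracted from the fibre integral, one obtains
\[
  (\pr_{1})_{\ast}\bigl(P(\Omega ^{\omega _{2}}+\varphi (\pr_{1}^{\ast}\Omega ^{\omega _{1}}))\land \pr_{2}^{\ast}\mu \bigr)=p^{\ast}Q(\Omega ^{\omega _{1}}),
\]
which is exactly the value at $\gamma =\Omega ^{\omega _{1}}$ of the invariant function $\gamma \mapsto \mu (P(\Omega ^{\omega _{2}}+\varphi (\gamma )))$, pulled back to $N_{1}$. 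By injectivity of $p^{\ast}$ this proves the displayed identity, and hence the proposition.

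The hard part will be bookkeeping rather than anything conceptual. Making $\tilde p^{\ast}\Omega ^{\omega _{1,2}}=\Omega ^{\omega _{2}}+\varphi (\pr_{1}^{\ast}\Omega ^{\omega _{1}})$ fully rigorous requires choosing compatible trivializations of the $G_{2}$-principal bundles $N_{1}\times N_{2}\to N_{1}\times M_{2}$ and $N_{1}\underset{G_{1}}{\times}N_{2}\to N_{1}\underset{G_{1}}{\times}M_{2}$ and checking that $\varphi $ intertwines the relevant vertical projections; one must also ensure that integration along the fibre of $q$ is defined on the current $P(\Omega ^{\omega _{1,2}})\land \mu $ (e.g.\ $q$ proper on $\supp \mu $, or compact fibres) and commutes with the product by the smooth closed form $P(\Omega ^{\omega _{1,2}})$. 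The closedness of $Q(\Omega ^{\omega _{1}})$ itself is the standard Chern--Weil fact, applied to each homogeneous component of $Q$.
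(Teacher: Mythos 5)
Your proposal is correct and follows essentially the same route as the paper: both reduce the statement to the identity $q_{\ast}(P(\Omega ^{\omega _{1,2}})\land \mu)=\psi _{\mu}(\Omega ^{\omega _{1}})$, where $\psi _{\mu}(\gamma )=\mu (P(\Omega ^{\omega _{2}}+\varphi(\gamma )))$ is the invariant function of Proposition \ref{prop:2}, and then conclude by ordinary Chern--Weil theory. The only (inessential) difference is that the paper checks this identity in a local trivialization of $N_{1}$ over $M_{1}$, while you check it by pulling back along the submersion $N_{1}\to M_{1}$ and using injectivity of the pullback, both resting on the same curvature decomposition computed just before the statement.
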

\begin{proof}
  Let $U\subset M_{1}$ be a trivializing open subset for $N_{1}$ and
  choose a trivialization of $N_{1}\mid _{U}\cong U\times G_{1}$. With this
  trivialization, we can identify $\Omega ^{\omega _{1}}\mid _{U}$ with a
  2-form on $U$ with values in $\mathfrak{g}_{1}$. 

  For $\gamma \in \mathfrak{g}_{1}$, we denote by 
  \begin{displaymath}
    \psi _{\mu}(\gamma )=\mu (P(\Omega ^{\omega_{2}
}+\varphi(\gamma))) 
  \end{displaymath}
the invariant function provided by proposition \ref{prop:2}. 

Then 
\begin{displaymath}
  q_{\ast}(P(\Omega
  ^{\omega _{1,2}})\land \mu)= \psi _{\mu}(\Omega ^{\omega _{1}}).
\end{displaymath}
Therefore, the result follows from the usual Chern-Weil theory.
\end{proof}

We go back now to complex geometry and analytic real Deligne
cohomology and to the notations \ref{def:19}, in particular
\eqref{eq:36}.  

\begin{corollary}\label{cor:3}
  Let $X$ be a complex manifold and let $\overline E=(E,h^{E})$ be a
  rank $r$ hermitian holomorphic 
  vector bundle on $X$. Let $\pi\colon \mathbb{P}(E)\longrightarrow X$ be
  the associated projective bundle. On $\mathbb{P}(E)$ we consider the
  tautological exact sequence
  \begin{displaymath}
    \overline \xi\colon
    0\longrightarrow \overline {\mathcal{O}(-1)}\longrightarrow \pi
    ^{\ast}\overline E
    \longrightarrow \overline Q \longrightarrow 0
  \end{displaymath}
  where all the vector bundles have the induced metric. Let $P_{1}$,
  $P_{2}$ and $P_{3}$ be invariant power series in $1$, $r-1$ and $r$
  variables respectively with coefficients in $\mathbb{D}$. Let
  $P_{1}(\overline {\mathcal{O}(-1)})$ and 
  $P_{2}(\overline Q)$ be the associated Chern forms and let
  $\widetilde {P}_{3}(\overline {\xi})$ the associated Bott-Chern
  class. Then 
  $$ \pi_{\ast}(P_{1}(\overline {\mathcal{O}(-1)})\bullet
  P_{2}(\overline Q)\bullet
  \widetilde {P}_{3}(\overline {\xi}))\in
  \bigoplus_{k}\widetilde{\mathcal{D}}^{2k-1}(X,k) 
  $$ 
  is closed. Hence it defines a class in analytic real Deligne
  cohomology. This class does not depend on the hermitian metric of
  $E$.   
\end{corollary}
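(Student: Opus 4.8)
The plan is to deduce the statement from Proposition \ref{prop:3}, by realizing $\mathbb{P}(E)$ together with the tautological data on it as an associated bundle built out of a universal situation over $\mathbb{P}^{r-1}$. Let $N_{1}\longrightarrow X$ be the unitary frame bundle of $\overline E$, a principal $G_{1}$-bundle with $G_{1}=U(r)$, carrying the connection $\omega_{1}$ given by the hermitian holomorphic connection of $\overline E$. Over $M_{2}:=\mathbb{P}^{r-1}$ consider the tautological short exact sequence
\begin{displaymath}
  \overline{\xi_{0}}\colon\ 0\longrightarrow\overline{\mathcal{O}(-1)}\longrightarrow\overline{\mathbb{C}^{r}}\longrightarrow\overline{Q}\longrightarrow 0,
\end{displaymath}
with the metrics induced by the standard flat metric on the trivial bundle, and let $N_{2}\longrightarrow M_{2}$ be the fibre product over $\mathbb{P}^{r-1}$ of the unitary frame bundles of $\overline{\mathcal{O}(-1)}$ and $\overline{Q}$, a principal bundle with group $G_{2}=U(1)\times U(r-1)$ and connection $\omega_{2}$ given by the hermitian holomorphic connections of $\overline{\mathcal{O}(-1)}$ and $\overline{Q}$. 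The standard action of $U(r)$ on $\mathbb{C}^{r}$ induces an action of $G_{1}$ on $\mathbb{P}^{r-1}$ and, by isometries and preserving the holomorphic structures, on $\mathcal{O}(-1)$ and on $Q$; hence a left action of $G_{1}$ on $N_{2}$ commuting with the right $G_{2}$-action and leaving $\omega_{2}$ invariant. We are thus exactly in the situation of Proposition \ref{prop:3}, with $N_{1}\underset{G_{1}}{\times}M_{2}=N_{1}\underset{U(r)}{\times}\mathbb{P}^{r-1}=\mathbb{P}(E)$ and $q=\pi$. Taking $P=P_{1}\otimes P_{2}$, the invariant function on $\mathfrak{g}_{2}=\mathfrak{u}(1)\oplus\mathfrak{u}(r-1)$, one checks from the construction of the induced connection $\omega_{1,2}$ that $N_{1}\underset{G_{1}}{\times}N_{2}\to\mathbb{P}(E)$ with $\omega_{1,2}$ is the unitary frame bundle of $\overline{\mathcal{O}(-1)}\oplus\overline{Q}$ over $\mathbb{P}(E)$ with its hermitian holomorphic connection, so that $P(\Omega^{\omega_{1,2}})=P_{1}(\overline{\mathcal{O}(-1)})\bullet P_{2}(\overline{Q})=:\alpha$, a closed form on $\mathbb{P}(E)$.

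Next I choose the current. Let $\mu_{0}$ be the canonical representative of the Bott-Chern class $\widetilde{P}_{3}(\overline{\xi_{0}})$ furnished by Definition \ref{def:1}; it is a smooth form on $\mathbb{P}^{r-1}$, and it is $U(r)$-invariant because every datum entering its construction is $U(r)$-equivariant --- the metrics on the deformation $\tr_{1}(\overline{\xi_{0}})_{\ast}$ of Definition \ref{def:13} being induced from the $U(r)$-invariant metrics of $\overline{\xi_{0}}$, while the integration over the auxiliary $\mathbb{P}^{1}$ and the kernel $\tfrac{-1}{2}\log(t\bar t)$ do not involve $U(r)$. Being invariant, $\mu_{0}$ descends to a smooth form $\mu$ on $\mathbb{P}(E)=N_{1}\underset{U(r)}{\times}\mathbb{P}^{r-1}$. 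Over any open $U\subseteq X$ an isometric trivialization of $\overline E|_{U}$ identifies $\mathbb{P}(E)|_{U}$ with $U\times\mathbb{P}^{r-1}$, carrying $\overline{\xi}$ to $p_{2}^{\ast}\overline{\xi_{0}}$ and $\mu$ to $p_{2}^{\ast}\mu_{0}$; so by functoriality of the Bott-Chern classes $[\mu]=\widetilde{P}_{3}(\overline{\xi})$ in $\bigoplus_{k}\widetilde{\mathcal{D}}^{2k-1}(\mathbb{P}(E),k)$. Using the properties of $\bullet$ recalled in Notation \ref{def:19} and the closedness of $\alpha$, this gives $P_{1}(\overline{\mathcal{O}(-1)})\bullet P_{2}(\overline Q)\bullet\widetilde{P}_{3}(\overline{\xi})=[\mu\land\alpha]$, whence
\begin{displaymath}
  \pi_{\ast}\bigl(P_{1}(\overline{\mathcal{O}(-1)})\bullet P_{2}(\overline Q)\bullet\widetilde{P}_{3}(\overline{\xi})\bigr)=\Bigl[\tfrac{1}{(2\pi i)^{r-1}}\textstyle\int_{\pi}(\mu\land\alpha)\Bigr]=\tfrac{1}{(2\pi i)^{r-1}}\bigl[q_{\ast}(P(\Omega^{\omega_{1,2}})\land\mu)\bigr].
\end{displaymath}
By Proposition \ref{prop:3} the form $q_{\ast}(P(\Omega^{\omega_{1,2}})\land\mu)$ is a closed differential form on $X$; since it lands in the correct Dolbeault degree and type (the fibre integral over $\mathbb{P}^{r-1}$ of a sum of real $(p,p)$-forms wedged with a Bott-Chern representative), being $d$-closed it is $\dd_{\mathcal{D}}$-closed. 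It is therefore a closed representative of $\pi_{\ast}(\,\cdot\,)$, so the latter determines a well defined class in $\bigoplus_{k}H^{2k-1}_{\mathcal{D}^{\an}}(X,\mathbb{R}(k))$.

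Finally, metric independence follows from the $\mathbb{P}^{1}$-deformation argument already used in the proofs of Propositions \ref{prop:12} and \ref{prop:22}. The whole construction is functorial under pullback, being assembled from Chern forms, the single universal invariant form $\mu_{0}$, and integration along the fibre of a proper submersion. Given two metrics $h_{0},h_{1}$ on $E$, pick a metric $h$ on $p_{1}^{\ast}E$ over $X\times\mathbb{P}^{1}$ restricting to $h_{0}$ over $0$ and $h_{1}$ over $\infty$; applying the construction to $(p_{1}^{\ast}E,h)$ produces a closed form $\tau$ on $X\times\mathbb{P}^{1}$ with $\iota_{0}^{\ast}\tau$ and $\iota_{\infty}^{\ast}\tau$ representing the classes attached to $(E,h_{0})$ and $(E,h_{1})$. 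Since $\dd_{\mathcal{D}}\bigl(\tfrac{1}{2\pi i}\int_{\mathbb{P}^{1}}\tfrac{-1}{2}\log(t\bar t)\bullet\tau\bigr)=\iota_{\infty}^{\ast}\tau-\iota_{0}^{\ast}\tau$, the two cohomology classes coincide.

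The main obstacle is the differential-geometric bookkeeping of the first step --- verifying that the connection $\omega_{1,2}$ produced by Proposition \ref{prop:3} is the hermitian holomorphic connection of $\overline{\mathcal{O}(-1)}\oplus\overline{Q}$ on $\mathbb{P}(E)$, hence $P(\Omega^{\omega_{1,2}})=\alpha$ --- together with the descent step showing that the $U(r)$-invariant $\mu_{0}$ glues to a global representative $\mu$ of $\widetilde{P}_{3}(\overline{\xi})$ (where the fine-sheaf gluing for $\widetilde{\mathcal{D}}$ is used); the remaining points are routine applications of the results quoted above.
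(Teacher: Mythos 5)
Your overall strategy --- realizing $\mathbb{P}(E)$ as the associated bundle $N_{1}\times_{U(r)}\mathbb{P}^{r-1}$ and feeding Proposition \ref{prop:3} a universal $U(r)$-invariant datum on the fibre --- is the right one, and it is the strategy of the paper. But there is a genuine gap where you produce the current $\mu$: you claim that the canonical Bott--Chern representative $\mu_{0}$ of $\widetilde P_{3}(\overline{\xi_{0}})$ on $\mathbb{P}^{r-1}$ descends to a smooth form $\mu$ on $\mathbb{P}(E)$ with $[\mu]=\widetilde P_{3}(\overline\xi)$, and you justify this by an ``isometric trivialization of $\overline E|_{U}$'' identifying $(\mathbb{P}(E)|_{U},\overline\xi)$ with $(U\times\mathbb{P}^{r-1},p_{2}^{\ast}\overline{\xi_{0}})$. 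No such trivialization exists unless $\overline E$ is flat: a local holomorphic frame that is everywhere orthonormal forces the Chern connection to be $d$ in that frame and hence the curvature to vanish. A $C^{\infty}$ unitary trivialization does exist, but it is not holomorphic, and the functoriality of Bott--Chern classes (Theorem \ref{thm:3}, item (ii)) applies only to holomorphic maps, so it cannot transport $\mu_{0}$ to a representative of $\widetilde P_{3}(\overline\xi)$. Two symptoms of the same problem: (a) an invariant form of positive degree on $M_{2}$ is invariant but not basic (its contraction with the fundamental vector fields of the $G_{1}$-action need not vanish), so it does not canonically descend to a form on $N_{1}\times_{G_{1}}M_{2}$; any descent uses the connection $\omega_{1}$ and introduces horizontal correction terms. (b) The genuine form $\widetilde P_{3}(\overline\xi)$ does restrict to $\mu_{0}$ on each fibre, but it also has nonzero components in the base directions, built from the curvature of $\overline E$ through the transgression bundle on $\mathbb{P}(E)\times\mathbb{P}^{1}$; the ``fibrewise $\mu_{0}$'' prescription discards exactly these, and showing the discrepancy is $\dd_{\mathcal{D}}$-exact is a global statement your argument does not address.

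The paper circumvents this by enlarging the universal fibre: it takes $M_{2}=\mathbb{P}(\mathbb{C}^{r})\times\mathbb{P}^{1}$, puts the frame bundle of the transgression bundle $\widetilde E$ into $N_{2}$ (so $G_{2}=U(1)\times U(r-1)\times U(r)$ and the invariant function $P$ involves $P_{3}$ as well), and takes for $\mu$ the invariant current $[-\log|t|]$ coming from the $\mathbb{P}^{1}$ factor. That $\mu$ is a degree-zero object, so it descends with no horizontality issue, and the curvature corrections you are missing are then automatically encoded in $\Omega^{\omega_{1,2}}$; the identity $\pi_{\ast}(P_{1}\bullet P_{2}\bullet\widetilde P_{3}(\overline\xi))=q_{\ast}(\mu\bullet P(\Omega^{\omega_{1,2}}))$ then follows from the very definition of the Bott--Chern class as an integral over $\mathbb{P}^{1}$. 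Your final metric-independence step is fine (it is the paper's, via Proposition \ref{prop:22}), but the identification on which the closedness rests needs to be repaired along these lines.
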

\begin{proof}
  We consider $\mathbb{C}^{r}$ with the standard hermitian metric. On
  the space $\mathbb{P}(\mathbb{C}^{r})$ we have the tautological
  exact sequence   
  \begin{displaymath}
    0\longrightarrow \mathcal{O}_{\mathbb{P}(\mathbb{C}^{r})}(-1)
    \overset{f}{\longrightarrow }\mathbb{C}^{r}\longrightarrow
    Q\longrightarrow 0. 
  \end{displaymath}
  Let $(x:y)$ be homogeneous coordinates on $\mathbb{P}^{1}$ and let
  $t=x/y$ be the absolute coordinate. Let $p_{1}$ and $p_{2}$ be the
  two projections of $M_{2}=\mathbb{P}(\mathbb{C}^{r})\times
  \mathbb{P}^{1}$. Let $\widetilde E$ be the cokernel of the map 
  \begin{displaymath}
    \begin{matrix}
    p_{1}^{\ast}\mathcal{O}_{\mathbb{P}(\mathbb{C}^{r})}(-1)&
    \longrightarrow       &
    p_{1}^{\ast}\mathcal{O}_{\mathbb{P}(\mathbb{C}^{r})}(-1)\otimes 
    p_{2}^{\ast}\mathcal{O}_{\mathbb{P}^{1}}(1)\oplus
    p_{1}^{\ast} \mathbb{C}^{r}\otimes
    p_{2}^{\ast}\mathcal{O}_{\mathbb{P}^{1}}(1) \\
    s & \longmapsto & s\otimes y + f(s)\otimes x
    \end{matrix}
  \end{displaymath}
  with the metric induced by the standard metric of $\mathbb{C}^{r}$
  and the Fubini-Study metric of $\mathcal{O}_{\mathbb{P}(1)}(1)$. 
  
  Let $N_{2}$ be the principal bundle over $M_{2}$ formed by the
  triples $(e_{1},e_{2},e_{3})$, where $e_{1}$, $e_{2}$ and $e_{3}$
  are unitary frames of 
  $p_{1}^{\ast}\mathcal{O}_{\mathbb{P}(\mathbb{C}^{r})}(-1)$,
  $p_{1}^{\ast} Q$ and $\widetilde E$ respectively. The structure
  group of this principal bundle is $G_{2}=U(1)\times U(r-1)\times
  U(r)$. Let $\omega _{2}$ be the connection induced by the hermitian
  holomorphic connections on the vector bundles
  $p_{1}^{\ast}\mathcal{O}_{\mathbb{P}(\mathbb{C}^{r})}(-1)$, 
  $p_{1}^{\ast} Q$ and $\widetilde E$. 

  Now we denote $M_{1}=X$, and let $N_{1}$ be the bundle of unitary
  frames of $\overline E$. This is a principal bundle over $M_{1}$
  with structure
  group $G_{1}=U(r)$.

  The group $G_{1}$ acts on the left on $N_{2}$. This action commutes
  with the right action of $G_{2}$ and the connection $\omega _{2}$ is
  invariant under this action.

  Let $\mu=[-\log(|t|)]$ be the current on $M_{2}$ associated to the
  locally integrable function $-\log(|t|)$. 
  This current is invariant under the action  of $G_{1}$ because this
  group acts trivially on the factor $\mathbb{P}^{1}$.

  The invariant power series $P_{1}$, $P_{2}$ and $P_{3}$ determine an
  invariant function $P$ on $\mathfrak{g}_{2}$, the Lie algebra of
  $G_{2}$. 

  Let $\omega _{1}$ be the connection induced in $N_{1}$ by the
  holomorphic hermitian 
  connection on $\overline E$. As before let $\omega _{1,2}$ be the connection
  on $N_{1}\underset{G_{1}}{\times} N_{2}$ induced by $\omega _{1}$
  and $\omega _{2}$ and let
  $q\colon N_{1}\underset{G_{1}}{\times} M_{2}\longrightarrow
  M_{1}$ be the projection.
  Observe that $N_{1}\underset{G_{1}}{\times}
  M_{2}=\mathbb{P}(E)\times \mathbb{P}^{1}$ and $q=\pi\circ p_{1}$.

  By the projection formula and the definition of
  Bott-Chern classes we have
  \begin{displaymath}
    \pi_{\ast}(P_{1}(\overline {\mathcal{O}(-1)})\land
    P_{2}(\overline Q)\land
    \widetilde {P}_{3}(\overline {\xi}))=
    q_{\ast}(\mu \bullet P(\Omega
    ^{\omega _{1,2}})),
  \end{displaymath}
  Therefore the fact that it is closed follows from
  \ref{prop:3}. Since, for fixed $P_{1}$, $P_{2}$ and $P_{3}$, the
  construction is functorial on $(X.\overline E)$, the fact that
  the class in analytic real Deligne 
  cohomology does not depend on the choice of the hermitian metric
  follows from proposition \ref{prop:22}. 
\end{proof}    

\begin{corollary} \label{cor:4}
  Let $\overline E=(E,h^{E})$ be a hermitian holomorphic vector bundle
  on a complex 
  manifold $X$. We consider the projective bundle $\pi
  \colon\mathbb{P}(E\oplus \mathbb{C})\longrightarrow X$. Let $\overline
  {Q}$ be the 
  universal quotient bundle on the space 
  $\mathbb{P}(E\oplus \mathbb{C})$ with the induced metric. Then the
  following equality of differential forms holds 
  \begin{displaymath}
    \pi_{\ast}\sum_{i}(-1)^{i}\ch(\bigwedge ^{i}\overline
    {Q}^{\vee})=
    \pi_{\ast}(c_{r}(\overline {Q})\Td^{-1}(\overline
    {Q}))=\Td^{-1}(\overline {E}). 
  \end{displaymath}
\end{corollary}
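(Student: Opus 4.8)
The plan is to prove the two equalities in turn. The first, $\sum_{i}(-1)^{i}\ch(\bigwedge^{i}\overline Q^{\vee})=c_r(\overline Q)\Td^{-1}(\overline Q)$, I would obtain as a pointwise identity of Chern--Weil forms on $\mathbb{P}(E\oplus\mathbb{C})$, independently of $\pi_{\ast}$. Indeed, for any rank $r$ hermitian holomorphic vector bundle $\overline F$, writing $x_1,\dots,x_r$ for the Chern roots of its curvature, the metric induced on $\bigwedge^{i}\overline F^{\vee}$ has Chern--Weil form $e_i(e^{-x_1},\dots,e^{-x_r})$, so that
\[
\sum_{i}(-1)^{i}\ch(\bigwedge^{i}\overline F^{\vee})=\prod_{j=1}^{r}(1-e^{-x_j})=\Big(\prod_{j}x_j\Big)\prod_{j}\frac{1-e^{-x_j}}{x_j}=c_r(\overline F)\,\Td^{-1}(\overline F).
\]
Taking $\overline F=\overline Q$ (which has rank $r$, since $\rk(E\oplus\mathbb{C})=r+1$) and then applying $\pi_{\ast}$ yields the first equality.

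For the second equality, $\pi_{\ast}(c_r(\overline Q)\Td^{-1}(\overline Q))=\Td^{-1}(\overline E)$, I would first show the left-hand side is a Chern--Weil form of $\overline E$ by putting the construction into the framework of Proposition \ref{prop:3}, exactly as in the proof of Corollary \ref{cor:3} but with no auxiliary $\mathbb{P}^1$: take $M_1=X$, let $N_1$ be the unitary frame bundle of $\overline E\oplus\overline{\mathbb{C}}$ with the connection $\omega_1$ induced by the hermitian holomorphic connection of $\overline E$, set $G_1=U(r+1)$, $M_2=\mathbb{P}(\mathbb{C}^{r+1})$ with $G_1$ acting in the standard way, let $\overline Q$ also denote the universal quotient bundle on $M_2$, $N_2$ its unitary frame bundle, $\omega_2$ its hermitian holomorphic connection, $\mu$ the $G_1$-invariant current of integration along the fibre $\mathbb{P}(\mathbb{C}^{r+1})$, and $P=c_r\,\Td^{-1}$ regarded as an invariant function. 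Then $N_1\underset{G_1}{\times}M_2\cong\mathbb{P}(E\oplus\mathbb{C})$ compatibly with the metrics, $q=\pi$, $P(\Omega^{\omega_{1,2}})=c_r(\overline Q)\Td^{-1}(\overline Q)$, and Proposition \ref{prop:3} gives $\pi_{\ast}(c_r(\overline Q)\Td^{-1}(\overline Q))=\psi_\mu(\Omega^{\omega_1})$, a Chern--Weil form of $\overline E\oplus\overline{\mathbb{C}}$; as the trivial summand is flat this is $\Phi(\overline E)$ for a universal invariant power series $\Phi$ in $r$ variables.

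It then suffices to identify $\Phi$ cohomologically. The form $\Phi(\overline E)$ is closed and represents $\pi_{\ast}(c_r(Q)\Td^{-1}(Q))$ in $H^{\ast}_{\mathcal{D}^{\an}}(X,\mathbb{R}(\ast))$. The tautological section $\mathcal{O}_{\mathbb{P}(E\oplus\mathbb{C})}\to Q$ induced by the inclusion of the trivial summand $\mathbb{C}\hookrightarrow E\oplus\mathbb{C}$ vanishes transversally precisely along the zero section $s\colon X\hookrightarrow\mathbb{P}(E\oplus\mathbb{C})$, so $c_r(Q)=s_{\ast}(1_X)$ (the higher rank form of property A2 of Theorem \ref{thm:13}, which follows by the splitting principle); moreover $s^{\ast}\mathcal{O}(-1)$ is the trivial summand, so $s^{\ast}Q\cong E$. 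Hence by the projection formula (property A4 of Theorem \ref{thm:13}),
\[
c_r(Q)\,\Td^{-1}(Q)=s_{\ast}\big(s^{\ast}\Td^{-1}(Q)\big)=s_{\ast}\big(\Td^{-1}(E)\big),
\]
and since $\pi\circ s=\mathrm{id}_X$ we obtain $\pi_{\ast}(c_r(Q)\Td^{-1}(Q))=\Td^{-1}(E)$. By Theorem \ref{thm:14} an invariant power series is determined by the characteristic class it defines, so $\Phi=\Td^{-1}$, and therefore $\pi_{\ast}(c_r(\overline Q)\Td^{-1}(\overline Q))=\Td^{-1}(\overline E)$ as differential forms.

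I expect the main obstacle to be the middle step: setting up Proposition \ref{prop:3} correctly, that is, checking that the induced connection $\omega_{1,2}$ on $N_1\underset{G_1}{\times}N_2$ is exactly the hermitian holomorphic connection of $\overline Q$ on $\mathbb{P}(E\oplus\mathbb{C})$, so that $P(\Omega^{\omega_{1,2}})$ is literally $c_r(\overline Q)\Td^{-1}(\overline Q)$, and that the fibre integration current is genuinely $G_1$-invariant. Once this is in place, the cohomological identification of $\Phi$ is routine.
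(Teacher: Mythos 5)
Your proposal is correct in substance, but it takes a genuinely different route from the paper's. The paper deduces the second equality formally from Corollary \ref{cor:3}: writing $\Td(\overline{\mathcal{O}(-1)})=1+c_{1}(\overline{\mathcal{O}(-1)})\,\phi(\overline{\mathcal{O}(-1)})$ and using $c_{r+1}(\overline E\oplus\overline{\mathbb{C}})=0$, so that $c_{r}(\overline Q)c_{1}(\overline{\mathcal{O}(-1)})=\dd_{\mathcal{D}}\widetilde c_{r+1}(\overline\xi)$ for the tautological sequence $\overline\xi$, it first obtains $\pi_{\ast}(c_{r}(\overline Q)\Td(\overline{\mathcal{O}(-1)}))=1$ and then replaces $\Td^{-1}(\overline Q)$ by $\Td(\overline{\mathcal{O}(-1)})\pi^{\ast}\Td^{-1}(\overline E)$ up to $\dd_{\mathcal{D}}$ of a Bott--Chern class; in both steps the exact correction terms are pushforwards of exactly the type covered by Corollary \ref{cor:3}, hence closed, so they vanish as forms. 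You instead re-enter the equivariant machinery of Proposition \ref{prop:3} directly, with $M_{2}=\mathbb{P}(\mathbb{C}^{r+1})$ and the plain fibre-integration current, to show that $\pi_{\ast}(c_{r}(\overline Q)\Td^{-1}(\overline Q))$ is a universal Chern--Weil form $\Phi(\overline E)$, and then identify $\Phi=\Td^{-1}$ cohomologically via $c_{r}(Q)=s_{\ast}(1_{X})$, the projection formula A4 and $\pi\circ s=\mathrm{id}$, invoking the uniqueness of the power series as in Theorem \ref{thm:14}. Your route makes transparent why the pushforward is a characteristic form at all and avoids the Bott--Chern juggling; the paper's route has the advantage that the only equivariant input is Corollary \ref{cor:3} as a black box, so the connection-compatibility you flag never needs to be revisited. (Your treatment of the first, Borel--Serre, equality via Chern roots is fine; the paper leaves it implicit.)

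Two remarks on the points you leave open. The deferred compatibility is true: the connection $\omega_{1,2}$ induced on the associated bundle coincides with the quotient connection on $Q=\pi^{\ast}(E\oplus\mathbb{C})/\mathcal{O}(-1)$ (fibrewise it is the Chern connection of the Fubini--Study-type metric, and along $\omega_{1}$-horizontal directions it is the orthogonal projection of the pulled-back Chern connection of $\overline E\oplus\overline{\mathbb{C}}$), and the quotient connection of a quotient metric is its Chern connection; this is precisely the compatibility already used implicitly in the paper's proof of Corollary \ref{cor:3}, so your argument is on the same footing there. On the other hand, your aside that $c_{r}(Q)=s_{\ast}(1_{X})$ ``follows by the splitting principle'' is too quick, since the tautological section does not split along with $Q$; the identity is nevertheless standard in any theory satisfying A1--A4 of Theorem \ref{thm:13} (it is the Euler-class statement the paper itself invokes in the proof of Lemma \ref{lemm:4}), so the cohomological identification of $\Phi$ goes through once you cite or prove it properly.
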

\begin{proof}
  Let $\overline {\xi}$ be the tautological exact sequence with
  induced metrics. We first prove that
  \begin{displaymath}
    \pi_{\ast}(c_{r}(\overline Q)\Td(\overline {\mathcal{O}(-1)}))=1. 
  \end{displaymath}
We can write $\Td(\overline {\mathcal{O}(-1)})=1+c_{1}(\overline
{\mathcal{O}(-1)}) \phi (\overline
{\mathcal{O}(-1)})$ for certain power series $\phi $. Since
$c_{r+1}(\overline E\oplus \mathbb{C})=0$ we have 
\begin{displaymath}
  c_{r}(\overline
  Q)c_{1}(\overline{\mathcal{O}(-1)})=\dd_{\mathcal{D}}\widetilde{c}_{r+1}(\overline
  {\xi}). 
\end{displaymath}
Therefore, by corollary \ref{cor:3}, we have 
\begin{align*}
  \pi_{\ast}(c_{r}(\overline Q)\Td(\overline {\mathcal{O}(-1)}))&=
  \pi_{\ast}(c_{r}(\overline Q)) + \pi_{\ast}(c_{r}(\overline Q) c_{1}(\overline 
  {\mathcal{O}(-1)}) \phi (\overline
  {\mathcal{O}(-1)}))\\
  &= 1+\dd_{\mathcal{D}} \pi_{\ast}(\widetilde c_{r+1}(\overline
  \xi) \phi (\overline 
  {\mathcal{O}(-1)}))\\ 
  &=1. 
\end{align*}

Then the corollary follows from corollary \ref{cor:3} by using the identity 
\begin{multline*}
  \pi_{\ast}(c_{r}(\overline {Q})\Td^{-1}(\overline {Q}))=
  \pi_{\ast}(c_{r}(\overline Q)\Td(\overline {\mathcal{O}(-1)})
  \pi^{\ast}\Td^{-1}(\overline E))\\
  +\dd_{\mathcal{D}}
  \pi_{\ast}(c_{r}(\overline Q)\Td(\overline
  {\mathcal{O}(-1)})\widetilde {\Td^{-1}}(\overline \xi)).
\end{multline*}
\end{proof}

The following generalization of corollary \ref{cor:3} provides many
relations between integrals of Bott-Chern classes and is left to the
reader.

\begin{corollary}\label{cor:11}
  Let $X$ be a complex manifold and let $\overline E=(E,h^{E})$ be a
  rank $r$ hermitian holomorphic 
  vector bundle on $X$. Let $\pi\colon\mathbb{P}(E)\longrightarrow X$ be
  the associated projective bundle. On $\mathbb{P}(E)$ we consider the
  tautological exact sequence
  \begin{displaymath}
    \overline \xi\colon
    0\longrightarrow \overline {\mathcal{O}(-1)}\longrightarrow \pi
    ^{\ast}\overline E
    \longrightarrow \overline Q \longrightarrow 0
  \end{displaymath}
  where all the vector bundles have the induced metric. Let $P_{1}$ and
  $P_{2}$ be invariant power series in $1$ and $r-1$
  variables respectively with coefficients in $\mathbb{D}$ and let
  $P_{3},\dots ,P_{k}$ be  invariant power series in $r$
  variables with coefficients in $\mathbb{D}$.
  Let
  $P_{1}(\overline {\mathcal{O}(-1)})$ and 
  $P_{2}(\overline Q)$ be the associated Chern forms and let
  $\widetilde {P}_{3}(\overline {\xi}),\dots ,\widetilde
  {P}_{k}(\overline {\xi})$ be the associated Bott-Chern
  classes. Then 
  $$\pi_{\ast}(P_{1}(\overline {\mathcal{O}(-1)})\bullet
  P_{2}(\overline Q)\bullet
  \widetilde {P}_{3}(\overline {\xi})\bullet \dots \bullet
  \widetilde {P}_{k}(\overline {\xi}))$$ 
  is a closed differential form
  on $X$ for any choice of the ordering in computing the non
  associative product under the integral. 
\end{corollary}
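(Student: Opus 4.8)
The plan is to run the proof of Corollary \ref{cor:3} with one extra $\mathbb{P}^{1}$-factor for every Bott-Chern class occurring in the product. Fix an ordering and bracketing of $P_{1}(\overline{\mathcal{O}(-1)})\bullet P_{2}(\overline Q)\bullet\widetilde{P}_{3}(\overline\xi)\bullet\dots\bullet\widetilde{P}_{k}(\overline\xi)$. Put $M_{2}=\mathbb{P}(\mathbb{C}^{r})\times(\mathbb{P}^{1})^{k-2}$, with absolute coordinates $t_{3},\dots,t_{k}$ on the $\mathbb{P}^{1}$-factors and write $\ell_{j}=\frac{-1}{2}\log(t_{j}\bar t_{j})$. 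Over $M_{2}$ we consider the bundles $\mathcal{O}(-1)$ and $Q$ of the tautological sequence of $\mathbb{P}(\mathbb{C}^{r})$, metrized by the standard metric of $\mathbb{C}^{r}$, together with, for each $j=3,\dots,k$, the bundle $\widetilde{E}_{j}$ obtained by pulling back the deformation bundle $\widetilde E$ of the proof of Corollary \ref{cor:3} along the $j$-th projection $M_{2}\to\mathbb{P}(\mathbb{C}^{r})\times\mathbb{P}^{1}$ (with the metric constructed there). Let $N_{2}$ be the bundle of unitary frames of $\overline{\mathcal{O}(-1)}\oplus\overline Q\oplus\bigoplus_{j}\widetilde E_{j}$, with structure group $G_{2}=U(1)\times U(r-1)\times U(r)^{k-2}$ and the connection $\omega_{2}$ induced by the hermitian holomorphic connections. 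As in Corollary \ref{cor:3}, set $M_{1}=X$ and let $N_{1}$ be the unitary frame bundle of $\overline E$, with structure group $G_{1}=U(r)$ and connection $\omega_{1}$; then $G_{1}$ acts on $N_{2}$ through the $\mathbb{C}^{r}$, commuting with $G_{2}$ and preserving $\omega_{2}$, one has $N_{1}\underset{G_{1}}{\times}M_{2}=\mathbb{P}(E)\times(\mathbb{P}^{1})^{k-2}$, and $q\colon N_{1}\underset{G_{1}}{\times}M_{2}\to M_{1}$ equals $\pi$ composed with the projection to $\mathbb{P}(E)$. The power series $P_{1},\dots,P_{k}$ determine an invariant function $P$ on $\mathfrak{g}_{2}$, and, by functoriality of the deformation bundle together with the curvature computation preceding Proposition \ref{prop:3}, the closed form $P(\Omega^{\omega_{1,2}})$ on $\mathbb{P}(E)\times(\mathbb{P}^{1})^{k-2}$ is $P_{1}(\overline{\mathcal{O}(-1)})\cdot P_{2}(\overline Q)\cdot\prod_{j}P_{j}(\widetilde E_{j})$, where now all the bundles carry the metrics induced from $\overline E$.

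The next step is to rewrite the pushforward. By the defining formula \eqref{eq:7}, each Bott-Chern class has the canonical representative $\widetilde{P}_{j}(\overline\xi)=\frac{1}{2\pi i}\int_{\mathbb{P}^{1}_{(j)}}\ell_{j}\bullet P_{j}(\widetilde E_{j})$. Using the projection formula \eqref{eq:36} to pull the $k-2$ fibre integrals successively to the front, the chosen product takes the form $\frac{1}{(2\pi i)^{k-2}}\int_{(\mathbb{P}^{1})^{k-2}}P(\Omega^{\omega_{1,2}})\bullet\nu$, where $\nu$ is the current on $(\mathbb{P}^{1})^{k-2}$ obtained by $\bullet$-multiplying, in the prescribed order, the functions $\ell_{j}$ pulled back from the factors. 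Since $\ell_{j}$ and $\dd_{\mathcal{D}}\ell_{j}$, a Poincar\'e--Lelong type current supported on $\{t_{j}=0\}\cup\{t_{j}=\infty\}$, involve only the $j$-th factor, $\nu$ is a well-defined finite sum of currents on $(\mathbb{P}^{1})^{k-2}$; and because $P(\Omega^{\omega_{1,2}})$ is of even degree, hence central for $\bullet$, its position in the product is immaterial. In particular $\nu$, viewed on $M_{2}$, is invariant under $G_{1}=U(r)$, which acts trivially on the $\mathbb{P}^{1}$-factors. Composing with $\pi_{\ast}$ and using $q=\pi\circ\mathrm{pr}$ yields
\begin{displaymath}
  \pi_{\ast}\bigl(P_{1}(\overline{\mathcal{O}(-1)})\bullet P_{2}(\overline Q)\bullet\widetilde{P}_{3}(\overline\xi)\bullet\dots\bullet\widetilde{P}_{k}(\overline\xi)\bigr)=\pm\, q_{\ast}\bigl(P(\Omega^{\omega_{1,2}})\land\nu\bigr).
\end{displaymath}

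Finally, treating each summand of $\nu$ separately, Proposition \ref{prop:3} applied to the data $(N_{1},M_{1},G_{1},\omega_{1})$, $(N_{2},M_{2},G_{2},\omega_{2})$ and the $G_{1}$-invariant current $\nu$ on $M_{2}$ shows that $q_{\ast}(P(\Omega^{\omega_{1,2}})\land\nu)$ is a closed differential form on $M_{1}=X$. As the ordering and bracketing were arbitrary, this proves the corollary; alternatively, once it is known for one ordering, any two orderings of the $\bullet$-product differ by a $\dd_{\mathcal{D}}$-exact term whose image under $\pi_{\ast}$ is again $\dd_{\mathcal{D}}$-exact. The only point requiring genuine care is the second step: keeping track of signs and of the non-associativity of $\bullet$ while pulling the $k-2$ fibre integrals out, and verifying that what remains is exactly $P(\Omega^{\omega_{1,2}})$ wedged with a $G_{1}$-invariant current pulled back from $(\mathbb{P}^{1})^{k-2}$. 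Once this normal form is reached, closedness is immediate from Proposition \ref{prop:3}, exactly as in Corollary \ref{cor:3}.
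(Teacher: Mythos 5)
Your proposal is correct and is exactly the argument the paper intends: the statement is explicitly left to the reader as the evident extension of corollary \ref{cor:3}, and you carry out that extension in the natural way (one $\mathbb{P}^{1}$-factor per Bott--Chern class, the ordered product of the $\ell_{j}$'s and their Poincar\'e--Lelong derivatives as the $G_{1}$-invariant current on $M_{2}$, and proposition \ref{prop:3} applied term by term), with the remaining work being the same routine bookkeeping the paper itself suppresses in corollary \ref{cor:3}. Only the closing ``alternatively'' remark should be dropped or handled with care: two bracketings of the $\bullet$-product differ by a $\dd_{\mathcal{D}}$-exact term only up to homotopy contributions involving $\dd_{\mathcal{D}}\widetilde{P}_{j}\neq 0$, but your main argument treats each ordering directly and does not need this.
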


\section{Cohomology of currents and wave front sets}
\label{sec:cohom-curr-wave}

The aim of this section is to prove the Poincar\'e lemma for the
complex of currents with fixed wave front set. This implies in
particular a certain $\partial\bar\partial$-lemma (corollary
\ref{cor:1}) that will allow us to control the singularities of
singular Bott-Chern classes.

Let $X$ be a complex manifold of dimension $n$.
Following notation \ref{def:19} recall that there is a canonical
isomorphism   
\begin{displaymath}
  H^{\ast}_{\mathcal{D}^{\an}}(X,\mathbb{R}(p))\cong
  H^{\ast}(\mathcal{D}^{\ast}_{D}(X,p)).
\end{displaymath}

A current $\eta$ can be viewed as a generalized
section of a vector bundle and, as such, has a wave front set that is
denoted by
$\WF(\eta)$. The theory of wave front sets of distributions is
developed in \cite{Hormander:MR1065993} chap. VIII. For the theory of
wave front 
sets of generalized sections, the reader can consult
\cite{GuilleminStenberg:MR0516965} chap. VI. Although we will work
with currents and 
hence with generalized sections of vector bundles, we will follow
\cite{Hormander:MR1065993}. 

The wave front set
of $\eta$ is a closed conical subset of the cotangent bundle of $X$
minus the 
zero section 
$T^{\ast}X_{0}=T^{\ast}X\setminus \{0\}$. This set describes the
points and directions of the singularities of $\eta$ and it allows us
to define certain products and inverse images of currents.

Let $S\subset T^{\ast}X_{0}$ be a closed conical subset, we will denote by
$\mathscr{D}^{\ast}_{X,S}$ the subsheaf of currents whose wave front set
is contained in $S$. We will denote by  $D^{\ast}(X,S)$ its 
complex of global sections. 

For every open set $U\subset X$ there is an appropriate notion of
convergence in $\mathscr{D}^{\ast}_{X,S}(U)$ (see
\cite{Hormander:MR1065993} VIII  Definition 8.2.2). All references to
continuity below are with respect to this notion of convergence. 

We next summarize the basic properties of wave front sets.

\begin{proposition}\label{prop:4}
  Let $u$ be a generalized section of a vector bundle and let $P$ be a
  differential operator with smooth coefficients. Then
  \begin{displaymath}
    \WF(Pu)\subseteq \WF(u).
  \end{displaymath}
\end{proposition}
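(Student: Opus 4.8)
The statement is the standard fact that applying a differential operator with smooth coefficients cannot enlarge the wave front set. I will reduce to the scalar case on an open subset of $\mathbb{R}^m$, since the wave front set of a generalized section of a vector bundle is defined componentwise in a local trivialization, and a differential operator with smooth coefficients acts as a matrix of scalar differential operators; applying Proposition-level linearity of $\WF$ (i.e.\ $\WF(u+v)\subseteq \WF(u)\cup\WF(v)$ and $\WF(fu)\subseteq\WF(u)$ for smooth $f$, both immediate from the definition via the decay of the Fourier transform of a localization) then handles the bundle case. So it suffices to prove $\WF(\partial^\alpha u)\subseteq\WF(u)$ for a distribution $u$ on an open set $U\subseteq\mathbb{R}^m$ and a multi-index $\alpha$, because a general $P=\sum_{|\alpha|\le N} a_\alpha(x)\partial^\alpha$ with $a_\alpha$ smooth is then controlled by combining this with the two closure properties just mentioned.

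The core computation is the behaviour of the Fourier transform under differentiation. Recall $(x_0,\xi_0)\notin\WF(u)$ means there is a cutoff $\chi\in C_c^\infty(U)$ with $\chi(x_0)\ne 0$ and a conic neighbourhood $\Gamma$ of $\xi_0$ such that $\widehat{\chi u}(\xi)$ decays rapidly (faster than any power of $|\xi|^{-1}$) for $\xi\in\Gamma$. First I would fix such $(x_0,\xi_0)\notin\WF(u)$ and such $\chi,\Gamma$. To test $\partial^\alpha u$ at $(x_0,\xi_0)$, pick any $\psi\in C_c^\infty(U)$ with $\psi(x_0)\ne0$ and $\operatorname{supp}\psi$ contained in the open set $\{\chi\ne0\}$, so that $\psi = \psi\,\chi/\chi \cdot \chi =: \theta\,\chi$ where we may instead simply choose $\psi$ supported where $\chi\equiv 1$ (shrinking $\chi$ appropriately at the start so that $\chi=1$ near $x_0$). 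Then $\psi\,\partial^\alpha u = \psi\,\partial^\alpha(\chi u)$ near $x_0$, and by the Leibniz rule $\psi\,\partial^\alpha(\chi u) = \partial^\alpha(\psi\chi u) - \sum_{\beta<\alpha}\binom{\alpha}{\beta}(\partial^{\alpha-\beta}\psi)\partial^\beta(\chi u)$. Taking Fourier transforms, $\widehat{\partial^\beta(\chi u)}(\xi) = (i\xi)^\beta\widehat{\chi u}(\xi)$, which still decays rapidly on $\Gamma$ because multiplication by the polynomial $\xi^\beta$ costs only finitely many powers of $|\xi|$. Each term $(\partial^{\alpha-\beta}\psi)\,\partial^\beta(\chi u)$ has the form (smooth compactly supported function)$\times$(distribution), and one argues that its localized Fourier transform also decays rapidly on a slightly smaller cone: this is the standard lemma that $\WF(f v)\subseteq\WF(v)$, proved by writing $\widehat{fv}=\widehat{f}*\widehat{v}$ and splitting the convolution into the region where the $v$-frequency lies in $\Gamma$ (rapid decay of $\widehat v$ there) and its complement, using rapid decay of $\widehat f$ to control the rest. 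Collecting terms, $\widehat{\psi\,\partial^\alpha u}$ decays rapidly on a conic neighbourhood of $\xi_0$, hence $(x_0,\xi_0)\notin\WF(\partial^\alpha u)$.

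The only mildly delicate point — and the step I would be most careful about — is the convolution estimate showing $\WF(fv)\subseteq\WF(v)$, i.e.\ the splitting of $\widehat{f}*\widehat v$ into a "good frequency" part and a "bad frequency" part with uniform conic control; this is exactly \cite{Hormander:MR1065993} Lemma 8.1.1 and the surrounding discussion. Everything else is bookkeeping with the Leibniz rule and the elementary identity $\widehat{\partial^\beta w}=(i\xi)^\beta\widehat w$. In fact, rather than reproving these localized-Fourier-transform lemmas, I would simply cite \cite{Hormander:MR1065993} Chapter VIII (Theorem 8.1.9 for pseudodifferential operators, or directly the remark following Definition 8.2.2 that differentiation does not increase the wave front set, and Theorem 8.1.9 together with Proposition 8.2.16 for the statement $\WF(Au)\subseteq\WF(u)$ when $A$ has smooth coefficients). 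Since a differential operator is a very special pseudodifferential operator and the inclusion $\WF(Au)\subseteq\WF(u)$ holds for all properly supported $\Psi$DOs, the proposition follows; I would present the short Leibniz-rule argument above as the self-contained version for the differential-operator case to keep the paper readable.
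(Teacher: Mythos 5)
Your proposal is correct and ends up in the same place as the paper, whose entire proof is the citation to \cite{Hormander:MR1065993}, Chapter VIII, formula (8.1.11), i.e.\ exactly the statement that differential operators with smooth coefficients do not enlarge the wave front set. Your Leibniz-rule sketch is the standard proof of that cited fact (with the one delicate step, $\WF(fv)\subseteq\WF(v)$, correctly identified and attributed to H\"ormander's Lemma 8.1.1), so there is no gap.
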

\begin{proof}
  This is \cite{Hormander:MR1065993} VIII (8.1.11).
\end{proof}

\begin{corollary}
  The sheaf $\mathscr{D}^{\ast}_{X,S}$ is closed under $\partial$ and $\bar
  \partial$. Therefore it is a sheaf of Dolbeault complexes.   
\end{corollary}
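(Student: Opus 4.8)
The plan is to deduce the corollary directly from Proposition \ref{prop:4}, applied to the first-order differential operators $\partial$ and $\bar\partial$ (and also to their real and imaginary combinations $d$ and $d^c$ if one prefers to phrase things in terms of the total exterior derivative). First I would note that $\partial$ and $\bar\partial$ are differential operators with smooth (indeed constant, in local holomorphic coordinates) coefficients acting on the bundles $\bigwedge^{p,q}T^{\ast}X$ of smooth forms, hence extend to operators on currents. By Proposition \ref{prop:4}, for any current $\eta$ one has $\WF(\partial\eta)\subseteq\WF(\eta)$ and $\WF(\bar\partial\eta)\subseteq\WF(\eta)$. Therefore, if $\eta\in\mathscr{D}^{\ast}_{X,S}(U)$ for some open $U\subseteq X$, i.e.\ $\WF(\eta)\subseteq S$, then $\WF(\partial\eta)\subseteq S$ and $\WF(\bar\partial\eta)\subseteq S$ as well, so $\partial\eta,\bar\partial\eta\in\mathscr{D}^{\ast}_{X,S}(U)$. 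Since this holds for every open $U$, the subsheaf $\mathscr{D}^{\ast}_{X,S}$ is stable under $\partial$ and $\bar\partial$.

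Next I would check that $\mathscr{D}^{\ast}_{X,S}$, with the inherited bigrading and the operators $\partial$, $\bar\partial$, really is a sheaf of Dolbeault complexes in the sense of \cite{BurgosKramerKuehn:cacg} \S 5.4 referenced in the paper: one needs the graded-commutative algebra structure (here there is no product to worry about, only the differentials), the relations $\partial^2=\bar\partial^2=\partial\bar\partial+\bar\partial\partial=0$, and compatibility with the real structure and the conjugation $\eta\mapsto\bar\eta$. All of these are inherited from the ambient sheaf $\mathscr{D}^{\ast}_X$ once we know the subsheaf is closed under the operations involved: the identities among $\partial$ and $\bar\partial$ hold on $\mathscr{D}^{\ast}_X$ and restrict; conjugation sends $\mathscr{D}^{\ast}_{X,S}$ to $\mathscr{D}^{\ast}_{X,\bar S}$, and since the wave front set of a real current (and the relevant classes of currents in the paper's conventions) is symmetric under the antipodal/conjugation involution on $T^{\ast}X_0$, or else one simply works with $S$ assumed symmetric, this causes no difficulty. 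Thus $(\mathscr{D}^{\ast}_{X,S},\partial,\bar\partial)$ is a sub-Dolbeault-complex of $(\mathscr{D}^{\ast}_X,\partial,\bar\partial)$.

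There is essentially no obstacle here: the content is entirely in Proposition \ref{prop:4} (which is H\"ormander VIII (8.1.11)), and the corollary is the observation that pseudolocality of differential operators immediately gives stability of the wave-front-constrained subsheaf under the Dolbeault operators. The only point requiring a word of care is the bookkeeping about whether one wants $S$ to be symmetric under the involution induced by complex conjugation on $T^{\ast}X_0$ so that the real/conjugation structure of the Dolbeault complex is preserved; in the applications of this paper the relevant $S$ will be of that type (it arises from conormal directions of real submanifolds), so this is harmless and I would simply remark on it in one sentence. The verification of the $\mathscr{D}$-module-free structural axioms of a Dolbeault complex is purely formal once closedness under $\partial$ and $\bar\partial$ is established.
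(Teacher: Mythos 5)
Your proposal is correct and is exactly the argument the paper intends: the corollary is stated without proof precisely because it follows immediately from Proposition \ref{prop:4} applied to the differential operators $\partial$ and $\bar\partial$, giving $\WF(\partial\eta),\WF(\bar\partial\eta)\subseteq\WF(\eta)\subseteq S$, with the Dolbeault-complex structure inherited formally from $\mathscr{D}^{\ast}_{X}$. Your extra remark on the conjugation-symmetry of $S$ is a harmless refinement and does not change the substance.
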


Let $f\colon X\longrightarrow Y$ be a morphism of complex
manifolds. The \emph{set of normal directions} of $f$ is
\begin{displaymath}
  N_{f}=\{(f(x),v)\in T^{\ast}Y \mid df(x)^{t}v=0\}.
\end{displaymath}

This set measures the singularities of $f$. For instance, if $f$ is a
smooth map then $N_{f}=0$ whereas, if $f$ is a closed immersion,
$N_{f}$ is the conormal bundle of $f(X)$. Let $S\subset T^{\ast}Y_{0}$ be a
closed conical subset. We will say that $f$ is transverse to $S$ if
$N_{f}\cap S=\emptyset$. We will denote
\begin{displaymath}
  f^{\ast}S=\{(x,df(x)^{t}v)\in T^{\ast}X_{0}\mid (f(x),v)\in
  S\}. 
\end{displaymath}

\begin{theorem}\label{thm:1}
Let $f\colon X\longrightarrow Y$ be a morphism of complex manifolds
that is transverse to $S$. Then
there exists one and only one extension of the pull-back morphism
$f^{\ast}\colon\mathscr{E}^{\ast}_{Y}\longrightarrow
\mathscr{E}^{\ast}_{X}$ to a continuous morphism
\begin{displaymath}
  f^{\ast}\colon\mathscr{D}^{\ast}_{Y,S}\longrightarrow
  \mathscr{D}^{\ast}_{X,f^{\ast}S}. 
\end{displaymath}
In particular there is a continuous morphism of complexes
\begin{displaymath}
  D^{\ast}(Y,S)\longrightarrow D^{\ast}(X,f^{\ast}S).
\end{displaymath}
\end{theorem}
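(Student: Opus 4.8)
If $f\colon X\to Y$ is a morphism of complex manifolds transverse to a closed conical subset $S\subset T^*Y_0$, then the pull-back $f^*$ on smooth forms extends uniquely to a continuous morphism $\mathscr{D}^*_{Y,S}\to\mathscr{D}^*_{X,f^*S}$.

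This is essentially Hörmander's theorem on pull-back of distributions, Theorem 8.2.4 in [Hormander:MR1065993], transported from scalar distributions to currents (generalized sections of the bundles $\Lambda^{p,q}T^*Y$, or equivalently to the Dolbeault complex of currents). Let me sketch the proof.

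The plan is to reduce the statement to the scalar case and then invoke Hörmander's pull-back theorem for distributions (\cite{Hormander:MR1065993} VIII, Theorem 8.2.4), which contains all the analytic substance. First I would use that the assertion is local on $X$ and $Y$: choosing coordinate charts, I may assume $X\subset\mathbb{C}^{m}$ and $Y\subset\mathbb{C}^{n}$ are open, and trivializing the exterior bundles $\Lambda^{\ast}T^{\ast}_{\mathbb{R}}X$ and $\Lambda^{\ast}T^{\ast}_{\mathbb{R}}Y$ by constant frames, a current in $\mathscr{D}^{\ast}_{Y,S}$ becomes a finite tuple of scalar distributions, each with wave front set contained in $S$. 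In these coordinates the pull-back $f^{\ast}$ on smooth forms is a combination of three operations: substitution $u\mapsto u\circ f$ of the distributional coefficients; multiplication by smooth functions built from the entries of the (real) Jacobian of $f$; and $\mathbb{R}$-linear combinations. By Proposition \ref{prop:4} the latter two operations do not enlarge wave front sets and are plainly continuous, so the whole problem is concentrated in the substitution map $u\mapsto u\circ f$ for one scalar distribution $u$ on $Y$.

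For this substitution map I would observe that the hypothesis ``$f$ transverse to $S$'', namely $N_{f}\cap S=\emptyset$, is exactly the hypothesis of \cite{Hormander:MR1065993} VIII, Theorem 8.2.4. That theorem provides one and only one continuous extension of $f^{\ast}$ from $C^{\infty}(Y)$ to the space of distributions with wave front set in $S$, together with the wave front bound $\WF(u\circ f)\subset f^{\ast}\WF(u)$, where
\begin{displaymath}
  f^{\ast}S=\{(x,df(x)^{t}v)\mid (f(x),v)\in S\}.
\end{displaymath}
Here transversality is also what guarantees $f^{\ast}S\subset T^{\ast}X_{0}$: if $df(x)^{t}v=0$ for some $(f(x),v)\in S$, then $(f(x),v)\in N_{f}\cap S$, a contradiction, so the target sheaf $\mathscr{D}^{\ast}_{X,f^{\ast}S}$ is meaningful. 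Reassembling the scalar statements componentwise gives the continuous morphism $f^{\ast}\colon\mathscr{D}^{\ast}_{Y,S}\to\mathscr{D}^{\ast}_{X,f^{\ast}S}$, and uniqueness is immediate because smooth forms are dense in $\mathscr{D}^{\ast}_{Y,S}$ for the topology of \cite{Hormander:MR1065993} VIII, Definition 8.2.2 while the sought extension is required to be continuous.

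To finish I would check that $f^{\ast}$ is a morphism of Dolbeault complexes. On smooth forms $f^{\ast}$ commutes with $\partial$ and $\bar\partial$ because $f$ is holomorphic; both composites $f^{\ast}\circ\partial$ and $\partial\circ f^{\ast}$ are continuous for the relevant wave-front topologies (again using Proposition \ref{prop:4} to bound the target sets), so they agree on the dense subspace of smooth forms, hence everywhere, and likewise for $\bar\partial$. Passing to global sections then yields the continuous morphism of complexes $D^{\ast}(Y,S)\to D^{\ast}(X,f^{\ast}S)$ asserted at the end. The only hard part is Hörmander's theorem itself, whose proof amounts to estimating, after microlocal cut-offs, the Fourier transform of $u\circ f$ in terms of that of $u$ in directions outside $S$; once that is invoked, the remaining work here is essentially bookkeeping, the single point deserving care being the verification above that transversality keeps $f^{\ast}S$ off the zero section, which is what makes the target sheaf the correct one.
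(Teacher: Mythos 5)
Your proof is correct and takes essentially the same route as the paper, whose entire argument is the citation of H\"ormander's theorem 8.2.4; your local reduction to scalar distributions, the check that transversality keeps $f^{\ast}S$ off the zero section, and the uniqueness-by-density argument are exactly the bookkeeping implicit in that citation.
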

\begin{proof}
  This follows from \cite{Hormander:MR1065993} theorem 8.2.4.  
\end{proof}

We now recall the effect of correspondences on the wave front sets.

Let $K\in D^{\ast}(X\times Y)$, and let $S$ be a conical subset of
$T^{\ast}Y_{0}$.  We will write
\begin{align*}
  \WF(K)_{X}&=\{(x,\xi)\in T^{\ast}X_{0}\mid \exists y\in Y,
  (x,y,\xi,0)\in \WF(K)\}\\
  \WF'(K)_{Y}&=\{(y,\eta)\in T^{\ast}Y_{0}\mid \exists x\in X,
  (x,y,0,-\eta)\in \WF(K)\}\\
  \WF'(K)\circ S&=\{(x,\xi)\in T^{\ast}X_{0}\mid \exists (y,\eta)\in S,
  (x,y,\xi,-\eta)\in \WF(K)\}.
\end{align*}

\begin{theorem}\label{thm:4} The image of the correspondence map
  \begin{displaymath}
    \begin{matrix}
      E^{\ast}_{c}(Y) & \longrightarrow & D^{\ast}(X)\\
      \eta & \longmapsto & p_{1 \ast}(K\land p_{2}^{\ast}(\eta))
    \end{matrix}
  \end{displaymath}
is contained in $ D^{\ast}(X,WF(K)_{X})$. Moreover, if $S\cap
\WF'(K)_{Y}=\emptyset$, then  
  there exists one and only one extension 
  to a continuous map
  \begin{displaymath}
    D^{\ast}_{c}(Y,S) \longrightarrow D^{\ast}(X,S'),
  \end{displaymath}
  where $S'=\WF(K)_{X}\cup \WF'(K)\circ S$.
\end{theorem}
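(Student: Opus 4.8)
Theorem \ref{thm:4} is a statement about composition of kernels with wave front conditions, so the plan is to reduce it to Hörmander's theory of the wave front set of the pull-back (Theorem \ref{thm:1} above, i.e. \cite{Hormander:MR1065993} 8.2.4) together with Hörmander's result on the wave front set of a push-forward/integration along a proper map and his theorem on the wave front set of a product of distributions (\cite{Hormander:MR1065993} 8.2.10 and 8.2.12). Let me sketch.

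---

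The proof splits into two parts, matching the two assertions.

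\textbf{The unconditional statement.} First I would treat the case $\eta\in E^{\ast}_c(Y)$, a smooth compactly supported form. Then $p_2^{\ast}\eta$ is smooth with wave front set contained in $p_2^{\ast}(\emptyset)=\emptyset$... wait, more precisely $\WF(p_2^\ast\eta)\subseteq p_2^\ast\WF(\eta)=\emptyset$ since $\eta$ is smooth, so $p_2^\ast\eta$ is smooth. The product $K\wedge p_2^\ast\eta$ is then a current obtained by multiplying $K$ by a smooth compactly supported (in the $Y$-direction) form, so by Proposition \ref{prop:4}-type reasoning (multiplication by smooth coefficients does not enlarge the wave front set) we get $\WF(K\wedge p_2^\ast\eta)\subseteq \WF(K)$, with compact support in the $Y$-direction. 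Now $p_{1\ast}$ is integration along the proper fibres $\{x\}\times Y$ (after cutting down by the $Y$-support of $\eta$); Hörmander's theorem on pushforward of currents along a submersion with proper support (\cite{Hormander:MR1065993}, around 8.2.4 and the discussion of integration along the fibre, or 8.2.12) gives
\[
  \WF\bigl(p_{1\ast}(K\wedge p_2^\ast\eta)\bigr)\subseteq
  \{(x,\xi)\mid (x,y,\xi,0)\in\WF(K)\text{ for some }y\}=\WF(K)_X.
\]
Hence the image of the correspondence map lies in $D^\ast(X,\WF(K)_X)$, as claimed.

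\textbf{The extension.} Now assume $S\cap\WF'(K)_Y=\emptyset$; I want to extend the map continuously to $D^\ast_c(Y,S)$. The composite is $\eta\mapsto p_{1\ast}(K\cdot p_2^\ast\eta)$. The only step where smoothness of $\eta$ was essential is the formation of the product $K\cdot p_2^\ast\eta$, since multiplication of two genuine currents is not always defined. Here is where Hörmander's criterion enters: the product of two distributions $u,v$ is well-defined and jointly sequentially continuous provided there is no $(z,\zeta)$ with $(z,\zeta)\in\WF(u)$ and $(z,-\zeta)\in\WF(v)$. Apply this on $X\times Y$ with $u=K$ and $v=p_2^\ast\eta$. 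By Theorem \ref{thm:1} (pull-back along the submersion $p_2$, which is automatically transverse to everything since $N_{p_2}=0$), $\WF(p_2^\ast\eta)\subseteq p_2^\ast S=\{(x,y,0,\eta')\mid (y,\eta')\in S\}$. A point of $\WF(K)$ of the form $(x,y,0,-\eta')$ with $(y,\eta')\in S$ would, by definition, put $(y,\eta')\in\WF'(K)_Y\cap S$, contradicting our hypothesis. So the product $K\cdot p_2^\ast\eta$ is defined and depends continuously (in the $\mathscr D^\ast_{\cdot,\cdot}$-topology) on $\eta$; moreover, by Hörmander's wave front estimate for products,
\[
  \WF(K\cdot p_2^\ast\eta)\subseteq
  \WF(K)\ \cup\ p_2^\ast S\ \cup\ \bigl(\WF(K)+p_2^\ast S\bigr),
\]
where the sum is the fibrewise sum over points where both are nonzero. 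Pushing forward by $p_{1\ast}$ (again using the proper-support pushforward estimate, after arranging proper supports by a partition of unity exhausting $Y$ — here one uses that $D^\ast_c(Y,S)$ consists of compactly supported currents, so the support issue is harmless) kills the $\eta'$-components and yields
\[
  \WF\bigl(p_{1\ast}(K\cdot p_2^\ast\eta)\bigr)\subseteq
  \WF(K)_X\ \cup\ \WF'(K)\circ S = S'.
\]
Uniqueness of the extension follows because $E^\ast_c(Y)$ is sequentially dense in $D^\ast_c(Y,S)$ in the relevant topology (\cite{Hormander:MR1065993} 8.2.3), and any continuous extension is determined on a dense set.

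\textbf{Main obstacle.} The technical heart is bookkeeping the Hörmander wave front conventions under the two projections simultaneously: getting the signs right in $\WF'$ (the reflection $\eta'\mapsto -\eta'$ built into $\WF'(K)_Y$ and into the product criterion), and handling the pushforward-along-a-submersion estimate, which in \cite{Hormander:MR1065993} is usually phrased for functions/distributions rather than generalized sections of a vector bundle — so one must either invoke \cite{GuilleminStenberg:MR0516965} Ch. VI for the vector bundle version, or work locally in trivializations and note that the estimates are componentwise. Also, one must be a little careful that ``$p_{1\ast}$'' is integration along the fibre of a trivial but non-compact fibration; this is why the compact support hypothesis on $\eta$ (and hence on $K\cdot p_2^\ast\eta$ in the $Y$-direction) is imposed, and it must be carried through the continuity argument. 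None of this is deep, but it is exactly the kind of convention-chasing that is easy to get wrong.
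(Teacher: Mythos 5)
Your argument is correct, but it does more work than the paper does: the paper's entire proof of this theorem is the citation ``This is \cite{Hormander:MR1065993} theorem 8.2.13'', since that theorem of H\"ormander is verbatim the statement at hand (modulo passing from distributions to currents/generalized sections, which is handled componentwise in local trivializations, as you note). What you have written is essentially a reconstruction of H\"ormander's own proof of 8.2.13: define the correspondence as $p_{1\ast}(K\cdot p_2^{\ast}\eta)$, use the pull-back theorem (8.2.4) for $p_2^{\ast}\eta$, the product criterion (8.2.10) for $K\cdot p_2^{\ast}\eta$ -- your verification that a conflict point $(x,y,0,-\eta')$ with $(y,\eta')\in S$ would force $S\cap\WF'(K)_{Y}\neq\emptyset$ is exactly right -- and the push-forward estimate for $p_{1\ast}$, which keeps only covectors of the form $(x,y,\xi,0)$ and hence yields precisely $\WF(K)_X\cup\WF'(K)\circ S$; uniqueness then follows from density of smooth compactly supported forms (8.2.3) in the H\"ormander topology of 8.2.2. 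So the proposal buys self-containedness at the cost of the convention-chasing you flag, while the paper's route is simply to quote the result; both are sound, and your bookkeeping of the sign in $\WF'$ and of the compact-support/properness issue for $p_{1\ast}$ is accurate.
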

\begin{proof}
  This is \cite{Hormander:MR1065993} theorem 8.2.13.
\end{proof}

We are now in a position to state and prove the Poincar\'e lemma for
currents with fixed wave front set.
As usual, we will
denote by $F$ the Hodge filtration of any Dolbeault complex.
  
\begin{theorem}[Poincar\'e lemma]\label{thm:2} Let $S$ be any conical
  subset of $T^{\ast}X_{0}$. Then the natural morphism 
  \begin{displaymath}
    \iota\colon (E^{\ast}(X), F)\longrightarrow
    (D^{\ast}(X,S), F) 
  \end{displaymath}
  is a filtered quasi-isomorphism.
\end{theorem}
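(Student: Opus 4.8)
The plan is to reduce to the classical Dolbeault–Grothendieck and Poincaré lemmas by a local argument combined with a global patching via a partition of unity, checking throughout that all the operators involved preserve the wave front set condition. First I would recall that the statement that $\iota$ is a filtered quasi-isomorphism amounts to showing, for each $p$, that $\iota\colon F^{p}E^{\ast}(X)\to F^{p}D^{\ast}(X,S)$ induces an isomorphism on cohomology, and likewise on the graded pieces $\operatorname{Gr}_{F}$; since the graded pieces of a Dolbeault complex are just $(\bar\partial$-complexes of$)$ the Dolbeault algebras in a fixed holomorphic degree, it suffices to treat the single complex $(D^{\ast}(X,S),d)$ (resp. with $\bar\partial$) and show the inclusion of smooth forms is a quasi-isomorphism; the filtered refinement then follows formally because the filtration is by a subcomplex of the same shape. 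So the heart of the matter is: every $d$-closed (resp. $\bar\partial$-closed) current with wave front set in $S$ that represents a cohomology class can be written as $\omega + d\tau$ (resp. $+\bar\partial\tau$) with $\omega$ smooth and $\tau$ a current with wave front set still in $S$; and if a smooth form is $d\tau$ for such a $\tau$, it is $d$ of a smooth form.

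The key steps, in order, would be: \textbf{(1)} Local solvability. On a polydisc $U\subseteq X$, if $\eta\in D^{k}(U,S)$ with $d\eta=0$ (or $\bar\partial\eta=0$), produce a homotopy operator $h$ with $\eta - (\text{smooth part}) = dh\eta + hd\eta$. The standard construction uses integration against a kernel (the Bochner–Martinelli / Cauchy kernel, or the explicit homotopy for the de Rham complex on a star-shaped domain). The essential point is that this homotopy operator is a correspondence of the type considered in Theorem \ref{thm:4}: it is $\eta\mapsto p_{1\ast}(K\wedge p_{2}^{\ast}\eta)$ for an explicit kernel $K$ on $U\times U$ whose own wave front set is supported on the conormal of the diagonal. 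Then Theorem \ref{thm:4} guarantees that $h$ maps $D^{\ast}_{c}(U,S)$ continuously into $D^{\ast}(U,S')$ with $S' = \WF(K)_{U}\cup \WF'(K)\circ S$; one checks $\WF(K)_{U} = \emptyset$ (the kernel's singularities are conormal to the diagonal, which contributes nothing to the $X$-component after the projection) and $\WF'(K)\circ S = S$ (composition with the conormal of the diagonal acts as the identity on conical sets), so in fact $h$ preserves the class $S$. \textbf{(2)} Regularity of the smooth part: the "smooth part" produced by the homotopy (the boundary term in the homotopy formula, obtained by restriction to the boundary sphere/torus or as the harmless term in the kernel expansion) is genuinely a $C^{\infty}$ form, because it is obtained by integrating the current against a smooth kernel — this is again a correspondence, now with a smooth $K$, so by the first assertion of Theorem \ref{thm:4} its image lies in $D^{\ast}(U,\WF(K)_{U}) = E^{\ast}(U)$. \textbf{(3)} Globalization: cover $X$ by such polydiscs, take a subordinate partition of unity $\{\rho_{i}\}$, and run the usual Čech–Dolbeault (or Čech–de Rham) descent to splice the local primitives into a global one; multiplication by the smooth cutoffs $\rho_{i}$ preserves wave front sets by Proposition \ref{prop:4} (differential operators of order zero), and so does the localized homotopy operator since it is given by a kernel supported near the diagonal. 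The bookkeeping of the spectral sequence / double complex argument is then identical to the classical proof that smooth forms compute de Rham (resp. Dolbeault) cohomology, carried out verbatim inside the subsheaves $\mathscr{D}^{\ast}_{X,S}$, which by the Corollary after Proposition \ref{prop:4} form a sheaf of Dolbeault complexes closed under $\partial$ and $\bar\partial$. \textbf{(4)} The filtered statement: since $F^{p}$ of both complexes is again the complex of (smooth / wave-front-$S$) forms of Hodge degree $\geq p$, and the local homotopy operators respect the Hodge bidegree up to the expected shift, steps (1)–(3) apply verbatim to $F^{p}$ and to $\operatorname{Gr}_{F}^{p}$; by the standard criterion a morphism of filtered complexes inducing isomorphisms on all $\operatorname{Gr}_{F}^{p}$ is a filtered quasi-isomorphism, which is what we want.

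The main obstacle I anticipate is step (1), specifically verifying carefully that the classical homotopy operator for the Poincaré/Dolbeault lemma genuinely fits the correspondence framework of Theorem \ref{thm:4} and that the resulting wave front set is exactly $S$ and not something larger. One must write the homotopy operator in the form $\eta\mapsto p_{1\ast}(K\wedge p_{2}^{\ast}\eta)$ with a specific $K\in D^{\ast}(U\times U)$, compute $\WF(K)$ (it is contained in the conormal bundle of the diagonal $\Delta\subset U\times U$, union possibly the zero section–type contributions which do not matter), and then verify the two conditions $S\cap \WF'(K)_{U}=\emptyset$ and $\WF(K)_{U}\cup(\WF'(K)\circ S)=S$. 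The first holds because $\WF'(K)_{U}$ is empty (no covectors of the form $(x,y,0,-\eta)$ appear in the conormal of the diagonal with $\eta\neq 0$), so the extension exists for \emph{every} conical $S$ — this is crucial, as it is exactly why the lemma holds with no transversality hypothesis on $S$. The second is the computation that "composing with the identity correspondence does nothing." Once this is nailed down, the rest is the routine homological algebra of comparing two resolutions of the constant sheaf (resp. $\mathcal{O}_{X}$, resp. $\Omega^{p}_{X}$), and the filtered refinement is automatic.
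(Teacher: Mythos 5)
Your proposal is correct and follows essentially the same route as the paper: the Bochner--Martinelli kernel, viewed as a correspondence whose wave front set is the conormal bundle of the diagonal, is fed into theorem \ref{thm:4} (so that the homotopy operator preserves the class $S$, with no hypothesis on $S$ since $\WF'(K)_{Y}=\emptyset$ and $\WF'(K)\circ S=S$), a cutoff $\rho$ localizes the operator and produces a smooth error term playing the role of your ``smooth part,'' the stalk-level quasi-isomorphism follows, and the global and filtered statements are obtained from fineness of the sheaves and reduction to the fixed-holomorphic-degree Dolbeault complexes $\mathscr{E}^{p,\ast}_{X}\to\mathscr{D}^{p,\ast}_{X,S}$, exactly as you outline.
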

\begin{proof}
  Let $K$ be the Bochner-Martinelli integral operator on $\mathbb{C}^{n}\times
  \mathbb{C}^{n}$. 
  It is the operator
  \begin{displaymath}
    \begin{matrix}
      E_{c}^{p,q}(\mathbb{C}^{n})&\longrightarrow &
      E^{p,q-1}(\mathbb{C}^{n})\\
       \varphi&\longmapsto & \int_{w\in \mathbb{C}^{n}}k(z,w)\land \varphi(w),
    \end{matrix}
  \end{displaymath}
  where $k$ is the Bochner-Martinelli kernel
  (\cite{GriffithsHarris:pag} pag. 383). Thus $k$ is a differential
  form on $\mathbb{C}^{n}\times 
  \mathbb{C}^{n}$ with singularities only along the diagonal.

  Using the explicit description of $k$ in
  \cite{GriffithsHarris:pag}, it can be seen that
  $WF(k)=N^{\ast}\Delta _{0}$, the conormal bundle of the diagonal. By
  theorem \ref{thm:4}, 
  the operator $K$ defines a continuous linear map
  from $\Gamma _{c}(\mathbb{C}^{n},\mathscr{D}^{\ast}_{\mathbb{C}^{n},S})$ to 
  $\Gamma
  (\mathbb{C}^{n},\mathscr{D}^{\ast}_{\mathbb{C}^{n},S})$. This is the
  key fact that allows us to adapt the proof of the Poincar\'e Lemma
  for arbitrary currents to the case of currents with fixed wave front
  set.   

  We will prove that the sheaf inclusion
  \begin{displaymath}
    (\mathscr{E}_{X},F)\longrightarrow (\mathscr{D}_{X,S},F)
  \end{displaymath}
  is a filtered quasi-isomorphism. Then the theorem will follow from
  the fact that both are fine sheaves.
  
  The previous statement is equivalent to the fact that, for any
  integer $p\ge 0$, the inclusion 
  \begin{displaymath}
    \iota\colon \mathscr{E}^{p,*}_{X}\longrightarrow \mathscr{D}^{p,*}_{X,S}
  \end{displaymath}
  is a quasi-isomorphism.

  Let $x\in X$, since exactness can be checked at the level of stalks,
  we need to show that
  \begin{displaymath}
    \iota_{x}\colon \mathscr{E}^{p,*}_{X,x}\longrightarrow \mathscr{D}^{p,*}_{X,S,x}
  \end{displaymath}
is a quasi-isomorphism.
let $U$ be a coordinate neighborhood around $x$ and let
  $x\in V\subset U$ be a relatively compact open subset.

  Let $\rho \in C_{c}^{\infty}(U)$ be a function with compact support
  such that $\rho \mid _{V}=1$. We define an operator
  \begin{displaymath}
    K\rho \colon\mathscr{D}^{p,q}_{X,S}(U)\longrightarrow
    \mathscr{D}^{p,q-1}_{X,S}(V).
  \end{displaymath}
  If $T\in \mathscr{D}^{p,q}_{X,S}(U)$ and $\varphi \in E_{c}^{\ast}(V)$
  is a test form, then
  \begin{displaymath}
    K\rho (T)(\varphi)=(-1)^{p+q}T(\rho K(\varphi)).
  \end{displaymath}
  Hence, using that $\bar \partial K(\varphi)+K(\bar \partial
  \varphi)=\varphi$,  and that
  $\varphi=\rho \varphi$, we have
  \begin{displaymath}
    (\bar \partial K\rho T + K\rho \bar \partial T +T)(\varphi)=
    -T(\bar \partial (\rho )\land K(\varphi)).
  \end{displaymath}
  Observe that, even if the support of $\varphi$ is contained in $V$,
  the support of $K(\varphi)$ can be $\mathbb{C}^{n}$; therefore the
  right hand side of the above equation may be non zero.

  We compute
  \begin{align*}
    T(\bar \partial (\rho )\land K(\varphi))&=
    T\left(\bar \partial (\rho)\land \int_{w\in
        \mathbb{C}^{n}}k(w,z)\land \varphi(w)
      \right)\\
      &=
      T\left(\int_{w\in
        \mathbb{C}^{n}}\bar \partial (\rho)\land k(w,z)\land
      \varphi(w) \right).
  \end{align*}

  Since $\supp(\varphi)\subset V$ and $\bar \partial (\rho)|_{V}\equiv
  0$, we can find a number $\epsilon >0$ such that, if
  $\|z-w\|<\epsilon $, then $\bar \partial (\rho)\land k(w,z)\land
      \varphi(w) =0$. Since the singularities of $k(w,z)$ are
      concentrated on the diagonal, it follows that 
  the differential form $\bar \partial (\rho)\land k(w,z)\land
  \varphi(w)$ is smooth. Therefore, the current in $V$ given by
  \begin{displaymath}
    \varphi\longmapsto T\left(\int_{w\in
        \mathbb{C}^{n}}\bar \partial (\rho)\land k(w,z)\land
      \varphi(w) \right),
  \end{displaymath}
  is the current associated to the smooth differential form 
  $T_{z}\left(\bar \partial (\rho)\land
    k(w,z)\right)$, where the subindex $z$ means that $T$ only acts on the
 $z$ variable, being $w\in V$ a parameter. This smooth form will be denoted by $\Psi (T)$.
  
  Summing up, we have shown that, for any current $T\in
  \mathscr{D}^{p,q}_{X,S}(U)$ there exists a smooth differential form
  $\Psi (T)\in \mathscr{E}^{p,q}_{X}(V)$ such that
  \begin{displaymath}
    T\mid _{V}=-\bar \partial K\rho T -K\rho \bar \partial T -\Psi (T).
  \end{displaymath}
  Observe that we can not say that $\Psi $ is a quasi-inverse of
  $\iota _{x}$ because it depends on the choice of $\rho $ and it is
  not possible to choose a single $\rho $ that can be applied to all
  $T$. Hence it 
  is not a well defined operator at the level of stalks. 
  Let now $T\in \mathscr{D}^{p,*}_{X,S,x}$ be closed. It is defined in
  some neighborhood of $x$, say $U'$. Applying the above procedure we
  find a smooth differential form $\Psi (T)$ defined on a relatively
  compact subset of $U'$, say $V'$, that is cohomologous to $T$. Hence
  the map induced by $\iota _{x}$ in cohomology is surjective. Let
  $\omega \in \mathscr{E}^{p,*}_{X,x}$ be closed and such that $\iota
  _{x}\omega =\bar \partial T$ for some $T\in
  \mathscr{D}^{p,*-1}_{X,S,x}$. We may assume that $\omega $ and $T$
  are defined is some neighborhood $U''$ of $x$. Then, on some
  relatively compact subset $V''\subset U''$, we have
  \begin{displaymath}
    \omega \mid _{V''} = \bar \partial T\mid _{V''}=-\bar \partial K\rho \omega
    - \bar \partial \Psi (T).
  \end{displaymath}
  Since $K\rho \omega$ and $\Psi (T)$ are smooth differential forms we
  conclude that the map induced by $\iota_{x}$ in cohomology is injective.
\end{proof}

We will denote by  
$\mathcal{D}^{\ast}_{D}(X,S,p)$ the Deligne complex associated to
$D^{\ast}(X,S)$.

The following two results are direct consequences of theorem \ref{thm:2}.
\begin{corollary} \label{cor:2} The inclusion
  $\mathcal{D}^{\ast}_{D}(X,S,p)\longrightarrow
  \mathcal{D}^{\ast}_{D}(X,p)$ induces 
  an isomorphism  
\begin{displaymath}
  H^{\ast}(\mathcal{D}^{\ast}_{D}(X,S,p)) \cong
H^{\ast}_{\mathcal{D}^{\an}}(X,\mathbb{R}(p)).
\end{displaymath}
\end{corollary}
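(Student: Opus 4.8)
The plan is to deduce the statement from the Poincar\'e lemma (Theorem \ref{thm:2}) together with the general behaviour of the Deligne complex functor under filtered quasi-isomorphisms. Recall from Notation \ref{def:19} that $\mathcal{D}^{\ast}_{D}(X,S,p)$ is by definition the Deligne complex $\mathcal{D}^{\ast}(D^{\ast}(X,S),p)$ associated to the Dolbeault complex $D^{\ast}(X,S)$ (which is a Dolbeault complex by the corollary to Proposition \ref{prop:4}), that $\mathcal{D}^{\ast}_{D}(X,p)=\mathcal{D}^{\ast}(D^{\ast}(X),p)$, and that $H^{\ast}(\mathcal{D}_{D}(X,p))\cong H^{\ast}_{\mathcal{D}^{\an}}(X,\mathbb{R}(p))$. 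Hence it suffices to show that the inclusion of Dolbeault complexes $j\colon D^{\ast}(X,S)\hookrightarrow D^{\ast}(X)$ induces, after applying $\mathcal{D}^{\ast}(\underline{\phantom{A}},p)$, a quasi-isomorphism.

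First I would record the chain of morphisms of Dolbeault complexes
$$E^{\ast}(X)\overset{\iota}{\longrightarrow} D^{\ast}(X,S)\overset{j}{\longrightarrow} D^{\ast}(X),$$
where $\iota$ and $j$ are the natural inclusions, compatible with the real structures and the Hodge filtrations. By Theorem \ref{thm:2}, $\iota$ is a filtered quasi-isomorphism for $F$. The composite $j\circ\iota$ is the standard inclusion of smooth forms into currents, which is a filtered quasi-isomorphism by the usual Dolbeault--Poincar\'e lemma for currents; this is precisely the fact used in Notation \ref{def:19} to identify $H^{\ast}(\mathcal{D}_{D}(X,p))$ with analytic Deligne cohomology. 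Applying the two-out-of-three property for quasi-isomorphisms on each step $F^{p}$ of the filtration, it follows that $j$ is itself a filtered quasi-isomorphism for $F$; and since $j$ is an inclusion defined over $\mathbb{R}$, it is simultaneously a filtered quasi-isomorphism for the conjugate filtration $\overline{F}$ and a quasi-isomorphism on the underlying real subcomplexes.

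Next I would invoke the general fact that the functor $\mathcal{D}^{\ast}(\underline{\phantom{A}},p)$ carries a morphism of Dolbeault complexes that is a filtered quasi-isomorphism to a quasi-isomorphism of Deligne complexes; this is a formal consequence of the description of $\mathcal{D}^{\ast}(\underline{\phantom{A}},p)$ as a simple complex built out of $F^{p}$, $\overline{F}^{p}$ and the real part, together with the five lemma applied to the associated long exact sequences (see \cite{BurgosKramerKuehn:cacg} \S 5 and \cite{Burgos:CDB}). Applying this to $j$ yields a quasi-isomorphism $\mathcal{D}^{\ast}_{D}(X,S,p)\longrightarrow \mathcal{D}^{\ast}_{D}(X,p)$; passing to cohomology and using $H^{\ast}(\mathcal{D}_{D}(X,p))\cong H^{\ast}_{\mathcal{D}^{\an}}(X,\mathbb{R}(p))$ gives the claim.

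There is no serious obstacle here: essentially all the content is already in Theorem \ref{thm:2}, and the remainder is bookkeeping with the Deligne complex functor. The only point requiring mild care is that Theorem \ref{thm:2} is stated only for the filtration $F$, while the Deligne complex depends on $F$, $\overline{F}$ and the real structure together; but because the maps involved are inclusions compatible with complex conjugation, the statements for $\overline{F}$ and for the real subcomplexes follow formally from the one for $F$, so this causes no difficulty.
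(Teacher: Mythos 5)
Your argument is correct and is exactly the route the paper intends: the paper derives Corollary \ref{cor:2} as a direct consequence of Theorem \ref{thm:2}, the only content being the two-out-of-three comparison with the classical inclusion of forms into currents and the fact that the Deligne complex functor turns filtered quasi-isomorphisms of Dolbeault complexes into quasi-isomorphisms. Your spelled-out bookkeeping (including the remark on the real structure and $\overline{F}$) fills in precisely what the paper leaves implicit.
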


\begin{corollary} \label{cor:1}
  \begin{enumerate}
  \item Let $\eta\in \mathcal{D}^{n}_{D}(X,p)$ be a current such
    that $$\dd_{\mathcal{D}}\eta \in \mathcal{D}^{n+1}_{D}(X,S,p),$$ then
    there is a current $a\in \mathcal{D}^{n-1}_{D}(X,p)$ such that
    $\eta + \dd_{\mathcal{D}} a\in \mathcal{D}^{n}_{D}(X,S,p)$.
  \item Let $\eta\in \mathcal{D}^{n}_{D}(X,S,p)$ be a current such
    that there is a current $a\in \mathcal{D}^{n-1}_{D}(X,p)$ with 
    $\eta = \dd_{\mathcal{D}} a$, then there is a current $b\in
    \mathcal{D}^{n-1}_{D}(X,S,p)$ such that $\eta = \dd_{\mathcal{D}}
    b$.  
  \end{enumerate}
\hfill $\square$
\end{corollary}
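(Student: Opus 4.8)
Corollary \ref{cor:1} is a $\partial\bar\partial$-type refinement of the Poincaré lemma (Theorem \ref{thm:2}), and the proof should be a routine diagram chase using Theorem \ref{thm:2} together with the standard mechanism relating the Deligne complex cohomology to the underlying Dolbeault data. Let me think about how to prove it.

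The Deligne complex $\mathcal{D}^*_D(X,p)$ is built from $D^*(X)$, and $\mathcal{D}^*_D(X,S,p)$ from $D^*(X,S)$. By Theorem \ref{thm:2}, the inclusion $(E^*(X),F) \to (D^*(X,S),F)$ is a filtered quasi-isomorphism, and similarly $(E^*(X),F) \to (D^*(X),F)$. Hence the inclusion $D^*(X,S) \to D^*(X)$ is a filtered quasi-isomorphism. Applying the functor $\mathcal{D}^*(\underline{\phantom{A}},p)$ — which, being built out of direct sums, shifts, and mapping cones of the filtered pieces (see Burgos's construction, referenced as \cite{Burgos:CDB}, \cite{BurgosKramerKuehn:cacg}) — preserves quasi-isomorphisms between filtered Dolbeault complexes. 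So Corollary \ref{cor:2} gives that $\mathcal{D}^*_D(X,S,p) \to \mathcal{D}^*_D(X,p)$ is a quasi-isomorphism. Everything below is then formal homological algebra.

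For part (1): suppose $\eta \in \mathcal{D}^n_D(X,p)$ with $\dd_{\mathcal{D}}\eta \in \mathcal{D}^{n+1}_D(X,S,p)$. Then $\dd_{\mathcal{D}}\eta$ is a $\dd_{\mathcal{D}}$-closed element of the subcomplex $\mathcal{D}^*_D(X,S,p)$, so it defines a class in $H^{n+1}(\mathcal{D}^*_D(X,S,p))$. But it is $\dd_{\mathcal{D}}$-exact in the larger complex $\mathcal{D}^*_D(X,p)$, hence its image in $H^{n+1}(\mathcal{D}^*_D(X,p))$ vanishes. Since the map between these cohomology groups is an isomorphism (Corollary \ref{cor:2}), the original class in $H^{n+1}(\mathcal{D}^*_D(X,S,p))$ is already zero; that is, there exists $c \in \mathcal{D}^n_D(X,S,p)$ with $\dd_{\mathcal{D}} c = \dd_{\mathcal{D}}\eta$. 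Set $a' = c - \eta \in \mathcal{D}^n_D(X,p)$; then $\dd_{\mathcal{D}} a' = 0$, and $\eta + a' = c \in \mathcal{D}^n_D(X,S,p)$. To get the stated conclusion (with $\eta + \dd_{\mathcal{D}} a$ rather than $\eta + a'$), use that $a'$ is a $\dd_{\mathcal{D}}$-closed element of $\mathcal{D}^n_D(X,p)$: by Corollary \ref{cor:2} (surjectivity, applied in degree $n$), $a'$ is cohomologous in $\mathcal{D}^*_D(X,p)$ to an element $c'$ of the subcomplex, i.e. $a' = c' + \dd_{\mathcal{D}} a$ for some $a \in \mathcal{D}^{n-1}_D(X,p)$ and $c' \in \mathcal{D}^n_D(X,S,p)$. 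Then $\eta + \dd_{\mathcal{D}} a = c - c' \in \mathcal{D}^n_D(X,S,p)$, as required. For part (2): $\eta \in \mathcal{D}^n_D(X,S,p)$ is $\dd_{\mathcal{D}}$-closed and becomes exact in $\mathcal{D}^*_D(X,p)$; by injectivity in Corollary \ref{cor:2} it is already exact within $\mathcal{D}^*_D(X,S,p)$, giving $b \in \mathcal{D}^{n-1}_D(X,S,p)$ with $\dd_{\mathcal{D}} b = \eta$.

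The only genuine point requiring care — and the step I would flag as the main obstacle — is verifying that the functor $\mathcal{D}^*(\underline{\phantom{A}},p)$ sends the filtered quasi-isomorphism of Theorem \ref{thm:2} to a quasi-isomorphism of Deligne complexes, i.e. that Corollary \ref{cor:2} is genuinely available at the chain level in all degrees, not merely as an isomorphism of total cohomology. This is where the hypothesis that Theorem \ref{thm:2} is a \emph{filtered} quasi-isomorphism (with respect to the Hodge filtration $F$) is essential: the Deligne complex is assembled from $F^p\Omega^*$, the full de Rham complex, and the real structure, so one needs the quasi-isomorphism to respect $F$ in order to conclude. Since this is exactly the content of Corollary \ref{cor:2}, once that is invoked the rest is the formal argument above. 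I would present parts (1) and (2) in the order given, deriving (2) first as the cleaner statement if convenient, and then (1) by the two-step cohomology-class-killing argument.
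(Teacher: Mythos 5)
Your argument is correct and is exactly the routine deduction the paper intends when it calls Corollary \ref{cor:1} a direct consequence of Theorem \ref{thm:2}: one uses that the inclusion $\mathcal{D}^{\ast}_{D}(X,S,p)\hookrightarrow\mathcal{D}^{\ast}_{D}(X,p)$ is a quasi-isomorphism (Corollary \ref{cor:2}) and then does the formal two-step class-killing argument for (1) and the injectivity argument for (2). Your flagged point about the Deligne-complex functor preserving the filtered quasi-isomorphism is precisely the content already packaged in Corollary \ref{cor:2}, so nothing further is needed.
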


\section{Deformation of resolutions}
\label{sec:deform-resol}

In this section we will recall  the deformation of resolutions based
on the Grassmannian 
graph construction of \cite{BaumFultonMacPherson:RRsv}.  We will
also recall the Koszul resolution associated to a section of a vector
bundle.

The main theme is that given a bounded 
complex $E_{\ast}$ of locally free sheaves (with some properties) on a
complex manifold $X$, one can
construct a bounded complex 
$\tr_{1}(E_{\ast})_{\ast}$ 
over a certain manifold  
$W$. This new manifold has a birational map $\pi \colon W\longrightarrow X\times
\mathbb{P}^{1}$, that is an isomorphism over $X\times \mathbb{P}^{1}\setminus
\{\infty\}$.  
The complex
$\tr_{1}(E_{\ast})_{\ast}$ agrees with the original 
complex over $X\times \{0\} $ and is particularly simple over $\pi^{-1}(X\times
\{\infty\})$. Thus $\tr_{1}(E_{\ast})_{\ast}$ is a
deformation of the original complex to a simpler one. The two
examples we are interested in are: first, when
the original complex is exact, then $W$ 
agrees with $X\times \mathbb{P}^{1}$ and  $\tr_{1}(E_{\ast})_{\ast}$ was
defined in \ref{def:12}. Its restriction
to $\pi^{-1}(X\times \{\infty\})$ is split; second, when $i\colon
Y\longrightarrow 
X$ is a closed immersion of complex manifolds, and $E_{\ast}$ is a bounded
resolution of $i_{\ast}\mathcal{O}_{Y}$, then $W$ agrees with the
deformation to the normal cone of $Y$ and the restriction of
$\tr_{1}(E_{\ast})_{\ast}$ to $\pi^{-1}(X\times \{\infty\})$ is an extension
  of a Koszul resolution by a split complex. Note that, if we allow
  singularities, then the Grassmannian graph construction is much more
  general. 

The deformation
of resolutions is based on the Grassmannian 
graph construction of \cite{BaumFultonMacPherson:RRsv}, and, in the
form that we present here, has been developed in  
\cite{BismutGilletSoule:MR1086887} and \cite{GilletSoule:aRRt}.

In order to fix notations we first recall the deformation to the
normal cone and the Koszul resolution associated to the
zero section of a vector bundle.

Let $Y\hookrightarrow X$ be a closed immersion of complex manifolds,
with $Y$ of pure codimension $n$. In the sequel we will use 
notation \ref{def:3}.
Let $W=W_{Y/X}$ be the blow-up of $X\times \mathbb{P}^{1}$ along
$Y\times \{\infty\}$. Since $Y$ and $X\times \mathbb{P}^{1}$ are
manifolds, $W$ is also a manifold. The map $\pi \colon W\longrightarrow
X\times \mathbb{P}^{1}$ is an isomorphism away from $Y\times \{\infty\}$; we
will write $P$ for the exceptional divisor of the blow-up. Then
\begin{displaymath}
  P=
  \mathbb{P}(N_{Y/X}\otimes N^{-1}_{\infty/\mathbb{P}^{1}}\oplus \mathbb{C}).
\end{displaymath}
 Thus $P$ can be seen 
as the projective completion of the vector bundle $N_{Y/X}\otimes
N^{-1}_{\infty/\mathbb{P}^{1}}$. Note that $N_{\infty/\mathbb{P}^{1}}$
is trivial although not canonically trivial. Nevertheless we can
choose to trivialize it by means of the
section $y\in \mathcal{O}_{\mathbb{P}^{1}}(1)$.
Sometimes we will tacitly assume this trivialization and omit
$N_{\infty/\mathbb{P}^{1}}$ from the formulae. 

The map $q_{W}\colon W\longrightarrow \mathbb{P}^{1}$, obtained by composing
$\pi $ with the projection $q\colon X\times \mathbb{P}^{1}\longrightarrow
\mathbb{P}^{1}$, is flat and, 
for $t\in \mathbb{P}^{1}$, we have
\begin{displaymath}
  q_{W}^{-1}(t) \cong
  \begin{cases}
    X\times\{t\},&$\text{ if } $t\not = \infty, \\
    P\cup \widetilde X, &\text{ if } t=\infty,
  \end{cases}
\end{displaymath}
where $\widetilde X$ is the blow-up of $X$ along $Y$, and $P\cap
\widetilde X$ is, at the same time, the divisor at $\infty$ of $P$ and the
exceptional divisor of $\widetilde X$. 

Following \cite{BismutGilletSoule:MR1086887} we will use the following notations
\begin{displaymath}
  \xymatrix{
  P \ar[r]^{f} \ar[d]_{\pi _{P}}& W\ar[d]^{\pi}\\
Y\times \{\infty\} \ar[r]^{i_{\infty}} & X\times \mathbb{P}^{1}}
\end{displaymath}
\begin{displaymath}
  \begin{array}{cr}
    i\colon Y\longrightarrow X,&\\
    W_{\infty}=\pi ^{-1}(\infty)=P\cup\widetilde X,&\\
    q\colon X\times \mathbb{P}^{1}\longrightarrow \mathbb{P}^{1},& \text{the
      projection,}\\
    p\colon X\times \mathbb{P}^{1}\longrightarrow X,& \text{the
      projection,}\\
    q_{W}=q \circ \pi &\\
    p_{W}=p \circ \pi &\\
    q_{Y}\colon Y\times \mathbb{P}^{1}\longrightarrow \mathbb{P}^{1},& \text{the
      projection,}\\
    p_{Y}\colon Y\times \mathbb{P}^{1}\longrightarrow Y,& \text{the
      projection,}\\
    j\colon Y\times \mathbb{P}^{1}\longrightarrow W& \text{the induced map,}\\ 
    j_{\infty}\colon  Y\times \{\infty\} \longrightarrow P. &\\
  \end{array}
\end{displaymath}
Given any map $g\colon Z\longrightarrow X\times \mathbb{P}^{1}$, we will
denote $p_{Z}=p\circ g$ and $q_{Z}=q\circ g$. For instance
$p_{P}=p\circ \pi\circ f=p_{W}\circ f=i\circ \pi _{P}$, where, in the
last equality, we are identifying $Y$ with $Y\times \{\infty\}$.

We next recall the construction of the Koszul resolution. Let $Y$ be a
complex manifold and let $N$ be a rank $n$ vector bundle. Let
$P=\mathbb{P}(N\oplus \mathbb{C})$ be the projective bundle of lines
in $N\oplus \mathbb{C}$. It is obtained by 
completing $N$ with the divisor at infinity. Let $\pi
_{P}\colon P\longrightarrow Y$ be the projection 
and let $s\colon Y\longrightarrow P$ be the zero section. On $P$ there
is a tautological short exact sequence 
\begin{equation}
  \label{eq:5}
  0\longrightarrow \mathcal{O}(-1) \longrightarrow 
  \pi _{P}^{\ast}(N\oplus \mathbb{C})\longrightarrow 
  Q\longrightarrow 0.
\end{equation}

The above exact sequence and the inclusion $\mathbb{C}\longrightarrow
\pi _{P}^{\ast}(N\oplus \mathbb{C})$ induce a section $\sigma
\colon\mathcal{O}_{P}\longrightarrow Q$ that vanishes along the zero section
$s(Y)$. By duality we obtain a morphism $Q^{\vee}\longrightarrow
\mathcal{O}_{P}$ that induces a long exact sequence
\begin{displaymath}
  0\longrightarrow \bigwedge^{n}Q^{\vee}\longrightarrow \dots
  \longrightarrow  \bigwedge^{1} Q^{\vee}\longrightarrow
  \mathcal{O}_{P}\longrightarrow  s_{\ast}
  \mathcal{O}_{Y}\longrightarrow 0. 
\end{displaymath}

If $F$ is another vector bundle over $Y$, we obtain an exact
sequence,  
\begin{equation}
\label{eq:6}
  0\longrightarrow \bigwedge^{n}Q^{\vee}\otimes \pi
  _{P}^{\ast}F\longrightarrow \dots 
  \longrightarrow  \bigwedge^{1} Q^{\vee}\otimes \pi _{P}^{\ast}F\longrightarrow
  \pi _{P}^{\ast}F\longrightarrow  s_{\ast}
  F\longrightarrow 0. 
\end{equation}

\begin{definition} \label{def:2}
  The \emph{Koszul resolution} of $s_{\ast}(F)$ is the resolution
  \eqref{eq:6}. The complex
\begin{displaymath}
  0\longrightarrow \bigwedge^{n}Q^{\vee}\otimes \pi
  _{P}^{\ast}F\longrightarrow \dots 
  \longrightarrow  \bigwedge^{1} Q^{\vee}\otimes \pi _{P}^{\ast}F\longrightarrow
  \pi _{P}^{\ast}F\longrightarrow  0
\end{displaymath}
will be  denoted by $K(F,N)$. 
When $\overline N$ is a hermitian vector bundle, the exact sequence
\eqref{eq:5} 
induces a hermitian metric on $Q$. If, moreover, $\overline F$ is
also a hermitian
vector bundle, all the vector bundles that appear in the Koszul resolution
have an induced hermitian metric. We will denote by $K(\overline
F,\overline N)$ the corresponding complex of hermitian vector
bundles. 
\end{definition}

In particular, we shall write $K(\overline
{\mathcal{O}_Y},\overline N)$ if $F=\mathcal{O}_Y$ is endowed with
the trivial metric $\|1\|=1$, unless expressly stated otherwise.

We finish this section by recalling the results about deformation
of resolutions that will be used in the sequel. For more details see 
\cite{BaumFultonMacPherson:RRsv} II.1,
\cite{BismutGilletSoule:MR1086887} Section 4 (c) 
and \cite{GilletSoule:aRRt} Section 1.

\begin{theorem} \label{thm:5}
  Let $i:Y\hookrightarrow X$ be a closed immersion of complex
  manifolds, where $Y$ may be empty. Let $U=X\setminus Y$. Let $F$ be a
  vector bundle over $Y$ and
    $E_{\ast}\longrightarrow i_{\ast} F\longrightarrow 0$ be a
    resolution of $i_{\ast}F$.
    Then
    there exists a complex manifold $W=W(E_{\ast})$, called the Grassmannian
  graph construction, 
  with a birational map $\pi \colon W\longrightarrow X\times
  \mathbb{P}^{1}$ and 
  a complex of vector bundles, $\tr_{1}(E_{\ast})_{\ast}$,
  over $W$ such that 
  \begin{enumerate}
  \item The map $\pi $ is an isomorphism away from $Y\times
    \{\infty\}$. The restriction of $\tr_{1}(E_{\ast})_{\ast}$ to $X\times
    (\mathbb{P}^{1}\setminus \{\infty\})$ is isomorphic to
    $p_{W}^{\ast} E_{\ast}$ restricted to $X\times
    (\mathbb{P}^{1}\setminus \{\infty\})$. Moreover, If $\widetilde X$
    is the Zariski closure of $U\times \{\infty\}$ inside $W$,  the
    restriction 
    of $\tr_{1}(E_{\ast})_{\ast}$ to $\widetilde X$ is split acyclic. In
    particular, if $Y$ is empty or $F$ is the zero vector bundle, hence
    $E_{\ast}$ is acyclic in the whole $X$, then
    $W=X\times 
    \mathbb{P}^{1}$ and $\tr_{1}(E_{\ast})_{\ast}$ is the first
    transgression exact sequence introduced in \ref{def:12}.
  \item \label{item:7} When $Y$ is non-empty and $F$ is a non-zero
    vector bundle over $Y$, then $W(E_{\ast})$ agrees with $W_{Y/X}$, the
    deformation to the 
    normal cone of 
    $Y$. Moreover, there is an exact sequence of resolutions on~$P$ 
    \begin{displaymath}
      \xymatrix{
        0 \ar[r]&
        A_{\ast} \ar[r] \ar[d]&
        \tr_{1}(E_{\ast})_{\ast}\mid _{P}\ar[r] \ar[d]&
        K(F,N_{Y/X}\otimes N^{-1}_{\infty/\mathbb{P}^{1}})\ar[r]
        \ar[d] & 0\\ 
        & 0 \ar[r]& (j_{\infty})_{\ast}F\ar[r]^{=} &
        (j_{\infty})_{\ast}F 
        &  
      },
    \end{displaymath}
    where $A_{\ast}$ is split acyclic and $K(F,N_{Y/X}\otimes
    N^{-1}_{\infty/\mathbb{P}^{1}})$ is the Koszul resolution.
  \item \label{item:18} Let $f\colon X'\longrightarrow X$ be a morphism of
    complex
    manifolds and assume that we are in one of the following cases:
    \begin{enumerate}
    \item The map $f$ is smooth.
    \item The map $f$ is arbitrary and $E_{\ast}$
      is acyclic. 
    \item \label{item:11}  $f$ is transverse to $Y$.
    \end{enumerate}
    Then $E'_{\ast}:=f^{\ast}(E_{\ast})$ is exact over $f^{-1}(U)$, 
    \begin{displaymath}
      W':=W(E'_{\ast})=W\underset{X}{\times}X',
    \end{displaymath}
    with $f_{W}\colon W'\longrightarrow W$ the induced map, 
    and we have
    $f_{W}^{\ast}(\tr_{1}(E_{\ast})_{\ast})=\tr_{1}(f^{\ast}(E_{\ast}))_{\ast}$.  
  \item \label{item:6} If the vector bundles $E_{i}$ are provided with
    hermitian 
    metrics, then one can choose a hermitian metric on
    $\tr_{1}(E_{\ast})_{\ast}$ such that its restriction to $X\times \{0\}$
    is 
    isometric to $E_{\ast}$ and the restriction to $U\times
    \{\infty\}$ is orthogonally split. We will denote by
    $\tr_{1}(\overline {E}_{\ast})_{\ast}$ the complex $\tr_{1}(E_{\ast})_{\ast}$
    with such a choice of hermitian metrics. 
    Moreover, this choice of metrics can be made functorial. That is, if $f$ is
    a map as in item \ref{item:18}, then 
    \begin{displaymath}
      f_{W}^{\ast}(\tr_{1}(\overline
      E_{\ast})_{\ast})=\tr_{1}(f^{\ast}(\overline E_{\ast}))_{\ast}
    \end{displaymath}
  \end{enumerate}
\end{theorem}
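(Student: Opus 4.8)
The plan is to treat the Grassmannian graph construction as an essentially explicit procedure, following \cite{BaumFultonMacPherson:RRsv} II.1 and \cite{GilletSoule:aRRt} \S 1, and to verify the five claimed properties largely by unwinding the definition on local charts and over the two distinguished loci $X\times\{0\}$ and $W_\infty=\pi^{-1}(\infty)$. Concretely, one starts from the resolution $E_\ast\to i_\ast F\to 0$, forms on $X\times\mathbb{P}^1$ the complex of the "deformed" maps $t\cdot d_i \colon p^\ast E_i \otimes \mathcal{O}(i)\to p^\ast E_{i-1}\otimes \mathcal{O}(i-1)$ (compare the first transgression exact sequence of Definition \ref{def:12}), and then cuts out the relevant graph subvariety: $W$ is obtained by a closure/blow-up of the locus where the morphism $p^\ast E_\ast \to$ (deformed complex) is fiberwise injective with locally free cokernel, and $\tr_1(E_\ast)_\ast$ is the tautological complex of quotients/kernels living on $W$. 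Over $X\times(\mathbb{P}^1\setminus\{\infty\})$ the parameter $t$ is invertible, the construction is a trivial rescaling, and one reads off directly that $W$ restricts to $X\times(\mathbb{P}^1\setminus\{\infty\})$ and $\tr_1(E_\ast)_\ast$ restricts to $p_W^\ast E_\ast$ there, giving the first half of (i); the identification of $W$ with the deformation to the normal cone $W_{Y/X}$ in (ii) is then a matter of comparing the explicit blow-up with the one recalled just before the theorem.

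For (i), the remaining point is that $\tr_1(E_\ast)_\ast$ restricted to the component $\widetilde X$ (the Zariski closure of $U\times\{\infty\}$) is split acyclic: this holds because over $U=X\setminus Y$ the complex $E_\ast$ is already acyclic, so over $U\times\{\infty\}$ the graph construction degenerates exactly as in Definition \ref{def:12}, where $\tr_1(E_\ast)_\ast|_{X\times\{\infty\}}\cong K_i\oplus K_{i-1}$ with the split differential; one then checks this persists on the closure $\widetilde X$ by continuity/properness of the construction. The final sentence of (i), that $W=X\times\mathbb{P}^1$ and $\tr_1$ is the first transgression sequence when $E_\ast$ is globally acyclic, is immediate since then the graph subvariety meets every fiber transversally and no blow-up is needed. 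For (ii), I would compute $\tr_1(E_\ast)_\ast|_P$ in a local frame near a point of $Y$: the exceptional divisor $P=\mathbb{P}(N_{Y/X}\otimes N^{-1}_{\infty/\mathbb{P}^1}\oplus\mathbb{C})$ carries the tautological sequence \eqref{eq:5}, and tracing through the deformed differentials shows the restricted complex maps onto the Koszul complex $K(F,N_{Y/X}\otimes N^{-1}_{\infty/\mathbb{P}^1})$ with split acyclic kernel $A_\ast$; this is the most computational step and is the one I expect to be the main obstacle, since it requires matching the graph-construction differentials with the Koszul differential $Q^\vee\to\mathcal{O}_P$ induced by the section $\sigma$, keeping careful track of the twist by $N^{-1}_{\infty/\mathbb{P}^1}$.

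For (iii), base change: in each of the three cases (a) $f$ smooth, (b) $E_\ast$ acyclic, (c) $f$ transverse to $Y$, the pullback $f^\ast E_\ast$ is still a resolution of $(i')_\ast f^\ast F$ over $f^{-1}(U)$ (using $\mathrm{Tor}$-independence, which holds in each case) and the graph subvariety behaves well under pullback, so $W(f^\ast E_\ast)=W\times_X X'$ and $\tr_1$ commutes with $f_W^\ast$; this is a formal consequence of the functoriality of "closure of the locus of injectivity with locally free cokernel" under flat or transverse base change, together with the fact that the blow-up $W_{Y/X}$ pulls back to $W_{Y'/X'}$ in cases (a) and (c). Finally, for (iv) one equips $E_\ast$ with hermitian metrics, pulls them back to $p_W^\ast E_i\otimes\mathcal{O}(i)$ using a fixed metric on $\mathcal{O}_{\mathbb{P}^1}(1)$, and transports them to $\tr_1(E_\ast)_\ast$ via the tautological isomorphism over $X\times(\mathbb{P}^1\setminus\{\infty\})$, glued near $\infty$ to the metric induced from the inclusion into the deformed complex by means of the partition of unity $\{\sigma_0,\sigma_\infty\}$ of Notation \ref{def:3} (exactly as in Definition \ref{def:13}); functoriality of this choice under the maps $f$ of (iii) is then automatic because every ingredient — the partition of unity, the metric on $\mathcal{O}(1)$, the tautological isomorphism — is compatible with pullback. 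I would end by noting that all verifications are local on $X$ and reduce, via standard arguments, to the model cases already treated in Definition \ref{def:12} and in the construction of the Koszul resolution.
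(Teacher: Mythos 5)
Your plan follows essentially the same route as the paper: both recall the explicit Grassmannian graph construction of \cite{BaumFultonMacPherson:RRsv} and \cite{GilletSoule:aRRt} (the closure of the section of the product of Grassmann bundles $\Gr(n_i,\widetilde C_i)$ with $\widetilde C_i=E_i(i)\oplus E_{i-1}(i-1)$), read off the behaviour over $X\times(\mathbb{P}^1\setminus\{\infty\})$, construct the metric on $\tr_1(E_\ast)_\ast$ by gluing the metric coming from the identification with $E_\ast$ near $0$ with the metric induced by $\widetilde C_\ast$ near $\infty$ using the fixed partition of unity, exactly as in definition \ref{def:13}, and treat case (iii)(c) by forming the Cartesian square, noting that $f^{\ast}E_\ast$ resolves $i'_{\ast}g^{\ast}F$, so that $W'$ is the deformation to the normal cone of $Y'$ and hence equals $W\times_X X'$. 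Be aware, however, that the paper does not reprove items (i) and (ii): it cites \cite{BaumFultonMacPherson:RRsv} II.1 and \cite{BismutGilletSoule:MR1086887} Section 4(c) (together with the explicit kernel description \eqref{eq:32}), and only supplies arguments for the functoriality when $f$ is not smooth and for the properties of the chosen metrics; your proposal to recompute the exact sequence on $P$ by a local-frame calculation is legitimate but is precisely the work the paper avoids, and you correctly flag it as the hardest step. One concrete caution: your claim that split acyclicity over $U\times\{\infty\}$ ``persists on the closure $\widetilde X$ by continuity/properness'' is not an argument — splitness of a complex of bundles is not a closed condition — and in \cite{BaumFultonMacPherson:RRsv} this statement comes from the explicit description of $\tr_1(E_\ast)_\ast$ over $\widetilde X$ (as sums of extended kernel bundles with the evident differential); either carry out that description or simply cite it, as the paper does.
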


\begin{proof}
  The case when $E_{\ast}$ is acyclic has already been treated. For
  the case when $Y$ is non-empty and $F$ is non zero, 
  we first recall the construction  of the
  Grassmannian graph of an arbitrary complex from
  \cite{GilletSoule:aRRt}, which 
  is more general than what we need here. If
  $E$ is a vector bundle over $X$ we will denote by $E(i)$ the vector
  bundle over $X\times \mathbb{P}^{1}$ given by $E(i)=p^{\ast}E\otimes
  q^{\ast} \mathcal{O}(i)$.

  Let
  $\widetilde C_{\ast}$ be the complex of locally free sheaves given
  by $\widetilde C_{i}=E_{i}(i)\oplus E_{i-1}(i-1)$ with differential
  given by $\dd(a,b)=(b,0)$. On $X\times (\mathbb{P}^{1}\setminus
  \{\infty\})$ we consider, for each $i$, the inclusion of vector
  bundles $\gamma _{i}\colon E_{i}\hookrightarrow \widetilde C_{i}$ given by
  $s\longmapsto (s\otimes y^{i},\dd s\otimes y^{i-1})$. Let $G$ be the
  product of the Grassmann bundles $Gr(n_{i},\widetilde C_{i})$
  that parametrize rank $n_{i}=\rk E_{i}$ subbundles of $\widetilde
  C_{i}$ over $X\times \mathbb{P}^{1}$. The inclusion $\gamma_{\ast }
  \colon \bigoplus E_{i}\longrightarrow \bigoplus \widetilde C_{i}$ induces
  a section $s$ of $G$ over $X\times \mathbb{A}^{1}$.

  Then $W(E_{\ast})$ is defined to be the closure of $s(X\times
  \mathbb{A}^{1})$ in $G$. Since the projection from $G$ to $X\times
  \mathbb{P}^{1}$ is proper, the same is true for the induced map
  $\pi\colon W\longrightarrow X\times 
  \mathbb{P}^{1}$.  For each $i$,
  the induced map $W\longrightarrow
  Gr(n_{i},\widetilde C_{i})$ defines a subbundle
  $\tr_{1}(E_{\ast})_{i}$ of $\pi ^{\ast} \widetilde C_{i}$. This subbundle
  agrees with $E_{i}$ over $X\times \mathbb{A}^{1}$. The differential
  of $\widetilde C_{\ast}$ induces a differential on
  $\tr_{1}(E_{\ast})_{\ast}$.  

  Assume now that the bundles $E_{i}$ are provided with hermitian
  metrics. Using the Fubini-Study metric of $\mathcal{O}(1)$ we obtain
  induced metrics on $\widetilde C_{i}$. Over $\pi ^{-1}(X\times
  (\mathbb{P}^{1}\setminus \{\infty\}))$ we induce a metric on
  $\tr_{1}(E_{\ast})_{i}$ by means of the identification with
  $E_{i}$. Over $\pi ^{-1}(X\times
  (\mathbb{P}^{1}\setminus \{0\}))$ we consider on
  $\tr_{1}(E_{\ast})_{i}$ the metric induced by $\widetilde C_{i}$. We glue
  together 
  both metrics with the partition of unity $\{\sigma _{0},\sigma
  _{\infty}\}$ of notation \ref{def:3}. 

  In the case we are interested there is a more explicit description of
  $\tr_{1}(E_{\ast})_{\ast}$ given in \cite{BismutGilletSoule:MR1086887}
  Section 4 (c). Namely, $\tr_{1}(E_{\ast})_{i}$ is the kernel of the
  morphism
  \begin{equation}\label{eq:32}
    \phi \colon  p_{W}^{\ast}\widetilde C_{i}=
    p_{W}^{\ast}E_{i}(i)\oplus p_{W}^{\ast}E_{i-1}(i-1)\longrightarrow 
    p_{W}^{\ast}E_{i-1}(i)\oplus p_{W}^{\ast}E_{i-2}(i-1)
  \end{equation}
  given by $\phi(s,t)=(\dd s-t\otimes y,\dd t)$.

  The only statements that are not explicitly proved in
  \cite{BismutGilletSoule:MR1086887} 
  Section 4 (c) 
  or \cite{GilletSoule:aRRt} Section 1 are the functoriality when $f$
  is not smooth and the properties of the  explicit choice of metrics. 

  If the complex
  $E_{\ast}$ is acyclic,
  then 
  the same is true for
  $E'_{\ast}=f^{\ast}E_{\ast}$. In this case $W=X\times
  \mathbb{P}^{1}$ and $W'=X'\times \mathbb{P}^{1}$. Then the
  functoriality follows from the definition of $\tr_{1}(E_{\ast})_{\ast}$.

  Assume now that we are in case \ref{item:11}. We can form the
  Cartesian square
  \begin{displaymath}
    \xymatrix{
      Y' \ar[r]^{i'} \ar[d]_{g}&X'\ar[d]^{f}\\
      Y \ar[r]^{i} &X
    }
  \end{displaymath}
  where $i'$ is also a closed immersion of complex
  manifolds. Then we have that $E'_{\ast}$ is a resolution of
  $i'_{\ast}g^{\ast} F$. Hence $W'=W(E'_{\ast})$ is the
  deformation to the normal cone of $Y'$ and therefore
  $W'=W\underset{X}{\times}X'$. Again the functoriality of
  $\tr_{1}(E_{\ast})_{\ast}$ can be checked using the 
  explicit construction of \cite{GilletSoule:aRRt} Section 1 that we
  have recalled above.
\end{proof}

\begin{remark}
  \begin{enumerate}
  \item The definition of $\tr_{1}(E_{\ast})$ can be extended to any bounded
    chain complex
    over a integral scheme (see \cite{GilletSoule:aRRt}). 
  \item There is a sign difference in the definition of the
    inclusion $\gamma $ used in \cite{GilletSoule:aRRt} and the one
    used in  \cite{BismutGilletSoule:MR1086887}. We have followed the
    signs of the 
    first reference.
  \end{enumerate}
\end{remark}

\section{Singular Bott-Chern classes}
\label{sec:singular-bott-chern}

Throughout this section we will use notation \ref{def:19}.
In particular we will write 
\begin{align*}
  \widetilde
  {\mathcal{D}}^{n}_{D}(X,p)&=\left. \mathcal{D}^{n}_{D}(X,p)\right/
  \dd_{\mathcal{D}}\mathcal{D}^{n-1}_{D}(X,p),\\
  \widetilde
  {\mathcal{D}}^{n}_{D}(X,S,p)&=\left. \mathcal{D}^{n}_{D}(X,S,p)\right/
  \dd_{\mathcal{D}}\mathcal{D}^{n-1}_{D}(X,S,p).
\end{align*}

A particularly important current is $W_{1}\in
\mathcal{D}^{1}_{D}(\mathbb{P}^{1},1)$ given by
\begin{equation}
  \label{eq:60}
  W_{1}=[\frac{-1}{2}\log \|t\|^{2}].
\end{equation}
With the above convention, this means that
\begin{equation}
  \label{eq:61}
  W_{1}(\eta)=\frac{1}{2\pi i}\int_{\mathbb{P}^{1}}
  \frac{-1}{2}\log \|t\|^{2}\bullet \eta.
\end{equation}
By the Poincar\'e-Lelong equation
\begin{equation}
  \label{eq:62}
  \dd_{\mathcal{D}}W_{1}=\delta _{\infty}-\delta _{0}.
\end{equation}

Note that the current $W_{1}$ was used in the construction of
Bott-Chern classes (definition \ref{def:1}) and will also have a
role in the definition of singular Bott-Chern classes.

Before defining singular Bott-Chern classes we need to
define the objects that give rise to them.

\begin{definition} \label{def:8}
  Let $i\colon Y\longrightarrow X$ be a closed immersion of complex
  manifolds. Let $N$ be the normal bundle of
  $Y$ and  let $h_{N}$ be a 
  hermitian metric on $N$. We denote $\overline N=(N,h_{N})$. Let
  $r_{N}$ be the rank of $N$, that agrees 
  with the codimension of $Y$ in $X$. Let
  $\overline F=(F,h_{F})$ be a hermitian vector bundle on $Y$ of rank
  $r_{F}$. Let  $\overline{E}_{\ast}\to i_{\ast}F$ be a metric on the
  coherent sheaf $i_{\ast}F$.
  The four-tuple 
  \begin{equation}
    \label{eq:40}
    \overline {\xi}=(i,\overline N,\overline
    F, \overline E_{\ast}). 
  \end{equation}  
  is called a \emph{hermitian embedded vector bundle}. The number
  $r_{F}$ will be called the \emph{rank} of $\overline {\xi}$ and the
  number $r_{N}$ will be called the \emph{codimension} of $\overline {\xi}$.

  By convention, any exact complex of hermitian vector bundles on $X$
  will be considered a hermitian embedded vector bundle of any rank
  and codimension.
\end{definition}

Obviously, to any hermitian embedded vector bundle we can associate the
metrized coherent sheaf $(i_{\ast}F,\overline{E}_{\ast}\to
i_{\ast}F)$.

\begin{definition}
  A \emph{singular Bott-Chern class} for a hermitian embedded vector
  bundle $\overline {\xi}$ is a class
  $\widetilde \eta\in \bigoplus_{p} \widetilde
  {\mathcal{D}}^{2p-1}_{D}(X,p)$ such that 
  \begin{equation}\label{eq:8}
    \dd_{\mathcal{D}} \eta=
    \sum_{i=0}^{n}(-1)^{i}[\ch(\overline E_{i})]-i_{\ast}
    ([\Td^{-1}(\overline N)\ch(\overline F)]) 
  \end{equation}
  for any current $\eta\in\tilde{\eta}$.  
\end{definition}

The existence of this class is guaranteed by the
Grothendieck-Riemann-Roch theorem, which implies that the two currents
in the right hand side of equation \eqref{eq:8} are cohomologous.

Even if we have defined singular Bott-Chern classes as classes of
currents with arbitrary singularities, it is an important observation
that in each singular Bott-Chern class we can find representatives
with controlled singularities. 
Let $N^{\ast}_{Y,0}$ be the conormal bundle of $Y$ with the zero
section deleted. It is a closed conical subset of $T^{\ast}_{0}(X)$. 
Since the current
\begin{multline*}
  \sum_{i=0}^{n}(-1)^{i}[\ch(\overline E_{i})] -
i_{\ast} ([\Td^{-1}(\overline N)\ch(\overline F)])\\
= \sum_{i=0}^{n}(-1)^{i}[\ch(\overline E_{i})] -
\Td^{-1}(\overline N)\ch(\overline F)\delta _{Y}
\end{multline*}
belongs to
$\mathcal{D}^{\ast}_{D}(X,N^{\ast}_{Y,0},p)$, by corollary
\ref{cor:1}, we obtain 

\begin{proposition} \label{prop:1} Let $\overline {\xi}=(i,\overline
  N,\overline F, \overline E_{\ast})$ be a
  hermitian embedded vector bundle as before. Then
any
singular Bott-Chern class for $\overline {\xi}$ belongs to the subset
\begin{displaymath}
  \bigoplus_{p}  \widetilde
  {\mathcal{D}}^{2p-1}_{D}(X,N^{\ast}_{Y,0},p)\subset
  \bigoplus_{p} \widetilde
{\mathcal{D}}^{2p-1}_{D}(X,p). 
\end{displaymath}
\hfill $\square$ 
\end{proposition}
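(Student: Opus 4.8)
The plan is to show that any singular Bott-Chern class for $\overline{\xi}$, viewed as a class in $\bigoplus_{p}\widetilde{\mathcal{D}}^{2p-1}_{D}(X,p)$, already has a representative with wave front set contained in $N^{\ast}_{Y,0}$, and that this representative is unique modulo $\dd_{\mathcal{D}}$ of currents with the same wave front constraint; together these two facts say exactly that the class lies in the image of $\bigoplus_{p}\widetilde{\mathcal{D}}^{2p-1}_{D}(X,N^{\ast}_{Y,0},p)$ inside $\bigoplus_{p}\widetilde{\mathcal{D}}^{2p-1}_{D}(X,p)$. The key input is Corollary \ref{cor:1}, which is precisely the $\dd_{\mathcal{D}}$-lemma for currents with a fixed wave front set: part (i) gives existence of a representative with controlled singularities once we know the coboundary has controlled singularities, and part (ii) gives the well-definedness of the resulting class.

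First I would verify the hypothesis needed to invoke Corollary \ref{cor:1}(i): that the right hand side of the differential equation \eqref{eq:8}, namely
$$
\sum_{i=0}^{n}(-1)^{i}[\ch(\overline E_{i})]-i_{\ast}([\Td^{-1}(\overline N)\ch(\overline F)])
= \sum_{i=0}^{n}(-1)^{i}[\ch(\overline E_{i})]-\Td^{-1}(\overline N)\ch(\overline F)\,\delta_{Y},
$$
lies in $\mathcal{D}^{\ast}_{D}(X,N^{\ast}_{Y,0},p)$. The Chern character forms $\ch(\overline E_{i})$ are smooth differential forms, so the currents $[\ch(\overline E_{i})]$ are smooth and hence have empty wave front set. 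The current $\delta_{Y}$ is the current of integration over the submanifold $Y$, whose wave front set is well known (Hörmander \cite{Hormander:MR1065993}) to be the conormal bundle $N^{\ast}_{Y}$ with the zero section removed, that is $N^{\ast}_{Y,0}$; multiplying $\delta_{Y}$ by the smooth form $\Td^{-1}(\overline N)\ch(\overline F)$ (pulled back to a neighbourhood of $Y$, or interpreted via $i_{\ast}$) does not enlarge the wave front set, by Proposition \ref{prop:4}. Hence the whole right hand side is a current in $\mathcal{D}^{\ast}_{D}(X,N^{\ast}_{Y,0},p)$, as needed.

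With this in hand the conclusion is immediate. Let $\widetilde{\eta}$ be a singular Bott-Chern class and let $\eta\in\widetilde{\eta}$ be any representative, a current in $\mathcal{D}^{2p-1}_{D}(X,p)$ (summed over $p$). By \eqref{eq:8}, $\dd_{\mathcal{D}}\eta\in\mathcal{D}^{2p}_{D}(X,N^{\ast}_{Y,0},p)$, so by Corollary \ref{cor:1}(i) there is $a\in\mathcal{D}^{2p-2}_{D}(X,p)$ with $\eta+\dd_{\mathcal{D}}a\in\mathcal{D}^{2p-1}_{D}(X,N^{\ast}_{Y,0},p)$; thus $\widetilde{\eta}$ has a representative lying in $\mathcal{D}^{2p-1}_{D}(X,N^{\ast}_{Y,0},p)$, giving a class in $\widetilde{\mathcal{D}}^{2p-1}_{D}(X,N^{\ast}_{Y,0},p)$ mapping to $\widetilde{\eta}$. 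Finally, this class is independent of the chosen representative: if $\eta,\eta'$ are two representatives in $\mathcal{D}^{2p-1}_{D}(X,N^{\ast}_{Y,0},p)$ with $\eta-\eta'=\dd_{\mathcal{D}}b$ for some $b\in\mathcal{D}^{2p-2}_{D}(X,p)$, then Corollary \ref{cor:1}(ii) produces $b'\in\mathcal{D}^{2p-2}_{D}(X,N^{\ast}_{Y,0},p)$ with $\eta-\eta'=\dd_{\mathcal{D}}b'$, so they define the same class in $\widetilde{\mathcal{D}}^{2p-1}_{D}(X,N^{\ast}_{Y,0},p)$. Therefore $\widetilde{\eta}$ lies in the subset $\bigoplus_{p}\widetilde{\mathcal{D}}^{2p-1}_{D}(X,N^{\ast}_{Y,0},p)$, which completes the proof.

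I do not expect a serious obstacle here: the only mildly delicate point is the identification of $\WF(\delta_{Y})$ with $N^{\ast}_{Y,0}$ and the observation that multiplication by a smooth form and application of $i_{\ast}$ preserve this bound, but both are standard consequences of the wave front formalism recalled in Section \ref{sec:cohom-curr-wave} (in particular Proposition \ref{prop:4}); the real content has already been isolated into Corollary \ref{cor:1}.
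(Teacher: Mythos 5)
Your proposal is correct and follows essentially the same route as the paper: one observes that the right hand side of \eqref{eq:8} lies in $\bigoplus_{p}\mathcal{D}^{2p}_{D}(X,N^{\ast}_{Y,0},p)$, since the Chern character forms are smooth and $\delta_{Y}$ has wave front set $N^{\ast}_{Y,0}$, and then invokes corollary \ref{cor:1}. Your explicit spelling out of both parts of that corollary (part (i) for the existence of a controlled representative, part (ii) for the injectivity implicit in calling the target a subset) is exactly what the paper's terse ``by corollary \ref{cor:1}'' is standing for.
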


This result will allow us to define inverse images of singular
Bott-Chern classes for certain maps.

Let $f\colon X'\longrightarrow X$ be a morphism of complex
manifolds that is transverse to $Y$. We form the
Cartesian square
\begin{displaymath}
  \xymatrix{Y'\ar[r]^{i'}\ar[d]^{g}& X' \ar[d]^{f}\\
    Y\ar[r]^{i}& X}.
\end{displaymath}
Observe that, by the transversality hypothesis, the normal bundle to
$Y'$ on $X'$ is the inverse image of the normal bundle to $Y$ on $X$
and $f^{\ast} E_{\ast}$ is a resolution of
$i'_{\ast}g^{\ast}F$.
Thus we write $f^{\ast}\overline \xi=(i',f^{\ast}\overline
N,g^{\ast} \overline F, f^{\ast} \overline E_{\ast} )$, which is a
hermitian embedded vector bundle.

By proposition \ref{prop:1}, given any singular Bott-Chern class
$\widetilde 
\eta$ for 
$\xi$, we can find a representative $\eta \in  \bigoplus_{p}
  \mathcal{D}^{2p-1}_{D}(X,N^{\ast}_{Y,0},p)$. By theorem \ref{thm:1},
  there is a well defined current $f^{\ast}\eta$ and it
is a singular Bott-Chern current for $f^{\ast}\xi$. Therefore we can
define $f^{\ast}(\widetilde \eta)=\widetilde{f^{\ast}(\eta)}$.  Again
by theorem 
\ref{thm:1}, this class does not depend on the choice of the
representative $\eta$. 

Our next objective is to study
the possible definitions of functorial singular Bott-Chern
classes.    

\begin{definition}
  \label{def:7}
  Let $r_{F}$ and $r_{N}$ be two integers.
  A \emph{theory of singular Bott-Chern classes of rank $r_{F}$ and
  codimension $r_{N}$} is an assignment 
  which, to each hermitian embedded vector bundle $\overline {\xi}=
  (i\colon Y\longrightarrow 
  X,\overline N, \overline 
  F, \overline E_{\ast}) $ of rank $r_{F}$ and codimension $r_{N}$,
  assigns a class of currents 
  \begin{displaymath}
    T(\overline \xi)\in \bigoplus_{p} \widetilde
     {\mathcal{D}}^{2p-1}_{D}(X,p) 
  \end{displaymath}
  satisfying the following properties
  \begin{enumerate}
  \item \label{item:15} (Differential equation) The following equality
    holds 
    \begin{equation}\label{eq:42}
      \dd_{\mathcal{D}} T(\overline \xi)=
      \sum_{i}(-1)^{i}[\ch(\overline
      E_{i})]-i_{\ast}([\Td^{-1}(\overline N)\ch(\overline F)]). 
    \end{equation}
  \item \label{item:16}(Functoriality) For every morphism
    $f\colon X'\longrightarrow X$ of 
    complex manifolds that is transverse to $Y$, then 
    \begin{displaymath}
      f^{\ast} T(\overline \xi)=T(f^{\ast} \overline
      \xi). 
    \end{displaymath}
  \item \label{item:17}(Normalization)  Let $\overline A=(
    A_{\ast},g_{\ast})$ be a non-negatively graded orthogonally split
    complex of vector 
    bundles. Write $\overline{\xi}\oplus \overline
    A=(i\colon Y\longrightarrow 
    X,\overline N, \overline 
    F, \overline E_{\ast}\oplus \overline A_{\ast})$.
    Then
    $T(\overline \xi)=T(\overline \xi\oplus \overline A)$. Moreover,
    if $X=\Spec \mathbb{C}$ is one point, $Y=\emptyset$ and $\overline
    E_{\ast}=0$, then $T(\overline \xi)=0$.
  \end{enumerate}
  
  A \emph{theory of singular Bott-Chern classes} is an assignment as
  before, for all positive integers $r_{F}$ and $r_{M}$. When the
  inclusion $i$ and the bundles $F$ and $N$ are clear from the context,
  we will denote $T(\overline \xi)$ by $T(\overline
  E_{\ast})$. Sometimes we will have to restrict ourselves to complex
  algebraic manifolds and algebraic vector bundles. In this case we
  will talk of \emph{theory of singular Bott-Chern classes for
    algebraic vector bundles}.  
\end{definition}

\begin{remark}
  \label{rem:1}
  \begin{enumerate}
  \item Recall that the case when $Y=\emptyset$ and $\overline
    E_{\ast}$ is any bounded exact 
    sequence of hermitian vector bundles is considered a hermitian
    embedded vector bundle of arbitrary rank. In this case, the
    properties above imply that
    \begin{displaymath}
      T(\overline \xi)=[\widetilde{\ch}(\overline E_{\ast})],
    \end{displaymath}
    where $\widetilde {\ch}$ is the Bott-Chern class associated to the
    Chern character.
    That is, for acyclic complexes, any theory of singular Bott-Chern
    classes agrees with the Bott-Chern
    classes associated to the Chern character.
  \item If the map $f$ is transverse to $Y$, then either $f^{-1}(Y)$ is
    empty or it has the same codimension as $Y$. Moreover, it is clear
    that $f^{\ast}F$ has the same rank as $F$. Therefore, the
    properties of singular Bott-Chern classes do not mix rank or
    codimension. This 
    is why we have defined singular Bott-Chern classes for a particular
    rank and codimension.
  \item By contrast with the case of Bott-Chern classes, the
    properties above are not enough to characterize singular
    Bott-Chern classes.
  \end{enumerate}
\end{remark}

For the rest of this section we will assume the existence of a theory
of singular Bott-Chern classes and we will obtain some consequences
of the definition. 

We start with the compatibility of singular Bott-Chern classes with
exact sequences and Bott-Chern classes. 

Let
\begin{equation}\label{eq:46}
  \overline \chi\colon  
  0\longrightarrow \overline F_{n}\longrightarrow \dots
  \longrightarrow \overline F_{1} 
  \longrightarrow \overline F_{0}
  \longrightarrow 0
\end{equation}
be a bounded exact sequence of hermitian vector bundles on $Y$. For
$j=0,\dots ,n$,  let $\overline E_{j,\ast}\longrightarrow
i_{\ast}F_{j}$ be a 
resolution, and assume that they fit in a commutative diagram
\begin{displaymath}
  \xymatrix{0 \ar[r] & \overline E_{n,\ast} \ar[r] \ar[d]&\dots \ar[r] 
    & \overline E_{1,\ast} \ar[r] \ar[d]
    & \overline E_{0,\ast} \ar[r] \ar[d]
    & 0\\
    0 \ar[r] & i_{\ast}F_{n} \ar[r]&\dots \ar[r] 
    & i_{\ast}F_{1} \ar[r]
    & i_{\ast}F_{0} \ar[r]
    & 0},
\end{displaymath}
with exact rows.
We write $\overline {\xi_{j}}= (i\colon Y\longrightarrow
  X,\overline N, \overline 
  F_{j}, \overline E_{j,\ast}) $. 
For each $k$, we denote by $\overline \eta_{k}$ the exact sequence
\begin{displaymath}
  0\longrightarrow \overline E_{n,k}\longrightarrow \dots
  \longrightarrow \overline E_{1,k} 
  \longrightarrow \overline E_{0,k}
  \longrightarrow 0.
\end{displaymath}

\begin{proposition} \label{prop:5}
With the above notations, the following equation holds: 
\begin{displaymath}
  T(\bigoplus_{j \text{ even} }\overline{\xi}_{j}) - T(\bigoplus
  _{j\text{ odd} }\overline {\xi}_j)=
  \sum_{k}(-1)^{k} [\widetilde{\ch}(\overline \eta_{k})]-
i_{\ast}([\Td^{-1}(\overline N)\widetilde{\ch}(\overline{\chi})]).
\end{displaymath}
\end{proposition}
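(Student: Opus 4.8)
The plan is to mimic the uniqueness argument for Bott–Chern classes (Theorem \ref{thm:3}) and for the Bott–Chern character of metrized coherent sheaves (Theorem \ref{zhabc}): deform all the data over $X \times \mathbb{P}^1$ using the first transgression construction, restrict to the fibers over $0$ and $\infty$, and integrate $W_1$ against the differential equation. First I would set up the deformation. For each $k$, apply $\tr_1$ to the horizontal exact sequence $\overline{\eta}_k$ of hermitian vector bundles; by diagram chasing (exactly as in the proof of part 1) of Theorem \ref{zhabc}, producing diagram \eqref{eq:101}) these fit together into a commutative diagram over $X \times \mathbb{P}^1$ whose rows are the $\tr_1(\overline{E}_{j,\ast})$ and whose columns $\widetilde{E}_{j,\ast} \to \widetilde{\mathcal{F}}_j$ are resolutions of sheaves $\widetilde{\mathcal{F}}_j$ flat over $\mathbb{P}^1$, with $\widetilde{\mathcal{F}}_j|_{X\times\{0\}} = i_\ast F_j$ (isometrically) and $\widetilde{\mathcal{F}}_j|_{X\times\{\infty\}}$ orthogonally split, and moreover the whole deformation is (fiberwise over $\mathbb{P}^1$) a deformation of $\overline{\xi}$. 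Crucially $\widetilde{\mathcal{F}}_j = i''_\ast \widetilde{G}_j$ for a deformation of the vector bundle $F_j$, namely $\widetilde{G}_j := \tr_1(\overline{\chi})_j$ restricted appropriately, so that the inclusions $\iota_0,\iota_\infty$ are transverse to $Y \times \mathbb{P}^1$ and the pulled-back four-tuples make sense.

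Next I would apply the theory $T$ to the family $\widetilde{\xi} := (i \times \mathrm{id}, \pi^\ast\overline N, \widetilde{G}_\bullet, \widetilde{E}_{\bullet,\ast})$ over $X \times \mathbb{P}^1$ — more precisely to the alternating sums $\bigoplus_{j \text{ even}} \widetilde{\xi}_j$ and $\bigoplus_{j \text{ odd}} \widetilde{\xi}_j$ — obtaining a class $T(\widetilde{\xi})$ whose $\dd_{\mathcal D}$ is the difference of Chern character forms and $i''_\ast\Td^{-1}\ch$ terms on $X \times \mathbb{P}^1$. By functoriality (axiom \ref{item:16}) restricting along $\iota_0$ gives $T(\bigoplus_{j \text{ even}}\overline{\xi}_j) - T(\bigoplus_{j \text{ odd}}\overline{\xi}_j)$; restricting along $\iota_\infty$ gives, by the normalization axiom \ref{item:17} together with Remark \ref{rem:1}(1), a difference of honest Bott–Chern classes of the orthogonally split fibers, which equals $\sum_k(-1)^k[\widetilde{\ch}(\text{split complex})]$ — and by careful bookkeeping this collapses to $0$ (the $\infty$-fiber resolutions are direct sums of Koszul resolutions of split sheaves plus split acyclic pieces, contributing nothing). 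Then, computing $0 = \dd_{\mathcal D}\big(W_1 \bullet T(\widetilde{\xi})\big)$ and integrating over $\mathbb{P}^1$ via \eqref{eq:61}–\eqref{eq:62}, the boundary terms give $\iota_\infty^\ast T(\widetilde{\xi}) - \iota_0^\ast T(\widetilde{\xi})$ and the interior term gives $\frac{1}{2\pi i}\int_{\mathbb{P}^1} W_1 \bullet \big(\text{difference of } \ch \text{ forms} - i''_\ast(\Td^{-1}\ch)\big)$; the latter is exactly $\sum_k(-1)^k[\widetilde{\ch}(\overline{\eta}_k)] - i_\ast([\Td^{-1}(\overline N)\widetilde{\ch}(\overline{\chi})])$ by Definition \ref{def:1} (for the $\ch$ part) and by the projection formula together with Definition \ref{def:1} applied on $Y$ (for the $i_\ast \Td^{-1}\ch$ part, using that $\widetilde{\chi} = \tr_1(\overline{\chi})$ computes $\widetilde{\ch}(\overline{\chi})$ and pushing forward commutes with $\int_{\mathbb{P}^1}$).

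The main obstacle I expect is the bookkeeping for the pushed-forward term: one must verify that the deformation $\widetilde{G}_\bullet = \tr_1(\overline{\chi})_\bullet$ of the vector bundles on $Y$ is compatible with the deformation $\widetilde{E}_{\bullet,\ast}$ of their resolutions on $X$ — i.e.\ that $\widetilde{E}_{\bullet,\ast}$ really resolves $i''_\ast\widetilde{G}_\bullet$ and restricts correctly at $0$ and $\infty$ — and that under $(\pi_Y)_\ast$ the class $\frac{1}{2\pi i}\int_{\mathbb{P}^1} W_1 \bullet \Td^{-1}(\pi^\ast\overline N)\ch(\widetilde{G}_\bullet)\,\delta_{Y\times\mathbb{P}^1}$ equals $i_\ast\big([\Td^{-1}(\overline N)\widetilde{\ch}(\overline{\chi})]\big)$; this requires that integration along $\mathbb{P}^1$ commutes with the pushforward $i_\ast$ at the level of currents and that $\Td^{-1}(\overline N)$ can be pulled outside the transgression integral since it is already a closed form on $Y \times \mathbb{P}^1$ pulled back from $Y$. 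Granting those compatibilities — which are the coherent-sheaf analogue of what is established in the proof of Theorem \ref{zhabc} and Corollary \ref{cor:12} — the identity follows. An alternative, possibly cleaner, route is to bypass the $\mathbb{P}^1$-deformation and instead reduce directly to Proposition \ref{prop:5}'s special case via Proposition \ref{prop:13} and Proposition \ref{prop:21}: the left-hand side is an alternating sum of $T(\overline{\xi}_j)$, the right-hand side an alternating sum of Bott–Chern characters, and both sides satisfy the same differential equation, so their difference is $\dd_{\mathcal D}$-closed; one then checks the difference vanishes by functoriality under a generic transverse pullback where everything splits. I would carry out the $\mathbb{P}^1$-deformation argument as the primary proof since it is the one that generalizes uniformly.
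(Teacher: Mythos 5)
Your proposal follows essentially the same route as the paper: apply the transgression construction $\tr_{1}$ along the direction of the acyclic complexes $\overline{\eta}_{k}$ (on $X$) and $\overline{\chi}$ (on $Y$), assemble the even and odd parts into two hermitian embedded vector bundles over $X\times\mathbb{P}^{1}$ relative to $Y\times\mathbb{P}^{1}$ (with normal bundle $p_{Y}^{\ast}\overline N$), restrict at $0$ and $\infty$, and compute $0=\dd_{\mathcal{D}}(p_{X})_{\ast}\bigl(W_{1}\bullet(T(+)-T(-))\bigr)$, identifying the interior terms with $\sum_{k}(-1)^{k}[\widetilde{\ch}(\overline{\eta}_{k})]$ and $i_{\ast}[\Td^{-1}(\overline N)\widetilde{\ch}(\overline{\chi})]$ via definition \ref{def:1} and push--pull. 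This is the paper's proof, and your handling of the interior terms (pulling $\Td^{-1}(p_{Y}^{\ast}\overline N)$ out of the $\mathbb{P}^{1}$-integral, compatibility of $i_{\ast}$ with integration along $\mathbb{P}^{1}$) is fine; you should also note, as the paper does, that $W_{1}\bullet T$ is defined because the wave front set of $W_{1}$ (conormal to $X\times\{0,\infty\}$) is disjoint from that of $T$ (conormal to $Y\times\mathbb{P}^{1}$), which is where proposition \ref{prop:1} and section \ref{sec:cohom-curr-wave} enter.

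The genuine gap is your treatment of the fiber at $\infty$. You claim the contribution there vanishes ``by the normalization axiom together with Remark \ref{rem:1}(i)'' because the $\infty$-fibers are ``direct sums of Koszul resolutions of split sheaves plus split acyclic pieces.'' This is not correct: no Koszul resolutions occur in this deformation (those belong to the Grassmannian-graph/deformation-to-the-normal-cone construction used in theorem \ref{thm:6}, not here, since $\overline{\eta}_{k}$ and $\overline{\chi}$ are acyclic and hence $W=X\times\mathbb{P}^{1}$, $W=Y\times\mathbb{P}^{1}$); moreover $Y\times\{\infty\}$ is nonempty, so Remark \ref{rem:1}(i) does not apply, and the restrictions at $\infty$ are resolutions of $i_{\ast}$ of the nonzero bundles $\overline K_{j}\oplus\overline K_{j-1}$ (kernels of $\overline{\chi}$), not acyclic complexes nor of the form $\overline{\xi}\oplus\overline A$ covered by axiom \ref{item:17}. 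Hence neither $T(+)\mid_{X\times\{\infty\}}$ nor $T(-)\mid_{X\times\{\infty\}}$ vanishes individually. What is true, and what the paper uses, is that the orthogonal splitness of $\tr_{1}$ of an acyclic complex at $\infty$ produces an isometry of hermitian embedded vector bundles between the even and the odd data at $\infty$, so that the \emph{difference} $T(+)\mid_{\infty}-T(-)\mid_{\infty}$ vanishes. With this one-line replacement (and cleaning up the index slip by which you speak of applying $\tr_{1}$ to the resolutions $\overline E_{j,\ast}$ rather than to $\overline{\eta}_{k}$ and $\overline{\chi}$), your argument coincides with the paper's proof.
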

Here the direct sum of hermitian embedded vector bundles, involving
the same embedding and the same hermitian normal bundle, is defined
in the obvious manner.
\begin{proof} We consider the construction of theorem \ref{thm:5} for
  each of the exact sequences $\overline \eta_{k}$ and the exact
  sequence $\overline \chi$. For each $k$, we have $W_{X}:=W(\overline
  \eta_{k})= X\times \mathbb{P}^{1}$ and we denote $W_{Y}:=W(\overline
  \chi)= Y\times \mathbb{P}^{1}$. 
  On $W_{Y}$ we consider the transgression exact
  sequence 
  $\tr_{1}(\overline {\chi})_{\ast}$ and on $W_{X}$ we
  consider the transgression exact sequences $\tr_{1}(\overline
  {\eta_{k}})_{\ast}$. We denote by $j\colon W_{Y}\longrightarrow
  W_{X}$ the induced morphism. Then there is an exact
  sequence (of exact sequences)
  \begin{displaymath}
    \dots \longrightarrow \tr_{1}(\overline \eta_{1})_{\ast }
    \longrightarrow \tr_{1}(\overline \eta_{0})_{\ast }
    \longrightarrow j_{\ast} \tr_{1}(\overline \chi)_{\ast
    }\longrightarrow 0. 
  \end{displaymath}
  We denote
  \begin{alignat*}{2}
    \tr_{1}(\overline \chi)_{+}&=\bigoplus_{j\text{ even} }
    \tr_{1}(\overline \chi)_{j},\quad&
    \tr_{1}(\overline \chi)_{-}&=\bigoplus_{j\text{ odd} }
    \tr_{1}(\overline \chi)_{j},\\
    \tr_{1}(\overline \eta_{k})_{+}&=\bigoplus_{j\text{ even} }
    \tr_{1}(\overline \eta_{k})_{j},\quad &
    \tr_{1}(\overline \eta_{k})_{-}&=\bigoplus_{j\text{ odd} }
    \tr_{1}(\overline \eta_{k})_{j},
  \end{alignat*}
  and
  \begin{align*}
    \tr_{1}(\overline {\xi})_{+}&=(j\colon W_{Y}\longrightarrow 
  W_{X},p_{Y}^{\ast}\overline N,  
  \tr_{1}(\overline \chi)_{+}, \tr_{1}(\overline \eta_{\ast})_{+} ),\\
    \tr_{1}(\overline {\xi})_{-}&=(j\colon W_{Y}\longrightarrow 
  W_{X},p_{Y}^{\ast}\overline N,  
  \tr_{1}(\overline \chi)_{-}, \tr_{1}(\overline \eta_{\ast})_{-} ),
  \end{align*}
where here $p_{Y}\colon W_{Y}\longrightarrow Y$ denotes the projection.

  We consider the current on $X\times \mathbb{P}^{1}$ given by
  $W_{1} \bullet\left(T(\tr_{1}(\overline \xi)_{+})-T(\tr_{1}(\overline
  \xi)_{-})\right)$. This current is well 
  defined because the wave front set of $W_{1}$ is the conormal bundle
  of $(X\times \{0\})\cup (X\times \{\infty\})$, whereas the wave front
  set of $T(\tr_{1}(\overline \xi)_{\pm})$ is the conormal bundle of
  $Y\times \mathbb{P}^{1}$. 

  By the functoriality of the transgression exact sequences, we obtain
  that 
  \begin{displaymath}
    \tr_{1}(\overline \xi)_{+}\mid_{X\times \{0\}}=\bigoplus_{j\text{
      even} }\overline \xi_{j},\quad
    \tr_{1}(\overline \xi)_{-}\mid_{X\times \{0\}}=\bigoplus_{j\text{
      odd} }\overline \xi_{j}.
  \end{displaymath}
  Moreover, using the fact that, for any bounded acyclic complex of
  hermitian vector bundles $\overline E_{\ast}$, the exact sequence
  $\tr_{1}(\overline E_{\ast})\mid_{X\times \{\infty\}}$ is orthogonally
  split, we have an isometry 
  \begin{displaymath}
    \tr_{1}(\overline \xi)_{+}\mid_{X\times \{\infty\}}\cong
    \tr_{1}(\overline \xi)_{-}\mid_{X\times \{\infty\}}.
\end{displaymath}

We now denote by $p_{X}\colon W_{X}\longrightarrow X$ the projection. 
Using the properties that define a theory of singular Bott-Chern classes, in
the group $\bigoplus_{p}  \widetilde
{\mathcal{D}}^{2p-1}_{D}(X,N^{\ast}_{Y,0},p)$, the following holds
\begin{align*}
  0&=\dd_{\mathcal{D}} (p_{X})_{\ast} \left(W_{1} \bullet T(\tr_{1}(\overline
    \xi)_{+})-W_{1} \bullet T(\tr_{1}(\overline 
    \xi)_{-})\right)\\
  &=\left(T(\tr_{1}(\overline \xi)_{+})-T(\tr_{1}(\overline 
  \xi)_{-})\right)\mid _{X\times \{\infty\}}-
  \left(T(\tr_{1}(\overline
  \xi)_{+})-T(\tr_{1}(\overline 
  \xi)_{-})\right)\mid _{X\times \{0\}}\\
  &-(p_{X})_{\ast}\sum_{k}(-1)^{k}W_{1}\bullet
  \left(\ch(\tr_{1}(\overline \eta_{k})_{+})- \ch(\tr_{1}(\overline
    \eta_{k})_{-})\right)\\
  &+(p_{X})_{\ast}\left(W_{1}\bullet j_{\ast}\left[\Td^{-1}(p_{Y}^{\ast}\overline
    N)\ch(\tr_{1}(\overline \chi)_{+})-\Td^{-1}(p_{Y}^{\ast} \overline
    N)\ch(\tr_{1}(\overline \chi)_{-})\right]\right)\\
  &=  -T(\bigoplus_{j \text{ even} }\overline{\xi}_{j}) + T(\bigoplus
  _{j\text{ odd} }\overline {\xi}_j)+
\sum(-1)^{k} [\widetilde{\ch}(\overline \eta_{k})]-
  i_{\ast}[\Td^{-1}(\overline N)\bullet \widetilde{\ch}(\overline{\chi})],
\end{align*}
which implies the proposition.
\end{proof}

The following result is a consequence of proposition \ref{prop:5} and
theorem \ref{zhabc}.

\begin{corollary}\label{comparThevb} Let $Y\longrightarrow X$ be a closed immersion of
  complex manifolds.
  Let $\overline {\chi}$ be an exact sequence of hermitian vector
  bundles on $Y$ as \eqref{eq:46}. For each $j$, let
  $\xi_{j}=(i\colon Y\longrightarrow X,\overline N,\overline F_{j},\overline
  E_{j,\ast})$ be a hermitian embedded vector bundle. We denote by
  $\overline {\varepsilon }$ the induced exact sequence of metrized
  coherent sheaves. Then
  \begin{displaymath}
    T(\bigoplus_{j \text{ even} }\overline{\xi}_{j}) - T(\bigoplus
    _{j\text{ odd} }\overline {\xi}_j)=
    [\widetilde{\ch}(\overline \varepsilon )]-
    i_{\ast}([\Td^{-1}(\overline N)\widetilde{\ch}(\overline{\chi})]).
  \end{displaymath} 
\end{corollary}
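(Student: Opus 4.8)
The plan is to read this corollary off directly from Proposition \ref{prop:5} by reinterpreting the class $\sum_{k}(-1)^{k}[\widetilde{\ch}(\overline{\eta}_{k})]$ occurring there as (the class of) the Bott-Chern secondary character of the exact sequence of metrized coherent sheaves furnished by Theorem \ref{zhabc}. Concretely, I would first apply Proposition \ref{prop:5} verbatim to the data of the statement: the bounded exact sequence $\overline{\chi}$ of hermitian vector bundles on $Y$, together with the compatible system of resolutions $\overline{E}_{j,\ast}\to i_{\ast}F_{j}$ fitting into the commutative diagram with exact rows. This yields
\[
  T(\bigoplus_{j \text{ even} }\overline{\xi}_{j}) - T(\bigoplus_{j\text{ odd} }\overline{\xi}_j)=
  \sum_{k}(-1)^{k}[\widetilde{\ch}(\overline{\eta}_{k})]-i_{\ast}([\Td^{-1}(\overline N)\widetilde{\ch}(\overline{\chi})]),
\]
where $\overline{\eta}_{k}$ denotes the $k$-th row $0\to\overline{E}_{n,k}\to\dots\to\overline{E}_{0,k}\to 0$ of the diagram.

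It then remains only to identify $\sum_{k}(-1)^{k}[\widetilde{\ch}(\overline{\eta}_{k})]$ with $[\widetilde{\ch}(\overline{\varepsilon})]$. For this I would note that the columns $\overline{E}_{j,\ast}\to i_{\ast}F_{j}$ are exactly the metrics carried by the metrized coherent sheaves $\overline{\mathcal{F}}_{j}=(i_{\ast}F_{j},\overline{E}_{j,\ast}\to i_{\ast}F_{j})$, and that the given commutative diagram of resolutions is a diagram of the shape \eqref{eq:100}; hence it exhibits $\overline{\varepsilon}\colon 0\to\overline{\mathcal{F}}_{n}\to\dots\to\overline{\mathcal{F}}_{0}\to 0$ as an exact sequence of metrized coherent sheaves with compatible metrics, the underlying sequence of sheaves being exact since $i$ is a closed immersion and $i_{\ast}$ is therefore exact. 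Equation \eqref{eq:102}, established while proving Theorem \ref{zhabc}, then gives $\widetilde{\ch}(\overline{\varepsilon})=\sum_{j}(-1)^{j}\widetilde{\ch}(\overline{E}_{\ast,j})$, and since $\overline{E}_{\ast,k}=\overline{\eta}_{k}$ this is precisely $\sum_{k}(-1)^{k}\widetilde{\ch}(\overline{\eta}_{k})$. Substituting into the display above proves the corollary.

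I do not expect a genuine obstacle, since the content is already contained in Proposition \ref{prop:5} and in equation \eqref{eq:102}. The only point requiring attention is bookkeeping: one must match the ordering of subscripts used in Proposition \ref{prop:5} (first index along the $\overline{\chi}$-direction, second along the resolution) with that of diagram \eqref{eq:100} and equation \eqref{eq:102} (where the two directions are interchanged), and keep track of which term of each short exact sequence is placed in degree zero, so that the signs of the Bott-Chern secondary characters on the two sides of the asserted identity agree.
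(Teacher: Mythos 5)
Your argument is exactly the combination the paper has in mind (the corollary is stated there as a consequence of Proposition \ref{prop:5} and Theorem \ref{zhabc}), and in the setting where the resolutions $\overline E_{j,\ast}\to i_{\ast}F_{j}$ fit into a commutative diagram with exact rows over $i_{\ast}\overline{\chi}$ your two steps — Proposition \ref{prop:5} verbatim, then the identification $\sum_{k}(-1)^{k}[\widetilde{\ch}(\overline{\eta}_{k})]=[\widetilde{\ch}(\overline{\varepsilon})]$ via equation \eqref{eq:102}, with the index and degree-zero bookkeeping you mention — do prove the displayed identity.

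The one point to be careful about is your claim that this commutative diagram is ``part of the data of the statement''. As written, the corollary only supplies the individual hermitian embedded vector bundles $\overline{\xi}_{j}$: no chain maps $\overline E_{j,\ast}\to \overline E_{j-1,\ast}$ covering $i_{\ast}\overline{\chi}$ are given, such maps need not exist for arbitrarily chosen locally free resolutions, and even when they exist the rows $\overline{\eta}_{k}$ need not be exact, which Proposition \ref{prop:5} requires. If you read the corollary as inheriting the setup fixed just before Proposition \ref{prop:5}, your proof is complete; but the statement is later invoked (through \eqref{eq:88}, in the proof that the push-forward is well defined) with independently chosen metrics, so for the literal statement you should add the reduction to the compatible case: choose a compatible system of metrics as in \eqref{eq:103}, dominate both choices by resolutions as in \eqref{eq:108}--\eqref{eq:109}, use \eqref{eq:107} and \eqref{eq:111} to record how $\widetilde{\ch}(\overline{\varepsilon})$ changes under the change of metrics, and check that $T(\bigoplus_{j\text{ even}}\overline{\xi}_{j})-T(\bigoplus_{j\text{ odd}}\overline{\xi}_{j})$ changes by exactly the same Bott--Chern terms; the latter follows from Proposition \ref{prop:5} applied to the exact sequences of resolutions lying over $0\to 0\to \overline F_{j}\to\overline F_{j}\to 0$, together with the behaviour of $T$ under enlarging a resolution by an acyclic complex, which one extracts from the defining formula \eqref{eq:68} (or from the uniqueness formula \eqref{eq:10}). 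With that reduction both sides change equally and the compatible case you proved yields the general one.
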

\hfill$\square$

We now study the effect of changing the metric of the normal bundle
$N$.

\begin{proposition}\label{prop:15}
  Let $\overline {\xi}_{0}=(i,\overline
  N_{0},\overline F, \overline E_{\ast})$ be a hermitian embedded vector
  bundle, where $\overline N_{0}=(N,h_{0})$. Let $h_{1}$ be another
  metric in the vector bundle $N$ and 
  write $\overline N_{1}=(N,h_{1})$,  $\overline {\xi}_{1}=(i,\overline
  N_{1},\overline F, \overline E_{\ast})$. Then
  \begin{displaymath}
    T(\overline {\xi}_{0})-T(\overline
    {\xi}_{1})=-i_{\ast}[\widetilde{\Td^{-1}}(N,h_{0},h_{1})\ch(\overline
    F)].
  \end{displaymath}
\end{proposition}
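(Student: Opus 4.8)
The plan is to use the same deformation-to-$\mathbb{P}^1$ argument that appears throughout this section, this time deforming only the metric on the normal bundle $N$. First I would place everything on $X\times\mathbb{P}^1$: choose a hermitian metric $h$ on $p_1^{\ast}N$ (where $p_1\colon X\times\mathbb{P}^1\to X$ is the projection) such that $\iota_0^{\ast}(p_1^{\ast}N,h)=\overline N_1$ and $\iota_\infty^{\ast}(p_1^{\ast}N,h)=\overline N_0$, exactly as in the proof of proposition \ref{prop:22}. Let $i'\colon Y\times\mathbb{P}^1\longrightarrow X\times\mathbb{P}^1$ be the induced closed immersion; its normal bundle is $p_1^{\ast}N$, and $p_1^{\ast}\overline E_{\ast}\to i'_{\ast}p_1^{\ast}F$ is a resolution. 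The inclusions $\iota_0$ and $\iota_\infty$ are transverse to $Y\times\mathbb{P}^1$, so we may form the hermitian embedded vector bundle $\overline{\Xi}=(i',(p_1^{\ast}N,h),p_1^{\ast}\overline F,p_1^{\ast}\overline E_{\ast})$ and apply functoriality: $\iota_0^{\ast}T(\overline{\Xi})=T(\overline{\xi}_1)$ and $\iota_\infty^{\ast}T(\overline{\Xi})=T(\overline{\xi}_0)$.

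Next I would compute $\dd_{\mathcal{D}}(p_1)_{\ast}\bigl(W_1\bullet T(\overline{\Xi})\bigr)$ in $\bigoplus_p\widetilde{\mathcal{D}}^{2p-1}_D(X,N^{\ast}_{Y,0},p)$, where $W_1=[\tfrac{-1}{2}\log\|t\|^2]$ as in \eqref{eq:60}. The current $W_1\bullet T(\overline{\Xi})$ is well defined because $\WF(W_1)$ is the conormal bundle of $(X\times\{0\})\cup(X\times\{\infty\})$ while $\WF(T(\overline{\Xi}))$ is contained in the conormal bundle of $Y\times\mathbb{P}^1$ (proposition \ref{prop:1}), and the two intersect trivially. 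Using the Leibniz rule, equation \eqref{eq:62} for $\dd_{\mathcal{D}}W_1$, and the differential equation \eqref{eq:42} for $T(\overline{\Xi})$, the left-hand side vanishes and the right-hand side splits into four terms: the boundary contributions $T(\overline{\xi}_0)-T(\overline{\xi}_1)$, a term coming from $\sum_i(-1)^i[\ch(p_1^{\ast}\overline E_i)]$, and a term coming from $i'_{\ast}[\Td^{-1}((p_1^{\ast}N,h))\ch(p_1^{\ast}\overline F)]$. Since the vector bundles $E_i$ and $F$ are pulled back from $X$, their Chern forms are $p_1^{\ast}$-pullbacks and hence $(p_1)_{\ast}(W_1\bullet\ch(p_1^{\ast}\overline E_i))=0$ by the projection formula (the fibre integral of $W_1$ against a pulled-back form is zero). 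The only surviving fibre-integral term is
\begin{displaymath}
  (p_1)_{\ast}\Bigl(W_1\bullet i'_{\ast}\bigl[\Td^{-1}((p_1^{\ast}N,h))\ch(p_1^{\ast}\overline F)\bigr]\Bigr),
\end{displaymath}
which by the projection formula A4 and the definition of the Bott-Chern class $\widetilde{\Td^{-1}}(N,h_0,h_1)$ (proposition \ref{prop:18}) equals $i_{\ast}[\widetilde{\Td^{-1}}(N,h_0,h_1)\ch(\overline F)]$, up to the sign dictated by the orientation conventions in definition \ref{def:11} and the antisymmetry of $\log\|t\|^2$ under $t\mapsto 1/t$. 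Collecting terms gives the claimed identity.

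The main obstacle, and the step deserving the most care, is verifying the sign and checking that the fibre integral $(p_1)_{\ast}(W_1\bullet \Td^{-1}((p_1^{\ast}N,h))\ch(p_1^{\ast}\overline F))$ really reproduces $\widetilde{\Td^{-1}}(N,h_0,h_1)\ch(\overline F)$ rather than its negative or a variant. This is exactly the content of equation \eqref{eq:106} in proposition \ref{prop:18}: one must identify the transgression bundle $\tr_1(\overline\xi)_1$ appearing there with the restriction of $(p_1^{\ast}N,h)$, match the roles of $0$ and $\infty$, and use the identification of $\iota_0^{\ast}$ and $\iota_\infty^{\ast}$ with $\overline N_1$ and $\overline N_0$ respectively — note the order reversal, which is what produces the overall minus sign in the statement. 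One also needs that $W_1\bullet i'_{\ast}(\cdot)$ commutes appropriately with the pushforward, which follows from the functoriality of $i'_{\ast}$ on currents and the transversality of $\iota_0,\iota_\infty$ to $Y\times\mathbb{P}^1$; the wave front set bookkeeping here is routine given theorems \ref{thm:1} and \ref{thm:4}, but should be stated. Everything else — the Leibniz computation, the vanishing of the pulled-back Chern form terms — is a direct repetition of the argument in the proof of proposition \ref{prop:5}.
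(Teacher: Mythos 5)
Your proposal is correct and is essentially the paper's own argument: the paper proves proposition \ref{prop:15} by the same deformation-over-$\mathbb{P}^{1}$ computation as in proposition \ref{prop:5} (deforming only the metric on $N$), applying $\dd_{\mathcal{D}}$ to $(p_{1})_{\ast}(W_{1}\bullet T(\overline{\Xi}))$ and collecting boundary and fibre-integral terms exactly as you do. Your boundary assignments ($h_{1}$ at $0$, $h_{0}$ at $\infty$) are consistent with equation \eqref{eq:106}, so the sign works out as you indicate.
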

\begin{proof}
  The proof is completely analogous to the proof of proposition \ref{prop:5}.  
\end{proof}

We now study the case when $Y$ is the zero section of a completed
vector bundle. Let $\overline F$ and $\overline N$ be hermitian
vector bundles over $Y$. We denote $P=\mathbb{P}(N\oplus
\mathbb{C})$, the projective bundle of lines in $N\oplus
\mathcal{O}_{Y}$. Let $s\colon Y\longrightarrow P$ denote the zero section
and let $\pi _{P}\colon P\longrightarrow Y$ denote the projection. Let
$K(\overline F,\overline N)$ be the Koszul resolution of definition
\ref{def:2}. We will use the notations before this definition.

The following result is due to Bismut, Gillet and Soul\'e for the
particular choice of singular Bott-Chern classes defined in
\cite{BismutGilletSoule:MR1086887}. 

\begin{theorem} \label{thm:8} Let $T$ be a theory of singular
  Bott-Chern classes of
  rank $r_{F}$ and codimension $r_{N}$. Let $Y$ be a complex
  manifold 
  and let $\overline F$ and $\overline N$ be hermitian vector bundles
  of rank $r_{F}$ and 
  $r_{N}$ respectively. Then 
  the current $(\pi _{P})_{\ast}(T(K(\overline F,\overline N)))$ is
  closed. Moreover the 
  cohomology class that it represents does not depend on the metric of
  $N$ and $F$ and determines a characteristic class for pairs of
  vector bundles of rank $r_{F}$ and $r_{N}$. We denote this class by
  $C_{T}(F,N)$.
\end{theorem}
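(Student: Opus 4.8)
The plan is to prove the three assertions of Theorem \ref{thm:8} in order: first that the current $(\pi_P)_\ast(T(K(\overline F,\overline N)))$ is closed, then that its cohomology class is independent of the metrics, and finally that it defines a characteristic class for pairs of vector bundles of rank $r_F$ and $r_N$, hence (by Theorem \ref{thm:14}) is given by a universal power series in the Chern classes of $F$ and $N$.

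First I would show closedness. Apply $\dd_{\mathcal D}$ and use the differential equation \eqref{eq:42} for the theory $T$ applied to the hermitian embedded vector bundle $K(\overline F,\overline N)=(s,\overline N,\overline F,\overline K_\ast\otimes\pi_P^\ast\overline F)$. This gives
\begin{displaymath}
  \dd_{\mathcal D}T(K(\overline F,\overline N))=
  \sum_i(-1)^i[\ch(\overline K_i\otimes\pi_P^\ast\overline F)]-
  s_\ast([\Td^{-1}(\overline N)\ch(\overline F)]).
\end{displaymath}
Now push forward along $\pi_P$; since $\pi_P$ is a proper smooth morphism (indeed $\pi_P\colon P=\mathbb P(N\oplus\mathbb C)\to Y$ is a projective bundle), $(\pi_P)_\ast$ commutes with $\dd_{\mathcal D}$, so $\dd_{\mathcal D}(\pi_P)_\ast T(K(\overline F,\overline N))=(\pi_P)_\ast\dd_{\mathcal D}T(K(\overline F,\overline N))$. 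The right-hand side is $(\pi_P)_\ast$ applied to the difference of the alternating sum of the Chern character forms of the Koszul bundles and the current $s_\ast([\Td^{-1}(\overline N)\ch(\overline F)])$. Since $\pi_P\circ s=\Id_Y$, the second term pushes to $[\Td^{-1}(\overline N)\ch(\overline F)]$. For the first term, I would invoke Corollary \ref{cor:4} (with $Q$ the universal quotient on $P=\mathbb P(N\oplus\mathbb C)$; note $\overline K_i=\bigwedge^i\overline Q^\vee$), which gives exactly $(\pi_P)_\ast\sum_i(-1)^i\ch(\bigwedge^i\overline Q^\vee)=\Td^{-1}(\overline N)$ \emph{at the level of differential forms} on $Y$; tensoring by $\pi_P^\ast\overline F$ and using the projection formula multiplies this by $\ch(\overline F)$. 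Hence the two terms cancel and $\dd_{\mathcal D}(\pi_P)_\ast T(K(\overline F,\overline N))=0$. The main subtlety here is keeping track of the wave front sets: $T(K(\overline F,\overline N))$ is only a current, but by Proposition \ref{prop:1} a representative can be chosen with wave front set in $N^\ast_{s(Y),0}$, and since $\pi_P$ is smooth the pushforward current is well defined and $\dd_{\mathcal D}$ commutes with it; the cancellation above is then an identity of currents.

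Next, independence of the metrics. If $\overline N'=(N,h'_N)$ and $\overline F'=(F,h'_F)$ are other choices, I would use Proposition \ref{prop:15} to control the change coming from the metric on $N$ and the anomaly formula (Corollary \ref{comparThevb}, or directly Proposition \ref{prop:5} applied to the trivial exact sequence $0\to\overline F\to\overline F'\to 0$) for the change of metric on $F$: in both cases $T(K(\overline F,\overline N))-T(K(\overline F',\overline N'))$ equals $i_\ast$ (with $i=s$) of a Bott-Chern secondary class on $Y$ times characteristic forms, plus an exact term. Pushing forward by $\pi_P$ and using $\pi_P\circ s=\Id_Y$ again, the $s_\ast$-terms become honest Bott-Chern classes on $Y$, which are $\dd_{\mathcal D}$-exact by their defining differential equation \eqref{eq:13}; hence the cohomology class of $(\pi_P)_\ast T(K(\overline F,\overline N))$ is unchanged. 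Alternatively and more cleanly, one can argue by functoriality: the whole construction $(Y,\overline F,\overline N)\mapsto(\pi_P)_\ast T(K(\overline F,\overline N))$ is compatible with pullback along morphisms transverse to $s(Y)$ (which here means arbitrary morphisms, since $K(\overline F,\overline N)$ restricted away from $s(Y)$ is split acyclic, or one uses Theorem \ref{thm:5}(iii)(b) together with functoriality of the Koszul resolution and of $\pi_P$), so the closed form $(\pi_P)_\ast T(K(\overline F,\overline N))$ depends functorially on $(\overline F,\overline N)$, and then Proposition \ref{prop:22} gives metric-independence for free. I expect this functoriality route to be the shortest, so I would present it as the main line and relegate the anomaly-formula computation to a remark.

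Finally, that the resulting assignment $C_T$ is a characteristic class for pairs of vector bundles of rank $r_F$, $r_N$ is now immediate: we have produced, for each complex manifold $Y$ and each pair $(F,N)$ of bundles of the prescribed ranks, a cohomology class in $\bigoplus_{k,p}H^k_{\mathcal D^{\an}}(Y,\mathbb R(p))$ (using Corollary \ref{cor:2} to identify the cohomology of the complex of currents with fixed wave front set with analytic Deligne cohomology), and the previous paragraph shows this class is functorial under holomorphic pullback and independent of the auxiliary metrics. This is precisely Definition \ref{def:6} of a theory of characteristic classes for $2$-tuples of vector bundles of rank $(r_F,r_N)$. The main obstacle in the whole proof is the first step — specifically, justifying that the identity from Corollary \ref{cor:4} can be used inside $(\pi_P)_\ast$ applied to the current $T(K(\overline F,\overline N))$, i.e.\ that all wave front sets are under control so that $(\pi_P)_\ast$ commutes with $\dd_{\mathcal D}$ and the formal cancellation is legitimate at the level of currents; this is handled by Proposition \ref{prop:1}, Theorem \ref{thm:1} and the smoothness of $\pi_P$, but it is the point that needs care rather than the subsequent essentially formal arguments.
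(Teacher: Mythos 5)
Your proposal is correct and follows essentially the same route as the paper: closedness is obtained by applying $\dd_{\mathcal D}$, using the defining differential equation of $T$, the projection formula to factor out $\ch(\overline F)$, and Corollary \ref{cor:4} to cancel $(\pi_P)_\ast\bigl(c_{r_N}(\overline Q)\Td^{-1}(\overline Q)\bigr)$ against $\Td^{-1}(\overline N)$ at the level of forms; metric independence then follows from functoriality of the construction together with Proposition \ref{prop:22}, which is exactly the paper's argument. Your added remarks on wave front sets and the alternative anomaly-formula verification are consistent supplements rather than a different method.
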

\begin{proof}
  We have that
  \begin{align*}
    \dd_{\mathcal{D}}(\pi _{P})_{\ast}&(T(K(\overline
    F,\overline N)))\\
    &=(\pi _{P})_{\ast}(\dd_{\mathcal{D}}T(K(\overline
    F,\overline N)))\\
    &=(\pi _{P})_{\ast}\left(\sum_{k=0}^{r}(-1)^{k}[\ch(
    \bigwedge^{k}\overline Q^{\vee})\pi _{P}^{\ast}\ch(\overline
    F)]-s_{\ast}[\Td^{-1}(\overline N)\ch(\overline F)]\right)\\
    &=\left((\pi _{P})_{\ast}[c_{r}(\overline
      Q)\Td^{-1}(\overline Q)]-[\Td^{-1}(\overline N)]
    \right)\ch(\overline F)).
  \end{align*}
  Therefore, the fact that the current $(\pi _{P})_{\ast}(T(K(\overline
  F,\overline N)))$ is closed follows from corollary \ref{cor:4}.
  The fact that this class is functorial on $(Y,\overline N,\overline
  F)$ is clear from the construction Thus, the fact that it does not
  depend on the hermitian metrics of $N$ and $F$ follows from
  proposition \ref{prop:22}.
\end{proof}

\begin{remark}
  By theorem \ref{thm:14} we know that, if we restrict ourselves to
  the algebraic category, $C_{T}(F,N)$ is given by a
  power series on the Chern classes with coefficients in
  $\mathbb{D}$. By degree reasons
  \begin{displaymath}
    C_{T}(F,N)\in
    \bigoplus_{p}H_{\mathcal{D}^{\an}}^{2p-1}(Y,\mathbb{R}(p)). 
  \end{displaymath}
  Let ${\bf 1}_{1}\in H^{1}_{\mathcal{D}}(\ast,\mathbb{R}(1))$ be the
  element determined by the constant function with value 1 in
  $\mathcal{D}^{1}(\ast,1)$. Then $C_{T}(F,N)/{\bf 1}_{1}$ is a power
  series in  the Chern
  classes of $N$ and $F$ with real coefficients.   
\end{remark}

\section{Classification of theories of singular Bott-Chern classes}
\label{sec:class-theor-sing}

The aim of this section is to give a complete classification of the
possible theories of singular Bott-Chern classes. This classification
is given in terms of the characteristic class $C_{T}$ introduced in
the previous section.

\begin{theorem} \label{thm:6}
  Let $r_{F}$ and $r_{N}$ be two positive integers. Let $C$ be a
  characteristic 
  class for pairs of vector bundles of rank $r_{F}$ and $r_{N}$.
  Then there exists a unique theory $T_{C}$ of singular Bott-Chern
  classes of rank $r_{F}$ and codimension $r_{N}$ such that
  $C_{T_{C}}=C$.  
\end{theorem}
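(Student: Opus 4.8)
The strategy is to reduce everything to the ``universal'' situation of the Koszul resolution on a projective completion of a vector bundle, where Theorem~\ref{thm:8} already tells us that the singular Bott-Chern class must produce the prescribed characteristic class $C$ after integration along $\pi_P$. First I would establish \emph{uniqueness}: given a theory $T$ with $C_T=C$, I want to show that for an arbitrary hermitian embedded vector bundle $\overline{\xi}=(i,\overline N,\overline F,\overline E_{\ast})$ the class $T(\overline\xi)$ is forced. The key geometric input is the deformation of resolutions (Theorem~\ref{thm:5}): applying it to the resolution $\overline E_{\ast}\to i_{\ast}F$ produces a complex $\tr_1(\overline E_{\ast})_{\ast}$ on $W=W_{Y/X}$, whose restriction to $X\times\{0\}$ is $\overline\xi$ and whose restriction to $W_\infty=P\cup\widetilde X$ breaks (up to split acyclic pieces, which are harmless by the normalization axiom and Remark~\ref{rem:1}) into the Koszul resolution $K(\overline F, \overline N\otimes N^{-1}_{\infty})$ on $P$ plus a split complex on $\widetilde X$. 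Then, exactly as in the proof of Proposition~\ref{prop:5}, I would integrate $W_1\bullet T(\tr_1(\overline\xi))$ over the $\mathbb{P}^1$-direction: the differential equation (axiom~\ref{item:15}), the Poincar\'e--Lelong formula \eqref{eq:62}, and functoriality (axiom~\ref{item:16}, valid because the inclusions of $0$ and $\infty$ are transverse to $Y\times\mathbb{P}^1$) give
\begin{displaymath}
  T(\overline\xi)=T(\tr_1(\overline\xi))|_{X\times\{\infty\}}
  + (p_X)_{\ast}\!\left(W_1\bullet \Big(\textstyle\sum_i(-1)^i[\ch(\tr_1(\overline E_{\ast})_i)] - j_{\ast}[\Td^{-1}\ch]\Big)\right).
\end{displaymath}
The second term is an explicit expression in Bott-Chern forms and the class $C$ (using Theorem~\ref{thm:8} to identify $(\pi_P)_{\ast}T(K(\cdot))$ with $C(F,N)$, and the decomposition on $W_\infty$). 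The first term is $T$ of something supported on $W_\infty$, which by localizing to $P$ and $\widetilde X$ reduces to $C$-data plus a lower-complexity situation, so an induction on the length of $\overline E_{\ast}$ (or on $\dim X$) closes it. Hence $T(\overline\xi)$ is determined by $C$.

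\textbf{Existence.} For existence I would turn the uniqueness formula into a definition: set $T_C(\overline\xi)$ to be the right-hand side of the identity extracted above, with the occurrences of $T(K(\cdot))$ replaced by any fixed choice of representative coming from $C$ via the construction implicit in Theorem~\ref{thm:8} (concretely, a current on $P$ whose $d_{\mathcal D}$ is the Koszul differential-equation right-hand side and whose $(\pi_P)_{\ast}$ is $C(F,N)$; such a representative exists because the relevant cohomology class vanishes by corollary~\ref{cor:4} combined with $C$). One then must check the three axioms. The differential equation follows by applying $d_{\mathcal D}$ and using Poincar\'e--Lelong plus Corollary~\ref{cor:4}; the bookkeeping is the same kind of computation as in Theorem~\ref{thm:8}. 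Functoriality follows from the functoriality of the Grassmannian graph construction (Theorem~\ref{thm:5}\,\ref{item:18}, case~\ref{item:11}, transversality to $Y$), the functoriality of Bott-Chern forms (Theorem~\ref{thm:3}\,\ref{item:13}), and compatibility of integration along the fibre with the base change $W'=W\times_X X'$; here one must invoke Theorem~\ref{thm:1} and Proposition~\ref{prop:1} to be sure that all pull-backs of the currents involved are well defined, since the wave front sets are controlled (conormal bundle of $Y\times\mathbb{P}^1$, disjoint from the normal directions of a transverse map). Normalization is immediate from the way split complexes enter the deformation construction, together with Remark~\ref{rem:1} and the base case $X=\Spec\mathbb{C}$. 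Finally $C_{T_C}=C$ holds essentially by construction, since feeding the Koszul data $K(\overline F,\overline N)$ into the defining formula and applying $(\pi_P)_{\ast}$ returns $C(F,N)$ after the split and excess terms cancel; this is again the computation of Theorem~\ref{thm:8} run in reverse.

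\textbf{Main obstacle.} The routine parts are the differential-equation check and the normalization; the delicate point is \emph{well-definedness} of the defining formula for $T_C$, i.e.\ independence of the auxiliary choices — the hermitian metrics on $\tr_1(\overline E_{\ast})_{\ast}$ and on the Koszul and split pieces, the chosen representative current for $C$, and (implicitly) the resolution $\overline E_{\ast}$ itself, since a hermitian embedded vector bundle fixes the resolution but the axioms must still be self-consistent under the moves in Corollary~\ref{comparThevb}. I expect this to be handled by the additivity machinery of Section~\ref{sec:bott-chern-forms}: Lemma~\ref{lemm:1}, Corollary~\ref{cor:9}, Proposition~\ref{prop:21}, and the generalized splitting result Proposition~\ref{prop:13}, exactly as these were used to set up the Bott-Chern classes of metrized coherent sheaves in Theorem~\ref{zhabc}. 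So the real work is to show that two different sets of choices differ by an expression which is a sum of Bott-Chern classes of orthogonally split or ``telescoping'' complexes, hence zero in $\widetilde{\mathcal D}^{\ast}_D$; once that is in place the rest is the standard deformation-argument template already exhibited in the proofs of Proposition~\ref{prop:5} and Theorem~\ref{thm:8}.
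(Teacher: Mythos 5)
Your proposal is correct and follows essentially the same route as the paper: uniqueness by deforming to the normal cone and integrating $W_{1}$ against $T(\tr_{1}(\overline E_{\ast})_{\ast})$ to force an explicit formula for $T(\overline \xi)$ in terms of $C$, then existence by adopting that formula as the definition and checking the axioms, with well-definedness reduced to Bott--Chern comparisons of the auxiliary metrics. The one superfluous element is the suggested induction (and the auxiliary representative current for $C$): the fiber at infinity is handled in a single step, since the restriction to $\widetilde X$ vanishes by functoriality and normalization, while on $P$ proposition \ref{prop:5} together with $(\pi_{P})_{\ast}T(K(\overline F,\overline N))=C_{T}(F,N)$ identifies the remaining term, and only the push-forward $i_{\ast}C(F,N)$ ever enters the defining formula.
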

\begin{proof}
  We first prove the uniqueness. Assume that $T$ is a theory of
  singular Bott-Chern classes such that $C_{T}=C$. Let $\overline
  {\xi}=(i\colon Y\longrightarrow X,\overline N, \overline F, \overline
  E_{\ast})$ be a hermitian embedded vector bundle as in section
  \ref{sec:singular-bott-chern}. Let $W$ be the deformation to the
  normal cone of $Y$. We will use all the notations of section
  \ref{sec:deform-resol}. In particular, we will denote by $p_{\widetilde
    X}\colon \widetilde X\longrightarrow X$ and $p_{P}\colon P\longrightarrow X$
  the morphisms induced by restricting $p_{W}$. Recall that $p_{P}$
  can be factored as
  \begin{displaymath}
    P\overset {\pi _{P}}{\longrightarrow }Y\overset
    {i}{\longrightarrow }X. 
  \end{displaymath}

  The normal vector bundle to the inclusion $j\colon Y\times
  \mathbb{P}^{1}\longrightarrow W$ is isomorphic to
  $p_{Y}^{\ast}N\otimes q_{Y}^{\ast}\mathcal{O}(-1)$. We provide it
  with the hermitian metric induced by the metric of $N$ and the
  Fubini-Study metric of $\mathcal{O}(-1)$ and we denote it by
  $\overline N'$. 

  By theorem \ref{thm:5} we have a complex of hermitian vector
  bundles, $\tr_{1}(E_{\ast})_{\ast }$ such that the restriction $\tr_{1}(E_{\ast})_{\ast
  }|_{X\times \{0\}}$ is isometric to $E_{\ast }$, the restriction
  $\tr_{1}(E_{\ast})_{\ast 
  }|_{\widetilde X}$ is orthogonally split and there is an exact
  sequence on $P$
  \begin{displaymath}
    0\longrightarrow A_{\ast }\longrightarrow \tr_{1}(E_{\ast})_{\ast
  }|_{P} \longrightarrow K(F,N)\longrightarrow 0, 
  \end{displaymath}
  where $A_{\ast}$ is split acyclic and $K(F,N)$ is the Koszul
  resolution. Recall that we have trivialized
  $N^{-1}_{\infty/\mathbb{P}^{1}}$ 
  by means of the section $y$ of $\mathcal{O}_{\mathbb{P}^{1}}(1)$.
  We choose a hermitian
  metric in every bundle of $A_{\ast}$ such that it becomes orthogonally
  split. For each $k$ we will
  denote by $\overline {\eta}_{k}$ the exact sequence of hermitian
  vector bundles
  \begin{equation}\label{eq:33}
     0\longrightarrow \overline A_{k}\longrightarrow \tr_{1}(\overline
     E_{\ast})_{k}|_{P} 
     \longrightarrow K(\overline F,\overline N)_{k}\longrightarrow 0. 
  \end{equation}
  Observe that the current $W_{1}$ is defined as the current
  associated to a locally integrable differential form. The pull-back
  of this form to $W$ is also  locally integrable. Therefore it
  defines a current on $W$ that we also denote by $W_{1}$. Moreover,
  since the wave front sets of $W_{1}$ and of $T(\tr_{1}(\overline E_{\ast})_{\ast})$
  are disjoint, there is a well defined current $W_{1}\bullet
  T(\tr_{1}(\overline E_{\ast})_{\ast})$. 
  Then, using the properties of singular Bott-Chern classes in
  definition \ref{def:7}, the equality 
  \begin{align*}
    0&=\dd_{\mathcal{D}}
    (p_{W})_{\ast}\left(W_{1} \bullet T(\tr_{1}(\overline E_{\ast})_{\ast})\right)\\
    &=(p_{\widetilde X})_{\ast}(T(\tr_{1}(\overline
    E_{\ast})_{\ast})|_{\widetilde 
      X})+(p_{P})_{\ast}(T(\tr_{1}(\overline E_{\ast})_{\ast})|_{P})-T(\overline \xi)\\
    &-(p_{W})_{\ast}\left(W_{1} \bullet
      \left(\sum_{k}(-1)^{k}\ch(\tr_{1}(\overline E_{\ast})_{\ast})- 
        (j_{\ast}(\ch(p_{Y}^{\ast}\overline F)\Td^{-1}(\overline N'))
      \right)\right)
  \end{align*}
  holds 
  in the group
  $\bigoplus_{k}\widetilde{\mathcal{D}}^{2k-1}(X,k)$.
  By properties   \ref{def:7}\ref{item:16} and
  \ref{def:7}\ref{item:17}, $T(\tr_{1}(\overline 
  E_{\ast})_{\ast})|_{\widetilde 
    X}=T(\tr_{1}(\overline 
  E_{\ast})_{\ast}|_{\widetilde 
    X})=0$. 
    
  By proposition \ref{prop:5} we have
  \begin{displaymath}
    T(\tr_{1}(\overline E_{\ast})_{\ast}|_{P})= T(K(\overline F,\overline
    N))-
    \sum_{k}(-1)^{k}[\widetilde{\ch}(\overline \eta_{k})]. 
  \end{displaymath}
  Moreover, we have
  \begin{displaymath}
    (p_{P})_{\ast}(T(K(\overline F,\overline N)))=i_{\ast}(\pi
    _{P})_{\ast}(T(K(\overline F,\overline N)))=i_{\ast}C_{T}(F,N). 
  \end{displaymath}

  By the definition of $N'$ and the choice of its metric,
  there are two differential forms $a,b$ on $Y$, such that
  \begin{displaymath}
    \ch(p_{Y}^{\ast}\overline F)\Td^{-1}(\overline
    N')=p_{Y}^{\ast}(a)+p_{Y}^{\ast 
    }(b)\land q_{Y}^{\ast}(c_{1}(\mathcal{O}(-1))).
  \end{displaymath}
  We denote $\omega =-c_{1}(\mathcal{O}(-1))$. By the properties of
  the Fubini-Study metric,
  $\omega$ is invariant under the involution of $\mathbb{P}^{1}$ that
  sends $t$ to $1/t$. Then
  \begin{displaymath}
    (p_{W})_{\ast}\left(W_{1} \bullet 
        (j_{\ast}(\ch(p_{Y}^{\ast}\overline F)\Td^{-1}(\overline N'))
    \right)=i_{\ast}(p_{Y})_{\ast}(W_{1} \bullet
    (p_{Y}^{\ast}a+p_{Y}^{\ast}b\omega
    )) =0
  \end{displaymath}
  because the current $W_{1}$ changes sign under the involution
  $t\longmapsto 1/t$.

  Summing up, we have obtained the equation
  \begin{multline} 
    T(\overline \xi)=
    -(p_{W})_{\ast}
    \left(\sum_{k}(-1)^{k}W_{1} \bullet \ch(\tr_{1}(\overline E_{\ast})_{k})
    \right)\\
    -\sum_{k}(-1)^{k}(p_{P})_{\ast}[\widetilde
    \ch(\overline \eta_{k})]+i_{\ast}C_{T}(F,N).\label{eq:10}
  \end{multline}
  Hence the singular Bott-Chern class is characterized by the
  properties of definition \ref{def:7} and the
  characteristic class $C_{T}$.

  In order to prove the existence of a theory of singular Bott-Chern
  classes, we use equation \eqref{eq:10} to define a class
  $T_{C}(\xi)$ as follows. 
  \begin{definition}\label{def:5}
    Let $C$ be a characteristic class for pairs of vector bundles of
    rank $r_{F}$ and $r_{N}$ as in theorem \ref{thm:6}. Let $\overline
    {\xi}= (i\colon Y\longrightarrow X,\overline N, \overline  
  F, \overline E_{\ast}) $ be as in definition \ref{def:7}. Let
  $\overline A_{\ast}$, $\tr_{1}(\overline
  E_{\ast})_{\ast}$ and  $\overline {\eta}_{\ast}$ be as in
  \eqref{eq:33}. Then we define
  \begin{multline} \label{eq:68}
    T_{C}(\overline \xi)=
    -(p_{W})_{\ast}
    \left(\sum_{k}(-1)^{k}W_{1} \bullet \ch(\tr_{1}(\overline E_{\ast})_{k}) 
    \right)\\
    -\sum_{k}(-1)^{k}(p_{P})_{\ast}[\widetilde
    \ch(\overline \eta_{k})]+i_{\ast}C(F,N).
  \end{multline}
  \end{definition}

  We have to 
  prove that this definition does not depend on the choice of the metric
  of $\tr_{1}(\overline E_{\ast})_{\ast }$ or the metric of $\overline A_{\ast }$,
  that $T_{C}$ satisfies the properties of 
  definition \ref{def:7} and that the characteristic class $C_{T_{C}}$
  agrees with $C$.
  
  First we prove the independence from the metrics. We denote by $h_{k}$
  the hermitian metric on $\tr_{1}(\overline E_{\ast})_{k}$ and by $g_{k}$
  the hermitian metric on $A_{k}$. Let $h'_{k}$ and $g'_{k}$ be another
  choice of metrics satisfying also that $(A_{\ast},g'_{\ast})$
  is orthogonally split, that $(\tr_{1}(E_{\ast})_{k},h'_{k})|_{X\times \{0\}}$ is
  isometric to $\overline E_{k}$ and that $(\tr_{1}(E_{\ast})_{k},h'_{k})|_{\widetilde
    X}$ is orthogonally split. We denote by $\overline \eta'_{k}$ the exact
  sequence $\eta_{k}$ provided with the metrics $g'$ and $h'$. Then,
  in the group 
  $\bigoplus_{p}\widetilde{\mathcal{D}}^{2p-1}(X,p)$, we have
  \begin{multline} \label{eq:11}
    \sum_{k}(-1)^{k}(p_{P})_{\ast}[\widetilde
    \ch(\overline \eta_{k})]-  \sum_{k}(-1)^{k}(p_{P})_{\ast}[\widetilde
    \ch(\overline \eta'_{k})]=\\
    \sum_{k}(-1)^{k}(p_{P})_{\ast}
    \left[\widetilde{\ch}(A_{k},g_{k},g'_{k}) \right]- 
    \sum_{k}(-1)^{k}(p_{P})_{\ast}
    \left[
    \widetilde{\ch}(\tr_{1}(E_{\ast})_{k}|_{P},h_{k},h'_{k})\right].
  \end{multline}
  Observe that the first term of the right hand side vanishes due to the
  hypothesis of $A_{\ast}$ being orthogonally split for both metrics.
  
  Moreover, we also have,
  \begin{multline} \label{eq:63}
    (p_{W})_{\ast}
    \left(\sum_{k}(-1)^{k}W_{1} \bullet \ch(\tr_{1}(E_{\ast})_{k},h_{k}) 
    \right)- \\(p_{W})_{\ast}
    \left(\sum_{k}(-1)^{k}W_{1} \bullet \ch(\tr_{1}(E_{\ast})_{k},h'_{k})
    \right)=\\
    (p_{W})_{\ast}
    \left(\sum_{k}(-1)^{k}W_{1} \bullet \dd_{\mathcal{D}}\widetilde
      {\ch}(\tr_{1}(E_{\ast})_{k},h_{k},h'_{k})  
    \right).
  \end{multline}
  But, in the group $\bigoplus_{p}\widetilde{\mathcal{D}}^{2p-1}(X,p)$,
  \begin{multline}\label{eq:12}
    (p_{W})_{\ast}
    \left(\sum_{k}(-1)^{k}W_{1} \bullet \dd_{\mathcal{D}}\widetilde
      {\ch}(\tr_{1}(E_{\ast})_{k},h_{k},h'_{k})
    \right)=\\\sum_{k}(-1)^{k}(p_{\widetilde X})_{\ast}[\widetilde
    {\ch}(\tr_{1}(E_{\ast})_{k},h_{k},h'_{k})]|_{\widetilde X}\\
    +
    \sum_{k}(-1)^{k}(p_{P})_{\ast}[\widetilde
    {\ch}(\tr_{1}(E_{\ast})_{k},h_{k},h'_{k})]|_{P})\\
    -\sum_{k}(-1)^{k}[\widetilde
    {\ch}(\tr_{1}(E_{\ast})_{k},h_{k},h'_{k})]|_{X\times
      \{0\}}.
  \end{multline}
  The last term of the right hand side vanishes because the metrics $h_{k}$
  and $h'_{k}$ agree when restricted to $X\times \{0\}$ and the first
  term vanishes by the hypothesis that $\tr_{1}(E_{\ast})_{\ast}|_{\widetilde
    X}$ is orthogonally split with both metrics. Combining equations
  \eqref{eq:11}, \eqref{eq:63} and \eqref{eq:12} we obtain that the right hand
  side of equation \eqref{eq:68} does not depend on the 
  choice of metrics.
  
  We next prove the property \ref{item:15} of definition \ref{def:7}.
  We compute
  \begin{multline*}
    \dd_{\mathcal{D}} T_{C}(\overline{\xi})=
    -\sum_{k}(-1)^{k}\left((p_{\widetilde
        X})_{\ast}\ch(\tr_{1}(\overline E_{\ast})_{k}|_{\widetilde X})+
      (p_{P})_{\ast}\ch(\tr_{1}(\overline E_{\ast})_{k}|_{P})\right)\\
    +\sum_{k}(-1)^{k}\ch(\tr_{1}(\overline E_{\ast})_{k}|_{X\times \{0\}})\\
    -\sum_{k}(-1)^{k}(p_{P})_{\ast}\left(\ch(\overline
      A_{k})+\ch(K(\overline F,\overline N)_{k})- 
      \ch(\tr_{1}(\overline E_{\ast})_{k}|_{P}) 
    \right).
  \end{multline*}
  Using that $\overline A_{\ast}$ and that $\tr_{1}(\overline
  E_{\ast})_{\ast}|_{\widetilde X}$ are 
  orthogonally split and corollary \ref{cor:4} we obtain 
  \begin{align*}
    \dd_{\mathcal{D}}
    T_{C}(\overline{\xi})&=\sum_{k}(-1)^{k}\ch(\overline E_{k})- 
    \sum_{k}(-1)^{k}(p_{P})_{\ast}\ch(K(\overline F,\overline N)_{k}) 
    \\
    &=\sum_{k}(-1)^{k}[\ch(\overline E_{k})]-
    (p_{P})_{\ast} [c_{r}(\overline Q)\Td^{-1}(\overline Q)]
    \\
    &= \sum_{k}(-1)^{k}[\ch(\overline E_{k})]-
    i_{\ast}[\ch(\overline F) \Td^{-1}(\overline N)].
  \end{align*}
  
  We now prove the normalization property. We consider first the case
  when $Y=\emptyset$ and $\overline E_{\ast}$ is a non-negatively graded
  orthogonally split complex. We denote by
  \begin{displaymath}
    \overline K_{i}=\Ker(\dd_{i}\colon E_{i}\longrightarrow E_{i-1})
  \end{displaymath}
  with the induced metric. By hypothesis there are isometries
  \begin{displaymath}
    \overline E_{i}=\overline K_{i}\oplus \overline K_{i-1}.
  \end{displaymath}
  Under these isometries, the differential is
  $\dd(s,t)=(t,0)$. Following the explicit construction of
  $\tr_{1}(E_{\ast})$ given in \cite{GilletSoule:aRRt}, recalled in definition
  \ref{def:12}, we see that
  \begin{displaymath}
    \tr_{1}(E_{\ast})_{i}=p^{\ast}K_{i}\otimes q^{\ast}\mathcal{O}(i)
    \oplus p^{\ast}K_{i-1}\otimes q^{\ast}\mathcal{O}(i-1)=
    K_{i}(i)
    \oplus K_{i-1}(i-1).
  \end{displaymath}
  Moreover, we can induce a metric on $ \tr_{1}(E_{\ast})_{\ast}$ satisfying
  the hypothesis of definition \ref{def:13} by means of
  the metric of the bundles $K_{i}$ and the Fubini-Study metric on the
  bundles $\mathcal{O}(i)$. It is clear that the second and third
  terms of the right hand side of equation \eqref{eq:10} are zero. For
  the first term we have
  \begin{align*}
    \sum_{k}(-1)^{k}&(p_{W})_{\ast}W_{1} \bullet 
    \left(\ch(\tr_{1}(\overline E_{\ast})_{k})
    \right)\\
    &=
    (p_{W})_{\ast}
    \left(\sum_{k}(-1)^{k}W_{1} \bullet \ch(\overline
      K_{k}(k)\overset{\perp}{\oplus}\overline K_{k-1}(k-1)) 
    \right)\\
    &=(p_{W})_{\ast}
    \left(W_{1} \bullet (a+b\land \omega ) 
    \right),
  \end{align*}
  where $\omega $ is the Fubini-Study $(1,1)$-form on $\mathbb{P}^{1}$
  and $a,b$ are inverse images of differential forms on $X$. Therefore
  we obtain that 
  $T_{C}(\overline E_{\ast})=0$. 

  Now let $\overline \xi=(i\colon Y\longrightarrow X,\overline N,
  \overline F, \overline 
  E_{\ast})$ and let $\overline B_{\ast}$ be a non-negatively graded orthogonally
  split complex 
  of vector bundles. By \cite{GilletSoule:aRRt} section 1.1, we have
  that $W(E_{\ast}\oplus B_{\ast})=W(E_{\ast})$ and that
  \begin{displaymath}
    \tr_{1}(E_{\ast}\oplus B_{\ast})= \tr_{1}(E_{\ast})\oplus \pi
    ^{\ast} \tr_{1}(B_{\ast}).
  \end{displaymath}

  In order to compute $T_{C}(\overline \xi)$, we have to
  consider the exact sequences of hermitian vector bundles over $P$ 
  \begin{displaymath}
    \overline {\eta}_{k}\colon 0\longrightarrow \overline A_{k}\longrightarrow
    \tr_{1}(\overline E_{\ast})_{k}|_{P}\longrightarrow  K(\overline
    F,\overline N)_{k}\longrightarrow 0,
  \end{displaymath}
  whereas, in order to compute  $T_{C}(\overline \xi\oplus \overline B_{\ast})$,
  we consider 
  the sequences
 \begin{multline*}
   \overline {\eta}'_{k}\colon \\
    0\longrightarrow \overline A_{k}\oplus \pi
    ^{\ast}(\tr_{1}(\overline B)_{k})|_{P}\longrightarrow 
    \tr_{1}(\overline E_{\ast})_{k}\oplus \pi
    ^{\ast}(\tr_{1}(\overline B)_{k})|_{P}\longrightarrow
    K(\overline F,\overline N)_{k}\longrightarrow 0. 
  \end{multline*}
  By the additivity of Bott-Chern classes, we have that $\widetilde
  {\ch}(\overline {\eta}_{k})=\widetilde
  {\ch}(\overline {\eta}'_{k})$. Therefore
  \begin{align*}
    T_{C}(\overline {\xi}\oplus \bar B_{\ast})-T_{C}(\overline {\xi})&=
    -(p_{W})_{\ast}
    \left(\sum_{k}(-1)^{k}W_{1} \bullet \ch(\tr_{1}(\overline E_{\ast}\oplus \overline B_{\ast})_{k})
    \right)\\ &\phantom{AAA}
    +(p_{W})_{\ast}
    \left(\sum_{k}(-1)^{k}W_{1} \bullet \ch(\tr_{1}(\overline E_{\ast})_{k})
    \right)\\
    &=-(p_{W})_{\ast}
    \left(\sum_{k}(-1)^{k}W_{1} \bullet \ch(\tr_{1}(\overline B_{\ast})_{k})
    \right)\\
    &=0.
  \end{align*}

  The proof of the functoriality is  left to the reader.
  
  Finally we prove that $C_{T_{C}}=C$. Let $Y$ be a complex 
  manifold and let $\overline F$ and $\overline N$ be two hermitian
  vector bundles. We write $X=\mathbb{P}(N\oplus \mathbb{C})$. Let
  $i\colon Y\longrightarrow X$ be the inclusion given by the zero
  section and let
  $\pi _{X}\colon X\longrightarrow Y$ be the projection. On $X$ we have the
  tautological exact sequence
  \begin{displaymath}
    0\longrightarrow \mathcal{O}(-1)\longrightarrow 
    \pi _{X}^{\ast}(N\oplus \mathbb{C})\longrightarrow
    Q\longrightarrow 0 
  \end{displaymath}
  and the Koszul resolution, denoted $K(\overline F,\overline N)$.
  We denote $$\overline {\xi}= (i\colon Y\longrightarrow
  X,\overline N, \overline 
  F,K(\overline F,\overline N)).$$
  Using the definition of $T_{C}$, that is, equation \eqref{eq:68},
  and the fact that $T_{C}$ satisfies the properties of definition
  \ref{def:7}, hence equation \eqref{eq:10} is satisfied, we obtain
  that
  \begin{displaymath}
    i_{\ast}C(F,N)=i_{\ast}C_{T_{C}}(F,N)
  \end{displaymath}
  Applying $(\pi _{X})_{\ast}$ we obtain that
  $C(F,N)=C_{T_{C}}(F,N)$ which finishes the proof of theorem
\ref{thm:6}. 
\end{proof}

\section{Transitivity and projection formula}
\label{sec:trans-mult}

We now investigate how different properties of the characteristic
class $C_{T}$ are reflected in the corresponding theory of singular
Bott-Chern classes.

\begin{proposition} \label{prop:6}
  Let $i\colon Y\hookrightarrow X$ be a closed immersion of complex
  manifolds. Let $\overline F$ be a hermitian vector bundle on $Y$ and
  $\overline G$ a hermitian vector bundle
  on $X$. Let $\overline N$ denote the normal bundle to $Y$ provided
  with a hermitian metric. Let $\overline E_{\ast}$ be a finite
  resolution of $i_{\ast}F$ by hermitian vector bundles. We denote
  $\overline {\xi}=(i\colon Y\longrightarrow X,\overline N,
  \overline F, \overline 
  E_{\ast})$ and $\overline {\xi}\otimes \overline
  G=(i\colon Y\longrightarrow X,\overline N, 
  \overline F\otimes i^{\ast}\overline G, \overline 
  E_{\ast}\otimes \overline G)$. Then
  \begin{displaymath}
    T(\overline {\xi}\otimes \overline G)-T(\overline {\xi})\bullet
    \ch(\overline G)=i_{\ast}(C_{T}(F\otimes i^{\ast}
    G,N))-i_{\ast}(C_{T}(F,N))\bullet \ch(\overline G). 
  \end{displaymath}
\end{proposition}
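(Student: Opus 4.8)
The plan is to read the identity off the explicit formula \eqref{eq:10}, which holds for \emph{any} theory $T$ of singular Bott-Chern classes (it was established in the course of the proof of Theorem~\ref{thm:6}, and by that theorem $T=T_{C_{T}}$). Applied to $\overline{\xi}$ it expresses $T(\overline{\xi})$ in terms of $-(p_{W})_{\ast}\bigl(\sum_{k}(-1)^{k}W_{1}\bullet\ch(\tr_{1}(\overline{E}_{\ast})_{k})\bigr)$, of $-\sum_{k}(-1)^{k}(p_{P})_{\ast}[\widetilde{\ch}(\overline{\eta}_{k})]$ with $\overline{\eta}_{k}$ the sequences \eqref{eq:33} on $P$, and of $i_{\ast}C_{T}(F,N)$. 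The first task is to identify the corresponding ingredients for $\overline{\xi}\otimes\overline{G}$. The deformation to the normal cone $W$ attached to $\overline{E}_{\ast}$ and to $\overline{E}_{\ast}\otimes\overline{G}$ coincides, since it depends only on the immersion $i$; moreover, from the explicit description \eqref{eq:32} of $\tr_{1}$ one reads off $\tr_{1}(E_{\ast}\otimes p^{\ast}G)_{k}=\tr_{1}(E_{\ast})_{k}\otimes p_{W}^{\ast}G$, and tensoring the functorially chosen metric on $\tr_{1}(\overline{E}_{\ast})_{k}$ by $p_{W}^{\ast}\overline{G}$ gives a metric of the kind allowed in Definition~\ref{def:5} (restriction to $\overline{E}_{k}\otimes p^{\ast}\overline{G}$ over $X\times\{0\}$ and orthogonal splitness over $\widetilde{X}$ are preserved by $\otimes\,p_{W}^{\ast}\overline{G}$, and the partition-of-unity gluing of hermitian metrics commutes with tensoring by a fixed bundle). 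Likewise, tensoring \eqref{eq:33} by $p_{P}^{\ast}\overline{G}=\pi_{P}^{\ast}i^{\ast}\overline{G}$ and using that $K(\overline{F},\overline{N})_{k}\otimes\pi_{P}^{\ast}i^{\ast}\overline{G}=K(\overline{F\otimes i^{\ast}G},\overline{N})_{k}$ (immediate from Definition~\ref{def:2}) and that $\overline{A}_{\ast}\otimes p_{P}^{\ast}\overline{G}$ is again orthogonally split, exhibits $\overline{\eta}_{k}\otimes p_{P}^{\ast}\overline{G}$ as an admissible choice of the sequences \eqref{eq:33} for $\overline{\xi}\otimes\overline{G}$.

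With these choices I would invoke two multiplicativity facts. First, the Chern character form of a tensor product is the $\bullet$-product of the Chern character forms, so $\ch(\tr_{1}(\overline{E}_{\ast})_{k}\otimes p_{W}^{\ast}\overline{G})=\ch(\tr_{1}(\overline{E}_{\ast})_{k})\bullet p_{W}^{\ast}\ch(\overline{G})$. Second, for any bounded exact sequence $\overline{\eta}$ of hermitian vector bundles and any hermitian bundle $\overline{G}$ on the same manifold, $\widetilde{\ch}(\overline{\eta}\otimes\overline{G})=\widetilde{\ch}(\overline{\eta})\bullet\ch(\overline{G})$; indeed the right-hand side is functorial, vanishes when $\overline{\eta}$ is orthogonally split, and satisfies the differential equation characterizing $\widetilde{\ch}(\overline{\eta}\otimes\overline{G})$ in Theorem~\ref{thm:3}, so the claim follows from the uniqueness there (here one uses that $\ch(\overline{G})$ is a $\dd_{\mathcal{D}}$-closed sum of even-degree forms, hence passes through $\dd_{\mathcal{D}}$ and commutes in the $\bullet$-product). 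Substituting both into \eqref{eq:10} for $\overline{\xi}\otimes\overline{G}$ and pulling the factor $\ch(\overline{G})$ out of the first two terms by the projection formula for the smooth proper morphism $p_{W}$ and for $(p_{P})_{\ast}=i_{\ast}\circ(\pi_{P})_{\ast}$ (the latter combining the projection formula for $\pi_{P}$ with property~A4 of Theorem~\ref{thm:13} for $i_{\ast}$, at the level of currents) one obtains
\begin{displaymath}
  T(\overline{\xi}\otimes\overline{G})=\bigl(T(\overline{\xi})-i_{\ast}C_{T}(F,N)\bigr)\bullet\ch(\overline{G})+i_{\ast}C_{T}(F\otimes i^{\ast}G,N),
\end{displaymath}
and rearranging this is exactly the asserted identity. (As a consistency check, the same multiplicativity and projection formula applied to \eqref{eq:8} show that both sides have the same $\dd_{\mathcal{D}}$-image, so the difference is a closed class.)

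The argument is essentially bookkeeping. The steps needing genuine care are the compatibility of $\tr_{1}$ and of the chosen metrics with $\otimes\,p_{W}^{\ast}\overline{G}$, together with the analogous statements for $\overline{A}_{\ast}$ and for the Koszul bundles, and the validity of the projection formula $i_{\ast}(\mu\bullet i^{\ast}\omega)=(i_{\ast}\mu)\bullet\omega$ for a current $\mu$ on $Y$ and a smooth form $\omega$ on $X$. Neither is a real difficulty, and I expect the only mild obstacle to be phrasing these compatibilities cleanly enough that the substitution into \eqref{eq:10} is transparent; no idea beyond the multiplicativity of $\ch$ and the projection formula is required. (Alternatively, one can run the deformation argument of Proposition~\ref{prop:5} directly, replacing $\ch(\overline E_{i})$-forms by $\ch(\overline E_{i}\otimes\overline G)$; the two approaches are equivalent.)
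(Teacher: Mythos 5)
Your proposal is correct and follows essentially the same route as the paper's proof: identify $W(E_{\ast}\otimes G)=W(E_{\ast})$ and $\tr_{1}(\overline E_{\ast}\otimes\overline G)_{\ast}=\tr_{1}(\overline E_{\ast})_{\ast}\otimes p_{W}^{\ast}\overline G$, note $K(F\otimes i^{\ast}G,N)=K(F,N)\otimes p_{P}^{\ast}G$, then pull $\ch(\overline G)$ out of both terms of formula \eqref{eq:10} by multiplicativity of $\ch$ and $\widetilde{\ch}$ together with the projection formula. Your extra care about the admissibility of the tensored metrics and the justification of $\widetilde{\ch}(\overline\eta\otimes\overline G)=\widetilde{\ch}(\overline\eta)\bullet\ch(\overline G)$ only makes explicit steps the paper leaves implicit.
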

\begin{proof}
  Since the construction of $\tr_{1}(E_{\ast})_{\ast}$ is local on $X$ and $Y$ and
  compatible with finite sums, we have that
  \begin{displaymath}
    W(E_{\ast})=W(E_{\ast}\otimes G), \qquad
    \tr_{1}(\overline E_{\ast}\otimes \overline
    G)_{\ast}=\tr_{1}(\overline E_{\ast})_{\ast}\otimes 
    p_{W}^{\ast}\overline G.
  \end{displaymath}

  We first compute
  \begin{multline}\label{eq:15}
    (p_{W})_{\ast}\left(\sum_{k}(-1)^{k}W_{1} \bullet \ch(\tr_{1}(\overline E_{\ast}\otimes
      \overline G)_{\ast })
      \right)\\
      =(p_{W})_{\ast}\left(\sum_{k}(-1)^{k}W_{1} \bullet \ch(\tr_{1}(\overline
        E_{\ast})_{\ast })p^{\ast}_{W}\ch(
      \overline G)
      \right)\\
      =(p_{W})_{\ast}\left(\sum_{k}(-1)^{k}W_{1} \bullet \ch(\tr_{1}(\overline
        E_{\ast})_{\ast }) 
      \right)\ch(
      \overline G). 
  \end{multline}

  The Koszul resolution of $i_{\ast}(F\otimes i^{\ast} G)$ is given by
  \begin{displaymath}
    K(F\otimes i^{\ast} G,N)= K(F,N)\otimes p_{P}^{\ast} G.
  \end{displaymath}
  For each $k\ge 0$, we will denote by $\overline \eta_{k}\otimes  p_{P}^{\ast}
  \overline G$ the exact sequence
  \begin{displaymath}
    0\longrightarrow \overline A_{k}\otimes  p_{P}^{\ast}
  \overline G \longrightarrow \tr_{1}(\overline
     E_{\ast}\otimes  \overline G)_{k}|_{P} 
     \longrightarrow K(\overline F,\overline N)_{k}\otimes  p_{P}^{\ast}
  \overline G \longrightarrow 0. 
  \end{displaymath}
  Then, we have 
  \begin{equation}
    \label{eq:14}
    (p_{P})_{\ast}[\widetilde{\ch}(\overline \eta_{k}\otimes  p_{P}^{\ast}
  \overline G)]=(p_{P})_{\ast}[\widetilde{\ch}(\overline
  \eta_{k})\bullet  p_{P}^{\ast} 
  \ch(\overline G)]=
  (p_{P})_{\ast}[\widetilde{\ch}(\overline \eta_{k})]\bullet 
  \ch(\overline G)
  \end{equation}
  Thus the proposition follows from equation \eqref{eq:15},  equation
  \eqref{eq:14} and formula \eqref{eq:10}.
\end{proof}

\begin{definition}
  We will say that a theory of singular Bott-Chern classes is
  \emph{compatible with the projection formula} if, whenever we are in
  the situation of proposition \ref{prop:6}, the following equality holds:
  \begin{displaymath}
    T(\overline {\xi}\otimes \overline G)=T(\overline {\xi})\bullet
    \ch(\overline G).
  \end{displaymath}
  We will say that a characteristic class $C$ (of pairs of vector bundles)
  is \emph{compatible with the projection formula} if it satisfies
  \begin{displaymath}
     C(F,N)=C(\mathcal{O}_{Y},N)\bullet \ch(F).
  \end{displaymath}
\end{definition}

\begin{corollary} \label{cor:7}
  A theory of singular Bott-Chern classes $T$ is compatible with the
  projection formula if and only if it is the case for the associated characteristic
  class $C_{T}$.
\end{corollary}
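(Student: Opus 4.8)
The plan is to prove both implications directly from Proposition \ref{prop:6}, which expresses the "defect" of the projection formula for $T$ in terms of the "defect" of the projection formula for $C_T$. Recall that Proposition \ref{prop:6} gives, for $\overline{\xi}=(i,\overline N,\overline F,\overline E_\ast)$ and a hermitian bundle $\overline G$ on $X$,
\begin{displaymath}
  T(\overline{\xi}\otimes\overline G)-T(\overline{\xi})\bullet\ch(\overline G)
  =i_\ast\bigl(C_T(F\otimes i^\ast G,N)\bigr)-i_\ast\bigl(C_T(F,N)\bigr)\bullet\ch(\overline G).
\end{displaymath}
Using the projection formula A4 of Theorem \ref{thm:13} for the push-forward $i_\ast$ in analytic Deligne cohomology, the right hand side rewrites as $i_\ast\bigl(C_T(F\otimes i^\ast G,N)-C_T(F,N)\bullet i^\ast\ch(\overline G)\bigr)=i_\ast\bigl(C_T(F\otimes i^\ast G,N)-C_T(F,N)\bullet\ch(i^\ast\overline G)\bigr)$.

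First I would prove the easy implication: if $C_T$ is compatible with the projection formula, then so is $T$. Indeed, compatibility of $C_T$ means $C_T(F',N)=C_T(\mathcal{O}_Y,N)\bullet\ch(F')$ for every hermitian bundle $F'$ on $Y$; applying this with $F'=F\otimes i^\ast G$ and with $F'=F$ and using multiplicativity of the Chern character, one gets $C_T(F\otimes i^\ast G,N)=C_T(\mathcal{O}_Y,N)\bullet\ch(F)\bullet\ch(i^\ast G)=C_T(F,N)\bullet\ch(i^\ast G)$, so the right hand side of the displayed identity vanishes, giving $T(\overline{\xi}\otimes\overline G)=T(\overline{\xi})\bullet\ch(\overline G)$ as desired.

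For the converse, suppose $T$ is compatible with the projection formula. Then the left hand side of the Proposition \ref{prop:6} identity vanishes for all data, so $i_\ast\bigl(C_T(F\otimes i^\ast G,N)-C_T(F,N)\bullet\ch(i^\ast\overline G)\bigr)=0$ for every closed immersion $i\colon Y\hookrightarrow X$, every $\overline F$ on $Y$, and every $\overline G$ on $X$. To conclude that $C_T(F\otimes i^\ast G,N)=C_T(F,N)\bullet\ch(i^\ast G)$, and in particular (taking $F=\mathcal{O}_Y$) that $C_T(\cdot,N)$ is compatible with the projection formula, I would choose the immersion to be the zero section $s\colon Y\hookrightarrow P=\mathbb{P}(N\oplus 1)$, whose push-forward is injective after applying $(\pi_P)_\ast$: precomposing the vanishing identity with $(\pi_P)_\ast$ and using $(\pi_P)_\ast s_\ast=\mathrm{id}$ together with the projection formula for $\pi_P$ yields $C_T(F\otimes s^\ast G,N)-C_T(F,N)\bullet s^\ast\ch(G)=0$ on $Y$. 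Since any hermitian bundle on $Y$ arises as $s^\ast$ of one on $P$ (for instance $\pi_P^\ast$ of it), $G':=s^\ast G$ ranges over all bundles on $Y$, so $C_T(F\otimes G',N)=C_T(F,N)\bullet\ch(G')$ for all $F,G'$ on $Y$; setting $F=\mathcal{O}_Y$ gives precisely the defining property of compatibility of $C_T$ with the projection formula.

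The only delicate point is the "injectivity" step for the converse: one must justify that applying $(\pi_P)_\ast$ to the vanishing of $i_\ast(\cdot)$, specialized to $i=s$, recovers the class itself. This is exactly the computation $(\pi_P)_\ast s_\ast(\alpha)=\alpha$ combined with the projection formula A4 to move $\ch(\overline G)$ across $s_\ast$; both are available from Theorem \ref{thm:13} and the normalization used in defining $C_T$ (Theorem \ref{thm:8}), so no genuine obstacle arises — the proof is a short formal manipulation, and I would simply record it as: "This follows immediately from Proposition \ref{prop:6}, the projection formula A4 of Theorem \ref{thm:13}, the multiplicativity of $\ch$, and, for the converse, the equality $(\pi_P)_\ast\circ s_\ast=\mathrm{id}$ applied as in the proof of Theorem \ref{thm:8}."
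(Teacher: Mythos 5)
Your argument is correct. The forward implication is exactly the paper's: feed the compatibility of $C_{T}$ into the right hand side of proposition \ref{prop:6}, using multiplicativity of $\ch$ and the projection formula A4 for $i_{\ast}$, and conclude the vanishing. For the converse you take a somewhat different route from the paper. The paper's converse never returns to proposition \ref{prop:6}: it computes directly from the definition $C_{T}(F,N)=\pi_{\ast}T(K(\overline F,\overline N))$, using the isometry $K(\overline F,\overline N)\cong K(\overline{\mathcal{O}}_{Y},\overline N)\otimes\pi^{\ast}\overline F$, the projection-formula hypothesis for $T$ applied to the single bundle $\overline G=\pi^{\ast}\overline F$, and the projection formula for the submersion $\pi$; this gives $C_{T}(F,N)=C_{T}(\mathcal{O}_{Y},N)\bullet\ch(F)$ in four lines. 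You instead plug the hypothesis on $T$ back into proposition \ref{prop:6} for arbitrary $\overline G$, specialize the immersion to the zero section $s\colon Y\to\mathbb{P}(N\oplus\mathbb{C})$, and strip off $s_{\ast}$ by applying $(\pi_{P})_{\ast}$ together with $(\pi_{P})_{\ast}s_{\ast}=\mathrm{id}$ and A4 — all of which is legitimate and has precedent in the paper (the same cancellation is used at the end of the proof of theorem \ref{thm:6}), since $(\pi_{P})_{\ast}$ commutes with $\dd_{\mathcal{D}}$ and therefore descends to the groups $\widetilde{\mathcal{D}}$. Your version proves along the way the formally stronger identity $C_{T}(F\otimes G',N)=C_{T}(F,N)\bullet\ch(G')$ for all $F,G'$ and avoids invoking the Koszul isometry explicitly, at the cost of a second use of proposition \ref{prop:6} and the push–pull identity; the paper's version is shorter and more self-contained given the definition of $C_{T}$. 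Either way the corollary is established.
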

\begin{proof}
  Assume that $C_{T}$ is  compatible with the
  projection formula  and that we are in the situation
  of proposition \ref{prop:6}. Then
  \begin{align*}
    i_{\ast}C_{T}(F\otimes i^{\ast}
    G,N))&=i_{\ast}(C_{T}(\mathcal{O}_{Y},N)\bullet
    \ch(F\otimes i^{\ast} 
    G))\\
    &=i_{\ast}(C_{T}(\mathcal{O}_{Y},N)\bullet \ch(F)  i^{\ast}
    \ch(G))\\
    &=i_{\ast}(C_{T}(\mathcal{O}_{Y},N)\bullet \ch(F))
    \ch(G)\\
    &=i_{\ast}(C_{T}(F,N))\bullet \ch(G).
  \end{align*}
  Thus, by proposition \ref{prop:6}, $T$ is compatible with the
  projection formula.

  Assume that $T$ is compatible with the projection formula. Let
  $\overline F$ and $\overline N$ be hermitian vector bundles over a
  complex manifold 
  $Y$. Let $s\colon Y\hookrightarrow P:=\mathbb{P}(N\oplus \mathbb{C})$ be
  the zero section and let $\pi \colon P\longrightarrow Y$ be the
  projection. Then
  \begin{align*}
    C_{T}(F,N)&=\pi _{\ast}(T(K(\overline F,\overline N)))\\
    &=\pi _{\ast}(T(K(\overline {\mathcal{O}}_{Y},\overline N)\otimes
    \pi ^{\ast} \overline F))\\ 
    &=\pi _{\ast}(T(K(\overline {\mathcal{O}}_{Y},\overline N))\bullet
    \pi ^{\ast} 
    \ch(F))\\
    &=\pi _{\ast}(T(K(\overline {\mathcal{O}}_{Y},\overline
    N)))\bullet \ch(F)\\ 
    &=C_{T}(\mathcal{O}_{Y},N)\bullet \ch(F).
  \end{align*}
\end{proof}

We will next investigate the relationship between singular Bott-Chern
classes and compositions of closed 
immersions. Thus,  let
\begin{displaymath}
\xymatrix{
  Y\ \ar@{^{(}->}[r]^{i_{Y/X}} \ar@/_17pt/@{^{(}->}[rr]_{i_{Y/M}} &
  \,X\  \ar@{^{(}->}[r]^{i_{X/M}} & 
  \,M}
\end{displaymath}
be a composition of closed immersions. Assume that the normal bundles
$N_{Y/X}$, $N_{X/M}$ and $N_{Y/M}$ are provided with hermitian
metrics. We will denote by $\overline {\varepsilon }$ the exact sequence
\begin{equation}\label{eq:84}
  \overline {\varepsilon }\colon 
  0\rightarrow \overline N_{Y/X}
  \rightarrow \overline N_{Y/M}
  \rightarrow i_{Y/X}^{\ast}\overline N_{X/M}
  \rightarrow 0.
\end{equation}
Let $P_{X/M}=\mathbb{P}(N_{X/M}\oplus \mathbb{C})$ be
the projective completion of the normal cone to $X$ in $M$. Then there is an
isomorphism
\begin{equation}
  \label{eq:83}
  N_{Y/P_{X/M}}\cong N_{Y/X}\oplus i^{\ast}_{Y/X}N_{X/M}.
\end{equation}
We denote by $\overline N_{Y/P_{X/M}}$ the vector bundle on the left
hand side with the
hermitian metric induced by the isomorphism \eqref{eq:83}.

 Let $\overline F$ be a
hermitian vector bundle over
$Y$, let $\overline E_{\ast }\longrightarrow 
(i_{Y/X})_{\ast}F$ be a resolution by hermitian
vector bundles. Let $\overline
E'_{\ast,\ast}$ be a complex of  complexes of vector bundles over $M$,
such that, for each $k\ge 0$, $\overline
E'_{k,\ast}\longrightarrow (i_{X/M})_{\ast}E_{k}$ is a resolution,
and there is a commutative diagram of resolutions
\begin{displaymath}
  \xymatrix{\dots \ar[r] & E'_{k+1,\ast} \ar[r]\ar[d]
& E'_{k,\ast} \ar[r]\ar[d] & E'_{k-1,\ast} \ar[r]\ar[d] & \dots\\
\dots \ar[r] & (i_{X/M})_{\ast}E_{k+1} \ar[r] &
(i_{X/M})_{\ast}E_{k} \ar[r]& (i_{X/M})_{\ast}E_{k-1} \ar[r]& \dots
}.
\end{displaymath}
It follows that we have a resolution $\Tot(\overline
E'_{\ast,\ast})\longrightarrow (i_{Y/M})_{\ast} F$ of
$(i_{Y/M})_{\ast} F$ by hermitian vector bundles.

\begin{notation}\label{def:4}
We will denote
\begin{align*}
  \overline \xi_{Y\hookrightarrow X}&=(i_{Y/X},\overline
  N_{Y/X},\overline F,\overline E_{\ast}),\\ 
  \overline \xi_{Y\hookrightarrow M}&=(i_{Y/M},\overline
  N_{Y/M},\overline F,\Tot (\overline E'_{\ast,\ast})),\\ 
  \overline \xi_{X\hookrightarrow M,k}&=(i_{X/M},\overline
  N_{X/M},\overline E_{k},\overline E'_{k,\ast}). 
\end{align*}
We will also denote by
$\overline \xi_{Y\hookrightarrow P_{X/M}}$ the hermitian embedded
vector bundle
\begin{displaymath}
  \left(Y\hookrightarrow P_{X/M},\overline N_{Y/P_{X/M}},\overline
    F,\Tot(\pi _{P_{X/M}}^{\ast}\overline E_{\ast} \otimes 
    K(\mathcal{O}_{X},\overline N_{X/M}))\right).
\end{displaymath}  
\end{notation}

Let $T$ be a theory of singular Bott-Chern classes, and let $C_{T}$ be
its associated characteristic class. 
Our aim now is to relate $T(\overline \xi_{Y\hookrightarrow X})$,
$T(\overline \xi_{Y\hookrightarrow M})$ 
and $T(\overline \xi_{X\hookrightarrow M,k})$.

Let $W_{X}$ be the deformation to the normal cone of $X$ in $M$. As
before we
denote by $j_{X}\colon X\times \mathbb{P}^{1}\longrightarrow W_{X}$
the inclusion.

We denote by $W$ the deformation to the normal cone of $j_{X}(Y\times
\mathbb{P}^{1})$ in $W_{X}$.
\begin{figure}[h]
  \centering
  \ifpdf
  \input 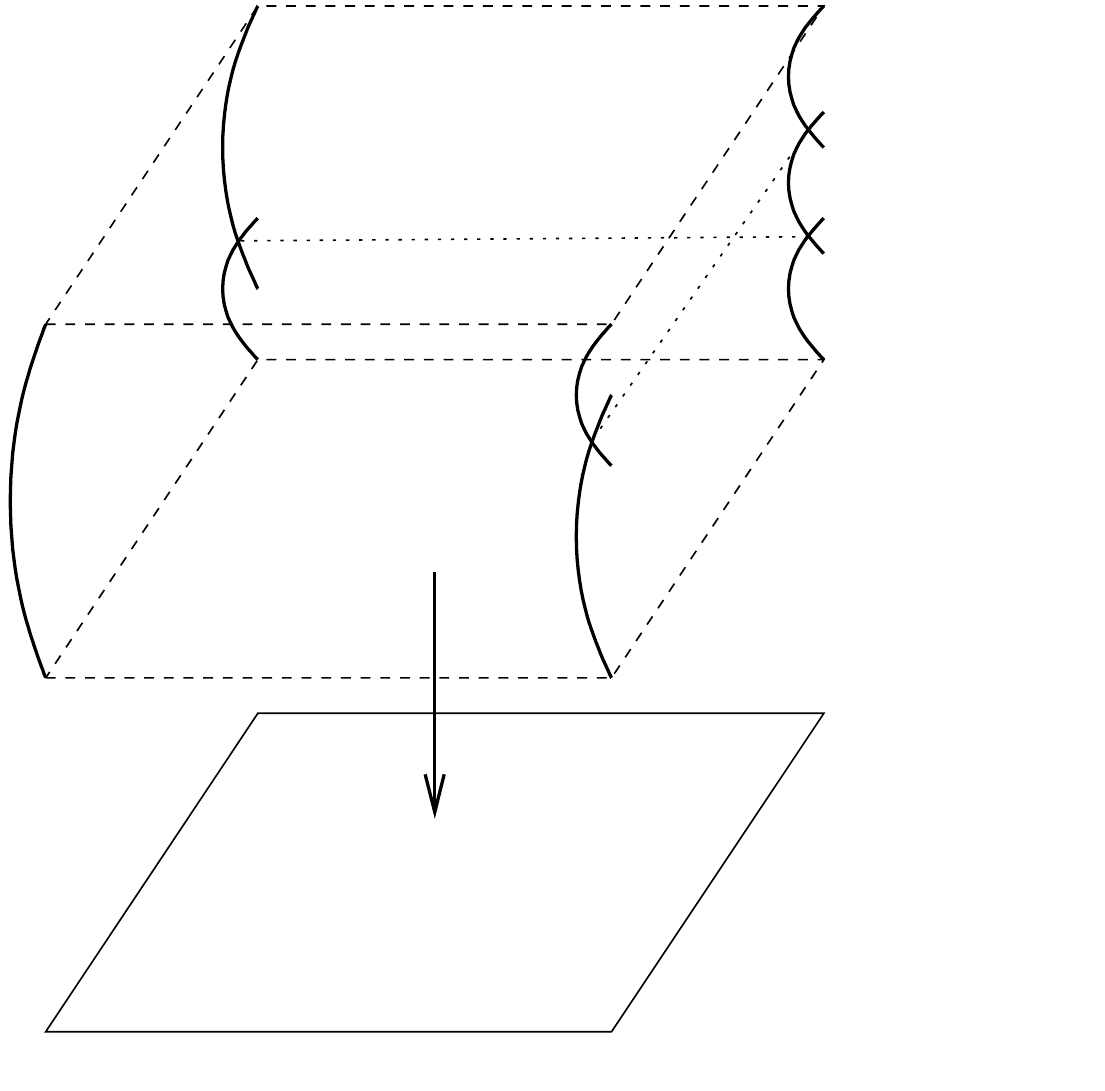_t
  \else
  \input doubledeformation.pstex_t
  \fi
  \caption{Double deformation}
  \label{fig:df}
\end{figure}

This double deformation is represented in figure \ref{fig:df}. There
is a proper map $q_{W}\colon W\longrightarrow \mathbb{P}^{1}\times
\mathbb{P}^{1}$. The fibers of $q_{W}$ over the corners of $\mathbb{P}^{1}\times
\mathbb{P}^{1}$ are as follows:
\begin{align*}
  q_{W} ^{-1}(0,0)&=M,\\
  q_{W} ^{-1}(\infty,0)&=\widetilde M_{X}\times \{0\} \cup P_{X/M},\\
  q_{W} ^{-1}(0,\infty)&=\widetilde M_{Y} \cup P_{Y/M},\\
  q_{W} ^{-1}(\infty,\infty)&=\widetilde M_{X} \times \{\infty\} \cup
  \widetilde 
  P_{X/M}\cup P_{Y/P_{X/M}},\\
\end{align*}
where $\widetilde M_{X}$ and $\widetilde M_{Y}$ are the blow-up of $M$
along $X$ and $Y$ respectively,
$P_{Y/M}=\mathbb{P}(N_{Y/M}\oplus \mathbb{C})$ is the projective completion of
the 
normal cone to $Y$ in $M$, $P_{Y/P_{X/M}}$ of the normal cone to
 $Y$ in $P_{X/M}$ and $\widetilde 
  P_{X/M}$ is the blow-up of $P_{X/M}$ along $Y$. 
The preimages by $\pi$ of the different faces of
$\mathbb{P}^{1}\times 
\mathbb{P}^{1}$  are as follows:
\begin{align*}
  q_{W} ^{-1}(\mathbb{P}^{1}\times \{0\})&= W_{X},\\
  q_{W} ^{-1}(\{0\}\times \mathbb{P}^{1})&= W_{Y},\\
  q_{W} ^{-1}(\mathbb{P}^{1}\times \{\infty\})&= \widetilde W_{X}\cup
  P_{Y\times \mathbb{P}^{1}},\\
  q_{W} ^{-1}(\{\infty\}\times \mathbb{P}^{1})&= \widetilde M_{X}\times
  \mathbb{P}^{1}\cup W_{Y/P},
\end{align*}
where $W_{Y}$ is the deformation to the normal cone of $Y$ in $M$,
the component $\widetilde W_{X}$ is the blow-up of $W_{X}$ along $j_{X}(Y\times
\mathbb{P}^{1})$, while 
$ P_{Y\times \mathbb{P}^{1}}=\mathbb{P}(N_{Y\times
    \mathbb{P}^{1}/W_{X}}\oplus \mathbb{C})$ is the
  projective completion of the normal cone to $j_{X}(Y\times 
\mathbb{P}^{1})$ in $W_{X}$ and $W_{Y/P} $ is the deformation to the
normal cone of $Y$ inside $P_{X/M}$. All the above subvarieties will
be called boundary components of $W$. 

We will use the following notations for the different maps.
\begin{align*}
  &p_{X}\colon X\times \mathbb{P}^{1}\longrightarrow X &
  &p_{Y}\colon Y\times \mathbb{P}^{1}\longrightarrow Y\\
  &p_{Y\times \mathbb{P}^{1}}\colon Y\times \mathbb{P}^{1}\times
  \mathbb{P}^{1}\longrightarrow Y\times \mathbb{P}^{1} & 
  &p_{\widetilde M_{X}\times
  \mathbb{P}^{1}}\colon \widetilde M_{X}\times
  \mathbb{P}^{1}\longrightarrow M\\
  & p_{W_{Y/P}}\colon W_{Y/P}\longrightarrow M &
  & p_{W_{Y}}\colon W_{Y}\longrightarrow M\\
  & p_{W_{X}}\colon W_{X}\longrightarrow M &
  & p_{P_{Y\times \mathbb{P}^{1}}}\colon  P_{Y\times
    \mathbb{P}^{1}}\longrightarrow M\\
  & p_{\widetilde W_{X}}\colon \widetilde W_{X}\longrightarrow M &
  &p_{P_{Y/P_{X/M}}}\colon P_{Y/P_{X/M}}\longrightarrow M\\
  &p_{P_{X/M}}\colon P_{X/M}\longrightarrow M &
  &p_{\widetilde P_{X/M}}\colon \widetilde P_{X/M}\longrightarrow M \\
  &p_{P_{Y/M}}\colon P_{Y/M}\longrightarrow M &
  &p_{W}\colon W\longrightarrow M\\
  &j_{Y}\colon Y\times \mathbb{P}^{1}\longrightarrow W_{Y} &
  &j'_{Y}\colon Y\times \mathbb{P}^{1}\longrightarrow W_{X} \\
  &j_{Y\times \mathbb{P}^{1}}\colon Y\times \mathbb{P}^{1}\times
  \mathbb{P}^{1}\longrightarrow W&
  &i_{Y/P_{X/M}}\colon Y\longrightarrow P_{X/M}\\
  &\pi _{P_{X/M}}\colon  P_{X/M}\longrightarrow X &
  &\pi _{P_{Y/M}}\colon  P_{Y/M}\longrightarrow Y\\
  &\pi _{P_{Y/P}}\colon  P_{Y/P_{X/M}}\longrightarrow Y &
  &\pi _{P_{Y\times \mathbb{P}^{1}}}\colon  P_{Y\times
    \mathbb{P}^{1}}\longrightarrow Y\times \mathbb{P}^{1}\\
  &\pi _{\widetilde M_{X}}\colon \widetilde M_{X}\longrightarrow M&
  &\pi _{\widetilde M_{Y}}\colon \widetilde M_{Y}\longrightarrow M
\end{align*}
Note that the map $p_{\widetilde M_{X}\times \mathbb{P}^{1}}$
factors through the blow-up $\widetilde M_{X}\longrightarrow M$ and
the map $p_{\widetilde W_{X}}$ factors through the blow-up $\widetilde
M_{Y}\longrightarrow M$, whereas 
the maps $p_{W_{Y/P}}$, $p_{P_{X/M}}$ and $p_{\widetilde P_{X/M}}$
factor through the inclusion 
$X\hookrightarrow 
M$ and the maps $p_{P_{Y\times \mathbb{P}^{1}}}$, $p_{P_{Y/M}}$ and
$p_{P_{Y/P_{X/M}}}$ factor through the 
inclusion $Y\hookrightarrow M$.

The normal bundle to $X\times \mathbb{P}^{1}$ in $W_{X}$ is isomorphic
to $p_{X}^{\ast}N_{X/M}\otimes q_{X}^{\ast}\mathcal{O}(-1)$ and we
consider on it the metric induced by the metric on $\overline N_{X/M}$
and the Fubini-Study metric on $\mathcal{O}(-1)$. We denote it by 
$\overline N_{X\times \mathbb{P}^{1}/W_{X}}$. The normal bundle to
$Y\times \mathbb{P}^{1}$ in $W_{X }$ satisfies
\begin{align*}
  N_{Y\times \mathbb{P}^{1}/W_{X}}|_{Y\times \{0\}}&\cong N_{Y/M}\\
  N_{Y\times \mathbb{P}^{1}/W_{X}}|_{Y\times \{\infty \}} &
  \cong N_{Y/X}\oplus i^{\ast}_{Y/X}N_{X/M}.
\end{align*}
On $N_{Y\times \mathbb{P}^{1}/W_{X}}$ we choose a hermitian metric
such that the above isomorphisms are isometries. Finally, on the normal
bundle to $Y\times \mathbb{P}^{1}\times \mathbb{P}^{1}$ in $W$, we
define a metric using the same procedure as the definition of the
metric of $\overline N_{X\times \mathbb{P}^{1}/W_{X}}$.

On $W_{X}$ we obtain a sequence of resolutions $\tr_{1}(
\overline E')_{n,\ast}\longrightarrow (j_{X})_{\ast}p_{X}^{\ast}E_{n}$. They form
a complex of complexes $\tr_{1}(\overline E')_{\ast,\ast}$ and the
associated total complex $\Tot(\tr_{1}(\overline E')_{\ast,\ast})$ provides us
with a resolution
\begin{equation}\label{eq:38}
  \Tot(\tr_{1}(\overline E')_{\ast,\ast })_{\ast }\longrightarrow
  (j'_{Y})_{\ast}p_{Y}^{\ast}F.
\end{equation}
The restriction of $\Tot(\tr_{1}(\overline E')_{\ast,\ast})$ to $M$ is
$\Tot(\overline E'_{\ast,\ast})$. The restriction of each complex $\tr_{1}(
\overline E')_{n,\ast}$ to $\widetilde M_{X}\times \{0\}$ is orthogonally
split. Therefore the restriction of $\Tot(\tr_{1}(\overline E'))$ to
$\widetilde M_{X}\times \{0\}$ is the total complex of a complex of orthogonally
split complexes. So it is acyclic although not necessarily
orthogonally split. The restriction of each complex $\tr_{1}(
\overline E')_{n,\ast}$ to $P_{X/M}$ fits in an exact sequence
\begin{displaymath}
  0\longrightarrow \overline A_{n,\ast}\longrightarrow
  \tr_{1}(\overline E')_{n,\ast}|_{P_{X/M}}\longrightarrow
   \pi _{P_{X/M}}^{\ast}\overline E_{n} \otimes 
   K(\overline {\mathcal{O}}_{X},\overline N_{X/M})_{\ast}\longrightarrow
  0.
\end{displaymath}
These exact sequences glue together giving a commutative diagram
\begin{displaymath}
  \xymatrix@C-1pt{
  \Tot(\overline A_{\ast,\ast})\ar@{^{(}->}[r] \ar[d]
  &\Tot(\tr_{1}(\overline E')_{\ast,\ast}|_{P_{X/M}})\ar@{->>}[r] \ar[d]
  &\Tot(\pi _{P_{X/M}}^{\ast}\overline E_{\ast}\otimes
  K(\overline {\mathcal{O}}_{X},\overline N_{X/M})_{\ast})\ar[d]
  \\
   0 \ar@{^{(}->}[r] & (i_{Y/P_{X/M}})_{\ast} F \ar@{->>}[r] &
   (i_{Y/P_{X/M}})_{\ast} F 
}
\end{displaymath}
where the rows are short exact sequences.
Even if the complexes $(\overline A_{n})_{\ast}$ are orthogonally split, this is not
necessarily the case for $\Tot(\overline A_{\ast,\ast})$. To ease the notation
we will denote $\overline A_{\ast}=\Tot(\overline A_{\ast,\ast})$.

Applying theorem \ref{thm:5} to the resolution \eqref{eq:38}, we obtain a complex
of hermitian vector bundles $\widetilde
E'_{\ast}=\tr_{1}(\Tot(\tr_{1}(\overline E')_{\ast,\ast}))$ 
which is a resolution of the coherent sheaf $(j_{Y\times
  \mathbb{P}^{1}})_{\ast}p_{Y\times 
  \mathbb{P}^{1}}^{\ast}p_{Y}^{\ast}F$.

We now study the restriction of $\widetilde
E'_{\ast}$
to each of the boundary components of $W$. 

\begin{itemize}
\item The restriction of $\widetilde E'_{\ast}$ to $W_{X}$ is just
  $\Tot(\tr_{1}(\overline E'))$ which has already been described. For
  each $k\ge 0$, we
  will denote by $\eta^{1}_{k}$ the short exact sequence of hermitian
  vector bundles on $P_{X/M}$
  \begin{displaymath}
    \xymatrix{
      \overline A_{k}\ar@{^{(}->}[r]
      &\Tot(\tr_{1}(\overline E')_{\ast,\ast}|_{P_{X/M}})_{k}\ar@{->>}[r]
      &\Tot(\pi _{P_{X/M}}^{\ast}\overline E\otimes
      K(\mathcal{O}_{X},\overline N_{X/M}))_{k}
    },
  \end{displaymath}
  whereas, for each $n,k\ge 0$ we will denote by $\eta^{1}_{n,k}$ the 
  short exact sequence
  \begin{displaymath}
    \xymatrix{
      \overline A_{n,k}\ar@{^{(}->}[r]
      &\tr_{1}(\overline E')_{n,k}|_{P_{X/M}}\ar@{->>}[r]
      &\pi _{P_{X/M}}^{\ast}\overline E_{n}\otimes
      K(\mathcal{O}_{X},\overline N_{X/M})_{k}
    }.
  \end{displaymath}

\item Its restriction to $W_{Y}$ is $\tr_{1}(\Tot(\overline E'))$. It is a
  resolution of 
$(j_{Y})_{\ast}p_{Y}^{\ast} F$. Its restriction to $\widetilde M_{Y}$
is orthogonally split, whereas its restriction to $P_{Y/M}$ fits in an
exact sequence
\begin{displaymath}
  0\longrightarrow \overline B_{\ast}\longrightarrow 
  \tr_{1}(\Tot(\overline E'))_{\ast}|_{P_{Y/M}}\longrightarrow 
  \pi _{P_{Y/M}}^{\ast}\overline F\otimes K(\overline
  {\mathcal{O}}_{Y},\overline N_{Y/M}) 
  \longrightarrow 0.
\end{displaymath}
For each $k\ge 0$ we will denote by $\eta^{2}_{k}$ the degree $k$
piece of the above exact sequence. 
\item  Its restriction to $\widetilde M_{X}\times \mathbb{P}^{1}$ is an acyclic
complex, such that its further restriction to $\widetilde M_{X}\times
\{0\}$ is acyclic and its restriction to $\widetilde M_{X}\times
\{\infty\}$ is orthogonally split.
\item Its restriction to $W_{Y/P}$ fits in
  a short exact sequence
  \begin{displaymath}
 0\rightarrow \tr_{1}(\overline A_{\ast})\rightarrow 
 \widetilde E'_{\ast}|_{W_{Y/P}}\rightarrow 
 \tr_{1}(\Tot (\pi _{P_{X/M}}^{\ast}\overline E\otimes
  K(\overline {\mathcal{O}}_{X},\overline N_{X/M})))\rightarrow 0.
  \end{displaymath}
  For each $k\ge 0$, we will denote by $\mu^{1} _{k}$ the
  exact sequence 
  of hermitian vector bundles over $W_{Y/P}$ given by the piece of
  degree $k$ of this exact sequence.
  The three terms of the above exact sequence become orthogonally split when
  restricted to $\widetilde P_{X/M}$. By contrast, when restricted to
  $P_{Y/P_{X/M}}$ they fit in a commutative diagram
  \begin{displaymath}
    \xymatrix{
      \overline C^{1}_{\ast}\ar@{^{(}->}[r]\ar[d]&
      \overline C^{2}_{\ast}\ar@{->>}[r]\ar[d]&\overline C^{3}\ar[d]\\
      \tr_{1}(\overline A)_{\ast}|_{P_{Y/P_{X/M}}} \ar@{^{(}->}[r]\ar[d]&
      \widetilde E'_{\ast}|_{P_{Y/P_{X/M}}} \ar@{->>}[r]\ar[d]&
      \overline D^{2}_{\ast}\ar[d]\\
      0 \ar@{^{(}->}[r] & \overline D^{1}_{\ast} \ar@{->>}[r]
      & \overline D^{1}_{\ast}
    }
  \end{displaymath}
where the complexes $\overline C^{i}_{\ast}$ are orthogonally split, and 
\begin{align*}
 \overline D^{1}_{\ast}&=\pi ^{\ast}_{P_{Y/P}} \overline F\otimes
  K(\overline {\mathcal{O}}_{Y},\overline N_{Y/P_{X/M}}),\\
\overline D^{2}_{\ast}&= \tr_{1}(\Tot (\pi _{P_{X/M}}^{\ast}\overline
E\otimes 
  K(\overline {\mathcal{O}}_{X},\overline N_{X/M})))|_{P_{Y/P_{X/M}}}.
\end{align*}
For each $k\ge 0$, we will denote by $\eta^{3}_{k}$ the exact sequence 
corresponding to the piece of degree $k$ of the second row of the above
diagram, by $\eta^{4}_{k}$
that of the second column and by $\eta^{5}_{k}$ that of the third
column. Notice that the map in the third row is an isometry. We assume
that the metric on $C^{1}_{\ast}$ is chosen in such a way that the
first column is an isometry. Since the complexes $\overline C^{i}_{\ast}$ are
orthogonally 
split, by lemma \ref{lemm:1} we obtain
\begin{equation}
  \label{eq:25}
  \sum_{k}(-1)^{k}\left(\widetilde{\ch}(\eta^{3}_{k})-
  \widetilde{\ch}(\eta^{4}_{k}) + \widetilde{\ch}(\eta^{5}_{k})\right)=0.
\end{equation}

Note that the restriction of $\mu ^{1}_{k}$ to $P_{X/M}$ agrees with
$\eta^{1}_{k}$, whereas its restriction to $P_{Y/P_{X/M}}$ agrees with
$\eta^{3}_{k}$. 

\item Its restriction to $\widetilde W_{X}$ is orthogonally split.
\item Finally its restriction to $P_{Y\times \mathbb{P}^{1}}$ fits in
  an exact sequence
  \begin{displaymath}
    \xymatrix{
    \overline D_{\ast}\ar@{^{(}->}[r] & \widetilde E'_{\ast}|_{P_{Y\times
        \mathbb{P}^{1}}} \ar@{->>}[r]& \pi^{\ast}  _{P_{Y\times
        \mathbb{P}^{1}}} p^{\ast}_{Y\times \mathbb{P}^{1}} \overline F\otimes
    K(\mathcal{O}_{Y\times \mathbb{P}^{1}},\overline N_{Y\times \mathbb{P}^{1}/W_{X}})
},
\end{displaymath}
where $\overline D_{\ast}$ is orthogonally split.
For each $k\ge 0$ we will denote by $\mu ^{2}_{k}$ the piece of
degree $k$ of this exact sequence. Note that the restriction of
$\mu^{2}_{k}$ to $P_{Y/M}$ agrees with $\eta^{2}_{k}$ and the
restriction of $\mu^{2}_{k}$ to $P_{Y/P_{X/M}}$ agrees with
$\eta^{4}_{k}$. 
\end{itemize}

On $\mathbb{P}^{1}\times \mathbb{P}^{1}$ we denote the two projections
by $p_{1}$ and 
$p_{2}$. Since the currents $p_{1}^{\ast}W_{1}$
and $p_{2}^{\ast}W_{1}$ have disjoint wave front sets we can define the
current 
$W_{2}=p_{1}^{\ast}W_{1}\bullet p_{2}^{\ast}W_{1}\in
\mathcal{D}^{2}_{D}(\mathbb{P}^{1}\times \mathbb{P}^{1},2)$ which
satisfies
\begin{equation} \label{eq:16}
  \dd_{\mathcal{D}}W_{2}=(\delta _{\{\infty\}\times \mathbb{P}^{1}}-
  \delta _{\{0\}\times \mathbb{P}^{1}})\bullet p_{2}^{\ast}W_{1}-
  p_{1}^{\ast}W_{1}\bullet(\delta _{\mathbb{P}^{1}\times \{\infty\}}-
  \delta _{\mathbb{P}^{1}\times \{0\}} ).
\end{equation}

The key point in order to study the compatibility of singular
Bott-Chern classes and composition of closed immersions is that, in
the group $\bigoplus_{p}\widetilde{\mathcal{D}}^{2p-1}(M,p)$, we have
$$\dd_{\mathcal{D}}(p_{W})_{\ast}\left(\sum_{k}(-1)^{k}W_{2} \bullet 
\ch(\widetilde E'_{k})\right)=0.$$ 

We compute this class using the
equation \eqref{eq:16}. It can be decomposed as follows.

\begin{align*}
  \dd_{\mathcal{D}}(p_{W})_{\ast}&\left(\sum_{k}(-1)^{k}W_{2} \bullet \ch(\widetilde
    E'_{k})\right)=\\
  & (p_{\widetilde M_{X}\times \mathbb{P}^{1}})_{\ast}
  \left(\sum_{k}(-1)^{k}W_{1} \bullet \ch(\widetilde E'_{k}|_{\widetilde M_{X}\times
      \mathbb{P}^{1}})\right) \tag{a}\label{eq:18}\\
  & + (p_{W_{Y/P}})_{\ast} \left(\sum_{k}(-1)^{k}W_{1} \bullet \ch(\widetilde
    E'_{k}|_{W_{Y/P}})
    \right) \tag{b}\label{eq:19}\\
  & - (p_{W_{Y}})_{\ast}\left(\sum_{k}(-1)^{k}W_{1} \bullet 
    \ch(\widetilde E'_{k}|_{W_{Y}})\right)
  \tag{c}\label{eq:20}\\ 
  & - (p_{\widetilde W_{X}})_{\ast}\left(\sum_{k}(-1)^{k}W_{1} \bullet 
    \ch(\widetilde E'_{k}|_{\widetilde W_{X}})\right)
  \tag{d}\label{eq:21}\\ 
  & - (p_{P_{Y\times \mathbb{P}^{1}}})_{\ast}\left(\sum_{k}(-1)^{k}W_{1} \bullet 
    \ch(\widetilde E'_{k}|_{P_{Y\times \mathbb{P}^{1}}})
    \right) \tag{e}\label{eq:22}\\ 
  & +(p_{W_{X}})_{\ast}\left(\sum_{k}(-1)^{k}W_{1} \bullet 
    \ch(\widetilde E'_{k}|_{W_{X}})\right)
  \tag{f}\label{eq:23}\\ 
  &\phantom{AAAA}=\colon  I_{a}+I_{b}-I_{c}-I_{d}-I_{e}+I_{f}
\end{align*}

We compute each of the above terms. 

\eqref{eq:18} Since the restriction $\widetilde E'|_{\widetilde
  M_{X}\times \{\infty\}}$ is orthogonally split, we have 
\begin{displaymath}
  I_{a}=-(\pi _{\widetilde M_{X}})_{\ast}\widetilde {\ch}(\widetilde
  E'|_{\widetilde M_{X}\times \{0\}}).  
\end{displaymath}
But, using lemma \ref{lemm:1} and the fact, for each $k$, the
complexes $\tr_{1}(\overline E')_{k,\ast}|_{\widetilde M_{X}}$ are
orthogonally split, we obtain that $I_{a}=0$.

\eqref{eq:19} We compute 
\begin{align*}
  I_{b}=&(p_{W_{Y/P}})_{\ast} \left(\sum_{k}(-1)^{k}W_{1} \bullet \ch(\widetilde
    E'_{k}|_{W_{Y/P}})\right)\\
  = &(p_{W_{Y/P}})_{\ast} \left(W_{1} \bullet \sum_{k}(-1)^{k}(-\dd_{\mathcal{D}}
    \widetilde{\ch} (\mu^{1} _{k})+ \ch(\tr_{1}(\overline A_{\ast})_{k})\right.\\
  &\left.\phantom{(p_{W_{Y/P}})_{\ast} \sum_{k}(-1)^{k}}
    +\ch(\tr_{1}(\Tot (\pi _{P_{X/M}}^{\ast}\overline E \otimes
    K(\mathcal{O}_{X},\overline N_{X/M})))_{k}))\right)\\
  =&\sum_{k}(-1)^{k}
  (-(p_{P_{Y/P_{X/M}}})_{\ast}\widetilde{\ch}(\eta^{3}_{k})-
  (p_{\widetilde P_{X/M}})_{\ast}\widetilde {\ch}(\mu^{1} _{k}|_{\widetilde P_{X/M}})
  +(p_{P_{X/M}})_{\ast}\widetilde{\ch}(\eta^{1}_{k}))\\
  & -\widetilde {\ch}(\overline A)\\
  &-(i_{X/M})_{\ast}(\pi _{P_{X/M}})_{\ast}
  T(\overline \xi_{Y\hookrightarrow P_{X/M}})+(i_{Y/M})_{\ast}C_{T}(F,N_{Y/P_{X/M}})\\
  &-\sum_{k}(-1)^{k} (p_{P_{Y/P_{X/M}}}) \widetilde {\ch}(\eta^{5}_{k}),
\end{align*}
where $\xi_{Y\hookrightarrow P_{X/M}}$ is as in notation \ref{def:4}.

By corollary \ref{cor:9} and the fact that the exact sequences
$\overline A_{k,\ast}$ are orthogonally split, the term $\widetilde
{\ch}(\overline A)$ vanishes. 

Also by corollary \ref{cor:9} we can see that $$ 
\sum_{k}(-1)^{k}
  (p_{\widetilde P_{X/M}})_{\ast}\widetilde {\ch}(\mu^{1} _{k}|_{\widetilde P_{X/M}})
$$
vanishes. 

Therefore we conclude
\begin{align*}
  I_{b}
  =&\sum_{k}(-1)^{k}
  (-(p_{P_{Y/P_{X/M}}})_{\ast}\widetilde{\ch}(\eta^{3}_{k})
  +(p_{P_{X/M}})_{\ast}\widetilde{\ch}(\eta^{1}_{k}))
  -(p_{P_{Y/P_{X/M}}}) \widetilde {\ch}(\eta^{5}_{k})\\
  &-(i_{X/M})_{\ast}(\pi _{P_{X/M}})_{\ast}
  T(\overline \xi_{Y\hookrightarrow P_{X/M}})+(i_{Y/M})_{\ast}C_{T}(F,N_{Y/P_{X/M}}).\\
\end{align*}

\eqref{eq:20} By the definition of singular Bott-Chern forms we have
\begin{displaymath}
  I_{c}=-T(\overline \xi_{Y\hookrightarrow M})+(i_{Y/M})_{\ast}C_{T}(F,N_{Y/M})
  -\sum_{k}(-1)^{k} (p_{P_{Y/M}})_{\ast}\widetilde{\ch}(\eta^{2}_{k}),
\end{displaymath}

\eqref{eq:21} Since the restriction of $\widetilde E'_{\ast}$ to
$\widetilde W_{X}$ is orthogonally split, we have $I_{d}=0$.

\eqref{eq:22} We compute
\begin{align*}
  I_{e}=&  (p_{P_{Y\times \mathbb{P}^{1}}})_{\ast}\left(\sum_{k}(-1)^{k}
    W_{1} \bullet \ch(\widetilde E'_{k}|_{P_{Y\times \mathbb{P}^{1}}})\right)\\
  =&(p_{P_{Y\times \mathbb{P}^{1}}})_{\ast}\left(W_{1} \bullet
    \sum_{k}(-1)^{k}\big( 
    -\dd_{\mathcal{D}} \widetilde {ch}(\mu
    ^{2}_{k})+\ch(\overline D_{k})\right.\\ 
  &\left.\phantom{(p_{P_{Y\times \mathbb{P}^{1}}})_{\ast}\sum_{k}}
  +\ch(\pi^{\ast}  _{P_{Y\times
        \mathbb{P}^{1}}} p^{\ast}_{Y}\overline  F\otimes
    K(\mathcal{O}_{Y\times \mathbb{P}^{1}},\overline N_{Y\times
      \mathbb{P}^{1}/W_{X}})_{k})
    \big)\right).
\end{align*}
The term $\sum(-1)^{k}\ch(\overline D_{k}) $ vanishes because the complex
$D_{\ast}$ is orthogonally split. We have
\begin{multline}
  \sum_{k}(-1)^{k}(p_{P_{Y\times \mathbb{P}^{1}}})_{\ast}(W_{1}
  \bullet \ch(\pi^{\ast}  _{P_{Y\times 
      \mathbb{P}^{1}}} p^{\ast}_{Y}\overline  F\otimes
    K(\overline {\mathcal{O}}_{Y\times \mathbb{P}^{1}},\overline N_{Y\times
      \mathbb{P}^{1}/W_{X}})_{k}))\\
    =(i_{Y/M})_{\ast}\ch(\overline F)\bullet
    (p_{Y})_{\ast}\left(W_{1}\bullet\pi ^{\ast}_{P_{Y\times \mathbb{P}^{1}}}
      \sum_{k}(-1)^{k}\ch(K(\overline {\mathcal{O}}_{Y\times
        \mathbb{P}^{1}},\overline N_{Y\times \mathbb{P}^{1}/W_{X}})_{k})
    \right)\\
    = (i_{Y/M})_{\ast}\ch(\overline F)\bullet
    (p_{Y})_{\ast}\left(W_{1}\bullet \Td^{-1}(\overline N_{Y\times \mathbb{P}^{1}/W_{X}})
    \right)\\
    = (i_{Y/M})_{\ast}\ch(\overline F)\bullet\widetilde
    {\Td^{-1}}(\overline {\varepsilon }_{N}),
\end{multline}
where $\overline{\varepsilon}_{N} $ is the exact sequence \eqref{eq:84}.

Therefore we obtain
\begin{multline*}
  I_{e}=-\sum_{k}(-1)^{k}(p_{P_{Y/P_{X/M}}})_{\ast}\widetilde {\ch}(\eta_{k}^{4})+
  \sum_{k}(-1)^{k}(p_{P_{Y/M}})_{\ast}\widetilde {\ch}(\eta_{k}^{2})\\
  +(i_{Y/M})_{\ast}\ch(\overline F)\bullet\widetilde
    {\Td^{-1}}(\overline {\varepsilon }_{N}).
\end{multline*}

\eqref{eq:23} Finally we have
\begin{align*}
  I_{f}=&-\sum_{k}(-1)^{k} T(\overline \xi_{X\hookrightarrow M,k})
  + \sum_{k}(-1)^{k} (i_{X/M})_{\ast} C_{T}(E_{k},N_{X/M})\\
  &- \sum_{k,l}(-1)^{k+l} (p_{P_{X/M}})_{\ast}\widetilde
  {\ch}(\eta^{1}_{k,l}). 
\end{align*}
By corollary \ref{cor:9} we have that
\begin{displaymath}
  \sum_{m,l}(-1)^{m+l} (p_{P_{X/M}})_{\ast}\widetilde
  {\ch}(\eta^{1}_{m,l})=\sum_{k}(-1)^{k} (p_{P_{X/M}})_{\ast}\widetilde
  {\ch}(\eta^{1}_{k}).
\end{displaymath}
Thus 
\begin{align*}
  I_{f}=&-\sum_{k}(-1)^{k} T(\overline \xi_{X\hookrightarrow M,k})
  + \sum_{k}(-1)^{k} (i_{X/M})_{\ast} C_{T}(E_{k},N_{X/M})\\
  &- \sum_{k}(-1)^{k} (p_{P_{X/M}})_{\ast}\widetilde
  {\ch}(\eta^{1}_{k}). 
\end{align*}

Summing up all the terms we have computed, and taking into account
equation \eqref{eq:25} and the fact that
\begin{displaymath}
  C_{T}(F,N_{Y/M})=C_{T}(F,N_{Y/P_{X/M}})  
\end{displaymath}
we have obtained the following partial result.

\begin{lemma} \label{lemm:2}
  Let $i_{Y/M}=i_{X/M}\circ i_{Y/X}$ be a composition of closed
  immersions of complex manifolds. Let $T$ be a theory of singular
  Bott-Chern classes with $C_{T}$ its associated characteristic
  class. Let $\overline \xi_{Y\hookrightarrow M}$, 
  $\overline \xi _{X\hookrightarrow M,k}$ 
  and $\overline \xi_{Y\hookrightarrow P_{X/M}}$
  be as in notation \ref{def:4}, and let $\overline {\varepsilon }$ be
  as in \eqref{eq:84}.
  Then, in the group $\bigoplus_{p}\widetilde{\mathcal{D}}^{2p-1}(M,p)$,
  the equation
  \begin{multline}\label{eq:50}
    T(\overline \xi_{Y\hookrightarrow M})=
    \sum_{k}(-1)^{k}T(\overline \xi _{X\hookrightarrow M,k})-\sum_{k}(-1)^{k}
    (i_{X/M})_{\ast}C_{T}(E_{k},N_{X/M})\\+
    (i_{X/M})_{\ast}(\pi _{P_{X/M}})_{\ast} 
    T(\overline \xi_{Y\hookrightarrow P_{X/M}})+
    (i_{Y/M})_{\ast}\ch(\overline F)\bullet\widetilde
    {\Td^{-1}}(\overline {\varepsilon }_{N})
  \end{multline}
  holds.
\end{lemma}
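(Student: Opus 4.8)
The plan is to prove Lemma \ref{lemm:2} by the standard double-deformation argument, using the manifold $W$ obtained by iterating the deformation to the normal cone (first of $X$ in $M$, then of $j_X(Y\times\mathbb{P}^1)$ in $W_X$) and the resolution $\widetilde E'_{\ast}$ constructed on it. The starting observation is that the current $p_W{}_{\ast}\bigl(\sum_k(-1)^k W_2\bullet\ch(\widetilde E'_k)\bigr)$ is $\dd_{\mathcal{D}}$-exact on $M$ (it is a direct image of a product in which the $\dd_{\mathcal{D}}$-exact factor $W_2$ appears, with the wave front set condition guaranteed by the disjointness of the wave front sets of $p_1^{\ast}W_1$ and $p_2^{\ast}W_1$, and of $W_2$ and $\ch(\widetilde E'_{\ast})$ whose singularities lie along the deformed copies of $Y$). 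Hence applying $\dd_{\mathcal{D}}$ and using equation \eqref{eq:16} for $\dd_{\mathcal{D}}W_2$ gives the decomposition into six boundary integrals $I_a,\dots,I_f$ already written out in the excerpt, and the whole content of the lemma is the evaluation of these six terms.

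The key steps, in order, are: (1) identify, for each of the six faces of $\mathbb{P}^1\times\mathbb{P}^1$, the restriction of $\widetilde E'_{\ast}$ to the corresponding boundary component of $W$ — these are exactly the descriptions bulleted in the excerpt (orthogonally split on $\widetilde M_X\times\{\infty\}$, $\widetilde W_X$, $\widetilde M_X\times\{\infty\}$-part, etc., and fitting into the various short exact sequences $\eta^1,\dots,\eta^5,\mu^1,\mu^2$ over the relevant projective completions); (2) evaluate $I_a$ and $I_d$: these vanish because $\widetilde E'$ restricted to $\widetilde M_X\times\{\infty\}$ and to $\widetilde W_X$ is orthogonally split, and the residual $\widetilde M_X\times\{0\}$ contribution in $I_a$ is killed by Lemma \ref{lemm:1} applied to the orthogonally split complexes $\tr_1(\overline E')_{k,\ast}|_{\widetilde M_X}$; (3) evaluate $I_c$ and $I_f$ directly from the definition of singular Bott-Chern classes via equation \eqref{eq:10} (more precisely its instances for $\overline\xi_{Y\hookrightarrow M}$ and for each $\overline\xi_{X\hookrightarrow M,k}$), using Corollary \ref{cor:9} to collapse the double sum $\sum_{k,l}(-1)^{k+l}\widetilde{\ch}(\eta^1_{k,l})$ to $\sum_k(-1)^k\widetilde{\ch}(\eta^1_k)$; (4) evaluate $I_b$ by the same method, peeling off the exact sequence $\mu^1$ over $W_{Y/P}$, using Corollary \ref{cor:9} to kill $\widetilde{\ch}(\overline A)$ and $\sum_k(-1)^k(p_{\widetilde P_{X/M}})_{\ast}\widetilde{\ch}(\mu^1_k|_{\widetilde P_{X/M}})$, and feeding in equation \eqref{eq:10} for $\overline\xi_{Y\hookrightarrow P_{X/M}}$; (5) evaluate $I_e$ over $P_{Y\times\mathbb{P}^1}$, where the new feature is that $\sum_k(-1)^k(p_{P_{Y\times\mathbb{P}^1}})_{\ast}(W_1\bullet\ch(K(\ldots)_k))$ computes, via Corollary \ref{cor:4} and the identification of $N_{Y\times\mathbb{P}^1/W_X}$ at $0$ and $\infty$, the Bott-Chern class $\widetilde{\Td^{-1}}(\overline\varepsilon_N)$ attached to \eqref{eq:84}; (6) add everything, use the relation \eqref{eq:25} coming from Lemma \ref{lemm:1} applied to the orthogonally split $\overline C^i_{\ast}$ to cancel the leftover $\widetilde{\ch}(\eta^3),\widetilde{\ch}(\eta^4),\widetilde{\ch}(\eta^5)$ contributions, and use $C_T(F,N_{Y/M})=C_T(F,N_{Y/P_{X/M}})$ (which holds since $N_{Y/M}$ and $N_{Y/P_{X/M}}$ are isomorphic as the extension \eqref{eq:84} and the splitting \eqref{eq:83} give the same bundle, and $C_T$ is a characteristic class hence depends only on the isomorphism class, with metrics matched by proposition \ref{prop:15} / \ref{prop:22}). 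This bookkeeping is exactly what the excerpt carries out; the conclusion is \eqref{eq:50}.

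The main obstacle, and the part that requires genuine care rather than routine manipulation, is step (1): one must verify that the double Grassmannian-graph construction really produces a resolution $\widetilde E'_{\ast}$ of $(j_{Y\times\mathbb{P}^1})_{\ast}p_{Y\times\mathbb{P}^1}^{\ast}p_Y^{\ast}F$ whose restrictions to all the boundary components of $W$ are as claimed, and in particular that the various short exact sequences $\eta^i_k$, $\mu^i_k$ glue compatibly along the intersections of boundary components (e.g. that $\mu^1_k|_{P_{X/M}}=\eta^1_k$ and $\mu^1_k|_{P_{Y/P_{X/M}}}=\eta^3_k$, and similarly for $\mu^2_k$). This is where Theorem \ref{thm:5} is applied twice — first to $\overline E'_{k,\ast}\to(i_{X/M})_{\ast}E_k$ to build $\tr_1(\overline E')_{\ast,\ast}$ on $W_X$, then to the total-complex resolution \eqref{eq:38} on $W_X$ to build $\widetilde E'_{\ast}$ on $W$ — and one must check the transversality / tor-independence hypotheses of item \ref{item:18} at each stage (the relevant inclusions $X\times\mathbb{P}^1\hookrightarrow W_X$ and $Y\times\mathbb{P}^1\hookrightarrow W_X$ are transverse to the loci being blown up, or the complexes involved are acyclic there, so the functoriality of $\tr_1$ applies). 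Once the geometry of figure \ref{fig:df} and the restrictions are pinned down, the rest is the additivity of Bott-Chern classes (Lemma \ref{lemm:1}, Corollaries \ref{cor:9}, \ref{cor:4}) plus repeated substitution of \eqref{eq:10}, and the six terms assemble into \eqref{eq:50} with no surprises.

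\begin{proof}[Proof of Lemma \ref{lemm:2}]
  We carry out the double-deformation computation sketched above. As noted, the current
  \begin{displaymath}
    (p_{W})_{\ast}\left(\sum_{k}(-1)^{k}W_{2} \bullet \ch(\widetilde E'_{k})\right)
  \end{displaymath}
  is well defined (the wave front set of $W_{2}$ is contained in the union of the conormal bundles of $\{0,\infty\}\times\mathbb{P}^{1}$ and $\mathbb{P}^{1}\times\{0,\infty\}$ pulled back to $W$, which is disjoint from the wave front set of $\ch(\widetilde E'_{\ast})$, concentrated along the deformed copies of $Y$) and is $\dd_{\mathcal{D}}$-exact on $M$, so $\dd_{\mathcal{D}}(p_{W})_{\ast}\bigl(\sum_{k}(-1)^{k}W_{2}\bullet\ch(\widetilde E'_{k})\bigr)=0$ in $\bigoplus_{p}\widetilde{\mathcal{D}}^{2p-1}(M,p)$. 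Expanding $\dd_{\mathcal{D}}$ by means of equation \eqref{eq:16} and the projection formula yields the decomposition
  \begin{displaymath}
    0=I_{a}+I_{b}-I_{c}-I_{d}-I_{e}+I_{f}
  \end{displaymath}
  with $I_{a},\dots,I_{f}$ given by \eqref{eq:18}--\eqref{eq:23}. Substituting the evaluations of $I_{a}$ through $I_{f}$ computed above, using Corollary \ref{cor:9} to collapse the double sums involving $\eta^{1}_{k,l}$ and to annihilate $\widetilde{\ch}(\overline A)$ and the $\widetilde P_{X/M}$-contribution of $\mu^{1}$, using equation \eqref{eq:25} to cancel the remaining contributions of $\widetilde{\ch}(\eta^{3}_{k})$, $\widetilde{\ch}(\eta^{4}_{k})$ and $\widetilde{\ch}(\eta^{5}_{k})$, and using the identity $C_{T}(F,N_{Y/M})=C_{T}(F,N_{Y/P_{X/M}})$ (valid because the extension \eqref{eq:84} and the splitting \eqref{eq:83} exhibit the same underlying bundle, and $C_{T}$, being a characteristic class, is independent of the choice of metric by proposition \ref{prop:22}), all the auxiliary terms $(p_{P_{X/M}})_{\ast}\widetilde{\ch}(\eta^{1}_{k})$, $(p_{P_{Y/P_{X/M}}})_{\ast}\widetilde{\ch}(\eta^{i}_{k})$ and $(p_{P_{Y/M}})_{\ast}\widetilde{\ch}(\eta^{2}_{k})$ cancel in pairs, and we are left with
  \begin{multline*}
    T(\overline \xi_{Y\hookrightarrow M})=
    \sum_{k}(-1)^{k}T(\overline \xi _{X\hookrightarrow M,k})-\sum_{k}(-1)^{k}
    (i_{X/M})_{\ast}C_{T}(E_{k},N_{X/M})\\+
    (i_{X/M})_{\ast}(\pi _{P_{X/M}})_{\ast}
    T(\overline \xi_{Y\hookrightarrow P_{X/M}})+
    (i_{Y/M})_{\ast}\ch(\overline F)\bullet\widetilde
    {\Td^{-1}}(\overline {\varepsilon }_{N}),
  \end{multline*}
  which is equation \eqref{eq:50}.
\end{proof}
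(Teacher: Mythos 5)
Your proposal is correct and follows essentially the same route as the paper: the lemma is exactly the outcome of the double-deformation computation with the current $W_{2}$, the decomposition into the six boundary terms $I_{a},\dots,I_{f}$, and their evaluation via equation \eqref{eq:10}, lemma \ref{lemm:1}, corollaries \ref{cor:9} and \ref{cor:4}, and equation \eqref{eq:25}; your assembly of these ingredients is precisely the paper's proof.

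One justification should be repaired. You argue that $C_{T}(F,N_{Y/M})=C_{T}(F,N_{Y/P_{X/M}})$ because the extension \eqref{eq:84} and the splitting \eqref{eq:83} ``exhibit the same underlying bundle''. That is not true in general: the exact sequence \eqref{eq:84} need not split holomorphically, so $N_{Y/M}$ and $N_{Y/X}\oplus i_{Y/X}^{\ast}N_{X/M}$ are in general not isomorphic as holomorphic vector bundles (they are only $C^{\infty}$-isomorphic). The identity you need is nevertheless true, but for a different reason: $C_{T}$ is a characteristic class in the sense of definition \ref{def:6}, and by proposition \ref{prop:12} such a class takes the same value on an extension as on the associated direct sum; applied to \eqref{eq:84} this gives $C_{T}(F,N_{Y/M})=C_{T}(F,N_{Y/X}\oplus i_{Y/X}^{\ast}N_{X/M})=C_{T}(F,N_{Y/P_{X/M}})$. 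Metric independence is automatic because $C_{T}$ is a cohomology class (proposition \ref{prop:22}), so no appeal to proposition \ref{prop:15} is needed at this point.
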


In order to compute the third term of the right hand side of equation
\eqref{eq:50} we consider the following situation
\begin{displaymath}
  \xymatrix{
Y\times _{X}P_{X/M} \ar[r]^{j}\ar@/_/[d]_{\pi} &
P_{X/M}\ar@/_/[d]_{\pi }\\ 
Y\ar@/_/[u]_{s}\ar[r]^{i} & X\ar@/_/[u]_{s}}.
\end{displaymath}
To ease the notation, we denote  $P_{X/M}$ by $P$, $Y\underset{X}{\times} P_{X/M}$ by
$X'$ and we 
denote by $P'$ the projective completion of the normal cone to $X'$ in
$P$ and by $\pi 
_{P'}\colon P'\longrightarrow X'$, $\pi _{X'/Y}\colon X'\longrightarrow Y$ and $\pi
_{P'/Y}\colon P'\longrightarrow Y$ the 
projections. Observe that $X$
and $X'$ intersect transversely along $Y$. Moreover,
$N_{Y/X'}=i^{\ast}_{Y/X}N_{X/M}$, $N_{X'/P}=\pi ^{\ast}_{X'/Y}N_{Y/X}$
and $N_{Y/P}=N_{Y/X}\oplus N_{Y/X'}$. We use these identifications to
define metrics on $N_{Y/X'}$, $N_{X'/P}$ and $N_{Y/P}$. Therefore the
exact sequence
\begin{displaymath}
  0\longrightarrow \overline N_{Y/X'}
  \longrightarrow \overline N_{Y/P}
  \longrightarrow i^{\ast}_{Y/X'}\overline N_{X'/P}
  \longrightarrow 0
\end{displaymath}
is orthogonally split.

We apply the previous lemma to the composition of closed
inclusions  
\begin{displaymath}
  Y\hookrightarrow X'\hookrightarrow P,
\end{displaymath}
the vector bundle $\overline F$ over $Y$ and the resolutions
\begin{gather*}
  \pi^{\ast}\overline F\otimes j^{\ast}K(\overline {\mathcal{O}}_{X},\overline
  N_{X/M})_{\ast}\longrightarrow
  s_{\ast}F\\
  \pi^{\ast}\overline E_{\ast}\otimes K(\overline
  {\mathcal{O}}_{X},\overline N_{X/M})_{k} 
  \longrightarrow j_{\ast}(\pi^{\ast}F\otimes
  j^{\ast}K(\mathcal{O}_{X},N_{X/M})_{k}).
\end{gather*}

We denote by $\overline \xi _{Y\hookrightarrow P}$ and  $\overline \xi
_{X'\hookrightarrow P,k}$ the
hermitian embedded vector bundles corresponding to the above
resolutions. If $i_{Y/P'}\colon Y\hookrightarrow P'$ is the induced
inclusion, we denote by 
$\overline \xi _{Y\hookrightarrow P'}$ the hermitian embedded vector bundle
\begin{displaymath}
  \left(i_{Y/P'},\overline N_{Y/P'}, \overline F,\Tot(\pi
  ^{\ast}_{P'}j^{\ast}K(\overline {\mathcal{O}}_{X},\overline
  N_{X/M})\otimes 
  K(\overline {\mathcal{O}}_{X'},\overline N_{X'/P})\otimes (\pi
  _{P'/Y})^{\ast}\overline F)\right).
\end{displaymath}
Note that the hermitian embedded vector bundle $\overline
\xi_{Y\hookrightarrow P}$ agrees with the 
hermitian embedded vector bundle denoted $\overline
\xi_{Y\hookrightarrow P_{X/M}}$ in lemma 
\ref{lemm:2}. Moreover, we have that
\begin{displaymath}
  \overline \xi_{X'\hookrightarrow P,k}=\pi ^{\ast}\overline \xi_{Y\hookrightarrow
    X}\otimes K(\overline {\mathcal{O}}_{X},\overline N_{X/M})_{k}.
\end{displaymath}

Applying lemma \ref{lemm:2}, we obtain
\begin{multline}
  T(\overline \xi_{Y\hookrightarrow P_{X/M}})=  \sum_{k}(-1)^{k}T(\overline \xi
_{X'\hookrightarrow P_{X/M},k})\\
- \sum_{k}(-1)^{k}j_{\ast}C_{T}(\pi ^{\ast} F\otimes
j^{\ast}K(\mathcal{O}_{X},N_{X/M})_{k},N_{X'/P}) \\
+j_{\ast}(\pi _{P'})_{\ast}T(\overline \xi _{Y\hookrightarrow P'})\label{eq:27} 
\end{multline}

By proposition \ref{prop:6}, 
\begin{multline} \label{eq:26}
  \sum_{k}(-1)^{k}T(\overline \xi
  _{X'\hookrightarrow P_{X/M},k})=\sum_{k}(-1)^{k}
  T(\pi ^{\ast}\overline \xi_{Y\hookrightarrow
    X}\otimes K(\overline {\mathcal{O}}_{X},\overline N_{X/M})_{k})\\
  =T(\pi ^{\ast}\overline \xi_{Y\hookrightarrow
    X})\bullet \sum_{k}(-1)^{k}\ch(K(\overline
  {\mathcal{O}}_{X},\overline N_{X/M})_{k})\\ 
  +\sum_{k}(-1)^{k} j_{\ast}C_{T}(\pi ^{\ast} F\otimes
  j^{\ast}K(\mathcal{O}_{X},N_{X/M})_{k}, N_{X'/P})\\
  -\sum_{k}(-1)^{k} j_{\ast}C_{T}(\pi ^{\ast} F, N_{X'/P})\bullet 
  \ch(K(\mathcal{O}_{X},N_{X/M})_{k}) 
\end{multline}

We now want to compute the term $ (i_{X/M})_{\ast}(\pi
_{P_{X/M}})_{\ast}j_{\ast}(\pi 
_{P'})_{\ast}T(\overline \xi _{Y\hookrightarrow P'})$. 

Observe that we can identify
\begin{displaymath}
  P'= \mathbb{P}(i_{Y/X}^{\ast}N_{X/M}\oplus
  \mathbb{C})\underset{Y}{\times} 
  \mathbb{P}(s^{\ast}N_{X'/P}\oplus
  \mathbb{C}),
\end{displaymath}
where $s^{\ast}N_{X'/P}$ is canonically isomorphic to $N_{Y/X}$.
 
Moreover 
\begin{displaymath}
  (i_{X/M})_{\ast}(\pi
_{P_{X/M}})_{\ast}j_{\ast}(\pi 
_{P'})_{\ast}T(\overline \xi _{Y\hookrightarrow P'})=
(i_{Y/M})_{\ast}(\pi _{P'/Y})_{\ast}T(\overline \xi _{Y\hookrightarrow P'}).
\end{displaymath}

\begin{definition}\label{def:10}
  We denote
  \begin{displaymath}
    C_{T}^{\ad}(F,N_{Y/X},i_{Y/X}^{\ast}N_{X/M})=(\pi _{P'/Y})_{\ast}T(\overline \xi
    _{Y\hookrightarrow P'}) 
  \end{displaymath}
  and we define
  \begin{equation}
    \label{eq:47}
    \rho (F,N_{Y/X},i_{Y/X}^{\ast}N_{X/M})=
    C_{T}(F,N_{Y/M})-C_{T}^{\ad}(F,N_{Y/X},i_{Y/X}^{\ast}N_{X/M}).
  \end{equation}
\end{definition}

\begin{lemma}\label{lemm:3}
  The current $ C_{T}^{\ad}(F,N_{Y/X},i_{Y/X}^{\ast}N_{X/M})$ is
  closed and defines a 
  characteristic class of triples of vector bundles. Therefore $\rho$
  is also a characteristic class.  Moreover the class $\rho$ does not
  depend on the theory of singular Bott-Chern classes $T$.
\end{lemma}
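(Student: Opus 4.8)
The plan is to establish closedness of $C_{T}^{\ad}(F,N_{Y/X},i_{Y/X}^{\ast}N_{X/M})$ by exhibiting it as a direct image of a closed form, following the pattern already used in Theorem~\ref{thm:8}, and then to deduce the independence statement by a universality argument. First I would unwind Definition~\ref{def:10}: here $P'=\mathbb{P}(i_{Y/X}^{\ast}N_{X/M}\oplus\mathbb{C})\times_{Y}\mathbb{P}(s^{\ast}N_{X'/P}\oplus\mathbb{C})$, and the hermitian embedded vector bundle $\overline\xi_{Y\hookrightarrow P'}$ is built from a tensor product of two Koszul resolutions. Applying $\dd_{\mathcal{D}}$ to $(\pi_{P'/Y})_{\ast}T(\overline\xi_{Y\hookrightarrow P'})$ and using the differential equation \eqref{eq:42}, the right hand side becomes $(\pi_{P'/Y})_{\ast}$ of the difference between the alternating sum of Chern character forms of the two Koszul complexes and the delta-current term $\delta_{Y}$ paired with $\Td^{-1}\ch(\overline F)$. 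Exactly as in the proof of Theorem~\ref{thm:8}, Corollary~\ref{cor:4} (applied twice, once for each projective bundle factor, using the product structure of $P'$ over $Y$) shows that the fiber integral of the Chern character forms of the Koszul resolution computes $\Td^{-1}(\overline N_{Y/X})\,\Td^{-1}(i_{Y/X}^{\ast}\overline N_{X/M})\,\ch(\overline F)$, which cancels against the push-forward of the delta term because $(\pi_{P'/Y})_{\ast}\delta_{Y}=1$. Hence $\dd_{\mathcal{D}}C_{T}^{\ad}=0$.

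Next, functoriality: for a morphism $f\colon Y'\to Y$ of complex manifolds and the pullbacks of the three bundles, the whole construction (the projective completions, the Koszul resolutions, the metrics, and the singular Bott-Chern class $T$ by property \ref{item:16}) is compatible with $f^{\ast}$, provided $f$ is transverse to the relevant zero sections — but zero sections are always transverse to everything after base change, so no restriction is needed. Therefore the cohomology class of $C_{T}^{\ad}$ is functorial in the triple $(F,N_{Y/X},i_{Y/X}^{\ast}N_{X/M})$, and by Proposition~\ref{prop:22} it does not depend on the chosen metrics; so it is a well-defined characteristic class for triples of vector bundles. Since $C_{T}(F,N_{Y/M})$ is already known (Theorem~\ref{thm:8}) to be a characteristic class, the difference $\rho(F,N_{Y/X},i_{Y/X}^{\ast}N_{X/M})$ defined in \eqref{eq:47} is a characteristic class as well.

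It remains to show $\rho$ is independent of the theory $T$. The cleanest route is to invoke Lemma~\ref{lemm:2}: take the specific composition $Y\hookrightarrow X'\hookrightarrow P$ with the orthogonally split exact sequence $0\to\overline N_{Y/X'}\to\overline N_{Y/P}\to i^{\ast}_{Y/X'}\overline N_{X'/P}\to 0$, so that the last term $\widetilde{\Td^{-1}}(\overline\varepsilon_{N})$ in \eqref{eq:50} vanishes. Combining this with \eqref{eq:27}, \eqref{eq:26} and Proposition~\ref{prop:6}, the singular Bott-Chern class $T(\overline\xi_{Y\hookrightarrow P_{X/M}})$ is expressed entirely in terms of lower-codimension singular Bott-Chern classes and values of $C_{T}$; pushing forward by $(\pi_{P_{X/M}})_{\ast}$ and rearranging, the genuinely theory-dependent contributions to $C_{T}^{\ad}$ cancel against those in $C_{T}(F,N_{Y/M})$, leaving $\rho$ as a fixed universal expression in Chern and Bott-Chern forms of $\overline N_{Y/X}$ and $i_{Y/X}^{\ast}\overline N_{X/M}$ that does not reference $T$ at all. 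The main obstacle I anticipate is the bookkeeping in this last step: one must carefully track which terms in the iterated deformation formula \eqref{eq:50}–\eqref{eq:27} are ``universal'' (fiber integrals of products of Bott-Chern forms of Koszul complexes, closed by Corollary~\ref{cor:11}) versus which genuinely involve $T$, and verify that, after the $\pi_{P_{X/M}}$-push-forward and the identification $P'\cong\mathbb{P}(i_{Y/X}^{\ast}N_{X/M}\oplus\mathbb{C})\times_Y\mathbb{P}(s^{\ast}N_{X'/P}\oplus\mathbb{C})$, the $T$-dependence reduces precisely to the difference $C_{T}(F,N_{Y/M})-C_{T}(F,N_{Y/P_{X/M}})$, which is zero since $N_{Y/M}$ and $N_{Y/P_{X/M}}$ carry the same metric by construction.
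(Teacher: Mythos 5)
Your treatment of the first two assertions is fine and is essentially the paper's own route: the paper simply says that closedness and the characteristic-class property are ``proved as in Theorem~\ref{thm:8}'', and your computation (differential equation \eqref{eq:42}, push-forward of the Chern forms of the product of Koszul complexes via Corollary~\ref{cor:4} applied fibrewise on the two factors of $P'$, cancellation against $\Td^{-1}(\overline N_{Y/X})\Td^{-1}(i_{Y/X}^{\ast}\overline N_{X/M})\ch(\overline F)$ coming from the orthogonal splitting of $\overline N_{Y/P'}$, then Proposition~\ref{prop:22}) fills in exactly those details; the transversality point for pull-backs is also correct.

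The independence of $\rho$ from $T$ is where you have a genuine gap. Lemma~\ref{lemm:2} together with \eqref{eq:27}, \eqref{eq:26} and Proposition~\ref{prop:6} are identities relating several $T$-dependent quantities ($T(\overline\xi_{Y\hookrightarrow P_{X/M}})$, $T(\overline\xi_{X'\hookrightarrow P,k})$, $T(\overline\xi_{Y\hookrightarrow P'})$ and various values of $C_T$); no rearrangement of them can, by itself, show that any particular combination is theory-independent, and indeed the combination you single out is not: $C_T^{\ad}$ genuinely depends on $T$ (for two theories one has $C^{\ad}_{T_1}-C^{\ad}_{T_2}=(C_{T_1}-C_{T_2})(F,N_{Y/M})\neq 0$ in general), so the claim that ``the $T$-dependence reduces to $C_T(F,N_{Y/M})-C_T(F,N_{Y/P_{X/M}})=0$'' cannot be the right mechanism. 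Moreover \eqref{eq:28}, which is what your cancellation would amount to, is derived in the paper \emph{using} Lemma~\ref{lemm:3}, so this route is also circular. The missing input is the structure formula \eqref{eq:10} (equivalently the uniqueness/existence formula of Theorem~\ref{thm:6}, Definition~\ref{def:5}): for \emph{any} hermitian embedded vector bundle $\overline\xi=(i,\overline N,\overline F,\overline E_\ast)$ the difference $T(\overline\xi)-i_\ast C_T(F,N)$ is an explicit expression in the deformation $\tr_1(\overline E_\ast)$ and Bott--Chern classes that does not involve $T$ at all. The paper applies this to the embedding $Y\hookrightarrow P'$ with the resolution $\overline K'_\ast$ (the tensor product of the two Koszul complexes), observing that the restriction of $\tr_1(\overline K')_\ast$ to the exceptional divisor of the blow-up of $P'\times\mathbb{P}^1$ along $Y\times\{\infty\}$ is isomorphic to a Koszul complex, and that $N_{Y/P'}\cong N_{Y/M}$, so that
\begin{displaymath}
  T(\overline\xi_{Y\hookrightarrow P'})-(i_{Y/P'})_\ast C_T(F,N_{Y/M})
\end{displaymath}
is a universal current; pushing forward by $(\pi_{P'/Y})_\ast$ gives $C_T^{\ad}(F,N_{Y/X},i_{Y/X}^{\ast}N_{X/M})-C_T(F,N_{Y/M})=-\rho$ as a $T$-independent class. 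Equivalently, you could note from \eqref{eq:10} that $T_1(\overline\xi)-T_2(\overline\xi)=i_\ast\bigl((C_{T_1}-C_{T_2})(F,N)\bigr)$ for any two theories, apply this to $\overline\xi_{Y\hookrightarrow P'}$ and push forward, so that the difference cancels in $\rho$. Your proof needs this step; without it the asserted cancellation is unsupported.
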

\begin{proof}
The fact that $C_{T}^{\ad}(F,N_{Y/X},i_{Y/X}^{\ast}N_{X/M})$ is closed
and determines a 
characteristic class is proved as in \ref{thm:8}. The independence of
$\rho $ 
from to $T$ is seen as follows. We denote by $\overline
K'_{\ast}$ the complex 
\begin{displaymath}
  \Tot(\pi
  ^{\ast}_{P'}j^{\ast}K(\overline {\mathcal{O}}_{X},\overline
  N_{X/M})\otimes 
  K(\overline {\mathcal{O}}_{X'},\overline N_{X'/P}))\otimes (\pi
  _{P'/Y})^{\ast}\overline F. 
\end{displaymath}
This complex is a resolution of $(i_{Y/P'})_{\ast}\overline F$  

Let $W$ be the blow-up of
$P'\times \mathbb{P}^{1}$ along $Y\times \infty$, and let
$\tr_{1}(\overline K')_{\ast}$ be the deformation of complexes on $W$ given by
theorem \ref{thm:5}. Just by
looking at the rank of the different vector bundles we see that the
restriction of $\tr_{1}(\overline K')_{\ast}$ to $P_{Y/P'}$, the exceptional
divisor of this blow-up, is isomorphic (although not necessarily
isometric) to the Koszul complex 
 $K(\overline F,\overline N_{X/M})_{\ast}$. Then, by equation \eqref{eq:10}
\begin{multline*}
  T(\overline \xi
    _{Y\hookrightarrow P'})-(i_{Y/P'})_{\ast}C_{T}(F,N_{Y/M})=\\
    -(p_{W})_{\ast}\left(W_{1} \bullet
      \sum_{k}(-1)^{k}\ch(\tr_{1}(\overline K')_{k}) 
    \right)\\
        -\sum_{k}(-1)^{k}(p_{P})_{\ast}\widetilde
    \ch(\tr_{1}(\overline K')_{k}|_{P_{Y/P'}},K(\overline F,\overline N_{X/M})_{k}). 
\end{multline*}
Since the right hand side of this equation
does not depend on the theory $T$, the result is proved. 
\end{proof}

Using equations
\eqref{eq:27}, \eqref{eq:26}, lemma \ref{lemm:3} and the projection
formula, we obtain 
\begin{align}
  (\pi _{P_{X/M}})_{\ast} T(\overline \xi_{Y\hookrightarrow P_{X/M}})=&
   \left(T(\overline \xi_{Y\hookrightarrow X})-
    (i_{Y/X})_{\ast}C_{T}(F,N_{Y/X})\right)\notag \\
  &\phantom{AA}\bullet (\pi _{P_{X/M}})_{\ast}
  \sum_{k}(-1)^{k} \ch(K(\mathcal{O}_{X},\overline N_{X/M})_{k})\notag\\
  &+(\pi _{P_{X/M}})_{\ast}j_{\ast}(\pi
  _{P'})_{\ast}T(\overline \xi _{Y\hookrightarrow P'})\notag\\
  =&\left(T(\overline \xi_{Y\hookrightarrow X})-
    (i_{Y/X})_{\ast}C_{T}(F,N_{Y/X})\right)\bullet \Td^{-1}(\overline N_{X/M})\notag\\
  &+(i_{Y/X})_{\ast}C^{\ad}_{T}(F,N_{Y/X},i_{Y/X}^{\ast}N_{X/M})\notag\\
  =&\left(T(\overline \xi_{Y\hookrightarrow X})-
    (i_{Y/X})_{\ast}C_{T}(F,N_{Y/X})\right)\bullet \Td^{-1}(\overline N_{X/M})\notag\\
  &+(i_{Y/X})_{\ast}C_{T}(F,N_{Y/M})-\rho (F,N_{Y/X},i_{Y/X}^{\ast}N_{X/M}).
\label{eq:28}  
\end{align}

Joining this equation and lemma \ref{lemm:2} we obtain the main
relationship between singular Bott-Chern classes and composition of
closed immersions.

\begin{proposition} \label{prop:7}
  Let $i_{Y/M}=i_{X/M}\circ i_{Y/X}$ be a composition of closed
  immersions of complex manifolds. Let $T$ be a theory of singular
  Bott-Chern classes with $C_{T}$ its associated characteristic
  class. Let $\overline \xi_{Y\hookrightarrow M}$, 
  $\overline \xi _{X\hookrightarrow M,k}$ and $\overline
  \xi_{Y\hookrightarrow P_{X/M}}$
  be as in notation \ref{def:4} and let $\overline {\varepsilon }$ be
  as in \eqref{eq:84}.
  Then, in the group $\bigoplus_{p}\widetilde{\mathcal{D}}^{2p-1}(M,p)$,
  we have the equation
  \begin{multline*}
    T(\overline \xi_{Y\hookrightarrow M})=
    \sum_{k}(-1)^{k}T(\overline \xi _{X\hookrightarrow
      M,k})+(i_{X/M})_{\ast}(T(\overline \xi_{Y\hookrightarrow X})\bullet
    \Td^{-1}(\overline N_{X/M}))\\
    +
    (i_{Y/M})_{\ast}\ch(\overline F)\bullet\widetilde
    {\Td^{-1}}(\overline {\varepsilon }_{N})\\
    +(i_{Y/M})_{\ast}C^{\ad}_{T}(F,N_{Y/X},i_{Y/X}^{\ast}N_{X/M})\\
    -(i_{X/M})_{\ast}((i_{Y/X})_{\ast}C_{T}(F,N_{Y/X})\bullet\Td^{-1}(N_{X/M}))\\ 
    -(i_{X/M})_{\ast}\sum_{k}(-1)^{k}
    C_{T}(E_{k},N_{X/M})
  \end{multline*}  
\end{proposition}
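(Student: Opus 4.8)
The plan is to combine Lemma \ref{lemm:2} with the identity \eqref{eq:28} that was established just above the statement. Recall that Lemma \ref{lemm:2} already expresses $T(\overline \xi_{Y\hookrightarrow M})$ as
\begin{displaymath}
  \sum_{k}(-1)^{k}T(\overline \xi _{X\hookrightarrow M,k})
  -\sum_{k}(-1)^{k}(i_{X/M})_{\ast}C_{T}(E_{k},N_{X/M})
  +(i_{X/M})_{\ast}(\pi _{P_{X/M}})_{\ast}T(\overline \xi_{Y\hookrightarrow P_{X/M}})
  +(i_{Y/M})_{\ast}\ch(\overline F)\bullet\widetilde{\Td^{-1}}(\overline{\varepsilon }_{N}),
\end{displaymath}
so the only term that still needs to be rewritten is the double-deformation contribution $(i_{X/M})_{\ast}(\pi _{P_{X/M}})_{\ast}T(\overline \xi_{Y\hookrightarrow P_{X/M}})$. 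This is precisely what equation \eqref{eq:28} computes: it identifies $(\pi _{P_{X/M}})_{\ast}T(\overline \xi_{Y\hookrightarrow P_{X/M}})$ with
\begin{displaymath}
  \left(T(\overline \xi_{Y\hookrightarrow X})-(i_{Y/X})_{\ast}C_{T}(F,N_{Y/X})\right)\bullet \Td^{-1}(\overline N_{X/M})
  +(i_{Y/X})_{\ast}C_{T}(F,N_{Y/M})-\rho(F,N_{Y/X},i_{Y/X}^{\ast}N_{X/M}).
\end{displaymath}

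First I would substitute this expression into the formula from Lemma \ref{lemm:2}, applying $(i_{X/M})_{\ast}$ to each summand and using the projection formula (property A4 of Theorem \ref{thm:13}, equivalently the compatibility of $\bullet$ with $i_{\ast}$) to pull the factor $\Td^{-1}(\overline N_{X/M})$ inside. This produces the terms
\begin{displaymath}
  (i_{X/M})_{\ast}(T(\overline \xi_{Y\hookrightarrow X})\bullet\Td^{-1}(\overline N_{X/M})),
  \quad
  -(i_{X/M})_{\ast}((i_{Y/X})_{\ast}C_{T}(F,N_{Y/X})\bullet\Td^{-1}(\overline N_{X/M})),
\end{displaymath}
together with $(i_{X/M})_{\ast}(i_{Y/X})_{\ast}C_{T}(F,N_{Y/M})$ and $-(i_{X/M})_{\ast}\rho(F,N_{Y/X},i_{Y/X}^{\ast}N_{X/M})$. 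Next I would use the transitivity of push-forward (property A3, $(i_{X/M})_{\ast}(i_{Y/X})_{\ast}=(i_{Y/M})_{\ast}$) to rewrite $(i_{X/M})_{\ast}(i_{Y/X})_{\ast}C_{T}(F,N_{Y/M})=(i_{Y/M})_{\ast}C_{T}(F,N_{Y/M})$ and likewise $(i_{X/M})_{\ast}\rho=(i_{Y/M})_{\ast}\rho$. Finally, invoking Definition \ref{def:10}, namely $\rho(F,N_{Y/X},i_{Y/X}^{\ast}N_{X/M})=C_{T}(F,N_{Y/M})-C_{T}^{\ad}(F,N_{Y/X},i_{Y/X}^{\ast}N_{X/M})$, the term $(i_{Y/M})_{\ast}C_{T}(F,N_{Y/M})-(i_{Y/M})_{\ast}\rho$ collapses exactly to $(i_{Y/M})_{\ast}C^{\ad}_{T}(F,N_{Y/X},i_{Y/X}^{\ast}N_{X/M})$, which is the term appearing in the statement. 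Collecting everything yields the claimed equation, noting that all identities take place in $\bigoplus_{p}\widetilde{\mathcal{D}}^{2p-1}(M,p)$ where these manipulations are legitimate because the relevant wave front sets are disjoint (as used throughout the derivation of Lemma \ref{lemm:2}).

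There is essentially no new obstacle here: the entire analytic content — the double deformation to the normal cone, the computation of the six boundary integrals $I_a,\dots,I_f$, and the auxiliary deformation giving \eqref{eq:28} — has already been carried out in the lead-up to the statement. The proof of Proposition \ref{prop:7} is a bookkeeping step that merely splices Lemma \ref{lemm:2} and equation \eqref{eq:28} together and applies the functoriality axioms A3 and A4 together with Definition \ref{def:10}. The one point to be careful about is the sign and placement of the $\Td^{-1}(\overline N_{X/M})$ factors when moving them across $(i_{X/M})_{\ast}$, and checking that the term $-\sum_{k}(-1)^{k}(i_{X/M})_{\ast}C_{T}(E_{k},N_{X/M})$ from Lemma \ref{lemm:2} is carried over unchanged (it appears verbatim in the conclusion). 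Hence the proof is short:
\begin{proof}
  Substitute equation \eqref{eq:28} into the formula of Lemma \ref{lemm:2}, applying $(i_{X/M})_{\ast}$ and using the projection formula to move $\Td^{-1}(\overline N_{X/M})$ through the push-forward. By property A3 of Theorem \ref{thm:13} one has $(i_{X/M})_{\ast}(i_{Y/X})_{\ast}=(i_{Y/M})_{\ast}$, so the contribution $(i_{Y/M})_{\ast}C_{T}(F,N_{Y/M})-(i_{Y/M})_{\ast}\rho(F,N_{Y/X},i_{Y/X}^{\ast}N_{X/M})$ equals $(i_{Y/M})_{\ast}C^{\ad}_{T}(F,N_{Y/X},i_{Y/X}^{\ast}N_{X/M})$ by Definition \ref{def:10}. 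Collecting the remaining terms gives the stated equation in $\bigoplus_{p}\widetilde{\mathcal{D}}^{2p-1}(M,p)$.
\end{proof}
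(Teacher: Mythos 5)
Your proposal is correct and is essentially the paper's own argument: the paper obtains Proposition \ref{prop:7} precisely by ``joining'' equation \eqref{eq:28} with lemma \ref{lemm:2}, applying $(i_{X/M})_{\ast}$ termwise, using $(i_{X/M})_{\ast}(i_{Y/X})_{\ast}=(i_{Y/M})_{\ast}$, and identifying $C_{T}(F,N_{Y/M})-\rho$ with $C^{\ad}_{T}$ via definition \ref{def:10} (equivalently, using the penultimate line of \eqref{eq:28} directly). The only cosmetic remark is that no projection formula is actually needed when substituting, since the $\Td^{-1}(\overline N_{X/M})$ factors already sit inside the push-forward in \eqref{eq:28} exactly as they appear in the statement.
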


We can simplify the formula of proposition \ref{prop:7} if we assume
that our theory of singular Bott-Chern classes is compatible with the
projection formula.  

\begin{corollary} \label{cor:5}
  With the hypothesis of proposition \ref{prop:7}, assume furthermore
  that $T$ is compatible with the projection formula. Then
  \begin{multline*}
    T(\overline \xi_{Y\hookrightarrow M})=
    \sum_{k}(-1)^{k}T(\overline \xi _{X\hookrightarrow
      M,k})+(i_{X/M})_{\ast}(T(\overline \xi_{Y\hookrightarrow X})\bullet
    \Td^{-1}(\overline N_{X/M}))\\ 
    +
    (i_{Y/M})_{\ast}\ch(\overline F)\bullet\widetilde
    {\Td^{-1}}(\overline {\varepsilon }_{N})\\
    +(i_{Y/M})_{\ast}\left[C^{\ad}_{T}(F,N_{Y/X},i_{Y/X}^{\ast}N_{X/M})-
    C_{T}(F,N_{Y/X})\bullet\Td^{-1}(i_{Y/X}^{\ast}N_{X/M}))\right.\\\left. 
    -C_{T}(F,i_{Y/X}^{\ast}N_{X/M})\bullet\Td^{-1}(N_{Y/X})\right]
  \end{multline*}    
\end{corollary}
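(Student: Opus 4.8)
The plan is to substitute the extra hypothesis into the formula of proposition \ref{prop:7} and to simplify the two terms that push characteristic classes forward from $X$ to $M$. First I would invoke corollary \ref{cor:7}: since $T$ is compatible with the projection formula, so is the associated characteristic class, i.e. $C_{T}(G,N)=C_{T}(\mathcal{O},N)\bullet\ch(G)$ for every pair of hermitian vector bundles. Comparing the right hand side of proposition \ref{prop:7} with the claimed identity, the sum $\sum_{k}(-1)^{k}T(\overline\xi_{X\hookrightarrow M,k})$, the term $(i_{X/M})_{\ast}(T(\overline\xi_{Y\hookrightarrow X})\bullet\Td^{-1}(\overline N_{X/M}))$, the term $(i_{Y/M})_{\ast}(\ch(\overline F)\bullet\widetilde{\Td^{-1}}(\overline{\varepsilon}_{N}))$ and the term $(i_{Y/M})_{\ast}C_{T}^{\ad}(F,N_{Y/X},i_{Y/X}^{\ast}N_{X/M})$ already coincide, so it suffices to show that
\begin{displaymath}
  -(i_{X/M})_{\ast}\bigl((i_{Y/X})_{\ast}C_{T}(F,N_{Y/X})\bullet\Td^{-1}(N_{X/M})\bigr)
  -(i_{X/M})_{\ast}\sum_{k}(-1)^{k}C_{T}(E_{k},N_{X/M})
\end{displaymath}
equals
\begin{displaymath}
  -(i_{Y/M})_{\ast}\bigl(C_{T}(F,N_{Y/X})\bullet\Td^{-1}(i_{Y/X}^{\ast}N_{X/M})\bigr)
  -(i_{Y/M})_{\ast}\bigl(C_{T}(F,i_{Y/X}^{\ast}N_{X/M})\bullet\Td^{-1}(N_{Y/X})\bigr).
\end{displaymath}

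For the first summand I would apply the projection formula (property A4 of theorem \ref{thm:13}) to $i_{Y/X}$, together with the functoriality of the Todd form $i_{Y/X}^{\ast}\Td^{-1}(N_{X/M})=\Td^{-1}(i_{Y/X}^{\ast}N_{X/M})$, to get $(i_{Y/X})_{\ast}C_{T}(F,N_{Y/X})\bullet\Td^{-1}(N_{X/M})=(i_{Y/X})_{\ast}(C_{T}(F,N_{Y/X})\bullet\Td^{-1}(i_{Y/X}^{\ast}N_{X/M}))$, and then use transitivity of push-forward (property A3), $(i_{X/M})_{\ast}(i_{Y/X})_{\ast}=(i_{Y/M})_{\ast}$, which yields the first summand of the target expression. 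For the second summand I would first factor, using the projection-formula compatibility of $C_{T}$, $C_{T}(E_{k},N_{X/M})=C_{T}(\mathcal{O}_{X},N_{X/M})\bullet\ch(E_{k})$, so that $\sum_{k}(-1)^{k}C_{T}(E_{k},N_{X/M})=C_{T}(\mathcal{O}_{X},N_{X/M})\bullet\sum_{k}(-1)^{k}\ch(\overline E_{k})$. As a class in analytic Deligne cohomology on $X$, the Grothendieck-Riemann-Roch theorem for the closed immersion $i_{Y/X}$ (the identity recalled after equation \eqref{eq:8}) gives $\sum_{k}(-1)^{k}[\ch(\overline E_{k})]=(i_{Y/X})_{\ast}(\Td^{-1}(\overline N_{Y/X})\ch(\overline F))$. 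Applying property A4 once more to $i_{Y/X}$, then the functoriality $i_{Y/X}^{\ast}C_{T}(\mathcal{O}_{X},N_{X/M})=C_{T}(\mathcal{O}_{Y},i_{Y/X}^{\ast}N_{X/M})$, and then recombining $C_{T}(\mathcal{O}_{Y},i_{Y/X}^{\ast}N_{X/M})\bullet\ch(\overline F)=C_{T}(F,i_{Y/X}^{\ast}N_{X/M})$ by projection-formula compatibility again, I obtain $\sum_{k}(-1)^{k}C_{T}(E_{k},N_{X/M})=(i_{Y/X})_{\ast}(C_{T}(F,i_{Y/X}^{\ast}N_{X/M})\bullet\Td^{-1}(N_{Y/X}))$; composing with $(i_{X/M})_{\ast}$ and using property A3 gives the second summand of the target expression.

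Substituting these two rewritings into the formula of proposition \ref{prop:7} and collecting the three $(i_{Y/M})_{\ast}$-terms produces exactly the stated identity. The argument is routine; the two points that need a little care are, first, the commutativity and associativity bookkeeping for the non-associative product $\bullet$ --- here harmless, since apart from the $C_{T}$'s every factor lies in even total degree, so all the products used are graded commutative and the non-associativity does not intervene --- and, second, the fact that the $C_{T}$- and $C_{T}^{\ad}$-terms are the images in $\bigoplus_{p}\widetilde{\mathcal{D}}^{2p-1}_{D}(M,p)$ of genuine analytic Deligne cohomology classes, by theorem \ref{thm:8} and lemma \ref{lemm:3}, so that for them the projection formula and transitivity identities of theorem \ref{thm:13}, as well as products with the closed forms $\Td^{-1}(\cdot)$ and $\ch(\cdot)$, are the corresponding operations in cohomology. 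I do not expect any genuine obstacle beyond this bookkeeping.
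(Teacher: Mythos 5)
Your proposal is correct and follows essentially the same route as the paper: the paper's proof is exactly your second computation, rewriting $\sum_{k}(-1)^{k}C_{T}(E_{k},N_{X/M})$ via projection-formula compatibility of $C_{T}$, the Grothendieck--Riemann--Roch identity in analytic Deligne cohomology, property A4 and functoriality, to get $(i_{Y/X})_{\ast}(C_{T}(F,i_{Y/X}^{\ast}N_{X/M})\bullet\Td^{-1}(N_{Y/X}))$. The only difference is that you also spell out the routine A4/A3 rewriting of $(i_{X/M})_{\ast}((i_{Y/X})_{\ast}C_{T}(F,N_{Y/X})\bullet\Td^{-1}(N_{X/M}))$, which the paper leaves implicit.
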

\begin{proof}
  Since $T$ is compatible with the projection formula, then $C_{T}$ is also.
 Therefore, using the Grothendieck-Riemann-Roch
  theorem for
  closed immersions 
  at the level of analytic Deligne cohomology classes, we have 
  \begin{align*}
    \sum_{k}(-1)^{k}
    C_{T}(E_{k},&N_{X/M})=C_{T}(\mathcal{O}_{X},N_{X/M})\bullet
    \sum_{k}(-1)^{k} \ch(E_{k})\\
    &=C_{T}(\mathcal{O}_{X},N_{X/M})\bullet
    (i_{Y/X})_{\ast}(\ch(F)\bullet \Td^{-1}(N_{Y/X}))\\
    &=(i_{Y/X})_{\ast}(i_{Y/X}^{\ast}C_{T}(\mathcal{O}_{X},N_{X/M})\bullet
    \ch(F)\bullet \Td^{-1}(N_{Y/X}))\\
    &=(i_{Y/X})_{\ast}(C_{T}(F,i_{Y/X}^{\ast}N_{X/M})\bullet
    \Td^{-1}(N_{Y/X})),
  \end{align*}
  which implies the result.
\end{proof}

\begin{definition} 
  Let $T$ be a theory of singular Bott-Chern classes. We will say that
  $T$ is \emph{transitive} if the equation
  \begin{multline}\label{eq:49}
    T(\overline \xi_{Y\hookrightarrow M})=
    \sum_{k}(-1)^{k}T(\overline \xi _{X\hookrightarrow
      M,k})+(i_{X/M})_{\ast}(T(\overline \xi_{Y\hookrightarrow X})\bullet
    \Td^{-1}(\overline N_{X/M}))\\
    +
    (i_{Y/M})_{\ast}\ch(\overline F)\bullet\widetilde
    {\Td^{-1}}(\overline {\varepsilon }_{N})
  \end{multline}
  holds.
  When equation \eqref{eq:49} is satisfied for a particular choice of
  complex immersions and resolutions, we say that the theory $T$ is
  \emph{transitive with respect to this particular choice}.
\end{definition}

We now introduce an abstract version of definition \ref{def:10}.

\begin{definition}\label{def:15}
  Given any characteristic class $C$ of pairs of vector bundles, we will
  denote
  \begin{displaymath}
    C^{\rho}(F,N_{1},N_{2}):= C(F,N_{1}\oplus N_{2})-\rho (F,N_{1},N_{2}),
  \end{displaymath}
  where $\rho $ is the characteristic class of definition \ref{def:10}.  
\end{definition}
Note that, when $T$ is a theory of singular Bott-Chern classes we have
\begin{displaymath}
  C^{\rho}_{T}(F,N_{1},N_{2})=C^{\ad}_{T}(F,N_{1},N_{2}).
\end{displaymath}

\begin{definition} 
  We will say that a characteristic class $C$ (of pairs of vector bundles)
  is \emph{$\rho$-Todd additive} (in the second variable) if it satisfies
  \begin{equation*}
    C(F,N_{1}\oplus N_{2})=
     C(F,N_{1})\bullet \Td^{-1}(N_{2})+
     C(F,N_{2})\bullet \Td^{-1}(N_{1})+ \rho (F,N_{1},N_{2})
  \end{equation*}
or, equivalently, 
  \begin{equation*}
    C^{\rho}(F,N_{1}, N_{2})=
     C(F,N_{1})\bullet \Td^{-1}(N_{2})+
     C(F,N_{2})\bullet \Td^{-1}(N_{1}).
  \end{equation*}
\end{definition}

A direct consequence of corollary \ref{cor:5} is
\begin{corollary} \label{cor:6}
  Let $T$ be a theory of singular Bott-Chern classes that is
  compatible with the projection formula. Then it is transitive if and
  only if the associated characteristic class $C_{T}$ is $\rho $-Todd
  additive. 
\end{corollary}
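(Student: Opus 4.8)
\textbf{Proof proposal for Corollary \ref{cor:6}.}

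The plan is to read off the equivalence directly from Corollary \ref{cor:5}, so the real content has already been established. Assume first that $T$ is transitive, i.e. equation \eqref{eq:49} holds for every composition of closed immersions and every choice of resolutions. Comparing \eqref{eq:49} with the formula in Corollary \ref{cor:5}, which applies because $T$ is compatible with the projection formula, we see that the bracketed term
\begin{displaymath}
  C^{\ad}_{T}(F,N_{Y/X},i_{Y/X}^{\ast}N_{X/M})-
    C_{T}(F,N_{Y/X})\bullet\Td^{-1}(i_{Y/X}^{\ast}N_{X/M})
    -C_{T}(F,i_{Y/X}^{\ast}N_{X/M})\bullet\Td^{-1}(N_{Y/X})
\end{displaymath}
must push forward to zero under $(i_{Y/M})_{\ast}$ for every such configuration. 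First I would specialize to the universal situation where $Y$ is the zero section of $P' = \mathbb{P}(N_{1}\oplus\mathbb{C})\times_{Y}\mathbb{P}(N_{2}\oplus\mathbb{C})$, exactly as in the discussion preceding Definition \ref{def:10}; there $(i_{Y/M})_{\ast}$ is split-injective on cohomology (it has the retraction $(\pi_{P'/Y})_{\ast}$, compare the proof of Theorem \ref{thm:8}), so the bracket itself vanishes. Recalling that $C^{\ad}_{T}=C^{\rho}_{T}$ and that, by Definition \ref{def:15}, $C^{\rho}_{T}(F,N_{1},N_{2})=C_{T}(F,N_{1}\oplus N_{2})-\rho(F,N_{1},N_{2})$, the vanishing of the bracket is precisely the $\rho$-Todd additivity identity
\begin{displaymath}
  C_{T}(F,N_{1}\oplus N_{2})=C_{T}(F,N_{1})\bullet\Td^{-1}(N_{2})
  +C_{T}(F,N_{2})\bullet\Td^{-1}(N_{1})+\rho(F,N_{1},N_{2}).
\end{displaymath}

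Conversely, assume $C_{T}$ is $\rho$-Todd additive. Then for any composition $i_{Y/M}=i_{X/M}\circ i_{Y/X}$ and any choice of resolutions, apply the identity with $N_{1}=N_{Y/X}$ and $N_{2}=i_{Y/X}^{\ast}N_{X/M}$; since $N_{Y/P_{X/M}}\cong N_{Y/X}\oplus i_{Y/X}^{\ast}N_{X/M}$ with the compatible metric (isomorphism \eqref{eq:83}) we have $C_{T}(F,N_{Y/M})=C_{T}(F,N_{Y/P_{X/M}})=C_{T}(F,N_{1}\oplus N_{2})$, and the additivity relation says exactly that the bracket in Corollary \ref{cor:5} vanishes identically. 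Substituting this back into the formula of Corollary \ref{cor:5} yields equation \eqref{eq:49}, so $T$ is transitive.

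The one point that needs a little care—and the closest thing to an obstacle—is making sure the reduction to the universal case in the forward direction is legitimate: one must check that the characteristic classes $C^{\ad}_{T}(F,N_{1},N_{2})$, $C_{T}(F,N_{i})$ and $\rho(F,N_{1},N_{2})$ occurring in the bracket are genuinely functorial classes (which is guaranteed by Lemma \ref{lemm:3} and Theorem \ref{thm:8}), so that an identity between them that holds after pushing forward along the zero-section inclusion of a projective completion, where the push-forward is split injective, holds as an identity of characteristic classes and hence in all instances. Once that is in place, the corollary is just the bookkeeping described above, and no further computation is required.
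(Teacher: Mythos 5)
Your proposal is correct and follows essentially the route the paper intends: Corollary \ref{cor:6} is stated there as a direct consequence of Corollary \ref{cor:5}, the equivalence being exactly the vanishing of the bracketed term, with the converse direction immediate and the forward direction obtained by testing on the universal zero-section configuration where $(i_{Y/M})_{\ast}$ is split injective via $(\pi_{P'/Y})_{\ast}$ (the same device used in the proofs of Theorems \ref{thm:6} and \ref{thm:8}). Your explicit treatment of that injectivity step merely fills in what the paper leaves implicit; no new idea or different decomposition is involved.
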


Since we are mainly interested in singular Bott-Chern classes that are
transitive and compatible with the projection formula, we will
study characteristic classes that are compatible with the
  projection formula and $\rho $-Todd-additive in the second
  variable. Since we want to express any characteristic class in terms
  of a power series we will restrict ourselves to the algebraic category. 

\begin{proposition} \label{prop:11}
  Let $C$ be a class that is compatible with the
  projection formula and
  $\rho $-Todd additive in the second variable. Then $C$
  determines a power series $\phi _{C}(x)$ given by
  \begin{equation}\label{eq:48}
    C(\mathcal{O}_{Y},L)=\phi_{C} (c_{1}(L)),
  \end{equation}
  for every complex algebraic manifold $Y$ and algebraic line bundle
  $Y$. 
  Conversely, given any power series in one variable $\phi (x)$, there
  exists a unique characteristic class for algebraic vector bundles
  that is compatible with the 
  projection formula and $\rho
  $-Todd additive in the second variable such that equation
  \eqref{eq:48} holds.
\end{proposition}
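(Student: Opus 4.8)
The plan is to mimic the structure of the proof of Theorem \ref{thm:14}, exploiting the two extra constraints (compatibility with the projection formula and $\rho$-Todd additivity) to cut the data down from an arbitrary $n$-tuple of vector bundles to a single line bundle $L$ in the second variable, and from $F$ to $\mathcal{O}_Y$ in the first. First I would address the forward direction. Given a class $C$ that is compatible with the projection formula and $\rho$-Todd additive in the second variable, restricting to the case $Y$ a complex algebraic manifold, $F=\mathcal{O}_Y$, and $N=L$ a line bundle gives, by Theorem \ref{thm:14} applied to the characteristic class $N\mapsto C(\mathcal{O}_Y,N)$ of one vector bundle, a power series in the Chern classes of $L$; since $L$ has rank one this is a power series $\phi_C$ in $c_1(L)$ alone, i.e.\ equation \eqref{eq:48} holds. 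This is essentially immediate from the already-established Theorem \ref{thm:14}; the only point to check is that $C(\mathcal{O}_Y,-)$ is genuinely a theory of characteristic classes for one vector bundle in the sense of Definition \ref{def:6}, which it is by hypothesis.

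The substantive content is the converse: given an arbitrary $\phi(x)\in \mathbb{D}[[x]]$, one must produce a characteristic class $C$ for algebraic vector bundles of all ranks $r_F,r_N$, compatible with the projection formula and $\rho$-Todd additive, with $C(\mathcal{O}_Y,L)=\phi(c_1(L))$, and show it is unique. For uniqueness, I would argue as follows. Compatibility with the projection formula forces $C(F,N)=C(\mathcal{O}_Y,N)\bullet\ch(F)$, so $C$ is determined in the first variable by its values at $F=\mathcal{O}_Y$. For the second variable, I would use the splitting principle: given $N$ of rank $r_N$ over an algebraic $Y$, choose a proper map $\pi\colon\widetilde Y\to Y$ with $\pi^\ast$ injective on analytic Deligne cohomology and $\pi^\ast N$ filtered with line-bundle quotients $L_1,\dots,L_{r_N}$ (as in the proof of Theorem \ref{thm:14}). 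Since $\pi^\ast N$ is an iterated extension of the $L_i$, and since $C(\mathcal{O}_{\widetilde Y},-)$ factors through a characteristic class — hence through isomorphism classes, and by Proposition \ref{prop:12} is insensitive to the difference between an extension and the corresponding direct sum — we get $C(\mathcal{O}_{\widetilde Y},\pi^\ast N)=C(\mathcal{O}_{\widetilde Y},\bigoplus_i L_i)$. Now $\rho$-Todd additivity, applied inductively, expresses $C(\mathcal{O}_{\widetilde Y},\bigoplus_i L_i)$ entirely in terms of the $C(\mathcal{O}_{\widetilde Y},L_i)=\phi(c_1(L_i))$, the Todd forms $\Td^{-1}(L_j)$, and the universal correction classes $\rho(\mathcal{O}_{\widetilde Y},L_i\oplus\cdots, L_j)$ — all of which are already determined (the $\rho$'s are fixed characteristic classes by Lemma \ref{lemm:3}, independent of everything). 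Injectivity of $\pi^\ast$ then pins down $C(\mathcal{O}_Y,N)$, and hence $C(F,N)$.

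For existence, I would run the same recipe constructively: define $C(\mathcal{O}_Y,N)$ by the formula obtained from the inductive unfolding of $\rho$-Todd additivity applied to a splitting $\pi^\ast N=\bigoplus L_i$, with the $C(\mathcal{O},L_i)$ replaced by $\phi(c_1(L_i))$, and then set $C(F,N):=C(\mathcal{O}_Y,N)\bullet\ch(F)$. Since the resulting expression is symmetric in the $L_i$ (the $\rho$-Todd additivity relation is symmetric under swapping summands, so the iterated version is well-defined independently of the order — this needs a small associativity/symmetry check, using that $\rho(F,N_1,N_2)$ is symmetric in $N_1,N_2$, which follows from Definition \ref{def:10} since $P'$ is symmetric in the two projective completions), it descends to a well-defined class on $Y$ by the standard descent argument of Theorem \ref{thm:14}, independent of the chosen splitting; functoriality is then clear, and one verifies directly that the class so defined satisfies the projection formula (by construction) and is $\rho$-Todd additive (this is exactly the relation built into the recipe, now checked to be consistent). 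The main obstacle I anticipate is precisely this consistency/well-definedness of the iterated $\rho$-Todd-additive expansion: one must confirm that $\rho$-Todd additivity is an \emph{associative} rule, i.e.\ that expanding $C(\mathcal{O},L_1\oplus L_2\oplus L_3)$ by first splitting off $L_1$ versus first splitting off $L_3$ yields the same element. This amounts to a cocycle-type identity among the classes $\rho(F,N_1,N_2)$, $\Td^{-1}$, and $\phi$; I expect it to follow from the geometric definition of $\rho$ via the double-deformation space (the three-fold composition $Y\hookrightarrow X'\hookrightarrow P$ and its iterates give the needed symmetry), together with the transitivity-type identity of Lemma \ref{lemm:2} specialized to zero sections of sums of line bundles — but this is the step that requires genuine care rather than a routine appeal to Theorem \ref{thm:14}.
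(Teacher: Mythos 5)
Your proposal is correct and takes essentially the same route as the paper: the paper's entire proof is the one-line remark that the result follows from the splitting principle and theorem \ref{thm:14}, and your reduction --- projection formula to set $F=\mathcal{O}_Y$, splitting principle together with proposition \ref{prop:12} to replace $N$ by a sum of line bundles, and iterated $\rho$-Todd additivity to reach the values $\phi(c_{1}(L_{i}))$ --- is precisely the elaboration of that remark. The consistency (cocycle) identity for $\rho$ that you flag in the existence step is a real point the paper silently glosses over, but it is harmless: it holds because both orders of expansion compute the same class for any class satisfying the axioms (e.g.\ $C_{T^{h}}$, or directly from the geometric definition of $\rho$ via lemma \ref{lemm:2}), so your argument closes as you anticipate.
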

\begin{proof}
  This result follows directly from the splitting principle and
  theorem \ref{thm:14}.
\end{proof}

\begin{remark} \label{rem:2}
The utility of corollary \ref{cor:6} and proposition \ref{prop:11} is
limited by the fact that 
we do not know an explicit formula for the class $\rho
(\mathcal{O}_{Y},N_{1},N_{2})$. This class is related with the
arithmetic difference between $\mathbb{P}_{Y}(N_{1}\oplus N_{2}\oplus
\mathbb{C})$ and $\mathbb{P}_{Y}(N_{1}\oplus
\mathbb{C})\underset{Y}{\times }\mathbb{P}_{Y}(N_{2}\oplus
\mathbb{C})$, the second space being simpler than the first. The main
ingredients needed to compute this class are the Bott-Chern classes of
the tautological exact sequence. Therefore the work of Mourougane
\cite{Mourougane04:cbcc} might be useful for computing this class.  
\end{remark}

Recall that an additive genus is a characteristic class for algebraic
vector bundles $S$ such that
\begin{displaymath}
  S(N_{1}\oplus N_{2})=S(N_{1})+S( N_{2}).
\end{displaymath}

Let $\phi (x)=\sum_{i=0}^{\infty}a_{i}x^{i}$ be a power series in one
variable. There is a one to one correspondence between additive genus
and power series characterized by the condition that
$S(L)=\phi (c_{1}(L))$, for each line bundle $L$.

Since the class $\rho $ does not depend on the theory $T$ it
cancels out when considering the difference between two different
theories of singular Bott-Chern classes.

\begin{proposition}\label{prop:9}
  Let $C_{1}$ and $C_{2}$ be two characteristic classes for pairs of
  algebraic vector bundles that 
  are compatible with the
  projection formula and $\rho $-Todd-additive in the
  second  variable. Then there is a unique additive genus $S_{12}$ such that
  \begin{equation}\label{eq:39}
    C_{1}(F,N)-C_{2}(F,N)=\ch(F)\bullet \Td(N)^{-1}\bullet S_{12}(N).
  \end{equation}
\end{proposition}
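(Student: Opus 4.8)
The plan is to reduce everything to the difference of the two associated power series in one variable, using Proposition \ref{prop:11} together with the splitting principle. First I would apply Proposition \ref{prop:11} to each of $C_{1}$ and $C_{2}$, obtaining power series $\phi_{1}(x)$ and $\phi_{2}(x)$ in one variable with $C_{j}(\mathcal{O}_{Y},L)=\phi_{j}(c_{1}(L))$ for every algebraic line bundle $L$. Set $\phi(x):=\phi_{1}(x)-\phi_{2}(x)$, which is of the form $\sum_{i\ge 1}a_{i}x^{i}$ (the degree-zero terms must agree, since both classes specialize, via functoriality and the normalization built into $C_{T}$, to the point case; alternatively the degree-zero ambiguity is harmless and can be absorbed into $S_{12}$). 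Let $S$ be the additive genus attached to $\phi$ by the stated one-to-one correspondence, so $S(L)=\phi(c_{1}(L))$ on line bundles. The claim is then that $S_{12}:=S$ works, i.e.\ $C_{1}(F,N)-C_{2}(F,N)=\ch(F)\bullet\Td(N)^{-1}\bullet S(N)$, and that $S$ is the unique additive genus with this property.

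The key computation is the case $F=\mathcal{O}_{Y}$ and $N$ a sum of line bundles. Since $C_{1}$ and $C_{2}$ are both $\rho$-Todd additive in the second variable and the correction term $\rho(F,N_{1},N_{2})$ is independent of the theory $T$ (Lemma \ref{lemm:3}), it cancels in the difference $C_{1}-C_{2}$; hence the function $D(N):=C_{1}(\mathcal{O}_{Y},N)-C_{2}(\mathcal{O}_{Y},N)$ satisfies the genuinely additive rule $D(N_{1}\oplus N_{2})=D(N_{1})\bullet\Td^{-1}(N_{2})+D(N_{2})\bullet\Td^{-1}(N_{1})$. A straightforward induction on $\rk N$ using this rule gives, for $N=L_{1}\oplus\dots\oplus L_{r}$ a sum of line bundles,
\begin{displaymath}
  D(N)=\Td^{-1}(N)\bullet\sum_{i=1}^{r}\phi(c_{1}(L_{i}))
  =\Td^{-1}(N)\bullet S(N),
\end{displaymath}
where the last equality is the defining property of the additive genus $S$. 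Since both sides are characteristic classes for vector bundles (here one invokes Theorem \ref{thm:14}, which expresses any characteristic class as a power series in the Chern classes, so that an identity valid for split bundles on all algebraic manifolds is an identity of power series, hence holds for arbitrary $N$), the splitting principle promotes this to $D(N)=\Td^{-1}(N)\bullet S(N)$ for every algebraic vector bundle $N$.

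It remains to restore the dependence on $F$. Both $C_{1}$ and $C_{2}$ are compatible with the projection formula, so $C_{j}(F,N)=C_{j}(\mathcal{O}_{Y},N)\bullet\ch(F)$; subtracting and using the previous paragraph yields
\begin{displaymath}
  C_{1}(F,N)-C_{2}(F,N)=\big(C_{1}(\mathcal{O}_{Y},N)-C_{2}(\mathcal{O}_{Y},N)\big)\bullet\ch(F)
  =\ch(F)\bullet\Td(N)^{-1}\bullet S(N),
\end{displaymath}
which is \eqref{eq:39}. For uniqueness: if $S_{12}$ is any additive genus satisfying \eqref{eq:39}, then taking $Y$ arbitrary, $F=\mathcal{O}_{Y}$ and $N=L$ a line bundle gives $\phi(c_{1}(L))=\Td(L)^{-1}\bullet S_{12}(L)\bullet\Td(L)=S_{12}(L)$ after multiplying back by $\Td(L)$ (note $S_{12}(L)$ lives in the relevant bidegrees so the $\bullet$-product with the invertible class $\Td(L)$ is just multiplication and is cancellable); since $c_{1}$ of line bundles on projective spaces generate freely in the sense of Theorem \ref{thm:14}, this determines $S_{12}$ on line bundles, hence determines the additive genus $S_{12}$ completely. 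The main obstacle, and the point deserving the most care, is the bookkeeping of the algebro-geometric twist and the degrees in the $\bullet$-product so that $\Td(N)^{-1}$ is genuinely invertible and the cancellation in the uniqueness argument is legitimate; the additivity induction itself is routine once $\rho$ has been observed to cancel.
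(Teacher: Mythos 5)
Your overall strategy is the intended one (the paper itself only sketches it): cancel the universal class $\rho$ in the difference, use compatibility with the projection formula to reduce to $F=\mathcal{O}_Y$, reduce to line bundles via the additivity rule, and conclude by the splitting principle and theorem \ref{thm:14} / proposition \ref{prop:11}. However, there is a concrete error in the execution: the genus you produce is the wrong one, and the displayed identity is false. Writing $D(N)=C_1(\mathcal{O}_Y,N)-C_2(\mathcal{O}_Y,N)$ and $\phi=\phi_1-\phi_2$, already for a line bundle $L$ equation \eqref{eq:39} forces $\phi(c_1(L))=D(L)=\Td^{-1}(L)\bullet S_{12}(L)$, i.e. $S_{12}(L)=\Td(L)\bullet\phi(c_1(L))$, not $\phi(c_1(L))$. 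Accordingly, the induction using $D(N_1\oplus N_2)=D(N_1)\bullet\Td^{-1}(N_2)+D(N_2)\bullet\Td^{-1}(N_1)$ gives, for $N=L_1\oplus\dots\oplus L_r$, the identity $D(N)=\sum_i D(L_i)\prod_{j\neq i}\Td^{-1}(L_j)=\Td^{-1}(N)\bullet\sum_i\Td(L_i)\phi(c_1(L_i))$, and not $\Td^{-1}(N)\bullet\sum_i\phi(c_1(L_i))$ as you claim; the two differ unless $\phi=0$. The same slip reappears in your uniqueness paragraph, where multiplying by $\Td(L)$ yields $S_{12}(L)=\Td(L)\bullet\phi(c_1(L))$ rather than $\phi(c_1(L))$ (uniqueness itself survives, since the genus is still pinned down on line bundles). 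Your identification is also inconsistent with theorem \ref{thm:11} and corollary \ref{cor:10}, where the correction genus visibly absorbs a Todd factor.

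The fix is short and actually simplifies your argument: define $S_{12}(N):=\Td(N)\bullet\bigl(C_1(\mathcal{O}_Y,N)-C_2(\mathcal{O}_Y,N)\bigr)$. This is a characteristic class (functoriality is clear), and additivity is immediate, with no induction, from the $\rho$-cancelled additivity rule together with multiplicativity of $\Td$: $\Td(N_1\oplus N_2)\bullet D(N_1\oplus N_2)=\Td(N_1)\bullet D(N_1)+\Td(N_2)\bullet D(N_2)$. Its restriction to line bundles is $\Td(L)\bullet\phi(c_1(L))$, so it is the additive genus attached to the power series $\tfrac{x}{1-e^{-x}}\bigl(\phi_1(x)-\phi_2(x)\bigr)$. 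Then $D(N)=\Td^{-1}(N)\bullet S_{12}(N)$ holds by construction on $(\mathcal{O}_Y,N)$, and the projection formula step you wrote restores the factor $\ch(F)$, giving \eqref{eq:39}; note also that the cancellation of $\rho$ needs only that the same class $\rho$ (definition \ref{def:15}, well defined by lemma \ref{lemm:3}) occurs in both additivity formulas, which is part of the hypothesis rather than something to be argued.
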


We can summarize the results of this section in the following theorem.

\begin{theorem}\label{thm:7}
  There is a one to one correspondence between theories of singular
  Bott-Chern classes for complex algebraic manifolds that are
  transitive and compatible with the
  projection formula, and formal power series $\phi (x)\in
  \mathbb{R}[[x]]$. To each theory of singular Bott-Chern classes
  corresponds the power series $\phi $ such that
  \begin{equation}\label{eq:64}
    C_{T}(\mathcal{O}_{Y},L)={\bf 1}_{1}\bullet \phi (c_{1}(L)),
  \end{equation}
  for every complex algebraic manifold $Y$ and every algebraic line
  bundle $L$.
  To each power series $\phi $ it corresponds a unique class $C$,
  compatible with the
  projection formula and $\rho $-Todd-additive in
  the second  variable, characterized by equation \eqref{eq:64} and a
  theory of singular Bott-Chern given by definition \ref{def:5}. 
\end{theorem}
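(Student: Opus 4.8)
The plan is to assemble the correspondence from the pieces already established in this section and in Section~7. I would begin by recalling from Theorem~\ref{thm:6} that theories of singular Bott-Chern classes are in bijection with characteristic classes $C$ of pairs of vector bundles (of each fixed rank and codimension), via $T\mapsto C_T$ and the explicit formula \eqref{eq:68} of Definition~\ref{def:5} for the inverse. So the task reduces to identifying, among all characteristic classes $C$, those $C=C_T$ for which the associated theory $T$ is both compatible with the projection formula and transitive; and then parametrizing those $C$ by power series $\phi(x)\in\mathbb{R}[[x]]$.

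The first main step uses Corollary~\ref{cor:7}: $T$ is compatible with the projection formula iff $C_T$ is, i.e.\ iff $C_T(F,N)=C_T(\mathcal{O}_Y,N)\bullet\ch(F)$. The second main step uses Corollary~\ref{cor:6}: for such $T$, transitivity is equivalent to $C_T$ being $\rho$-Todd additive in the second variable. Thus the relevant characteristic classes are exactly those which are compatible with the projection formula and $\rho$-Todd additive. Now I would invoke Proposition~\ref{prop:11}: in the algebraic category such a class $C$ is completely determined by its value on line bundles, $C(\mathcal{O}_Y,L)=\phi_C(c_1(L))$, and every power series $\phi$ in one variable arises from a unique such $C$. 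Finally, degree considerations (as in the remark after Theorem~\ref{thm:8}) force $C(\mathcal{O}_Y,L)\in\bigoplus_p H^{2p-1}_{\mathcal{D}^{\an}}(Y,\mathbb{R}(p))$, so $\phi_C$ lands in ${\bf 1}_1\cdot\mathbb{R}[[x]]$; writing $C(\mathcal{O}_Y,L)={\bf 1}_1\bullet\phi(c_1(L))$ with $\phi\in\mathbb{R}[[x]]$ gives equation~\eqref{eq:64}. Conversely, given $\phi\in\mathbb{R}[[x]]$, Proposition~\ref{prop:11} produces the class $C$, Theorem~\ref{thm:6}/Definition~\ref{def:5} produces the theory $T_C$, Corollary~\ref{cor:7} gives compatibility with the projection formula (since $C$ is compatible by construction), and Corollary~\ref{cor:6} gives transitivity (since $C$ is $\rho$-Todd additive by construction). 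This closes the loop and establishes the bijection.

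I would then note explicitly that the two maps are mutually inverse: starting from a transitive, projection-formula-compatible theory $T$, forming $C_T$, extracting $\phi$ via \eqref{eq:64}, and running the converse construction recovers $C_T$ by the uniqueness clause of Proposition~\ref{prop:11} and hence recovers $T$ by the uniqueness clause of Theorem~\ref{thm:6}; and starting from $\phi$, the round trip returns $\phi$ because $C(\mathcal{O}_Y,L)$ determines $\phi$. One small point to address carefully is that the theories here are indexed over all ranks $r_F$ and codimensions $r_N$ simultaneously, so one must check that a single power series $\phi$ controls $C$ in all $(r_F,r_N)$ consistently; but this is immediate, since compatibility with the projection formula reduces $C(F,N)$ to $C(\mathcal{O}_Y,N)$, the splitting principle reduces $C(\mathcal{O}_Y,N)$ to the line-bundle case across all ranks of $N$, and $\rho$-Todd additivity is exactly the compatibility needed to glue these, as encoded in Proposition~\ref{prop:11}.

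The main obstacle is not really in this final assembly — which is a fairly formal bookkeeping of the corollaries — but rather that it rests on the somewhat delicate Proposition~\ref{prop:7} and Lemma~\ref{lemm:3} (the independence of $\rho$ from $T$), which feed Corollary~\ref{cor:6}. Assuming those, the only genuine care needed in the proof of Theorem~\ref{thm:7} is making sure the degree/twist normalization ${\bf 1}_1$ is handled correctly so that the parametrizing power series has \emph{real} (rather than $\mathbb{D}$-valued) coefficients, and that one is consistently working in the algebraic category where Theorem~\ref{thm:14} and hence Proposition~\ref{prop:11} apply.
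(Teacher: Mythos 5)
Your proposal is correct and follows essentially the same route as the paper, which presents Theorem \ref{thm:7} precisely as a summary assembled from Theorem \ref{thm:6}, Corollary \ref{cor:7}, Corollary \ref{cor:6}, Proposition \ref{prop:11}, and the degree/${\bf 1}_{1}$ normalization noted in the remark after Theorem \ref{thm:8}. The bookkeeping you add about mutual inverses and consistency across ranks and codimensions is exactly the intended (and in the paper implicit) content.
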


Even if we do not know the exact value of the class $\rho $ another
consequence of corollary \ref{cor:6} is that, in order to prove the
transitivity of a theory of singular Bott-Chern classes it is enough
to check it for a particular class of compositions.

\begin{corollary} \label{cor:8}
  Let $T$ be a theory of singular Bott-Chern classes compatible with
  the projection formula. Then $T$ is
  transitive if and only if for any compact complex manifold $Y$ and
  vector bundles $N_{1}$, $N_{2}$, the theory $T$ is transitive with
  respect to the composition of inclusions
  \begin{displaymath}
    Y\hookrightarrow \mathbb{P}_{Y}(N_{1}\oplus \mathbb{C})
    \hookrightarrow \mathbb{P}_{Y}(N_{1}\oplus \mathbb{C})
    \times_{Y}
    \mathbb{P}_{Y}(N_{2}\oplus \mathbb{C}) 
  \end{displaymath}
  and the Koszul resolutions. 
\hfill $\square$
\end{corollary}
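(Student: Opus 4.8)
The plan is to deduce the statement directly from Corollary \ref{cor:6}, which already gives the equivalence "$T$ transitive $\iff$ $C_T$ is $\rho$-Todd additive in the second variable" for any theory compatible with the projection formula. The only point requiring argument is that $\rho$-Todd additivity of $C_T$ is equivalent to transitivity of $T$ with respect to the single family of compositions $Y\hookrightarrow \mathbb{P}_Y(N_1\oplus\mathbb{C})\hookrightarrow \mathbb{P}_Y(N_1\oplus\mathbb{C})\times_Y\mathbb{P}_Y(N_2\oplus\mathbb{C})$ equipped with Koszul resolutions, rather than all compositions. One direction is trivial: if $T$ is transitive, it is transitive for these particular compositions. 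For the converse, I would show that the $\rho$-Todd additivity identity for $C_T$ is precisely the content of equation \eqref{eq:49} (or equivalently of Proposition \ref{prop:7} / Corollary \ref{cor:5}) specialized to these compositions.

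The key step is the following. Fix a complex manifold $Y$ and hermitian vector bundles $\overline N_1$, $\overline N_2$. Set $X=\mathbb{P}_Y(N_1\oplus\mathbb{C})$, $M=\mathbb{P}_Y(N_1\oplus\mathbb{C})\times_Y\mathbb{P}_Y(N_2\oplus\mathbb{C})$, and let $i_{Y/X}$, $i_{X/M}$ be the zero-section inclusions. Then $N_{Y/X}\cong N_1$, $i_{Y/X}^{\ast}N_{X/M}\cong N_2$, and $N_{Y/M}\cong N_1\oplus N_2$, so the exact sequence $\overline{\varepsilon}_N$ of \eqref{eq:84} is the (orthogonally split, for the natural metrics) sequence $0\to\overline N_1\to\overline N_1\oplus\overline N_2\to\overline N_2\to 0$; hence $\widetilde{\Td^{-1}}(\overline{\varepsilon}_N)=0$. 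Now take $\overline F=\overline{\mathcal O}_Y$ with the trivial metric, and take for $\overline E_{\ast}\to i_{Y/X,\ast}\mathcal O_Y$ the Koszul resolution $K(\overline{\mathcal O}_Y,\overline N_1)$, and for $\overline E'_{\ast,\ast}$ the double Koszul resolution so that $\Tot(\overline E'_{\ast,\ast})=\Tot\big(\pi_{P_{X/M}}^{\ast}K(\overline{\mathcal O}_Y,\overline N_1)\otimes K(\overline{\mathcal O}_X,\overline N_{X/M})\big)$. With these choices all three hermitian embedded vector bundles $\overline\xi_{Y\hookrightarrow X}$, $\overline\xi_{X\hookrightarrow M,k}$, $\overline\xi_{Y\hookrightarrow P_{X/M}}$ of Notation \ref{def:4} are built from Koszul resolutions, so that $(\pi_P)_{\ast}T$ applied to them produces exactly the classes $C_T$ and $C_T^{\ad}$ of the relevant bundles. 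Applying Proposition \ref{prop:7} and then the projection $M\to Y$ (using $(i_{X/M})_{\ast}$ and $(\pi_{P_{X/M}})_{\ast}$ carefully, as in the derivation of \eqref{eq:28}), the equation of Proposition \ref{prop:7} becomes, after pushforward, precisely the $\rho$-Todd additivity identity
\begin{displaymath}
  C_T(\mathcal O_Y,N_1\oplus N_2)=
  C_T(\mathcal O_Y,N_1)\bullet\Td^{-1}(N_2)+
  C_T(\mathcal O_Y,N_2)\bullet\Td^{-1}(N_1)+\rho(\mathcal O_Y,N_1,N_2),
\end{displaymath}
using that $C_T$ is compatible with the projection formula to reduce from arbitrary $F$ to $\mathcal O_Y$. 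Thus transitivity with respect to these compositions forces $C_T$ to be $\rho$-Todd additive on all pairs $(\mathcal O_Y,N_1\oplus N_2)$, and by the projection-formula compatibility and the splitting principle (Theorem \ref{thm:14}), $\rho$-Todd additive on all pairs $(F,N)$.

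Having established that transitivity for this single family of compositions implies $\rho$-Todd additivity of $C_T$, Corollary \ref{cor:6} gives full transitivity of $T$, completing the proof. The main obstacle I anticipate is purely bookkeeping: verifying that, for the Koszul-resolution choices above, the terms $T(\overline\xi_{X\hookrightarrow M,k})$ and $(\pi_{P_{X/M}})_{\ast}T(\overline\xi_{Y\hookrightarrow P_{X/M}})$ in Proposition \ref{prop:7} collapse after pushforward to exactly $C_T(F,N_2)\bullet\Td^{-1}(N_1)$ and $C_T^{\ad}(F,N_1,N_2)$ respectively — this uses the identification $P'\cong\mathbb{P}_Y(N_2\oplus\mathbb{C})\times_Y\mathbb{P}_Y(N_1\oplus\mathbb{C})$ from the proof of Proposition \ref{prop:7} together with $(\pi_P)_{\ast}\sum_k(-1)^k\ch(K(\overline{\mathcal O}_X,\overline N_{X/M})_k)=\Td^{-1}(N_{X/M})$ from Corollary \ref{cor:4}, and the fact that the extra split sequence $\overline{\varepsilon}_N$ contributes nothing. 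Once this identification is in place, no new ideas are needed; everything reduces to the already-proved Proposition \ref{prop:7} and Corollary \ref{cor:6}.
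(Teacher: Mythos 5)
Your proposal is correct and follows exactly the route the paper intends (the corollary is stated with the proof left to the reader as an immediate consequence of Corollaries \ref{cor:5} and \ref{cor:6}): transitivity for the zero-section compositions with Koszul resolutions forces $(i_{Y/M})_{\ast}$ of the defect class $C_T^{\ad}(F,N_1,N_2)-C_T(F,N_1)\bullet\Td^{-1}(N_2)-C_T(F,N_2)\bullet\Td^{-1}(N_1)$ to vanish, applying $(\pi_{M/Y})_{\ast}$ recovers its vanishing on $Y$, i.e.\ $\rho$-Todd additivity (first for $\mathcal{O}_Y$, then for all $F$ by the projection formula), and Corollary \ref{cor:6} then gives full transitivity. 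The only remark worth making is that the bookkeeping you anticipate is unnecessary: you need not compute the push-forwards of the individual terms $T(\overline{\xi}_{X\hookrightarrow M,k})$ and $(\pi_{P_{X/M}})_{\ast}T(\overline{\xi}_{Y\hookrightarrow P_{X/M}})$, since subtracting the assumed transitivity equation \eqref{eq:49} from the unconditional identity of Corollary \ref{cor:5} cancels them identically before you push forward.
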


We can make the previous corollary a little more explicit. Let $\pi
_{1}$ and $\pi _{2}$ be the projections from
$P:=\mathbb{P}_{Y}(N_{1}\oplus \mathbb{C}) 
\times_{Y}
\mathbb{P}_{Y}(N_{2}\oplus \mathbb{C})$ to
$P_{1}:=\mathbb{P}_{Y}(N_{1}\oplus \mathbb{C})$ and
$P_{2}:=\mathbb{P}_{Y}(N_{2}\oplus \mathbb{C})$ respectively. Let
$\overline K_{1}=K(\overline{\mathcal{O}}_{Y},\overline N_{1})$ and
$\overline K_{2}=K(\overline{\mathcal{O}}_{Y},\overline N_{2})$ be the
Koszul resolutions in 
$P_{1}$ and $P_{2}$ respectively. Then, $$\overline K=\pi
_{1}^{\ast}K_{1}\otimes
\pi
_{2}^{\ast}K_{2} 
$$  
is a resolution of $\mathcal{O}_{Y}$ in $P$. Then the theory $T$ is
transitive in this case if
\begin{displaymath}
  T(\overline K)=\pi _{2}^{\ast}T(\overline K_{2})\bullet
  \pi _{1}^{\ast}(c_{r_{1}}(\overline Q_{1})\bullet \Td^{-1}(\overline
  Q_{1})) +
  (i_{1})_{\ast}(T(\overline K_{1})\bullet p_{1}^{\ast}
  \Td^{-1}(\overline N_{2})),
\end{displaymath}
where $r_{1}$ is the rank of $N_{1}$, $\overline
Q_{1}$
is the tautological quotient bundle in $P_{1}$ with the induced
metric, $i_{1}\colon P_{1}\longrightarrow P$ is the inclusion
and $p_{1}\colon P_{1}\longrightarrow Y$ is the projection.

The singular Bott-Chern classes that we have defined depend on the
choice of a hermitian metric on the normal bundle and behave well with
respect inverse images. Nevertheless, when
one is interested in covariant functorial properties and, in particular, in a
composition of closed immersions, it might be interesting to consider a
variant of singular Bott-Chern classes that depend on the choice of
metrics on the tangent bundles to $Y$ and $X$.

\begin{notation} \label{def:14}
Let $\overline {\xi}=(i\colon Y\longrightarrow X,\overline
N,\overline F,\overline E_{\ast}\to i_{\ast}F)$ be a hermitian embedded vector
bundle. Let $\overline T_{X}$ and $\overline T_{Y}$ be the tangent
bundles to $X$ and $Y$ provided with hermitian metrics. As usual we
write $\Td(Y)=\Td(\overline T_{Y})$ and $\Td(X)=\Td(\overline
T_{X})$. We put
\begin{displaymath}
  \overline {\xi}_{c}=(i\colon Y\longrightarrow X,\overline
  T_{X}, \overline T_{Y},\overline F,\overline E_{\ast}\to i_{\ast}F).
\end{displaymath}
By abuse of notation we will also say that $\overline {\xi}_{c}$
is a hermitian embedded vector bundle. In this situation we denote by
$\overline 
{\xi}_{N_{Y/X}}$ the exact sequence of hermitian vector bundles
\begin{displaymath}
  \overline {\xi}_{N_{Y/X}}\colon 
  0\longrightarrow \overline T_{Y}
  \longrightarrow i^{\ast}\overline T_{X}
  \longrightarrow \overline N_{Y/X}
  \longrightarrow 0.
\end{displaymath}
If there is no danger of confusion we will denote $\overline
N=\overline N_{Y/X}$ and therefore $\overline {\xi}_{N}=\overline
{\xi}_{N_{Y/X}}$.  
\end{notation}

\begin{definition} \label{def:18}
  Let $T$ be a theory of singular Bott-Chern classes.
  Then the \emph{covariant singular Bott-Chern class} associated to
  $T$ is given by
  \begin{equation}
    \label{eq:85}
    T_{c}(\overline {\xi}_{c})=
    T(\overline {\xi})+i_{\ast}(\ch(\overline F)\bullet
    \widetilde{\Td^{-1}}(\overline{\xi}_{N_{Y/X}})\Td(Y)) 
  \end{equation}
\end{definition}

\begin{proposition}\label{prop:17}
  The covariant singular Bott-Chern classes satisfy the following
  properties
  \begin{enumerate}
  \item The class $T_{c}(\overline {\xi}_{c})$ does not depend on the
    choice of the metric on $N_{Y/X}$.
  \item The differential equation
    \begin{equation}
      \label{eq:86}
      \dd_{\mathcal{D}}T_{c}(\overline {\xi}_{c})=
      \sum_{k}(-1)^{k}\ch(\overline E_{k})-
      i_{\ast}(\ch(\overline F)\bullet \Td(Y))\bullet \Td^{-1}(X)
    \end{equation}
    holds.
  \item If the theory $T$ is compatible with the projection formula, then
    \begin{displaymath}
      T_{c}(\overline {\xi}_{c}\otimes \overline G)=
      T_{c}(\overline {\xi}_{c})\bullet \ch(\overline G).
    \end{displaymath}
  \item If, moreover, the theory $T$ is transitive, then, using
    notation \ref{def:4} adapted to the current setting, we have
    \begin{multline}
      \label{eq:87}
       T_{c}(\overline \xi_{Y\hookrightarrow M,c})=
    \sum_{k}(-1)^{k}T_{c}(\overline \xi _{X\hookrightarrow
      M,k,c})\\+(i_{X/M})_{\ast}(T_{c}(\overline \xi_{Y\hookrightarrow X,c})\bullet
    \Td(X))\bullet \Td^{-1}(M).
    \end{multline}
  \item With the hypothesis of corollary \ref{comparThevb}, we have
  \begin{equation}\label{eq:88}
    T_{c}(\bigoplus_{j \text{ even} }\overline{\xi}_{j,c}) - T_{c}(\bigoplus
    _{j\text{ odd} }\overline {\xi}_{j,c})=
    [\widetilde{\ch}(\overline \varepsilon )]-
    i_{\ast}([\widetilde{\ch}(\overline{\chi})\bullet \Td(Y)])\bullet 
    \Td^{-1}(X).
  \end{equation} 
  \end{enumerate}
\end{proposition}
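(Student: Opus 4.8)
The plan is to verify each of the five assertions by unwinding Definition \ref{def:18}, namely
\begin{displaymath}
  T_{c}(\overline {\xi}_{c})=T(\overline {\xi})+i_{\ast}(\ch(\overline F)\bullet\widetilde{\Td^{-1}}(\overline{\xi}_{N_{Y/X}})\Td(Y)),
\end{displaymath}
and reducing each statement to the corresponding already-proved property of the underlying theory $T$ together with elementary facts about Bott-Chern classes of the exact sequence $\overline{\xi}_{N_{Y/X}}$. First I would establish (1): the only $N_{Y/X}$-metric dependence in $T_{c}(\overline{\xi}_{c})$ comes from $T(\overline{\xi})$ and from $\widetilde{\Td^{-1}}(\overline{\xi}_{N_{Y/X}})$. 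If $h_{0},h_{1}$ are two metrics on $N=N_{Y/X}$, then by Proposition \ref{prop:15} the change in $T(\overline{\xi})$ is $-i_{\ast}[\widetilde{\Td^{-1}}(N,h_{0},h_{1})\ch(\overline F)]$, while the change in $\widetilde{\Td^{-1}}(\overline{\xi}_{N_{Y/X}})$ is, by the additivity/functoriality of Bott-Chern classes (Proposition \ref{prop:20} applied to the sequence $0\to\overline T_{Y}\to i^{\ast}\overline T_{X}\to \overline N\to 0$ with varying metric on the last term, together with the composition behaviour of $\widetilde{\Td^{-1}}$ under a change of one metric), exactly $\widetilde{\Td^{-1}}(N,h_{0},h_{1})$ up to sign; these cancel in the sum, giving independence. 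Here I expect the sign/degree bookkeeping for $\widetilde{\Td^{-1}}$ of a three-term sequence versus the two-metric Bott-Chern class of Definition \ref{def:11} to be the most delicate routine point.

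Next I would prove (2) by applying $\dd_{\mathcal{D}}$ to \eqref{eq:85}. By Definition \ref{def:7}\eqref{item:15},
\begin{displaymath}
  \dd_{\mathcal{D}}T(\overline{\xi})=\sum_{k}(-1)^{k}[\ch(\overline E_{k})]-i_{\ast}([\Td^{-1}(\overline N)\ch(\overline F)]),
\end{displaymath}
and $\dd_{\mathcal{D}}\widetilde{\Td^{-1}}(\overline{\xi}_{N_{Y/X}})=\Td^{-1}(\overline T_{Y})\Td^{-1}(\overline N_{Y/X})-\Td^{-1}(i^{\ast}\overline T_{X})$ by the differential equation for the Bott-Chern class of $\overline{\xi}_{N}$; note $\Td$ is multiplicative, and with our conventions $\Td(Y)=\Td(\overline T_{Y})$. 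Pushing forward, using $\dd_{\mathcal{D}}i_{\ast}=i_{\ast}\dd_{\mathcal{D}}$ and the projection formula at the level of forms, the term $i_{\ast}(\ch(\overline F)\bullet(\Td^{-1}(\overline T_{Y})\Td^{-1}(\overline N))\Td(Y))=i_{\ast}(\ch(\overline F)\bullet\Td^{-1}(\overline N))$ cancels the corresponding term in $\dd_{\mathcal{D}}T(\overline{\xi})$, leaving $i_{\ast}(\ch(\overline F)\bullet\Td^{-1}(i^{\ast}\overline T_{X})\Td(Y))=i_{\ast}(\ch(\overline F)\Td(Y))\bullet\Td^{-1}(X)$ by the projection formula, which is \eqref{eq:86}.

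For (3), assuming $T$ compatible with the projection formula, I would tensor everything by $\overline G$: $T(\overline{\xi}\otimes\overline G)=T(\overline{\xi})\bullet\ch(\overline G)$ by hypothesis, the exact sequence $\overline{\xi}_{N_{Y/X}}$ is unchanged by tensoring (the normal and tangent bundles do not change), and the correction term transforms by $\ch(\overline F\otimes i^{\ast}G)=\ch(\overline F)i^{\ast}\ch(\overline G)$, so the projection formula $i_{\ast}(\alpha\bullet i^{\ast}\ch(\overline G))=i_{\ast}(\alpha)\bullet\ch(\overline G)$ gives the claim. For (5), I would start from Corollary \ref{comparThevb}, add the correction terms $i_{\ast}(\ch(\overline F_{j})\widetilde{\Td^{-1}}(\overline{\xi}_{N})\Td(Y))$ for each $j$ (the sequence $\overline{\xi}_{N}$ is the same for all $j$, only $\overline F_{j}$ varies), and use additivity of $\ch$ on the exact sequence $\overline\chi$: $\sum_{j}(-1)^{j}\ch(\overline F_{j})=\dd_{\mathcal{D}}\widetilde{\ch}(\overline\chi)$ at the level of forms; combining $\widetilde{\Td^{-1}}\bullet\Td$ with the identity $\Td^{-1}\Td=1$ and the standard manipulation relating $\widetilde{\ch}\cdot\Td^{-1}$ and $\Td^{-1}\cdot\widetilde{\ch}$ modulo $\dd_{\mathcal{D}}$ converts the right-hand side of Corollary \ref{comparThevb} into that of \eqref{eq:88}.

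The main obstacle will be (4): one must feed the definition \eqref{eq:85} into the transitivity equation \eqref{eq:49} for $T$ and reassemble the several Bott-Chern correction terms into the single clean term $(i_{X/M})_{\ast}(T_{c}(\overline\xi_{Y\hookrightarrow X,c})\bullet\Td(X))\bullet\Td^{-1}(M)$. Concretely, I would write out $T_{c}$ for each of $\overline\xi_{Y\hookrightarrow M}$, $\overline\xi_{X\hookrightarrow M,k}$, $\overline\xi_{Y\hookrightarrow X}$, substitute \eqref{eq:49}, and then show that the accumulated tangent-bundle corrections organize via the octahedral-type exact sequences relating $\overline T_{Y}$, $\overline T_{X}$, $\overline T_{M}$, $\overline N_{Y/X}$, $\overline N_{X/M}$, $\overline N_{Y/M}$ (including the sequence $\overline{\varepsilon}_{N}$ of \eqref{eq:84}) together with the additivity of $\widetilde{\Td^{-1}}$ under these sequences (Lemma \ref{lemm:1}/Corollary \ref{cor:12} applied to $\Td^{-1}$, as allowed by the final remark of Section \ref{sec:bott-chern-forms}). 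The key identity to extract is that $\widetilde{\Td^{-1}}(\overline{\xi}_{N_{Y/M}})$ equals, modulo $\dd_{\mathcal{D}}$ and up to the explicit $\widetilde{\Td^{-1}}(\overline{\varepsilon}_{N})$ term appearing in \eqref{eq:49}, the sum of $\widetilde{\Td^{-1}}(\overline{\xi}_{N_{Y/X}})$ and the pullback of $\widetilde{\Td^{-1}}(\overline{\xi}_{N_{X/M}})$ appropriately weighted by Todd forms; once this is in hand, collecting terms and using $\Td^{-1}(X)\Td(X)=1$ yields \eqref{eq:87}. I expect this last bookkeeping — tracking which $\Td$ or $\Td^{-1}$ factor multiplies each Bott-Chern class and on which space it lives — to be the only genuinely laborious part, and everything else to follow formally from the results quoted above.
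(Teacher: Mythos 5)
Your proposal is correct and follows the same route the paper intends: the paper's own proof of this proposition consists of the single remark that all statements follow from straightforward computations, and your plan — unwinding definition \ref{def:18}, invoking proposition \ref{prop:15}, the differential equation and projection/transitivity properties of $T$, and the Bott-Chern additivity identities for the tangent/normal exact sequences (with the $\dd_{\mathcal{D}}$-integration-by-parts step converting $\sum_k(-1)^k\ch(\overline E_k)$, resp.\ $\sum_j(-1)^j\ch(\overline F_j)$, into the pushed-forward terms) — is exactly that computation. The points you flag as delicate (signs for $\widetilde{\Td^{-1}}$ and the Todd-weighted bookkeeping in (iv)) are indeed only bookkeeping and work out as you indicate.
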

\begin{proof}
  All the statements follow from straightforward computations.
\end{proof}

\section{Homogeneous singular Bott-Chern classes}
\label{sec:bismut-gillet-soule}

In this section we will show that, by adding a natural fourth axiom to
definition \ref{def:7}, we obtain a unique theory of singular
Bott-Chern classes that we call homogeneous singular Bott-Chern
classes, and we will compare it with the classes previously defined
by Bismut, Gillet and Soul\'e and by Zha.

In the paper \cite{BismutGilletSoule:MR1086887}, Bismut, Gillet and
Soul\'e introduced a theory of singular Bott-Chern classes that is
the main ingredient in their construction of direct images for
closed immersions.

Strictly speaking, the construction of
\cite{BismutGilletSoule:MR1086887} only produces a theory of  
singular Bott-Chern classes in the sense of this paper when the metrics
involved satisfy a technical condition, called Condition (A) of
Bismut. Nevertheless, there is a unique way to extend the definition of
\cite{BismutGilletSoule:MR1086887} from metrics satisfying Bismut's 
condition (A) to 
general metrics in such a way that one obtains a theory of singular
Bott-Chern classes in the sense of this paper.

In his thesis \cite{zha99:_rieman_roch}, Zha gave another
definition of singular Bott-Chern classes, and he also used them to
define direct 
images for closed immersions in Arakelov theory. 

We will recall the construction of both theories of singular
Bott-Chern classes and we will show that they agree with the theory
of homogeneous singular Bott-Chern classes.

 We warn
the reader that the normalizations we use differ from the
normalizations in \cite{BismutGilletSoule:MR1086887} and
\cite{zha99:_rieman_roch}. The 
main difference is that we insist on using the
algebro-geometric twist in cohomology, whereas in the other two papers
the 
authors 
use cohomology with real coefficients. 

Let $r_{F}$ and $r_{N}$ be two positive integers. Let $Y$ be a
complex manifold and let $\overline F$ and $\overline N$ be two
hermitian 
vector bundles of rank $r_{F}$ and $r_{N}$ respectively. Let
$P=\mathbb{P}(N\oplus \mathbb{C})$ and let $s$ be the zero section. We
will follow the notations of 
definition \ref{def:2}. Then $T(K(\overline F,\overline N))$
satisfies the differential equation
\begin{displaymath}
  \dd_{\mathcal{D}} T(K(\overline
F,\overline N))=c_{r_{N}}(\overline Q)\Td^{-1}(\overline Q)\ch(\pi
_{P}^{\ast}\overline F)-s_{\ast}(\ch(\overline F)\Td^{-1}(\overline
N)). 
\end{displaymath}
Therefore, the class
\begin{displaymath}
  \widetilde e_{T}(\overline F,\overline N):= T(K(\overline
F,\overline N))\bullet \Td(\overline Q)\bullet\ch^{-1}(\pi
_{P}^{\ast}\overline F)
\end{displaymath}
satisfies the simpler equation
\begin{equation}\label{eq:69}
  \dd_{\mathcal{D}}\widetilde e_{T}(\overline F,\overline N)=
  [c_{r_{N}}(\overline Q)]-\delta _{Y}.
\end{equation}
Observe that the right hand side of this equation belongs to
$\mathcal{D}^{2r_{N}}_{D}(P,r_{N})$. Thus it seems natural to
introduce the following definition.
\begin{definition} \label{def:16}
  Let $T$ be a theory of singular Bott-Chern classes of rank $r_{F}>0$
  and codimension $r_{N}$. Then the class
  \begin{displaymath}
  \widetilde e_{T}(\overline F,\overline N)= T(K(\overline
F,\overline N))\bullet \Td(\overline Q)\bullet\ch^{-1}(\pi
_{P}^{\ast}\overline F)    
  \end{displaymath}
  is called the \emph{Euler-Green class associated to} $T$.
The class $T(K(\overline
F,\overline N))$ is said to be \emph{homogeneous} if
  \begin{displaymath}
    \widetilde e_{T}(\overline F,\overline N) \in \widetilde
{\mathcal{D}}^{2r_{N}-1}_{D}(P,r_{N}). 
  \end{displaymath}
  A theory of singular Bott-Chern classes of rank $0$ is said to be
  \emph{homogeneous} if it agrees with the theory of 
  Bott-Chern classes associated to the Chern character.  Finally, a
  theory of singular Bott-Chern classes 
  is said to be \emph{homogeneous} if its restrictions to all
  ranks and codimensions are homogeneous.  
\end{definition}

The main interest of the above definition is the following result.
\begin{theorem} \label{thm:9}
  Given two positive integers $r_{F}$ and $r_{N}$ there exists a
  unique theory of homogeneous singular Bott-Chern classes of
  rank~$r_{F}$ and codimension~$r_{N}$. 
\end{theorem}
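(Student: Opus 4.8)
The plan is to use the classification theorem (Theorem~\ref{thm:6}), which sets up a bijection between theories of singular Bott-Chern classes of rank $r_F$ and codimension $r_N$ and characteristic classes $C$ for pairs of vector bundles of that rank and codimension. Under this bijection, a theory $T$ is homogeneous precisely when the Euler-Green class $\widetilde e_{T}(\overline F,\overline N)$ lands in $\widetilde{\mathcal{D}}^{2r_{N}-1}_{D}(P,r_{N})$, that is, in the single weight $p=r_N$ component rather than spreading across all weights. So the strategy is to translate the homogeneity condition on $T$ into a condition on the associated characteristic class $C_T$, and then show that exactly one characteristic class satisfies that condition.

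First I would compute $\widetilde e_{T}(\overline F,\overline N)$ on the model case $Y$, $\overline N$, $\overline F$ with $P=\mathbb{P}(N\oplus\mathbb{C})$ and the Koszul resolution $K(\overline F,\overline N)$, using the defining formula \eqref{eq:68} of Definition~\ref{def:5} for $T=T_C$. On this model, $W=W_{Y/X}$ is already the deformation to the normal cone of the zero section, and the ingredients $\tr_1(\overline E_\ast)_\ast$, $\overline A_\ast$, $\overline\eta_\ast$ simplify considerably (indeed $\overline\eta_k$ is the degree-$k$ piece of $0\to\overline A_k\to \tr_1(\overline E_\ast)_k|_P\to K(\overline F,\overline N)_k\to 0$, and by construction $(p_P)_\ast[\widetilde\ch(\overline\eta_k)]$ and $(\pi_P)_\ast T(K(\overline F,\overline N))=C(F,N)$ both appear). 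Multiplying through by $\Td(\overline Q)\ch^{-1}(\pi_P^\ast\overline F)$ and pushing forward by $\pi_P$, one obtains an explicit expression for $(\pi_P)_\ast\widetilde e_{T_C}(\overline F,\overline N)$ as a universal (metric-independent, by Theorem~\ref{thm:8} and Corollary~\ref{cor:3}) characteristic-class expression plus $C(F,N)\bullet(\text{correction terms})$. The homogeneity requirement then becomes the condition that, for every $(Y,\overline N,\overline F)$, a certain explicit combination of $C$ with standard characteristic forms has all its components outside weight $r_N$ vanishing. By the splitting principle and Theorem~\ref{thm:14}, $C$ is a power series in the Chern classes of $N$ and $F$ with coefficients in $\mathbb{D}$, and $C(F,N)/\mathbf{1}_1$ is a power series with real coefficients; writing $C$ in terms of these universal coordinates converts the homogeneity condition into a system of equations on the coefficients.

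The heart of the argument is then showing this system has a unique solution. I expect that degree considerations force the weight-$r_N$ part of the homogeneity equation to determine $C$ recursively: the ``leading'' part of $\widetilde e_{T_C}$ is forced to be (the class of) $c_{r_N}(\overline Q)$-type Green current, and matching the lower-weight components to zero pins down the remaining coefficients of $C$ one graded piece at a time. Concretely, I would argue that the assignment $C\mapsto (\text{non-}r_N\text{-weight part of }\widetilde e_{T_C})$ is an affine-linear map on the space of characteristic classes whose linear part is injective (a perturbation of $C$ by a nonzero characteristic class produces a nonzero perturbation in some weight $\ne r_N$, because multiplication by the invertible class $\Td(\overline Q)\ch^{-1}(\pi_P^\ast\overline F)$ and the various correction terms preserve the relevant filtration-graded structure), and whose image contains the requisite target; hence there is a unique $C$ making the non-$r_N$ part vanish. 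The rank-$0$ case is immediate since there the theory is by fiat the Bott-Chern class of the Chern character, which is unique. Finally, once existence and uniqueness of the homogeneous $C$ is established for each $(r_F,r_N)$, Theorem~\ref{thm:6} gives the unique homogeneous theory $T$, completing the proof.

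The main obstacle will be the explicit computation of $\widetilde e_{T_C}(\overline F,\overline N)$ in closed enough form to make the injectivity/surjectivity of the relevant linear map transparent: one must carefully track how the correction terms $-\sum_k(-1)^k(p_P)_\ast[\widetilde\ch(\overline\eta_k)]$ and the transgression term $-(p_W)_\ast(\sum_k(-1)^k W_1\bullet\ch(\tr_1(\overline E_\ast)_k))$ from \eqref{eq:68} interact with the multiplication by $\Td(\overline Q)\bullet\ch^{-1}(\pi_P^\ast\overline F)$ and the pushforward $(\pi_P)_\ast$, and verify that they do not themselves contribute to the obstruction in a way that could destroy uniqueness. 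The homogeneity condition of Corollary~\ref{cor:3} (closedness of certain pushforwards of Bott-Chern classes) should be exactly what guarantees these correction terms are ``harmless'', but pinning that down is the delicate part.
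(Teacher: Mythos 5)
Your first step coincides with the paper's: reduce via theorem~\ref{thm:6} to finding the unique characteristic class $C$ whose associated theory $T_C$ is homogeneous, and your uniqueness argument is essentially workable. Indeed, by formula \eqref{eq:68} two theories differ on the Koszul model only by $s_{\ast}\bigl((C_1-C_2)(F,N)\bigr)$, so the difference of Euler--Green classes is $s_{\ast}\bigl((C_1-C_2)(F,N)\bullet\Td(\overline N)\bullet\ch^{-1}(\overline F)\bigr)$, which lives in weights $\ge r_N+1$ and is detected by $(\pi_P)_{\ast}s_{\ast}=\Id$; homogeneity of both theories then forces $C_1=C_2$, which is the injectivity you invoke. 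The genuine gap is the existence half. You assert that the image of your affine-linear map contains the required target ("matching the lower-weight components to zero pins down the coefficients"), but a recursive determination of coefficients only proves uniqueness; nothing in your sketch shows the system is consistent, and that consistency is precisely the hard content. The paper's mechanism is lemma~\ref{lemm:4}: on $P=\mathbb{P}(N\oplus\mathbb{C})$ there is a unique class $\widetilde e(P,\overline Q,s)\in\widetilde{\mathcal{D}}^{2r_N-1}_{D}(P,r_N)$ satisfying $\dd_{\mathcal{D}}\widetilde e=[c_{r_N}(\overline Q)]-\delta_Y$ and $\widetilde e|_{D_\infty}=0$, whose existence and uniqueness rest on the restriction isomorphism $H^{2r_N-1}_{\mathcal{D}^{\an}}(P,r_N)\cong H^{2r_N-1}_{\mathcal{D}^{\an}}(D_\infty,r_N)$ coming from the projective bundle formula (theorem~\ref{thm:13}, A1). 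The decisive geometric input, which never appears in your proposal, is the divisor at infinity: the Koszul resolution restricted to $D_\infty$ is orthogonally split, so functoriality forces $T(K(\overline F,\overline N))|_{D_\infty}=0$ for \emph{every} theory, and this boundary condition together with the differential equation \eqref{eq:69} characterizes $\widetilde e$. One then defines $C(F,N)=(\pi_P)_{\ast}(\widetilde e\bullet\Td^{-1}(\overline Q))\bullet\ch(F)$ (lemma~\ref{lemm:6}) and checks that $T_C(K(\overline F,\overline N))=\widetilde e\bullet\Td^{-1}(\overline Q)\bullet\ch(\pi_P^{\ast}\overline F)$, which gives homogeneity; without constructing $\widetilde e$ (or an equivalent) existence is simply not addressed.

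Two further points make your concrete plan problematic as written. First, homogeneity is a condition on the current class $\widetilde e_{T_C}$ on $P$, not on $(\pi_P)_{\ast}\widetilde e_{T_C}$: the pushforward to $Y$ only records the top fibre component, so the quantity you propose to compute cannot detect the components in weights $\ne r_N$ that you need to kill. Second, in \eqref{eq:68} the $C$-independent terms, namely $-(p_W)_{\ast}\bigl(\sum_k(-1)^kW_1\bullet\ch(\tr_1(\overline E_{\ast})_k)\bigr)$ and $-\sum_k(-1)^k(p_P)_{\ast}[\widetilde{\ch}(\overline\eta_k)]$, are not closed and hence are not characteristic classes, so theorem~\ref{thm:14} and the splitting principle do not directly convert the homogeneity requirement into equations on the Chern-class coefficients of $C$; you can only argue cohomologically after subtracting a reference solution of \eqref{eq:69} vanishing on $D_\infty$ --- and producing that reference is exactly the Euler--Green construction your proposal is missing.
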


\begin{proof}
  The proof of this result is based on the theory of Euler-Green
  classes. 

  Let $P=\mathbb{P}(N\oplus \mathbb{C})$ be as before, and let $s$
  denote the zero section of $P$. 
  Let $D_{\infty}$ be
  the subvariety of $P$ that parametrizes the lines contained in
  $N$. Then $D_{\infty}=\mathbb{P}(N)$.

  \begin{lemma}\label{lemm:4}
    There exists a unique class $\widetilde e(P,\overline Q,s)\in
    \mathcal{D}^{2r_{N}-1}_{D}(P,r_{N})$ such that
    \begin{enumerate}
    \item \label{item:4} It satisfies
      \begin{equation}
        \label{eq:30}
 \dd_{\mathcal{D}}\widetilde e(P,\overline
      Q,s)=[c_{r_{N}}(\overline Q)]-\delta _{Y}.       
      \end{equation}
    \item \label{item:5} The restriction $\widetilde e(P,\overline
      Q,s)|_{D_{\infty}}=0.$
    \end{enumerate}
  \end{lemma}

  \begin{proof}
    We first show the uniqueness. Assume that $\widetilde e$ and
    $\widetilde e'$ are two classes that satisfy the hypothesis of the
    theorem. Then $\widetilde e'-\widetilde e$ is closed. Hence it
    determines a cohomology class in
    $H^{2r_{N}-1}_{\mathcal{D}^{\an}}(P,r_{N})$. Since, by theorem
    \ref{thm:13}, the restriction
    \begin{equation}
      \label{eq:70}
      H^{2r_{N}-1}_{\mathcal{D}^{\an}}(P,r_{N})\longrightarrow
    H^{2r_{N}-1}_{\mathcal{D}^{\an}}(D_{\infty},r_{N})
    \end{equation}
    is an
    isomorphism, condition \ref{item:5} implies that $\widetilde
    e'=\widetilde e$. Now we prove the existence. Since $Y$ is the
    zero locus of the section $s$, that is transversal to the zero
    section of $Q$, we know that the currents $[c_{r_{N}}]$ and
    $\delta _{Y}$ are cohomologous. Therefore there exists an element
    $\widetilde a\in \widetilde {\mathcal{D}}^{2r_{N}-1}_{D}(P,r_{N})$
    such that $\dd_{\mathcal{D}}\widetilde a
    =[c_{r_{N}}(\overline Q)]-\delta _{Y}$.
    Since $\overline Q$ restricted to
    $D_{\infty}$ splits as an orthogonal direct sum
    \begin{equation}
      \label{eq:35}
      \overline
    Q|_{D_{\infty}}=\overline S\oplus \overline {\mathbb{C}}
    \end{equation}
    where
    the metric on the factor $\mathbb{C}$ is trivial, and
    the section $s$ restricts to the constant section $1$, we obtain
    that $([c_{r_{N}}(\overline Q)]-\delta
    _{Y})|_{D_{\infty}}=0$. Therefore $\widetilde a$ determines a
    class in  $H^{2r_{N}-1}_{\mathcal{D}^{\an}}(P,r_{N})$.  Using again that
  \eqref{eq:70} is an isomorphism, we find an element $\widetilde b\in
  H^{2r_{N}-1}_{\mathcal{D}^{\an}}(P,r_{N})$, such that $\widetilde
  e=\widetilde a-\widetilde b$  satisfies the conditions of the lemma.
  \end{proof}

  We continue with the proof of theorem \ref{thm:9}. We first prove
  the uniqueness. Let $T$ be a
  theory of homogeneous singular Bott-Chern classes. The splitting
  \eqref{eq:35} implies easily that
  the restriction of the Koszul resolution $K(\overline F,\overline N)$ 
  to $D_{\infty}$ is orthogonally split. By the functoriality of
  singular Bott-Chern classes, $T(K(\overline
  F,\overline N))|_{D_{\infty}}=0$. Thus the class
  \begin{displaymath}
    \widetilde e_{T}(\overline F,\overline N):= T(K(\overline
    F,\overline N))\bullet \Td(\overline Q)\bullet\ch^{-1}(\pi
    _{P}^{\ast}\overline F)\in \widetilde
    {\mathcal{D}}^{2r_{N}-1}_{D}(P,r_{N})
  \end{displaymath}
  satisfies the two conditions of lemma \ref{lemm:4}. Therefore 
  $\widetilde e_{T}(\overline F,\overline N)=\widetilde e(P,\overline
  Q,s)$ and
  \begin{equation}\label{eq:73}
    T(K(\overline
    F,\overline N))= \widetilde e(P,\overline
  Q,s)\bullet \Td^{-1}(\overline Q)\bullet\ch(\pi
    _{P}^{\ast}\overline F),
  \end{equation}
  where the right hand side does not depend on the theory $T$. In
  consequence we have that
  \begin{equation}
    \label{eq:71}
    C_{T}(F,N)=(\pi _{P})_{\ast}T(K(\overline
    F,\overline N))
  \end{equation}
  does not depend on the theory $T$. Thus by the uniqueness in theorem
  \ref{thm:6} we obtain the uniqueness here.

  For the existence we observe 
  \begin{lemma}\label{lemm:6} The current 
      \begin{displaymath}
    C(F,N)=(\pi _{P})_{\ast}(\widetilde e(P,\overline
  Q,s)\bullet \Td^{-1}(\overline Q)) \bullet \ch(\overline F)
  \end{displaymath}
  is a characteristic class for pairs of vector bundles of rank
  $r_{F}$ and $r_{N}$.
  \end{lemma}
  \begin{proof}
    We first compute, using equation \eqref{eq:30} and corollary
    \ref{cor:4}, 
    \begin{align*}
      \dd_{\mathcal{D}} C(F,N)&=(\pi
      _{P})_{\ast}\left(\dd_{\mathcal{D}}\widetilde e(P,\overline 
      Q,s)\bullet \Td^{-1}(\overline Q)\right)\bullet \ch(\overline F)\\
      &=  (\pi
      _{P})_{\ast}\left(([c_{r_{N}}(\overline Q)]-\delta _{Y})\bullet
      \Td^{-1}(\overline Q)\right)\bullet 
      \ch(\overline F)\\
      &= (\pi
      _{P})_{\ast}\left(c_{r_{N}}(\overline Q)\bullet \Td^{-1}(\overline
      Q)\right) \bullet
      \ch(\overline F)
      -\Td^{-1}(\overline N)\bullet \ch(\overline F)\\
      &=0.
    \end{align*}
    Thus $C(F,N)$ determines a cohomology class. This class is
    functorial by construction. By proposition \ref{prop:22} this
    class does not depend on the metric and defines a characteristic
    class.  
  \end{proof}

  By the existence in theorem \ref{thm:6} we
  obtain a theory of singular Bott-Chern classes $T_{C}$ that is
  easily seen to be homogeneous. 
\end{proof}

A reformulation of theorem \ref{thm:9} is 
\begin{theorem} \label{thm:12}
  There exists a unique way to associate to each 
  hermitian embedded vector bundle $\overline {\xi}=
  (i\colon Y\longrightarrow 
  X,\overline N, \overline 
  F, \overline E_{\ast}) $ a class of currents 
  \begin{displaymath}
    T^{h}(\overline \xi)\in \bigoplus_{p} \widetilde
     {\mathcal{D}}^{2p-1}_{D}(X,N^{\ast}_{Y,0},p) 
  \end{displaymath}
  that we call homogeneous singular Bott-Chern class,
  satisfying the following properties
  \begin{enumerate}
  \item  (Differential equation) The equality 
    \begin{equation}
      \dd_{\mathcal{D}} T^{h}(\overline \xi)=
      \sum_{i}(-1)^{i}[\ch(\overline
      E_{i})]-i_{\ast}([\Td^{-1}(\overline N)\ch(\overline F)]) 
    \end{equation}
    holds.
  \item (Functoriality) For every morphism
    $f\colon X'\longrightarrow X$ of 
    complex manifolds that is transverse to $Y$, 
    \begin{displaymath}
      f^{\ast} T^{h}(\overline \xi)=T^{h}(f^{\ast} \overline
      \xi). 
    \end{displaymath}
  \item (Normalization)  Let $\overline A=(
    A_{\ast},g_{\ast})$ be a non-negatively graded orthogonally split
    complex of vector 
    bundles. Write $\overline{\xi}\oplus \overline
    A=(i\colon Y\longrightarrow 
    X,\overline N, \overline 
    F, \overline E_{\ast}\oplus \overline A_{\ast})$.
    Then
    $T^{h}(\overline \xi)=T^{h}(\overline \xi\oplus \overline A)$. Moreover,
    if $X=\Spec \mathbb{C}$ is one point, $Y=\emptyset$ and $\overline
    E_{\ast}=0$, then $T^{h}(\overline \xi)=0$.
  \item (Homogeneity) If $r_{F}=\rk(F)>0$ and $r_{N}=\rk(N)>0$, then, with
    the notations of 
    definition \ref{def:16},
    \begin{displaymath}
      T^{h}(K(\overline 
      F,\overline N))\bullet \Td(\overline Q)\bullet\ch^{-1}(\pi
      _{P}^{\ast}\overline F)\in \widetilde
      {\mathcal{D}}^{2r_{N}-1}_{D}(P,r_{N}).
    \end{displaymath}
  \end{enumerate}
\hfill $\square$
\end{theorem}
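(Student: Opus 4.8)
The plan is to deduce Theorem~\ref{thm:12} directly from Theorem~\ref{thm:9} together with the classification Theorem~\ref{thm:6}, since Theorem~\ref{thm:12} is essentially a reformulation of Theorem~\ref{thm:9} phrased in terms of an assignment $\overline{\xi}\mapsto T^h(\overline{\xi})$ on all hermitian embedded vector bundles at once, rather than rank by rank and codimension by codimension. First I would note that, by Proposition~\ref{prop:1}, every singular Bott-Chern class for $\overline{\xi}$ automatically lands in $\bigoplus_p\widetilde{\mathcal{D}}^{2p-1}_D(X,N^\ast_{Y,0},p)$, so there is no loss in stating the target of $T^h$ with the wave-front-set constraint built in; the content is the existence and uniqueness of the assignment satisfying the four axioms.

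For \emph{existence}, I would take, for each pair $(r_F,r_N)$ with $r_F>0$, the unique theory of homogeneous singular Bott-Chern classes of rank $r_F$ and codimension $r_N$ produced by Theorem~\ref{thm:9}, and for $r_F=0$ the Bott-Chern class $[\widetilde{\ch}(\overline{E}_\ast)]$ associated to the Chern character (Theorem~\ref{thm:3}, via Remark~\ref{rem:1}). Bundling these together gives an assignment $T^h$ on all hermitian embedded vector bundles. Axioms (i)--(iii) hold because each piece is a theory of singular Bott-Chern classes in the sense of Definition~\ref{def:7}, and the axioms do not mix rank or codimension (Remark~\ref{rem:1}(ii)); the compatibility at the boundary case $r_F=0$ with acyclic complexes is exactly the content of Remark~\ref{rem:1}(i). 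Axiom (iv) is the homogeneity property of Definition~\ref{def:16}, which holds by construction of the theory from Theorem~\ref{thm:9}.

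For \emph{uniqueness}, suppose $T^h$ is any assignment satisfying (i)--(iv). Its restriction to rank $r_F>0$ and codimension $r_N$ is then a theory of singular Bott-Chern classes of that rank and codimension (axioms (i)--(iii)) which is homogeneous (axiom (iv)), hence equals the unique such theory from Theorem~\ref{thm:9}. On rank $0$, axiom (iii) together with the differential equation and functoriality forces $T^h(\overline{E}_\ast)=[\widetilde{\ch}(\overline{E}_\ast)]$, again by Remark~\ref{rem:1}(i) (which is itself a consequence of the uniqueness in Theorem~\ref{thm:3}). Since $T^h$ is determined on each rank and codimension separately, it is determined altogether. One should remark that the proof of Theorem~\ref{thm:9} already exhibits the explicit formula \eqref{eq:73} for $T^h(K(\overline{F},\overline{N}))$ in terms of the Euler-Green class $\widetilde{e}(P,\overline{Q},s)$ of Lemma~\ref{lemm:4}, so via the reconstruction formula \eqref{eq:10} (with $C=C_{T^h}$ computed by \eqref{eq:71}) one gets a completely explicit description of $T^h(\overline{\xi})$ for every $\overline{\xi}$.

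I do not expect a genuine obstacle here: the theorem is a packaging statement, and all the real work is in Theorem~\ref{thm:9} (whose core is the uniqueness of the Euler-Green class, Lemma~\ref{lemm:4}, coming from the fact that restriction to $D_\infty=\mathbb{P}(N)$ induces an isomorphism on $H^{2r_N-1}_{\mathcal{D}^{\an}}$) and in Theorem~\ref{thm:6}. The only point requiring a little care is the bookkeeping of the rank-$0$ case and checking that ``homogeneous'' as defined for a full theory (Definition~\ref{def:16}, last sentence) is compatible with the rank-by-rank statement of Theorem~\ref{thm:9}; this is immediate from the definitions. Hence the proof is essentially the observation $\square$ indicated in the text, and I would simply write: ``This is a restatement of Theorems~\ref{thm:6} and~\ref{thm:9}, together with Remark~\ref{rem:1}(i) for the rank-zero case.''
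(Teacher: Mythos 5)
Your proposal is correct and matches the paper's treatment: the paper introduces Theorem~\ref{thm:12} explicitly as ``a reformulation of theorem \ref{thm:9}'' and offers no separate argument beyond that observation, which is exactly the packaging you carry out (rank-by-rank application of Theorem~\ref{thm:9}, Remark~\ref{rem:1} for the rank-zero case, and Proposition~\ref{prop:1} for the wave-front-set refinement of the target).
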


The class $\widetilde e(P,\overline
  Q,s)$ of lemma \ref{lemm:4} is a particular case of the Euler-Green
  classes introduced by 
  Bismut, Gillet and Soul\'e in \cite{BismutGilletSoule:MR1086887}. 
The basic properties of the
Euler-Green classes are summarized in the following results.

\begin{proposition}\label{prop:14}
  Let $X$ be a complex manifold, let $\overline E$ be a  hermitian
  holomorphic 
  vector bundle of rank $r$ and let $s$ be a holomorphic section of
  $E$ that is 
  transverse to the zero section. Denote by $Y$ the zero locus of
  $s$. There is a unique way to assign to each $(X,\overline E,s)$ as
  before a class of currents
  \begin{displaymath}
    \widetilde e(X,\overline E,s)\in \widetilde
  {\mathcal{D}}^{2r-1}_{D}(X,N^{\ast}_{Y,0},r)
  \end{displaymath}
  satisfying the following properties
  \begin{enumerate}
  \item \label{item:8}(Differential equation)
    \begin{equation}
      \label{eq:51}
      \dd_{\mathcal{D}}\widetilde e(X,\overline E,s)=
      c_{r}(\overline E)-\delta _{Y}.
    \end{equation}
  \item \label{item:9}(Functoriality) If $f\colon X'\longrightarrow X$ is a morphism
    transverse to $Y$ then
    \begin{equation}
      \label{eq:52}
      \widetilde e(X',f^{\ast}\overline E,f^{\ast}s)=
      f^{\ast}\widetilde e(X,\overline E,s).
    \end{equation}
  \item \label{item:10} (Multiplicativity) Let $\overline E_{1}$ and
    $\overline E_{2}$ be
    hermitian holomorphic vector bundles, and let $s_{1}$ and $s_{2}$ be
    holomorphic sections of $\overline E_{1}$ and $\overline E_{2}$
    respectively that are transverse to the zero section and with zero
    locus $Y_{1}$  and $Y_{2}$. We write $\overline E=\overline
    E_{1}\oplus \overline E_{2}$ and $s=s_{1}\oplus s_{2}$. Assume
    that  $s$ is
    transverse to the zero section; hence $Y_{1}$ and $Y_{2}$ meet
    transversely. With this hypothesis we have
    \begin{multline*}
      \label{eq:53}
      \widetilde e(X,\overline E,s)=\widetilde e(X,\overline
      E_{1},s_{1})\land c_{r_{2}}(\overline E_{2})+
      \delta _{Y_{1}}\land \widetilde e(X,\overline
      E_{2},s_{2})\\
      =\widetilde e(X,\overline
      E_{1},s_{1})\land \delta _{Y_{2}}+
       c_{r_{1}}(\overline E_{1})\land \widetilde e(X,\overline
      E_{2},s_{2}).
    \end{multline*}
  \item \label{item:19} (Line bundles) If $\overline L$ is a hermitian
    line bundle and $s$ is a  
    section of $L$, then
    \begin{equation}
      \label{eq:54}
       \widetilde e(X,\overline L,s)=-\log\|s\|.
    \end{equation}
  \end{enumerate}
\end{proposition}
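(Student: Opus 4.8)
The plan is to prove uniqueness and existence of the Euler-Green class $\widetilde e(X,\overline E,s)$ by reducing everything, via the splitting principle and functoriality, to the tautological situation already treated in Lemma \ref{lemm:4}, namely the case where $X=\mathbb{P}(E'\oplus\mathbb{C})$ for some bundle $E'$, $\overline E=\overline Q$ is the universal quotient, and $s$ is the tautological section vanishing on the zero section.

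First I would prove uniqueness. Suppose $\widetilde e$ and $\widetilde e'$ both satisfy \eqref{eq:51}--\eqref{eq:54}. Their difference is a $\dd_{\mathcal{D}}$-closed current, hence by Corollary \ref{cor:2} it represents a class in $H^{2r-1}_{\mathcal{D}^{\an}}(X,\mathbb{R}(r))$. By the standard pull-back-to-a-classifying-space argument: the section $s$ transverse to the zero section realizes $\overline E$, up to a deformation of the metric, as pulled back from the tautological pair $(\mathbb{P}(E\oplus\mathbb{C}),Q,s_{\mathrm{taut}})$ along a map $f$ transverse to the zero section (here $Y=f^{-1}(Y_{\mathrm{taut}})$ with $Y_{\mathrm{taut}}$ the zero section). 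Combining this with property \eqref{item:9} (functoriality), property \eqref{item:10} with the trivial metric (to absorb changes of metric on $E$, using that an orthogonally split situation contributes nothing — or more simply a direct metric-deformation argument via $\mathbb{P}^1$ as in Proposition \ref{prop:22}), and Lemma \ref{lemm:4}'s uniqueness in the tautological case, one gets $\widetilde e=\widetilde e'$ in the tautological case. Functoriality then propagates this to all $(X,\overline E,s)$. In fact the cleanest route is: reduce to line bundles by the splitting principle and property \eqref{item:10} (a transverse section of $E$ does not in general split, but one pulls back to the flag bundle of $E$ where $E$ acquires a filtration by subbundles, each successive quotient a line bundle with an induced section, and the zero locus is cut out inductively), then invoke \eqref{item:19} which pins down the line-bundle case completely.

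For existence I would simply \emph{define} $\widetilde e(X,\overline E,s)$ by pulling back the tautological class $\widetilde e(P,\overline Q,s_{\mathrm{taut}})$ of Lemma \ref{lemm:4} along the classifying map $f\colon X\to P=\mathbb{P}(E\oplus\mathbb{C})$ associated to $s$ (using that $f$ is transverse to the zero section $Y_{\mathrm{taut}}$, so the pull-back of currents is legitimate by Theorem \ref{thm:1}, landing in $\widetilde{\mathcal{D}}^{2r-1}_{D}(X,N^{\ast}_{Y,0},r)$ by Proposition \ref{prop:1}-type reasoning since $f^{\ast}N^{\ast}_{Y_{\mathrm{taut}},0}=N^{\ast}_{Y,0}$). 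One must check this is independent of the choice of classifying map and of the hermitian metric on $E$; independence of the metric follows as in Lemma \ref{lemm:6}/Proposition \ref{prop:22} using a metric on $p_1^{\ast}E$ over $X\times\mathbb{P}^1$ interpolating the two, together with the fact that $\widetilde e$ changes by a $\dd_{\mathcal{D}}$-exact term under metric change; independence of the map follows because any two classifying maps are connected through a map from $X\times\mathbb{P}^1$. Then \eqref{eq:51} follows from \eqref{eq:30} by pull-back (the right-hand side $[c_r(\overline E)]-\delta_Y$ pulls back correctly because $f$ is transverse to $Y_{\mathrm{taut}}$, so $f^{\ast}\delta_{Y_{\mathrm{taut}}}=\delta_Y$). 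Functoriality \eqref{eq:52} is immediate from the construction and the compatibility of classifying maps with composition. The line-bundle normalization \eqref{eq:54} is checked by hand: for a line bundle $L$ with section $s$, the class $-\log\|s\|$ satisfies $\dd_{\mathcal{D}}(-\log\|s\|)=c_1(\overline L)-\delta_Y$ by the Poincaré–Lelong formula (cf. \eqref{eq:62}), and the same splitting-principle uniqueness argument forces $\widetilde e(X,\overline L,s)=-\log\|s\|$.

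The multiplicativity property \eqref{item:10} I expect to be the main obstacle, and the bulk of the real work. The plan there is: given $\overline E=\overline E_1\oplus\overline E_2$ with $s=s_1\oplus s_2$ transverse, realize the whole situation as pulled back from the ``double tautological'' space $P_1\times_Y P_2$ with $P_i=\mathbb{P}(E_i\oplus\mathbb{C})$, where $Y_1,Y_2$ become the two coordinate zero sections meeting transversely along $Y$. One then needs the identity $\widetilde e(P,\overline Q_1\oplus\overline Q_2,s)=\widetilde e(P_1,\overline Q_1,s_1)\land c_{r_2}(\overline Q_2)+\delta_{Y_1}\land\widetilde e(P_2,\overline Q_2,s_2)$ at the tautological level; this is proved by verifying that the right-hand side satisfies the differential equation \eqref{eq:30} for the pair $(\overline Q_1\oplus\overline Q_2,s)$ — using $c_{r}(\overline Q_1\oplus\overline Q_2)=c_{r_1}(\overline Q_1)c_{r_2}(\overline Q_2)$, $\dd_{\mathcal{D}}\widetilde e(P_1,\overline Q_1,s_1)=c_{r_1}(\overline Q_1)-\delta_{Y_1}$, the Leibniz rule, and $\delta_{Y_1}\land\delta_{Y_2}=\delta_Y$ by transversality — and then verifying it vanishes on the appropriate boundary divisor $D_\infty$ so that Lemma \ref{lemm:4}'s uniqueness applies. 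The wave front set condition from Proposition \ref{prop:1} and Theorem \ref{thm:4} is exactly what makes the products $\delta_{Y_1}\land\widetilde e(P_2,\overline Q_2,s_2)$ well-defined, since the wave front set of $\delta_{Y_1}$ is $N^{\ast}_{Y_1,0}$ and that of $\widetilde e(P_2,\overline Q_2,s_2)$ is contained in $N^{\ast}_{Y_2,0}$, and these are disjoint away from the zero section by transversality of $Y_1$ and $Y_2$.
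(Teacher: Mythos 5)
Your overall strategy---classify $(E,s)$ by the map $f\colon X\to P=\mathbb{P}(E\oplus\mathbb{C})$, $x\mapsto(s(x):-1)$, and pin down the tautological case by Lemma \ref{lemm:4}---is exactly the paper's strategy, but your existence construction has a genuine gap. You define $\widetilde e(X,\overline E,s):=f^{\ast}\widetilde e(P,\overline Q,s_{Q})$ with no correction term, and then assert that the result is (and should be) independent of the metric on $E$, and that ``$\widetilde e$ changes by a $\dd_{\mathcal{D}}$-exact term under metric change''. Both assertions are false, and the omission breaks axioms (i) and (iv): the isomorphism $f^{\ast}Q\cong E$ is not an isometry, so the naive pullback satisfies $\dd_{\mathcal{D}}f^{\ast}\widetilde e(P,\overline Q,s_{Q})=c_{r}(E,f^{\ast}h_{Q})-\delta_{Y}$, with the wrong Chern form, and for a hermitian line bundle it equals $-\tfrac{1}{2}\log\bigl(\|s\|^{2}/(1+\|s\|^{2})\bigr)$, not $-\log\|s\|$. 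The class $\widetilde e$ genuinely depends on the metric: axioms (i) and (ii) alone force, via the $X\times\mathbb{P}^{1}$ deformation as in Proposition \ref{prop:18}, the change-of-metric formula $\widetilde e(X,(E,h_{0}),s)-\widetilde e(X,(E,h_{1}),s)=\widetilde c_{r}(E,h_{0},h_{1})$, whose $\dd_{\mathcal{D}}$ is $c_{r}(E,h_{0})-c_{r}(E,h_{1})$, so the change is not exact (Proposition \ref{prop:22} and Lemma \ref{lemm:6} concern closed classes and do not apply to $\widetilde e$ itself). The paper's definition is precisely your pullback corrected by $\widetilde c_{r}(E,f^{\ast}h_{Q},h_{E})$, i.e. equation \eqref{eq:55}; with that correction (i) holds for the given metric and (iv) follows from the computation just indicated. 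Note also that your verification of (iv) by ``the same uniqueness argument forces it'' is circular, since uniqueness only compares assignments already known to satisfy (iv).

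On the uniqueness side, to invoke Lemma \ref{lemm:4} you must first show that any assignment satisfying (i)--(iv) vanishes on $D_{\infty}$ in the tautological case; your parenthetical ``an orthogonally split situation contributes nothing'' is not one of the axioms, and the direct attempt to extract it from (iii) fails because on $D_{\infty}$ the bundle is $\overline S\oplus\overline{\mathbb{C}}$ with section $(0,1)$, whose $S$-component, being identically zero, is not transverse. A small extra device is needed, e.g.\ pull back to the total space of $S$, where the pair $\bigl(p^{\ast}\overline S\oplus\overline{\mathbb{C}},(\tau,1)\bigr)$, with $\tau$ the tautological section, is covered by (iii) and (iv) and gives $0$, and then restrict along the zero section using (ii). For the same reason your proposed ``cleanest route'' via flag bundles does not work: the splitting principle produces filtrations, not orthogonal direct sums with transverse split sections, so axiom (iii) never applies there and (iv) cannot be propagated that way. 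Finally, your plan for multiplicativity (verify the differential equation and boundary vanishing over $P_{1}\times_{X}P_{2}$) is reasonable, but it requires the analogue of Lemma \ref{lemm:4} for that space, namely injectivity of restriction to the union of the two divisors at infinity, which you would still have to prove.
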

\begin{proof}
  Bismut, Gillet and Soul\'e prove the existence by constructing explicitly
  an Euler-Green current in the total space of $E$ and pulling it back to $X$
  by the section $s$. For the uniqueness, first we see that properties
  \ref{item:8} and \ref{item:9} imply that, if $h_{0}$ and $h_{1}$ are
  two hermitian metrics in $E$, then
  \begin{equation}
    \label{eq:55}
     \widetilde e(X,(E,h_{0}),s)-\widetilde e(X,(E,h_{1}),s)=
     \widetilde c_{r}(E,h_{0},h_{1}).
  \end{equation}
  We now consider $\pi \colon P=\mathbb{P}(E\oplus
  \mathbb{C})\longrightarrow X$,  with the
  tautological exact sequence
  \begin{equation*}
    0\longrightarrow \mathcal{O}(-1)\longrightarrow \pi ^{\ast}E\oplus
    \mathbb{C} 
    \longrightarrow Q \longrightarrow 0
  \end{equation*}
  On $Q$ we consider the metric induced by the metric of $\overline E$
  and the 
  trivial metric on the factor $\mathbb{C}$, and let $s_{Q}$ the
  section of $Q$ induced by the section $1$ of
  $\mathbb{C}$. Let $D_{\infty}$ be as in lemma \ref{lemm:4}.
  Then properties \ref{item:9} to \ref{item:19} imply that 
  $\widetilde e(P,\overline Q,s_{Q})|_{D_{\infty}}=0$. Hence by lemma
  \ref{lemm:4} $\widetilde e$ is uniquely determined. Finally, let 
  $f\colon X\longrightarrow P$ be the map given by $x\longmapsto
  (s(x):-1)$. Then $f^{\ast}Q\cong E$, although they are not
  necessarily isometric,
  and $f^{\ast}s_{Q}=s$. Therefore, the functoriality and  equation
  \eqref{eq:55} determine $ \widetilde e(X,\overline E,s)$.

  To prove the existence, we use lemma \ref{lemm:4}, functoriality and
  equation \eqref{eq:55} to define the Euler-Green classes. It is easy
  to show that they are well defined and satisfy properties
  \ref{item:8} to \ref{item:19}. 
\end{proof}

Equation \eqref{eq:73} relating homogeneous singular Bott-Chern classes and
Euler-Green classes in a particular case can be generalized to
arbitrary vector bundles.

\begin{proposition} \label{prop:10}
  Let $X$ be a complex manifold, $\overline E$ a hermitian
  vector bundle over $X$, $s$ a section of $E$ transversal to the zero
  section and $i\colon Y\longrightarrow X$ the zero locus of $s$. Let
  $K(\overline E)$ be the 
  Koszul resolution of $i_{\ast}\mathcal{O}_{Y}$ determined by
  $\overline E$ and $s$. We can identify $N_{Y/X}$ with
  $i^{\ast}E$. We denote by $\overline N_{Y/X}$ the vector bundle with
  the metric induced by the above identification. Then 
  \begin{displaymath}
    T^{h}(i,\overline {\mathcal{O}}_{Y},\overline N_{Y/X},K(\overline
    E))=\widetilde e(X,\overline E,s)\bullet 
    \Td^{-1}(\overline E).
  \end{displaymath}
\end{proposition}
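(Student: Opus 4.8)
The strategy is to show that both sides define homogeneous singular Bott-Chern classes for the same hermitian embedded vector bundle, so that uniqueness in Theorem \ref{thm:9} (equivalently Theorem \ref{thm:12}) forces equality. More precisely, I would first observe that the left-hand side is, by definition, a homogeneous singular Bott-Chern class for $\overline\xi=(i,\overline{\mathcal O}_Y,\overline N_{Y/X},K(\overline E))$. So the whole content is the identity $T^h(\overline\xi)=\widetilde e(X,\overline E,s)\bullet\Td^{-1}(\overline E)$, which by the uniqueness part of Theorem \ref{thm:9} reduces to verifying that the right-hand side satisfies the four axioms of Theorem \ref{thm:12}: the differential equation, functoriality, normalization, and homogeneity.

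The differential equation is a direct computation using Proposition \ref{prop:14}\ref{item:8}: since $\dd_{\mathcal D}\widetilde e(X,\overline E,s)=c_r(\overline E)-\delta_Y$ and $\Td^{-1}(\overline E)$ is $\dd_{\mathcal D}$-closed, one gets $\dd_{\mathcal D}(\widetilde e(X,\overline E,s)\bullet\Td^{-1}(\overline E))=c_r(\overline E)\Td^{-1}(\overline E)-\delta_Y\bullet\Td^{-1}(\overline E)$. I would then rewrite $c_r(\overline E)\Td^{-1}(\overline E)=\sum_k(-1)^k\ch(\bigwedge^k\overline E^\vee)$ (the Koszul/Borel–Serre identity at the level of forms, which is legitimate by corollary \ref{cor:4}), and use $\delta_Y\bullet\Td^{-1}(\overline E)=i_\ast(\Td^{-1}(\overline N_{Y/X}))$ together with the identification $\overline N_{Y/X}\cong i^\ast\overline E$. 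Since $\ch(\overline{\mathcal O}_Y)=1$ and the metric on $\mathcal O_Y$ is trivial, this matches $\sum_k(-1)^k[\ch(K(\overline E)_k)]-i_\ast([\Td^{-1}(\overline N)\ch(\overline{\mathcal O}_Y)])$, which is exactly the required differential equation for $\overline\xi$. Functoriality for maps $f$ transverse to $Y$ follows from Proposition \ref{prop:14}\ref{item:9} together with the compatibility of the Koszul resolution with such pullbacks ($f^\ast K(\overline E)=K(f^\ast\overline E)$, $f^\ast\overline N_{Y/X}=\overline N_{Y'/X'}$) and of $\Td^{-1}$ with pullback.

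For the normalization axiom: if $X=\Spec\mathbb C$ is a point, then $Y=\emptyset$ forces $s$ nowhere vanishing, hence $\overline E$ is stably trivial of rank $r>0$ and $c_r(\overline E)=0$, $\delta_Y=0$; by uniqueness in Proposition \ref{prop:14} (or the fact that the only class in $\widetilde{\mathcal D}^{2r-1}_D(\ast,r)=0$ for $r\ge 1$) we get $\widetilde e=0$, so the right-hand side vanishes; and adding an orthogonally split complex to $K(\overline E)$ does not change it since it is built only from $\widetilde e(X,\overline E,s)$ and $\Td^{-1}(\overline E)$, which see only the bundle $E$, not the resolution. The homogeneity axiom is the one requiring the most care: I must check that for $X=P=\mathbb P(N\oplus\mathbb C)$, $\overline E=\overline Q$, $s=s_Q$ the tautological section, one has $T^h(K(\overline Q))\bullet\Td(\overline Q)\bullet\ch^{-1}(1)=\widetilde e(P,\overline Q,s_Q)\in\widetilde{\mathcal D}^{2r_N-1}_D(P,r_N)$ — but this is immediate from the candidate formula, since $\widetilde e(P,\overline Q,s_Q)\bullet\Td^{-1}(\overline Q)\bullet\Td(\overline Q)=\widetilde e(P,\overline Q,s_Q)$, and $\widetilde e(P,\overline Q,s_Q)$ lives in the right group by Proposition \ref{prop:14}. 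The main obstacle I anticipate is making the differential-equation computation fully rigorous at the level of currents rather than cohomology classes: one must be careful that the products $c_r(\overline E)\bullet\Td^{-1}(\overline E)$ and $\delta_Y\bullet\Td^{-1}(\overline E)$ are the honest products of currents (legitimate because $\Td^{-1}(\overline E)$ is smooth and $\delta_Y$ has wave front set in $N^\ast_{Y,0}$, so theorem \ref{thm:1}/theorem \ref{thm:4} apply), and that the Koszul identity $c_r\Td^{-1}=\sum(-1)^k\ch(\bigwedge^k\cdot^\vee)$ holds as an identity of differential forms; both are supplied by the earlier sections (notably corollary \ref{cor:4} and the wave-front-set formalism of Section \ref{sec:cohom-curr-wave}).
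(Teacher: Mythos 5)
There is a genuine gap in your reduction. The uniqueness statement of theorem \ref{thm:9} (equivalently theorem \ref{thm:12}) characterizes a \emph{theory} of singular Bott-Chern classes, i.e.\ an assignment defined on \emph{all} hermitian embedded vector bundles of the given rank and codimension, satisfying the axioms on all of them. Your right-hand side $\widetilde e(X,\overline E,s)\bullet\Td^{-1}(\overline E)$ is only defined on the subclass of data of Koszul type $(i,\overline{\mathcal O}_Y,\overline N_{Y/X},K(\overline E))$; it is not (and you do not extend it to) a theory in the sense of definition \ref{def:7}. Checking the differential equation, functoriality and homogeneity on this subclass therefore does not put you in a position to invoke the uniqueness of theorem \ref{thm:9}. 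The normalization axiom makes the problem visible: it compares $T$ on $\overline\xi$ and on $\overline\xi\oplus\overline A$ for an arbitrary orthogonally split complex $\overline A$, and $K(\overline E)\oplus\overline A$ is no longer of Koszul type, so your candidate assigns nothing to it; the remark that the formula ``sees only the bundle $E$'' is not a verification of that axiom. More substantively, if one tries to salvage the argument by a direct uniqueness argument on $X$, one only gets that $T^{h}(K(\overline E))\bullet\Td(\overline E)$ and $\widetilde e(X,\overline E,s)$ have the same $\dd_{\mathcal D}$, hence differ by a class in $\bigoplus_p H^{2p-1}_{\mathcal D^{\mathrm{an}}}(X,\mathbb R(p))$, and on a general $X$ there is no analogue of the divisor $D_\infty$ (used in lemma \ref{lemm:4}) forcing this class to vanish.

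What is missing is precisely the bridge between the general data $(X,\overline E,s)$ and the universal case, and this is how the paper argues: the uniqueness part of theorem \ref{thm:9} yields the identity \eqref{eq:73} only for the zero section of $P=\mathbb P(N\oplus\mathbb C)$ with the tautological metrics, and proposition \ref{prop:10} is then deduced by pulling back along the classifying map $f\colon X\to\mathbb P(E\oplus\mathbb C)$, $x\mapsto(s(x):-1)$, which is transverse to the zero locus of $s_Q$ and satisfies $f^{\ast}s_Q=s$, and by correcting for the discrepancy between the metrics $h_1$ (induced from $f^{\ast}\overline Q$) and $h_0$ (the given one on $E$) with Bott-Chern classes $\widetilde c_r(E,h_0,h_1)$, $\widetilde{\Td^{-1}}(E,h_0,h_1)$, using the metric-change property \eqref{eq:55} of $\widetilde e$ and the fact that $h_0=h_1$ on $Y$ (so that $\overline N_{Y/X}\cong i^{\ast}f^{\ast}\overline Q$ isometrically). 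Your local computations that are correct — the current-level differential equation, the Borel--Serre identity $\sum_k(-1)^k\ch(\bigwedge^k\overline E^{\vee})=c_r(\overline E)\Td^{-1}(\overline E)$ at the level of Chern--Weil forms, the wave-front-set justification of the products, and functoriality within the Koszul subclass — would all be useful inside such an argument, but by themselves they do not establish the proposition.
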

\begin{proof}
  Let $P=\mathbb{P}(E\oplus \mathbb{C})$. We follow the notation of
  proposition \ref{prop:14}. We denote by $h_{0}$ the original metric
  of $\overline E$ and by $h_{1}$ the metric induced by the
  isomorphism $E\cong f^{\ast}Q$. Observe that $h_{0}$ and $h_{1}$
  agree when restricted to $Y$, because the preimage of
  $\overline Q$ by the zero section agrees with $\overline E$. Hence
  there is an isometry
  $\overline N_{Y/X}\cong i^{\ast}f^{\ast}\overline Q$. We
  denote $T^{h}(K(\overline E))=T^{h}(i,\overline
  {\mathcal{O}}_{Y},\overline N_{Y/X},K(\overline E))$.   
  Then we have
  \begin{align*}
    T^{h}(K(\overline E))&=f^{\ast}T^{h}(K(\overline
    {\mathcal{O}_{X}},\overline E))+\sum_{i}(-1)^{i}\widetilde
    {\ch}(\bigwedge^{i}E^{\vee},h_{0},h_{1})\\
    &=f^{\ast}(\widetilde e(P,\overline Q,s_{Q})\bullet
    \Td^{-1}(\overline Q))+\widetilde c_{r}(E,h_{0},h_{1})\bullet
    \Td^{-1}(E,h_{1})\\
    &\phantom{AAAA}
    +
    c_{r}(E,h_{0})\bullet \widetilde{\Td^{-1}}(E,h_{0},h_{1})\\
    &=\widetilde e(X,\overline E,s)\bullet \Td^{-1}(E,h_{1})
    -\widetilde c_{r}(E,h_{0},h_{1})\bullet \Td^{-1}(E,h_{1})\\
    &\phantom{AAAA}
    +
    \widetilde c_{r}(E,h_{0},h_{1})\bullet \Td^{-1}(E,h_{1})+
    c_{r}(E,h_{0})\bullet \widetilde{\Td^{-1}}(E,h_{0},h_{1})\\
    &=\widetilde e(X,\overline E,s)\bullet \Td^{-1}(E,h_{0})-
    \widetilde e(X,\overline E,s)\bullet \dd_{\mathcal{D}}\widetilde
    {\Td^{-1}}(E,h_{0},h_{1}) \\
    &\phantom{AAAA}
    +c_{r}(E,h_{0})\bullet
    \widetilde{\Td^{-1}}(E,h_{0},h_{1})\\ 
    &=\widetilde e(X,\overline E,s)\bullet \Td^{-1}(E,h_{0})-
    \dd_{\mathcal{D}} \widetilde e(X,\overline E,s)\bullet \widetilde
    {\Td^{-1}}(E,h_{0},h_{1}) \\
    &\phantom{AAAA}
    +c_{r}(E,h_{0})\bullet
    \widetilde{\Td^{-1}}(E,h_{0},h_{1})\\ 
    &=\widetilde e(X,\overline E,s)\bullet \Td^{-1}(E,h_{0})
    +i_{\ast}\widetilde{\Td^{-1}}(E,h_{0},h_{1})|_{Y}\\
    &=\widetilde e(X,\overline E,s)\bullet \Td^{-1}(\overline E),
  \end{align*}
which concludes the proof.
\end{proof}

\begin{theorem}\label{thm:10}
  The theory of homogeneous singular Bott-Chern classes is compatible
  with the projection formula and transitive.
\end{theorem}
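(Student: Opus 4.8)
\textbf{Proof strategy for Theorem \ref{thm:10}.}

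The plan is to deduce both properties from the explicit characterization of the homogeneous theory via Euler-Green classes, using the classification results of Sections \ref{sec:class-theor-sing} and \ref{sec:trans-mult}. For the projection formula, by Corollary \ref{cor:7} it suffices to prove that the associated characteristic class $C_{T^{h}}$ is compatible with the projection formula, i.e. that $C_{T^{h}}(F,N)=C_{T^{h}}(\mathcal{O}_{Y},N)\bullet\ch(F)$. This is immediate from equation \eqref{eq:71} together with Lemma \ref{lemm:6}: indeed $C_{T^{h}}(F,N)=(\pi_{P})_{\ast}(\widetilde e(P,\overline Q,s)\bullet\Td^{-1}(\overline Q))\bullet\ch(\overline F)$, where the factor $(\pi_{P})_{\ast}(\widetilde e(P,\overline Q,s)\bullet\Td^{-1}(\overline Q))$ depends only on $(Y,\overline N)$ and not on $F$; so $C_{T^{h}}(\mathcal{O}_{Y},N)$ is exactly that factor and the claimed identity holds. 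Hence $T^{h}$ is compatible with the projection formula.

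For transitivity, by Corollary \ref{cor:6} (which applies once we know $T^{h}$ is compatible with the projection formula) it is equivalent to show that $C_{T^{h}}$ is $\rho$-Todd additive in the second variable, and by Corollary \ref{cor:8} it is enough to verify transitivity with respect to the standard composition $Y\hookrightarrow\mathbb{P}_{Y}(N_{1}\oplus\mathbb{C})\hookrightarrow\mathbb{P}_{Y}(N_{1}\oplus\mathbb{C})\times_{Y}\mathbb{P}_{Y}(N_{2}\oplus\mathbb{C})$ with the Koszul resolutions. Here I would use Proposition \ref{prop:10}, which identifies $T^{h}$ of a Koszul resolution with $\widetilde e(X,\overline E,s)\bullet\Td^{-1}(\overline E)$, and then invoke the multiplicativity property \ref{item:10} of the Euler-Green classes in Proposition \ref{prop:14}. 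Concretely, on $P=\mathbb{P}_{Y}(N_{1}\oplus\mathbb{C})\times_{Y}\mathbb{P}_{Y}(N_{2}\oplus\mathbb{C})$ the resolution $\overline K=\pi_{1}^{\ast}K_{1}\otimes\pi_{2}^{\ast}K_{2}$ of $\mathcal{O}_{Y}$ is the Koszul resolution attached to the section $s=s_{1}\oplus s_{2}$ of $\overline Q_{1}\boxplus\overline Q_{2}$ (pulled back appropriately), with zero locus $Y$ the transverse intersection of $P_{1}=\mathbb{P}_{Y}(N_{1}\oplus\mathbb{C})$ and its companion. Applying \ref{item:10} gives
\begin{displaymath}
  \widetilde e(P,\overline Q_{1}\boxplus\overline Q_{2},s)=
  \pi_{1}^{\ast}\widetilde e(P_{1},\overline Q_{1},s_{1})\bullet \pi_{2}^{\ast}c_{r_{2}}(\overline Q_{2})
  + \delta_{P_{1}}\bullet \pi_{2}^{\ast}\widetilde e(P_{2},\overline Q_{2},s_{2}),
\end{displaymath}
and multiplying by $\Td^{-1}(\overline Q_{1})\bullet\Td^{-1}(\overline Q_{2})$, using Proposition \ref{prop:10} on each piece and the projection formula $\delta_{P_{1}}\bullet(\cdot)=(i_{1})_{\ast}i_{1}^{\ast}(\cdot)$, yields precisely the displayed transitivity identity
\begin{displaymath}
  T^{h}(\overline K)=\pi_{2}^{\ast}T^{h}(\overline K_{2})\bullet
  \pi_{1}^{\ast}(c_{r_{1}}(\overline Q_{1})\bullet\Td^{-1}(\overline Q_{1}))
  +(i_{1})_{\ast}(T^{h}(\overline K_{1})\bullet p_{1}^{\ast}\Td^{-1}(\overline N_{2})),
\end{displaymath}
after identifying $i_{1}^{\ast}\overline Q_{2}$ with $p_{1}^{\ast}\overline N_{2}$ on $P_{1}$ (the zero section direction drops out as in the proof of Proposition \ref{prop:10}).

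The main obstacle I anticipate is bookkeeping rather than conceptual: one must check carefully that the metrics used in forming $\overline K=\pi_{1}^{\ast}K_{1}\otimes\pi_{2}^{\ast}K_{2}$ are exactly those induced by the section $s_{1}\oplus s_{2}$ as required to invoke Proposition \ref{prop:10} and the multiplicativity \ref{item:10} — there may be a discrepancy of metrics on the relevant quotient bundles, which would have to be absorbed using equation \eqref{eq:55} and the behavior of Bott-Chern classes $\widetilde c_{r}$ and $\widetilde{\Td^{-1}}$ under change of metric, exactly as in the chain of equalities in the proof of Proposition \ref{prop:10}. A secondary point is verifying the transversality hypotheses (so that \ref{item:10} applies): since $P_{1}$ and the companion divisor meet transversely along $Y$, the section $s_{1}\oplus s_{2}$ is transverse to the zero section, so this is fine, but it should be stated. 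Once these metric identifications are made, the two properties follow and, by Corollaries \ref{cor:6} and \ref{cor:8}, the full transitivity of $T^{h}$ is established.
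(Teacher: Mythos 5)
Your proposal is correct and follows essentially the same route as the paper: the projection formula is read off from the explicit Euler--Green expression for $C_{T^{h}}$ together with corollary \ref{cor:7}, and transitivity is reduced via corollary \ref{cor:8} to the standard composition with Koszul resolutions and then settled by proposition \ref{prop:10} combined with the multiplicativity \ref{item:10} of proposition \ref{prop:14}. The only slip is in your intermediate multiplicativity display, which mixes the two equivalent forms of \ref{item:10}: to arrive at your (correct) final identity one should use the form $\widetilde e(\overline E_{1},s_{1})\land \delta_{Y_{2}}+c_{r_{1}}(\overline E_{1})\land\widetilde e(\overline E_{2},s_{2})$ with $Y_{2}=P_{1}$ the zero locus of $s_{2}$, i.e.\ the current $\delta_{P_{1}}$ must multiply $\widetilde e(P_{1},\overline Q_{1},s_{1})$ rather than $\widetilde e(P_{2},\overline Q_{2},s_{2})$.
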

\begin{proof}
  We have
  \begin{align*}
    \label{eq:72}
    C_{T^{h}}(F,N)&=(\pi _{P})_{\ast}T^{h}(K(\overline F,\overline
    N))\\
    &=(\pi _{P})_{\ast}(\widetilde e(P,\overline
  Q,s)\bullet \Td^{-1}(\overline Q)\bullet\ch(\pi
    _{P}^{\ast}\overline F))\\
    &=(\pi _{P})_{\ast}(\widetilde e(P,\overline
  Q,s)\bullet \Td^{-1}(\overline Q))\bullet\ch(\overline F)\\
  &=C_{T^{h}}(\mathcal{O}_{Y},N)\bullet \ch(F).
  \end{align*}
  Thus $C_{T^{h}}$ is compatible with the projection formula.
  
  We now prove the transitivity. Let $Y$, $N_{1}$ and $N_{2}$ be as in
  corollary \ref{cor:8}. We follow the notation after this
  corollary. Then applying proposition \ref{prop:10} we obtain
  \begin{equation}\label{eq:74}
    T^{h}(\overline K)=\widetilde e(P,\pi _{1}^{\ast}\overline
    Q_{1}\oplus \pi _{2}^{\ast}\overline Q_{2},s_{1}+s_{2})\bullet
    \Td^{-1}(\pi _{1}^{\ast}\overline
    Q_{1}\oplus \pi _{2}^{\ast}\overline Q_{2}),
  \end{equation}
  where $s_{i}$ denote the tautological section of
  $\overline Q_{i}$ or its preimage by $\pi _{i}$.

  Then, by proposition \ref{prop:14} \ref{item:10}, taking into
  account that $Y_{1}=P_{2}$,
  \begin{multline}
    \label{eq:75}
    T^{h}(\overline K)=\pi _{1}^{\ast}(c_{r_{1}}(\overline
    Q_{1})\Td^{-1}(\overline Q_{1})) \bullet \pi _{2}^{\ast}(
    \widetilde e(P_{2},\overline Q_{2},s_{2})\Td^{-1}(\overline
    Q_{2})) \\+
    (i_{1})_{\ast}(\widetilde e(P_{1},\overline Q_{1},s_{1})\Td^{-1}(\overline
    Q_{1})\bullet p_{1}^{\ast}\Td^{-1}(\overline N_{2})).
  \end{multline}
  Applying again proposition \ref{prop:10} we obtain
  \begin{equation}
    \label{eq:76}
    T^{h}(\overline K)=\pi _{1}^{\ast}(c_{r_{1}}(\overline
    Q_{1})\Td^{-1}(\overline Q_{1})) \bullet \pi _{2}^{\ast}(
    T^{h}(\overline K_{2})) +
    (i_{1})_{\ast}(T^{h}(\overline K_{1})\bullet 
    p_{1}^{\ast}\Td^{-1}(\overline N_{2})).
  \end{equation}
  Thus, by corollary \ref{cor:8} the theory of homogeneous singular
  Bott-Chern classes is transitive.
\end{proof}

We next recall the construction of singular Bott-Chern classes of
Bismut, Gillet and Soul\'e. 
Let $i\colon Y\longrightarrow X$ be a closed immersion of complex manifolds and 
let $\overline{\xi}=(i,\overline N,\overline
F, \overline E_{\ast}) $ be a hermitian embedded vector bundle. We consider the
associated complex of sheaves
\begin{displaymath}
  0\to E_{n}\overset{v}{\to}\dots\overset{v}{\to}
  E_{0}\to 0, 
\end{displaymath}
where we denote by $v$ the differential of this complex.

This complex is exact for all $x\in X\setminus Y$. The cohomology
sheaves of this complex are holomorphic vector bundles on $Y$ which we
denote by
\begin{displaymath}
  H_{n}=\mathcal{H}_{n}(E_{\ast}|_{Y}),\quad
  H=\bigoplus _{n}H_{n}.
\end{displaymath}
For each $x\in Y$ and $U\in T_{x}X$ we denote by $\partial_{U}v(x)$
the derivative of the map $v$ calculated in any holomorphic
trivialization of $E$ near $x$. Then $\partial_{U}v(x)$ acts on
$H_{x}$. Moreover, this action only depends on the class $y$ of $U$
in $N_{x}$. We denote it by $\partial_{y}v(x)$. Moreover
$(\partial_{y}v(x))^{2}=0$; therefore the pull-back of $H$ to the
total space of $N$ together with $\partial_{y}v$ is a complex that
we denote by $(H,\partial_{y}v)$.

On the total space of $N$, the interior multiplication by $y\in N$
turns $\bigwedge N^{\vee}$ into a Koszul complex. By abuse of notation
we denote also by $\iota_{y}$ the operator $\iota_{y}\otimes 1$ acting
on $\bigwedge N^{\vee}\otimes F$. There is a canonical isomorphism
between the complexes $(H,\partial_{y}v)$
and $(\bigwedge N^{\vee}\otimes F,\iota_{y})$. An explicit description
of this isomorphism can be found in \cite{bismut90:SCCI} \S1.

Let $v^{\ast}$ be the adjoint of the operator $v$ with respect to the
metrics of $\overline E_{\ast}$. Then we have an identification of
vector bundles over $Y$
\begin{displaymath}
  H_{k}=\{f\in E_{k}\mid vf=v^{\ast}f=0\}.
\end{displaymath}
This identification induces a hermitian metric on $H_{k}$, and hence
on $H$. Note that the metrics on $N$ and $F$ also induce a hermitian
metric on $\bigwedge N^{\vee}\otimes F$.

\begin{definition}
We say that $\overline {\xi}=(i,\overline N,\overline
F, \overline {E}_{\ast})$ satisfies Bismut assumption (A) if the
canonical isomorphism between  $(H,\partial_{y}v)$ and 
$(\bigwedge N^{\vee}\otimes F,\iota_{y})$ is an isometry.
\end{definition}

\begin{proposition} \label{prop:8}
Let $\overline {\xi}=(i,\overline N,\overline
F, \overline E_{\ast})$ be as before, with $\overline N=(N,h_{N})$ and
$\overline F=(F,h_{F})$. Then there exist metrics $h'_{E_{k}}$ over
$E_{k}$ such that the hermitian embedded vector bundle $\overline
{\xi}'=(i,\overline N,\overline F, 
(E_{\ast},h'_{E_{\ast}}))$ satisfies Bismut assumption (A).
\end{proposition}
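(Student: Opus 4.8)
\textbf{Proof plan for Proposition \ref{prop:8}.}

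The idea is to deform the given metrics on the $E_k$ to new ones for which the canonical isomorphism $(H,\partial_y v)\cong(\bigwedge N^\vee\otimes F,\iota_y)$ becomes an isometry, without changing the metrics on $N$ and $F$. First I would recall the identification $H_k=\{f\in E_k\mid vf=v^\ast f=0\}$, which depends on the chosen metrics $h_{E_\ast}$ through the adjoint $v^\ast$; the metric it induces on $H_k$ (and on $H=\bigoplus_k H_k$) is the restriction of $h_{E_k}$. The point is that we are completely free to choose the metrics $h'_{E_k}$, and we want to arrange that the resulting induced metric on $H$ agrees, under the canonical (holomorphic) isomorphism described in \cite{bismut90:SCCI} \S1, with the metric on $\bigwedge N^\vee\otimes F$ coming from $h_N$ and $h_F$. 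Since that canonical isomorphism is intrinsic (it does not involve any choice of metric on $E_\ast$), the condition ``assumption (A)'' is purely a condition relating the metric induced on $H$ by $h'_{E_\ast}$ to a \emph{fixed} target metric on $H$, namely the transported metric $g_H$ of $\bigwedge N^\vee\otimes F$.

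So the plan reduces to the following local-to-global construction. Fix the target hermitian metric $g_H$ on the vector bundle $H\to Y$ obtained by transporting the metric of $\bigwedge N^\vee\otimes F$ via the canonical isomorphism. I want to produce metrics $h'_{E_k}$ on the $E_k$ (over all of $X$, but only their behaviour along $Y$ matters for condition (A)) such that:
\begin{enumerate}
\item the orthogonal decomposition of $E_k|_Y$ with respect to $h'_{E_k}$ and the operator $v$ realizes $H_k$ as the harmonic subspace $\{vf=v'^\ast f=0\}$;
\item the metric induced on $H_k$ by $h'_{E_k}$ equals $g_{H,k}$.
\end{enumerate}
To do this, work first along $Y$: at each point of $Y$ one has the complex $(E_\ast|_Y, v)$ of vector spaces with cohomology $H_\ast$; choosing in each $E_k|_Y$ a complement of $\ker v$ and of $\operatorname{im} v$ lets one write $E_k|_Y\cong B_k\oplus H_k\oplus B_{k-1}$ where $B_k$ is a complement of $\ker v_k$ mapping isomorphically to $\operatorname{im} v_k\subset E_{k-1}$. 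Equip $H_k$ with $g_{H,k}$, choose an arbitrary metric on each $B_k$, and declare these three summands mutually orthogonal; this defines a metric on $E_k|_Y$ for which the harmonic space is exactly $H_k$ with metric $g_{H,k}$ (a standard Hodge-theoretic computation: with such a block-orthogonal metric, $v^\ast$ maps $B_{k-1}$-part back to $B_{k-1}\subset E_k$ and kills $H_k$, so $\ker v\cap\ker v^\ast = H_k$). Then extend these metrics from $Y$ to a neighbourhood in $X$ and, using a partition of unity, glue with any globally defined metrics on the $E_k$ away from $Y$; since condition (A) only constrains the $1$-jet behaviour along $Y$ — it is a condition about $H$, $\partial_y v$ and the induced metric on $H$, all of which are determined by $h'_{E_\ast}|_Y$ — the extension and gluing do not disturb it. This yields $\overline\xi'=(i,\overline N,\overline F,(E_\ast,h'_{E_\ast}))$ satisfying Bismut assumption (A).

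\textbf{Main obstacle.} The only genuinely delicate point is verifying that, with the block-orthogonal choice of metric along $Y$, the canonical isomorphism of \cite{bismut90:SCCI} \S1 between $(H,\partial_y v)$ and $(\bigwedge N^\vee\otimes F,\iota_y)$ is indeed an \emph{isometry} and not merely an isomorphism — i.e. that transporting the $\bigwedge N^\vee\otimes F$ metric to $H$ and then declaring $H_k$ to be $g_{H,k}$-orthogonal to the $B$-summands is compatible with how $\partial_y v$ acts. This is where one must unwind the explicit description of that canonical isomorphism and check that the metric we put on $H$ is exactly the transported one; everything else (Hodge decomposition with a block metric, extension from $Y$ to $X$, partition-of-unity gluing) is routine. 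I expect the cleanest route is simply to \emph{define} the metric on $H$ to be the transported target metric from the outset, as above, so that the isometry statement becomes a tautology, and then the real content is only the Hodge-theoretic lemma that a block-orthogonal metric on $E_\ast|_Y$ realizes the prescribed metric on the harmonic subspace — which is elementary linear algebra fibrewise, varying smoothly over $Y$.
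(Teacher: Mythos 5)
Your argument is correct, and it is worth noting that the paper itself gives no proof at all: it simply cites Proposition 1.6 of Bismut's paper \cite{bismut90:SCCI}, so what you have written is essentially a reconstruction of the standard argument behind that citation rather than a divergence from the paper. Your two key reductions are both valid relative to the definition of assumption (A) used here: the canonical isomorphism $(H,\partial_y v)\cong(\bigwedge N^\vee\otimes F,\iota_y)$ is metric-independent, and the metric induced on $H$ (via $H_k=\{f\in E_k\mid vf=v^\ast f=0\}$) depends only on the restriction $h'_{E_\ast}|_Y$, so the condition is purely a zeroth-order condition on the metrics along $Y$ and is insensitive to the extension to $X$; your partition-of-unity gluing (with the cutoff equal to $1$ near $Y$) therefore does the job. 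The fibrewise lemma is also right: since the cohomology sheaves of $E_\ast|_Y$ are locally free, $\ker v$ and $\operatorname{im}v$ are subbundles of $E_k|_Y$, one gets a smooth three-block splitting, and for a block-orthogonal metric one checks $\ker v^\ast=(\operatorname{im}v)^\perp$, hence $\ker v\cap\ker v^\ast$ is exactly the middle block with whatever metric you placed on it; taking that metric to be the one transported from $\bigwedge N^\vee\otimes F$ makes the isometry statement tautological, as you say. Two small inaccuracies, neither harmful: the third summand in your splitting of $E_k|_Y$ is $\operatorname{im}(v\colon E_{k+1}\to E_k)\cong B_{k+1}$, not $B_{k-1}$, with your convention for $B_k$; and the condition constrains only the restriction of the metrics to $Y$ (their $0$-jet), not their $1$-jet — it is the map $v$, not the metric, whose first-order behaviour along $Y$ enters through $\partial_y v$.
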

\begin{proof}
  This is \cite{bismut90:SCCI} proposition 1.6.
\end{proof}

Let $\nabla^{E}$ be the canonical hermitian holomorphic connection on
$E$ and let $V=v+v^{\ast}$. Then
\begin{displaymath}
  A_{u}=\nabla^{E}+\sqrt{u}V
\end{displaymath}
is a superconnection on $E$. 

Let $\nabla ^{H}$ be the canonical hermitian connection on $H$. Then
\begin{displaymath}
  B=\nabla ^{H}+ \partial_{y}v+(\partial_{y}v)^{\ast}
\end{displaymath}
is a superconnection on $H$.

Let $N_{H}$ be the number operator on the complex $(E,v)$, that is,
$N_{H}$ acts on $E_{k}$ by multiplication by $k$, and let $\Tr_{s}$
denote the supertrace. Recall that here we are using the symbol $\left[\
\right]$ to 
denote the current associated to a locally integrable differential
form and the symbol $\delta _{Y}$ to denote the current integration
along a subvariety, both with the normalizations of
notation \ref{def:19}.  

For $0<Re(s)\le 1/2$ let $\zeta_{E}(s)$ be the current on $X$ given by
the formula
\begin{multline}
  \label{eq:41}
  \zeta_{E}(s)=\frac{1}{\Gamma (s)}\left.\int_{0}^{\infty}u^{s-1}
  \right\{\left[\Tr_{s}\left(N_{H}\exp(-A_{u}^{2})\right)\right]\\
  -\left.i_{\ast}\left[\int_{N}\Tr_{s}\left(N_{H}\exp(-B^{2})\right)\right] 
    \right\}\dd u.
\end{multline}
This current is well defined and extends to a current that depends
holomorphically on $s$ near $0$.

\begin{definition} \label{def:9}
  Assume that $\overline {\xi}=(i,\overline N,\overline
F, \overline E_{\ast})$ satisfies Bismut assumption (A). Then we denote
\begin{displaymath}
  T^{BGS}(\overline {\xi})=-\frac{1}{2}\zeta '_{E}(0). 
\end{displaymath}
By abuse of notation we will denote also by $T^{BGS}(\overline {\xi})$
its class in $\widetilde
  \bigoplus_{p} \widetilde
{\mathcal{D}}^{2p-1}_{D}(X,p). $

Let now
  $\overline {\xi}=(i,\overline N,\overline
F, (E_{\ast},h_{E_{\ast}}))$ be general and let $\overline
{\xi}'=(i,\overline N,\overline 
F, (E_{\ast},h'_{E_{\ast}}))$ be any hermitian embedded vector bundle
satisfying
assumption (A) provided by proposition \ref{prop:8}. 
Then we denote
\begin{displaymath}
  T^{BGS}(\overline{\xi})=T^{BGS}(\overline{\xi}')+
  \sum_{i}(-1)^{i}\widetilde 
{\ch}(E_{i},h_{E_{i}},h'_{E_{i}}), 
\end{displaymath}
where $\widetilde 
{\ch}(E_{i},h_{E_{i}},h'_{E_{i}})$ is as in definition \ref{def:11}. 
\end{definition}

\begin{remark}
  This definition only agrees (up to a normalization factor) with the
  definition in \cite{BismutGilletSoule:MR1086887} 
  for hermitian embedded vector bundles that satisfy assumption (A).
\end{remark}

\begin{theorem}
  The assignment that, to each hermitian embedded vector bundle
  $\overline {\xi}$,  associates the current
  $T^{BGS}(\overline{\xi})$, is a 
  theory of singular Bott-Chern classes that agrees with $T^{h}$.
\end{theorem}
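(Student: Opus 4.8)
The plan is to verify that $T^{BGS}$ satisfies the four axioms of Theorem~\ref{thm:12}: the differential equation, functoriality, normalization, and homogeneity. Since Theorem~\ref{thm:9} (equivalently Theorem~\ref{thm:12}) guarantees a \emph{unique} theory satisfying these, identifying $T^{BGS}$ with $T^{h}$ will then be automatic. First I would check that $T^{BGS}$ is a theory of singular Bott-Chern classes at all, i.e.\ satisfies the three axioms of Definition~\ref{def:7}. The differential equation for $-\tfrac12\zeta'_{E}(0)$ under Bismut assumption~(A) is exactly the content of the transgression formulas of \cite{BismutGilletSoule:MR1086887}, once one translates their real-coefficient normalization into the algebro-geometric twist of Notation~\ref{def:19}; for a general hermitian embedded vector bundle the correction term $\sum_{i}(-1)^{i}\widetilde{\ch}(E_{i},h_{E_{i}},h'_{E_{i}})$ in Definition~\ref{def:9} is designed precisely so that the differential equation is preserved, using $\dd_{\mathcal{D}}\widetilde{\ch}(E_{i},h_{E_{i}},h'_{E_{i}})=\ch(E_{i},h_{E_{i}})-\ch(E_{i},h'_{E_{i}})$. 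One must also check this definition is independent of the auxiliary metrics $h'_{E_{\ast}}$ satisfying~(A): if $h'$ and $h''$ both satisfy~(A), then $T^{BGS}$ computed with $h'$ differs from that computed with $h''$ by $\sum_i(-1)^i\widetilde{\ch}(E_i,h'_{E_i},h''_{E_i})$, which follows from the behaviour of $\zeta_E$ under a metric change among assumption-(A) metrics (again a result in \cite{BismutGilletSoule:MR1086887}, or \cite{bismut90:SCCI}) combined with the cocycle property of Bott-Chern classes.

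For functoriality under a morphism $f$ transverse to $Y$, I would use that the superconnection, the number operator, the supertrace, and the fibre integral over $N$ all commute with pullback along such an $f$ (transversality ensures $f^{\ast}N_{Y/X}=N_{Y'/X'}$ and that $f^{\ast}E_{\ast}$ resolves $i'_{\ast}g^{\ast}F$), and that if the metrics on $\overline{\xi}$ satisfy~(A) then so do the pulled-back metrics. Here Proposition~\ref{prop:1} is needed to make sense of $f^{\ast}T^{BGS}(\overline{\xi})$ at the level of currents. For the normalization axiom, adding an orthogonally split complex $\overline A_{\ast}$ in non-negative degrees does not change the cohomology sheaves $H$ nor the action of $\partial_y v$, and contributes only split pieces to the superconnection heat kernel, so $\zeta_{E_{\ast}\oplus A_{\ast}}=\zeta_{E_{\ast}}$; the one-point case with $E_{\ast}=0$ is immediate since all the currents vanish. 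Together with Remark~\ref{rem:1}(1) one also checks that on acyclic complexes $T^{BGS}$ reduces to $\widetilde{\ch}$.

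\textbf{The main obstacle.}
The decisive and most delicate step is the homogeneity axiom: one must show
\begin{displaymath}
  T^{BGS}(K(\overline F,\overline N))\bullet\Td(\overline Q)\bullet\ch^{-1}(\pi_P^{\ast}\overline F)\in\widetilde{\mathcal{D}}^{2r_N-1}_D(P,r_N),
\end{displaymath}
i.e.\ that the Euler--Green-type class attached to the Koszul resolution of the zero section of $\overline N\oplus\overline{\mathbb{C}}$ lives in the single Deligne degree $2r_N$ rather than spreading across all degrees. The Koszul resolution of the zero section automatically satisfies Bismut assumption~(A) (the canonical isomorphism $(H,\partial_y v)\cong(\bigwedge N^{\vee},\iota_y)$ is an isometry by construction of the induced metrics), so $T^{BGS}(K(\overline F,\overline N))=-\tfrac12\zeta'_{K}(0)$ directly. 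I would then invoke the explicit computation of $\zeta'$ in this geometric situation from \cite{BismutGilletSoule:MR1086887}: there the transgression current for the Koszul complex of a section transverse to the zero section is shown to equal (up to the twist by $\Td(\overline Q)\ch(\pi_P^{\ast}\overline F)$) the Bismut--Gillet--Soul\'e Euler--Green current $\widetilde e(P,\overline Q,s)$, which by its very definition lies in $\widetilde{\mathcal{D}}^{2r_N-1}_D(P,N^{\ast}_{Y,0},r_N)$ — see Proposition~\ref{prop:14} and Lemma~\ref{lemm:4}. This is exactly the relation established in Proposition~\ref{prop:10} for the homogeneous theory, so the real work is to verify that the superconnection formula for $\zeta'_K(0)$ produces the same current; this is a matter of matching the heat-kernel transgression of \cite{BismutGilletSoule:MR1086887} with the defining properties of $\widetilde e$. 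Once homogeneity is checked, uniqueness in Theorem~\ref{thm:12} forces $T^{BGS}=T^{h}$, and combined with Proposition~\ref{prop:16} (agreement on acyclic complexes) and Theorem~\ref{thm:6} the identification is complete. A parallel and shorter argument, comparing $C_{T^{BGS}}$ with $C_{T^{h}}$ via equation~\eqref{eq:71}, gives an alternative route: since both characteristic classes are computed from $\widetilde e(P,\overline Q,s)$ they coincide, and Theorem~\ref{thm:6} does the rest.
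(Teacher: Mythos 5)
Your proposal is correct and follows essentially the paper's own route: the three axioms of Definition~\ref{def:7} are checked by citing the transgression, anomaly, functoriality and additivity results of Bismut--Gillet--Soul\'e (together with the metric-independence of Definition~\ref{def:9} via the cocycle property of Bott--Chern classes), and the identification with $T^{h}$ rests on the same key input, namely that on Koszul resolutions $T^{BGS}$ equals the Euler--Green current twisted by $\Td^{-1}$ (BGS Theorem 3.17 = Proposition~\ref{prop:10}). The only cosmetic difference is that you phrase the last step as verifying homogeneity and invoking uniqueness in Theorem~\ref{thm:12}, while the paper deduces $C_{T^{BGS}}=C_{T^{h}}$ and applies Theorem~\ref{thm:6} — the alternative you yourself mention — and these are equivalent since the uniqueness in Theorem~\ref{thm:12} is derived from Theorem~\ref{thm:6}.
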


\begin{proof}
  First we have to show that, when $\overline {\xi}$ does not satisfy
  assumption (A) then $T^{BGS}(\overline {\xi})$ is well
  defined. Assume that $\overline {\xi}''=(i,\overline N,\overline 
  F, (E_{\ast},h'_{E_{\ast}}))$ is another choice of hermitian
  embedded vector bundle satisfying assumption (A). By lemma
  \ref{lemm:1} we have that
  \begin{displaymath}
    \widetilde{\ch}(E_{i},h_{i},h'_{i})+
    \widetilde{\ch}(E_{i},h'_{i},h''_{i})  
    +\widetilde{\ch}(E_{i},h''_{i},h_{i})=0.
  \end{displaymath}
  By \cite{BismutGilletSoule:MR1086887} theorem 2.5 we have that
  \begin{displaymath}
    T^{BGS}(\overline{\xi}')-T^{BGS}(\overline{\xi}'')
    =\sum_{i}(-1)^{i}\widetilde
    {\ch}(E_{i},h'_{E_{i}},h''_{E_{i}}).  
  \end{displaymath}
  Summing up we obtain that $T^{BGS}(\overline {\xi})$ is well
  defined. 

  If the hermitian embedded vector bundle $\overline {\xi}$ satisfies
  Bismut assumption 
  (A) then, by \cite{BismutGilletSoule:MR1086887} theorem 1.9,
  $T^{BGS}(\overline 
  {\xi})$ satisfies 
  equation \eqref{eq:42}. If $\overline {\xi}$ does not satisfy
  assumption (A) then, combining \cite{BismutGilletSoule:MR1086887}
  theorem 1.9 and 
  equation \eqref{eq:13}, we also obtain that $T^{BGS}(\overline
  {\xi})$ satisfies 
  equation \eqref{eq:42}. 

  The functoriality property is \cite{BismutGilletSoule:MR1086887}
  theorem 1.10.  

  In order to prove the normalization property, let $\overline
  {\xi}=(i\colon Y\longrightarrow  
    X,\overline N, \overline 
    F, \overline E_{\ast})$
  be a hermitian embedded vector bundle that satisfies assumption (A)
  and let $\overline A$ be a non-negatively graded orthogonally split
  complex of vector bundles on $X$. Observe that $\overline A$ is also a
  (trivial) hermitian embedded vector bundle. Then $\overline A$ and
  $\overline {\xi}\oplus \overline A$ also satisfy assumption (A). By 
  \cite{BismutGilletSoule:MR1086887} theorem 2.9
  \begin{displaymath}
    T^{BGS}(\overline {\xi}\oplus \overline A)=
    T^{BGS}(\overline {\xi})+T^{BGS}(\overline A).
  \end{displaymath}
  But by \cite{BismutGilletSoule:MR1047123} remark 2.3, $T^{BGS}(\overline
  A)$ agrees with the 
  Bott-Chern class associated to the Chern character and  the exact
  complex $\overline A$. Since $A$ is 
  orthogonally split we have $T^{BGS}(\overline A)=0$. Now the case when $\xi$
  does not satisfy assumption (A) follows from the definition.

  By \cite{BismutGilletSoule:MR1086887} theorem 3.17, with the
  hypothesis of proposition \ref{prop:10}, we have that
  \begin{align*}
    T^{BGS}(i,\overline {\mathcal{O}}_{Y},\overline N_{Y/X},K(\overline
    E))&=\widetilde e(X,\overline E,s)\bullet 
    \Td^{-1}(\overline E)\\
    &=T^{h}(i,\overline {\mathcal{O}}_{Y},\overline N_{Y/X},K(\overline
    E)).
  \end{align*}
  From this it follows that $C_{T^{BGS}}=C_{T^{h}}$ and by theorem
  \ref{thm:6}, $T^{BGS}=T^{h}$.
\end{proof}

We now recall Zha's construction. Note that, in order to
obtain a theory of singular Bott-Chern classes, we have changed the
normalization convention from the one
used by Zha. Note also that Zha does not define explicitly  a
singular Bott-Chern 
class, but such a definition is implicit in his definition of direct
images for closed immersions. Let $Y$ be a 
complex 
manifold and let $\overline N=(N,h)$ be a hermitian vector bundle. 
We denote $P=\mathbb{P}(N\oplus \mathbb{C})$. Let $\pi
\colon P\longrightarrow Y$ denote the projection and let
$\iota\colon Y\longrightarrow P$ denote the inclusion as the zero
section. On $P$ we consider the 
tautological exact sequence
\begin{displaymath}
  0\longrightarrow \mathcal{O}(-1)\longrightarrow \pi ^{\ast}N\oplus
  \mathcal{O}_{P} \longrightarrow Q\longrightarrow 0.
\end{displaymath}
Let $h_{1}$ denote the hermitian metric on $Q^{\vee}$ induced by the
metric of $N$ and the trivial metric on $\mathcal{O}_{P}$ and let
$h_{0}$ denote the semi-definite hermitian form on $Q^{\vee}$ induced
by the map $Q^{\vee}\longrightarrow \mathcal{O}_{P}$ obtained from the
above exact sequence and the trivial metric on $\mathcal{O}_{P}$.
Let $h_{t}=(1-t^{2})h_{0}+t^{2}h_{1}$. It is a hermitian metric on
$Q^{\vee}$. We will denote $\overline Q_{t}^{\vee}
=(Q^{\vee},h_{t})$. Let $\nabla_{t}$ be the associated hermitian holomorphic
connection and let $N_{t}$ denote  the endomorphism defined by
\begin{displaymath}
  \frac{\dd}{\dd t}\left<v,w\right>_{t}=\left<N_{t}v,w\right>.
\end{displaymath}

For each $n\ge 1$, let $\Det$ denote the alternate $n$-linear form on
the space of $n$ by $n$ matrices such that
\begin{displaymath}
  \det(A)=\Det(A,\dots ,A).
\end{displaymath}
We denote $\det(B;A)=\Det(B,A,\dots ,A)$.

Zha introduced the differential form 
\begin{equation}
  \label{eq:43}
  \widetilde e_{Z}(\overline Q^{\vee})=\frac{-1}{2}\lim_{s\rightarrow 0}
 \int_{s}^{1}\det(N_{t},\nabla_{t}^{2})\dd t
\end{equation}
which is a smooth form on $P\setminus \iota(Y)$, locally integrable on
$P$. Hence it defines a current, also denoted by $ \widetilde
e_{Z}(\overline Q^{\vee})$ on $P$. 
The important property of this current is that it satisfies
\begin{equation}
  \label{eq:44}
  \dd_{\mathcal{D}}\overline e_{Z}(Q^{\vee})=c_{n}(\overline 
  Q_{1})-\delta
  _{Y}.  
\end{equation}

In \cite{zha99:_rieman_roch}, Zha denotes by $C(\overline Q^{\vee})$ a
form that differs from $\widetilde e_{Z}$ by the normalization factor
and the sign. We denote it by $\widetilde e_{Z}$ because it agrees with
the Euler-Green current introduced in
\cite{BismutGilletSoule:MR1086887}.  

\begin{proposition}
  The equality
  \begin{displaymath}
     \widetilde e_{Z}(Q^{\vee})=\widetilde e(P,\overline 
  Q_{1}, s_{Q})
  \end{displaymath}
  holds.
\end{proposition}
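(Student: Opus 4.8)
The plan is to invoke the uniqueness statement in Proposition \ref{prop:14} and check that Zha's current $\widetilde e_Z(Q^\vee)$ satisfies the defining properties of the Euler-Green class in the relevant case. More precisely, I would apply the characterization to the triple $(P,\overline Q_1,s_Q)$, where $P=\mathbb{P}(N\oplus\mathbb{C})$, $\overline Q_1$ is the quotient bundle with the metric $h_1$, and $s_Q$ is the tautological section induced by $1\in\mathbb{C}$, whose zero locus is the zero section $\iota(Y)$. By Proposition \ref{prop:14} the class $\widetilde e(P,\overline Q_1,s_Q)$ is the unique class of currents in $\widetilde{\mathcal{D}}^{2r_N-1}_D(P,N^\ast_{Y,0},r_N)$ satisfying the differential equation \eqref{eq:51}, functoriality \eqref{eq:52}, multiplicativity, and the line bundle normalization \eqref{eq:54}; in fact, as the proof of that proposition shows, already the differential equation together with the vanishing of its restriction to $D_\infty=\mathbb{P}(N)$ (which follows from multiplicativity and the orthogonal splitting $\overline Q_1|_{D_\infty}=\overline S\oplus\overline{\mathbb{C}}$) pins it down, via Lemma \ref{lemm:4} and the fact that the restriction $H^{2r_N-1}_{\mathcal{D}^{\an}}(P,r_N)\to H^{2r_N-1}_{\mathcal{D}^{\an}}(D_\infty,r_N)$ is an isomorphism.

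So the first step is to observe that $\widetilde e_Z(Q^\vee)$, viewed as a current on $P$, lies in $\mathcal{D}^{2r_N-1}_D(P,N^\ast_{Y,0},r_N)$: it is a smooth form off $\iota(Y)$, locally integrable, and its only singularities are conormal to $\iota(Y)$, so its wave front set is contained in $N^\ast_{Y,0}$; this is exactly the setting of Proposition \ref{prop:1}-type considerations since $\dd_{\mathcal{D}}\widetilde e_Z(Q^\vee)=c_n(\overline Q_1)-\delta_Y$ by \eqref{eq:44} already belongs to that complex. The second step is to verify the differential equation: equation \eqref{eq:44} is precisely \eqref{eq:51} for the bundle $\overline Q_1$ (note $r_N=n=\rk Q_1$ and $c_{r_N}(\overline Q_1)$ is the top Chern form). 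The third step is to check the restriction to $D_\infty$ vanishes: over $D_\infty$ the semi-definite form $h_0$ on $Q^\vee$ becomes degenerate precisely along the $\overline{\mathbb{C}}$-factor while the interpolation $h_t$ restricted to the $\overline S$ summand is constant, so the integrand $\det(N_t,\nabla_t^2)$ in \eqref{eq:43}, which involves the derivative endomorphism $N_t$, vanishes identically on $D_\infty$; hence $\widetilde e_Z(Q^\vee)|_{D_\infty}=0$. (Alternatively, one can cite the computation in \cite{zha99:_rieman_roch} that $\widetilde e_Z$ restricts to zero on $\mathbb{P}(N)$, or deduce it from the explicit form of $\widetilde e_Z$ over the locus where $h_0$ is already nondegenerate transversally.)

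Once these two facts — the differential equation and the vanishing on $D_\infty$ — are in hand, Lemma \ref{lemm:4} gives $[\widetilde e_Z(Q^\vee)]=\widetilde e(P,\overline Q_1,s_Q)$ as classes in $\widetilde{\mathcal{D}}^{2r_N-1}_D(P,N^\ast_{Y,0},r_N)$, which is the assertion. The main obstacle I expect is the verification of the $D_\infty$-restriction: one must be careful that the interpolation $h_t=(1-t^2)h_0+t^2h_1$ degenerates along a rank-one subbundle over $D_\infty$, so that a priori the integral \eqref{eq:43} could have a boundary contribution at $s\to 0$ localized on $D_\infty$; the point is that the Chern-Weil integrand is built so that this contribution is exactly the current $\delta_Y$ on the nose (which restricts to zero on $D_\infty$ since $Y\cap D_\infty=\emptyset$), and what survives is smooth and manifestly vanishes on $D_\infty$ because $N_t$ acts trivially on the constant factor $\overline{\mathbb{C}}$. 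Making this precise amounts to matching Zha's explicit formula with the decomposition $\overline Q_1|_{D_\infty}=\overline S\oplus\overline{\mathbb{C}}$ used in the proof of Lemma \ref{lemm:4}, after which everything follows formally from the uniqueness.
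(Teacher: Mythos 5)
Your proposal is correct and follows essentially the same route as the paper: the paper's proof likewise notes that both currents satisfy equation \eqref{eq:44} and restrict to zero on $D_{\infty}$, and then concludes by the uniqueness of lemma \ref{lemm:4}. One small slip in your verification of the $D_{\infty}$-vanishing: under $Q^{\vee}|_{D_{\infty}}=S^{\vee}\oplus \mathbb{C}$ it is on the trivial factor $\mathbb{C}$ that $h_{t}$ is constant (so that both $N_{t}$ and the curvature vanish there), while on $S^{\vee}$ one has $h_{t}=t^{2}h_{1}$, i.e.\ the roles of the two summands are interchanged in your description; with this corrected, the block-diagonal structure (both $N_{t}$ and $\nabla_{t}^{2}$ vanishing on the trivial block) still forces $\det(N_{t};\nabla_{t}^{2})$ to vanish identically on $D_{\infty}$, which is the conclusion you need.
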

\begin{proof}
  With the notations of lemma  \ref{lemm:4}, 
  both classes satisfy equation \eqref{eq:44} and their restriction to
  $D_{\infty}$ is zero. By lemma \ref{lemm:4} they agree.
\end{proof}

\begin{definition}
   Let $\overline {\xi}= (i\colon Y\longrightarrow
  X,\overline N, \overline 
  F, \overline E_{\ast}) $ be as in definition \ref{def:7}. Let
  $\overline A_{\ast}$, $\tr_{1}(\overline
  E)_{\ast}$ and  $\overline {\eta}_{\ast}$ be as in
  \eqref{eq:33}. Then we define
  \begin{multline} 
    T^{Z}(\overline \xi)=
    -(p_{W})_{\ast}
    \left(\sum_{k}(-1)^{k}W_{1} \bullet \ch(\tr_{1}(\overline E)_{k}) 
    \right)\\
    -\sum_{k}(-1)^{k}(p_{P})_{\ast}[\widetilde
    \ch(\overline \eta_{k})]+(p_{P})_{\ast}(\ch(\pi
    _{p}^{\ast}\overline F)\Td^{-1}(\overline Q_{1})\widetilde
    e_{Z}(\overline 
    Q_{1}^{\vee})).
  \end{multline}
\end{definition}

It follows directly from the definition that $T^{Z}$ is the theory
of singular Bott-Chern classes associated to the class
\begin{equation}
  \label{eq:45}
  C_{Z}(F,N)=(p_{P})_{\ast}(\ch(\pi
  _{p}^{\ast}\overline F)\Td^{-1}(\overline Q_{1})\widetilde
  e_{Z}(\overline 
  Q_{1}^{\vee})).
\end{equation}

\begin{theorem}
  The theory of singular Bott-Chern classes $T^{Z}$ agrees with the
  theory of homogeneous singular Bott-Chern classes $T^{h}$.
\end{theorem}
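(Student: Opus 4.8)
The strategy is to show that $T^{Z}$ is homogeneous in the sense of Definition \ref{def:16}, and then invoke the uniqueness in Theorem \ref{thm:9} (equivalently Theorem \ref{thm:12}) to conclude $T^{Z}=T^{h}$. More concretely, it suffices to compute the characteristic class $C_{T^{Z}}=C_{Z}$ attached to $T^{Z}$ by Theorem \ref{thm:8} and to identify it with $C_{T^{h}}$; then the uniqueness in Theorem \ref{thm:6} forces $T^{Z}=T^{h}$. Since $C_{Z}$ is given explicitly by \eqref{eq:45}, and $C_{T^{h}}$ was identified in the proof of Theorem \ref{thm:9} (see Lemma \ref{lemm:6} and equation \eqref{eq:73}) as
\begin{displaymath}
  C_{T^{h}}(F,N)=(\pi _{P})_{\ast}\bigl(\widetilde e(P,\overline Q,s)\bullet \Td^{-1}(\overline Q)\bigr)\bullet \ch(\overline F),
\end{displaymath}
the whole matter reduces to the identity $\widetilde e_{Z}(\overline Q_{1}^{\vee})=\widetilde e(P,\overline Q_{1},s_{Q})$, which is precisely the proposition proved immediately above via Lemma \ref{lemm:4}.

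First I would unwind the definition of $T^{Z}$ and observe that, by construction, it fits the template of Definition \ref{def:5}: it is built from the deformation-to-the-normal-cone data $W$, the transgression bundles $\tr_{1}(\overline E_{\ast})_{\ast}$, the auxiliary exact sequences $\overline\eta_{k}$ on $P$, plus a final term $(p_{P})_{\ast}(\ch(\pi_{p}^{\ast}\overline F)\Td^{-1}(\overline Q_{1})\widetilde e_{Z}(\overline Q_{1}^{\vee}))$ in place of $i_{\ast}C(F,N)$. Comparing with \eqref{eq:68}, this means $T^{Z}=T_{C}$ for the characteristic class $C=C_{Z}$ of \eqref{eq:45}; one only has to check that $C_{Z}$ really is a characteristic class, i.e. that the current $\widetilde e_{Z}(\overline Q_{1}^{\vee})$ defines a class in analytic Deligne cohomology after applying $(p_{P})_{\ast}(\ch(\pi_{p}^{\ast}\overline F)\Td^{-1}(\overline Q_{1})\,\cdot\,)$ — this follows from \eqref{eq:44} together with Corollary \ref{cor:4} exactly as in Lemma \ref{lemm:6}, and from functoriality of the construction (and Proposition \ref{prop:22}) that it is metric-independent. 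Then by the uniqueness half of Theorem \ref{thm:6}, to prove $T^{Z}=T^{h}$ it is enough to prove $C_{Z}=C_{T^{h}}$.

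Next I would invoke the proposition established just before this theorem, namely $\widetilde e_{Z}(\overline Q^{\vee})=\widetilde e(P,\overline Q_{1},s_{Q})$. Substituting this into \eqref{eq:45} gives
\begin{displaymath}
  C_{Z}(F,N)=(p_{P})_{\ast}\bigl(\ch(\pi_{p}^{\ast}\overline F)\bullet\Td^{-1}(\overline Q_{1})\bullet\widetilde e(P,\overline Q_{1},s_{Q})\bigr)
  =(\pi_{P})_{\ast}\bigl(\widetilde e(P,\overline Q,s)\bullet\Td^{-1}(\overline Q)\bigr)\bullet\ch(\overline F),
\end{displaymath}
using the projection formula in Deligne cohomology and $p_{P}=i\circ\pi_{P}$; the right-hand side is exactly $C_{T^{h}}(F,N)$ as recorded in Lemma \ref{lemm:6}. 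Hence $C_{Z}=C_{T^{h}}$, and Theorem \ref{thm:6} yields $T^{Z}=T_{C_{Z}}=T_{C_{T^{h}}}=T^{h}$, which completes the proof.

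\textbf{Main obstacle.} The only genuinely non-formal input is the comparison $\widetilde e_{Z}(\overline Q^{\vee})=\widetilde e(P,\overline Q_{1},s_{Q})$ of Euler–Green currents, but this has already been isolated and proved above using the characterization in Lemma \ref{lemm:4} (both sides satisfy the same differential equation \eqref{eq:44} and restrict to zero on $D_{\infty}$). Given that, the remaining work is bookkeeping: matching the explicit formula defining $T^{Z}$ against the definition of $T_{C}$ in \eqref{eq:68}, and checking that $C_{Z}$ is a bona fide characteristic class via \eqref{eq:44} and Corollary \ref{cor:4}. So the essential difficulty is really the Euler–Green comparison, and everything else is a direct application of Theorems \ref{thm:6} and \ref{thm:9}.
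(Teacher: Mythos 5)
Your proposal is correct and follows essentially the same route as the paper: the paper's proof is exactly the observation that $T^{Z}=T_{C_{Z}}$ by construction, that $\widetilde e_{Z}(\overline Q^{\vee})=\widetilde e(P,\overline Q_{1},s_{Q})$ (the preceding proposition), so that $C_{Z}$ coincides with $C_{T^{h}}$ as computed via proposition \ref{prop:10} (equivalently equation \eqref{eq:73}/lemma \ref{lemm:6}), and then the uniqueness of theorem \ref{thm:6} gives $T^{Z}=T^{h}$. Your write-up just spells out the bookkeeping the paper leaves implicit.
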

\begin{proof}
  The result follows directly from theorem \ref{thm:6}, equation
  \eqref{eq:45} and proposition \ref{prop:10}.
\end{proof}

Next we want to use \ref{thm:7} to give another characterization of
$T^{h}$. To this end 
we only need to compute the characteristic class
$C_{T^{h}}(\mathcal{O}_{Y},L)$ for a line bundle $L$ as a power
series in $c_{1}(L)$. 

\begin{theorem} \label{thm:11}
  The theory of homogeneous singular Bott-Chern classes of algebraic
  vector bundles is the unique
  theory of singular Bott-Chern classes of algebraic vector bundles that 
  is compatible with the projection formula and transitive and that
  satisfies
  \begin{displaymath}
    C_{T^{h}}(\mathcal{O}_{Y},L)={\bf 1}_{1}\bullet \phi(c_{1}(L)), 
  \end{displaymath}
  where $\phi $ is the power series
  \begin{displaymath}
    \phi(x)=\frac{1}{2}
    \sum_{n=0}^{\infty}\frac{(-1)^{n+1}H_{n+1}}{(n+2)!}x^{n},
  \end{displaymath}
  and where $H_{n}=1+\frac{1}{2}+\frac{1}{3}+\dots +\frac{1}{n}$, $n\ge 1$
  are the harmonic 
 numbers.
\end{theorem}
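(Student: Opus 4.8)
The plan is to combine Theorem \ref{thm:7} with an explicit computation of the Euler-Green class on $\mathbb{P}^1$. By Theorem \ref{thm:10} the theory $T^h$ is compatible with the projection formula and transitive, so by Theorem \ref{thm:7} it corresponds to a unique power series $\phi\in\mathbb{R}[[x]]$ via $C_{T^h}(\mathcal{O}_Y,L)={\bf 1}_1\bullet\phi(c_1(L))$. Conversely, any transitive theory compatible with the projection formula that satisfies this identity must, again by Theorem \ref{thm:7}, coincide with $T^h$. Thus the whole content of the theorem is the identification of the power series $\phi$, and for this it suffices to evaluate $C_{T^h}(\mathcal{O}_Y,L)$ in one universal case, namely $Y=\ast$ a point and $L=\mathcal{O}_{\mathbb{P}^1}(1)$, so that $P=\mathbb{P}(L\oplus\mathbb{C})=\mathbb{P}^1$ and $r_N=1$.

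First I would use Proposition \ref{prop:10} (or directly Proposition \ref{prop:14}\ref{item:19}): for a line bundle with a section $s$, the homogeneous singular Bott-Chern class is $T^h(K(\overline L))=\widetilde e(X,\overline L,s)\bullet\Td^{-1}(\overline L)$ with $\widetilde e(X,\overline L,s)=[-\log\|s\|]$. In the case at hand, $\overline Q=\overline L$ is the tautological quotient on $\mathbb{P}^1$ with the Fubini-Study metric, $s=s_Q$ is the section vanishing at the zero section $Y=\{0\}$, and $\pi_P\colon\mathbb{P}^1\to\ast$. Hence
\begin{displaymath}
  C_{T^h}(\mathcal{O}_\ast,L)=(\pi_P)_\ast\bigl(T^h(K(\overline L))\bigr)
  =(\pi_P)_\ast\bigl([-\log\|s_Q\|]\bullet\Td^{-1}(\overline Q)\bigr).
\end{displaymath}
Writing $\omega=c_1(\overline Q)=c_1(\mathcal{O}_{\mathbb{P}^1}(1))$ for the Fubini-Study form and $\Td^{-1}(x)=\sum_{k\ge 0} b_k x^k$ for the inverse Todd power series, and using that $(\pi_P)_\ast$ is $\frac{1}{2\pi i}\int_{\mathbb{P}^1}$ (with the algebro-geometric twist of notation \ref{def:19}), one reduces the computation to the single integral $\frac{1}{2\pi i}\int_{\mathbb{P}^1}(-\log\|s_Q\|^2)\,\omega$, appropriately interpreted, together with the coefficients of $\Td^{-1}$. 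The main calculational task is therefore evaluating this weighted integral of $-\log\|s_Q\|$ against powers of the Fubini-Study form; this is a classical computation (it is the value $\widetilde e$ contributes, essentially a Bott-Chern-type integral on $\mathbb{P}^1$), and it produces, after bookkeeping, the harmonic numbers $H_{n+1}$ in the numerators.

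I expect the main obstacle to be purely bookkeeping: correctly tracking the normalization conventions of notation \ref{def:19} (the factors $(2\pi i)^{-1}$, the twist in the definition of $[\,\cdot\,]$ and $\delta_Y$, the sign in $W_1$ and in $-\frac12\log\|t\|^2$), and then assembling the coefficients of $\Td^{-1}(x)=x/(1-e^{-x})\cdot(\text{sign convention})$ against the single logarithmic integral to obtain exactly
\begin{displaymath}
  \phi(x)=\frac12\sum_{n=0}^\infty\frac{(-1)^{n+1}H_{n+1}}{(n+2)!}\,x^n.
\end{displaymath}
A clean way to organize the final step is to note that $\phi$ is characterized by the relation $\dd_{\mathcal{D}}(\text{something})=[c_1(\overline Q)]-\delta_Y$ forcing $\phi$ to be a specific antiderivative-type series, and then to recognize the generating function: the series $\sum_{n\ge 0}\frac{(-1)^{n+1}H_{n+1}}{(n+2)!}x^n$ is (up to the factor $\tfrac12$) the power series obtained by the standard identity relating $\int_0^1 \frac{1-e^{-tx}}{tx}\,\frac{\dd t}{t}$-type integrals to harmonic numbers, which is exactly the shape produced by integrating $\Td^{-1}$ against $-\log$ on the fibre. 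Once the universal value $\phi$ is pinned down, the theorem follows immediately from Theorem \ref{thm:7} and Theorem \ref{thm:10}, with no further argument needed.
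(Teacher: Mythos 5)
Your structural reduction is exactly the paper's: invoke theorem \ref{thm:10} to know $T^{h}$ is compatible with the projection formula and transitive, invoke theorem \ref{thm:7} to reduce uniqueness to identifying the power series, and use proposition \ref{prop:10} (equivalently proposition \ref{prop:14}) to write $T^{h}$ of the Koszul resolution of a line bundle as $-\tfrac12\log\|s\|^{2}\bullet\Td^{-1}(\overline Q)$ and push forward. However, there is a genuine gap in how you propose to pin down $\phi$. You claim it suffices to evaluate in the ``one universal case'' $Y=\ast$; but over a point every line bundle is trivial, $c_{1}(L)=0$, and the pushforward $(\pi_{P})_{\ast}T^{h}(K(\overline{\mathcal{O}}_{\ast},\overline L))$ only determines the constant term $\phi(0)=-\tfrac14$, not the series (your test case is also internally inconsistent, since $\mathcal{O}_{\mathbb{P}^{1}}(1)$ is not a bundle on a point, and even $Y=\mathbb{P}^{1}$ would only see $\phi$ modulo $x^{2}$). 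To determine all coefficients you must carry out the computation over a general base $Y$ (or over the system $\mathbb{P}^{m}$ for all $m$), keeping track of every power $c_{1}(\overline L)^{n}$.

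Relatedly, the heart of the theorem — the appearance of the harmonic numbers — is not a single integral against the Fubini--Study form plus coefficient bookkeeping, and you do not actually compute it. In the paper one writes $c_{1}(\overline Q)$ explicitly on $P=\mathbb{P}(L\oplus\mathbb{C})$ as
\begin{displaymath}
  c_{1}(\overline Q)=\frac{\pi^{\ast}c_{1}(\overline L)}{1+t\bar t h}
  -\frac{\partial(th)\wedge\bar\partial(\bar t h)}{h(1+t\bar t h)^{2}},
\end{displaymath}
and then evaluates, for every $n\ge 1$, the fibre integral
\begin{displaymath}
  \pi_{\ast}\bigl(c_{1}(\overline Q)^{n}\log\|s\|^{2}\bigr)
  =n\,c_{1}(\overline L)^{n-1}\int_{0}^{1}\log(1-w)\,w^{n-1}\,\dd w
  =-H_{n}\,c_{1}(\overline L)^{n-1},
\end{displaymath}
so that combining with $\Td^{-1}(\overline Q)=\sum_{n\ge 0}\tfrac{(-1)^{n}}{(n+1)!}c_{1}(\overline Q)^{n}$ yields $\phi(x)=\tfrac12\sum_{n\ge0}\tfrac{(-1)^{n+1}H_{n+1}}{(n+2)!}x^{n}$. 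This family of integrals, one for each $n$, is what produces $H_{n}$; your sketch (``recognize the generating function'', ``purely bookkeeping'') omits precisely this step, and since everything else in the statement is quoted from earlier results, the omission is the missing content of the proof.
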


We already know that
$T^{h}$ is compatible with the projection
formula and transitive. Thus it only remains to compute the power
series $\phi $.

Let $\overline L=(L,h_{L})$ be a hermitian line bundle over a complex  manifold
$Y$. Let $z$ be a system of holomorphic coordinates of $Y$. Let $e$ be a 
local section of $L$ and let $h(z)=h(e_{z},e_{z})$. Let
$P=\mathbb{P}(L\oplus 
\mathbb{C})$, with $\pi \colon P\longrightarrow Y$ the projection and
$\iota\colon Y\longrightarrow P$ the zero section. We choose homogeneous
coordinates on $P$ given by 
$(z,(x:y))$, here $(x:y)$ represents
the line of $L_{z}\oplus \mathbb{C}$ generated by $xe(z)+y\mathbf{1}$,
where $\mathbf{1}$ is a generator of $\mathbb{C}$ of norm 1. On the
open set $y\not =0$ we will use the absolute coordinate $t=x/y$. Let
\begin{displaymath}
  0\longrightarrow \mathcal{O}(-1)\longrightarrow 
  \pi ^{\ast}(L\oplus \mathbb{C})\longrightarrow Q\longrightarrow 0
\end{displaymath}
be the tautological exact sequence. The section $s=\{\mathbf{1}\}$ is a
global section of $Q$ that vanishes along the zero section. Moreover
we have
\begin{displaymath}
  \|s\|^{2}_{(z,(x:y))}=\frac{x\bar x h(z)}{y\bar y+
x\bar x h(z)}=\frac{t\bar t h}{1+
t\bar t h}.
\end{displaymath}
Then (recall that we are using the algebro-geometric normalization)
\begin{align}
  c_{1}(\overline Q)&=\partial\bar {\partial}\log  \|s\|^{2}\\
  &=\partial\bar {\partial}\log \frac{t\bar t h}{1+
t\bar t h}\\
&=\partial\left(
  \frac{1+t\bar th}{t\bar th}
  \frac{t\bar\partial(\bar th)(1+t\bar t h)-t^{2}\bar t
    h\bar \partial(\bar t h)}{(1+t\bar t h)^{2}}
  \right)\\
&=\partial\left(\frac{t\bar\partial (\bar t h)}{t\bar t h(1+t\bar t h)}
  \right)\\
&=\partial\left(\frac{\bar \partial(\bar t h)}{\bar t h}
\right)\frac{1}{1+t\bar t h}-\frac{\bar t \partial(ht)\land
  \bar \partial(\bar t h)}{\bar t h(1+t\bar t h)^{2}}\\
&=\frac{\pi ^{\ast}c_{1}(\overline L)}{1+t\bar t h}-
\frac{\partial(th)\land
  \bar \partial(\bar t h)}{h(1+t\bar t h)^{2}}.
\end{align}

We now consider the Koszul resolution
\begin{displaymath}
  \overline K\colon  0\longrightarrow Q^{\vee}\overset{s}{\longrightarrow}
  \mathcal{O}_{p} \longrightarrow \iota_{\ast}\mathcal{O}_{X}
  \longrightarrow 0.
\end{displaymath}
 We denote by $T^{h}(\overline K)$ the singular Bott-Chern class
 associated to this Koszul complex. Then, by proposition \ref{prop:14}
 and proposition \ref{prop:10},
 \begin{displaymath}
   T^{h}(\overline K)=-\frac{1}{2}\Td^{-1}(\overline
   Q)\log\|s\|^{2}. 
 \end{displaymath}
 In order to compute $\pi _{\ast}T^{h}(\overline K)$ we have to
 compute first $\pi _{\ast}c_{1}(\overline Q)^{n}\log\|s\|^{2}$.
 But
 \begin{displaymath}
   c_{1}(\overline Q)^{n}=
   \frac{\pi^{\ast}c_{1}(\overline L)^{n}}{(1+t\bar t h)^{n}}-
   n\left(\frac{\pi^{\ast}c_{1}(\overline L)}{(1+t\bar t h)}
   \right)^{n-1}\frac{\partial (t h)\land \bar \partial(\bar t
     h)}{h(1+t\bar t h)^{2}}.
 \end{displaymath}
Therefore 
 \begin{align*}
   \pi _{\ast}c_{1}(\overline
   Q)^{n}\log\|s\|^{2}&=-nc_{1}(\overline L)^{n-1} 
   \frac{1}{2\pi i}\int_{\mathbb{P}^{1}}\frac{\partial (t h)\land
     \bar \partial(\bar t 
     h)}{h(1+t\bar t h)^{n+1}}\log \frac{t\bar t h}{1+t\bar t h}\\
   &=-n c_{1}(\overline L)^{n-1} 
   \frac{1}{2\pi i}\int_{0}^{2\pi}\int _{0}^{\infty}
   \log \frac{r^{2}}{1+r^{2}}\frac{-2ir\dd \theta \dd
     r}{(1+r^{2})^{n+1}}\\ 
   &=n c_{1}(\overline
   L)^{n-1}\int_{0}^{1}\log(1-w)w^{n-1}\dd w\\
   &=-c_{1}(\overline
   L)^{n-1} H_{n},
 \end{align*}
 where $H_{n}$, $n\ge 1$ are the harmonic numbers. 
 Since
 \begin{displaymath}
   Td^{-1}(\overline Q)=\frac{1-\exp(-c_{1}(\overline
     Q))}{c_{1}(\overline
     Q)}=\sum_{n=0}^{\infty}\frac{(-1)^{n}}{(n+1)!}
   c_{1}(\overline Q)^{n},
 \end{displaymath}
we obtain
\begin{align*}
  C_{T^{h}}(\mathcal{O}_{Y},L)=\pi _{\ast}T^{h}(\overline
  K)=\frac{1}{2}\sum_{n=0}^{\infty} 
  \frac{(-1)^{n+1}H_{n+1}}{(n+2)!}c_{1}(\overline L)^{n}{\bf 1}_{1}.
\end{align*}

Then, a reformulation of proposition \ref{prop:9} is
\begin{corollary} \label{cor:10}
  Let $T$ be a theory of singular Bott-Chern classes for algebraic
  vector bundles that is
  compatible with the projection formula and transitive. Then there is
  a unique additive genus $S_{T}$ such that
  \begin{equation}\label{eq:17}
    C_{T}(F,N)-C_{T^{h}}(F,N)=\ch(F)\bullet \Td(N)^{-1}\bullet S_{T}(N).
  \end{equation}
  Conversely, any additive genus determines a theory of singular
  Bott-Chern classes by the formula (\ref{eq:17}).
\end{corollary}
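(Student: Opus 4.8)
\textbf{Proof proposal for Corollary \ref{cor:10}.}

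The plan is to deduce this from Theorem \ref{thm:7} together with the already-established facts about $T^h$ (Theorem \ref{thm:10}) and the general comparison result Proposition \ref{prop:9}. First I would invoke Theorem \ref{thm:10}, which tells us that $T^{h}$ is itself compatible with the projection formula and transitive; hence both $T$ and $T^{h}$ fall under the classification of Theorem \ref{thm:7}, and their associated characteristic classes $C_{T}$ and $C_{T^{h}}$ are both compatible with the projection formula and $\rho$-Todd-additive in the second variable. This is precisely the hypothesis needed to apply Proposition \ref{prop:9} to the pair $(C_{T}, C_{T^{h}})$: there is a unique additive genus $S_{T}:=S_{12}$ with
\begin{displaymath}
  C_{T}(F,N)-C_{T^{h}}(F,N)=\ch(F)\bullet \Td(N)^{-1}\bullet S_{T}(N),
\end{displaymath}
which is exactly \eqref{eq:17}. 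The uniqueness of $S_{T}$ is inherited from the uniqueness clause in Proposition \ref{prop:9}.

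For the converse direction, I would run the correspondence of Theorem \ref{thm:7} backwards. Given an additive genus $S$, it corresponds (by the one-to-one correspondence between additive genera and power series in one variable, recalled just before Proposition \ref{prop:9}) to a power series; adding to this the power series $\phi$ of Theorem \ref{thm:11} that represents $C_{T^{h}}(\mathcal{O}_{Y},L)/{\bf 1}_{1}$ yields a new power series $\psi$. By Theorem \ref{thm:7} there is a unique theory $T$ of singular Bott-Chern classes that is transitive and compatible with the projection formula with $C_{T}(\mathcal{O}_{Y},L)={\bf 1}_{1}\bullet\psi(c_{1}(L))$. One then checks that this $T$ realizes \eqref{eq:17} with $S_{T}=S$: by construction $C_{T}(\mathcal{O}_{Y},L)-C_{T^{h}}(\mathcal{O}_{Y},L)={\bf 1}_{1}\bullet(\psi-\phi)(c_{1}(L))=S(L)$ for line bundles, and since both sides of \eqref{eq:17}, viewed as functions of $(F,N)$, are compatible with the projection formula (so determined by their value on $\mathcal{O}_{Y}$) and both sides reduce on line bundles to the same thing, the splitting principle and Theorem \ref{thm:14} force equality of the $N$-dependent factors; tensoring by $\ch(F)$ via the projection-formula compatibility then gives the identity for arbitrary $F$.

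The only genuinely non-formal point is the comparison of the two sides of \eqref{eq:17} as characteristic classes in the second variable: one must know that a class of the shape $\ch(F)\bullet\Td(N)^{-1}\bullet S(N)$ with $S$ additive is again $\rho$-Todd-additive and projection-formula-compatible, so that matching on line bundles suffices. This is exactly the content packaged into Proposition \ref{prop:9} (and its proof, via the splitting principle), so I expect the main obstacle to be essentially already dispatched there; the remaining work is bookkeeping with Theorem \ref{thm:7} and Theorem \ref{thm:11}. Thus the corollary is really a repackaging of Proposition \ref{prop:9} in the language of theories of singular Bott-Chern classes rather than characteristic classes, made possible by the bijection of Theorem \ref{thm:6}/\ref{thm:7}.
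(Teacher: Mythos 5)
The forward direction of your argument is correct and is essentially the paper's own: Corollary \ref{cor:10} is presented there as a reformulation of Proposition \ref{prop:9}, obtained by combining Theorem \ref{thm:10} (so $T^{h}$ qualifies) with the dictionary of Corollaries \ref{cor:6} and \ref{cor:7} (equivalently the classification in Theorem \ref{thm:7}), so that $C_{T}$ and $C_{T^{h}}$ are projection-formula compatible and $\rho$-Todd-additive and Proposition \ref{prop:9} applies.

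In the converse, however, there is a concrete slip. If $S(L)=\mathbf{1}_{1}\bullet\sigma(c_{1}(L))$, then \eqref{eq:17} evaluated at $(\mathcal{O}_{Y},L)$ reads $C_{T}(\mathcal{O}_{Y},L)-C_{T^{h}}(\mathcal{O}_{Y},L)=\Td(L)^{-1}\bullet S(L)$, not $S(L)$. So the power series you should feed into Theorem \ref{thm:7} is not $\psi=\phi+\sigma$ but $\psi(x)=\phi(x)+\frac{1-e^{-x}}{x}\,\sigma(x)$; with your choice the two sides of \eqref{eq:17} do \emph{not} agree on line bundles, and the theory you construct realizes \eqref{eq:17} with the additive genus attached to the power series $\frac{x}{1-e^{-x}}\sigma(x)$ rather than with $S$. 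After correcting $\psi$, the cleanest way to conclude is to apply the already-proved forward direction to the pair $(T,T^{h})$, getting an additive genus $S'$ with $C_{T}-C_{T^{h}}=\ch\bullet\Td^{-1}\bullet S'$, and to observe that $S'$ agrees with $S$ on line bundles, hence everywhere. Note also that the converse as stated needs none of this machinery: given $S$, the right-hand side of \eqref{eq:17} defines a characteristic class $C$, and Theorem \ref{thm:6} already produces the unique theory $T_{C}$ with $C_{T_{C}}=C$; that $T_{C}$ is again compatible with the projection formula and transitive follows from Corollaries \ref{cor:6} and \ref{cor:7}, because the correction term $\ch(F)\bullet\Td(N)^{-1}\bullet S(N)$ is compatible with the projection formula and Todd-additive in $N$, while the class $\rho$ cancels in the comparison with $C_{T^{h}}$.
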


\section{The arithmetic Riemann-Roch theorem for regular closed  
  immersions}

In this section we recall the definition of arithmetic Chow groups and
arithmetic $K$-groups. We see that each choice of an additive theory
of singular 
Bott-Chern classes allows us to define direct images for closed
immersions in arithmetic
$K$-theory. Once the direct images for closed immersions are defined,
we prove the arithmetic Grothendieck-Riemann-Roch theorem for closed
immersions. A
version of this theorem was proved earlier by Bismut, Gillet and
Soul\'e 
\cite{BismutGilletSoule:MR1086887} when there is a commutative diagram 
\begin{displaymath}
  \xymatrix{
    \mathcal{Y}\ar[r]^{i}\ar[dr]^{f}& \mathcal{X}\ar[d]^{g}\\
    &\mathcal{Z}
  },
\end{displaymath}
where $i$ is a closed immersion and $f$ and $g$ are
smooth  over $\mathbb{C}$.
The version of this theorem given in this paper is due to Zha
\cite{zha99:_rieman_roch}, but still unpublished. 
 The theorem of Bismut, Gillet and Soul\'e
compares $g_{\ast}\chh(i_{\ast}\overline E)$ with
$f_{\ast}\chh(\overline E)$, whereas the theorem of Zha compares
directly $\chh(i_{\ast}\overline E)$ with $i_{\ast}\chh(\overline
E)$. The main difference between the theorem of Bismut, Gillet and 
Soul\'e and that of Zha is the kind of arithmetic Chow groups they
use. In 
the first 
case these groups are only covariant for proper
morphisms that are smooth over $\mathbb{C}$; thus the
Grothendieck-Riemann-Roch can only be stated for a diagram as above,
while in the second case a version of these groups that are covariant
for arbitrary proper morphisms is used.

Since each choice of a theory of singular Bott-Chern classes gives
rise to a 
different definition of direct images for closed immersions, the
arithmetic Grothendieck-Riemann-Roch theorem will have a correction
term that 
depends on the theory of singular Bott-Chern classes used. In the
particular case of the homogeneous singular Bott-Chern classes, which
are the theories used by  Bismut, Gillet and Soul\'e and by Zha, this
correction term vanishes and we obtain the simplest formula. In this
case the arithmetic Grothendieck-Riemann-Roch theorem is formally
identical to the classical one.

Let $(A,\Sigma ,F_{\infty})$ be an arithmetic ring
\cite{GilletSoule:ait}. Since we will allow the arithmetic varieties
to be non regular and we will use Chow groups indexed by dimension,
following \cite{GilletSoule:aRRt} we will assume that the ring $A$ is
equidimensional and Jacobson. Let $F$ be the field of fractions of A. 
An \textit{arithmetic variety} $\mathcal{X}$ is a scheme  
flat and quasi-projective over $A$ such that
$\mathcal{X}_{F}=\mathcal{X}\times \Spec F$ 
is smooth. 
 Then 
$X:=\mathcal{X}_{\Sigma }
$ is a
complex algebraic manifold, which is endowed with an
anti-holomorphic automorphism $F_{\infty}$. One also associates to
$\mathcal{X}$ the real variety 
$X_{\mathbb{R}}=(X,F_{\infty})$.  

Following \cite{BurgosKramerKuehn:cacg},  
to each regular arithmetic variety we can associate different kinds
of arithmetic Chow groups.
Concerning arithmetic Chow groups, we shall use the terminology and
notation in
op. cit.  \S 4 and \S 6.

Let $\mathcal{D}_{\log}$ be the Deligne complex of sheaves defined
in \cite{BurgosKramerKuehn:cacg} section 5.3; we refer to op. cit.
for the precise definition and properties. A
\textit{$\mathcal{D}_{\log}$-arithmetic variety} is a pair
$(\mathcal{X},\mathcal{C})$
consisting of an arithmetic variety $\mathcal{X}$ and a complex of
sheaves $\mathcal{C}$ on $X_{\mathbb{R}}$ which is a
$\mathcal{D}_{\log}$-complex (see op. cit. section 3.1).

We are interested in the following $\mathcal{D}_{\log}$-complexes of
sheaves:
\begin{enumerate}
\item The Deligne complex $\mathcal{D}_{\las,X}$ of
differential forms on $X$ with logarithmic and arbitrary
singularities. That is, for every Zariski open subset $U$ of $X$, we
write
\begin{displaymath}
E^{\ast}_{\las,X}(U)=\lim_{\substack{\longrightarrow \\
\overline{U}}}\Gamma(\overline{U},\mathscr{E}^{\ast}_{\overline{U}}
(\log B)),
\end{displaymath}
where the limit is taken over all diagrams
\begin{displaymath}
\xymatrix{
U\ar[r]^{\overline{\iota}}\ar[rd]^{\iota}
&\overline{U}\ar[d]^{\beta} \\
&X
}
\end{displaymath}
such that $\overline{\iota}$ is an open immersion, $\beta $ is a
proper morphism, $B=\overline{U}\setminus U$, 
is a normal crossing divisor and $\mathscr{E}^{\ast}_{\overline{U}}
(\log B)$ denotes the sheaf of smooth differential forms on $U$ with
logarithmic singularities along $B$ introduced in \cite{Burgos:CDc} .

For any Zariski open subset $U\subseteq X$, we put
\begin{displaymath}
\mathcal{D}^{\ast}_{\las,X}(U,p)=(\mathcal{D}^{\ast}_
{\las,X}(U,p),\dd_{\mathcal{D}})=(\mathcal{D}^{\ast}
(E_{\las,X}(U),p),\dd_{\mathcal{D}}).
\end{displaymath}
If $U$ is now a Zariski open subset of $X_{\mathbb{R}}$, then 
we write
\begin{displaymath}
\mathcal{D}^{\ast}_{\las,X}(U,p)=(\mathcal{D}^{\ast}_
{\las,X}(U,p),\dd_{\mathcal{D}})=
(\mathcal{D}^{\ast}_{\las,X}(U_{\mathbb{C}},p)^{\sigma},  
\dd_{\mathcal  {D}}),
\end{displaymath}
where $\sigma$ is the involution $\sigma (\eta)=\overline
{F_{\infty}^{\ast}\eta}$ as in \cite{BurgosKramerKuehn:cacg}
notation 5.65.

Note that the
sections of $\mathcal{D}^{\ast}_{\las,X}$ over an open set $U\subset X$ are
differential forms on $U$ with logarithmic singularities along
$X\setminus U$ and arbitrary singularities along
$\overline{X}\setminus X$, where $\overline{X}$ is an arbitrary
compactification of $X$. Therefore the complex of global sections
satisfy
\begin{displaymath}
  \mathcal{D}^{\ast}_{\las,X}(X,*)=\mathcal{D}^{\ast}(X,\ast),
\end{displaymath}
where the right hand side complex has been introduced in section \S
1. The complex $\mathcal{D}^{\ast}_{\las,X}$ is a particular case of
the construction 
of \cite{BurgosKramerKuehn:accavb} section 3.6. 

\item The Deligne complex $\mathcal{D}_{\D, X}$ 
of currents on $X$. This is the complex introduced in
\cite{BurgosKramerKuehn:cacg} definition 6.30.
\end{enumerate}

When $\mathcal{X}$ is regular, applying the theory of
\cite{BurgosKramerKuehn:cacg} we can define 
the arithmetic Chow groups
$\cha^{\ast}(\mathcal{X},\mathcal{D}_{\las,X})$ and
$\cha^{\ast}(\mathcal{X},\mathcal{D}_{\D,X})$. These groups satisfy
the following properties
\begin{enumerate}
\item There are natural morphisms
  \begin{displaymath}
    \cha^{\ast}(\mathcal{X},\mathcal{D}_{\las,X}) \longrightarrow 
    \cha^{\ast}(\mathcal{X},\mathcal{D}_{\D,X})
  \end{displaymath}
  and, when applicable, all properties below will be compatible with these
  morphisms.  

\item There is a product structure that turns
  $\cha^{\ast}(\mathcal{X},\mathcal{D}_{\las,X})_{\QQ}$ into an
  associative and commutative algebra. Moreover, it turns
  $\cha^{\ast}(\mathcal{X},\mathcal{D}_{\D,X})_{\QQ}$ into a 
  $\cha^{\ast}(\mathcal{X},\mathcal{D}_{\las,X})_{\QQ}$-module.
\item  If $f\colon \mathcal{Y}\longrightarrow \mathcal{X}$ is a map 
  of regular arithmetic varieties, there
  are pull-back morphisms  
  \begin{displaymath}
    f^{\ast}\colon \cha^{\ast}(\mathcal{X},\mathcal{D}_{\las,X})\longrightarrow 
    \cha^{\ast}(\mathcal{Y},\mathcal{D}_{\las,Y}).    
  \end{displaymath}
  If moreover, $f$ is smooth over $F$, there are pull-back morphisms 
  \begin{displaymath}
    f^{\ast}\colon \cha^{\ast}(\mathcal{X},\mathcal{D}_{\D,X})\longrightarrow 
    \cha^{\ast}(\mathcal{Y},\mathcal{D}_{\D,Y}).    
  \end{displaymath}
  The inverse image is compatible with the product structure.
\item  If $f\colon \mathcal{Y}\longrightarrow \mathcal{X}$ is a proper map  
  of regular arithmetic varieties of relative dimension $d$, there are
  push-forward morphisms 
  \begin{displaymath}
    f_{\ast}\colon \cha^{\ast}(\mathcal{Y},\mathcal{D}_{\D,Y})\longrightarrow 
    \cha^{\ast-d}(\mathcal{X},\mathcal{D}_{\D,X}).    
  \end{displaymath}
  If moreover, $f$ is smooth over $F$, there are push-forward  morphisms 
  \begin{displaymath}
    f_{\ast}\colon \cha^{\ast}(\mathcal{Y},\mathcal{D}_{\las,Y})\longrightarrow 
    \cha^{\ast-d}(\mathcal{X},\mathcal{D}_{\las,X}).    
  \end{displaymath}
  The push-forward morphism satisfies the projection
  formula and is compatible with base change.
\item The groups $\cha^{\ast}(\mathcal{X},\mathcal{D}_{\las,X})$ are naturally
  isomorphic to the groups defined by Gillet and Soul\'e in
  \cite{GilletSoule:ait} (see \cite{BurgosKramerKuehn:accavb} theorem
  3.33).
  When $X$ is
  generically projective, the groups
  $\cha^{\ast}(\mathcal{X},\mathcal{D}_{\D,X})$ are isomorphic to analogous
  groups introduced by Kawaguchi and Moriwaki
  \cite{KawaguchiMoriwaki:isfav} and are very similar to the weak
  arithmetic Chow groups introduced by Zha (see
  \cite{Burgos:MR2384539}).
\item There are well-defined maps
\begin{align*}
\zeta&\colon \cha^{p}(\mathcal{X},\cc)\longrightarrow\CH^{p}(\mathcal{X}),\\  
\amap&\colon \widetilde{\cc}^{2p-1}(X_{\mathbb{R}},p)\longrightarrow\cha^{p}
(\mathcal{X},\cc),\\
\omega&\colon \cha^{p}(\mathcal{X},\cc)\longrightarrow{\rm
  Z}\cc^{2p}(X_{\mathbb{R}},p),
\end{align*}  
where $\cc$ is either $\mathcal{D}_{\las,X}$ or $\mathcal{D}_{\D,X}$.
For the precise definition of these maps see
\cite{BurgosKramerKuehn:cacg} notation 4.12. 
\end{enumerate}

When $\mathcal{X}$ is not necessarily regular, following 
\cite{GilletSoule:aRRt} and combining with the definition of
\cite{BurgosKramerKuehn:cacg} we can define the arithmetic Chow
groups indexed by dimension
$\cha_{\ast}(\mathcal{X},\mathcal{D}_{\las,X})$ and 
$\cha_{\ast}(\mathcal{X},\mathcal{D}_{\D,X})$ (see
\cite{BurgosKramerKuehn:accavb} section 5.3). 

They have the following properties (see \cite{GilletSoule:aRRt}).
\begin{enumerate}
\item If $\mathcal{X}$ is regular and equidimensional of dimension
  $n$ then there are isomorphisms
  \begin{align*}
    \cha_{\ast}(\mathcal{X},\mathcal{D}_{\las,X})
    &\cong \cha^{n-\ast}(\mathcal{X},\mathcal{D}_{\las,X}),\\
    \cha_{\ast}(\mathcal{X},\mathcal{D}_{\D,X})
    &\cong \cha^{n-\ast}(\mathcal{X},\mathcal{D}_{\D,X}).
  \end{align*}
\item If $f\colon \mathcal{Y}\longrightarrow \mathcal{X}$ is a proper map
  between arithmetic varieties then there is a push-forward map
  \begin{displaymath}
    f_{\ast}\colon \cha_{\ast}(\mathcal{Y},\mathcal{D}_{\D,Y})\longrightarrow 
    \cha_{\ast}(\mathcal{X},\mathcal{D}_{\D,X}).
  \end{displaymath}
  If $f$ is smooth over $F$ then there is a push-forward map
  \begin{displaymath}
    f_{\ast}\colon \cha_{\ast}(\mathcal{Y},\mathcal{D}_{\las,Y})\longrightarrow 
    \cha_{\ast}(\mathcal{X},\mathcal{D}_{\las,X}).
  \end{displaymath}
\item If $f\colon \mathcal{Y}\longrightarrow \mathcal{X}$ is a flat map or,
  more generally, a
  local complete intersection (l.c.i) map of relative dimension $d$,
  there 
  are pull-back morphisms  
  \begin{displaymath}
    f^{\ast}\colon \cha_{\ast}(\mathcal{X},\mathcal{D}_{\las,X})\longrightarrow 
    \cha_{\ast+d}(\mathcal{Y},\mathcal{D}_{\las,Y}).    
  \end{displaymath}
  If moreover, $f$ is smooth over $F$, there are pull-back morphisms 
  \begin{displaymath}
    f^{\ast}\colon \cha_{\ast}(\mathcal{X},\mathcal{D}_{\D,X})\longrightarrow 
    \cha_{\ast+d}(\mathcal{Y},\mathcal{D}_{\D,Y}).    
  \end{displaymath}
\item If $f\colon \mathcal{Y}\longrightarrow \mathcal{X}$ is a morphism of
  arithmetic varieties with $\mathcal{X}$ regular, then there is a cap
  product
  \begin{displaymath}
    \cha^{p}(\mathcal{X},\mathcal{D}_{\las,X})\otimes 
    \cha_{d}(\mathcal{Y},\mathcal{D}_{\las,Y})\longrightarrow  
    \cha_{d-p}(\mathcal{Y},\mathcal{D}_{\las,Y})_{\QQ},
  \end{displaymath}
  and a similar cap-product with the groups
  $\cha_{d}(\mathcal{Y},\mathcal{D}_{\D,Y})$. This product is denoted
  by $y\otimes x\mapsto y._{f}x$, 

\end{enumerate}
For more properties of these groups see \cite{GilletSoule:aRRt}.

We will define now the arithmetic $K$-groups in this context.
As a matter of convention, in the sequel
we will
use slanted letters to denote a object defined over $A$ and the
same letter in roman type for the corresponding object defined over
$\mathbb{C}$. For instance we will denote a vector bundle over
$\mathcal{X}$ by $\mathcal{E}$ and the corresponding vector bundle
over $X$ by $E$.

\begin{definition}
  A \emph{hermitian vector bundle} on an arithmetic
  variety $\mathcal{X}$, $\overline{\mathcal{E}}$, is a locally free
  sheaf $\mathcal{E}$ with a hermitian metric $h_E$ on the vector
  bundle $E$ induced on $X$, that is invariant under
  $F_{\infty}$. A sequence of hermitian vector bundles on
  $\mathcal{X}$
  $$(\overline{\varepsilon})\qquad \ldots \longrightarrow
  \overline{\mathcal{E}}_{n+1} \longrightarrow
  \overline{\mathcal{E}}_n \longrightarrow
  \overline{\mathcal{E}}_{n-1} \longrightarrow \ldots$$ is said to be
  exact if it is exact as a sequence of vector bundles.

  A \emph{metrized coherent sheaf} is a pair
  $\overline{\mathcal{F}}=(\mathcal{F},\overline E_{\ast}\to
  F)$, where $\mathcal{F}$ is a coherent sheaf on
  $\mathcal{X}$ and $\overline E_{\ast}\to
  F$ is a resolution of the coherent sheaf
  $F=\mathcal{F}_{\CC}$ by hermitian vector bundles, that is defined over
  $\mathbb{R}$, hence is invariant under $F_{\infty}$. We assume that
  the hermitian metrics are also invariant under $F_{\infty}$.
\end{definition}

Recall that to every hermitian vector bundle 
we can associate a collection of Chern
forms, denoted by $c_{p}$. Moreover, the invariance of the hermitian
metric under $F_{\infty}$ implies that 
the Chern forms will be invariant under the involution $\sigma $. Thus 
$$c_p(\overline
{\mathcal{E}})\in\mathcal{D}^{2p}_{\las,X}(X_{\mathbb{R}},p)=
\mathcal{D}^{2p}(X,p)^{\sigma}.$$  
 We will denote also by
$c_{p}(\overline {\mathcal{E}})$ its image in 
$\mathcal{D}^{2p}_{\D,X}(X_{\mathbb{R}},p)$. In particular we have
defined the 
Chern character $\ch(\overline {\mathcal{E}})$ in either of the
groups $\bigoplus _{p}\mathcal{D}^{2p}_{\las,X}(X_{\mathbb{R}},p)$ or 
$\bigoplus_{p}\mathcal{D}^{2p}_{\D,X}(X_{\mathbb{R}},p)$. Moreover, to
each 
finite exact sequence $(\overline{\varepsilon})$ of hermitian vector
bundles on $\mathcal{X}$ we can attach a secondary Bott-Chern class
$\widetilde{\ch}(\overline{\varepsilon})$. Again, the fact that the
sequence is defined over $A$ and the invariance of the metrics with
respect to $F_{\infty}$ imply that
$$
\widetilde{\ch}(\overline{\varepsilon})\in 
\bigoplus
_{p}\widetilde{\mathcal{D}}^{2p-1}_{\las,X}(X_{\mathbb{R}},p)= 
\bigoplus _{p}\widetilde{\mathcal{D}}^{2p-1}(X,p)^{\sigma}.
$$
We will denote also by $\widetilde{\ch}(\overline{\varepsilon})$ its
image in $\bigoplus
_{p}\widetilde{\mathcal{D}}^{2p-1}_{\D,X}(X_{\mathbb{R}},p)$. The
Bott-Chern 
classes associated to exact sequences of metrized coherent sheaves
enjoy the same properties.

\begin{definition}
Let $\mathcal{X}$ be an arithmetic variety and
let $\mathcal{C}^{\ast}(\ast)$ be one of the two $\mathcal{D}_{\log}$-complexes
$\mathcal{D}_{\las,X}$ or $\mathcal{D}_{\D, X}$. The
\textit{arithmetic $K$-group} associated to the 
$\mathcal{D}_{\log}$-arithmetic variety $(\mathcal{X},
\mathcal{C})$ is the abelian group
$\widehat{K}(\mathcal{X},\mathcal{C})$ generated by pairs
$(\overline{\mathcal{E}},\eta)$, where $\overline{\mathcal{E}}$ is a
hermitian vector bundle on $\mathcal{X}$ and $\eta\in\bigoplus_{p\geq
0} \widetilde{\mathcal{C}}^{2p-1}(X_{\mathbb{R}},p)$, modulo relations
\begin{equation}\label{equivKgr}
(\overline{\mathcal{E}}_1,\eta_1)+(\overline{\mathcal{E}}_2,\eta_2)=
(\overline{\mathcal{E}},\tilde{\ch}(\overline{\varepsilon})+
\eta_1+\eta_2)  
\end{equation}
for each short exact sequence
$$(\overline{\varepsilon})\qquad 0 \longrightarrow
\overline{\mathcal{E}}_{1} \longrightarrow \overline{\mathcal{E}}
\longrightarrow \overline{\mathcal{E}}_{2} \longrightarrow 0\ .$$

The \textit{arithmetic $K'$-group} associated to the
$\mathcal{D}_{\log}$-arithmetic variety $(\mathcal{X},
\mathcal{C})$ is 
the abelian group
$\widehat{K}'(\mathcal{X},\mathcal{C})$ generated by pairs
$(\overline{\mathcal{F}},\eta)$, where $\overline{\mathcal{F}}$ is a
metrized coherent sheaf on $\mathcal{X}$ and $\eta\in\bigoplus_{p\geq
0} \widetilde {\mathcal{C}}^{2p-1}(X_{\mathbb{R}},p)$, modulo
relations 
\begin{equation}\label{equivKprim}
(\overline{\mathcal{F}}_1,\eta_1)+(\overline{\mathcal{F}}_2,\eta_2)=
(\overline{\mathcal{F}},\tilde{\ch}(\overline{\varepsilon})+\eta_1+\eta_2)
\end{equation}
for each short exact sequence of metrized coherent sheaves
$$(\overline{\varepsilon})\qquad 0 \longrightarrow
\overline{\mathcal{F}}_{1} \longrightarrow \overline{\mathcal{F}}
\longrightarrow \overline{\mathcal{F}}_{2} \longrightarrow 0\ .$$
\end{definition}

We now give some properties of the arithmetic $K$-groups. As their
proofs are similar, in the essential
points, to those of analogous statements in, for example,
\cite{GilletSoule:ait} in the regular case and
\cite{GilletSoule:aRRt} in the singular case,
we omit them.
\begin{enumerate}
\item We have natural morphisms
$$\widehat{K}(\mathcal{X},\mathcal{D}_{\las,X}) \longrightarrow
\widehat{K}(\mathcal{X},\mathcal{D}_{\D,X})\text{ and }
\widehat{K}'(\mathcal{X},\mathcal{D}_{\las,X}) \longrightarrow
\widehat{K}'(\mathcal{X},\mathcal{D}_{\D,X}).
$$
  When applicable, all properties below will be compatible with these
  morphisms.  
\item $\widehat{K}(\mathcal{X},\mathcal{D}_{\las,X})$ is a ring. The
  product structure is given by
  \begin{equation}\label{eq:78}
    (\overline{\mathcal{F}}_{1},\eta_{1})\cdot
    (\overline{\mathcal{F}}_{2},\eta_{2})=
    (\overline{\mathcal{F}}_{1}\otimes
    \overline{\mathcal{F}}_{2},\ch(\overline{\mathcal{F}}_{1})\bullet
    \eta_{2}+\eta_{1}\bullet\ch(\overline{\mathcal{F}}_{2})+
 \dd_{\mathcal{D}}\eta_{1}\bullet \eta_{2})
  \end{equation}

\item $\widehat{K}(\mathcal{X},\mathcal{D}_{\D,X})$ is a 
$\widehat{K}(\mathcal{X},\mathcal{D}_{\las,X})$-module.

\item There are natural maps 
$$
\widehat{K}(\mathcal{X},\mathcal{C}) \longrightarrow 
\widehat{K}'(\mathcal{X},\mathcal{C}) 
$$
that, when $\mathcal{X}$ is regular, are isomorphisms.

\item The groups $\widehat{K}'(\mathcal{X},\mathcal{D}_{\las,X})$ and $
\widehat{K}'(\mathcal{X},\mathcal{D}_{\D,X})$ are  
$\widehat{K}(\mathcal{X},\mathcal{D}_{\las,X})$-modules.

\item There are natural maps 
$$
\omega \colon \widehat K'(\mathcal{X},\mathcal{C})
\longrightarrow \bigoplus _{p}Z\mathcal{C}^{2p}(p)
$$
that send the class of a pair $(\overline {\mathcal{F}},\eta)$ with  
$\overline {\mathcal{F}}=(\mathcal{F},\overline
E_{\ast}\to\mathcal{F}_{\CC})$ to the form (or current)
\begin{displaymath}
  \omega (\overline {\mathcal{F}},\eta)=\sum_{i}(-1)^{i}\ch(\overline
  E_{i}) +\dd_{\mathcal{D}}\eta.
\end{displaymath}

\item When $\mathcal{X}$ is regular, there exists a Chern character,
$$\widehat{\ch}\colon \widehat{K}(\mathcal{X},\mathcal{C})_{\QQ}
\longrightarrow 
\bigoplus_{p}\widehat{\CH}^{p}(\mathcal{X},\mathcal{C})_{\QQ},$$
that is an isomorphism. Moreover, if
$\mathcal{C}=\mathcal{D}_{\las,X}$ this isomorphism is compatible with
the product 
structure. If $\mathcal{X}$ is not regular, there is a biadditive
pairing
\begin{displaymath}
  \widehat K(\mathcal{X},\mathcal{D}_{\las,X})\otimes
  \cha_{\ast}(\mathcal{X},\mathcal{D}_{\las,X}) \longrightarrow
    \cha_{\ast}(\mathcal{X},\mathcal{D}_{\las,X})_{\QQ},
\end{displaymath}
and a similar pairing with the groups $
\cha_{\ast}(\mathcal{X},\mathcal{D}_{\D,X})$, which is denoted in both
cases by $\alpha \otimes x\mapsto \widehat{\ch}(\alpha )\cap x$. For
the properties of this product see \cite{GilletSoule:aRRt} pg. 496.
\item If $\mathcal{Y}$ and $\mathcal{X}$ are
arithmetic varieties and $f\colon \mathcal{Y}\to \mathcal{X}$ is a
morphism of arithmetic varieties, $f$ induces a morphism of rings:
\begin{equation*}
f^*\colon \widehat{K}(\mathcal{X},\mathcal{D}_{\las,X})\rightarrow
\widehat{K}(\mathcal{Y},\mathcal{D}_{\las,Y}).
\end{equation*}
When $f$ is flat, the inverse image is also defined for the groups
$\widehat{K}'(\mathcal{X},\mathcal{D}_{\las,X})$. Moreover, if
$f_{\mathbb{C}}$ is 
smooth, the inverse image can be defined for the groups
$\widehat{K}(\mathcal{X},\mathcal{D}_{\D,X})$ and, when in addition
$f$ is flat, for the groups 
$\widehat{K}'(\mathcal{X},\mathcal{D}_{\D,X})$. 
\end{enumerate}

In what follows we will be interested in direct images for closed
immersions. Since the direct images in arithmetic $K$-theory will
depend on the choice of a metric, we have the following

\begin{definition}
  A \textit{metrized arithmetic variety} is a pair
$(\mathcal{X},h_X)$
consisting of an arithmetic variety $\mathcal{X}$ and a hermitian
metric on the complex tangent bundle $T_{X}$ that is invariant
under $F_{\infty}$. 
\end{definition}

Let $(\mathcal{X},h_X)$ and $(\mathcal{Y},h_Y)$ be metrized
arithmetic varieties and let $i\colon \mathcal{Y}\longrightarrow
\mathcal{X}$  be a closed immersion. Over the complex numbers, we are
in the situation of notation \ref{def:14}. In particular we have a
canonical exact 
sequence of 
hermitian vector bundles
\begin{equation}
  \label{eq:90}
\overline{\xi}_N \colon  0\longrightarrow
\overline{T}_{Y}\longrightarrow i^*\overline{T}_{X} \longrightarrow
\overline{N}_{Y/X} \longrightarrow 0  
\end{equation}
where the tangent bundles
$T_{Y}$, $T_{X}$ are endowed with the hermitian metrics $h_Y$, $h_X$
respectively and the normal bundle $N_{Y/X}$ is endowed with an
arbitrary hermitian metric $h_{N}$. We will follow the conventions of
notation \ref{def:14}.

We next define push-forward maps, via a closed immersion, for
the elements of the arithmetic $K$-group of a metrized arithmetic
variety.
 We will define two kinds of push-forward maps. One will
depend only on a metric on the complex normal bundle $N_{Y/X}$. By
contrast, the 
second will depend on the choice of metrics on the complex tangent
bundles $T_{X}$ and $T_{Y}$. The second definition allows us to see
$K'(\underline {\phantom{A}},\mathcal{D}_{\D,Y})$ as a functor from
the category whose objects are  metrized arithmetic varieties and
whose morphisms are closed immersions to the category of abelian
groups.   

 As we deal with hermitian vector bundles and metrized coherent
sheaves, both definitions will involve the choice of a theory of
singular Bott-Chern classes. In order for the push forward to be well
defined in $K$-theory we need a minimal additivity property for the
singular Bott-Chern classes.

\begin{definition}\label{defadtheory}
A theory of singular Bott-Chern classes $T$ is called
\emph{additive} if for any closed embedding of complex manifolds
$i\colon Y\hookrightarrow X$ and any hermitian embedded vector bundles
$\overline{\xi}_1=(i,\overline{N},\overline{F}_1,\overline{E}_{1,\ast})$,
$\overline{\xi}_2=(i,\overline{N},\overline{F}_2,\overline{E}_{2,\ast})$
the equation
$$T(\overline{\xi}_1\oplus
\overline{\xi}_2)=T(\overline{\xi}_1)+T(\overline{\xi}_2)$$
is satisfied.

Let $C$ be a characteristic class for pairs of vector bundles. We
say that it is \emph{additive} (in the first variable) if
$$C(F_1\oplus F_2,N)=C(F_1,N)+C(F_2,N)$$
for any vector bundles $F_1,F_2,N$ on a complex manifold $X$.
\end{definition}

The following statement follows directly from equation \ref{eq:68}:
\begin{proposition}
A theory of singular Bott-Chern classes $T$ is additive if and only
if the corresponding characteristic class $C_T$ is additive in the
first variable.
\end{proposition}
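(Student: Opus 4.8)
The plan is to exploit the explicit formula \eqref{eq:68} that \emph{defines} $T_C(\overline\xi)$ in terms of $C$, together with the characterization in Theorem~\ref{thm:6} which gives a bijection between theories $T$ of singular Bott-Chern classes and characteristic classes $C$. Under this bijection, $C=C_T$, so it suffices to show that additivity of $C$ in the first variable is equivalent to additivity of $T_C$; equivalently, that the map $C\mapsto T_C$ carries sums of characteristic classes (in the first variable) to sums of singular Bott-Chern classes, and conversely. I would first reduce the "only if" direction: assume $C$ is additive in the first variable and take two hermitian embedded vector bundles $\overline\xi_1=(i,\overline N,\overline F_1,\overline E_{1,\ast})$ and $\overline\xi_2=(i,\overline N,\overline F_2,\overline E_{2,\ast})$ sharing the same immersion $i\colon Y\hookrightarrow X$ and the same hermitian normal bundle $\overline N$.

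First I would write out \eqref{eq:68} for $\overline\xi_1\oplus\overline\xi_2$. The point is that all the geometric ingredients in the construction of Theorem~\ref{thm:5} are compatible with finite direct sums (as already used in the proof of Proposition~\ref{prop:6}): $W(E_{1,\ast}\oplus E_{2,\ast})=W(E_{1,\ast})\cup_{X\times\PP^1}W(E_{2,\ast})$ in the appropriate sense — more precisely, since the Grassmannian graph construction is local and compatible with sums, $\tr_1(\overline E_{1,\ast}\oplus\overline E_{2,\ast})_{\ast}=\tr_1(\overline E_{1,\ast})_{\ast}\oplus\tr_1(\overline E_{2,\ast})_{\ast}$, and the exceptional divisor $P=\PP(N\oplus\mathbb C)$ is the same for both (it depends only on $i$ and $\overline N$, which are common). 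Hence the Koszul resolution splits as $K(\overline F_1\oplus\overline F_2,\overline N)=K(\overline F_1,\overline N)\oplus K(\overline F_2,\overline N)$, and the exact sequences $\overline\eta_k$ split correspondingly. Then: the first term of \eqref{eq:68}, $-(p_W)_\ast\sum_k(-1)^k W_1\bullet\ch(\tr_1(\overline E_\ast)_k)$, is additive because $\ch$ is additive for direct sums; the second term $-\sum_k(-1)^k(p_P)_\ast[\widetilde\ch(\overline\eta_k)]$ is additive because the Bott-Chern class $\widetilde\ch$ associated to the (additive) Chern character is additive for direct sums of exact sequences (by Lemma~\ref{lemm:1} / Corollary~\ref{cor:9}, since $\overline\eta_{1,k}\oplus\overline\eta_{2,k}$ is a direct sum of exact sequences); and the third term $i_\ast C(F_1\oplus F_2,N)=i_\ast(C(F_1,N)+C(F_2,N))$ is additive precisely by the hypothesis on $C$. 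Summing these three, $T_C(\overline\xi_1\oplus\overline\xi_2)=T_C(\overline\xi_1)+T_C(\overline\xi_2)$.

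For the converse, suppose $T=T_C$ is additive. Then apply additivity to the special case where $Y\hookrightarrow P=\PP(N\oplus\mathbb C)$ is the zero section, $\overline E_{j,\ast}=K(\overline F_j,\overline N)$ is the Koszul resolution, and $\overline\xi_j=(s\colon Y\hookrightarrow P,\overline N,\overline F_j,K(\overline F_j,\overline N))$. Since $K(\overline F_1\oplus\overline F_2,\overline N)=K(\overline F_1,\overline N)\oplus K(\overline F_2,\overline N)$, additivity of $T$ gives $T(K(\overline F_1\oplus\overline F_2,\overline N))=T(K(\overline F_1,\overline N))+T(K(\overline F_2,\overline N))$; now apply $(\pi_P)_\ast$ and use the definition $C_T(F,N)=(\pi_P)_\ast T(K(\overline F,\overline N))$ from Theorem~\ref{thm:8} to conclude $C_T(F_1\oplus F_2,N)=C_T(F_1,N)+C_T(F_2,N)$, i.e. $C=C_T$ is additive in the first variable.

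The main obstacle I anticipate is bookkeeping rather than any conceptual difficulty: one must verify carefully that the auxiliary metrics chosen on $\tr_1(\overline E_\ast)_\ast$ and on the split complexes $\overline A_\ast$ in \eqref{eq:33} can be taken compatibly with the direct sum decomposition (so that $\overline A_{1,\ast}\oplus\overline A_{2,\ast}$ again gives a valid — orthogonally split — choice for $\overline\xi_1\oplus\overline\xi_2$, and the resulting $\overline\eta_k$ is the direct sum of the $\overline\eta_{j,k}$). This is exactly the same kind of compatibility already invoked in Proposition~\ref{prop:6}, and the independence of \eqref{eq:68} from the metric choices (established in the proof of Theorem~\ref{thm:6}) guarantees that any such choice works, so the argument closes. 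Since the Proposition statement is precisely this equivalence, this two-directional argument completes the proof.
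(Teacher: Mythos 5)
Your proposal is correct and follows essentially the paper's own route: the paper simply asserts that the proposition "follows directly from equation \eqref{eq:68}", and your argument is exactly the fleshed-out version of that remark (additivity of the first two terms of \eqref{eq:68} via compatibility of $\tr_1$ and $\widetilde{\ch}$ with direct sums, the third term carrying the hypothesis on $C$, and the converse recovered from $C_T(F,N)=(\pi_P)_\ast T(K(\overline F,\overline N))$ applied to the split Koszul resolution). No gaps.
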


Note that a theory of singular Bott-Chern classes consists in joining
theories of singular Bott-Chern classes in arbitrary rank and
codimension (definition \ref{def:7}). The property of being additive
gives a compatibility condition for these theories, by respect to the
hermitian vector bundles $\overline{F}$ (with the notation used in
definition \ref{def:7}). Note also that if a theory of singular
Bott-Chern classes is compatible with the projection formula then it
is additive.

\begin{definition}\label{def:17} Let $T$ be an additive theory of
  singular Bott-Chern 
  classes, and let $T_{c}$ be the associated covariant class as in
  definition \ref{def:18}. 
  Let $i\colon (\mathcal{Y},h_{Y})\longrightarrow
(\mathcal{X},h_{X})$ be a closed immersion of metrized arithmetic
varieties and let $\overline N=\overline N_{Y/X}=(N_{Y/X},h_{N})$ be a
choice of a
hermitian metric on the complex normal bundle. The 
\emph{push-forward maps}
\begin{displaymath}
  i^{T_{c}}_{\ast}, i^{T}_{\ast}\colon \widehat
  K(\mathcal{Y},\mathcal{D}_{\D,Y})\longrightarrow  
  \widehat K'(\mathcal{X},\mathcal{D}_{\D,X})
\end{displaymath}
are defined by
\begin{align}
i^{T_{c}}_{\ast} (\overline{\mathcal{F}},\eta)
&= [((i_{\ast}\mathcal{F},\overline{E}_{\ast}\to
(i_{\ast}\mathcal{F})_{\CC}) ,0)]-[(0,T_{c}(\overline{\xi}_{c}))]\notag\\
&\phantom{AA}
+[(0,i_{\ast}(\eta\Td(Y) i^*\Td^{-1}(X)))]\label{eq:66}\\
i^{T}_{\ast}(\overline{\mathcal{F}},\eta)&=
[((i_{\ast}\mathcal{F},\overline{E}_{\ast}\to
(i_{\ast}\mathcal{F})_{\CC}) ,0)]-[(0,T(\overline{\xi}))]\notag
\\
&\phantom{AA}+[(0,i_{\ast}(\eta\Td^{-1}(\overline N_{Y/X})))].
\label{eq:91}
\end{align}
Here
$$0\rightarrow\overline{E}_n\rightarrow\ldots\rightarrow
\overline{E}_1\rightarrow\overline{E}_0\rightarrow
(i_*\mathcal{F})_{\mathbb{C}}\rightarrow 0$$ is a finite resolution of
the coherent 
sheaf $(i_*\mathcal{F})_{\CC}$ by hermitian vector bundles, 
$\overline{\xi}=
(i,\overline{N}_{X/Y},
\overline{\mathcal{F}}_{\mathbb{C}},\overline{E}_*)$  
is the induced hermitian embedded vector bundle on $X$, and
$\overline{\xi}_{c}=
(i,\overline T_{X}, \overline T_{Y},
\overline{\mathcal{F}}_{\mathbb{C}},\overline{E}_*)$ as in definition
\ref{def:18}.

We can extend this definition to push-forward maps
\begin{displaymath}
  i^{T_{c}}_{\ast},i^{T}_{\ast}\colon \widehat
  K'(\mathcal{Y},\mathcal{D}_{\D,Y})\longrightarrow  
  \widehat K'(\mathcal{X},\mathcal{D}_{\D,X})
\end{displaymath}
by the rule
\begin{align}
i^{T_{c}}_{\ast} (\overline{\mathcal{F}},\eta) &=
[((i_{\ast}\mathcal{F},\Tot(\overline E_{\ast,\ast})\to
(i_{\ast}\mathcal{F})_{\CC}),0)]-
\sum_{i}(-1)^{i}[(0,T_{c}(\overline{\xi}_{i,c}))]
\notag \\
&\phantom{AAA}+[(0,i_{\ast}(\eta\Td(Y) i^*\Td^{-1}(X)))],
  \label{eq:67}\\
i^{T}_{\ast} (\overline{\mathcal{F}},\eta) &=
[((i_{\ast}\mathcal{F},\Tot(\overline E_{\ast,\ast})\to
(i_{\ast}\mathcal{F})_{\CC}),0)]-
\sum_{i}(-1)^{i}[(0,T(\overline{\xi}_{i}))]
\notag \\
&\phantom{AAA}+[(0,i_{\ast}(\eta\Td^{-1}(\overline N_{Y/X})))],
  \label{eq:92}
\end{align}
where $0\to \overline {E}_{n}\to\dots\to \overline {E}_{0}\to
\mathcal{F}_{\CC}\to 0$ is a resolution of $\mathcal{F}_{\CC}$ by
hermitian vector bundles, $\overline E_{\ast,\ast}$ is a complex of
complexes of vector bundles over $X$, such that, for each $i\ge 0$,
$\overline {E}_{i,\ast}\to i_{\ast}E_{i}$ is also a resolution by
hermitian vector bundles and $\overline{\xi}_{i}=
(i,\overline{N}_{X/Y},\overline{E}_{i},\overline{E}_{i,*})$ is the
induced hermitian embedded vector bundle and $\overline{\xi}_{i,c}$ is
as in definition \ref{def:18}. We suppose that there is a
commutative diagram of resolutions
\begin{displaymath}
  \xymatrix{\dots \ar[r] & E_{k+1,\ast} \ar[r]\ar[d]
& E_{k,\ast} \ar[r]\ar[d] & E_{k-1,\ast} \ar[r]\ar[d] & \dots\\
\dots \ar[r] & i_{\ast}E_{k+1} \ar[r] & i_{\ast}E_{k} \ar[r]&
i_{\ast}E_{k-1} \ar[r]& \dots }.
\end{displaymath}
hence a resolution $\Tot(\overline
E_{\ast,\ast})\longrightarrow (i_{\ast} \mathcal{F})_{\CC}$ by
hermitian vector bundles.
\end{definition}

Note that, whenever the push-forward $i^{T}_{\ast}$ appears, we will
assume that we have chosen a metric on $N_{Y/X}$.

The two push-forward maps are related by the equation
\begin{equation}
  \label{eq:93}
  i^{T_{c}}_{\ast} (\overline{\mathcal{F}},\eta) =
  i^{T}_{\ast} (\overline{\mathcal{F}},\eta)
  -\left[\left(0,i_{\ast}\left( \omega (\overline{\mathcal{F}},\eta) 
      \widetilde{\Td^{-1}} 
(\overline{\xi}_N)\Td(Y)\right)\right)\right],
\end{equation}
where $\overline{\xi}_{N}$ is the exact sequence \eqref{eq:90}.

\begin{proposition}
  The push-forward maps $i^{T}_{\ast}$, $i^{T_{c}}_{\ast}$ are well
  defined. That is, they do 
  not depend on the choice of a representative  of a class in
  $\widehat K$, nor
  on the choice of metrics on the coherent sheaf
  $(i_{\ast}\mathcal{F})_{\CC}$. The first one does not depend on the
  choice of metrics on $T_{X}$ nor on $T_{Y}$, whereas the second one
  does  
  not depend on the choice of a metric on the normal bundle 
  $N_{Y/X}$. Moreover, if $i$ is a regular closed immersion or
  $\mathcal{X}$ is a regular arithmetic variety, then $i^{T_{c}}_{\ast}$
  and $i^{T}_{\ast}$
  can be lifted to maps
  \begin{displaymath}
    i^{T_{c}}_{\ast}, i^{T}_{\ast}\colon \widehat
    K(\mathcal{Y},\mathcal{D}_{\D,Y})\longrightarrow  
    \widehat K(\mathcal{X},\mathcal{D}_{\D,Y}).
  \end{displaymath}
\end{proposition}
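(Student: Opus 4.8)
The strategy is the standard one for showing that a construction on $\widehat K$-groups descends from generators to the quotient: check that the defining relation \eqref{equivKgr} (and its $K'$-analogue \eqref{equivKprim}) is respected, then check independence of the auxiliary choices. First I would fix a closed immersion $i\colon \mathcal{Y}\hookrightarrow\mathcal{X}$ and a short exact sequence of hermitian vector bundles (or metrized coherent sheaves) $\overline{\varepsilon}\colon 0\to\overline{\mathcal{F}}_1\to\overline{\mathcal{F}}\to\overline{\mathcal{F}}_2\to 0$ on $\mathcal{Y}$. Choosing resolutions by hermitian vector bundles fitting in a commutative diagram with exact rows (which exists, as in the hypotheses preceding proposition \ref{prop:5} and in theorem \ref{zhabc}), we obtain an induced exact sequence of metrized coherent sheaves $i_\ast\overline{\varepsilon}$ on $\mathcal{X}$ and a compatible exact sequence of hermitian embedded vector bundles. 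Corollary \ref{comparThevb} then gives exactly the identity relating the alternating sum of the singular Bott-Chern classes of the pieces to $[\widetilde{\ch}(i_\ast\overline{\varepsilon})]-i_\ast([\Td^{-1}(\overline N)\widetilde{\ch}(\overline\varepsilon)])$, while additivity of $T$ (definition \ref{defadtheory}) handles the splitting of the resolutions. Combining this with the defining relation of $\widehat K'(\mathcal{X},\mathcal{D}_{\D,X})$ — where the class $\widetilde{\ch}(i_\ast\overline\varepsilon)$ enters precisely as the correction term — and with the compatibility of $i_\ast$ at the level of currents (the term $i_\ast(\eta\,\Td^{-1}(\overline N))$ is linear in $\eta$, and $\widetilde{\ch}(\overline\varepsilon)$ appears in the relation defining $\widehat K(\mathcal{Y},\mathcal{D}_{\D,Y})$, getting pushed forward with the Todd twist), one checks that $i^{T}_\ast$ carries a sum of generators satisfying \eqref{equivKgr} to a valid identity in $\widehat K'(\mathcal{X},\mathcal{D}_{\D,X})$. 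For $i^{T_c}_\ast$ one uses instead part (5) of proposition \ref{prop:17}, equation \eqref{eq:88}, which is the covariant analogue of corollary \ref{comparThevb}.

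Next I would treat independence of the chosen resolution $\overline E_\ast\to (i_\ast\mathcal{F})_{\CC}$ (and of the double complex $\overline E_{\ast,\ast}$ in the $K'$-case). Given two resolutions, one embeds both into a common one via exact sequences of resolutions as in \eqref{eq:108} and \eqref{eq:109}; then proposition \ref{prop:5} (or corollary \ref{comparThevb}) expresses the difference of the two singular Bott-Chern contributions as a combination of ordinary Bott-Chern classes $\widetilde{\ch}$ of exact sequences of hermitian vector bundles, together with a term $i_\ast$ of such classes. But in $\widehat K'(\mathcal{X},\mathcal{D}_{\D,X})$ the pair $(\text{resolution},0)$ changes by exactly $\widetilde{\ch}$ of the corresponding exact sequence under the relation \eqref{equivKprim}, and the pushed-forward Bott-Chern correction terms match the terms coming from theorem \ref{zhabc} and proposition \ref{prop:19}/\ref{prop:13}. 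So the ambiguity cancels. Independence from the metric on $(i_\ast\mathcal{F})_{\CC}$ is the same computation with proposition \ref{prop:15} absorbed into the bookkeeping (though that proposition concerns the normal bundle; the metric on the resolution is handled by proposition \ref{prop:5} exactly as above).

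For the two remaining assertions: independence of $i^{T}_\ast$ from the metric $h_N$ on $N_{Y/X}$ follows from proposition \ref{prop:15}, which says $T(\overline\xi_0)-T(\overline\xi_1)=-i_\ast[\widetilde{\Td^{-1}}(N,h_0,h_1)\ch(\overline F)]$; this difference is cancelled by the corresponding change in the term $i_\ast(\eta\,\Td^{-1}(\overline N))$ together with $\dd_{\mathcal D}$ of a Bott-Chern class of the Todd class, which dies in $\widetilde{\mathcal D}^{\ast}_{\D,X}$. Symmetrically, independence of $i^{T_c}_\ast$ from $h_X,h_Y$ follows from part (1) of proposition \ref{prop:17} together with the definition \eqref{eq:85} of $T_c$: the $\widetilde{\Td^{-1}}(\overline\xi_N)\Td(Y)$ factor that enters $T_c$ matches the $\Td(Y)\,i^\ast\Td^{-1}(X)$ factor in \eqref{eq:66}, and \eqref{eq:93} makes the relation between the two push-forwards transparent. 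Finally, the lifting statement: when $i$ is a regular closed immersion (so $i_\ast\mathcal{O}_{\mathcal Y}$ has finite Tor-dimension and the resolution can be taken globally) or when $\mathcal{X}$ is regular (so $\widehat K(\mathcal{X},\mathcal{C})\to\widehat K'(\mathcal{X},\mathcal{C})$ is an isomorphism, by the property listed just before the definition of push-forward), the image actually lies in $\widehat K(\mathcal{X},\mathcal{D}_{\D,X})$ rather than merely $\widehat K'$. I expect the only real subtlety — the ``main obstacle'' — to be the careful sign and Todd-twist bookkeeping when verifying compatibility with \eqref{equivKgr}: one must line up the Bott-Chern correction term in the $\widehat K$-relation on $\mathcal{Y}$, the Todd-genus twist introduced by $i_\ast$, and the Bott-Chern correction term in the $\widehat K'$-relation on $\mathcal{X}$, all in the precise normalization of notation \ref{def:19}; the geometric input (corollary \ref{comparThevb}, propositions \ref{prop:5}, \ref{prop:15}, \ref{prop:17}) is already in place, so this is a matter of assembling identities rather than proving anything new.
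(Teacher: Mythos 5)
Most of your proposal tracks the paper's own argument: compatibility with the relations \eqref{equivKgr}/\eqref{equivKprim} is indeed obtained from additivity of $T$ together with corollary \ref{comparThevb} (for $i^{T}_{\ast}$) and its covariant form \eqref{eq:88} (for $i^{T_{c}}_{\ast}$), independence of the metric/resolution on $(i_{\ast}\mathcal{F})_{\CC}$ is the same Bott-Chern bookkeeping against \eqref{equivKprim} (the paper does it more economically by applying corollary \ref{comparThevb} to the two-term sequence $0\to\overline\xi\to\overline\xi'\to 0$ rather than dominating both resolutions), and the lifting is exactly SGA6 plus the isomorphism $\widehat K\cong\widehat K'$ in the regular case.

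There is, however, a genuine error in your paragraph on the ``two remaining assertions'': you have swapped the two independence statements, and the swapped versions are false. The proposition claims that $i^{T}_{\ast}$ is independent of the metrics on $T_{X}$ and $T_{Y}$ (trivial, since neither enters formula \eqref{eq:91}), and that $i^{T_{c}}_{\ast}$ is independent of the metric on $N_{Y/X}$ (which is exactly part (i) of proposition \ref{prop:17}, since $h_{N}$ does not appear elsewhere in \eqref{eq:66}). You instead assert that $i^{T}_{\ast}$ is independent of $h_{N}$ and that $i^{T_{c}}_{\ast}$ is independent of $h_{X},h_{Y}$; neither holds, and the cancellation you invoke does not occur. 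Concretely, changing $h_{0}$ to $h_{1}$ on $N_{Y/X}$ changes $-T(\overline\xi)$ by $i_{\ast}[\widetilde{\Td^{-1}}(N,h_{0},h_{1})\ch(\overline F)]$ (proposition \ref{prop:15}) and changes the last term of \eqref{eq:91} by $i_{\ast}(\eta\,\dd_{\mathcal{D}}\widetilde{\Td^{-1}}(N,h_{0},h_{1}))$, which modulo $\Img\dd_{\mathcal{D}}$ is $i_{\ast}(\dd_{\mathcal{D}}\eta\,\widetilde{\Td^{-1}}(N,h_{0},h_{1}))$; the total change is therefore $i_{\ast}\bigl(\omega(\overline{\mathcal{F}},\eta)\,\widetilde{\Td^{-1}}(N,h_{0},h_{1})\bigr)$, which is nonzero in general — it is precisely the type of correction term appearing in \eqref{eq:93}, and it is the reason the paper fixes a metric on $N_{Y/X}$ whenever $i^{T}_{\ast}$ is used and introduces $i^{T_{c}}_{\ast}$ to trade that dependence for a dependence on $h_{X},h_{Y}$. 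The fix is simply to prove the statements as written: for $i^{T}_{\ast}$ there is nothing to check, and for $i^{T_{c}}_{\ast}$ quote proposition \ref{prop:17}(i).
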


\begin{proof}
  The fact that $i^{T}_{\ast}$ only depends on the metric on
  $\overline N$ and not on the metrics on $T_{X}$ and $T_{Y}$ and that
  for $i^{T_{c}}_{\ast}$ is the opposite, follows directly from the
  definition in the first case and from proposition \ref{prop:17} in
  the second.

We will only prove the other statements for $i^{T_{c}}_{\ast}$, as
the other case is analogous.
We first prove the independence from the metric chosen on the coherent
sheaf
$(i_{\ast}\mathcal{F})_{\CC}$. If $\overline{E}_{\ast}\to
(i_{\ast}\mathcal{F})_{\CC}$, $\overline{E}'_{\ast}\to
(i_{\ast}\mathcal{F})_{\CC}$ are two such metrics, inducing the
hermitian embedded vector bundles $\overline{\xi}$ respectively
$\overline{\xi}'$, then, using corollary \ref{comparThevb}
\begin{displaymath}
T_{c}(\overline{\xi}_{c}')-T_{c}(\overline{\xi}_{c})=
T(\overline{\xi}')-T(\overline{\xi})=
\widetilde{\ch}(\overline{\varepsilon}), 
\end{displaymath}
where $\overline {\varepsilon }$ is the exact complex of hermitian
embedded vector bundles
\begin{displaymath}
\overline{\varepsilon}\colon 
0\longrightarrow\overline{\xi}\longrightarrow
\overline{\xi}'\longrightarrow 0,
\end{displaymath}
where $\overline{\xi}'$ sits in degree zero.

Therefore, by equation \ref{equivKprim},
\begin{multline*}
  [((i_{\ast}\mathcal{F},\overline{E}_{\ast}\to
(i_{\ast}\mathcal{F})_{\CC}) ,0)]-[(0,T_{c}(\overline{\xi_{c}}))]\\=
[((i_{\ast}\mathcal{F},\overline{E}'_{\ast}\to
(i_{\ast}\mathcal{F})_{\CC}) ,0)]-[(0,T_{c}(\overline{\xi}_{c}'))].
\end{multline*}

Since the last term of equation \ref{eq:66} does not depend on the
metric on $(i_{\ast}\mathcal{F})_{\CC}$, we obtain that $i^{T_{c}}_{\ast}$
does not depend on this 
metric.

For proving that the push-forward map $i^{T_{c}}_{\ast}$ is well defined
it remains to show the independence from the choice of a
representative  of a class in $\widehat
K(\mathcal{Y},\mathcal{D}_{\D,Y})$. We consider an exact sequence of
hermitian vector bundles on $\mathcal{Y}$
\begin{displaymath}
\overline{\varepsilon}\colon 0\longrightarrow \overline{\mathcal{F}}_1
\longrightarrow \overline{\mathcal{F}} \longrightarrow
\overline{\mathcal{F}}_2\longrightarrow 0
\end{displaymath}
and two classes $\eta_1, \eta_2\in\bigoplus_{p\geq 0} \widetilde
{\mathcal{D}}_{\D}^{2p-1}(Y,p)$. We also denote $\overline{\varepsilon}$
the induced exact sequence of hermitian vector bundles on $Y$. We
have to prove
\begin{equation}\label{compatrepr}
i^{T_{c}}_{\ast}([(\overline{\mathcal{F}},\eta_1+\eta_2+\widetilde{\ch}
(\overline{\varepsilon})])=i^{T_{c}}_{\ast}([(\overline{\mathcal{F}_1},\eta_1)])
+i^{T_{c}}_{\ast}([(\overline{\mathcal{F}_2},\eta_2)]).
\end{equation}
Since it is clear that $i^{T_{c}}_{\ast}(0,\eta_1+\eta_2)
=i^{T_{c}}_{\ast}(0,\eta_1)
+i^{T_{c}}_{\ast}(0,\eta_2)$, we are led to prove
\begin{equation}\label{compatrepr2}
i^{T_{c}}_{\ast}([(\overline{\mathcal{F}},\widetilde{\ch}
(\overline{\varepsilon})])=i^{T_{c}}_{\ast}([(\overline{\mathcal{F}_1},0)])
+i^{T_{c}}_{\ast}([(\overline{\mathcal{F}_2},0)]).
\end{equation}
We choose metrics on the coherent sheaves
$(i_{\ast}\mathcal{F}_1)_{\CC}$, $(i_{\ast}\mathcal{F}_2)_{\CC}$ and
$(i_{\ast}\mathcal{F})_{\CC}$ respectively:
\begin{displaymath}
\overline{E}_{1,\ast}\longrightarrow (i_{\ast}\mathcal{F}_1)_{\CC}\
,\ \overline{E}_{2,\ast}\longrightarrow
(i_{\ast}\mathcal{F}_2)_{\CC}\ ,\ \overline{E}_{\ast}\longrightarrow
(i_{\ast}\mathcal{F})_{\CC}.
\end{displaymath}
We denote $\overline{\xi}_1$, $\overline{\xi}_2$, $\overline{\xi}$
the induced hermitian embedded vector bundles. We obtain an exact
sequence of metrized coherent sheaves on $\mathcal{X}$:
\begin{displaymath}
\overline{\nu}\colon 0\longrightarrow \overline{i_{\ast}\mathcal{F}_{1}}
\longrightarrow \overline{i_{\ast}\mathcal{F}} \longrightarrow
\overline{i_{\ast}\mathcal{F}_{2}} \longrightarrow 0.
\end{displaymath}

Then, using the fact that the theory $T$ is additive and equation
\eqref{eq:88} we have
\begin{equation}\label{equ1}
T_{c}(\overline{\xi}_{1,c})+
T_{c}(\overline{\xi}_{2,c})-T_{c}(\overline{\xi}_{c})=
[\widetilde{\ch}(\overline{\nu})]-
i_{\ast}([
\widetilde{\ch}(\overline{\varepsilon})\bullet \Td(Y)])\bullet
\Td^{-1}(X).  
\end{equation}
Moreover, by the relation \eqref{equivKprim},
\begin{equation}\label{equ2}
[(\overline{i_{\ast}\mathcal{F}_{1}},0)]+
[(\overline{i_{\ast}\mathcal{F}_{2}},0)] 
=[(\overline{i_{\ast}\mathcal{F}},\widetilde{\ch}(\overline{\nu}))].
\end{equation}
Hence, we compute,
\begin{align*}
  i^{T_{c}}_{\ast}([(\overline{\mathcal{F}},\widetilde{\ch}
(\overline{\varepsilon})])&-i^{T_{c}}_{\ast}([(\overline{\mathcal{F}_1},0)])
-i^{T_{c}}_{\ast}([(\overline{\mathcal{F}_2},0)])\\
&=[(i_{\ast}\overline{\mathcal{F}},0)]-[(i_{\ast}\overline{\mathcal{F}_{1}},0)]-
[(i_{\ast}\overline{\mathcal{F}_{2}},0)]\\
&\phantom{A}-[(0,T_{c}(\overline{\xi}_{c}))]+
[(0,T_{c}(\overline{\xi_{1,c}}))]
+[(0,T_{c}(\overline{\xi_{2,c}}))]\\
&\phantom{A}+[(0,i_{\ast}([\widetilde{\ch}(\overline {\varepsilon
})]\bullet \Td(Y)\bullet i^{\ast}\Td^{-1}(X)))]\\
&=-[(0,i_{\ast}([\widetilde{\ch}(\overline {\varepsilon
})]\bullet \Td(Y)\bullet i^{\ast}\Td^{-1}(X))  ))]\\
&\phantom{AA}+[(0,i_{\ast}([\widetilde{\ch}(\overline {\varepsilon
})]\bullet \Td(Y)\bullet i^{\ast}\Td^{-1}(X))  ))]
\\&=0.
\end{align*}

The proof that $i^{T_{c}}_{\ast}$ for metrized coherent
sheaves is well defined is similar. The proof of its independence from 
choice of a metric 
on $N_{Y/X}$ or from the choice of the resolutions and metrics in $X$
is the same as before. Now let
\begin{displaymath}
  0\longrightarrow \overline{\mathcal{F}}'\longrightarrow 
  \overline{\mathcal{F}} \longrightarrow \overline
  {\mathcal{F}}''\longrightarrow 0 
\end{displaymath}
be a short exact sequence of metrized coherent sheaves on
$\mathcal{Y}$. This means that we have resolutions $\overline
E'_{\ast}\to \mathcal{F}'_{\mathbb{C}} $, $\overline
E_{\ast}\to \mathcal{F}_{\mathbb{C}} $ and $\overline
E''_{\ast}\to \mathcal{F}''_{\mathbb{C}} $. Using theorem \ref{zhabc}
we can suppose that 
there is a commutative diagram of resolutions
\begin{equation}\label{eq:80}
  \begin{array}[h]{ccccccccc}
    0&\rightarrow&\overline E'_{\ast}&\rightarrow&\overline E_{\ast}
    &\rightarrow&\overline E''_{\ast}&\rightarrow&0\\
    &&\downarrow && \downarrow && \downarrow &&\\
    0&\rightarrow& \mathcal{F}'_{\mathbb{C}}
    &\rightarrow& \mathcal{F}_{\mathbb{C}}
    &\rightarrow& \mathcal{F}''_{\mathbb{C}}
    &\rightarrow& 0,
  \end{array}
\end{equation}
with exact rows. Moreover, we can assume 
that the complexes of complexes $\overline
E'_{\ast,\ast}$,
$\overline E_{\ast,\ast}$, $\overline E''_{\ast,\ast}$ used in
definition \ref{def:17} are chosen compatible with diagram
\eqref{eq:80}. Thus we obtain a commutative diagram
\begin{equation}\label{eq:81}
  \begin{array}[h]{ccccccccc}
    0&\rightarrow&\Tot \overline E'_{\ast,\ast}&\rightarrow&\Tot \overline
    E_{\ast, \ast}
    &\rightarrow&\Tot \overline E''_{\ast,\ast}&\rightarrow&0\\
    &&\downarrow && \downarrow && \downarrow &&\\
    0&\rightarrow& i_{\ast}\mathcal{F}'_{\mathbb{C}}
    &\rightarrow& i_{\ast}\mathcal{F}_{\mathbb{C}}
    &\rightarrow& i_{\ast}\mathcal{F}''_{\mathbb{C}}
    &\rightarrow& 0.
  \end{array}
\end{equation}
We denote by $\overline {\nu }$ the exact sequence of metrized
coherent sheaves on $X$ defined by diagram \eqref{eq:81}. We denote
$\overline {\chi}_{i}$ the exact sequence of hermitian vector bundles
on $Y$
\begin{displaymath}
  \overline {\chi}_{i}\colon  0\longrightarrow \overline E'_{i}
  \longrightarrow \overline E_{i}
  \longrightarrow \overline E''_{i}
  \longrightarrow 0,
\end{displaymath}
and by $\overline \varepsilon $ the exact sequence of metrized
coherent sheaves on $X$
\begin{displaymath}
  \overline {\varepsilon }_{i}\colon  0\longrightarrow \overline
  {i_{\ast}E}'_{i} 
  \longrightarrow \overline {i_{\ast} E}_{i}
  \longrightarrow \overline {i_{\ast}E}''_{i}
  \longrightarrow 0.
\end{displaymath}
Moreover, let $\overline {\xi}_{i}$, $\overline {\xi}'_{i}$ and
$\overline {\xi}''_{i}$ denote the 
hermitian embedded 
vector bundles defined by the above resolutions and $\overline E_{i}$,
$\overline E'_{i}$ and 
$\overline E''_{i}$ respectively and let  $\overline {\xi}_{i,c}$,
$\overline {\xi}'_{i,c}$ and 
$\overline {\xi}''_{i,c}$ be as in definition \ref{def:18}.  
Then, using proposition \ref{prop:13} and equation \eqref{eq:88}
we obtain 
\begin{align}
  \widetilde{\ch}(\overline {\nu })&=\sum_{i}(-1)^{i}\widetilde{\ch} 
  (\overline {\varepsilon })\notag\\
  &=\sum_{i}(-1)^{i}(T_{c}(\overline {\xi}'_{i,c})+
  T_{c}(\overline {\xi}''_{i,c})-
  T_{c}(\overline {\xi}_{i,c}))\label{eq:82}\\  
  &\phantom{AA}+\sum_{i}(-1)^{i}i_{\ast}(\widetilde{\ch}
  (\overline{\chi}_{i})\bullet \Td(Y))\bullet \Td^{-1}(X)\notag  
\end{align}
Now the proof follows as before, but using equation \eqref{eq:82}
instead of equation \eqref{equ1}. 

If $\mathcal{X}$ is a regular arithmetic variety, the lifting
property follows from the isomorphism between the $\widehat K$-groups
and the 
$\widehat K'$-groups.

Suppose now that  $i\colon \mathcal{Y}\longrightarrow \mathcal{X}$ is a
regular closed immersion and let $[\overline{\mathcal{F}},\eta]\in
\widehat K(\mathcal{Y},\mathcal{D}_{\D,Y})$. Then it follows from
\cite{BerthelotGrothendieck:SGA6} III that the
coherent sheaf  
$i_{\ast}\mathcal{F}$ can be resolved
\begin{displaymath}
0\longrightarrow \mathcal{E}_n\longrightarrow\ldots\longrightarrow
\mathcal{E}_0\longrightarrow i_{\ast}\mathcal{F}\longrightarrow 0
\end{displaymath}
with $\mathcal{E}_i$ locally free sheaves on $\mathcal{X}$. Moreover
we endow the 
vector bundles $E_i$ induced on $X$ with hermitian metrics and so
we obtain a metric on the coherent sheaf $i_{\ast}\mathcal{F}$ and the
corresponding hermitian embedded vector bundle $\overline{\xi}$.
Using the independence from the resolutions and on the metrics we see
that the equation \ref{eq:66} defines an element in
$\widehat K(\mathcal{X},\mathcal{D}_{\D,X})$.
\end{proof}

\begin{proposition}
  For any element $\alpha \in \widehat
  K'(\mathcal{Y},\mathcal{D}_{\D,Y})$ we have 
  \begin{align}
    \label{eq:9}
    \omega (i^{T_{c}}_{\ast}(\alpha ))\Td(X)&=
    i_{\ast}(\omega (\alpha )\Td(Y))\\
    \omega (i^{T}_{\ast}(\alpha ))&=
    i_{\ast}(\omega (\alpha )\Td^{-1}(N_{Y/X}))\label{eq:94}
  \end{align}
\end{proposition}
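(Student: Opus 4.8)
The two formulas are parallel, so I would prove \eqref{eq:9} first and then deduce \eqref{eq:94}, or rather prove both simultaneously by checking them on generators and using additivity of $\omega$. Recall that an element of $\widehat K'(\mathcal{Y},\mathcal{D}_{\D,Y})$ is represented by a pair $(\overline{\mathcal{F}},\eta)$, and by biadditivity of $\omega$ in the two arguments it suffices to treat separately the two types of generators: those of the form $(0,\eta)$ and those of the form $(\overline{\mathcal{F}},0)$. For a generator $(0,\eta)$ the computation is immediate: from the definition of the push-forward maps we have $i^{T_{c}}_{\ast}(0,\eta)=[(0,i_{\ast}(\eta\,\Td(Y)\, i^{\ast}\Td^{-1}(X)))]$ and $i^{T}_{\ast}(0,\eta)=[(0,i_{\ast}(\eta\,\Td^{-1}(\overline N_{Y/X})))]$, and applying $\omega$ to a class of the form $[(0,\beta)]$ gives $\dd_{\mathcal{D}}\beta$; since $i_{\ast}$ commutes with $\dd_{\mathcal{D}}$ and $\Td(Y), \Td^{-1}(X), \Td^{-1}(\overline N_{Y/X})$ are closed forms, one gets $\omega(i^{T_{c}}_{\ast}(0,\eta))\Td(X)=i_{\ast}(\dd_{\mathcal{D}}\eta\,\Td(Y))=i_{\ast}(\omega(0,\eta)\Td(Y))$, using the fact that $i^{\ast}\Td(X)$ and $i^{\ast}\Td^{-1}(X)$ are mutually inverse, and similarly in the second case.

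The substantive case is the generator $(\overline{\mathcal{F}},0)$. Here, by definition of $i^{T_{c}}_{\ast}$ (equation \eqref{eq:66}, taking $\eta=0$),
\begin{displaymath}
  i^{T_{c}}_{\ast}(\overline{\mathcal{F}},0) =
  [((i_{\ast}\mathcal{F},\overline{E}_{\ast}\to (i_{\ast}\mathcal{F})_{\CC}),0)]
  -[(0,T_{c}(\overline{\xi}_{c}))].
\end{displaymath}
Applying $\omega$ and using that $\omega$ of the first term equals $\sum_{i}(-1)^{i}\ch(\overline E_{i})$ (the Chern character form of the metrized coherent sheaf) while $\omega$ of the second term is $-\dd_{\mathcal{D}}T_{c}(\overline{\xi}_{c})$, we obtain
\begin{displaymath}
  \omega(i^{T_{c}}_{\ast}(\overline{\mathcal{F}},0)) =
  \sum_{i}(-1)^{i}\ch(\overline E_{i}) + \dd_{\mathcal{D}}T_{c}(\overline{\xi}_{c}).
\end{displaymath}
Now I invoke the differential equation for the covariant singular Bott-Chern class, namely \eqref{eq:86} in proposition \ref{prop:17}, which reads $\dd_{\mathcal{D}}T_{c}(\overline{\xi}_{c})=\sum_{k}(-1)^{k}\ch(\overline E_{k})-i_{\ast}(\ch(\overline F)\bullet\Td(Y))\bullet\Td^{-1}(X)$. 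Substituting, the two sums $\sum_{i}(-1)^{i}\ch(\overline E_{i})$ cancel exactly the term $\sum_{k}(-1)^{k}\ch(\overline E_{k})$ with opposite sign — wait, they add; so I must be careful with signs: in fact $\omega$ of $[(0,-T_{c}(\overline\xi_c))]$ is $-\dd_{\mathcal{D}}T_c(\overline\xi_c)$, and substituting \eqref{eq:86} gives $\omega(i^{T_c}_\ast(\overline{\mathcal{F}},0)) = \sum(-1)^i\ch(\overline E_i) - \big(\sum(-1)^k\ch(\overline E_k) - i_\ast(\ch(\overline F)\Td(Y))\Td^{-1}(X)\big) = i_\ast(\ch(\overline F)\Td(Y))\Td^{-1}(X)$. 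Multiplying by $\Td(X)$ yields $\omega(i^{T_c}_\ast(\overline{\mathcal{F}},0))\Td(X) = i_\ast(\ch(\overline F)\Td(Y)) = i_\ast(\omega(\overline{\mathcal{F}},0)\Td(Y))$, since $\omega(\overline{\mathcal{F}},0) = \sum(-1)^i\ch(\overline E_i)$ represents $\ch(\overline F)\Td^{-1}(N)\cdot\delta_Y$ — actually $\omega(\overline{\mathcal{F}},0)$ is the form $\sum(-1)^i\ch(\overline E_i)$ which as a current is cohomologous but I should use here directly that by the projection formula $i_\ast(\ch(\overline F)\Td(Y))$ is what appears; identifying $\omega(\overline{\mathcal F},0)$ with $\ch(\overline{\mathcal F})$ at the level of forms closes the argument. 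For \eqref{eq:94} the same computation with equation \eqref{eq:42} (the differential equation defining a theory of singular Bott-Chern classes) in place of \eqref{eq:86}, and $\Td^{-1}(\overline N_{Y/X})$ in place of $\Td(Y)i^*\Td^{-1}(X)$, gives $\omega(i^T_\ast(\overline{\mathcal{F}},0)) = i_\ast(\Td^{-1}(\overline N)\ch(\overline F)) = i_\ast(\omega(\overline{\mathcal F},0)\Td^{-1}(N_{Y/X}))$ directly, with no need to multiply through by anything.

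\textbf{Main obstacle.} No deep obstacle is expected; the proof is a bookkeeping exercise combining the definition of $\omega$ on $\widehat K'$, the definition of the push-forward maps, and the relevant differential equations (\eqref{eq:86} and \eqref{eq:42}). The one point requiring genuine care is the treatment of the general element of $\widehat K'$: one must check that the relations \eqref{equivKprim} defining $\widehat K'$ are respected, i.e.\ that the formula is compatible with replacing $(\overline{\mathcal F},0)$ by a sum coming from a short exact sequence of metrized coherent sheaves; this is handled by the additivity of $\omega$ together with the compatibility of $\widetilde{\ch}$ with the differential equation (the fact that $\dd_{\mathcal{D}}\widetilde{\ch}(\overline{\varepsilon}) = \sum(-1)^k\ch(\overline{\mathcal F}_k)$), but since $\omega(i^{T_{c}}_\ast(-))$ only depends on $\omega$ of the input (as the displayed formulas show), well-definedness on $\widehat K'$ is automatic once it is established on representatives. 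The secondary subtlety is keeping the signs and the placement of $\Td(X)$ versus $\Td^{-1}(X)$ straight, for which proposition \ref{prop:17} must be quoted exactly as stated.
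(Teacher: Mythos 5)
Your proposal is correct and follows essentially the same route as the paper: one checks the formulas on the two kinds of generators $(0,\eta)$ and $(\overline{\mathcal{F}},0)$, applies the definitions of $\omega$ and of $i^{T_{c}}_{\ast}$, $i^{T}_{\ast}$, and substitutes the differential equations \eqref{eq:86} and \eqref{eq:42} together with the projection formula $i_{\ast}(b\, i^{\ast}a)=(i_{\ast}b)a$ (the paper writes out only the $i^{T_{c}}_{\ast}$ case). The one point to clean up is your closing hesitation: $\omega(\overline{\mathcal{F}},0)=\ch(\overline{\mathcal{F}})$ is an equality of forms on $Y$ by the very definition of $\omega$ on $\widehat K'$ (and of the Chern character form of a metrized coherent sheaf), so no appeal to cohomologous currents is needed—nor would it be permissible, since the proposition asserts an identity of forms, not of cohomology classes.
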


\begin{proof} We will prove the statement only for $i_{\ast}^{T_{c}}$.
We consider first a class of the form $[\overline{\mathcal{F}},0]$.
Using equation \eqref{eq:85} we obtain, after choosing
a metric $\overline{E}_i\longrightarrow (i_{\ast}\mathcal{F})_{\CC}$,
and considering 
the induced hermitian embedded vector bundle $\overline{\xi_{c}}$:
\begin{align*}
\omega (i^{T_{c}}_{\ast}([\overline{\mathcal{F}},0]))\Td(X) &=\left(
\sum(-1)^i\ch(\overline{E_i})-\dd_{\mathcal{D}}
T_{c}(\overline{\xi_{c}})\right)\Td(X)\\  
 &= i_{\ast}(\ch(\overline{F})\bullet
 \Td(Y)\bullet i^{\ast}\Td^{-1}(X)i^{\ast}(\Td(X)))\\
&=i_{\ast}(\ch(\overline{F})\bullet\Td(Y))\\&= i_{\ast}(\omega
([\overline{\mathcal{F}},0] )\Td(Y))
\end{align*}

Taking now a class of the form $[0,\eta]$ we obtain:
\begin{align*}
\omega (i^{T_{c}}_{\ast}([0,\eta]))\Td(X) &=
\dd_{\mathcal{D}}\left(i_{\ast}(\eta\Td(Y)
i^*\Td^{-1}(X))\right)\Td(X)\\
&=i_{\ast}\dd_{\mathcal{D}}(\eta\Td(Y))\\
&=i_{\ast}(\omega ([0,\eta])\Td(Y))
\end{align*}
and hence the equality \ref{eq:9} is proved.
\end{proof}

The next proposition explains the terminology ``compatible with the
projection formula'' and ``transitive'' that we used for theories of
singular Bott-Chern classes. The second statement is the main reason
to introduce the push-forward $i_{\ast}^{T_{c}}$.

\begin{proposition}
  If the theory of singular Bott-Chern classes is compatible with the
  projection formula, we have that, for $\alpha \in
  \widehat K'(\mathcal{Y},\mathcal{D}_{\D,Y})$ and $\beta \in
  \widehat K(\mathcal{X},\mathcal{D}_{\las,X})$ the following
  equalities hold 
  \begin{align*}
    i^{T_{c}}_{\ast}(\alpha i^{\ast}\beta )&=i^{T_{c}}_{\ast}(\alpha )\beta,\\ 
    i^{T}_{\ast}(\alpha i^{\ast}\beta )&=i^{T}_{\ast}(\alpha )\beta. 
  \end{align*}
  If moreover the theory of singular Bott-Chern classes is transitive
  and $j\colon (\mathcal{Z},h_{Z})\longrightarrow (\mathcal{Y},h_{Y})$ is
  another closed 
  immersion of metrized arithmetic varieties, then 
  \begin{displaymath}
    (i\circ j)_{\ast}^{T_{c}}=i_{\ast}^{T_{c}}\circ j_{\ast}^{T_{c}}.
  \end{displaymath}
\end{proposition}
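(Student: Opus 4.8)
The plan is to verify both assertions on generators of the arithmetic $K$-groups, reducing everything to the identities for covariant singular Bott--Chern classes established in Proposition~\ref{prop:17} together with the projection formula for $i_\ast$ on the Deligne complexes of currents. All the push-forward maps \eqref{eq:66}, \eqref{eq:67}, \eqref{eq:91}, \eqref{eq:92} are additive, and the groups $\widehat K(\,\cdot\,,\mathcal{D}_{\D})$, $\widehat K'(\,\cdot\,,\mathcal{D}_{\D})$, $\widehat K(\,\cdot\,,\mathcal{D}_{\las})$ are generated by classes of the form $(\overline{\mathcal F},0)$ and $(0,\eta)$; hence it suffices to check the claimed equalities on such classes. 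Moreover, by the comparison \eqref{eq:93} between $i^{T_c}_\ast$ and $i^{T}_\ast$, once the projection formula is known for one of them it follows for the other, provided the correction term $\bigl(0,i_\ast(\omega(\overline{\mathcal F},\eta)\,\widetilde{\Td^{-1}}(\overline{\xi}_{N})\,\Td(Y))\bigr)$ also satisfies the projection formula; since $\omega$ is multiplicative for the module structure \eqref{eq:78}, this last point is again just the projection formula for $i_\ast$ on currents. So I would argue for $i^{T_c}_\ast$ and transcribe.

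\emph{Projection formula.} For $\beta=(0,\zeta)$ one has $i^\ast\beta=(0,i^\ast\zeta)$ and, by \eqref{eq:78}, $\alpha\cdot i^\ast\beta$ is represented by a pair $(0,\cdot)$ whose component is $\omega(\alpha)\bullet i^\ast\zeta$ up to $\dd_{\mathcal{D}}$-exact terms; pushing forward with \eqref{eq:67} and applying the projection formula $i_\ast(\gamma\bullet i^\ast\zeta)=i_\ast(\gamma)\bullet\zeta$ for currents, and then \eqref{eq:78} again on the target, gives $i^{T_c}_\ast(\alpha)\cdot\beta$. For $\beta=(\overline{\mathcal G},0)$ a hermitian vector bundle, one resolves $i_\ast(\mathcal F\otimes i^\ast\mathcal G)$ using the projection formula for coherent sheaves $i_\ast(\mathcal F\otimes i^\ast\mathcal G)\cong i_\ast\mathcal F\otimes\mathcal G$, rewrites the singular term by Proposition~\ref{prop:17}(3) (valid since $T$ is compatible with the projection formula) as $T_c(\overline\xi_c\otimes\overline{\mathcal G}_{\mathbb C})=T_c(\overline\xi_c)\bullet\ch(\overline{\mathcal G}_{\mathbb C})$, and moves the Todd-twisted $\eta$-term across $i_\ast$ by the projection formula; collecting the three contributions via \eqref{equivKprim} and \eqref{eq:78} yields $i^{T_c}_\ast(\alpha)\cdot\beta$. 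Combining the two cases proves the projection formula for $i^{T_c}_\ast$, and the identical computation with the twist $\Td(Y)\,i^\ast\Td^{-1}(X)$ replaced by $\Td^{-1}(\overline N_{Y/X})$ (or directly via \eqref{eq:93}) proves it for $i^{T}_\ast$.

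\emph{Transitivity.} Write the composition as $Z\hookrightarrow Y\hookrightarrow X$ and reduce again to generators. On a class $(0,\eta)$, both $(i\circ j)^{T_c}_\ast$ and $i^{T_c}_\ast\circ j^{T_c}_\ast$ produce $\bigl(0,(ij)_\ast(\eta\,\Td(Z)\,(ij)^\ast\Td^{-1}(X))\bigr)$, using $(ij)_\ast=i_\ast\circ j_\ast$ on currents, the projection formula for $j_\ast$, and the cancellation $j^\ast\Td(Y)\bullet j^\ast\Td^{-1}(Y)=1$. On a class $(\overline{\mathcal F},0)$ one first fixes a resolution $\overline E_\ast\to(j_\ast\mathcal F)_{\mathbb C}$ over $Y$, then a compatible double system of resolutions $\overline E'_{k,\ast}\to i_\ast E_k$ and $\Tot(\overline E'_{\ast,\ast})\to(i_\ast j_\ast\mathcal F)_{\mathbb C}$ over $X$, placing oneself exactly in the setting of Notation~\ref{def:4} adapted to the present situation. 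The essential input is Proposition~\ref{prop:17}(4) (equation \eqref{eq:87}, available because $T$ is transitive), which expresses $T_c(\overline\xi_{Z\hookrightarrow X,c})$ in terms of $\sum_k(-1)^k T_c(\overline\xi_{Y\hookrightarrow X,k,c})$ and $i_\ast\bigl(T_c(\overline\xi_{Z\hookrightarrow Y,c})\bullet\Td(Y)\bigr)\bullet\Td^{-1}(X)$; one also invokes Proposition~\ref{prop:13} to identify, inside $\widehat K'(\mathcal X,\mathcal D_{\D,X})$, the metrized coherent sheaf coming from the $\Tot$-resolution with the one assembled inductively, the discrepancy being a sum of Bott--Chern characters that cancels precisely the $\sum_k(-1)^k T_c(\overline\xi_{Y\hookrightarrow X,k,c})$ contribution, and the fact (proved in the preceding proposition) that $i^{T_c}_\ast$ is independent of the chosen metric on $N_{Y/X}$, so the metric forced by the double-deformation construction is immaterial.

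The main obstacle will be this last step: organizing the two expansions of $(i\circ j)^{T_c}_\ast(\overline{\mathcal F},0)$ and of $i^{T_c}_\ast\bigl(j^{T_c}_\ast(\overline{\mathcal F},0)\bigr)$ so that every auxiliary Bott--Chern term and every Todd twist match up pairwise. This is essentially the same bookkeeping already performed in the proof of Proposition~\ref{prop:7} via the double deformation of Figure~\ref{fig:df} and in Proposition~\ref{prop:17}; the work here is to read those identities in $\widehat K'(\mathcal X,\mathcal D_{\D,X})$ rather than in $\bigoplus_p\widetilde{\mathcal D}^{2p-1}(X,p)$, using the relations \eqref{equivKprim} and \eqref{eq:78} to pass between the two levels.
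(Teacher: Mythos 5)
Your argument is correct and rests on exactly the same pillars as the paper's proof: the projection formula is reduced to the identity $T_{c}(\overline{\xi}_{c}\otimes \overline G)=T_{c}(\overline{\xi}_{c})\bullet \ch(\overline G)$ of proposition \ref{prop:17}, and transitivity to equation \eqref{eq:87}. The only organizational difference is that you split $\beta$ (and, for transitivity, the class in $\widehat K'$) into generators $(\overline{\mathcal G},0)$ and $(0,\zeta)$ and handle $i^{T}_{\ast}$ through \eqref{eq:93}, whereas the paper computes $i^{T_{c}}_{\ast}(\alpha\, i^{\ast}\beta)-i^{T_{c}}_{\ast}(\alpha)\beta$ in one stroke for arbitrary $\alpha=(\overline{\mathcal F},\eta)$, $\beta=(\overline{\mathcal E},\chi)$ using \eqref{eq:66} and \eqref{eq:78}, and notes that the $i^{T}_{\ast}$ computation is identical; both routes cost essentially the same. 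One small correction to your transitivity bookkeeping: the appeal to proposition \ref{prop:13} is unnecessary, and the ``discrepancy'' you describe is not what absorbs the $\sum_{k}(-1)^{k}T_{c}(\overline{\xi}_{Y\hookrightarrow X,k,c})$ term; since the preceding proposition allows you to compute $(i\circ j)^{T_{c}}_{\ast}(\overline{\mathcal F},0)$ with the metric on $((ij)_{\ast}\mathcal F)_{\mathbb C}$ given by the same total complex $\Tot(\overline E'_{\ast,\ast})$ that appears in \eqref{eq:67}, the two sides differ exactly by $T_{c}(\overline{\xi}_{Z\hookrightarrow X,c})-\sum_{k}(-1)^{k}T_{c}(\overline{\xi}_{Y\hookrightarrow X,k,c})-i_{\ast}(T_{c}(\overline{\xi}_{Z\hookrightarrow Y,c})\bullet \Td(Y))\bullet \Td^{-1}(X)$, which vanishes by \eqref{eq:87}.
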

\begin{proof}
  We prove first the projection formula. For simplicity we will treat
  the case when $\alpha \in \widehat
  K(\mathcal{Y},\mathcal{D}_{\D,Y})$. Let $\alpha
  =(\overline{\mathcal{F}},\eta)$, let $\overline
  {\xi_{c}}=(i,\overline T_{X},\overline T_{Y},\overline {\mathcal
    F}_{\mathbb{C}},\overline E_{\ast})$   be a hermitian embedded vector bundle
  and let
  $\beta =(\overline {\mathcal{E}},\chi)$.   Using equations
  \eqref{eq:66} 
  and \eqref{eq:78}, we obtain
  \begin{align*}
    i^{T_{c}}_{\ast}(\alpha i^{\ast}\beta )-i^{T_{c}}_{\ast}(\alpha )\beta&=
    -\sum_{i}(-1)^{i}\ch(\overline E_{i})\bullet \chi +
    \dd_{\mathcal{D}}(T_{c}(\overline {\xi}_{c}))\bullet \chi\\
    &\phantom{AA}
    +i_{\ast}(\ch((\overline
    {\mathcal{F}})_{\mathbb{C}})
    \bullet \Td(Y)))\bullet\Td^{-1}(X)\bullet \chi \\
    &\phantom{AA}+T_{c}(\overline \xi_{c})\bullet \ch(\overline
    {\mathcal{E}}_{\mathbb{C}})-  
    T_{c}(\overline \xi_{c}\otimes \overline {\mathcal{E}}_{\mathbb{C}})\\
    &=
    T_{c}(\overline \xi_{c}\otimes \overline {\mathcal{E}}_{\mathbb{C}})
    -T_{c}(\overline \xi_{c})\bullet \ch(\overline
    {\mathcal{E}}_{\mathbb{C}}).
  \end{align*}
  Therefore, if $T$ is compatible with the projection formula, then
  the projection formula holds.

  The fact that, if moreover $T$ is transitive then $ (i\circ
  j)_{\ast}^{T_{c}}= i_{\ast}^{T_{c}}\circ j_{\ast}^{T_{c}}$ follows
  directly from 
  the definition and equation \eqref{eq:87}.
\end{proof}

If $i\colon \mathcal{Y}\longrightarrow \mathcal{X}$ is a regular closed immersion
between arithmetic varieties, then the normal cone
$\mathcal{N}_{\mathcal{Y}/\mathcal{X}}$ is a locally free sheaf. The
choice of 
a hermitian metric on $N_{Y/X}$ determines a hermitian vector bundle
$\overline {\mathcal{N}}_{\mathcal{Y}/\mathcal{X}}$. If now
$i\colon (\mathcal{Y},h_{Y})\longrightarrow (\mathcal{X},h_{X})$ is a closed
immersion 
between regular metrized arithmetic varieties, then the tangent
bundles ${\mathcal{T}}_{\mathcal{Y}}$ and
${\mathcal{T}}_{\mathcal{X}}$ are virtual vector bundles. Since over
$\mathbb{C}$ they define vector bundles, we can provide them with
hermitian metrics and denote the
hermitian virtual vector bundles by $\overline {\mathcal{T}}_{\mathcal{X}}$ and 
$\overline {\mathcal{T}}_{\mathcal{Y}}$. There are well defined clases
$\widehat 
{\Td}(\mathcal{Y})= \widehat
{\Td}(\overline {\mathcal{T}}_{\mathcal{Y}})$ and $\widehat
{\Td}(\mathcal{X})= \widehat
{\Td}(\overline {\mathcal{T}}_{\mathcal{X}})$. 
 
The arithmetic Grothendieck-Riemann-Roch theorem for closed immersions
compares the direct images in the arithmetic $K$-groups with
the 
direct images in the arithmetic Chow groups.

\begin{theorem}[\cite{BismutGilletSoule:MR1086887},
  \cite{zha99:_rieman_roch}] \label{thm:15}  
  Let $T$ be a theory of singular Bott-Chern
  classes and let $S_{T}$ be the additive genus of corollary
  \ref{cor:10}. 
  \begin{enumerate}
  \item   Let $i\colon \mathcal{Y}\longrightarrow \mathcal{X}$ be a
    regular closed immersion between arithmetic varieties. Assume that
    we have chosen a hermitian metric on the complex bundle
    $N_{Y/X}$. Then, for any 
    $\alpha=(\overline {\mathcal{F}},\eta) \in
    \widehat K(\mathcal{Y},\mathcal{D}_{\D,Y})$ 
    the 
    equation
    \begin{equation}\label{eq:29}
      \widehat {\ch}(i^{T}_{\ast}(\alpha ))= 
      i_{\ast}(\widehat{\ch}(\alpha
      )\widehat{\Td}^{-1}(\overline{\mathcal{N}}
      _{\mathcal{Y}/\mathcal{X}}))
      -
      \amap(i_{\ast}(\ch(\mathcal{F}_{\CC})
      \Td^{-1}(N_{Y/X})S_{T}(N))
    \end{equation}
    holds.
  \item   Let $i\colon (\mathcal{Y},h_{Y})\longrightarrow (\mathcal{X},h_{X})$ be a
    closed immersion between regular metrized arithmetic
    varieties. Then, for any 
    $\alpha=(\overline {\mathcal{F}},\eta) \in
    \widehat K(\mathcal{Y},\mathcal{D}_{\D,Y})$ the equation
    \begin{equation}\label{eq:24}
      \widehat {\ch}(i^{T_{c}}_{\ast}(\alpha ))\widehat
      {\Td}(\mathcal{X})= 
      i_{\ast}(\widehat{\ch}(\alpha
      )\widehat{\Td}(\mathcal{Y}))-\amap(i_{\ast}(\ch(
      \mathcal{F}_{\CC})\Td(Y)S_{T}(N)))
    \end{equation}  
    holds.
  \end{enumerate}
\end{theorem}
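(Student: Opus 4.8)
\textbf{Proof strategy for Theorem \ref{thm:15}.}

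The plan is to derive both statements from the corresponding arithmetic Grothendieck-Riemann-Roch theorem for the homogeneous theory $T^{h}$ (which is essentially the theorem of Bismut, Gillet and Soul\'e / Zha) together with corollary \ref{cor:10}, which expresses any theory $T$ in terms of $T^{h}$ via the additive genus $S_{T}$. First I would treat statement (1). The key observation is that the difference $i^{T}_{\ast}(\alpha)-i^{T^{h}}_{\ast}(\alpha)$ is, by the very definition of the push-forward (equation \eqref{eq:92}) and equation \eqref{eq:17} of corollary \ref{cor:10}, a purely ``analytic'' contribution: it equals $-[(0,i_{\ast}(T(\overline\xi)-T^{h}(\overline\xi)))]$, and by proposition \ref{prop:5} (or corollary \ref{comparThevb}) together with the defining relation $C_{T}-C_{T^{h}}=\ch(F)\bullet\Td(N)^{-1}\bullet S_{T}(N)$, one sees that $T(\overline\xi)-T^{h}(\overline\xi)=i_{\ast}(\ch(\overline F)\Td^{-1}(\overline N)S_{T}(\overline N))$ modulo $\dd_{\mathcal{D}}$-exact terms. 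Hence $i^{T}_{\ast}(\alpha)=i^{T^{h}}_{\ast}(\alpha)-\amap(i_{\ast}(\ch(\mathcal F_{\CC})\Td^{-1}(N)S_{T}(N)))$. Applying $\widehat{\ch}$ and using that $\widehat{\ch}\circ\amap=\amap$ (up to the appropriate normalization), statement (1) reduces to the case $T=T^{h}$, where $S_{T^{h}}=0$ and the formula is exactly the statement of the arithmetic GRR for closed immersions proved in \cite{BismutGilletSoule:MR1086887} and \cite{zha99:_rieman_roch}.

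For statement (2) I would reduce to statement (1) by comparing the two push-forwards $i^{T_{c}}_{\ast}$ and $i^{T}_{\ast}$ via equation \eqref{eq:93}, namely
\begin{displaymath}
  i^{T_{c}}_{\ast}(\overline{\mathcal F},\eta)=
  i^{T}_{\ast}(\overline{\mathcal F},\eta)
  -\left[\left(0,i_{\ast}\left(\omega(\overline{\mathcal F},\eta)
  \widetilde{\Td^{-1}}(\overline\xi_{N})\Td(Y)\right)\right)\right],
\end{displaymath}
where $\overline\xi_{N}$ is the tangent-normal exact sequence \eqref{eq:90}. Applying $\widehat{\ch}$ and then multiplying by $\widehat{\Td}(\mathcal X)$, the first term is handled by statement (1): $\widehat{\ch}(i^{T}_{\ast}(\alpha))\widehat{\Td}(\mathcal X)=i_{\ast}(\widehat{\ch}(\alpha)\widehat{\Td}^{-1}(\overline{\mathcal N})\,i^{\ast}\widehat{\Td}(\mathcal X))-\amap(\cdots)$, and $\widehat{\Td}^{-1}(\overline{\mathcal N})\,i^{\ast}\widehat{\Td}(\mathcal X)$ must be rewritten as $\widehat{\Td}(\mathcal Y)$ up to the $\amap$ of a secondary class; the identity relating $\widehat{\Td}$ of the three terms of $\overline\xi_{N}$ is the arithmetic analogue of the multiplicativity of the Todd class and involves precisely the Bott-Chern class $\widetilde{\Td^{-1}}(\overline\xi_{N})$. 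Combining this with the explicit ``correction term'' coming from \eqref{eq:93}, and using the formula $\omega(\overline{\mathcal F},\eta)=\ch(\mathcal F_{\CC})+\dd_{\mathcal{D}}\eta$ to identify $\amap(i_{\ast}(\omega(\alpha)\widetilde{\Td^{-1}}(\overline\xi_{N})\Td(Y)))$ with the discrepancy between $i_{\ast}(\widehat{\ch}(\alpha)\widehat{\Td}(\mathcal Y))$ and $i_{\ast}(\widehat{\ch}(\alpha)\widehat{\Td}^{-1}(\overline{\mathcal N})i^{\ast}\widehat{\Td}(\mathcal X))$, yields equation \eqref{eq:24} with the same genus $S_{T}$, since $\Td^{-1}(N)S_{T}(N)$ and $\Td(Y)S_{T}(N)$ differ exactly by the factor $\Td(X)i^{\ast}\Td^{-1}(X)$ appearing after multiplying by $\widehat{\Td}(\mathcal X)$.

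The main obstacle I expect is bookkeeping in the arithmetic intersection ring: one has to be careful that the various identities among $\widehat{\Td}$, $\widehat{\ch}$ and the secondary classes $\widetilde{\Td}$, $\widetilde{\ch}$ hold not merely in cohomology but in $\cha_{\ast}(\mathcal X,\mathcal D_{\D,X})$, with the correct placement of $\amap$ of boundary terms and the correct normalization constant ${\bf 1}_{1}$. In particular the reduction in statement (1) uses that $i_{\ast}$ on arithmetic Chow groups commutes with $\amap$ and with $\cap$ against $\widehat{\Td}$-type classes (the projection formula and compatibility with the maps $\zeta,\amap,\omega$ recalled earlier in this section), and one must check that the ``analytic'' difference $T(\overline\xi)-T^{h}(\overline\xi)$ computed above is independent of all auxiliary resolutions — which is precisely what corollary \ref{comparThevb} and proposition \ref{prop:13} guarantee. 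Once these compatibilities are invoked, both \eqref{eq:29} and \eqref{eq:24} follow from the known case $T=T^{h}$ by a formal manipulation; no genuinely new analytic input beyond the cited theorems of Bismut-Gillet-Soul\'e and Zha is needed.
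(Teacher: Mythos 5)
Your two reduction steps are essentially sound and consistent with the paper: since the deformation data $W$, $\tr_{1}(\overline E_{\ast})_{\ast}$ and $\overline\eta_{k}$ in definition \ref{def:5} do not depend on the theory, one indeed gets $T(\overline\xi)-T^{h}(\overline\xi)=i_{\ast}(\ch(\overline F)\bullet\Td^{-1}(\overline N)\bullet S_{T}(N))$ modulo $\dd_{\mathcal{D}}$-exact currents, so \eqref{eq:29} for general $T$ follows from the case $T=T^{h}$; and the deduction of \eqref{eq:24} from \eqref{eq:29} via \eqref{eq:93} and the secondary Todd identity for $\overline\xi_{N}$ is exactly the ``straightforward computation'' the paper leaves to the reader. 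The genuine gap is the base case $T=T^{h}$, which you dispose of by declaring it ``exactly'' the theorem of Bismut--Gillet--Soul\'e and Zha. It is not: the Bismut--Gillet--Soul\'e theorem is formulated in arithmetic Chow groups that are covariant only for proper morphisms smooth over $\mathbb{C}$, so in their framework there is no push-forward $i_{\ast}$ along the closed immersion at all; their statement compares $g_{\ast}\chh(i_{\ast}\overline E)$ with $f_{\ast}\chh(\overline E)$ after composing with a submersion, assumes regularity (whereas statement (1) here allows non-regular $\mathcal X$, $\mathcal Y$), and works with metrics satisfying Bismut's condition (A). Zha's statement is in yet another, unpublished, formalism (weak arithmetic Chow groups, his own singular Bott-Chern currents and direct images), and invoking it would require precisely the comparisons this paper establishes ($T^{Z}=T^{h}$, compatibility of the Chow-group and $K$-group formalisms) together with a proof transported into the present setting.

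The paper therefore proves \eqref{eq:29} directly, for arbitrary $T$, by the deformation to the normal cone over the arithmetic base: one resolves $i_{\ast}\mathcal F$ over $\mathcal W$ by the complex $\widetilde{\mathcal E}_{\ast}$ of theorem \ref{thm:5} (which exists over $A$), uses as key input the vanishing $(p_{\mathcal W})_{\ast}\bigl(\chh(\widetilde{\mathcal E}_{\ast})\,\widehat{\dv}(t)\bigr)=0$ in $\cha^{\ast}(\mathcal X,\mathcal D_{\D,X})$, expands it as in \eqref{eq:96}, identifies $\widehat c_{r}(\overline{\mathcal Q})$ on $\mathcal P$ with the zero-section cycle equipped with the Euler--Green current (lemma \ref{lemm:5}), and evaluates $(\pi_{\mathcal P})_{\ast}\chh\bigl(K(\overline{\mathcal F},\overline{\mathcal N})\bigr)$ as in \eqref{eq:56}; the genus $S_{T}$ enters there and through equation \eqref{eq:65}. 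None of this analytic--arithmetic content appears in your proposal, so as written the core identity $\chh(i^{T^{h}}_{\ast}\alpha)=i_{\ast}\bigl(\chh(\alpha)\widehat{\Td}^{-1}(\overline{\mathcal N}_{\mathcal Y/\mathcal X})\bigr)$ is asserted rather than proved, and the argument does not stand on its own (nor on the cited references in the form you use them). To repair it, either reproduce the deformation argument above for $T^{h}$, or carry it out directly for $T$ as the paper does, in which case your reduction step becomes unnecessary.
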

\begin{proof}
  The proof follows the classical pattern of the deformation to the
  normal cone as in \cite{BismutGilletSoule:MR1086887} and
  \cite{zha99:_rieman_roch}. 

  Let $\mathcal{W}$ be the deformation to the normal cone to
  $\mathcal{Y}$ in $\mathcal{X}$. We will follow the notation of
  section \ref{sec:deform-resol}. Since $i$ is a regular closed
  immersion, there is a finite resolution by locally free sheaves
  \begin{displaymath}
    0\to
    \mathcal{E}_{n}\to \dots \to \mathcal{E}_{1}\to \mathcal{E}_{0}
    \to i_{\ast}\mathcal{F}\to 0. 
  \end{displaymath}
  We choose hermitian metrics on the complex bundles
  $E_{i}=(\mathcal{E}_{i})_{\mathbb{C}}$. The immersion
  $j\colon \mathcal{Y}\times \mathbb{P}^{1}\longrightarrow \mathcal{W}$ is
  also a regular immersion. The construction of theorem 
  \ref{thm:5} is valid over the arithmetic ring $A$. Therefore we have
  a resolution by hermitian vector bundles
  \begin{displaymath}
    0\to
    \widetilde {\mathcal{G}}_{n}\to \dots \to \widetilde
    {\mathcal{G}}_{1}\to \widetilde{\mathcal{G}}_{0} 
    \to i_{\ast}\mathcal{F}\to 0.
  \end{displaymath}
  such that its restriction to $\mathcal{X}\times \{0\}$ is isometric
  to 
  $\mathcal{E}_{\ast}$. Its restriction to $\widetilde {\mathcal{X}}$
  is orthogonally split, and its restriction to
  $\mathcal{P}=\mathbb{P}(\mathcal{N}_{\mathcal{Y}/\mathcal{X}}\oplus
  \mathcal{O}_{\mathcal{Y}})$ fits in a short exact sequence
  \begin{displaymath}
    0\longrightarrow \overline{\mathcal{A}}_{\ast}\longrightarrow 
    \widetilde {\mathcal{E}}_{\ast}|_{\mathcal{P}}\longrightarrow 
    K(\overline{\mathcal{F}},
    \overline{\mathcal{N}}_{\mathcal{Y}/\mathcal{X}})\longrightarrow   
    0, 
  \end{displaymath}
  where $\overline{\mathcal{A}}_{\ast}$ is orthogonally split and
  $K(\overline{\mathcal{F}},
  \overline{\mathcal{N}}_{\mathcal{Y}/\mathcal{X}})$ is the Koszul 
  resolution. We denote by $\overline {\eta}_{k}$ the piece of degree
  $k$ of this exact sequence. Let $t$ be the absolute coordinate of
  $\mathbb{P}^{1}$. It defines a rational function in $\mathcal{W}$
  and
  $$\widehat {\dv}(t)=(\mathcal{X}_{0}+\mathcal{P}+\widetilde
  {\mathcal{X}},(0,-\frac{1}{2}\log t\overline t))
  $$ 
  The key point of the proof of the theorem is that, in the group
  $\cha^{\ast}(\mathcal{X},\mathcal{D}_{\D,X} )$, we have
  \begin{displaymath}
    (p_{\mathcal{W}})_{\ast}(\widehat{\ch}(\widetilde
    {\mathcal{E}}_{\ast})\widehat {\dv}(t))=0.
  \end{displaymath}
  Using the definition of the product in the arithmetic Chow rings
  we obtain
  \begin{multline}\label{eq:96}
    (p_{\mathcal{W}})_{\ast}(\widehat{\ch}(\widetilde
    {\mathcal{E}}_{\ast})\widehat {\dv}(t))=\widehat{\ch}(
    \overline{\mathcal{E}}_{\ast})-
    (p_{\widetilde{\mathcal{X}}})_{\ast}\widehat{\ch}(
    \widetilde {\mathcal{E}}_{\ast}|_{\widetilde{\mathcal{X}}})
    -(p_{\widetilde{\mathcal{P}}})_{\ast}\widehat{\ch}(
    \widetilde{\mathcal{E}}_{\ast}|_{\mathcal{P}})\\+
    \amap((p_{W})_{\ast}(\ch((\widetilde{\mathcal{E}}_{\ast})_{\mathbb{C}})
    \bullet W_{1})).
  \end{multline}
  But we have
  \begin{align}
    \widehat{\ch}(
    \overline{\mathcal{E}}_{\ast})&=\widehat{\ch}(i^{T}_{\ast}(\overline
    {\mathcal{F}}))+\amap(T(\overline {\xi})),\label{eq:97}\\
    (p_{\widetilde{\mathcal{X}}})_{\ast}\widehat{\ch}(
    \widetilde {\mathcal{E}}_{\ast}|_{\widetilde{\mathcal{X}}})
    &=0,\label{eq:98}\\
    (p_{\widetilde{\mathcal{P}}})_{\ast}\widehat{\ch}(
    \widetilde{\mathcal{E}}_{\ast}|_{\mathcal{P}})&=
    i_{\ast}(\pi _{\mathcal{P}})_{\ast}(
    \widehat{\ch}(K(\overline{\mathcal{F}},
    \overline{\mathcal{N}}_{\mathcal{Y}/\mathcal{X}}))
    -\sum_{k}(-1)^{k}\amap(\widetilde{\ch}(\overline{\eta}_{k}))).\label{eq:99} 
  \end{align}
  Moreover, by equation \eqref{eq:10},
  \begin{multline}\label{eq:65}
    \amap((p_{W})_{\ast}(\ch((\widetilde{\mathcal{E}}_{\ast})_{\mathbb{C}})
    \bullet W_{1}))=
    -\amap(T(\overline{\xi})) -
    \sum_{k}(-1)^{k}\amap(\widetilde{\ch}(\overline{\eta}_{k})))\\
    +\amap(i_{\ast}C_{T}(\mathcal{F}_{\mathbb{C}},\mathcal{N}_{\mathbb{C}})). 
  \end{multline}
  Thus we are led to compute $ i_{\ast}(\pi _{\mathcal{P}})_{\ast}
    \widehat{\ch}(K(\overline{\mathcal{F}},
    \overline{\mathcal{N}}_{\mathcal{Y}/\mathcal{X}}))$. This is done
    in the following two lemmas.

    \begin{lemma}\label{lemm:5}
      Let $\mathcal{Y}$ be an arithmetic variety, 
      $\overline{\mathcal{N}}$ 
      a rank $r$ hermitian vector bundle over $\mathcal{Y}$ and denote 
      $\mathcal{P}=\mathbb{P}^{1}(\mathcal{N}\oplus
      \mathcal{O}_{\mathcal{Y}})$, and $\overline{\mathcal{Q}}$ the
      tautological quotient bundle. Let $\mathcal{Y}_{0}$ be the cycle
      defined by the zero
      section of $\mathcal{P}$. Then
      \begin{equation}
        \label{eq:95}
        \widehat{c}_{r}(\overline {\mathcal{Q}})=(\mathcal{Y}_{0},
        (c_{r}(\overline {\mathcal{Q}}_{\mathbb{C}}),\widetilde
        e(\mathcal{P}_{\mathbb{C}},\overline
        {\mathcal{Q}}_{\mathbb{C}},s))), 
      \end{equation}
      where $\widetilde e(\mathcal{P}_{\mathbb{C}},\overline
        {\mathcal{Q}}_{\mathbb{C}},s)$ is the Euler-Green current of
        lemma \ref{lemm:4}.
    \end{lemma}    
    \begin{proof}
      We know that
      $\widehat{c}_{r}(\overline {\mathcal{Q}})=(\mathcal{Y}_{0},
        (c_{r}(\overline {\mathcal{Q}}_{\mathbb{C}}),\widetilde
        e))$ for certain Green current $\widetilde e$. By definition
        this Green current satisfies
        \begin{displaymath}
          \dd_{\mathcal{D}}\widetilde e=c_{r}(\overline
          {\mathcal{Q}}_{\mathbb{C}}) -\delta _{\mathcal{Y}_{\mathbb{C}}}.
        \end{displaymath}
        Moreover, since the restriction of  $\overline
        {\mathcal{Q}}_{\mathbb{C}}$ to $D_{\infty}$ has a global section
        of constant norm we have that $\widetilde
        e|_{D_{\infty}}=0$. Therefore, by lemma \ref{lemm:4},  
        $$\widetilde e=\widetilde e(\mathcal{P}_{\mathbb{C}},\overline
        {\mathcal{Q}}_{\mathbb{C}},s).$$
    \end{proof}

    \begin{lemma}
      The following equality hold:
       \begin{multline}\label{eq:56}
        (\pi _{\mathcal{P}})_{\ast}
        \widehat{\ch}(K(\overline{\mathcal{F}},\overline{\mathcal{N}})_{\ast})=\\
        \widehat{\ch}(\overline{\mathcal{F}})
        \widehat{\Td^{-1}}(\overline{\mathcal{N}})
        +\amap(C_{T}(\overline{\mathcal{F}},\overline{\mathcal{N}})-
        \ch(\mathcal{F}_{\CC})
        \Td^{-1}(N_{Y/X})S_{T}(N)). 
      \end{multline}
    \end{lemma}
    \begin{proof}
      We just compute, using lemma \ref{lemm:5},
      \begin{align*}
                (\pi _{\mathcal{P}})_{\ast}
        \widehat{\ch}(K(\overline{\mathcal{F}}&,\overline{\mathcal{N}})_{\ast})=
         (\pi _{\mathcal{P}})_{\ast}\sum_{k}(-1)^{k}
         \widehat {\ch}(\bigwedge^{k}\overline {\mathcal{Q}}^{\vee})
         \widehat {\ch}(\pi _{\mathcal{P}}^{\ast}\overline
         {\mathcal{F}})\\&=
         (\pi _{\mathcal{P}})_{\ast}(\widehat
         c_{r}(\overline{\mathcal{Q}})
         \widehat{\Td^{-1}}(\overline{\mathcal{Q}}))
         \widehat{\ch}(\overline{\mathcal{F}})\\  
         &=
         \widehat{\Td^{-1}}(\overline{\mathcal{N}})
         \widehat{\ch}(\overline{\mathcal{F}})+
         \amap((\pi _{P})_{\ast}(\widetilde e \Td^{-1}(\overline
         {Q}))\ch(\overline F))\\
         &=\widehat{\Td^{-1}}(\overline{\mathcal{N}})
         \widehat{\ch}(\overline{\mathcal{F}})
         +
         \amap((\pi _{P})_{\ast}(T^{h}(K(\overline F,\overline N)))
         \ch(\overline F))\\
         &=\widehat{\Td^{-1}}(\overline{\mathcal{N}})
         \widehat{\ch}(\overline{\mathcal{F}})
         +\amap(C_{T^{h}}(F,N))\\
         &=\widehat{\Td^{-1}}(\overline{\mathcal{N}})
         \widehat{\ch}(\overline{\mathcal{F}})
         +C_{T}(F,N)-\amap(\Td^{-1}(N)\ch(F)S_{T}(N)).
      \end{align*}
    \end{proof}
    The equation \eqref{eq:29} follows by combining equations
    \eqref{eq:96}, \eqref{eq:97}, \eqref{eq:98}, \eqref{eq:99},
    \eqref{eq:65} and \eqref{eq:56}.
    
    The equation \eqref{eq:24} follows from equation \eqref{eq:29} by
    a straightforward computation.
\end{proof}

Since $T$ is homogeneous if and only if $S_{T}=0$,
in view of this result, 
  the theory of homogeneous singular Bott-Chern classes is characterized
  for being the unique theory of singular Bott-Chern classes that
  provides an exact arithmetic Grothendieck-Riemann-Roch theorem for
  closed immersions. 
  By contrast, if one uses a theory of singular Bott-Chern
  classes that is not homogeneous, there is an analogy between the genus
  $S_{T}$ and the $R$-genus that appears in the arithmetic
  Grothendieck-Riemann-Roch theorem for submersions.

Since there is a unique theory of homogeneous singular Bott-Chern
classes, the following definition is natural. 

\begin{definition}
    Let $i\colon (\mathcal{Y},h_{Y})\longrightarrow
(\mathcal{X},h_{X})$ be a closed immersion of metrized arithmetic
varieties, the 
\emph{push-forward map}
\begin{displaymath}
  i_{\ast}\colon \widehat
  K'(\mathcal{Y},\mathcal{D}_{\D,Y})\longrightarrow  
  \widehat K'(\mathcal{X},\mathcal{D}_{\D,Y})
\end{displaymath}
is defined as $i_{\ast}=i^{T_{c}^{h}}_{\ast}$.
\end{definition}

\begin{corollary}
  The push-forward map makes
  $\widehat{K}'(\underline{\phantom{\mathcal{Y}}},\mathcal{D}_{\D,Y})$
  and 
  $\widehat{K}(\underline{\phantom{\mathcal{Y}}},\mathcal{D}_{\D,Y})$
  functors from the category of  regular metrized arithmetic varieties and
  closed immersions to the category of  abelian groups.
\end{corollary}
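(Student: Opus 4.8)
The plan is to check that the push-forward maps $i_\ast=i_\ast^{T_c^h}$ just defined satisfy the two axioms required of a functor: compatibility with composition of closed immersions, and that the identity immersion induces the identity map. For the composition axiom, I would invoke the already-established facts about the homogeneous theory. By Theorem \ref{thm:10}, the theory $T^h$ of homogeneous singular Bott-Chern classes is compatible with the projection formula and transitive. Hence the associated covariant class $T_c^h$ enjoys the transitivity formula \eqref{eq:87} of Proposition \ref{prop:17}(4). By the last Proposition of this section (the one asserting, under compatibility with the projection formula and transitivity, that $(i\circ j)^{T_c}_\ast = i^{T_c}_\ast\circ j^{T_c}_\ast$), we obtain directly that for a composition of closed immersions of regular metrized arithmetic varieties $j\colon(\mathcal Z,h_Z)\to(\mathcal Y,h_Y)$ and $i\colon(\mathcal Y,h_Y)\to(\mathcal X,h_X)$ one has $(i\circ j)_\ast = i_\ast\circ j_\ast$ on $\widehat K'(\mathcal Z,\mathcal D_{\D,Z})$, and likewise on $\widehat K(\underline{\phantom{A}},\mathcal D_{\D})$ when the varieties are regular (using the isomorphism $\widehat K\cong\widehat K'$ in the regular case).

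Next I would treat the identity immersion $\mathrm{id}_\mathcal{X}\colon\mathcal X\to\mathcal X$. Here the normal bundle is zero, the induced exact sequence $\overline\xi_{N_{Y/X}}$ of Notation \ref{def:14} is the trivial sequence $0\to \overline T_X\to\overline T_X\to 0\to 0$, so $\Td(Y)\,i^\ast\Td^{-1}(X)=1$ and the Koszul/resolution data reduce to the tautological resolution $\overline E_\ast\colon \overline{\mathcal F}\overset{\mathrm{Id}}{\to}\mathcal F$. By the normalization axiom of Theorem \ref{thm:12}, together with Proposition \ref{prop:19} (the vanishing of the Bott-Chern class of the tautological resolution), one gets $T^h(\overline\xi_c)=0$ for this data, so the formulas \eqref{eq:66} and \eqref{eq:67} collapse to $i^{T_c}_\ast(\overline{\mathcal F},\eta)=[(\overline{\mathcal F},\eta)]$; that is, $\mathrm{id}_\ast$ is the identity on $\widehat K'(\mathcal X,\mathcal D_{\D,X})$.

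Finally, one should note that the assignment $\mathcal X\mapsto \widehat K'(\mathcal X,\mathcal D_{\D,X})$ (and $\widehat K$ in the regular case) is evidently a well-defined object assignment, and that the push-forward maps $i_\ast$ were already shown in the preceding Proposition to be well defined (independent of resolutions, metrics on $(i_\ast\mathcal F)_\CC$, and the choice of metric on $N_{Y/X}$ — the latter being exactly why the homogeneous choice, whose $C_{T^h}$ does not depend on $T$, makes $i^{T_c^h}_\ast$ canonical). Combining these three observations — well-definedness, preservation of identities, and preservation of composition — yields the corollary. The only genuinely substantive input is the transitivity of $T^h$, which has already been proved in Theorem \ref{thm:10}; everything else is a formal consequence of the axioms in Theorem \ref{thm:12} and the properties of the push-forward recorded above, so I do not anticipate a real obstacle beyond carefully matching the bookkeeping of metrics on tangent and normal bundles in the two definitions \eqref{eq:66} and \eqref{eq:67}.
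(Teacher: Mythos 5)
Your proposal is correct and follows essentially the route the paper intends: functoriality for compositions is exactly Theorem \ref{thm:10} (the homogeneous theory is compatible with the projection formula and transitive) combined with the proposition giving $(i\circ j)^{T_{c}}_{\ast}=i^{T_{c}}_{\ast}\circ j^{T_{c}}_{\ast}$, and well-definedness was established in the preceding proposition. Your explicit check that the identity immersion acts as the identity (via the tautological resolution, the orthogonally split sequence $\overline{\xi}_{N}$, and Proposition \ref{prop:19}) is the only step the paper leaves implicit, and you handle it correctly.
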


\begin{corollary}
      Let $i\colon (\mathcal{Y},h_{Y})\longrightarrow
(\mathcal{X},h_{X})$ be a closed immersion of regular metrized arithmetic
varieties, then 
  \begin{equation}\label{eq:89}
    \widehat {\ch}(i^{T}_{\ast}(\alpha ))\widehat {\Td}(\mathcal{X})= 
    i_{\ast}(\widehat{\ch}(\alpha
    )\widehat{\Td}(\mathcal{Y})). 
  \end{equation}  
\end{corollary}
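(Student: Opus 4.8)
The plan is to obtain this as an immediate consequence of Theorem \ref{thm:15} specialized to the theory of \emph{homogeneous} singular Bott-Chern classes $T^{h}$. By the definition immediately preceding the statement, the push-forward $i_{\ast}$ on $\widehat{K}'(\mathcal{Y},\mathcal{D}_{\D,Y})$ is by fiat $i^{T^{h}_{c}}_{\ast}$, and by Theorem \ref{thm:10} the theory $T^{h}$ is compatible with the projection formula and transitive, so equation \eqref{eq:24} of Theorem \ref{thm:15} applies with $T=T^{h}$. That equation reads
\begin{displaymath}
  \widehat{\ch}(i^{T^{h}_{c}}_{\ast}(\alpha))\,\widehat{\Td}(\mathcal{X})=
  i_{\ast}\bigl(\widehat{\ch}(\alpha)\,\widehat{\Td}(\mathcal{Y})\bigr)
  -\amap\bigl(i_{\ast}(\ch(\mathcal{F}_{\CC})\,\Td(Y)\,S_{T^{h}}(N))\bigr),
\end{displaymath}
so the only thing left to check is that the genus $S_{T^{h}}$ is trivial, which makes the correction term vanish and yields \eqref{eq:89}.

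To see that $S_{T^{h}}=0$ I would invoke Corollary \ref{cor:10}: for any theory $T$ of singular Bott-Chern classes compatible with the projection formula and transitive one has
\begin{displaymath}
  C_{T}(F,N)-C_{T^{h}}(F,N)=\ch(F)\bullet\Td(N)^{-1}\bullet S_{T}(N).
\end{displaymath}
Taking $T=T^{h}$ the left-hand side is identically zero, so $\ch(F)\bullet\Td(N)^{-1}\bullet S_{T^{h}}(N)=0$ for all hermitian bundles $F$ and $N$; choosing $F=\mathcal{O}_{Y}$ with its trivial metric and using that $\Td(N)^{-1}$ has constant term $1$ and is therefore invertible in the ring of characteristic forms forces $S_{T^{h}}(N)=0$ for every $N$. (This is precisely the content of the remark following Theorem \ref{thm:15}, that $T$ is homogeneous if and only if $S_{T}=0$, and one may simply cite it.)

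Combining the two observations gives equation \eqref{eq:89}. I do not expect a genuine obstacle here; the only point requiring a moment's care is bookkeeping about the two flavours of push-forward. The presence of the factor $\widehat{\Td}(\mathcal{X})$ on the left-hand side is exactly what signals that one must use the covariant push-forward $i^{T^{h}_{c}}_{\ast}=i_{\ast}$ and hence equation \eqref{eq:24}, rather than $i^{T^{h}}_{\ast}$ and equation \eqref{eq:29}; once that is fixed, the vanishing of $S_{T^{h}}$ does the rest.
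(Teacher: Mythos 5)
Your proof is correct and is essentially the paper's own (implicit) argument: the corollary is an immediate consequence of equation \eqref{eq:24} of Theorem \ref{thm:15} applied with $T=T^{h}$, combined with the fact that $S_{T^{h}}=0$ (which follows from Corollary \ref{cor:10} taking $T=T^{h}$, as you argue, and is exactly the remark made after Theorem \ref{thm:15}), so the correction term vanishes. Your bookkeeping point is also the right one: the factor $\widehat{\Td}(\mathcal{X})$ on the left-hand side signals that the push-forward in question is the covariant one $i_{\ast}=i^{T^{h}_{c}}_{\ast}$ just defined, i.e.\ one must invoke \eqref{eq:24} rather than \eqref{eq:29}.
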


\begin{remark}
  Combining theorem \ref{thm:15} with
  \cite{GilletRoesslerSoule:_arith_rieman_roch_theor_in_higher_degrees}
  we can obtain an arithmetic Grothendieck-Riemann-Roch theorem for
  projective morphisms of regular arithmetic varieties. 

  In a forthcoming paper we will show that the higher torsion forms used to
  define the direct images for submersions can also be characterized
  axiomatically.  
\end{remark}

\newcommand{\noopsort}[1]{} \newcommand{\printfirst}[2]{#1}
  \newcommand{\singleletter}[1]{#1} \newcommand{\switchargs}[2]{#2#1}
\providecommand{\bysame}{\leavevmode\hbox to3em{\hrulefill}\thinspace}
\providecommand{\MR}{\relax\ifhmode\unskip\space\fi MR }
\providecommand{\MRhref}[2]{%
  \href{http://www.ams.org/mathscinet-getitem?mr=#1}{#2}
}
\providecommand{\href}[2]{#2}

\Addresses

\end{document}